\numberwithin{equation}{section}
\numberwithin{figure}{section}
\theoremstyle{plain}
\newtheorem{thm}{\protect\theoremname}[section]
\theoremstyle{definition}
\newtheorem{defn}[thm]{\protect\definitionname}
\theoremstyle{remark}
\newtheorem{rem}[thm]{\protect\remarkname}
\theoremstyle{plain}
\newtheorem{lem}[thm]{\protect\lemmaname}
\theoremstyle{plain}
\newtheorem{prop}[thm]{\protect\propositionname}
\theoremstyle{definition}
\newtheorem{problem}[thm]{\protect\problemname}
\theoremstyle{plain}
\newtheorem{cor}[thm]{\protect\corollaryname}
\DeclareMathOperator{\lead}{lead}
\DeclareMathOperator{\spec}{spec}
\DeclareMathOperator{\Op}{\normalfont \text{Op}}
\DeclareMathOperator{\Diff}{Diff}
\DeclareMathOperator{\phg}{phg}
\DeclareMathOperator{\GL}{GL}
\DeclareMathOperator{\id}{id}
\DeclareMathOperator{\of}{\normalfont \textbf{of}}
\DeclareMathOperator{\lf}{\normalfont \textbf{lf}}
\DeclareMathOperator{\rf}{\normalfont \textbf{rf}}
\DeclareMathOperator{\ff}{\normalfont \textbf{ff}}
\DeclareMathOperator{\iif}{\normalfont \textbf{if}}
\DeclareMathOperator{\ef}{\normalfont \textbf{ef}}
\DeclareMathOperator{\po}{\normalfont{\text{P}}}
\DeclareMathOperator{\tr}{\normalfont{\text{tr}}}
\DeclareMathOperator{\inte}{\normalfont{\text{int}}}
\DeclareMathOperator{\J}{J}
\providecommand{\corollaryname}{Corollary}
\providecommand{\definitionname}{Definition}
\providecommand{\lemmaname}{Lemma}
\providecommand{\problemname}{Problem}
\providecommand{\propositionname}{Proposition}
\providecommand{\remarkname}{Remark}
\providecommand{\theoremname}{Theorem}
\begin{document}
\title{Boundary Value Problems for $0$-Elliptic Operators}
\author{Marco Usula}
\begin{abstract}
Let $X$ be a manifold with boundary, and let $L$ be a $0$-elliptic
operator on $X$ which is semi-Fredholm essentially surjective with
infinite-dimensional kernel. Examples include Hodge Laplacians and
Dirac operators on conformally compact manifolds. We construct left
and right parametrices for $L$ when supplemented with appropriate
elliptic boundary conditions. The construction relies on a new calculus
of pseudodifferential operators on functions over both $X$ and $\partial X$,
which we call the ``symbolic $0$-calculus''. This new calculus
supplements the ordinary $0$-calculus of Mazzeo--Melrose, enabling
it to handle boundary value problems. In the original $0$-calculus,
operators are characterized as polyhomogeneous right densities on
a blow-up of $X^{2}$. By contrast, operators in the symbolic $0$-calculus
are characterized (locally near each point of the boundary of the
diagonal) as quantizations of polyhomogeneous symbols on appropriate
blown-up model spaces.
\end{abstract}

\maketitle
\tableofcontents{}

\sloppy

\section{\label{sec:Introduction}Introduction}

Let $Y$ be a closed manifold. If $L\in\Diff^{m}\left(Y\right)$ is
an elliptic operator, then the induced map $L:C^{\infty}\left(Y\right)\to C^{\infty}\left(Y\right)$
is Fredholm. This can be proved easily using pseudodifferential theory:
if $L$ is elliptic, then it has a parametrix $Q\in\Psi^{-m}\left(X\right)$,
i.e. a pseudodifferential inverse for $L$ modulo smoothing operators
in $\Psi^{-\infty}\left(X\right)$. This implies Fredholmness, because
smoothing operators on a closed manifold are compact.

On a compact manifold with boundary $X$, elliptic operators need
not be Fredholm. For example, the Laplacian $\Delta$ relative to
a smooth metric on $X$ has infinite-dimensional kernel. To recover
Fredholmness, one needs to impose boundary conditions, such as for
example Dirichlet or Neumann boundary conditions for $\Delta$. In
the general setting, given an elliptic operator $L$, one aims to
supplement the equation $Lu=v$ with an equation prescribing the value
of some pseudodifferential operator on $\partial X$ acting on the
``boundary value'' of $u$. Under an appropriate ellipticity hypothesis
on the boundary condition, the supplemented map is Fredholm. This
paper provides a similar result for a natural class of differential
operators on $X$, degenerate along the boundary $\partial X$, called
\emph{$0$-differential }operators.

The degeneracy type we consider in this paper has been introduced
by Mazzeo in \cite{MazzeoEdgeI}. An \emph{edge structure} on $X$
is a fibration $\phi:\partial X\to B$ of the boundary. Associated
to $\phi$, we have a Lie algebra $\mathcal{V}_{e}\left(X\right)$
of \emph{edge vector fields}, i.e. those vector fields on $X$ which
are tangent to the fibers of $\phi$, and a corresponding ring $\Diff_{e}^{\bullet}\left(X\right)$
of \emph{edge differential operators}. There is a natural notion of
\emph{edge ellipticity}: the Lie algebra $\mathcal{V}_{e}\left(X\right)$
is the bundle of sections of a vector bundle $^{e}TX$ on $X$, called
the \emph{edge tangent bundle}, and if $L\in\Diff_{e}^{m}\left(X\right)$
then the principal symbol in the interior extends to a fibrewise homogeneous
function on the dual $^{e}T^{*}X$; the operator is \emph{edge elliptic}
if this function is invertible away from the zero section. Examples
of such operators are ubiquitous in geometry: an \emph{edge metric}
$g$ on $X$ is a metric on the bundle $^{e}TX$, and it determines
a complete metric on $X^{\circ}$; the usual elliptic operators in
Riemannian geometry, such as Laplacians and Dirac operators, are all
edge elliptic. We refer to \cite{MazzeoEdgeI} for more details on
edge elliptic operators.

There are two natural ``extremal'' edge structures: if $\phi$ is
the fibration over a point, we get the Lie algebra $\mathcal{V}_{b}\left(X\right)$
of $b$\emph{-vector fields }tangent to the boundary, and the corresponding
ring $\Diff_{b}^{\bullet}\left(X\right)$ of \emph{$b$-differential},
or \emph{totally characteristic}, operators; on the other hand, if
$\phi$ is the identity on $\partial X$, we get the Lie algebra $\mathcal{V}_{0}\left(X\right)$
of \emph{$0$-vector fields} which vanish on the boundary, and the
corresponding ring $\Diff_{0}^{\bullet}\left(X\right)$ of \emph{$0$-differential
operators}. $b$-metrics on $X$ are asymptotically cylindrical, while
$0$-metrics on $X$ (also called conformally compact metrics) are
asymptotically negatively curved; the typical example is the hyperbolic
metric itself, which can be seen as a $0$-metric on the closed unit
ball.

Edge operators are bounded between weighted Sobolev and Hölder spaces
associated to $\mathcal{V}_{e}\left(X\right)$. For simplicity, in
this introduction we will avoid regularity issues and work instead
on the space $x^{\delta}H_{b}^{\infty}\left(X\right)$ of $o\left(x^{\delta}\right)$
\emph{conormal functions}. We denote by $L_{b}^{2}\left(X\right)$
the space of $L^{2}$ functions with respect to a $b$-metric on $X$,
and by $H_{b}^{\infty}\left(X\right)$ the space of $L_{b}^{2}$ functions
which remain $L_{b}^{2}$ when acted upon by any number of $b$-vector
fields. Furthermore, given a non-negative function $x$ on $X$ such
that $x^{-1}\left(0\right)=\partial X$ and $x$ vanishes transversely
on $\partial X$, we denote by $x^{\delta}H_{b}^{\infty}\left(X\right)$
the space of functions $x^{\delta}u$ with $u\in H_{b}^{\infty}\left(X\right)$.
These Fréchet spaces should be thought of as the natural analogues
of the space of smooth functions on a closed manifold.

If $L\in\Diff_{e}^{m}\left(X\right)$, then $L$ induces continuous
linear maps
\[
L:x^{\delta}H_{b}^{\infty}\left(X\right)\to x^{\delta}H_{b}^{\infty}\left(X\right).
\]
However, in contrast to the closed case, edge ellipticity of $L$
alone is \emph{not} sufficient for Fredholmness. More precisely, Fredholmness
depends on the invertibility of a family of model operators, called
the \emph{Bessel family }of $L$. The Bessel family of $L$ is an
``operator-valued principal symbol'' $\eta\mapsto\hat{N}_{\eta}\left(L\right)$,
parametrized by $\eta\in T^{*}\partial X$, such that for every $p\in\partial X$
and $\eta\in T_{p}^{*}\partial X$ $\hat{N}_{\eta}\left(L\right)$
is an operator on functions over the fiber $N_{p}^{+}\partial X$
of the inward-pointing normal bundle of $\partial X$ in $X$. If
$\hat{N}_{\eta}\left(L\right)$ is injective (resp. surjective) on
$x^{\delta}H_{b}^{\infty}\left(N_{p}^{+}\partial X\right)$ for every
$p\in\partial X$ and $\eta\in T_{p}^{*}\partial X\backslash0$, then
$L$ is semi-Fredholm essentially injective (resp. surjective) on
$x^{\delta}H_{b}^{\infty}\left(X\right)$.

The theory of $b$-elliptic operators was studied by Melrose--Mendoza
in \cite{MelroseMendozaTotallyCharacteristic} and by Melrose in \cite{MelroseAPS}:
in this case, the Bessel family of $L$ is particularly simple (it
reduces to a holomorphic family of elliptic operators on $\partial X$),
and the result is that for all $\delta$ but a discrete subset\footnote{The real parts of the indicial roots of $L$.},
$L$ is Fredholm on $x^{\delta}H_{b}^{\infty}\left(X\right)$. The
$0$-elliptic case was studied by Mazzeo--Melrose in \cite{MazzeoMelroseResolvent, MazzeoPhD},
and a full theory of edge elliptic operators, encompassing both the
$b$-case and the $0$-case, was developed by Mazzeo in \cite{MazzeoEdgeI}.
The result in this case (when the base of the fibration $\phi:\partial X\to B$
has positive dimension) is more involved: in general, $L$ is only
semi-Fredholm essentially injective (resp. surjective) for $\delta$
large (resp. small) enough, but there may not be weighted spaces on
which the operator is Fredholm. For example, Dirac operators on edge
manifolds with $\dim\left(B\right)>0$ typically do not admit Fredholm
weights. Another example is the Hodge Laplacian on middle degree forms
over conformally compact manifolds.

In the semi-Fredholm essentially surjective case, it is of great interest
to be able to supplement $L$ with appropriate boundary conditions,
in order to get Fredholmness. In this paper, mainly for simplicity,
we will focus on $0$-differential operators; we expect that the theory
will extend relatively easily to the edge case. Let us fix a $0$-elliptic
operator $L\in\Diff_{0}^{m}\left(X\right)$, and a surjective, not
injective weight $\delta\in\mathbb{R}$. The operator $L:x^{\delta}H_{b}^{\infty}\left(X\right)\to x^{\delta}H_{b}^{\infty}\left(X\right)$
is then essentially surjective, but with infinite-dimensional kernel.
We wish to supplement $L$ with boundary conditions, in order to obtain
a Fredholm operator. In this paper, imposing a boundary condition
on $u\in x^{\delta}H_{b}^{\infty}\left(X\right)$ means:
\begin{enumerate}
\item projecting $u$ orthogonally to the part $P_{1}u$ of $u$ in the
kernel; here $P_{1}$ is the orthogonal projector onto the $x^{\delta}L_{b}^{2}$
kernel of $L$ for some auxiliary choice of an Hilbert inner product
on $x^{\delta}L_{b}^{2}\left(X\right)$;
\item extracting the ``boundary value'', or more precisely the \emph{trace},
of $P_{1}u$, which we will denote by $\boldsymbol{A}_{L}u$;
\item applying the ``boundary condition'', a pseudodifferential operator
$\boldsymbol{Q}$, to $\boldsymbol{A}_{L}u$.
\end{enumerate}
The trace map $\boldsymbol{A}_{L}$ is central in this paper. It is
proved in \cite{MazzeoEdgeI} that, under a certain hypothesis\footnote{Constancy of the indicial roots.}
on $L$ which is satisfied in most geometrically interesting cases,
any $x^{\delta}H_{b}^{\infty}$ solution of $Lu$ admits an asymptotic
expansion in a collar $[0,\varepsilon)\times\partial X\hookrightarrow X$
of $\partial X$ in $X$ of the form
\[
u\sim\sum_{\begin{smallmatrix}\mu\in\spec_{b}\left(L\right)\\
\Re\left(\mu\right)>\delta
\end{smallmatrix}}\sum_{k=0}^{\infty}\sum_{l=0}^{\tilde{M}_{\mu}}u_{\mu,k,l}\cdot x^{\mu+k}\left(\log x\right)^{l};
\]
here $\spec_{b}\left(L\right)$ is a finite subset of $\mathbb{C}$,
whose elements are called the \emph{indicial roots }of $L$, and the
coefficients $u_{\mu,x,l}$ are smooth functions on the boundary\footnote{If $u$ is only in $x^{\delta}L_{b}^{2}\left(X\right)$, then the
expansion is still well-defined but the coefficients have negative
Sobolev regularity, cf. §7 of \cite{MazzeoEdgeI}.}. Solving formally the equation $Lu=0$ term by term in the expansion
above, one realizes that the coefficients $u_{\mu,k,l}$ are not independent
of each other: only the ``leading coefficients'' $\left\{ u_{\mu,0,l}:\mu\in\spec_{b}\left(L\right)\right\} $
are formally free, while the coefficient $u_{\mu,k,l}$ for $k>0$
can be expressed as a linear differential combination of the coefficients
$u_{\mu',0,l'}$, where $\mu-\mu'\in\mathbb{N}$ and $l'\leq l$.
The trace map $\boldsymbol{A}_{L}$ sends a function $u\in x^{\delta}H_{b}^{\infty}\left(X\right)$
to the family $\left\{ w_{\mu,0,l}:\mu\in\spec_{b}\left(L\right)\right\} $
of leading coefficients\footnote{More precisely, the coefficients corresponding to the critical\emph{
}indicial roots, cf. §\ref{subsubsec:Critical-indicial-roots}.} in the expansion of $w=P_{1}u$, the projection of $u$ onto the
kernel of $L$.

It turns out that the leading coefficients $\left\{ w_{\mu,0,l}:u\in\spec_{b}\left(L\right),\Re\left(\mu\right)\leq\overline{\delta}\right\} $
can be neatly interpreted as sections of a natural smooth vector bundle
$\boldsymbol{E}_{L}$ over $\partial X$, and therefore the trace
map $\boldsymbol{A}_{L}$ can be seen as a map
\[
\boldsymbol{A}_{L}:x^{\delta}H_{b}^{\infty}\left(X\right)\to C^{\infty}\left(\partial X;\boldsymbol{E}_{L}\right).
\]
The boundary condition is then a pseudodifferential operator $\boldsymbol{Q}\in\Psi^{\bullet}\left(\partial X;\boldsymbol{E}_{L},\boldsymbol{W}\right)$
where $\boldsymbol{W}$ is another vector bundle. The supplemented
operator is
\[
\mathcal{L}:=L\oplus\boldsymbol{Q}\boldsymbol{A}_{L}:x^{\delta}H_{b}^{\infty}\left(X\right)\to x^{\delta}H_{b}^{\infty}\left(X\right)\oplus C^{\infty}\left(\partial X;\boldsymbol{W}\right).
\]
The notion of \emph{ellipticity} of the boundary condition $\boldsymbol{Q}$
is due to Mazzeo--Vertman \cite{MazzeoEdgeII}. Roughly speaking,
$\mathcal{L}$ itself comes equipped with an operator-valued symbol
$\hat{N}\left(\mathcal{L}\right)$, its \emph{Bessel family}. It takes
the form $\eta\mapsto\hat{N}_{\eta}\left(\mathcal{L}\right)=\hat{N}_{\eta}\left(L\right)\oplus\sigma_{\eta}\left(\boldsymbol{Q}\right)\hat{N}_{\eta}\left(\boldsymbol{A}_{L}\right)$,
where $\eta\in T^{*}\partial X\backslash0$ and, if $\pi\left(\eta\right)=p\in\partial X$:
\begin{enumerate}
\item $\hat{N}_{\eta}\left(L\right)$ is the Bessel operator of $L$ at
$\eta$;
\item $\sigma_{\eta}\left(\boldsymbol{Q}\right)$ is the principal symbol
of $\boldsymbol{Q}$;
\item $\hat{N}_{\eta}\left(\boldsymbol{A}_{L}\right)$ is the \emph{Bessel
trace map}, sending a function $u\in x^{\delta}L_{b}^{2}\left(N_{p}^{+}\partial X\right)$
to the trace of $\hat{N}_{\eta}\left(P_{1}\right)u$, the projection
of $u$ onto the kernel of $\hat{N}_{\eta}\left(L\right)$.
\end{enumerate}
The boundary condition $\boldsymbol{Q}$ is elliptic for $L$ (relative
to the weight $\delta$) if $\hat{N}_{\eta}\left(\mathcal{L}\right)$
is invertible as a map $x^{\delta}H_{b}^{\infty}\left(N_{p}^{+}\partial X\right)\to x^{\delta}H_{b}^{\infty}\left(N_{p}^{+}\partial X\right)\oplus\boldsymbol{W}_{p}$
for every $\eta\in T_{p}^{*}\partial X\backslash0$ and every $p\in\partial X$.
One of the main results of this paper is the following Fredholm theorem
(the formulation here is slightly imprecise: see Theorem \ref{thm:main-theorem}
and Theorem \ref{thm:fredholm-theorem} for a precise formulation):
\begin{thm}
Let $L\in\Diff_{0}^{m}\left(X\right)$ be a $0$-elliptic operator
with constant indicial roots, and let $\delta$ be a surjective, not
injective weight for $L$. If $\boldsymbol{Q}\in\Psi^{\bullet}\left(\partial X;\boldsymbol{E}_{L},\boldsymbol{W}\right)$
is an elliptic boundary condition for $L$ relative to the weight
$\delta$, then the map
\[
\mathcal{L}:=L\oplus\boldsymbol{Q}\boldsymbol{A}_{L}:x^{\delta}H_{b}^{\infty}\left(X\right)\to x^{\delta}H_{b}^{\infty}\left(X\right)\oplus C^{\infty}\left(\partial X;\boldsymbol{W}\right)
\]
is Fredholm.
\end{thm}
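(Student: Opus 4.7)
The plan is to prove Fredholmness of $\mathcal{L}$ by constructing a left and a right parametrix within the symbolic $0$-calculus developed in the body of the paper; the abstract Fredholm criterion then reduces the problem to compactness of the two remainders on the relevant Fr\'echet spaces.

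First I would extract the model inverse from the elliptic boundary condition hypothesis. By assumption the Bessel family $\hat{N}_\eta(\mathcal{L}) = \hat{N}_\eta(L) \oplus \sigma_\eta(\boldsymbol{Q})\hat{N}_\eta(\boldsymbol{A}_L)$ is invertible as a map $x^\delta H_b^\infty(N_p^+\partial X) \to x^\delta H_b^\infty(N_p^+\partial X) \oplus \boldsymbol{W}_p$ for every nonzero $\eta \in T_p^*\partial X$ and every $p \in \partial X$. Its inverse splits as a pair $(\hat{G}_\eta, \hat{K}_\eta)$, where $\hat{G}_\eta$ is a Green-type model operator inverting the interior part and $\hat{K}_\eta$ is a Poisson-type model operator that produces the solution from prescribed boundary trace data. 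Homogeneity of $\hat{N}_\eta(\mathcal{L})$ under the natural dilation induces corresponding homogeneity of $(\hat{G}_\eta, \hat{K}_\eta)$, and polyhomogeneity should propagate from $L$ and $\boldsymbol{Q}$ to this inverse on the blown-up model space that defines the operator-valued symbol class of the symbolic $0$-calculus.

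Next I would quantize this fiberwise inverse family. Interpreting $\eta \mapsto \hat{N}_\eta(\mathcal{L})^{-1}$ as the leading symbol of an operator in the symbolic $0$-calculus yields a candidate parametrix $\mathcal{P} = (G,K)$ satisfying $\mathcal{L}\mathcal{P} = \id - R_r$ and $\mathcal{P}\mathcal{L} = \id - R_l$, with $R_l, R_r$ of negative order. A standard iterative symbol improvement should drive the interior order of the remainders arbitrarily negative, while the boundary of the diagonal is handled precisely by the new calculus: the parametrix must couple an interior Green component $G$ with a boundary Poisson component $K$ acting into $C^\infty(\partial X; \boldsymbol{W})$, and the ordinary $0$-calculus of Mazzeo--Melrose has no room for the latter. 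In the interior the construction reduces to the usual pseudodifferential parametrix for the elliptic principal symbol of $L$.

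Finally I would conclude by checking that negative-order elements of the symbolic $0$-calculus act compactly on $x^\delta H_b^\infty(X) \oplus C^\infty(\partial X; \boldsymbol{W})$. This uses the continuity and compactness statements for smoothing elements of the calculus established in the body of the paper, together with the regularization provided by $\boldsymbol{A}_L$ once composed with the model Poisson kernel. The main obstacle, and essentially the reason for introducing the symbolic $0$-calculus in the first place, is showing that the fiberwise inverse Bessel family actually quantizes to a well-defined operator whose compositions with $L$ and $\boldsymbol{A}_L$ remain inside the calculus. Closure under composition and sharp mapping properties of the residual class are the technical inputs without which the Fredholm conclusion cannot be reached; once they are in place, the argument above collapses to a few lines.
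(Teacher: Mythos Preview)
Your outline has the right shape---invert the Bessel family, quantize, iterate, check compactness---and this is indeed the skeleton of the paper's argument. But two structural points in the paper's actual construction differ from what you describe, and they matter.

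First, the interior component of the parametrix is \emph{not} obtained by quantizing $\hat G_\eta$ in the new symbolic $0$-calculus. The paper takes $G$ to be the generalized inverse of $L$ on $x^\delta L_b^2$ coming from Mazzeo's theory (Theorem~\ref{thm:(Mazzeo)-generalized-projectors-and-gen-inverse}), which lives in the ordinary large $0$-calculus $\Psi_0^{-m,\mathcal H}(X)$, not in $\hat\Psi_0^{-\infty,\bullet}(X)$. In fact $\hat N_\eta(G)$ is \emph{not} the Bessel family of a $0$-interior operator in the paper's sense (its model lies in a $b$-scattering class, cf.\ the remark following \S\ref{subsubsec:The-Bessel-family-of-interior}). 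Only the Poisson piece $\boldsymbol C_0=\boldsymbol B_0\boldsymbol K_0$ is produced by quantization in the new calculus. This split pays off immediately: because $G$ is the honest generalized inverse, one has the exact identities $LG=I-P_2$, $P_1G=0$, and (after arranging $\boldsymbol B_0=P_1\boldsymbol B_0'$) $L\boldsymbol C_0=0$ and $\boldsymbol A G=\boldsymbol A P_1 G=0$. The off-diagonal blocks of $\mathcal L\mathcal G_0$ therefore vanish \emph{exactly}, and the remainder is block-diagonal from the start: $P_2$ (finite rank) in the interior block and $I-\boldsymbol Q\boldsymbol A\boldsymbol C_0$ in the boundary block. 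Your ``iterate to drive the order down'' then applies only to the boundary block (a Neumann series in the twisted boundary calculus) for the right parametrix, and only to a $0$-interior remainder $P_1-\boldsymbol C_0\boldsymbol Q\boldsymbol A$ for the left parametrix.

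Second, you make no mention of the \emph{twisted} homogeneity of the Bessel trace family $\hat{\boldsymbol a}_{L,\eta}$: it satisfies $\hat{\boldsymbol a}_{L,t\eta}=t^{\boldsymbol{\mathfrak s}_L}\hat{\boldsymbol a}_{L,\eta}\lambda_{t^{-1}}^*$ rather than ordinary homogeneity, where $\boldsymbol{\mathfrak s}_L$ is a nontrivial endomorphism of $\boldsymbol E_L$ with the critical indicial roots as eigenvalues. This forces the Poisson operator $\boldsymbol B_0$ and the boundary operator $\boldsymbol K_0$ to live in \emph{twisted} classes $\hat\Psi_{0\po}^{-\infty,\bullet,\boldsymbol{\mathfrak s}}$ and $\Psi_{\phg}^{\bullet,(\boldsymbol{\mathfrak t},-\boldsymbol{\mathfrak s}_L)}$, and is the reason the paper needs the twisted boundary calculus at all. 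Showing that $\boldsymbol A_L$ is a twisted symbolic $0$-trace operator (Theorem~\ref{thm:A_L-is-twisted-symbolic-0-trace}) in turn requires the nontrivial fact that $P_1$ is a $0$-interior operator (Theorem~\ref{thm:P1_is_0_interior}), proved by exploiting $P_1^2=P_1$ together with the $0b$-composition theorem. These are the hard technical inputs your last paragraph alludes to, but the twisting is not optional bookkeeping---without it the model Poisson family is not homogeneous of any single degree and cannot be quantized as an untwisted symbolic $0$-Poisson operator.
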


The proof is a parametrix construction. As always, the hard work consists
in developing a pseudodifferential calculus well-suited to contain
such a parametrix. In the fully elliptic case ($0$-ellipticity +
invertible Bessel family), the approach adopted in \cite{MazzeoMelroseResolvent, MazzeoPhD, MazzeoEdgeI}
is to characterize the operators in the calculus in terms of the geometric
structures of their Schwartz kernels. More precisely, the \emph{large
$0$-calculus} $\Psi_{0}^{\bullet,\bullet}\left(X\right)$ consists
of operators on functions over $X$, whose Schwart kernels (a priori
distributional right densities on $X^{2}$) lift to \emph{polyhomogeneous}
right densities on the blow-up $X_{0}^{2}:=\left[X^{2}:\partial\Delta\right]$,
with a standard pseudodifferential singularity along the lift $\Delta_{0}$
of the diagonal $\Delta\subseteq X^{2}$. The class $\Psi_{0}^{\bullet,\bullet}\left(X\right)$
is filtered by the order of the interior conormal singularity, and
by the orders of decay/blow up (more precisely, the \emph{index sets})
of the lifted Schwartz kernels at the various faces of $X_{0}^{2}$.
We will recall the $0$-calculus in detail in §\ref{sec:Recap-on-the-0-calc}.
Three crucial aspects of the theory are:
\begin{enumerate}
\item \emph{inversion of the Bessel family}: given a $0$-elliptic operator
$L\in\Diff_{0}^{m}\left(X\right)$, with constant indicial roots and
invertible bessel family $\hat{N}\left(L\right)$, one needs to be
able to find an operator in $\Psi_{0}^{-m,\bullet}\left(X\right)$
whose Bessel family coincides with $\hat{N}\left(L\right)^{-1}$;
\item \emph{composition theorem}: given two operators $A\in\Psi_{0}^{m,\mathcal{E}}\left(X\right)$
$B\in\Psi_{0}^{m',\mathcal{F}}\left(X\right)$, one needs to know
when the composition $AB$ is well-defined, and on which instance
of $\Psi_{0}^{\bullet,\bullet}\left(X\right)$ it lives;
\item \emph{mapping properties}: given an operator $A\in\Psi_{0}^{m,\mathcal{E}}\left(X\right)$,
one needs to now whether $A$ is bounded on appropriate weighted $0$-Sobolev
or Hölder spaces, and whether it is compact.
\end{enumerate}
The first two steps are quite challenging. The second step uses a
very elegant and powerful technique due to Richard Melrose, which
we refer to as the \emph{pull-back / push-forward technique }(cf.
§\ref{subsec:The-pull-back-push-forward-technique}).

The paper \cite{MazzeoEdgeII} by Mazzeo--Vertman, which constitutes
the main inspiration for this work, attempts to use a similar approach
to construct parametrices in the semi-Fredholm, essentially surjective
case. \cite{MazzeoEdgeII} introduces two classes of operators that
``bridge between'' $X$ and $\partial X$: the \emph{$0$-trace
}operators, which map functions on $X$ to functions on $\partial X$,
are defined as polyhomogeneous right densities on $\left[\partial X\times X:\partial\Delta\right]$;
the $0$\emph{-Poisson} operators, which map functions on $\partial X$
to functions on $X$, are defined as polyhomogeneous right densities
on $\left[X\times\partial X:\partial\Delta\right]$. Unfortunately,
this approach does not seem sufficient to solve the whole problem.
As we shall see in this paper, the parametrix construction requires
a full discussion of composition theorems between $0$-trace operators,
$0$-Poisson operators, operators in the interior $0$-calculus, and
pseudodifferential operators on the boundary. In \cite{MazzeoEdgeII},
only some of these instances are considered. Moreover, in adopting
the ``Schwartz kernel approach'', various problems arise. It appears
difficult to prove composition theorems involving pseudodifferential
operators on the boundary, due to the different nature of the interior
conormal singularity of pseudodifferential operators on $\partial X$,
and the polyhomogeneous singularities of the Schwartz kernels of $0$-trace
and $0$-Poisson operators. The composition theorems, which use the
pull-back / push-forward technique, tend to give worse index sets
than what is needed for the parametrix construction, and therefore
ad-hoc techniques are required to improve these index sets. Finally,
it is important to provide precise mapping properties for $0$-traces
and $0$-Poisson operators, between weighted $0$-Sobolev operators
on $X$ and Sobolev spaces on $\partial X$; these mapping properties
are not discussed in detail in \cite{MazzeoEdgeII}. We refer to \cite{UsulaPhD}
for a more detailed discussion on this.

In this paper, we attempt to solve all these issues by adopting a
different viewpoint. The main change of perspective compared to \cite{MazzeoEdgeI, MazzeoEdgeII}
is that, instead of characterizing our operators as polyhomogeneous
right densities on appropriate blow-ups of the double spaces $X^{2}$,
$\partial X\times X$ and $X\times\partial X$, we characterize them
(locally near any point of the boundary of the diagonal $\partial\Delta$)
as \emph{quantizations of polyhomogeneous symbols} on appropriate
blown-up spaces. To exemplify the idea, consider an operator in the
$0$-calculus $P\in\Psi_{0}^{-\infty,\mathcal{E}}\left(X\right)$:
given a point $p\in\partial X$, and coordinates $\left(x,y\right)$
for $X$ centered at $p$, the Schwartz kernel of $P$ in these coordinates
coincides near $\left(p,p\right)$ with a distributional right density
$K\left(x,y,\tilde{x},\tilde{y}\right)d\tilde{x}d\tilde{y}$, where
$K\left(x,y,\tilde{x},\tilde{y}\right)$ lifts to a polyhomogeneous
function of the blow-up of $\mathbb{R}_{1}^{n+1}\times\mathbb{R}_{1}^{n+1}$
at the locus $\left\{ x=\tilde{x}=0,y=\tilde{y}\right\} $. By contrast,
we will consider operators $P:\dot{C}^{\infty}\left(X\right)\to C^{-\infty}\left(X\right)$
whose Schwartz kernels, in the coordinates above near the point $\left(p,p\right)$
are of the form
\[
P\left(y;x,\tilde{x},y-\tilde{y}\right)d\tilde{x}d\tilde{y}=\frac{1}{\left(2\pi\right)^{n}}\int e^{i\left(y-\tilde{y}\right)\eta}p\left(y;x,\tilde{x},\eta\right)d\eta d\tilde{x}d\tilde{y},
\]
where the symbol $p\left(y;x,\tilde{x},\eta\right)$, a priori a distribution
on $\mathbb{R}^{n}\times\mathbb{R}_{2}^{2}\times\mathbb{R}^{n}$,
lifts to a polyhomogeneous function on the product of $\mathbb{R}^{n}$
and an appropriate blow-up of the model space $\mathbb{R}_{2}^{2}\times\mathbb{R}^{n}$.
A similar discussion holds for operator of trace and Poisson type.

We call the calculus developed in this paper the \emph{symbolic $0$-calculus}.
It consists of several pieces:
\begin{enumerate}
\item two classes $\hat{\Psi}_{0}^{-\infty,\mathcal{E}}\left(X\right)$
and $\hat{\Psi}_{0b}^{-\infty,\mathcal{E}}\left(X\right)$ (note the
hats!) of \emph{$0$-interior }and \emph{$0b$-interior }operators,
acting on functions over $X$; these classes are strictly related
to the $0$-calculus and the extended $0$-calculus of Lauter \cite{Lauter};
they are filtered by index families $\mathcal{E}$, which indicate
the asymptotic properties of the \emph{symbols} representing these
operators;
\item classes $\hat{\Psi}_{0\tr}^{-\infty,\mathcal{E}}\left(X,\partial X\right)$
and $\hat{\Psi}_{0\po}^{-\infty,\mathcal{E}}\left(\partial X,X\right)$
of \emph{symbolic $0$-trace }and \emph{$0$-Poisson operators}, strictly
related to the classes of $0$-trace and $0$-Poisson operators introduced
in \cite{MazzeoEdgeII}, similarly filtered by index families indicating
the asymptotic behavior of the symbols representing these operators
in frequency space;
\item variants $\hat{\Psi}_{0\tr}^{-\infty,\mathcal{E},\boldsymbol{\mathfrak{s}}}\left(X;\partial X,\boldsymbol{E}\right)$
and $\hat{\Psi}_{0\po}^{-\infty,\mathcal{E},\boldsymbol{\mathfrak{s}}}\left(\partial X,\boldsymbol{E};X\right)$
of the classes above, which we call \emph{twisted}\textbf{\emph{ }}\emph{symbolic
$0$-trace and $0$-Poisson operators}: here $\boldsymbol{E}\to\partial X$
is a vector bundle on $\partial X$, and $\boldsymbol{\mathfrak{s}}:\boldsymbol{E}\to\boldsymbol{E}$
is a smooth endomorphism \emph{with constant eigenvalues};
\item a calculus $\Psi_{\phg}^{\bullet,\left(\boldsymbol{\mathfrak{s}},\boldsymbol{\mathfrak{t}}\right)}\left(\partial X;\boldsymbol{E},\boldsymbol{F}\right)$
of \emph{twisted pseudodifferential operators} on $\partial X$, acting
between sections of vector bundles $\boldsymbol{E},\boldsymbol{F}\to\partial X$
equipped with smooth endomorphisms $\boldsymbol{\mathfrak{s}},\boldsymbol{\mathfrak{t}}$
with constant eigenvalues.
\end{enumerate}
The need for the twisted variants comes from a detailed analysis of
the model problem at the Bessel level. For every point $p\in\partial X$,
the Bessel trace map $\eta\mapsto\hat{N}_{\eta}\left(\boldsymbol{A}_{L}\right)$
restricted to $T_{p}^{*}\partial X\backslash0$ is a family of operators
$x^{\delta}H_{b}^{\infty}\left(N_{p}^{+}\partial X\right)\to\left(\boldsymbol{E}_{L}\right)_{p}$,
which should be thought of as an ``operator-valued principal symbol''
for $\boldsymbol{A}_{L}$. The usual principal symbol of pseudodifferential
operators on $\partial X$, and the Bessel families of operators in
the $0$-calculus, are all \emph{homogeneous} with respect to the
action of $\mathbb{R}^{+}$ on $T_{p}^{*}\partial X\backslash0$ by
dilations. By contrast, the family $\eta\to\hat{N}_{\eta}\left(\boldsymbol{A}_{L}\right)$
is \emph{twisted }homogeneous: there is a natural endomorphism $\boldsymbol{\mathfrak{s}}_{L}:\boldsymbol{E}_{L}\to\boldsymbol{E}_{L}$,
whose eigenvalues are precisely the indicial roots\footnote{More precisely, the \emph{critical }indicial roots.}
of $L$, and the family $\eta\to\hat{N}_{\eta}\left(\boldsymbol{A}_{L}\right)$
satisfies the property
\[
\hat{N}_{t\eta}\left(\boldsymbol{A}_{L}\right)=t^{\boldsymbol{\mathfrak{s}}_{L}}\circ\hat{N}_{\eta}\left(\boldsymbol{A}_{L}\right)\circ\lambda_{t^{-1}}^{*}
\]
for every $t>0$, where $\lambda_{t^{-1}}:T_{p}^{*}\partial X\backslash0\to T_{p}^{*}\partial X\backslash0$
is dilation by $t^{-1}$. The class $\hat{\Psi}_{0\tr}^{-\infty,\mathcal{E},\boldsymbol{\mathfrak{s}}}\left(X;\partial X,\boldsymbol{E}\right)$
is developed precisely with the aim of accomodating the trace map
$\boldsymbol{A}_{L}$. This aspect is not discussed in \cite{MazzeoEdgeII},
but we believe it to be important for the theory.

The twisted homogeneity of the Bessel trace family indicates that
the natural class of boundary conditions for the problem is a class
of pseudodifferential operators with ``twisted'' principal symbols.
The calculus $\Psi_{\phg}^{\bullet,\left(\boldsymbol{\mathfrak{s}},\boldsymbol{\mathfrak{t}}\right)}\left(\partial X;\boldsymbol{E},\boldsymbol{F}\right)$
is constructed precisely with this aim. This approach has been considered
before: in fact, $\Psi_{\phg}^{\bullet,\left(\boldsymbol{\mathfrak{s}},\boldsymbol{\mathfrak{t}}\right)}\left(\partial X;\boldsymbol{E},\boldsymbol{F}\right)$
is a particularly simple subcalculus of the calculus developed by
Krainer--Mendoza in \cite{KrainerMendozaVariableOrders}, with the
precise aim of formulating elliptic boundary conditions for edge elliptic
operators. The calculus of \cite{KrainerMendozaVariableOrders} is
much larger, because it aims to provide elliptic boundary conditions
for elliptic edge operators with \emph{variable} indicial roots. We
restrict to the case of constant indicial roots, and this allows us
to work with twisting endomorphisms with constant eigenvalues, which
greatly simplifies some aspects of the theory. We remark that a generalization
of the techniques presented in this paper to the variable indicial
roots case requires substantial work along the lines of \cite{KrainerMendozaVariableOrders}.

This paper is one of very many works aiming to discuss elliptic boundary
value problems on singular spaces. Besides the already mentioned \cite{MazzeoEdgeII}
and \cite{KrainerMendozaVariableOrders}, we mention \cite{Schulze1991PseudoDifferentialOO, schultze1994pseudo, schulze2002operators, krainer2016boundary}
.

The paper is organized as follows. §\ref{sec:Recap-on-the-0-calc}
is a recap on the $0$-calculus and related objects. In §\ref{sec:Boundary-value-problems},
we explain more in detail our formulation of boundary value problems,
and we discuss the model problem. §\ref{sec:The-symbolic-0-calculus}
and §\ref{sec:Adjoints-compositions-mapping} form the technical core
of the paper: in §\ref{sec:The-symbolic-0-calculus}, we develop the
symbolic $0$-calculus, and we prove its relation with the $0$-calculus
and the extended $0$-calculus, while in §\ref{sec:Adjoints-compositions-mapping}
we prove composition and mapping properties. Finally, in §\ref{sec:Parametrix-construction},
we construct the parametrix and we prove the main theorem. 

\subsection{Acknowledgements}

The author is grateful to Rafe Mazzeo and Joel Fine for their valuable
discussions and insights that contributed to this work. A significant
portion of this research was conducted during the author\textquoteright s
visit to Stanford University, and he wishes to extend his heartfelt
thanks to the Department of Mathematics at Stanford for their generous
support and hospitality. This work was supported by the EoS grant
4000725.

\section{\label{sec:Recap-on-the-0-calc}A primer on $0$-pseudodifferential
theory}

In this section, we recall the main objects in the theory of $0$-pseudodifferential
operators. We will recall:
\begin{enumerate}
\item the \emph{$0$-calculus} introduced by Mazzeo--Melrose in \cite{MazzeoMelroseResolvent, MazzeoPhD, MazzeoEdgeI},
and the \emph{extended $0$-calculus }introduced by Lauter in\emph{
}\cite{Lauter} (cf. also \cite{Hintz0calculus});
\item the classes of \emph{$0$-trace} and \emph{$0$-Poisson} operators
introduced by Mazzeo--Vertman in \cite{MazzeoEdgeII} (cf. also \cite{UsulaPhD}).
\end{enumerate}
We will also recall the\emph{ $b$-calculus }introduced by Melrose
in\emph{ }\cite{MelroseAPS, MelroseMendozaTotallyCharacteristic},
although this class plays a minor role in what follows. We will freely
make use of the language of compact manifolds with corners and $b$-maps,
due to Melrose and presented in \cite{MelroseCorners}. Some concepts
(such as boundary conormality, polyhomogeneity and the pull-back /
Push-forward Theorems) which are particularly important for this work
will be recalled in the Appendix.

\subsection{\label{subsec:-calculus,--calculus,-extended}$0$-calculus, $b$-calculus,
extended $0$-calculus}

Fix a compact manifold with boundary $X^{n+1}$. Denote by $\dot{C}^{\infty}\left(X\right)$
the Fréchet space of smooth functions on $X$ vanishing to infinite
order along the boundary. Call $\mathcal{D}_{X}^{1}$ the bundle of
$1$-densities on $X$, and denote by $C^{-\infty}\left(X\right)$
the topological dual of $\dot{C}^{\infty}\left(X;\mathcal{D}_{X}^{1}\right)$
equipped with the strong topology. We denote by $\Op\left(X\right)$
the class of continuous linear operators $\dot{C}^{\infty}\left(X\right)\to C^{-\infty}\left(X\right)$.
Its elements will be called \emph{general interior operators} (as
opposed to general \emph{trace}, \emph{Poisson} and \emph{boundary
}operators which we shall define later). By the Schwartz Kernel Theorem,
we can identify these operators with their Schwartz kernels, which
are distributional right densities $X^{2}$ i.e. elements $C^{-\infty}\left(X^{2};\pi_{R}^{*}\mathcal{D}_{X}^{1}\right)$;
here $\pi_{L},\pi_{R}:X^{2}\to X$ are the left and right projection.
The \emph{$0$-calculus, $b$-calculus }and \emph{extended $0$-calculus}
are all subclasses of $\Op\left(X\right)$ characterized in terms
of the lifts of their Schwartz kernels to appropriate blow-ups of
$X^{2}$.

Let's start with the $0$-calculus. The double space $X^{2}$ is a
compact manifold with corners, with two collective boundary hyperfaces,
namely the \emph{left face }$\lf=\partial X\times X$ and the \emph{right
face} $\rf=X\times\partial X$. We call $\Delta$ the diagonal of
$X^{2}$. The \emph{$0$-stretched double space }is the blow-up
\[
X_{0}^{2}=\left[X^{2}:\partial\Delta\right].
\]
This space has three boundary hyperfaces, which we call $\lf,\rf,\ff_{0}$:
the left and right faces $\lf,\rf$ are simply the lifts of the left
and right faces of $X^{2}$, while the $0$-front face $\ff_{0}$
is the new face obtained from the blow-up. We will use the symbols
$r_{\lf},r_{\rf},r_{\ff_{0}}$ to denote boundary defining functions
for the corresponding faces of $X_{0}^{2}$ (recall that a \emph{boundary
defining function }for a boundary hyperface $H$ of a compact manifold
with corners $Y$ is a smooth non-negative function $r_{H}$ on $Y$
such that $r_{H}^{-1}\left(0\right)=H$ and $dr_{H}$ is nonvanishing
along $H$). As explained in detail in \cite{MazzeoEdgeI, MazzeoPhD},
the $0$-front face $\ff_{0}$ is the total space of a smooth fiber
bundle $\ff_{0}\to\partial\Delta\equiv\partial X$, with typical fiber
the closed quarter $n+1$-dimensional sphere 
\[
S_{2}^{n+1}=\left\{ \left(\theta_{0},\theta,\theta_{n+1}\right)\in\mathbb{R}^{n+2}:\theta_{0}^{2}+\left|\theta\right|^{2}+\theta_{n+1}^{2}=1,\theta_{0}\geq0,\theta_{n+1}\geq0\right\} .
\]
We denote by $\beta_{0}:X_{0}^{2}\to X^{2}$ the blow-down map, and
by $\beta_{0,L},\beta_{0,R}:X_{0}^{2}\to X$ the lifts of the left
and right projections $\pi_{L},\pi_{R}:X^{2}\to X$. Another important
submanifold of $X_{0}^{2}$ is the \emph{lifted diagonal} $\Delta_{0}$,
defined as the topological closure of $\beta_{0}^{-1}\left(\Delta\backslash\partial\Delta\right)$
in $X_{0}^{2}$. Unlike $\Delta$ in $X^{2}$, $\Delta_{0}$ is an
\emph{interior $p$-submanifold} of $X_{0}^{2}$ (cf. §6 of \cite{MelroseCorners}),
roughly meaning that $\Delta_{0}$ admits a collar neighborhood inside
$X_{0}^{2}$.
\begin{defn}
(Small $0$-calculus) Let $m\in\mathbb{R}$. We denote by $\Psi_{0}^{m}\left(X\right)$
the space of operators $A\in\Op\left(X\right)$ whose Schwartz kernels
$K_{A}$ lift via $\beta_{0}$ to elements
\[
\kappa_{A}\in\mathcal{A}_{\phg}^{\left(\infty,\infty,0\right)}I^{m}\left(X_{0}^{2},\Delta_{0};r_{\ff_{0}}^{-n-1}\beta_{0,R}^{*}\mathcal{D}_{X}^{1}\right).
\]
This means that $\kappa_{A}$ is a section of $r_{\ff_{0}}^{-n-1}\beta_{0,R}^{*}\mathcal{D}_{X}^{1}$
vanishing to infinite order at the faces $\lf,\rf$, smooth across
$\ff_{0}$, and with an interior conormal singularity of order $m$
along $\Delta_{0}$.
\end{defn}

\begin{rem}
We refer to §6 of \cite{MelroseCorners} for the notion of conormality
along an interior $p$-submanifold. This notion is however not very
important in this paper.
\end{rem}

\begin{defn}
(Large residual $0$-calculus) Let $\mathcal{E}=\left(\mathcal{E}_{\lf},\mathcal{E}_{\rf},\mathcal{E}_{\ff_{0}}\right)$
be a triple of index sets. We denote by $\Psi_{0}^{-\infty,\mathcal{E}}\left(X\right)$
the space of operators $A\in\Op\left(X\right)$ whose Schwartz kernels
$K_{A}$ lift via $\beta_{0}$ to elements
\[
\kappa_{A}\in\mathcal{A}_{\phg}^{\mathcal{E}}\left(X_{0}^{2};r_{\ff_{0}}^{-n-1}\beta_{0,R}^{*}\mathcal{D}_{X}^{1}\right).
\]
\end{defn}

\begin{rem}
Some authors, for example \cite{Hintz0calculus, Lauter}, prefer to
work with operators acting on ``half-$0$-densities''. By the Schwartz
Kernel Theorem, these operators are naturally described as half-$0$-densities
on $X_{0}^{2}$, and this choice causes a discrepancy in the index
set convention. Our index set convention is compatible with that of
\cite{MazzeoEdgeI}.
\end{rem}

To introduce the large residual $b$-calculus and extended $0$-calculus,
we need to introduce two more blow-ups, namely the \emph{$b$-stretched
}double space
\[
X_{b}^{2}=\left[X^{2}:\lf\cap\rf\right]
\]
and the $0b$\emph{-stretched }double space
\[
X_{0b}^{2}=\left[X_{0}^{2}:\lf\cap\rf\right].
\]
We denote by $\beta_{b}$ (resp. $\beta_{0b}$) the blow-down maps
to $X^{2}$, and by $\beta_{b,L},\beta_{b,R}$ (resp. $\beta_{0b,L},\beta_{0b,R}$)
the lifts of the canonical left and right projections $\pi_{L},\pi_{R}:X^{2}\to X$.
Similarly to $X_{0}^{2}$, $X_{b}^{2}$ has three boundary hyperfaces:
the left and right faces $\lf,\rf$, obtained by lifting the left
and right faces of $X^{2}$, and the \emph{$b$-front face} $\ff_{b}$
created by the blow-up. The space $X_{0b}^{2}$ can be described equivalently
either as the simple blow-up of $X_{0}^{2}$ at the corner $\lf\cap\rf$,
or as the simple blow-up of $X_{b}^{2}$ at the locus $\beta_{b}^{-1}\left(\partial\Delta\right)$.
As such, we obtain four boundary hyperfaces $\lf,\rf,\ff_{b},\ff_{0}$,
all lifted from either $X_{0}^{2}$ or $X_{b}^{2}$ indifferently
via the partial blow-down maps $X_{0b}^{2}\to X_{0}^{2}$ and $X_{0b}^{2}\to X_{b}^{2}$.
The $0$-front face $\ff_{0}$ of $X_{0b}^{2}$ is again a smooth
fiber bundle over $\partial\Delta\equiv\partial X$, with typical
fiber the blow-up of the quarter $n+1$-dimensional sphere $S_{2}^{n+1}$
at its corner, i.e. the manifold
\[
\left[S_{2}^{n+1}:\left\{ \theta_{0}=0,\theta_{n+1}=0\right\} \right].
\]

\begin{defn}
(Large residual extended $0$-calculus) Let $\mathcal{E}=\left(\mathcal{E}_{\lf},\mathcal{E}_{\rf},\mathcal{E}_{\ff_{b}},\mathcal{E}_{\ff_{0}}\right)$
be a quadruple of index sets. We define
\[
\Psi_{0b}^{-\infty,\mathcal{E}}\left(X\right)=\mathcal{A}_{\phg}^{\mathcal{E}}\left(X_{0b}^{2};r_{\ff_{0}}^{-n-1}r_{\ff_{b}}^{-1}\beta_{0b,R}^{*}\mathcal{D}_{X}^{1}\right).
\]
\end{defn}

\begin{defn}
(Large residual $b$-calculus) Let $\mathcal{E}=\left(\mathcal{E}_{\lf},\mathcal{E}_{\rf},\mathcal{E}_{\ff_{b}}\right)$
be a triple of index sets. We define
\[
\Psi_{b}^{-\infty,\mathcal{E}}\left(X\right)=\mathcal{A}_{\phg}^{\mathcal{E}}\left(X_{0b}^{2};r_{\ff_{b}}^{-1}\beta_{b,R}^{*}\mathcal{D}_{X}^{1}\right).
\]
\end{defn}

We also need a class of simpler operators, for which we do not need
to blow $X^{2}$ up.
\begin{defn}
(Very residual operators) Given an index family $\mathcal{E}=\left(\mathcal{E}_{\lf},\mathcal{E}_{\rf}\right)$
for $X^{2}$, we denote by $\Psi^{-\infty,\mathcal{E}}\left(X\right)$
the space $\mathcal{A}_{\phg}^{\mathcal{E}}\left(X^{2};\pi_{R}^{*}\mathcal{D}_{X}^{1}\right)$.
\end{defn}

The following result summarizes various inclusions between the classes
mentioned so far. These results are known from the sources cited above,
and are anyway straightforward consequences of the Pull-back Theorem.
\begin{lem}
We have the following inclusions:
\begin{align*}
\Psi^{-\infty,\left(\mathcal{E}_{\lf},\mathcal{E}_{\rf}\right)}\left(X\right) & \subseteq\Psi_{0}^{-\infty,\left(\mathcal{E}_{\lf},\mathcal{E}_{\rf},\mathcal{E}_{\lf}+\mathcal{E}_{\rf}+n+1\right)}\left(X\right)\\
\Psi^{-\infty,\left(\mathcal{E}_{\lf},\mathcal{E}_{\rf}\right)}\left(X\right) & \subseteq\Psi_{b}^{-\infty,\left(\mathcal{E}_{\lf},\mathcal{E}_{\rf},\mathcal{E}_{\lf}+\mathcal{E}_{\rf}+1\right)}\left(X\right)\\
\Psi_{0}^{-\infty,\left(\mathcal{E}_{\lf},\mathcal{E}_{\rf},\mathcal{E}_{\ff_{0}}\right)}\left(X\right) & \subseteq\Psi_{0b}^{-\infty,\left(\mathcal{E}_{\lf},\mathcal{E}_{\rf},\mathcal{E}_{\lf}+\mathcal{E}_{\rf}+1,\mathcal{E}_{\ff_{0}}\right)}\left(X\right)\\
\Psi_{b}^{-\infty,\left(\mathcal{E}_{\lf},\mathcal{E}_{\rf},\mathcal{E}_{\ff_{b}}\right)}\left(X\right) & \subseteq\Psi_{0b}^{-\infty,\left(\mathcal{E}_{\lf},\mathcal{E}_{\rf},\mathcal{E}_{\ff_{b}},\mathcal{E}_{\ff_{b}}+n\right)}\left(X\right).
\end{align*}
\end{lem}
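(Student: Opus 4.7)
The plan is to deduce each of the four inclusions from a single principle: the Pullback Theorem for polyhomogeneous sections along a $b$-map (cf. the Appendix), combined with a careful comparison of the reference right densities appearing in the two sides. Each inclusion corresponds to a blow-down $b$-map between two of the stretched double spaces, namely $\beta_{0}:X_{0}^{2}\to X^{2}$ for (1), $\beta_{b}:X_{b}^{2}\to X^{2}$ for (2), and the partial blow-downs $X_{0b}^{2}\to X_{0}^{2}$ for (3) and $X_{0b}^{2}\to X_{b}^{2}$ for (4). All of the blow-ups involved are simple radial blow-ups of $p$-submanifolds, so all boundary multiplicities $e(H',H)$ are either $0$ or $1$. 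Consequently, the Pullback Theorem tells us that a polyhomogeneous function on the target pulls back to a polyhomogeneous function on the domain, with index set at each newly created front face equal to the extended union of the index sets at the faces of the target that contain the blow-up center; the index sets at the other faces are left unchanged.

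For inclusions (1) and (2), the blow-up center is $\partial\Delta\subseteq\lf\cap\rf\subset X^{2}$, so the scalar pullback produces the index $\mathcal{E}_{\lf}+\mathcal{E}_{\rf}$ at the new face. The pullback of $\pi_{R}^{*}\mathcal{D}_{X}^{1}$ equals $\beta_{0,R}^{*}\mathcal{D}_{X}^{1}$ (resp. $\beta_{b,R}^{*}\mathcal{D}_{X}^{1}$), which differs from the reference density on $X_{0}^{2}$ (resp. $X_{b}^{2}$) by a factor of $r_{\ff_{0}}^{n+1}$ (resp. $r_{\ff_{b}}$); this accounts for the extra shifts of $+n+1$ and $+1$ at the new faces. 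Inclusion (3) is analogous: the center $\lf\cap\rf$ in $X_{0}^{2}$ is not contained in $\ff_{0}$, so $r_{\ff_{0}}$ lifts without picking up an $r_{\ff_{b}}$ factor, and only the $r_{\ff_{b}}^{-1}$ appearing in the reference density of $X_{0b}^{2}$ contributes the $+1$ shift at $\ff_{b}$.

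The interesting case is (4), where the blow-up center $\beta_{b}^{-1}(\partial\Delta)$ is contained entirely in $\ff_{b}\subset X_{b}^{2}$ and has codimension $n+1$. The Pullback Theorem yields the index $\mathcal{E}_{\ff_{b}}$ at the new face $\ff_{0}$ from the scalar coefficient. However, working in polar-like coordinates adapted to the codimension of $\beta_{b}^{-1}(\partial\Delta)$, one checks that the lifted boundary defining function on $X_{0b}^{2}$ satisfies $\beta^{*}(r_{\ff_{b}})=r_{\ff_{b}}\cdot r_{\ff_{0}}$ up to a smooth nonvanishing factor. Hence the pullback of the reference density $r_{\ff_{b}}^{-1}\beta_{b,R}^{*}\mathcal{D}_{X}^{1}$ on $X_{b}^{2}$ carries an additional factor of $r_{\ff_{0}}^{-1}$, and comparing with the reference $r_{\ff_{0}}^{-n-1}r_{\ff_{b}}^{-1}\beta_{0b,R}^{*}\mathcal{D}_{X}^{1}$ on $X_{0b}^{2}$ gives a density ratio of $r_{\ff_{0}}^{n}$, producing exactly the shift of $+n$ (and not $+n+1$) at $\ff_{0}$. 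This factorization of $\beta^{*}(r_{\ff_{b}})$ is the only subtle point in the proof; everything else is straightforward bookkeeping.
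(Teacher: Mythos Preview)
Your proof is correct and follows exactly the approach the paper indicates (the paper itself gives no detailed argument, merely stating that the inclusions ``are anyway straightforward consequences of the Pull-back Theorem''). One small terminological slip: in your first paragraph you say the index set at the new front face is the \emph{extended union} of the index sets at the faces containing the center, but the Pull-back Theorem actually gives the \emph{sum} $\sum_{G\supseteq\text{center}}\mathcal{F}_G$ (extended unions arise in the Push-forward Theorem, not here); your subsequent computations correctly use the sum, so this does not affect the argument.
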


\subsection{\label{subsec:-Poisson-and--trace}$0$-Poisson and $0$-trace operators}

Let's now pass to $0$-Poisson and $0$-trace operators. Analogously
to $\Op\left(X\right)$, we denote by $\Op\left(X,\partial X\right)$
the class of continuous linear maps $\dot{C}^{\infty}\left(X\right)\to C^{-\infty}\left(\partial X\right)$,
which we call \emph{general trace operators}. By the Schwartz Kernel
Theorem, these operators can be identified with elements of $C^{-\infty}\left(\partial X\times X;\pi_{R}^{*}\mathcal{D}_{X}^{1}\right)$.
Similarly, we denote by $\Op\left(\partial X,X\right)$ the class
of continuous linear maps $C^{\infty}\left(\partial X\right)\to C^{-\infty}\left(X\right)$,
which we call \emph{general Poisson operators}. These operators are
identified with right densities in $C^{-\infty}\left(X\times\partial X;\pi_{R}^{*}\mathcal{D}_{\partial X}^{1}\right)$.

As above, we now define appropriate blow-ups of the double spaces
$\partial X\times X$ and $X\times\partial X$. Namely, we define
\begin{align*}
\left(\partial X\times X\right)_{0} & =\left[\partial X\times X:\partial\Delta\right]\\
\left(X\times\partial X\right)_{0} & =\left[X\times\partial X:\partial\Delta\right].
\end{align*}
Observe that we have canonical identifications $\left(\partial X\times X\right)_{0}\simeq\lf\left(X_{0}^{2}\right)$
and $\left(X\times\partial X\right)_{0}\simeq\rf\left(X_{0}^{2}\right)$.
These spaces have two boundary hyperfaces each: the \emph{original
face} $\of$ is in both cases the lift of $\partial X\times\partial X$,
or equivalently the intersection $\lf\left(X_{0}^{2}\right)\cap\rf\left(X_{0}^{2}\right)$;
the \emph{front face} $\ff$ is the new face obtained from the blow-up,
which for $\left(\partial X\times X\right)_{0}$ coincides with $\lf\left(X_{0}^{2}\right)\cap\ff_{0}\left(X_{0}^{2}\right)$
while for $\left(X\times\partial X\right)_{0}$ coincides with $\rf\left(X_{0}^{2}\right)\cap\ff_{0}\left(X_{0}^{2}\right)$.
The front face $\ff\left(\left(\partial X\times X\right)_{0}\right)$
(resp. $\ff\left(\left(X\times\partial X\right)_{0}\right)$) is again
the total space of a smooth fiber bundle $\ff\to\partial\Delta$,
with typical fiber equal to the left (resp. right) face of the quarter
sphere $S_{2}^{n+1}$; of course, this face is canonically diffeomorphic
to the $n$-dimensional closed unit ball. We denote by $\beta_{\tr}:\left(\partial X\times X\right)_{0}\to\partial X\times X$
and $\beta_{\po}:\left(X\times\partial X\right)_{0}\to X\times\partial X$
the blow-down maps, and we call $\beta_{\tr,L},\beta_{\tr,R}$ and
$\beta_{\po,L},\beta_{\po,R}$ the lifted canonical projections.
\begin{defn}
$ $
\begin{enumerate}
\item ($0$-trace operators) Given index sets $\mathcal{E}=\left(\mathcal{E}_{\of},\mathcal{E}_{\ff}\right)$,
we define $\Psi_{0\tr}^{-\infty,\mathcal{E}}\left(X,\partial X\right)=\mathcal{A}_{\phg}^{\left(\mathcal{E}_{\of},\mathcal{E}_{\ff}\right)}\left(\left(\partial X\times X\right)_{0};r_{\ff}^{-n-1}\beta_{\tr,R}^{*}\mathcal{D}_{X}^{1}\right)$;
\item ($0$-Poisson operators) Given index sets $\mathcal{E}=\left(\mathcal{E}_{\of},\mathcal{E}_{\ff}\right)$,
we define $\Psi_{0\po}^{-\infty,\mathcal{E}}\left(\partial X,X\right)=\mathcal{A}_{\phg}^{\left(\mathcal{E}_{\of},\mathcal{E}_{\ff}\right)}\left(\left(X\times\partial X\right)_{0};r_{\ff}^{-n}\beta_{\po,R}^{*}\mathcal{D}_{\partial X}^{1}\right)$.
\end{enumerate}
\end{defn}

As above, we also have classes of residual trace and Poisson operators
for which we don't need to blow the double space up:
\begin{defn}
$ $
\begin{enumerate}
\item (Residual trace operators) Given an index sets $\mathcal{E}$, we
define $\Psi_{\tr}^{-\infty,\mathcal{E}}\left(X,\partial X\right)=\mathcal{A}_{\phg}^{\mathcal{E}}\left(\partial X\times X;\pi_{R}^{*}\mathcal{D}_{X}^{1}\right)$;
\item (Residual Poisson operators) Given an index set $\mathcal{E}$, we
define $\Psi_{\po}^{-\infty,\mathcal{E}}\left(\partial X,X\right)=\mathcal{A}_{\phg}^{\mathcal{E}}\left(X\times\partial X;\pi_{R}^{*}\mathcal{D}_{\partial X}^{1}\right)$.
\end{enumerate}
\end{defn}

Again, the Pull-back Theorem easily implies the following
\begin{lem}
We have inclusions
\begin{align*}
\Psi_{\tr}^{-\infty,\mathcal{E}_{\of}}\left(X,\partial X\right) & \subseteq\Psi_{0\tr}^{-\infty,\left(\mathcal{E}_{\of},\mathcal{E}_{\of}+n+1\right)}\left(X,\partial X\right)\\
\Psi_{\po}^{-\infty,\mathcal{E}_{\of}}\left(\partial X,X\right) & \subseteq\Psi_{0\po}^{-\infty,\left(\mathcal{E}_{\of},\mathcal{E}_{\of}+n\right)}\left(\partial X,X\right).
\end{align*}
\end{lem}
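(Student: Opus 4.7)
The plan is to apply the Pull-back Theorem to the blow-down maps $\beta_{\tr}$ and $\beta_{\po}$, taking care of the density bundle conventions that differ between the residual and $0$-variants of the calculus.

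Start with the first inclusion. Given $u \in \Psi_{\tr}^{-\infty, \mathcal{E}_{\of}}(X, \partial X) = \mathcal{A}_{\phg}^{\mathcal{E}_{\of}}(\partial X \times X; \pi_R^* \mathcal{D}_X^1)$, trivialize $\pi_R^* \mathcal{D}_X^1$ locally near a point of $\partial\Delta$ by the nowhere-vanishing section $|dx\, dy|$, so that $u = f \cdot |dx\, dy|$ with $f$ a polyhomogeneous function having index set $\mathcal{E}_{\of}$ at the unique boundary hyperface $\partial X \times \partial X$. A boundary defining function for $\partial X \times \partial X$ in $\partial X \times X$ pulls back under $\beta_{\tr}$ to $r_{\of}\, r_{\ff}$ (up to a smooth nowhere-vanishing factor), so the Pull-back Theorem gives that $\beta_{\tr}^* f$ is polyhomogeneous on $(\partial X \times X)_0$ with index set $\mathcal{E}_{\of}$ at both $\of$ and $\ff$. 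Since $\beta_{\tr, R}^* |dx\, dy|$ is a nowhere-vanishing section of $\beta_{\tr, R}^* \mathcal{D}_X^1$, the lift $\beta_{\tr}^* u = (\beta_{\tr}^* f) \cdot \beta_{\tr, R}^* |dx\, dy|$ is polyhomogeneous with the same index family when viewed as a section of $\beta_{\tr, R}^* \mathcal{D}_X^1$.

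To view $\beta_{\tr}^* u$ instead as a section of the twisted bundle $r_{\ff}^{-n-1} \beta_{\tr, R}^* \mathcal{D}_X^1$ appearing in the definition of $\Psi_{0\tr}^{-\infty,\bullet}$, write
\[
\beta_{\tr}^* u \;=\; \bigl(r_{\ff}^{n+1}\, \beta_{\tr}^* f\bigr) \cdot \bigl(r_{\ff}^{-n-1}\, \beta_{\tr, R}^* |dx\, dy|\bigr),
\]
so the scalar coefficient acquires a factor of $r_{\ff}^{n+1}$, shifting its index at $\ff$ by $n+1$. Hence $\beta_{\tr}^* u \in \mathcal{A}_{\phg}^{(\mathcal{E}_{\of},\, \mathcal{E}_{\of} + n + 1)}((\partial X \times X)_0;\, r_{\ff}^{-n-1} \beta_{\tr, R}^* \mathcal{D}_X^1)$, which is the first claimed inclusion.

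The Poisson inclusion is proved identically: replace $\beta_{\tr}$ by $\beta_{\po}$ and $\pi_R^* \mathcal{D}_X^1$ by $\pi_R^* \mathcal{D}_{\partial X}^1$, and note that the definition of $\Psi_{0\po}^{-\infty,\bullet}$ involves the twist $r_{\ff}^{-n}$ in place of $r_{\ff}^{-n-1}$, which yields a shift of only $n$ at the front face. The entire argument amounts to bookkeeping given the explicit density twists in the definitions; the only step that requires any care is the conversion between conventions, and no real obstacle arises.
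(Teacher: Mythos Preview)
Your proof is correct and follows precisely the approach indicated in the paper, which states only that the inclusions are ``straightforward consequences of the Pull-back Theorem.'' You have carefully filled in the density bookkeeping that the paper leaves implicit, correctly tracking how the boundary defining function on $\partial X \times X$ (resp.\ $X \times \partial X$) pulls back to $r_{\of}\,r_{\ff}$ under the blow-down and how the $r_{\ff}^{-n-1}$ (resp.\ $r_{\ff}^{-n}$) twist in the target bundle produces the shift in the front-face index set.
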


\subsection{\label{subsec:The-local-perspective}The local perspective}

To get a firm grasp on the classes of operators defined above, we
need to understand the local structure of their Schwartz kernels.
This local perspective will be important later, when we will develop
our alternative ``symbolic'' calculus.

\subsubsection{\label{subsubsec:local-physical-0-calculus-extended-0-calculus}$0$-calculus,
$b$-calculus and extended $0$-calculus}

Recall the following notations, compatible with those used in \cite{MelroseCorners}:
\begin{enumerate}
\item we denote by $\mathbb{R}_{k}^{n}$ the model corner of dimension $n$
and depth $k$, i.e. the product $[0,\infty)^{k}\times\mathbb{R}^{n-k}$;
\item we denote by $\overline{\mathbb{R}}^{n}$ the radial compactification
of $\mathbb{R}^{n}$ (cf. §1.9 of \cite{MelroseMicrolocal});
\item we denote by $\overline{\mathbb{R}}_{k}^{n}$ the portion of $\overline{\mathbb{R}}^{n}$
defined by $x_{1}\geq0,...,x_{k}\geq0$.
\end{enumerate}
\begin{rem}
The space $\dot{C}^{\infty}\left(\overline{\mathbb{R}}^{n}\right)$
coincides with the space of Schwartz functions $\mathcal{S}\left(\mathbb{R}^{n}\right)$,
and dually the space $C^{-\infty}\left(\overline{\mathbb{R}}^{n}\right)$
coincides with the space $\mathcal{S}'\left(\mathbb{R}^{n}\right)$
of tempered distributions.
\end{rem}

Let $P\in\Op\left(X\right)$ be a general interior operator. Given
a point $p\in\partial X$ and coordinates $\left(x,y\right)$, the
Schwartz kernel of $P$ can be described in the induced coordinates
$\left(x,y,\tilde{x},\tilde{y}\right)$ on $X^{2}$ near $\left(p,p\right)$
as a right density of the form $P\left(x,y,\tilde{x},\tilde{y}\right)d\tilde{x}d\tilde{y}$,
where $P\left(x,y,\tilde{x},\tilde{y}\right)$ is a distribution defined
in a neighborhood of the origin in $\mathbb{R}_{1}^{n+1}\times\mathbb{R}_{1}^{n+1}$.
By performing the change of coordinates $Y=y-\tilde{y}$ and rearranging
the variables, we can equivalently write the Schwartz kernel as $P\left(y;x,\tilde{x},y-\tilde{y}\right)d\tilde{x}d\tilde{y}$,
where now $P\left(y;x,\tilde{x},Y\right)$ is a distribution defined
in a neighborhood of the origin in $\mathbb{R}^{n}\times\mathbb{R}_{1}^{1}\times\mathbb{R}_{1}^{1}\times\mathbb{R}^{n}$.

We are interested in characterizing the elements of $\Psi_{0}^{-\infty,\bullet}\left(X\right),\Psi_{0b}^{-\infty,\bullet}\left(X\right)$,
locally near points $\left(p,p\right)\in\partial\Delta$, in coordinates.
In order to do that, we first define local versions of these classes
in $\mathbb{R}_{1}^{n+1}$. Consider first the model space
\[
M^{2}=\overline{\mathbb{R}}_{1}^{1}\times\overline{\mathbb{R}}_{1}^{1}\times\overline{\mathbb{R}}^{n}
\]
with coordinates $x,\tilde{x},Y$. $M^{2}$ has five boundary hyperfaces,
namely the \emph{left }and \emph{right faces}
\[
\lf=\left\{ x=0\right\} ,\rf=\left\{ \tilde{x}=0\right\} 
\]
and the \emph{infinity} \emph{faces}
\[
\iif_{Y}=\left\{ \left|Y\right|=\infty\right\} ,\iif_{x}=\left\{ x=\infty\right\} ,\iif_{\tilde{x}}=\left\{ \tilde{x}=\infty\right\} .
\]
We think of $\overline{\mathbb{R}}^{n}\times M^{2}$, with coordinates
$y,x,\tilde{x},Y$, as a compactified local model for $X^{2}$ near
a point $\left(p,p\right)$ of $\partial\Delta$. Now, consider the
three blow-ups,
\begin{align*}
M_{0}^{2} & =\left[M^{2}:\lf\cap\rf\cap\left\{ Y=0\right\} \right]\\
M_{b}^{2} & =\left[M^{2}:\lf\cap\rf\right]\\
M_{0b}^{2} & =\left[M_{0}^{2}:\lf\cap\rf\right].
\end{align*}
These spaces inherit from $M^{2}$ the five boundary hyperfaces described
above, as lifts of these faces to the blow-ups. Additionally, $M_{0}^{2}$
and $M_{b}^{2}$ have each an extra boundary hyperface, created from
the blow-up. We call these faces $\ff_{0}$ in $M_{0}^{2}$, and $\ff_{b}$
in $M_{b}^{2}$. We order the faces of $M_{0}^{2}$ as $\lf$, $\rf$,
$\ff_{0}$, $\iif_{Y}$, $\iif_{x}$, $\iif_{\tilde{x}}$, and the
faces of $M_{b}^{2}$ as $\lf$, $\rf$, $\ff_{b}$, $\iif_{Y}$,
$\iif_{x}$, $\iif_{\tilde{x}}$. Finally, these faces lift to faces
of $M_{0b}^{2}$, and we order its seven faces as $\lf$, $\rf$,
$\ff_{b}$, $\ff_{0}$, $\iif_{Y}$, $\iif_{x}$, $\iif_{\tilde{x}}$.
The spaces $M^{2},M_{b}^{2}$ are represented in Figure \ref{fig:M2&M2b}\begin{figure}
\centering
\caption{$M^2$ and $M^2_b$}
\label{fig:M2&M2b}

\begin{minipage}{.49\textwidth}
	
	\tikzmath{\L = 5;\R = \L /3;}
	\tdplotsetmaincoords{60}{160}
	\begin{tikzpicture}[tdplot_main_coords, scale = 0.55]


		\coordinate (A) at (-\L,0,0) ;
		\coordinate (A1) at (-\L+\R,0,0) ;
		\coordinate (A2) at (-\L,\R,0) ;
		\coordinate (A3) at (-\L,0,\R) ;

		\coordinate (B) at (\L,0,0) ;
		\coordinate (B1) at (\L-\R,0,0) ;
		\coordinate (B2) at (\L,\R,0) ;
		\coordinate (B3) at (\L,0,\R) ;

		\coordinate (C) at (-\L,\L,0) ;
		\coordinate (D) at (\L,\L,0) ;
		\coordinate (E) at (-\L,0,\L) ;
		\coordinate (F) at (\L,0,\L) ;
		\coordinate (G) at (-\L,\L,\L);
		\coordinate (H) at (\L,\L,\L);

		\draw 
			(B) -- (F) 
			(B) -- (D) 
			(B) -- (A) 
			(A) -- (C) 
			(A) -- (E)
		;

		\draw[]
			(F) -- (E)
			(D) -- (C)
			(D) -- (H)
			(H) -- (F)
			(C) -- (G)
			(G) -- (E)
			(G) -- (H)
		;
		
		\node at (0,0,{\L/2}) {$\text{\LARGE lf}$};
		\node at (0,{\L/2},0) {$\text{\LARGE rf}$};
				
	\end{tikzpicture}

\end{minipage}
\begin{minipage}{.49\textwidth}
	\tikzmath{\L = 5; \R = \L/6; \r= \L/6; \u= asin(\r/\R);}
	\tdplotsetmaincoords{60}{160}
	\begin{tikzpicture}[tdplot_main_coords, scale = 0.55]

		\coordinate (O) at (0,0,0) ;
		\coordinate (A) at (-\L,0,0) ;
		\coordinate (B) at (\L,0,0) ;
		\coordinate (C) at (-\L,\L,0) ;
		\coordinate (D) at (\L,\L,0) ;
		\coordinate (E) at (-\L,0,\L) ;
		\coordinate (F) at (\L,0,\L) ;
		\coordinate (G) at (-\L,\L,\L);
		\coordinate (H) at (\L,\L,\L);

		\coordinate (A1) at (-\L,\r,0) ;
		\coordinate (A2) at (-\L,0,\r) ;
		
		\coordinate (B1) at (\L,\r,0) ;
		\coordinate (B2) at (\L,0,\r) ;

		\coordinate (OA1) at ({-\R*cos(\u)},\r,0);
		\coordinate (OA2) at ({-\R*cos(\u)},0,\r);
		\coordinate (OB1) at ({\R*cos(\u)},\r,0);
		\coordinate (OB2) at ({\R*cos(\u)},0,\r);
			
		\draw 
			(A1) -- (OA1) 
			(A2) -- (OA2)
			(B1) -- (OB1) 
			(B2) -- (OB2)
			(A1) -- (C)
			(A2) -- (E)
			(B1) -- (D)
			(B2) -- (F)
		;

		\draw[]
			(C) -- (D)
			(E) -- (F)
			(G) -- (H)
			(E) -- (G)
			(G) -- (C)
			(F) -- (H)
			(H) -- (D)
		;

		\begin{scope}[canvas is yz plane at x=0]
			\draw (A1) arc (0:90:\r);
			\draw (B1) arc (0:90:\r);
		\end{scope}
		\begin{scope}[canvas is xz plane at y=0]
			\draw (OB2) arc (\u:{180-\u}:\R);
		\end{scope}
		\begin{scope}[canvas is xy plane at z=0]
			\draw (OB1) arc (\u:{180-\u}:\R);
		\end{scope}

		\node at (0,0,{\L/2}) {$\text{\LARGE lf}$};
		\node at (0,{\L/2},0) {$\text{\LARGE rf}$};
		\node at ({0},{\L/15},{\L/15}) {$\text{\LARGE ff}_b$};

	\end{tikzpicture}

\end{minipage}

\end{figure}, and the spaces $M_{0}^{2},M_{0b}^{2}$ are represented in Figure
\ref{fig:M20&M20b}\begin{figure}
\centering
\caption{$M^2_0$ and $M^2_{0b}$}
\label{fig:M20&M20b}

\begin{minipage}{.49\textwidth}
	
	\tikzmath{\L = 5;\R = \L /3; \u = 0;}
	\tdplotsetmaincoords{60}{160}
	\begin{tikzpicture}[tdplot_main_coords, scale = 0.55]


		\coordinate (A) at (-\L,0,0) ;
		\coordinate (A1) at (-\L+\R,0,0) ;
		\coordinate (A4) at (-\L,{\R*sin(\u)},0) ;
		\coordinate (A5) at (-\L,0,{\R*sin(\u)}) ;
		\coordinate (A10) at ({-\R*cos(\u)},{\R*sin(\u)},0) ;
		\coordinate (A11) at ({-\R*cos(\u)},0,{\R*sin(\u)}) ;

		\coordinate (B) at (\L,0,0) ;
		\coordinate (B3) at (\L,0,\R) ;
		\coordinate (B4) at (\L,{\R*sin(\u)},0) ;
		\coordinate (B5) at (\L,0,{\R*sin(\u)}) ;
		\coordinate (B10) at ({\R*cos(\u)},{\R*sin(\u)},0) ;
		\coordinate (B11) at ({\R*cos(\u)},0,{\R*sin(\u)}) ;

		\coordinate (C) at (-\L,\L,0) ;
		\coordinate (D) at (\L,\L,0) ;
		\coordinate (E) at (-\L,0,\L) ;
		\coordinate (F) at (\L,0,\L) ;
		\coordinate (G) at (-\L,\L,\L);
		\coordinate (H) at (\L,\L,\L);

		\draw 
			(B5) -- (F) 
			(B4) -- (D)
			(A4) -- (C) 
			(A5) -- (E)
			(B4) -- (B10)
			(B5) -- (B11)
			(A4) -- (A10)
			(A5) -- (A11)
		;

		\draw[]
			(F) -- (E)
			(D) -- (C)
			(D) -- (H)
			(H) -- (F)
			(C) -- (G)
			(G) -- (E)
			(G) -- (H)
		;

		\begin{scope}[canvas is xy plane at z=0]
		\end{scope}
		\begin{scope}[canvas is xz plane at y=0]
		\end{scope}
		\begin{scope}[canvas is zy plane at x=0]
			\draw (B5) arc(0:90:{\R*sin(\u)});
			\draw (A5) arc(0:90:{\R*sin(\u)});
			\draw (B11) arc(0:90:{\R*sin(\u)});
			\draw (A11) arc(0:90:{\R*sin(\u)});
		\end{scope}
		\begin{scope}[canvas is xy plane at z=0]
			\draw ({\R*cos(\u)},{\R*sin(\u)}) arc(\u:{180-\u}:\R);
		\end{scope}
		\begin{scope}[canvas is xz plane at y=0]
			\draw ({\R*cos(\u)},{\R*sin(\u)}) arc(\u:{180-\u}:\R);
		\end{scope}

		\node at (0,0,{\L/2}) {$\text{\LARGE lf}$};
		\node at (0,{\L/2},0) {$\text{\LARGE rf}$};
		\node at (0,{\L/6},{\L/6}) {$\text{\LARGE ff}_0$};
		


	\end{tikzpicture}

\end{minipage}
\begin{minipage}{.49\textwidth}
	
	\tikzmath{\L = 5;\R = \L /3; \u = 180/6;}
	\tdplotsetmaincoords{60}{160}
	\begin{tikzpicture}[tdplot_main_coords, scale = 0.55]


		\coordinate (A) at (-\L,0,0) ;
		\coordinate (A1) at (-\L+\R,0,0) ;
		\coordinate (A4) at (-\L,{\R*sin(\u)},0) ;
		\coordinate (A5) at (-\L,0,{\R*sin(\u)}) ;
		\coordinate (A10) at ({-\R*cos(\u)},{\R*sin(\u)},0) ;
		\coordinate (A11) at ({-\R*cos(\u)},0,{\R*sin(\u)}) ;

		\coordinate (B) at (\L,0,0) ;
		\coordinate (B3) at (\L,0,\R) ;
		\coordinate (B4) at (\L,{\R*sin(\u)},0) ;
		\coordinate (B5) at (\L,0,{\R*sin(\u)}) ;
		\coordinate (B10) at ({\R*cos(\u)},{\R*sin(\u)},0) ;
		\coordinate (B11) at ({\R*cos(\u)},0,{\R*sin(\u)}) ;

		\coordinate (C) at (-\L,\L,0) ;
		\coordinate (D) at (\L,\L,0) ;
		\coordinate (E) at (-\L,0,\L) ;
		\coordinate (F) at (\L,0,\L) ;
		\coordinate (G) at (-\L,\L,\L);
		\coordinate (H) at (\L,\L,\L);

		\draw 
			(B5) -- (F) 
			(B4) -- (D)
			(A4) -- (C) 
			(A5) -- (E)
			(B4) -- (B10)
			(B5) -- (B11)
			(A4) -- (A10)
			(A5) -- (A11)
		;

		\draw[]
			(F) -- (E)
			(D) -- (C)
			(D) -- (H)
			(H) -- (F)
			(C) -- (G)
			(G) -- (E)
			(G) -- (H)
		;

		\begin{scope}[canvas is xy plane at z=0]
		\end{scope}
		\begin{scope}[canvas is xz plane at y=0]
		\end{scope}
		\begin{scope}[canvas is zy plane at x=0]
			\draw (B5) arc(0:90:{\R*sin(\u)});
			\draw (A5) arc(0:90:{\R*sin(\u)});
			\draw (B11) arc(0:90:{\R*sin(\u)});
			\draw (A11) arc(0:90:{\R*sin(\u)});
		\end{scope}
		\begin{scope}[canvas is xy plane at z=0]
			\draw ({\R*cos(\u)},{\R*sin(\u)}) arc(\u:{180-\u}:\R);
		\end{scope}
		\begin{scope}[canvas is xz plane at y=0]
			\draw ({\R*cos(\u)},{\R*sin(\u)}) arc(\u:{180-\u}:\R);
		\end{scope}

		\node at (0,0,{\L/2}) {$\text{\LARGE lf}$};
		\node at (0,{\L/2},0) {$\text{\LARGE rf}$};
		\node at (0,{\L/6},{\L/6}) {$\text{\LARGE ff}_0$};
		\node at ({\L/1.7},{\L/15},{\L/15}) {$\text{\LARGE ff}_b$};
		\node at ({-\L/1.7},{\L/15},{\L/15}) {$\text{\LARGE ff}_b$};



	\end{tikzpicture}

\end{minipage}

\end{figure} (the infinity faces are not indicated in order to not weigh down
the pictures). Of course, the spaces $\overline{\mathbb{R}}^{n}\times M_{0}^{2}$,
$\overline{\mathbb{R}}^{n}\times M_{b}^{2}$ and $\overline{\mathbb{R}}^{n}\times M_{0b}^{2}$
should be considered as compactified local models for $X_{0}^{2}$,
$X_{b}^{2}$ and $X_{0b}^{2}$, in a neighborhood of the lift of $\left(p,p\right)$.

Denote by $\Op\left(\mathbb{R}_{1}^{n+1}\right)$ the space of continuous
linear operators $\dot{C}^{\infty}\left(\overline{\mathbb{R}}_{1}^{1}\times\overline{\mathbb{R}}^{n}\right)\to C^{-\infty}\left(\overline{\mathbb{R}}_{1}^{1}\times\overline{\mathbb{R}}^{n}\right)$.
An extendible distribution $K\left(y;x,\tilde{x},Y\right)$ on $\overline{\mathbb{R}}^{n}\times M^{2}$
determines an element of $\Op\left(\mathbb{R}_{1}^{n+1}\right)$,
with Schwartz kernel $K\left(y;x,\tilde{x},y-\tilde{y}\right)d\tilde{x}d\tilde{y}$.
The local version of the (large, residual) $0$-calculus is then the
following:
\begin{defn}
(Local $0$-calculus) We denote by $\Psi_{0,\mathcal{S}}^{-\infty,\mathcal{E}}\left(\mathbb{R}_{1}^{n+1}\right)$
the subspace of $\Op\left(\mathbb{R}_{1}^{n+1}\right)$ consisting
of operators of the form $K\left(y;x,\tilde{x},y-\tilde{y}\right)d\tilde{x}d\tilde{y}$,
with $K\left(y;x,\tilde{x},Y\right)\in\mathcal{S}\left(\mathbb{R}^{n}\right)\hat{\otimes}\mathcal{A}_{\phg}^{\left(\mathcal{E}_{\lf},\mathcal{E}_{\rf},\mathcal{E}_{\ff_{0}}-n-1,\infty,\infty,\infty\right)}\left(M_{0}^{2}\right)$.\footnote{The symbol $\hat{\otimes}$ indicates the completed projective tensor
product of Fréchet spaces; the properties of $\hat{\otimes}$ which
are important for us are summarized in the Appendix. We refer to \cite{Treves}
for details.}
\end{defn}

In the previous definition, the subscript $\mathcal{S}$ stands for
``Schwartz'' and refers to the fact that $K\left(y;x,\tilde{x},Y\right)$
vanishes to infinite order as $\left|y\right|\to\infty$. We similarly
define
\begin{defn}
(Local extended $0$-calculus) We define $\Psi_{0b,\mathcal{S}}^{-\infty,\mathcal{E}}\left(\mathbb{R}_{1}^{n+1}\right)$
as the subclass of $\Op\left(\mathbb{R}_{1}^{n+1}\right)$ consisting
of right densities $K\left(y;x,\tilde{x},y-\tilde{y}\right)d\tilde{x}d\tilde{y}$
with $K\left(y;x,\tilde{x},Y\right)$ lifting to an element of $\mathcal{S}\left(\mathbb{R}^{n}\right)\hat{\otimes}\mathcal{A}_{\phg}^{\left(\mathcal{E}_{\lf},\mathcal{E}_{\rf},\mathcal{E}_{\ff_{b}}-1,\mathcal{E}_{\ff_{0}}-n-1,\infty,\infty,\infty\right)}\left(M_{0b}^{2}\right)$.
\end{defn}

The following two lemmas are then easy consequences of the definitions:
\begin{lem}
\label{lem:Local-characterization-0-calculus}(Local characterization
of the $0$-calculus) Let $\mathcal{E}=\left(\mathcal{E}_{\lf},\mathcal{E}_{\rf},\mathcal{E}_{\ff_{0}}\right)$
be a triple of index sets. If $P\in\Psi_{0}^{-\infty,\mathcal{E}}\left(X\right)$
, then:
\begin{enumerate}
\item for every open neighborhood $U$ of $\partial\Delta$ in $X^{2}$,
the restriction of the Schwartz kernel $K_{P}$ to $X^{2}\backslash U$
coincides with the Schwartz kernel of a very residual operator in
$\Psi^{-\infty,\left(\mathcal{E}_{\lf},\mathcal{E}_{\rf}\right)}\left(X\right)$;
\item for every point $p\in\partial X$ and every choice of coordinates
$\left(x,y\right)$ for $X$ centered at $p$, there is an operator
$K\left(y;x,\tilde{x},y-\tilde{y}\right)d\tilde{x}d\tilde{y}\in\Psi_{0,\mathcal{S}}^{-\infty,\mathcal{E}}\left(\mathbb{R}_{1}^{n+1}\right)$
such that, in the chosen coordinates, we have
\[
K_{P}\equiv K\left(y;x,\tilde{x},y-\tilde{y}\right)d\tilde{x}d\tilde{y}
\]
in a neighborhood of the origin.
\end{enumerate}
Conversely, if an operator $P:\dot{C}^{\infty}\left(X\right)\to C^{-\infty}\left(X\right)$
satisfies Point 1, and for every $p\in\partial X$ there exist coordinates
$\left(x,y\right)$ centered at $p$ such that the condition in Point
2 is satisfied, then $P\in\Psi_{0}^{-\infty,\mathcal{E}}\left(X\right)$.
\end{lem}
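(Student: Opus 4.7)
The plan is to deduce the lemma directly from the definition of $\Psi_0^{-\infty,\mathcal{E}}(X)$ by carefully localizing the polyhomogeneity condition. Recall that an element $P \in \Psi_0^{-\infty,\mathcal{E}}(X)$ is, by definition, an operator whose Schwartz kernel lifts via $\beta_0: X_0^2 \to X^2$ to a polyhomogeneous section of $r_{\ff_0}^{-n-1}\beta_{0,R}^*\mathcal{D}_X^1$ with index family $\mathcal{E}$ at $(\lf,\rf,\ff_0)$. The two key geometric facts I will use are: (i) $\beta_0$ restricts to a diffeomorphism away from $\ff_0$, so polyhomogeneity at $\lf,\rf$ is preserved; and (ii) near any boundary diagonal point $(p,p)$, there is a local model diffeomorphism identifying a neighborhood of the lift of $(p,p)$ in $X_0^2$ with a neighborhood of the origin in $\overline{\mathbb{R}}^n \times M_0^2$.

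First I would prove the forward direction. For Point 1, fix an open neighborhood $U$ of $\partial\Delta$ in $X^2$. Since $\partial\Delta \subset \ff_0(X_0^2)$ is the image of the new face under $\beta_0$, the blow-down $\beta_0$ restricts to a diffeomorphism $X_0^2 \setminus \beta_0^{-1}(\overline U) \to X^2 \setminus \overline U$ that maps $\lf(X_0^2)$ and $\rf(X_0^2)$ onto $\lf(X^2) \setminus \overline{U}$ and $\rf(X^2) \setminus \overline{U}$. Polyhomogeneity is local on the manifold with corners, so pushing $\kappa_P$ down gives a polyhomogeneous right density with index sets $\mathcal{E}_{\lf},\mathcal{E}_{\rf}$ outside $U$, which is exactly a restriction of an element of $\Psi^{-\infty,(\mathcal{E}_{\lf},\mathcal{E}_{\rf})}(X)$. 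For Point 2, take coordinates $(x,y)$ centered at $p$; the induced coordinates $(x,y,\tilde x,\tilde y)$ identify a neighborhood $V$ of $(p,p)$ with a neighborhood of the origin in $\mathbb{R}_1^{n+1}\times\mathbb{R}_1^{n+1}$. Change of variables $Y = y - \tilde y$ and rearranging yields coordinates $(y;x,\tilde x,Y)$ in which $\partial\Delta \cap V$ is the locus $\{x=\tilde x=0,\ Y=0\}$; this coincides, inside the compactified model $\overline{\mathbb{R}}^n \times M^2$, with the submanifold blown up to form $M_0^2$. Thus $\beta_0^{-1}(V)$ is identified with an open subset of $\overline{\mathbb{R}}^n \times M_0^2$, and the polyhomogeneity of $\kappa_P$ becomes polyhomogeneity on this chart with the specified index sets at $\lf,\rf,\ff_0$. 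The factor $r_{\ff_0}^{-n-1}$ in the density absorbs the corresponding shift between the conventions used in the definitions of $\Psi_0^{-\infty,\mathcal{E}}(X)$ and $\Psi_{0,\mathcal{S}}^{-\infty,\mathcal{E}}(\mathbb{R}_1^{n+1})$. Multiplying by a smooth cutoff $\chi(y)$ that is $1$ near $0$ and compactly supported produces an element of $\Psi_{0,\mathcal{S}}^{-\infty,\mathcal{E}}(\mathbb{R}_1^{n+1})$ agreeing with $K_P$ near the origin.

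For the converse, suppose $P$ satisfies Point 1, and that for each $p \in \partial X$ there is a coordinate system $(x,y)$ centered at $p$ in which a local representation as in Point 2 holds. Using compactness of $\partial X$, select finitely many such coordinate charts whose neighborhoods of $(p,p)$ cover $\partial\Delta$, and a partition of unity $\{\chi_i\}_{i=0}^N$ on $X^2$ subordinate to the open cover consisting of these chart neighborhoods together with the complement of a smaller neighborhood of $\partial\Delta$. The pieces $\chi_i K_P$ for $i \ge 1$ lift to polyhomogeneous sections on the corresponding open subsets of $X_0^2$ by the previous identifications, with the correct density twist and index sets, while $\chi_0 K_P$ lifts to the smooth extension by zero of the very residual operator of Point 1, which has index sets $(\mathcal{E}_{\lf},\mathcal{E}_{\rf},\infty)$ dominated by $\mathcal{E}$. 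Since polyhomogeneity is a local property, the sum $\kappa_P = \sum_i (\beta_0^*\chi_i)(\beta_0^*K_P)$ is polyhomogeneous on $X_0^2$ with index family $\mathcal{E}$, so $P \in \Psi_0^{-\infty,\mathcal{E}}(X)$.

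The main obstacle I anticipate is bookkeeping rather than ideas: correctly tracking the density twist $r_{\ff_0}^{-n-1}\beta_{0,R}^*\mathcal{D}_X^1$ under the local change of variables $Y = y - \tilde y$ and under the blow-up, and verifying that the index set shift by $n+1$ in passing between the global and local conventions matches precisely. Once the identifications of local model spaces and the correct density conventions are fixed, both implications reduce to the locality of polyhomogeneity on manifolds with corners.
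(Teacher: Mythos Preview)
Your proposal is correct and is precisely the argument the paper has in mind: the paper does not give a proof at all, stating only that the lemma is an ``easy consequence of the definitions.'' You have spelled out exactly the intended reasoning---that polyhomogeneity is local, that $\beta_0$ is a diffeomorphism off $\ff_0$, that the local coordinate model identifies a neighborhood of the lift of $(p,p)$ with $\overline{\mathbb{R}}^n\times M_0^2$, and that a partition of unity glues the local pieces for the converse---so your write-up simply fills in the details the paper omits.
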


\begin{lem}
\label{lem:local-characterization-0b-calculus}(Local characterization
of the extended $0$-calculus) Let $\mathcal{E}=\left(\mathcal{E}_{\lf},\mathcal{E}_{\rf},\mathcal{E}_{\ff_{b}},\mathcal{E}_{\ff_{0}}\right)$
be a quadruple of index sets. If $P\in\Psi_{0b}^{-\infty,\mathcal{E}}\left(X\right)$,
then:
\begin{enumerate}
\item for every open neighborhood $U$ of $\partial\Delta$ in $X^{2}$,
the restriction of the Schwartz kernel $K_{P}$ to $X^{2}\backslash U$
coincides with the Schwartz kernel of an operator in $\Psi_{b}^{-\infty,\left(\mathcal{E}_{\lf},\mathcal{E}_{\rf},\mathcal{E}_{\ff_{b}}\right)}\left(X\right)$;
\item for every point $p\in\partial X$ and every choice of coordinates
$\left(x,y\right)$ for $X$ centered at $p$, there exists an operator
$K\left(y;x,\tilde{x},y-\tilde{y}\right)d\tilde{x}d\tilde{y}\in\Psi_{0b,\mathcal{S}}^{-\infty,\mathcal{E}}\left(\mathbb{R}_{1}^{n+1}\right)$
such that in the chosen coordinates we have
\[
K_{P}\equiv K\left(y;x,\tilde{x},y-\tilde{y}\right)d\tilde{x}d\tilde{y}
\]
near the origin.
\end{enumerate}
Conversely, if an operator $P:\dot{C}^{\infty}\left(X\right)\to C^{-\infty}\left(X\right)$
satisfies Point 1, and for every $p\in\partial X$ there exist coordinates
$\left(x,y\right)$ for $X$ centered at $p$ such that the condition
in Point 2 is satisfied, then $P\in\Psi_{0b}^{-\infty,\mathcal{E}}\left(X\right)$.
\end{lem}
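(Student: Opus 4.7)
The plan is to view both statements as manifestations of the fact that polyhomogeneity on a manifold with corners is a local property, combined with the identification of $\overline{\mathbb{R}}^{n}\times M_{0b}^{2}$ as the local model for $X_{0b}^{2}$ near the lift of any point $(p,p)\in\partial\Delta$. The argument closely parallels that of the preceding Lemma \ref{lem:Local-characterization-0-calculus}; the only new ingredient is the additional $b$-blow-up introduced to pass from $X_{0}^{2}$ to $X_{0b}^{2}$, which in local coordinates corresponds to the extra step $M_{0b}^{2}=[M_{0}^{2}:\lf\cap\rf]$.

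For the forward direction, fix $P\in\Psi_{0b}^{-\infty,\mathcal{E}}(X)$ with polyhomogeneous lifted kernel $\kappa_{P}$. For Point 1, I observe that the partial blow-down $X_{0b}^{2}\to X_{b}^{2}$ only collapses $\ff_{0}$, and $\ff_{0}$ projects to $\partial\Delta\subset X^{2}$. Hence restricting $\kappa_{P}$ to the preimage of $X^{2}\setminus U$ eliminates the role of $\ff_{0}$, and one obtains a polyhomogeneous right density on $X_{b}^{2}$ with the stated index family. For Point 2, I choose coordinates $(x,y)$ on $X$ centered at $p$. The map $(y,x,\tilde{x},\tilde{y})\mapsto(y;x,\tilde{x},y-\tilde{y})$ furnishes a local diffeomorphism between a neighborhood of $(p,p)$ in $X^{2}$ and a neighborhood of the origin in $\overline{\mathbb{R}}^{n}\times M^{2}$ sending $\partial\Delta$ to $\lf\cap\rf\cap\{Y=0\}$ and $\lf\cap\rf$ to $\lf\cap\rf$. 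By functoriality of iterated blow-up, it lifts to a local diffeomorphism between $X_{0b}^{2}$ and $\overline{\mathbb{R}}^{n}\times M_{0b}^{2}$ near the relevant faces. The Pullback Theorem then transfers polyhomogeneity of $\kappa_{P}$ to a polyhomogeneous function on the local model. The shifts $\mathcal{E}_{\ff_{0}}-n-1$ and $\mathcal{E}_{\ff_{b}}-1$ appearing in the definition of $\Psi_{0b,\mathcal{S}}^{-\infty,\mathcal{E}}(\mathbb{R}_{1}^{n+1})$ arise precisely as the powers of $r_{\ff_{0}}$ and $r_{\ff_{b}}$ needed to convert the twisted density factor $r_{\ff_{0}}^{-n-1}r_{\ff_{b}}^{-1}\beta_{0b,R}^{*}\mathcal{D}_{X}^{1}$ into the flat form $d\tilde{x}d\tilde{y}$.

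For the converse direction, suppose $P$ satisfies both conditions. Polyhomogeneity of $\kappa_{P}$ on $X_{0b}^{2}$ is a local condition, so it is enough to verify it near each point of $X_{0b}^{2}$. At a point not lying over $\partial\Delta$, Point 1 combined with the inclusion $\Psi_{b}^{-\infty,\bullet}(X)\subseteq\Psi_{0b}^{-\infty,\bullet}(X)$ recorded in \S\ref{subsec:-calculus,--calculus,-extended} supplies polyhomogeneity with an index family compatible with $\mathcal{E}$. Near the preimage of $(p,p)\in\partial\Delta$, Point 2 in the chosen coordinates, together with the inverse of the local diffeomorphism above, supplies polyhomogeneity on $X_{0b}^{2}$. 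Agreement of the two descriptions in the overlap is automatic since both are restrictions of the same $\kappa_{P}$.

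The main subtlety, and the only real obstacle, is coordinate invariance in the converse direction: Point 2 is assumed only for \emph{some} choice of coordinates at each boundary point, so one must verify that this single choice is sufficient. Any two coordinate systems at $p$ differ by a local diffeomorphism of $\mathbb{R}_{1}^{n+1}$ fixing the origin and preserving $\partial X$; the induced change of variables on $\overline{\mathbb{R}}^{n}\times M^{2}$ preserves both $\lf\cap\rf\cap\{Y=0\}$ and $\lf\cap\rf$ and, by the universal property of blow-up, lifts to a local $b$-diffeomorphism of $\overline{\mathbb{R}}^{n}\times M_{0b}^{2}$. Polyhomogeneous conormal distributions pull back to polyhomogeneous conormal distributions under such $b$-diffeomorphisms, so the class $\Psi_{0b,\mathcal{S}}^{-\infty,\mathcal{E}}(\mathbb{R}_{1}^{n+1})$ is invariant and Point 2 in one chart implies it in every chart. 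The remaining bookkeeping -- tracking $r_{\ff_{0}}^{-n-1}$ and $r_{\ff_{b}}^{-1}$ through the Jacobians of the various blow-down maps to ensure the index-set shifts are consistent between the two local descriptions -- is routine but error-prone, and is the only place where genuine care is required.
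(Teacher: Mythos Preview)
Your proposal is correct and matches the paper's approach: the paper states this lemma as an ``easy consequence of the definitions'' without giving a detailed proof, and what you have written is precisely the routine argument the paper has in mind, namely that the iterated blow-up $X_{0b}^{2}$ is locally modelled on $\overline{\mathbb{R}}^{n}\times M_{0b}^{2}$ near the lift of any $(p,p)\in\partial\Delta$, so polyhomogeneity transfers back and forth via the Pull-back Theorem, while away from $\partial\Delta$ the $0$-front face plays no role and one is left with the $b$-calculus description.
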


For completeness, we also formulate the local versions of the classes
of very residual operators and the $b$-calculus:
\begin{defn}
(Local $b$-calculus and very residual operators) We define $\Psi_{\mathcal{S}}^{-\infty,\left(\mathcal{E}_{\lf},\mathcal{E}_{\rf}\right)}\left(\mathbb{R}_{1}^{n+1}\right)$
(resp. $\Psi_{b,\mathcal{S}}^{-\infty,\left(\mathcal{E}_{\lf},\mathcal{E}_{\rf},\mathcal{E}_{\ff_{b}}\right)}\left(\mathbb{R}_{1}^{n+1}\right)$)
as the space of operators $K\left(y;x,\tilde{x},y-\tilde{y}\right)d\tilde{x}d\tilde{y}$,
with $K\left(y;x,\tilde{x},Y\right)\in\mathcal{S}\left(\mathbb{R}^{n}\right)\hat{\otimes}\mathcal{A}_{\phg}^{\left(\mathcal{E}_{\lf},\mathcal{E}_{\rf},\infty,\infty,\infty\right)}\left(M^{2}\right)$
(resp. $K\left(y;x,\tilde{x},Y\right)\in\mathcal{S}\left(\mathbb{R}^{n}\right)\hat{\otimes}\mathcal{A}_{\phg}^{\left(\mathcal{E}_{\lf},\mathcal{E}_{\rf},\mathcal{E}_{\ff_{b}}-1,\infty,\infty,\infty\right)}\left(M_{b}^{2}\right)$).
\end{defn}

It is easy to see that if $P\in\Psi^{-\infty,\mathcal{E}}\left(X\right)$
(resp. $P\in\Psi_{b}^{-\infty,\mathcal{E}}\left(X\right)$), then
for every $p\in\partial X$ and some (hence every) choice of coordinates
$x,y$ for $X$ centered at $p$, there is an operator $K\left(y;x,\tilde{x},y-\tilde{y}\right)d\tilde{x}d\tilde{y}$
in $\Psi_{\mathcal{S}}^{-\infty,\mathcal{E}}\left(\mathbb{R}_{1}^{n+1}\right)$
(resp. $\Psi_{\mathcal{S},b}^{-\infty,\mathcal{E}}\left(\mathbb{R}_{1}^{n+1}\right)$)
such that in the chosen coordinates we have $K_{P}\equiv K\left(y;x,\tilde{x},y-\tilde{y}\right)d\tilde{x}d\tilde{y}$
near the origin.

\subsubsection{\label{subsubsec:local-physical-0-trace-and-0-Poisson}$0$-trace
and $0$-Poisson operators}

We get similar local characterizations for $0$-Poisson and $0$-trace
operators. Define
\[
P^{2}=\overline{\mathbb{R}}_{1}^{1}\times\overline{\mathbb{R}}^{n}
\]
with coordinates $x,Y$, which we identify with the right face of
the model space $M^{2}$ introduced previously. We introduce the blow-up
\[
P_{0}^{2}=\left[P^{2}:\left\{ x=0,Y=0\right\} \right],
\]
which we can canonically identify with the right face of $M_{0}^{2}$.
$P_{0}^{2}$ has again four boundary hyperfaces, namely $\of$ obtained
as the lift of $x=0$, $\ff$ obtained from the blow-up, $\iif_{Y}$
obtained as the lift of $\left|Y\right|=\infty$, and $\iif_{x}=\left\{ x=\infty\right\} $.
The space $P_{0}^{2}$ is represented in Figure \ref{fig:P20}\begin{figure}
	\centering
	\caption{$P^2_0$}
	\label{fig:P20}

	\tikzmath{\L = 4;\R = \L /3;}
	\begin{tikzpicture}
		

		\coordinate (O) at (0,0);
		\coordinate (A) at (-\L, 0);
		\coordinate (B) at (\L, 0);
		\coordinate (C) at (-\L, \L);
		\coordinate (D) at (\L, \L);
		\coordinate (A1) at ({-\L+\R}, 0);
		\coordinate (A2) at (-\L, \R);
		\coordinate (A3) at (-\R,0);
		\coordinate (B1) at ({\L-\R}, 0);
		\coordinate (B2) at (\L, \R);
		\coordinate (B3) at (\R,0);

		\draw (C) -- (D);

		\draw (A) -- (A3);
		\draw (B) -- (B3);
		\draw (A) -- (C);
		\draw (B) -- (D);
		
		\draw (B3) arc(0:180:\R);


		\node at ({-\L+1.7*\L/6},{\L/12}) {$\LARGE{\text{of}}$};
		\node at ({\L-1.7*\L/6},{\L/12}) {$\LARGE{\text{of}}$};
		\node at (0,{\R + \L/12}) {$\LARGE{\text{ff}}$};
		\node at ({-\L+0.4*\L/6},{\L*2/3}) {$\LARGE{\text{if}_Y}$};
		\node at ({\L-0.4*\L/6},{\L*2/3}) {$\LARGE{\text{if}_Y}$};
		\node at ({0},{\L-0.4*\L/6}) {$\LARGE{\text{if}_x}$};

	\end{tikzpicture}
\end{figure}. For every point $p\in\partial X$, a choice of coordinates $\left(x,y\right)$
for $X$ centered at $p$ determines a local diffeomorphism between
a neighborhood of $\left(p,p\right)$ in $X\times\partial X$ and
a neighborhood of the origin in $\overline{\mathbb{R}}^{n}\times P^{2}$:
this diffeomorphism lifts to a neighborhood of the fiber $\ff_{p}$
in $\left(X\times\partial X\right)_{0}$, and a neighborhood of $\left\{ 0\right\} \times\ff$
in $\overline{\mathbb{R}}^{n}\times P_{0}^{2}$.

Similarly, define
\[
T^{2}=\overline{\mathbb{R}}_{1}^{1}\times\overline{\mathbb{R}}^{n}
\]
with coordinates $\tilde{x},Y$, which we identify with the left face
of $M^{2}$. We introduce the blow-up
\[
T_{0}^{2}=\left[T^{2}:\left\{ \tilde{x}=0,Y=0\right\} \right],
\]
which we can canonically identify with the left face of $M_{0}^{2}$.
Again $T_{0}^{2}$ is canonically diffeomorphic to $P_{0}^{2}$, and
we call its faces $\of,\ff,\iif_{Y},\iif_{\tilde{x}}$.

Denote by $\Op\left(\mathbb{R}^{n},\mathbb{R}_{1}^{n+1}\right)$ the
class of operators $\mathcal{S}\left(\mathbb{R}^{n}\right)\to C^{-\infty}\left(\overline{\mathbb{R}}_{1}^{1}\times\overline{\mathbb{R}}^{n}\right)$,
and by $\Op\left(\mathbb{R}_{1}^{n+1},\mathbb{R}^{n}\right)$ the
class of operators $\dot{C}^{\infty}\left(\overline{\mathbb{R}}_{1}^{1}\times\overline{\mathbb{R}}^{n}\right)\to\mathcal{S}'\left(\mathbb{R}^{n}\right)$.
\begin{defn}
(Local $0$-trace operators) We denote by $\Psi_{0\tr,\mathcal{S}}^{-\infty,\mathcal{E}}\left(\mathbb{R}_{1}^{n+1},\mathbb{R}^{n}\right)$
the subclass of $\Op\left(\mathbb{R}_{1}^{n+1},\mathbb{R}^{n}\right)$
consisting of right densities $K\left(y;\tilde{x},y-\tilde{y}\right)d\tilde{x}d\tilde{y}$
with $K\left(y;\tilde{x},Y\right)$ lifting to $\mathcal{S}\left(\mathbb{R}^{n}\right)\hat{\otimes}\mathcal{A}_{\phg}^{\left(\mathcal{E}_{\of},\mathcal{E}_{\ff}-n-1,\infty,\infty\right)}\left(T_{0}^{2}\right)$.
\end{defn}

\begin{defn}
(Local $0$-Poisson operators) We denote by $\Psi_{0\po,\mathcal{S}}^{-\infty,\mathcal{E}}\left(\mathbb{R}^{n},\mathbb{R}_{1}^{n+1}\right)$
the subclass of $\Op\left(\mathbb{R}^{n},\mathbb{R}_{1}^{n+1}\right)$
consisting of right densities $K\left(y;x,y-\tilde{y}\right)d\tilde{y}$,
with $K\left(y;x,Y\right)$ lifting to $\mathcal{S}\left(\mathbb{R}^{n}\right)\hat{\otimes}\mathcal{A}_{\phg}^{\left(\mathcal{E}_{\of},\mathcal{E}_{\ff}-n,\infty,\infty\right)}\left(P_{0}^{2}\right)$.
\end{defn}

Analogously to the interior cases, the definitions imply easily the
following characterizations:
\begin{lem}
\label{lem:local-characterization-0-trace}(Local characterization
of $0$-trace operators) Let $\mathcal{E}=\left(\mathcal{E}_{\of},\mathcal{E}_{\ff}\right)$
be a pair of index sets. If $A\in\Psi_{0\tr}^{-\infty,\mathcal{E}}\left(X,\partial X\right)$
, then:
\begin{enumerate}
\item for every open neighborhood $U$ of $\partial\Delta$ in $X\times\partial X$,
the restriction of the Schwartz kernel $K_{A}$ to $\left(\partial X\times X\right)\backslash U$
coincides with the Schwartz kernel of a residual trace operator in
$\Psi_{\tr}^{-\infty,\mathcal{E}_{\of}}\left(X,\partial X\right)$;
\item for every point $p\in\partial X$ and every choice of coordinates
$\left(x,y\right)$ for $X$ centered at $p$, there is an operator
$K\left(y;\tilde{x},y-\tilde{y}\right)d\tilde{x}d\tilde{y}\in\Psi_{0\tr,\mathcal{S}}^{-\infty,\mathcal{E}}\left(\mathbb{R}_{1}^{n+1},\mathbb{R}^{n}\right)$
such that in the chosen coordinates we have
\[
K_{A}\equiv K\left(y;\tilde{x},y-\tilde{y}\right)d\tilde{x}d\tilde{y}
\]
in a neighborhood of the origin.
\end{enumerate}
Conversely, if an operator $A:\dot{C}^{\infty}\left(X\right)\to C^{-\infty}\left(\partial X\right)$
satisfies Point 1, and for every $p\in\partial X$ there exist coordinates
$\left(x,y\right)$ centered at $p$ such that the condition in Point
2 is satisfied, then $A\in\Psi_{0\tr}^{-\infty,\mathcal{E}}\left(X,\partial X\right)$.
\end{lem}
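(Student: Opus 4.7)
The proof will follow the same local--global strategy used for Lemmas \ref{lem:Local-characterization-0-calculus} and \ref{lem:local-characterization-0b-calculus}, applied to the simpler blow-up $(\partial X\times X)_{0}$. The starting point is the defining identity $\Psi_{0\tr}^{-\infty,\mathcal{E}}(X,\partial X)=\mathcal{A}_{\phg}^{\mathcal{E}}((\partial X\times X)_{0};r_{\ff}^{-n-1}\beta_{\tr,R}^{*}\mathcal{D}_{X}^{1})$. Polyhomogeneity is a local property on the total space of the blow-up, so I analyze $\beta_{\tr}^{*}K_{A}$ separately away from $\ff$ and in a neighborhood of each fibre of $\ff\to\partial\Delta$.

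For Point 1, I use that $\beta_{\tr}$ restricts to a diffeomorphism from $(\partial X\times X)_{0}\setminus\ff$ onto $(\partial X\times X)\setminus\partial\Delta$, matching the original face of the blow-up with the face $\partial X\times\partial X$ of the base. The weight $r_{\ff}^{-n-1}$ is smooth and nonvanishing there, so it contributes only a smooth positive density factor, and polyhomogeneity on the blow-up with index set $\mathcal{E}_{\of}$ descends to polyhomogeneity with the same index set on $\partial X\times X$. This identifies $K_{A}$ away from any neighborhood of $\partial\Delta$ with the Schwartz kernel of an element of $\Psi_{\tr}^{-\infty,\mathcal{E}_{\of}}(X,\partial X)$.

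For Point 2, fix $p\in\partial X$ with coordinates $(x,y)$, and work in the induced coordinates $(y,\tilde{x},\tilde{y})$ on $\partial X\times X$ near $(p,p)$, in which $\partial\Delta=\{\tilde{x}=0,\;y=\tilde{y}\}$. The change of variables $Y=y-\tilde{y}$ locally identifies a neighborhood of $(p,p)$ with a neighborhood of the origin in $\mathbb{R}^{n}\times T^{2}$, sending $\partial\Delta$ to $\{\tilde{x}=0,Y=0\}$. Since blow-up is a local operation and commutes with products by factors disjoint from the blown-up centre, this lifts to a diffeomorphism from a neighborhood of the fibre of $\ff$ over $p$ in $(\partial X\times X)_{0}$ onto a neighborhood of $\{0\}\times\ff$ in $\overline{\mathbb{R}}^{n}\times T_{0}^{2}$, identifying the boundary faces and making $r_{\ff}$ correspond to a defining function of $\ff\subset T_{0}^{2}$ up to a smooth positive factor. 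Writing $\beta_{\tr,R}^{*}\mathcal{D}_{X}^{1}$ as $d\tilde{x}\,d\tilde{y}$, the shift $\mathcal{E}_{\ff}\mapsto\mathcal{E}_{\ff}-n-1$ in the definition of $\Psi_{0\tr,\mathcal{S}}^{-\infty,\mathcal{E}}(\mathbb{R}_{1}^{n+1},\mathbb{R}^{n})$ absorbs exactly the weight $r_{\ff}^{-n-1}$; multiplying by a cutoff supported near $(p,p)$ ensures Schwartz decay in $y$ and yields the desired representative.

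The converse follows by a partition of unity argument: cover $\partial X$ by coordinate charts, decompose $K_{A}$ into pieces supported near the diagonal fibres of $\ff$ and a piece supported away from a neighborhood of $\partial\Delta$, then extend each piece to an element of $\mathcal{A}_{\phg}^{\mathcal{E}}((\partial X\times X)_{0};r_{\ff}^{-n-1}\beta_{\tr,R}^{*}\mathcal{D}_{X}^{1})$ via Point 2 (resp.\ Point 1) and sum. The only bookkeeping that warrants care is the matching of density factors and the $n+1$ shift under the change of variables $\tilde{y}\mapsto Y$ combined with the blow-down, but this is identical to the interior case treated in Lemma \ref{lem:Local-characterization-0-calculus}, and poses no genuine obstacle.
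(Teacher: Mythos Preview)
Your proposal is correct and follows exactly the approach the paper has in mind: the paper states this lemma (and its Poisson counterpart) as an easy consequence of the definitions, writing only that ``analogously to the interior cases, the definitions imply easily the following characterizations'' without giving further details. Your sketch spells out precisely the local--global argument implicit in that remark, matching the treatment of Lemmas~\ref{lem:Local-characterization-0-calculus} and~\ref{lem:local-characterization-0b-calculus}.
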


\begin{lem}
\label{lem:local-characterization-0-poisson}(Local characterization
of $0$-Poisson operators) Let $\mathcal{E}=\left(\mathcal{E}_{\of},\mathcal{E}_{\ff}\right)$
be a pair of index sets. If $B\in\Psi_{0\po}^{-\infty,\mathcal{E}}\left(\partial X,X\right)$
, then:
\begin{enumerate}
\item for every open neighborhood $U$ of $\partial\Delta$ in $X\times\partial X$,
the restriction of the Schwartz kernel $K_{B}$ to $\left(X\times\partial X\right)\backslash U$
coincides with the Schwartz kernel of a residual Poisson operator
in $\Psi_{\po}^{-\infty,\mathcal{E}_{\of}}\left(\partial X,X\right)$;
\item for every point $p\in\partial X$ and every choice of coordinates
$\left(x,y\right)$ for $X$ centered at $p$, there is an operator
$K\left(y;x,y-\tilde{y}\right)d\tilde{y}\in\Psi_{0\po,\mathcal{S}}^{-\infty,\mathcal{E}}\left(\mathbb{R}^{n},\mathbb{R}_{1}^{n+1}\right)$
such that in the chosen coordinates we have
\[
K_{B}\equiv K\left(y;x,y-\tilde{y}\right)d\tilde{y}
\]
in a neighborhood of the origin.
\end{enumerate}
Conversely, if an operator $B:C^{\infty}\left(\partial X\right)\to C^{-\infty}\left(X\right)$
satisfies Point 1, and for every $p\in\partial X$ there exist coordinates
$\left(x,y\right)$ centered at $p$ such that the condition in Point
2 is satisfied, then $B\in\Psi_{0\po}^{-\infty,\mathcal{E}}\left(\partial X,X\right)$.
\end{lem}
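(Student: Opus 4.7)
The plan is to unpack both sides directly from the definitions and check that they match. The forward direction for Point 1 is essentially automatic from the structure of the blow-down map, and Point 2 is a coordinate computation that tracks how the density factor $r_{\ff}^{-n}$ transforms; the converse then follows from a partition-of-unity argument using the globally-defined residual piece.

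First, for Point 1, observe that the blow-down map $\beta_{\po}:(X\times\partial X)_0\to X\times\partial X$ is a diffeomorphism away from $\ff$, and $\beta_{\po}(\ff)=\partial\Delta$. Hence given an open neighborhood $U$ of $\partial\Delta$ in $X\times\partial X$, the preimage $\beta_{\po}^{-1}(U)$ is an open neighborhood of $\ff$ in $(X\times\partial X)_0$, and on the complement the map $\beta_{\po}$ is a diffeomorphism identifying $\of$ with $\partial X\times\partial X\setminus U$ (after shrinking $U$ slightly if needed). The density factor $r_{\ff}^{-n}\beta_{\po,R}^*\mathcal{D}_{\partial X}^1$ restricts there to a smooth positive multiple of $\pi_R^*\mathcal{D}_{\partial X}^1$, since $r_{\ff}$ is smooth and nonvanishing away from $\ff$. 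Therefore the Schwartz kernel $K_B$ restricted to $(X\times\partial X)\setminus U$ is polyhomogeneous with index set $\mathcal{E}_{\of}$ at $\partial X\times\partial X$ and smooth elsewhere, i.e., lies in $\Psi_{\po}^{-\infty,\mathcal{E}_{\of}}(\partial X,\partial X)$.

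For Point 2, fix $p\in\partial X$ and coordinates $(x,y)$ centered at $p$. These induce coordinates $(x,y,\tilde y)$ on $X\times\partial X$ near $(p,p)$, and $\partial\Delta$ is cut out by $x=0,\;y=\tilde y$. After the change of variables $Y=y-\tilde y$, the submanifold $\partial\Delta$ becomes $\{x=0,\;Y=0\}$, so the blow-up $[X\times\partial X:\partial\Delta]$ locally matches a neighborhood of $\{0\}\times\ff$ in $\overline{\mathbb{R}}^n\times P_0^2$ (after radial compactification of the $y,Y$ and $x$ directions, which is harmless because polyhomogeneity on the infinity faces is automatic in the Schwartz class). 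Under the same change of variables the right density $\pi_R^*\mathcal{D}_{\partial X}^1$ pulls back to $d\tilde y$, and the weight $r_{\ff}^{-n}$ exactly compensates the conversion between a polyhomogeneous function on $P_0^2$ with index $\mathcal{E}_{\ff}-n$ at $\ff$ and a polyhomogeneous section of $r_{\ff}^{-n}\beta_{\po,R}^*\mathcal{D}_{\partial X}^1$. Writing $B$ locally near $(p,p)$ as $K(y;x,y-\tilde y)\,d\tilde y$ with $K(y;x,Y)\in\mathcal{S}(\mathbb{R}^n)\hat\otimes\mathcal{A}_{\phg}^{(\mathcal{E}_{\of},\mathcal{E}_{\ff}-n,\infty,\infty)}(P_0^2)$ then reproduces exactly the defining data of the local class $\Psi_{0\po,\mathcal{S}}^{-\infty,\mathcal{E}}(\mathbb{R}^n,\mathbb{R}_1^{n+1})$, the Schwartz factor in $y$ being obtained by multiplying with a cutoff supported in the coordinate chart.

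For the converse, suppose $B:C^\infty(\partial X)\to C^{-\infty}(X)$ satisfies Point 1 and, for every $p\in\partial X$, Point 2 holds in some coordinate chart around $p$. Cover $\partial X$ by finitely many such charts $(U_\alpha,(x_\alpha,y_\alpha))$ and choose a subordinate partition of unity $\{\chi_\alpha\}$ on a neighborhood $V$ of $\partial\Delta$ in $X\times\partial X$. Each local piece $\chi_\alpha K_B$ is, by Point 2, the pullback of an element of $\Psi_{0\po,\mathcal{S}}^{-\infty,\mathcal{E}}$, and thus lifts via $\beta_{\po}$ to a polyhomogeneous section of $r_{\ff}^{-n}\beta_{\po,R}^*\mathcal{D}_{\partial X}^1$ with the required index sets on the model space, which under the local diffeomorphism identifies with an open set in $(X\times\partial X)_0$. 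Summing these local contributions and adding the piece $(1-\sum_\alpha\chi_\alpha)K_B$, which by Point 1 is a residual Poisson kernel and hence polyhomogeneous on all of $(X\times\partial X)_0$ by the inclusion $\Psi_{\po}^{-\infty,\mathcal{E}_{\of}}\subseteq\Psi_{0\po}^{-\infty,(\mathcal{E}_{\of},\mathcal{E}_{\of}+n)}$, shows that $K_B$ lifts globally to $\mathcal{A}_{\phg}^{\mathcal{E}}\bigl((X\times\partial X)_0;r_{\ff}^{-n}\beta_{\po,R}^*\mathcal{D}_{\partial X}^1\bigr)$.

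The main (and only) subtlety is bookkeeping: keeping track of the $r_{\ff}^{-n}$ weight and of the shift by $n$ in the $\ff$-index between the global definition (expressed in terms of a specific right density on $(X\times\partial X)_0$) and the local definition (expressed purely as a polyhomogeneous scalar function on $P_0^2$). Once the conventions are aligned, the rest is essentially reading off the definitions, with coordinate-independence guaranteed by the intrinsic nature of the blow-up and of polyhomogeneity.
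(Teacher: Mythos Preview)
Your proposal is correct and takes essentially the same approach as the paper, which in fact omits the proof entirely, stating only that ``the definitions imply easily the following characterizations.'' You have simply written out in detail the bookkeeping that the paper leaves implicit: the blow-down is a diffeomorphism away from $\ff$, local coordinates match the model $P_0^2$, and the converse is a partition-of-unity argument using the inclusion $\Psi_{\po}^{-\infty,\mathcal{E}_{\of}}\subseteq\Psi_{0\po}^{-\infty,(\mathcal{E}_{\of},\mathcal{E}_{\of}+n)}$.
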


Finally, we formulate the local versions of the classes of residual
Poisson and trace operators for completeness.
\begin{defn}
(Local residual trace and Poisson operators) We define $\Psi_{\po,\mathcal{S}}^{-\infty,\mathcal{E}_{\of}}\left(\mathbb{R}^{n},\mathbb{R}_{1}^{n+1}\right)$
(resp. $\Psi_{\tr,\mathcal{S}}^{-\infty,\mathcal{E}_{\of}}\left(\mathbb{R}_{1}^{n+1},\mathbb{R}^{n}\right)$)
as the space of operators represented by right densities $K\left(y;x,y-\tilde{y}\right)d\tilde{y}$
(resp. $K\left(y;\tilde{x},y-\tilde{y}\right)d\tilde{x}d\tilde{y}$)
with $K\left(y;x,Y\right)\in\mathcal{S}\left(\mathbb{R}^{n}\right)\hat{\otimes}\mathcal{A}_{\phg}^{\left(\mathcal{E}_{\of},\infty,\infty\right)}\left(P^{2}\right)$
(resp. $K\left(y;\tilde{x},Y\right)\in\mathcal{S}\left(\mathbb{R}^{n}\right)\hat{\otimes}\mathcal{A}_{\phg}^{\left(\mathcal{E}_{\of},\infty,\infty\right)}\left(T^{2}\right)$).
\end{defn}

Again, it is easy to formulate a local characterization for elements
of $\Psi_{\tr}^{-\infty,\mathcal{E}}\left(X,\partial X\right)$ and
$\Psi_{\po}^{-\infty,\mathcal{E}}\left(\partial X,X\right)$ in terms
of their local counterparts, along the lines of the characterizations
given above.

\subsection{\label{subsec:Model-families}Model families}

Every operator in the classes $\Psi_{0}^{-\infty,\bullet}\left(X\right)$,
$\Psi_{0b}^{-\infty,\bullet}\left(X\right)$, $\Psi_{0\tr}^{-\infty,\bullet}\left(X,\partial X\right)$,
$\Psi_{0\po}^{-\infty,\bullet}\left(\partial X,X\right)$ comes equipped
with a family of model operators, smoothly parametrized by a point
in $\partial X$. This family is called the\emph{ normal family} of
the operator: it is essentially the leading term in the asymptotic
expansion of the lifted Schwartz kernel at the front face. Using the
Fourier transform, the normal family can be conveniently re-packaged
as a family of model operators on a simpler model space, smoothly
parametrized by a nonzero covector in $\partial X$. This family will
be called the \emph{Bessel family} in accordance with \cite{MazzeoEdgeI}
(other sources such as \cite{Hintz0calculus} refer to it as the \emph{transformed
normal operator}).\emph{ }We will now recall their definitions.

Given an index set $\mathcal{I}$, we will denote by $\left[\mathcal{I}\right]$
the space of pairs $\left(\alpha,l\right)\in\mathcal{I}$ with $\inf\Re\left(\mathcal{I}\right)\leq\Re\left(\alpha\right)<\inf\Re\left(\mathcal{I}\right)+1$.
For simplicity, we will restrict ourselves to operators whose index
set $\mathcal{I}$ at the front face satisfies $\left[\mathcal{I}\right]=0$.

\subsubsection{\label{subsubsec:The-normal-family}The normal family}

The invariant definitions are straightforward:
\begin{defn}
(Normal family in the $0$-calculus and extended $0$-calculus) Let
$\mathcal{E}=\left(\mathcal{E}_{\lf},\mathcal{E}_{\rf},\mathcal{E}_{\ff_{b}},\mathcal{E}_{\ff_{0}}\right)$,
and let $\mathcal{E}'=\left(\mathcal{E}_{\lf},\mathcal{E}_{\rf},\mathcal{E}_{\ff_{0}}\right)$.
Assume that $\left[\mathcal{E}_{\ff_{0}}\right]=0$. Given $P\in\Psi_{0b}^{-\infty,\mathcal{E}}\left(X\right)$
(resp. $P\in\Psi_{0}^{-\infty,\mathcal{E}'}\left(X\right)$), its
\emph{normal family} is the restriction
\[
N\left(P\right)=\kappa_{P|\ff_{0}},
\]
where $\kappa_{P}$ is the lift of the Schwartz kernel $K_{P}$ of
$P$ from $X^{2}$ to $X_{0}^{2}$ (resp. $X_{0b}^{2}$). Given $p\in\partial X$,
the \emph{normal operator} of $P$ at $p$ is the restriction of $\kappa_{P}$
to the fiber $\ff_{0|\left(p,p\right)}$.
\end{defn}

\begin{defn}
(Normal family for $0$-trace and $0$-Poisson operators) Let $\mathcal{E}=\left(\mathcal{E}_{\of},\mathcal{E}_{\ff}\right)$,
and assume that $\left[\mathcal{E}_{\ff}\right]=0$. Given $P\in\Psi_{0\tr}^{-\infty,\mathcal{E}}\left(X,\partial X\right)$
(resp. $P\in\Psi_{0\po}^{-\infty,\mathcal{E}}\left(\partial X,X\right)$),
its \emph{normal family} is the restriction
\[
N\left(P\right)=\kappa_{A|\ff}
\]
where $\kappa_{P}$ is the lift of the Schwartz kernel $K_{P}$ of
$P$ from $\partial X\times X$ (resp. $X\times\partial X$) to $\left(\partial X\times X\right)_{0}$
(resp. $\left(X\times\partial X\right)_{0}$). Given $p\in\partial X$,
the \emph{normal operator} of $P$ at $p$ is the restriction of $\kappa_{P}$
to the fiber $\ff_{|\left(p,p\right)}$.
\end{defn}

These definitions are manifestly invariant; however, they do not help
recognizing $N_{p}\left(P\right)$ as an operator. In order to explain
this interpretation, we resort to the local characterizations given
above. We refer to \cite{MazzeoPhD, MazzeoEdgeI, MazzeoEdgeII, UsulaPhD}
for more details.

For every point $p\in\partial X$, denote by $X_{p}$ the inward-pointing
closed half of $T_{p}X$. A choice of coordinates $\left(x,y\right)$
for $X$ centered at $p$ determines global linear coordinates on
$X_{p}$, which we denote by $\left(x,y\right)$ again with slight
abuse of notation, and which identify $X_{p}$ with the closed half-space
$\mathbb{R}_{1}^{n+1}$. Define polar coordinates around $x=\tilde{x}=0,y=\tilde{y}$,
namely
\begin{align*}
r & =\sqrt{x^{2}+\tilde{x}^{2}+\left|y-\tilde{y}\right|^{2}}\\
\theta_{0} & =\frac{x}{r},\theta=\frac{y-\tilde{y}}{r},\theta_{n+1}=\frac{\tilde{x}}{r}.
\end{align*}
These coordinates parametrize $X_{0}^{2}$ near $\ff_{0|\left(p,p\right)}$;
in particular, they determine a diffeomorphism between a neighborhood
of $\ff_{0|\left(p,p\right)}$ in $X_{0}^{2}$ and a neighborhood
of the locus $r=0$ in the local model $\mathbb{R}^{n}\times\mathbb{R}_{1}^{1}\times S_{2}^{n+1}$
with coordinates $y,r,\theta_{0},\theta,\theta_{n+1}$. Here, as above,
\[
S_{2}^{n+1}=\left\{ \left(\theta_{0},\theta,\theta_{n+1}\right)\in\mathbb{R}^{n+2}:\theta_{0}^{2}+\left|\theta\right|^{2}+\theta_{n+1}^{2}=1,\theta_{0}\geq0,\theta_{n+1}\geq0\right\} .
\]
In these coordinates, $\ff_{0|\left(p,p\right)}$ corresponds to the
locus $\left\{ y=0,r=0\right\} $ and is therefore identified with
$S_{2}^{n+1}$ and parametrized by the coordinates $\left(\theta_{0},\theta,\theta_{n+1}\right)$.
The left face is instead defined by the equation $\theta_{0}=0$,
while the right face is defined by the equation $\theta_{n+1}=0$.

If $P\in\Psi_{0}^{-\infty,\mathcal{E}}\left(X\right)$, we can write
its Schwartz kernel in coordinates near $\left(p,p\right)$ as $K\left(y;x,\tilde{x},y-\tilde{y}\right)d\tilde{x}d\tilde{y}$,
and by definition of $\Psi_{0}^{-\infty,\mathcal{E}}\left(X\right)$
we can find a polyhomogeneous function $K'\left(y;r,\theta_{0},\theta,\theta_{n+1}\right)$
on $\mathbb{R}^{n}\times\mathbb{R}_{1}^{1}\times S_{2}^{n+1}$ such
that
\[
K\left(y;x,\tilde{x},y-\tilde{y}\right)d\tilde{x}d\tilde{y}\equiv K'\left(y;r,\frac{x}{r},\frac{y-\tilde{y}}{r},\frac{\tilde{x}}{r}\right)\frac{d\tilde{x}d\tilde{y}}{\tilde{x}^{n+1}}
\]
near the origin. More precisely, since the singular factor $\tilde{x}^{-n-1}$
has index set $-n-1$ both at the right face and at the front face,
the function $K'\left(y;r,\theta_{0},\theta,\theta_{n+1}\right)$
is polyhomogeneous on $\mathbb{R}^{n}\times\mathbb{R}_{1}^{1}\times S_{2}^{n+1}$
with index sets:
\begin{enumerate}
\item $\mathcal{E}_{\lf}$ at $\left\{ \theta_{0}=0\right\} $;
\item $\mathcal{E}_{\rf}+n+1$ at $\left\{ \theta_{n+1}=0\right\} $;
\item $\mathcal{E}_{\ff_{0}}$ at $\left\{ r=0\right\} $.
\end{enumerate}
In particular, from $\left[\mathcal{E}_{\ff_{0}}\right]=0$, we know
that the evaluation $K_{0}'\left(\theta_{0},\theta,\theta_{n+1}\right):=K'\left(0;0,\theta_{0},\theta,\theta_{n+1}\right)$
at $y=0$ and $r=0$ is well defined and in $\mathcal{A}_{\phg}^{\left(\mathcal{E}_{\lf},\mathcal{E}_{\rf}+n+1\right)}\left(S_{2}^{n+1}\right)$.
The function $K_{0}'\left(\theta_{0},\theta,\theta_{n+1}\right)$
is precisely the expression in the coordinates $\left(\theta_{0},\theta,\theta_{n+1}\right)$
of the restriction of $\kappa_{P}$ to $\ff_{0|\left(p,p\right)}$
(i.e. $N_{p}\left(P\right)$) with the singular density factor removed.
We interpret $N_{p}\left(P\right)$ as an operator on functions over
$\mathbb{R}_{1}^{n+1}$, as follows:
\begin{align*}
N_{p}\left(P\right) & =K'_{0}\left(\frac{x}{r},\frac{y-\tilde{y}}{r},\frac{\tilde{x}}{r}\right)\frac{d\tilde{x}d\tilde{y}}{\tilde{x}^{n+1}}.
\end{align*}
It is proved in \cite{MazzeoPhD} that this definition does not depend
on the choice of coordinates, and determines an operator on functions
over $X_{p}$, invariant under positive dilations and translations
by vectors in $T_{p}\partial X$.

Instead of using polar coordinates, it is convenient to introduce
``left'' projective coordinates
\[
s=\frac{x}{\tilde{x}},u=\frac{y-\tilde{y}}{\tilde{x}}
\]
for $X_{0}^{2}$ defined near $\left(p,p\right)$ and away from the
right face. These coordinates restrict to coordinates on $\ff_{0|\left(p,p\right)}$
away from the intersection $\ff_{0|\left(p,p\right)}\cap\rf$. More
precisely, we have on $\ff_{0|\left(p,p\right)}\cap\rf$
\[
s=\frac{\theta_{0}}{\theta_{n+1}},u=\frac{\theta}{\theta_{n+1}}.
\]
This change of coordinates extends to a diffeomorphism
\begin{align*}
S_{2}^{n+1} & \to\overline{\mathbb{R}}_{1}^{n+1}\\
\left(\theta_{0},\theta,\theta_{n+1}\right) & \mapsto\left(\frac{\theta_{0}}{\theta_{n+1}},\frac{\theta}{\theta_{n+1}}\right),
\end{align*}
where $\overline{\mathbb{R}}_{1}^{n+1}$ is the radial compactification
of the half-space $\mathbb{R}_{1}^{n+1}$. Thus, the coordinates $\left(s,u\right)$
parametrize $\ff_{0|\left(p,p\right)}$ as the radial compactification
$\overline{\mathbb{R}}_{1}^{n+1}$. It follows that we can write
\[
N_{p}\left(P\right)=K_{0}''\left(\frac{x}{\tilde{x}},\frac{y-\tilde{y}}{\tilde{x}}\right)\frac{d\tilde{x}d\tilde{y}}{\tilde{x}^{n+1}},
\]
where $K_{0}''\left(s,u\right)\in\mathcal{A}_{\phg}^{\left(\mathcal{E}_{\lf},\mathcal{E}_{\rf}+n+1\right)}\left(\overline{\mathbb{R}}_{1}^{n+1}\right)$.

If $P\in\Psi_{0b}^{-\infty,\left(\mathcal{E}_{\lf},\mathcal{E}_{\rf},\mathcal{E}_{\ff_{b}},\mathcal{E}_{\ff_{0}}\right)}\left(X\right)$,
the construction is exactly as above, except that the function $K'_{0}\left(\theta_{0},\theta,\theta_{n+1}\right)$
is polyhomogeneous on $S_{2}^{n+1}$ blown up at the corner, and correspondingly
$K_{0}''\left(s,u\right)$ is polyhomogeneous on $\overline{\mathbb{R}}_{1}^{1}\times\overline{\mathbb{R}}^{n}$.
The index sets are $\mathcal{E}_{\lf}$ at $\left\{ s=0\right\} $,
$\mathcal{E}_{\rf}+n+1$ at $\left\{ s=\infty\right\} $, and $\mathcal{E}_{\ff_{b}}+n$
at $\left\{ \left|u\right|=\infty\right\} $.

In the $0$-trace and $0$-Poisson cases, the definitions are similar.
Given $A\in\Psi_{0\tr}^{-\infty,\mathcal{E}}\left(X,\partial X\right)$
with $\left[\mathcal{E}_{\ff}\right]=0$, choose a point $p\in\partial X$
and coordinates $\left(x,y\right)$ for $X$ centered at $p$. Using
polar coordinates, we can write the Schwartz kernel of $A$ near $\left(p,p\right)$
as
\[
K\left(y;\tilde{x},y-\tilde{y}\right)d\tilde{x}d\tilde{y}=K'\left(y;r,\frac{y-\tilde{y}}{r},\frac{\tilde{x}}{r}\right)\frac{d\tilde{x}d\tilde{y}}{\tilde{x}^{n+1}}
\]
for some function $K'\left(y;r,\theta,\theta_{n+1}\right)$ polyhomogeneous
on $\mathbb{R}^{n}\times\mathbb{R}_{1}^{1}\times\left\{ S_{2}^{n+1}\cap\left\{ \theta_{0}=0\right\} \right\} $
with index sets $\mathcal{E}_{\ff}$ at $\left\{ r=0\right\} $ and
$\mathcal{E}_{\of}+n+1$ at $\left\{ \theta_{n+1}=0\right\} $. Of
course, the intersection $S_{2}^{n+1}\cap\left\{ \theta_{0}=0\right\} $
can be canonically identified with the unit ball $D^{n}$ parametrized
by $\theta$, since if $\left(\theta_{0},\theta,\theta_{n+1}\right)\in S_{2}^{n+1}$
then $\theta_{0}=0$ is equivalent to $\theta_{n+1}=\sqrt{1-\left|\theta\right|^{2}}$.
Since $\lead\left(\mathcal{E}_{\ff}\right)=0$, the restriction $K'_{0}\left(\theta\right):=K'\left(0;0,\theta,\sqrt{1-\left|\theta\right|^{2}}\right)$
is a well-defined polyhomogeneous function in $\mathcal{A}_{\phg}^{\mathcal{E}_{\of}+n+1}\left(D^{n}\right)$.
Again, the projective coordinate
\[
u=\frac{y-\tilde{y}}{\tilde{x}}=\frac{\theta}{\theta_{n+1}}=\frac{\theta}{\sqrt{1-\left|\theta\right|^{2}}}
\]
allows us to identify $D^{n}$ with the radial compactification $\overline{\mathbb{R}}^{n+1}$,
and we can write
\begin{align*}
N_{p}\left(A\right) & =K'\left(0;0,\frac{y-\tilde{y}}{r},\frac{\tilde{x}}{r}\right)\frac{d\tilde{x}d\tilde{y}}{\tilde{x}^{n+1}}\\
 & =K'_{0}\left(\frac{y-\tilde{y}}{r}\right)\frac{d\tilde{x}d\tilde{y}}{\tilde{x}^{n+1}}\\
 & =K_{0}''\left(\frac{y-\tilde{y}}{\tilde{x}}\right)\frac{d\tilde{x}d\tilde{y}}{\tilde{x}^{n+1}}
\end{align*}
with $K_{0}''\left(u\right)\in\mathcal{A}_{\phg}^{\mathcal{E}_{\of}+n+1}\left(\overline{\mathbb{R}}^{n}\right)$.
This allows us to interpret $N_{p}\left(A\right)$ as a translation
and dilation invariant operator from functions on $X_{p}$ to functions
on $\partial X_{p}=T_{p}\partial X$.

Similarly, given $B\in\Psi_{0\po}^{-\infty,\mathcal{E}}\left(\partial X,X\right)$
with $\left[\mathcal{E}_{\ff}\right]=0$, choose a point $p\in\partial X$
and coordinates $\left(x,y\right)$ for $X$ centered at $p$. Write
again the Schwartz kernel of $B$ near $\left(p,p\right)$ in polar
coordinates as
\[
K\left(y;x,y-\tilde{y}\right)=K'\left(y;r,\frac{x}{r},\frac{y-\tilde{y}}{r}\right)x^{-n}d\tilde{y},
\]
where $K'\left(y;r,\theta_{0},\theta\right)$ is polyhomogeneous on
$\mathbb{R}^{n}\times\mathbb{R}_{1}^{1}\times\left\{ S_{2}^{n+1}\cap\left\{ \theta_{n+1}=0\right\} \right\} $
with index sets $\mathcal{E}_{\ff}$ at $\left\{ r=0\right\} $ and
$\mathcal{E}_{\of}+n$ at $\left\{ \theta_{0}=0\right\} $. Since
$\left[\mathcal{E}_{\ff}\right]=0$ we have a well-defined restriction
$K'_{0}\left(\theta\right):=K'\left(0;0,\sqrt{1-\left|\theta\right|^{2}},\theta\right)\in\mathcal{A}_{\phg}^{\mathcal{E}_{\of}+n}\left(D^{n}\right)$;
introducing ``right'' projective coordinates
\[
v=\frac{y-\tilde{y}}{x},
\]
we get a diffeomorphism $D^{n}\to\overline{\mathbb{R}}^{n}$ given
by $\theta\mapsto\frac{\theta}{\sqrt{1-\left|\theta\right|^{2}}}$,
and we have
\begin{align*}
N_{p}\left(B\right) & =K'\left(0;0,\frac{x}{r},\frac{y-\tilde{y}}{r}\right)\frac{d\tilde{y}}{x^{n}}\\
 & =K'_{0}\left(\frac{y-\tilde{y}}{r}\right)\frac{d\tilde{y}}{x^{n}}\\
 & =K''_{0}\left(\frac{y-\tilde{y}}{x}\right)\frac{d\tilde{y}}{x^{n}}.
\end{align*}
This allows us to interpret $N_{p}\left(B\right)$ as a dilation and
translation invariant operator from functions on $\partial X_{p}=T_{p}\partial X$
to functions on $X_{p}$.

\subsubsection{\label{subsubsec:The-Bessel-family-physical}The Bessel family}

The Bessel family of an operator in any of the classes $\Psi_{0}^{-\infty,\bullet}\left(X\right)$,
$\Psi_{0b}^{-\infty,\bullet}\left(X\right)$, $\Psi_{0\tr}^{-\infty,\bullet}\left(X,\partial X\right)$,
$\Psi_{0\po}^{-\infty,\bullet}\left(\partial X,X\right)$, is a convenient
equivalent way to express the normal family we just described. Unlike
the normal family, the Bessel family is not completely invariant;
rather, it depends on the auxiliary choice of a vector field $V$
on $X$ defined along $\partial X$, transversal to $\partial X$
and inward-pointing. Let's fix such a vector field. Then, for every
point $p\in\partial X$, the vector $V_{p}$ represents a preferred
point in the interior of the model space $X_{p}$ described above.
This identifies $X_{p}$ with the product $T_{p}\partial X\times\mathbb{R}_{1}^{1}$.
Moreover, $V$ determines a global smooth section of the inward-pointing
normal bundle $N^{+}\partial X$, which then gets identified with
the trivial fiber bundle $\partial X\times\mathbb{R}_{1}^{1}$.

Consider an operator $P$ in $\Psi_{0}^{-\infty,\left(\mathcal{E}_{\lf},\mathcal{E}_{\rf},\mathcal{E}_{\ff_{0}}\right)}\left(X\right)$
or in $\Psi_{0b}^{-\infty,\left(\mathcal{E}_{\lf},\mathcal{E}_{\rf},\mathcal{E}_{\ff_{b}},\mathcal{E}_{\ff_{0}}\right)}\left(X\right)$,
with $\left[\mathcal{E}_{\ff_{0}}\right]=0$, and a point $p\in\partial X$.
We have seen above that the normal operator $N_{p}\left(P\right)$
acts on functions over $X_{p}$, and is invariant under translations
by elements of $T_{p}\partial X$ and positive dilations. Thanks to
the decomposition $X_{p}=T_{p}\partial X\times\mathbb{R}_{1}^{1}$,
we can conjugate $P$ by the Fourier transform on functions over $T_{p}\partial X$,
and the result can be interpreted as an operator on functions over
$T_{p}^{*}\partial X\times\mathbb{R}_{1}^{1}$.\footnote{Strictly speaking, the invariant Fourier transform on a vector spaces
$V$ transforms functions over $V$ to density-valued functions over
the dual $V^{*}$; however, since we are conjugating an operator,
we can invariantly factor out a common linear density in the domain
and in the codomain, and the result is invariant under this choice.}. The translation invariance of $N_{p}\left(P\right)$ implies that
the result depends only parametrically on the covector. More precisely,
the conjugation is a smooth family $\eta\mapsto\hat{N}_{\eta}\left(P\right)$,
where $\eta$ ranges in $T_{p}^{*}\partial X\backslash0$ and $\hat{N}_{\eta}\left(P\right)$
is an operator on functions over $\mathbb{R}_{1}^{1}$. We call $\hat{N}_{\eta}\left(P\right)$
the \emph{Bessel operator} of $P$ at $\eta$. A different choice
of the vector field $V$ only changes $\hat{N}_{\eta}\left(P\right)$
by a smooth, nowhere zero factor.

Let's express $\hat{N}_{\eta}\left(P\right)$ in coordinates. Let
$x,y$ be coordinates for $X$ centered at $p$ and compatible with
$V$, in the sense that $Vx\equiv1$ along the domain of definition
of the chart. As explained above, we have
\[
N_{p}\left(P\right)=k\left(\frac{x}{\tilde{x}},\frac{y-\tilde{y}}{\tilde{x}}\right)\frac{d\tilde{x}d\tilde{y}}{\tilde{x}^{n+1}},
\]
where
\[
k\left(s,u\right)\in\begin{cases}
\mathcal{A}_{\phg}^{\left(\mathcal{E}_{\lf},\mathcal{E}_{\rf}+n+1\right)}\left(\overline{\mathbb{R}}_{1}^{n+1}\right) & \text{ if \ensuremath{P\in\Psi_{0}^{-\infty,\left(\mathcal{E}_{\lf},\mathcal{E}_{\rf},\mathcal{E}_{\ff_{0}}\right)}\left(X\right)}}\\
\mathcal{A}_{\phg}^{\left(\mathcal{E}_{\lf},\mathcal{E}_{\rf}+n+1,\mathcal{E}_{\ff_{b}}+n\right)}\left(\overline{\mathbb{R}}_{1}^{1}\times\overline{\mathbb{R}}^{n}\right) & \text{ if \ensuremath{P\in\Psi_{0b}^{-\infty,\left(\mathcal{E}_{\lf},\mathcal{E}_{\rf},\mathcal{E}_{\ff_{b}},\mathcal{E}_{\ff_{0}}\right)}\left(X\right)}}
\end{cases}.
\]
A computation shows that the Bessel family is then
\[
\hat{N}_{\eta}\left(P\right)=\hat{k}\left(\frac{x}{\tilde{x}},\tilde{x}\eta\right)\frac{d\tilde{x}}{\tilde{x}},
\]
where $\hat{k}\left(s,\eta\right)$ is the Fourier transform of $k\left(s,u\right)$
in the $u$ variables. This expression implies immediately that $\hat{N}_{\eta}\left(P\right)$
is not dilation invariant anymore. Rather, it is ``homogeneous of
degree $0$'' in $\eta$ in the following sense. Given $t>0$, denote
by $\lambda_{t}$ the ``dilation by $t$'' action on $N_{p}^{+}\partial X$.
Then we have
\[
\lambda_{t}^{*}\circ\hat{N}_{\eta}\left(P\right)\circ\lambda_{t^{-1}}^{*}=\hat{N}_{t\eta}\left(P\right).
\]
We refer to \cite{MazzeoPhD, MazzeoEdgeI, Lauter, Hintz0calculus}
for more details.

For $0$-trace and $0$-Poisson operators, the construction is similar.
If $A\in\Psi_{0\tr}^{-\infty,\mathcal{E}}\left(X,\partial X\right)$
with $\left[\mathcal{E}_{\ff}\right]=0$, as explained above we can
write the normal operator $A$ at $p$ with respect to coordinates
$\left(x,y\right)$ with $Vx\equiv1$ near $p$, as
\[
N_{p}\left(A\right)=k\left(\frac{y-\tilde{y}}{\tilde{x}}\right)\frac{d\tilde{x}d\tilde{y}}{\tilde{x}^{n+1}}
\]
for some $k\left(u\right)\in\mathcal{A}_{\phg}^{\mathcal{E}_{\of}+n+1}\left(\overline{\mathbb{R}}^{n}\right)$.
The Bessel family at $p$ in these coordinates is
\[
\hat{N}_{\eta}\left(A\right)=\hat{k}\left(\tilde{x}\eta\right)\frac{d\tilde{x}}{\tilde{x}}.
\]
Invariantly, $\eta\mapsto\hat{N}_{\eta}\left(A\right)$ can be interpreted
as a family of operators from functions over $N_{p}^{+}\partial X$
to $\mathbb{C}$, smoothly parametrized by $\eta\in T_{p}^{*}\partial X\backslash0$.
Again, the dependence on the choice of the vector field $V$ is only
up to a nowhere zero phase factor. This time, the dilation invariance
of $N_{p}\left(A\right)$ translates into the homogeneity property
\[
\hat{N}_{\eta}\left(A\right)\circ\lambda_{t^{-1}}^{*}=\hat{N}_{t\eta}\left(A\right).
\]
Similarly, if $B\in\Psi_{0\po}^{-\infty,\mathcal{E}}\left(X,\partial X\right)$
with $\left[\mathcal{E}_{\ff}\right]=0$, we can write its normal
operator $N_{p}\left(B\right)$ in coordinates as
\[
N_{p}\left(B\right)=k\left(\frac{y-\tilde{y}}{x}\right)\frac{d\tilde{y}}{x^{n}}
\]
for some $k\left(v\right)\in\mathcal{A}_{\phg}^{\mathcal{E}_{\of}+n}\left(\overline{\mathbb{R}}^{n}\right)$,
and the corresponding Bessel family at $p$ is simply
\[
\hat{N}_{\eta}\left(B\right)=\hat{k}\left(x\eta\right).
\]
We can think of $\eta\mapsto\hat{N}_{\eta}\left(B\right)$ invariantly
as a family of operators from $\mathbb{C}$ to functions over $N_{p}^{+}\partial X$,
smoothly parametrized by $\eta\in T_{p}^{*}\partial X\backslash0$.
The dilation invariance of $N_{p}\left(B\right)$ translates into
the homogeneity property
\[
\lambda_{t}^{*}\circ\hat{N}_{\eta}\left(B\right)=\hat{N}_{t\eta}\left(B\right).
\]
We refer to §2.6.2 and §2.7.2 of \cite{UsulaPhD} for more details.

\subsection{\label{subsec:Parametrices-for-fully}Parametrices for fully $0$-elliptic
operators}

The $0$-calculus was developed by Mazzeo and Melrose in order to
construct parametrices for \emph{fully $0$-elliptic operators}.
\begin{defn}
($0$-differential operators) A \emph{$0$-differential operator}
on $X$ is a differential operator $L$ which, in coordinates $\left(x,y\right)$
near any point $p\in\partial X$, can be written in the form
\[
L=\sum_{j+\left|\alpha\right|\leq m}L_{j,\alpha}\left(x,y\right)\left(x\partial_{x}\right)^{j}\left(x\partial_{y}\right)^{\alpha}
\]
where $j\in\mathbb{N},\alpha\in\mathbb{N}^{n}$, and $L_{j,\alpha}\left(x,y\right)$
are locally defined smooth functions. We denote by $\Diff_{0}^{m}\left(X\right)$
the space of $0$-differential operators of order $\leq m$.
\end{defn}

Invariantly, a $0$-differential operator can be described as a $C^{\infty}\left(X\right)$
linear combination of compositions of vector fields on $X$ which
vanish along the boundary. These vector fields are called\emph{ $0$-vector
fields}, and they form a Lie subalgebra $\mathcal{V}_{0}\left(X\right)$
of the Lie algebra of vector fields on $X$. The object $\mathcal{V}_{0}\left(X\right)$
is central in the theory. Via the Serre--Swan Theorem, $\mathcal{V}_{0}\left(X\right)$
determines uniquely a smooth vector bundle $^{0}TX\to X$, the \emph{$0$-tangent
bundle}, whose module of sections is $\mathcal{V}_{0}\left(X\right)$.
The inclusion $\mathcal{V}_{0}\left(X\right)\to\mathcal{V}\left(X\right)$
into the module of vector fields on $X$ determines a bundle map $\#:{}^{0}TX\to TX$,
which is an isomorphism at each point $p\in X^{\circ}$ since $0$-vector
fields span $T_{p}X^{\circ}$ pointwise. However, $\#$ vanishes identically
along $\partial X$. The dual of $^{0}TX$ is denoted by $^{0}T^{*}X$,
and can be thought of as the bundle whose smooth sections are the
singular $1$-forms of the form $x^{-1}\omega$, where $\omega$ is
a smooth $1$-form on $X$ and $x$ is a boundary defining function
on $X$.

By construction, $0$-differential operators can never be elliptic
in the standard sense: due to the degeneracy of $0$-vector fields
along $\partial X$, the principal symbol $\sigma\left(L\right)$
of an operator $L\in\Diff_{0}^{m}\left(X\right)$ (thought of as a
smooth function on $T^{*}X$, fibrewise homogeneous of degree $m$)
always vanishes to order $m$ along $\partial X$. The correct notion
of ellipticity involves extending $\sigma\left(L\right)$ from the
interior (i.e. from $T^{*}X^{\circ}\equiv{^{0}T^{*}X_{|X^{\circ}}}$)
to a smooth function $^{0}\sigma\left(L\right)$ on $^{0}T^{*}X$,
fibrewise homogeneous of degree $m$.
\begin{defn}
The \emph{principal $0$-symbol} of an operator $L\in\Diff_{0}^{m}\left(X\right)$
is the unique smooth extension to $^{0}T^{*}X$ of the principal symbol
$\sigma\left(L\right)$ restricted to $T^{*}X^{\circ}$. $L$ is called
\emph{$0$-elliptic} if $^{0}\sigma\left(L\right)$ is nowhere zero
on $^{0}T^{*}X\backslash0$.
\end{defn}

The small $0$-calculus $\Psi_{0}^{\bullet}\left(X\right)$ recalled
above was developed precisely with the aim of inverting $0$-elliptic
operators modulo smoothing operators. Indeed, it is easy to check
that $\Diff_{0}^{m}\left(X\right)\subseteq\Psi_{0}^{m}\left(X\right)$,
and using a variant of the standard Hadamard parametrix construction,
it is not hard (with the small $0$-calculus at hand) to prove the
following
\begin{thm}
(Mazzeo--Melrose) Let $L\in\Diff_{0}^{m}\left(X\right)$ be $0$-elliptic.
Then there exists an operator $Q\in\Psi_{0}^{-m}\left(X\right)$ such
that $LQ\equiv QL\equiv I\mod\Psi_{0}^{-\infty}\left(X\right)$.
\end{thm}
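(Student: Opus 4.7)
The plan is a classical Hadamard-style parametrix construction, lifted to the $0$-stretched double space $X_{0}^{2}$. The structural facts I take for granted---which constitute the real content of the small $0$-calculus and are established in \cite{MazzeoPhD, MazzeoEdgeI} via the pull-back / push-forward technique on an appropriate triple $0$-stretched space---are:
(i) the inclusion $\Diff_{0}^{m}(X)\subseteq\Psi_{0}^{m}(X)$, together with an extension of the principal $0$-symbol to a surjective symbol map on $\Psi_{0}^{m}(X)$ whose kernel is $\Psi_{0}^{m-1}(X)$;
(ii) the composition theorem $\Psi_{0}^{m}(X)\circ\Psi_{0}^{m'}(X)\subseteq\Psi_{0}^{m+m'}(X)$, multiplicative on $0$-symbols, with corollary that $\Psi_{0}^{-\infty}(X)$ is a two-sided ideal;
(iii) a Borel-type asymptotic summation: given $Q_{k}\in\Psi_{0}^{-m-k}(X)$, there is $Q\in\Psi_{0}^{-m}(X)$ with $Q-\sum_{k<N}Q_{k}\in\Psi_{0}^{-m-N}(X)$ for every $N$.

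Granting these, the iteration is standard. By $0$-ellipticity, ${}^{0}\sigma(L)$ is a smooth invertible fibrewise homogeneous symbol on ${}^{0}T^{*}X\setminus0$, so by (i) I choose $Q_{0}\in\Psi_{0}^{-m}(X)$ with ${}^{0}\sigma(Q_{0})=({}^{0}\sigma(L))^{-1}$, whence (i) and (ii) give $LQ_{0}-I\in\Psi_{0}^{-1}(X)$. Inductively, if $L(Q_{0}+\cdots+Q_{N})=I-R_{N+1}$ with $R_{N+1}\in\Psi_{0}^{-N-1}(X)$, I pick $Q_{N+1}\in\Psi_{0}^{-m-N-1}(X)$ whose leading $0$-symbol equals $({}^{0}\sigma(L))^{-1}\cdot{}^{0}\sigma_{-N-1}(R_{N+1})$, which lowers the error by one order. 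Asymptotic summation (iii) then produces $Q_{R}\in\Psi_{0}^{-m}(X)$ with $LQ_{R}-I\in\bigcap_{N}\Psi_{0}^{-N}(X)=\Psi_{0}^{-\infty}(X)$, the equality following from the definitions (the conormal singularity at $\Delta_{0}$ is erased, while the behaviour at $\ff_{0},\lf,\rf$ stays fixed). A left parametrix $Q_{L}\in\Psi_{0}^{-m}(X)$ is produced by the same argument applied to the formal $L_{b}^{2}$-adjoint of $L$, which is again a $0$-elliptic $0$-differential operator. The standard identity
\[
Q_{L}-Q_{R}=Q_{L}(I-LQ_{R})+(Q_{L}L-I)Q_{R},
\]
combined with the two-sided ideal property from (ii), shows $Q_{L}\equiv Q_{R}\mod\Psi_{0}^{-\infty}(X)$, so $Q:=Q_{R}$ is simultaneously a left and a right parametrix.

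The hard part---the whole reason one needs the $0$-stretched space rather than the ordinary double space---is hidden in (ii): one must prove that the push-forward to $X_{0}^{2}$ of the composition of two lifted Schwartz kernels, carried out on a suitable triple $0$-stretched space, remains smooth across $\ff_{0}$, vanishes to infinite order at $\lf$ and $\rf$, and preserves the conormal order at $\Delta_{0}$. This is where the geometry of the $0$-blow-ups genuinely enters. Once this composition theorem is in hand, the Hadamard iteration proceeds exactly as on a closed manifold and the rest of the argument is routine manipulation of symbols and index sets.
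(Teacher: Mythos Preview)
Your proposal is correct and matches the paper's approach: the paper does not give a detailed proof but simply remarks that, with the small $0$-calculus in hand, the result follows from ``a variant of the standard Hadamard parametrix construction,'' which is precisely the symbolic iteration you carry out. Your explicit identification of the three structural ingredients (symbol surjectivity, composition, asymptotic summation) and the left/right parametrix reconciliation is exactly the expected elaboration of that one-line description.
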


The result above is sufficient for properties such as elliptic regularity
on appropriate Sobolev spaces associated to $\mathcal{V}_{0}\left(X\right)$.
More precisely, given $k\in\mathbb{N}$ and $\delta\in\mathbb{R}$,
denote by $x^{\delta}H_{0}^{k}\left(X\right)$ the space of functions
$u\in x^{\delta}L_{b}^{2}\left(X\right)$\footnote{The space $L_{b}^{2}\left(X\right)$ is the space of functions on
$X^{\circ}$ with finite $L^{2}$ norm with respect to a singular
positive density of the form $x^{-1}\omega$, where $\omega$ is a
smooth positive density on $X$ and $x$ is a boundary defining function.
The choice of $L_{b}^{2}\left(X\right)$ as our base space for $0$-Sobolev
spaces is purely conventional: with this choice, $o\left(x^{\delta}\right)$
conormal functions are in $x^{\delta}L_{b}^{2}\left(X\right)$.} for which $V_{1}\cdots V_{l}u\in x^{\delta}L_{b}^{2}\left(X\right)$
for every $V_{1},...,V_{l}\in\mathcal{V}_{0}\left(X\right)$ and $l\leq k$.
As usual, $x^{\delta}H_{0}^{k}\left(X\right)$ is a Banach space with
a norm induced by a choice of a boundary defining function $x$, a
smooth positive density $\omega$ on $X$, and a finite subset of
$\mathcal{V}_{0}\left(X\right)$ which generates $\mathcal{V}_{0}\left(X\right)$
pointwise. The induced Banach topology is intrinsic of $X$. If $Q\in\Psi_{0}^{m}\left(X\right)$
with $m\in\mathbb{Z}$, then $Q$ induces for every $\delta\in\mathbb{R}$
a continuous linear map
\[
Q:x^{\delta}H_{0}^{k+m}\left(X\right)\to x^{\delta}H_{0}^{k}\left(X\right).
\]
Consequently, if $L\in\Diff_{0}^{m}\left(X\right)$ is $0$-elliptic,
for every $\delta$ and $k$ we have $0$-elliptic estimates of the
form
\[
\left|\left|u\right|\right|_{x^{\delta}H_{0}^{k+m}}\leq C\left(\left|\left|u\right|\right|_{x^{\delta}L_{b}^{2}}+\left|\left|Lu\right|\right|_{x^{\delta}H_{0}^{k}}\right).
\]
However, this is not enough to guarantee that the induced map
\[
L:x^{\delta}H_{0}^{k+m}\left(X\right)\to x^{\delta}H_{0}^{k}\left(X\right)
\]
is \emph{Fredholm}. This is essentially due to the fact that the residual
operators in $\Psi_{0}^{-\infty}\left(X\right)$ are in general \emph{non-compact}
as operators on $x^{\delta}L_{b}^{2}\left(X\right)$.

In order to invert $L$ modulo compact remainders, one needs to be
able to invert every Bessel operator of $L$ on the appropriate weighted
space. Since $L\in\Psi_{0}^{m}\left(X\right)$, one can define $N\left(L\right)$
as in §\ref{subsec:Model-families} as the restriction of $\kappa_{L}=\beta_{0}^{*}K_{L}$
to the $0$-front face $\ff_{0}$ of $X_{0}^{2}$. Its interpretation
as a family of operators is very easy to understand: if in local coordinates
$\left(x,y\right)$ centered at a point $p\in\partial X$ $L$ takes
the local form
\[
L=\sum_{j+\left|\alpha\right|\leq m}L_{j,\alpha}\left(x,y\right)\left(x\partial_{x}\right)^{j}\left(x\partial_{y}\right)^{\alpha},
\]
then the normal operator at $p$ in the induced global linear coordinates
on $X_{p}$ (still denoted by $\left(x,y\right)$ with slight abuse
of notation) is simply
\[
N_{p}\left(L\right)=\sum_{j+\left|\alpha\right|\leq m}L_{j,\alpha}\left(0,0\right)\left(x\partial_{x}\right)^{j}\left(x\partial_{y}\right)^{\alpha}.
\]
Choosing a vector field $V$ on $X$ transversal to $\partial X$
and inward-pointing, and assuming that $Vx\equiv1$ near $p$, the
Bessel operator of $L$ (with respect to the auxiliary choice of $V$)
at a covector $\eta\in T_{p}^{*}\partial X$ is then
\[
\hat{N}_{\eta}\left(L\right)=\sum_{j+\left|\alpha\right|\leq m}L_{j,\alpha}\left(0,0\right)\left(x\partial_{x}\right)^{j}\left(ix\eta\right)^{\alpha}.
\]
We remark that $\hat{N}_{\eta}\left(L\right)$ should be interpreted
as an operator on functions over $N_{p}^{+}\partial X$. The evaluation
of $\hat{N}_{\eta}\left(L\right)$ at $\eta=0$ determines a simpler
operator which depends only on $p$, which is called the \emph{indicial
}operator of $L$ at $p$:
\[
I_{p}\left(L\right)=\sum_{j\leq m}L_{j,0}\left(0,0\right)\left(x\partial_{x}\right)^{j}.
\]
$I_{p}\left(L\right)$ can be invariantly interpreted as a \emph{dilation
invariant} operator on functions over $N_{p}^{+}\partial X$. Its
fibrewise Mellin transform yields a \emph{polynomial}
\[
I_{p}^{M}\left(L\right)=\sum_{j\leq m}L_{j,0}\left(0,0\right)z^{j},
\]
which is called the \emph{indicial polynomial} of $L$ at $p$.
\begin{defn}
An \emph{indicial root} for $L$ at $p$ is a root of $I_{p}^{M}\left(L\right)$.
Equivalently, it is a complex number $\mu$ for which $x^{\mu}$ is
in the kernel of $I_{p}\left(L\right)$. The \emph{multiplicity} of
an indicial root $\mu$ is $m-1$, where $m$ is the algebraic multiplicity
of $\mu$ as a root of $I_{p}^{M}\left(L\right)$. Equivalently, $m$
is the maximum among the integers $p$ such that $x^{\mu}\left(\log x\right)^{p}$
is in the kernel of $I_{p}\left(L\right)$.
\end{defn}

In general, the indicial roots of $L$ depend on the point $p\in\partial X$.
However, this does not happen for most geometrically interesting operators,
so from now on \emph{we shall assume that the $0$-elliptic operators
considered in this paper have constant indicial roots}. We denote
by $\spec_{b}\left(L\right)$ the set of indicial roots of $L$, and
by $\widetilde{\spec}_{b}\left(L\right)$ the set of pairs $\left(\mu,M_{\mu}\right)$,
where $\mu\in\spec_{b}\left(L\right)$ and $M_{\mu}$ is the multiplicity
of $\mu$.

A weight $\delta\in\mathbb{R}$ is called \emph{nonindicial} if $\delta\not\in\Re\left(\spec_{b}\left(L\right)\right)$.
It is proved in \cite{MazzeoEdgeI} that, if $L\in\Diff_{0}^{m}\left(X\right)$
is $0$-elliptic and $\delta$ is nonindicial, then the Bessel operators
\begin{equation}
\hat{N}_{\eta}\left(L\right):x^{\delta}H_{0}^{m}\left(N_{p}^{+}\partial X\right)\to x^{\delta}L_{b}^{2}\left(N_{p}^{+}\partial X\right)\label{eq:bessel_on_weighted}
\end{equation}
are Fredholm for every $\eta\in T^{*}\partial X\backslash0$. Moreover,
the kernel and orthogonal complement of the range of $\hat{N}_{\eta}\left(L\right)$
consist of polyhomogeneous functions on the radial compactification
$\overline{N_{p}^{+}\partial X}=\left[0,+\infty\right]$, with index
set $\mathcal{E}$ with $\Re\left(\mathcal{E}\right)>\delta$ at $0$,
and infinite order of vanishing (i.e. Schwartz decay) at $\infty$.
\begin{defn}
A weight $\delta\in\mathbb{R}$ is said to be \emph{injective }(resp.
\emph{surjective}, \emph{invertible}) for $L$ if it is non-indicial
and the map \ref{eq:bessel_on_weighted} is injective (resp. surjective,
invertible).
\end{defn}

If $\delta$ is an invertible weight, then one obtains good parametrices
modulo \emph{compact} operators on $x^{\delta}L_{b}^{2}\left(X\right)$.
\begin{thm}
\label{thm:(Mazzeo=002013Melrose)Good_parametrices_fully_0-elliptic}(Mazzeo--Melrose)
Let $L\in\Diff_{0}^{m}\left(X\right)$ be $0$-elliptic with constant
indicial roots, and let $\delta\in\mathbb{R}$ be an invertible weight
for $L$. Then there exist operators $A,B\in\Psi_{0}^{-m}\left(X\right)+\Psi_{0}^{-\infty,\mathcal{H}}\left(X\right)$,
with $\mathcal{H}=\left(\mathcal{H}_{\lf},\mathcal{H}_{\rf},\mathcal{H}_{\ff_{0}}\right)$
and
\begin{align*}
\Re\left(\mathcal{H}_{\lf}\right) & >\delta\\
\Re\left(\mathcal{H}_{\rf}\right) & >-\delta-1\\
\left[\mathcal{H}_{\ff_{0}}\right] & =0,
\end{align*}
such that:
\begin{enumerate}
\item $LA=I-R_{A}$, with $R_{A}\in\Psi^{-\infty,\left(\infty,\mathcal{H}_{\rf}\right)}\left(X\right)$;
\item $BL=I-R_{B}$, with $R_{B}\in\Psi^{-\infty,\left(\mathcal{H}_{\lf},\infty\right)}\left(X\right)$.
\end{enumerate}
\end{thm}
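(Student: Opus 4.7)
The plan is a two-stage parametrix construction for $A$; the left parametrix $B$ follows by a symmetric argument. First I build a symbolic parametrix in the small $0$-calculus, and then correct it to infinite order at the $0$-front face $\ff_0$ by inverting the normal family.

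Step one is standard. Since $^{0}\sigma(L)$ is invertible on $^{0}T^{*}X \setminus 0$, an iterative Hadamard-type construction in $\Psi_{0}^{\bullet}(X)$ produces $Q_{0} \in \Psi_{0}^{-m}(X)$ with
\[
L Q_{0} = I - R_{0}, \qquad R_{0} \in \Psi_{0}^{-\infty}(X),
\]
whose lifted kernel has index sets $(\infty, \infty, 0)$: Schwartz at $\lf$ and $\rf$, smooth across $\ff_0$. For step two I seek $Q_{1} \in \Psi_{0}^{-\infty, \mathcal{H}}(X)$ so that the lifted kernel of $L(Q_{0} + Q_{1}) - I$ vanishes to infinite order at $\ff_0$. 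At the level of normal operators this demands $N(L) \circ N(Q_{1}) = N(R_{0})$, together with matching of all subleading coefficients in the $\ff_0$-polyhomogeneous expansion. Passing to the Bessel picture after fixing a transversal inward-pointing vector field $V$, and using that $\delta$ is an invertible weight, the leading equation
\[
\hat{N}_{\eta}(L) \, \hat{N}_{\eta}(Q_{1}) = \hat{N}_{\eta}(R_{0}), \qquad \eta \in T^{*}\partial X \setminus 0,
\]
is inverted fibrewise via the topological isomorphism $\hat{N}_{\eta}(L) : x^{\delta}H_{0}^{m}(N_{p}^{+}\partial X) \to x^{\delta}L_{b}^{2}(N_{p}^{+}\partial X)$. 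Iterating this inversion against successive coefficients of the expansion of $R_{0}$ at $\ff_0$ and Borel summing the resulting formal series assembles $Q_{1}$.

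The main obstacle is to verify that the family $(p, \eta) \mapsto \hat{N}_{\eta}(Q_{1})$ is actually the Bessel family of a polyhomogeneous kernel on $X_{0}^{2}$ with the prescribed index sets. For this I invoke Mazzeo's fine analysis of $\hat{N}_{\eta}(L)^{-1}$: applied to a datum Schwartz at the origin of $N_{p}^{+}\partial X$, its image admits a polyhomogeneous expansion in powers $x^{\mu + k}(\log x)^{\ell}$ with $\mu$ restricted to the indicial roots of $L$ satisfying $\Re(\mu) > \delta$, which forces $\Re(\mathcal{H}_{\lf}) > \delta$. The dual analysis at infinity, carried out via the formal adjoint of $L$ with respect to the $x^{\delta}L_{b}^{2}$ pairing, yields $\Re(\mathcal{H}_{\rf}) > -\delta - 1$, while $[\mathcal{H}_{\ff_0}] = 0$ comes from the fact that the inversion preserves leading homogeneity in $|\eta|$. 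Constancy of the indicial roots is essential here: it guarantees that the index sets are independent of $p \in \partial X$ and hence glue into a genuine polyhomogeneous kernel on $X_{0}^{2}$; with variable indicial roots a substantially more delicate treatment, along the lines of \cite{KrainerMendozaVariableOrders}, would be required.

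Once $Q_{1}$ has been assembled, the residual kernel $R_{A} = I - L(Q_{0} + Q_{1})$ vanishes to infinite order at $\ff_{0}$ by construction, and combined with the Schwartz behavior of $R_{0}$ at $\lf$ and the structure of the correction it pushes down to $\Psi^{-\infty, (\infty, \mathcal{H}_{\rf})}(X)$. The construction of $B$ is dual: one runs the same procedure on the left, solving $N(Q_{1}') \circ N(L) = N(R_{0}')$ coming from $Q_{0}L = I - R_{0}'$; equivalently, one applies the right parametrix construction to the formal adjoint $L^{*}$ at the dual weight and transposes. The roles of $\lf$ and $\rf$ swap, producing $R_{B} \in \Psi^{-\infty, (\mathcal{H}_{\lf}, \infty)}(X)$.
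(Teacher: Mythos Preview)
The paper does not actually prove this theorem; it is recalled from \cite{MazzeoMelroseResolvent, MazzeoPhD, MazzeoEdgeI}, with only a two-line indication of the main ingredients (a detailed description of $\hat N_\eta(L)^{-1}$, and the composition theorem for the $0$-calculus). Your sketch is more detailed than what the paper offers and correctly captures the first ingredient, but it has a genuine gap in the passage from the $\ff_0$-correction to the claimed form of the remainder.

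After your two steps, $R_A = R_0 - LQ_1$ does vanish to infinite order at $\ff_0$, so it is very residual, say in $\Psi^{-\infty,(\mathcal{H}_{\lf}', \mathcal{H}_{\rf})}(X)$ for some $\mathcal{H}_{\lf}'$ with $\Re(\mathcal{H}_{\lf}')>\delta$. But it does \emph{not} automatically vanish to infinite order at $\lf$. The correction $Q_1$ carries a nontrivial index set at the left face, coming precisely from the indicial-root expansion of $\hat{N}_\eta(L)^{-1}$ applied to Schwartz data that you yourself invoke; since $L$ lifts from the left to an operator tangent to $\lf$ on $X_0^2$, the product $LQ_1$ inherits a nontrivial expansion there, which the Schwartz behavior of $R_0$ at $\lf$ cannot cancel. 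A third step is missing: one solves $LQ_2 \equiv R_1$ term by term in the left-face expansion, using that the indicial operator $I_p(L)$ is invertible on the relevant weighted spaces (this is where nonindiciality of $\delta$ enters again, separately from the Bessel inversion). Only with $A = Q_0 + Q_1 + Q_2$ does the remainder land in $\Psi^{-\infty,(\infty,\mathcal{H}_{\rf})}(X)$. This indicial step, together with the composition theorem needed to control the iterations, is exactly what the paper's brief summary is pointing to.
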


The main ingredients for the proof are:
\begin{enumerate}
\item a detailed description of the inverse of the Bessel operators $\hat{N}_{\eta}\left(L\right)$
as maps $x^{\delta}H_{0}^{m}\left(N_{p}^{+}\partial X\right)\to x^{\delta}L_{b}^{2}\left(N_{p}^{+}\partial X\right)$;
\item a \emph{composition theorem} for the $0$-calculus.
\end{enumerate}
This theorem immediately implies that if $L$ and $\delta$ are as
in the previous theorem, then $L$ is Fredholm as a map $x^{\delta}H_{0}^{k+m}\left(X\right)\to x^{\delta}H_{0}^{k}\left(X\right)$,
because the right and left parametrices $A,B\in\Psi_{0}^{-m}\left(X\right)+\Psi_{0}^{-\infty,\mathcal{H}}\left(X\right)$
are bounded as maps $x^{\delta}H_{0}^{k}\left(X\right)\to x^{\delta}H_{0}^{k+m}\left(X\right)$
and the very residual remainders $R_{A},R_{B}$ are \emph{compact}
as operators $x^{\delta}H_{0}^{k}\left(X\right)\to x^{\delta}H_{0}^{k}\left(X\right)$
for every $k$. We refer to \cite{MazzeoEdgeI, Lauter, Hintz0calculus}
for details.

\section{\label{sec:Boundary-value-problems}Boundary value problems}

In this section, we will sketch our approach to boundary value problems,
and we will solve in detail the model problem. We adopt a strategy
similar to the one introduced by Mazzeo and Vertman in \cite{MazzeoEdgeII}.
A careful analysis of the model problem motivates the need for the
``symbolic $0$-calculus'' developed in the rest of the paper with
the aim of constructing parametrices for elliptic boundary value problems.

\subsection{\label{subsec:Boundary-conditions}Boundary conditions}

\subsubsection{\label{subsubsec:Semi-Fredholmness}Semi-Fredholmness}

Theorem \ref{thm:(Mazzeo=002013Melrose)Good_parametrices_fully_0-elliptic}
provides very satisfactory left and right parametrices for a $0$-elliptic
operator $L$ with constant indicial roots on $x^{\delta}L_{b}^{2}\left(X\right)$,
when the weight $\delta$ is invertible. However, in natural geometric
problems, one very commonly has to deal with $0$-elliptic operators
on weighted spaces where the weight $\delta$ is only injective or
surjective. In this case, $L$ is only \emph{semi-Fredholm}. The following
fundamental result is due to Mazzeo and is essentially a corollary
of Theorem \ref{thm:(Mazzeo=002013Melrose)Good_parametrices_fully_0-elliptic}.
\begin{thm}
\label{thm:(Mazzeo)-generalized-projectors-and-gen-inverse}(Mazzeo)
Let $L\in\Diff_{0}^{m}\left(X\right)$ be $0$-elliptic with constant
indicial roots, and let $\delta\in\mathbb{R}$ be an injective (resp.
surjective) weight for $L$. Then $L$ is semi-Fredholm essentially
injective (resp. essentially surjective) as an operator $x^{\delta}H_{0}^{k+m}\left(X\right)\to x^{\delta}H_{0}^{k}\left(X\right)$.
Moreover, given a choice of an $x^{\delta}L_{b}^{2}$ inner product
induced by a singular density of the form $x^{-2\delta-1}\omega$,
with $\omega$ a smooth positive density on $X$, the $x^{\delta}L_{b}^{2}$
generalized inverse $G$ and orthogonal projectors $P_{1},P_{2}$
onto the kernel and orthogonal complement of the range of $L$, are
all in the $0$-calculus. More specifically:
\begin{enumerate}
\item $G\in\Psi_{0}^{-m}\left(X\right)+\Psi_{0}^{-\infty,\mathcal{H}}\left(X\right)$,
where $\mathcal{H}=\left(\mathcal{H}_{\lf},\mathcal{H}_{\rf},\mathcal{H}_{\ff_{0}}\right)$
is as in Theorem \ref{thm:(Mazzeo=002013Melrose)Good_parametrices_fully_0-elliptic};
\item $P_{1}\in\Psi_{0}^{-\infty,\mathcal{E}}\left(X\right)$ where $\mathcal{E}=\left(\mathcal{E}_{\lf},\mathcal{E}_{\rf},\mathcal{E}_{\ff_{0}}\right)$
and:
\begin{enumerate}
\item $\mathcal{E}_{\lf}$ is the extended union of the simple index sets
generated by the pairs $\left(\mu,M_{\mu}\right)\in\widetilde{\spec}_{b}\left(L\right)$
with $\Re\left(\mu\right)>\delta$;
\item $\mathcal{E}_{\rf}=\overline{\mathcal{E}}_{\lf}-2\delta-1$;
\item $\mathcal{E}_{\ff_{0}}=\mathbb{N}\cup\mathcal{I}$ where $\mathcal{I}$
is an index set with $\Re\left(\mathcal{I}\right)>n$;
\end{enumerate}
\item $P_{2}\in\Psi_{0}^{-\infty,\mathcal{F}}\left(X\right)$ where $\mathcal{F}=\left(\mathcal{F}_{\lf},\mathcal{F}_{\rf},\mathcal{F}_{\ff_{0}}\right)$
and:
\begin{enumerate}
\item $\mathcal{F}_{\lf}$ is the extended union of the simple index sets
generated by the pairs $\left(\mu,M_{\mu}\right)\in\widetilde{\spec}_{b}\left(L^{\dagger}\right)$
with $\Re\left(\mu\right)>\delta$, where $L^{\dagger}$ is the $x^{\delta}L_{b}^{2}$
formal adjoint of $L$;
\item $\mathcal{F}_{\rf}=\overline{\mathcal{F}}_{\lf}-2\delta-1$;
\item $\mathcal{F}_{\ff_{0}}=\mathbb{N}\cup\mathcal{I}$ where $\mathcal{I}$
is an index set with $\Re\left(\mathcal{I}\right)>n$.
\end{enumerate}
\end{enumerate}
If $\delta$ is essentially injective (resp. essentially surjective)
then $P_{1}$ (resp. $P_{2}$) is very residual.
\end{thm}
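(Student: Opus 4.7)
The strategy is to adapt the fully elliptic parametrix construction of Theorem~\ref{thm:(Mazzeo=002013Melrose)Good_parametrices_fully_0-elliptic} to the semi-Fredholm setting by replacing the Bessel inverse with a Bessel \emph{generalized} inverse, and by promoting the finite-dimensional kernel and cokernel projectors of the Bessel family to operators in the $0$-calculus. Without loss of generality I treat the surjective case: the injective case follows by taking formal $x^{\delta}L_{b}^{2}$-adjoints, which turns an injective weight for $L$ into a surjective weight for $L^{\dagger}$, swaps the roles of $P_{1}$ and $P_{2}$, and transposes Schwartz kernels, explaining the symmetric appearance of the index sets in items (2) and (3).

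The first step is to build the data at the Bessel level. For each $\eta \in T^{*}\partial X\setminus 0$, surjectivity of $\hat{N}_{\eta}(L)\colon x^{\delta}H_{0}^{m}(N_{p}^{+}\partial X)\to x^{\delta}L_{b}^{2}(N_{p}^{+}\partial X)$ together with Fredholmness produces a finite-dimensional kernel $K_{\eta}$ varying smoothly with $\eta$. A direct ODE analysis on $N_{p}^{+}\partial X$ (using constancy of the indicial roots and nonindicialness of $\delta$) shows that every element of $K_{\eta}$ is polyhomogeneous on $\overline{N_{p}^{+}\partial X}$, with expansions at $x=0$ controlled by indicial roots $\mu$ with $\Re(\mu)>\delta$ and Schwartz decay at infinity. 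The orthogonal projector $\hat{N}_{\eta}(P_{1})$ onto $K_{\eta}$ and the generalized inverse $\hat{N}_{\eta}(G)$ on the orthogonal complement then have polyhomogeneous Schwartz kernels whose boundary behaviour is dictated by $K_{\eta}$ and by $K_{\eta}^{*}=\ker\hat{N}_{\eta}(L^{\dagger})$, together with the symmetrization induced by the inner product $x^{-2\delta-1}\omega$.

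The second step is to quantize these Bessel families and iterate. Combining the Hadamard construction in the interior with the prescribed normal families, I assemble candidates $G_{0}\in\Psi_{0}^{-m}(X)+\Psi_{0}^{-\infty,\mathcal{H}}(X)$, $P_{1,0}\in\Psi_{0}^{-\infty,\mathcal{E}}(X)$, $P_{2,0}\in\Psi_{0}^{-\infty,\mathcal{F}}(X)$ with the claimed index sets: the $\lf$ indices arise from the polyhomogeneity of $K_{\eta}$ (resp.\ $K_{\eta}^{*}$), the $\rf$ indices from the inner-product duality which swaps $\lf$ and $\rf$ and conjugates, and the $\ff_{0}$ index $\mathbb{N}\cup\mathcal{I}$ from the smooth Taylor expansion of the quantization plus the polyhomogeneous remainder produced by iterating the Hadamard step. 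The composition theorem for the extended $0$-calculus then gives $LG_{0}=I-P_{2,0}+R_{1}$ and $G_{0}L=I-P_{1,0}+R_{2}$ with $R_{i}$ polyhomogeneous of strictly improved order at $\ff_{0}$; a Borel-type asymptotic summation kills $R_{i}$ at $\ff_{0}$ to infinite order, leaving only very residual remainders in $\Psi^{-\infty,\bullet}(X)$. These remainders are trace-class on $x^{\delta}L_{b}^{2}(X)$ by the index-set conditions at $\lf$ and $\rf$, which simultaneously proves semi-Fredholmness; a final finite-rank correction identifies the corrected operators with the genuine generalized inverse and projectors.

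The main obstacle will be the careful index-set bookkeeping throughout the iteration: one must verify that the composition theorem neither destroys the left-face index sets one has paid to install, nor spoils the symmetry $\mathcal{E}_{\rf}=\overline{\mathcal{E}}_{\lf}-2\delta-1$, and one must confirm that the quantization of the Bessel projector really lands in $\Psi_{0}^{-\infty,\mathcal{E}}(X)$ with $[\mathcal{E}_{\ff_{0}}]=0$, so that ``normal family'' remains meaningful at every stage of the iteration. The final clause (that $P_{1}$ is very residual in the essentially injective case, and $P_{2}$ in the essentially surjective case) then falls out automatically, because in those cases $K_{\eta}$ (resp.\ $K_{\eta}^{*}$) is trivial for every $\eta$, so the corresponding projector contributes nothing at $\ff_{0}$ and collapses to a very residual operator.
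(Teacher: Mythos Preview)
The paper does not actually prove this theorem: it is stated as a known result due to Mazzeo and described as ``essentially a corollary'' of Theorem~\ref{thm:(Mazzeo=002013Melrose)Good_parametrices_fully_0-elliptic}, with the reader referred to \cite{MazzeoEdgeI} for the details. Your proposal is a faithful sketch of the argument that appears in Mazzeo's paper: replace the Bessel inverse by the Bessel generalized inverse and projectors, quantize these to the front face, iterate away the $\ff_{0}$ error via asymptotic summation, and then make a finite-rank correction to hit the honest generalized inverse and projectors. So there is no divergence between your approach and the paper's --- the paper simply outsources the proof, and what you wrote is the standard route.

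One small caution on your sketch: the sentence ``a Borel-type asymptotic summation kills $R_{i}$ at $\ff_{0}$ to infinite order, leaving only very residual remainders'' is slightly too optimistic in the surjective-not-injective case. The remainder after the $\ff_{0}$-iteration is not very residual but merely in the large $0$-calculus with positive order at $\ff_{0}$; what rescues you is that such operators are \emph{compact} on $x^{\delta}L_{b}^{2}$, which is all you need for semi-Fredholmness. The passage to the genuine $G,P_{1},P_{2}$ (as opposed to parametrices modulo compact) then uses that the generalized inverse differs from any compact-remainder parametrix by a finite-rank operator whose Schwartz kernel is a finite sum of tensor products of polyhomogeneous kernel/cokernel elements, and this is where the precise $\lf$/$\rf$ index sets in items (2) and (3) are finally pinned down. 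Your outline is correct in spirit but would benefit from separating these two stages more cleanly.
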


If $\delta$ is not invertible, but still injective or surjective,
then one can still hope to supplement $L$ with boundary conditions
in order to get Fredholmness. The formulation of elliptic boundary
conditions for semi-Fredholm $0$-elliptic operators is due to Mazzeo
and Vertman, cf. \cite{MazzeoEdgeII}. In this paper, we present a
slightly different formulation which we sketch now. For simplicity,
in this section we will ignore regularity issues and work on weighted
spaces of conormal functions $x^{\delta}H_{b}^{\infty}\left(X\right)$
(this space consists of functions on $x^{\delta}L_{b}^{2}\left(X\right)$
for which every multiple $b$-derivative is still in $x^{\delta}L_{b}^{2}\left(X\right)$).

\subsubsection{\label{subsubsec:Critical-indicial-roots}Critical indicial roots}

Fix once and for all the following auxiliary data:
\begin{enumerate}
\item an auxiliary vector field $V$ on $X$ transversal to $\partial X$
and inward-pointing;
\item a boundary defining function $x$ compatible with $V$, in the sense
that $Vx\equiv1$ near the boundary;
\item a smooth positive density $\omega$ on $X$.
\end{enumerate}
As previously explained, the choices of $x$ and $\omega$ allow us
to define inner products on the weighted spaces $x^{\delta}L_{b}^{2}\left(X\right)$.
If $L\in\Diff_{0}^{m}\left(X\right)$ is $0$-elliptic with constant
indicial roots, and $\delta$ is an injective or surjective weight,
we denote by $G,P_{1},P_{2}$ the generalized inverse and orthogonal
projectors for $L$ relative to the chosen inner product on $x^{\delta}L_{b}^{2}\left(X\right)$.
The choice of $V$ trivializes the inward-pointing normal bundle $N^{+}\partial X\equiv\partial X\times\mathbb{R}_{1}^{1}$,
and its flow starting at $\partial X$ determines a collar neighborhood
of $\partial X$ in $X$ with respect to which we can compute asymptotic
expansions of polyhomogeneous functions on $X$. The global coordinate
$N^{+}\partial X\to\mathbb{R}_{1}^{1}$ induced by $V$ will be denoted
with $x$ as well, with slight abuse of notation. 

Let $\delta$ be a surjective, not injective weight for $L$. We are
interested in solving the equation $Lu=v$ for $u,v\in x^{\delta}H_{b}^{\infty}\left(X\right)$.
This problem can be solved for essentially every $v$ (i.e. for all
$v$ orthogonal to the kernel of $L^{\dagger}$, which by the previous
theorem span a finite-dimensional subspace of $\mathcal{A}_{\phg}^{\mathcal{F}_{\lf}}\left(X\right)$),
but the solution is not essentially unique since the kernel of $L$
is infinite-dimensional. Assuming for simplicity that $v$ is in the
range, a particular\emph{ }solution is provided by $Gv$, the unique
solution orthogonal to the $x^{\delta}L_{b}^{2}$ kernel of $L$.
In general, every solution can be decomposed orthogonally as $u=P_{1}u+Gv$.
Now, although $Gv$ is generically only in $x^{\delta}H_{b}^{\infty}\left(X\right)$,
$P_{1}u$ is in fact \emph{polyhomogeneous} with index set $\mathcal{E}_{\lf}$:
this is proved in \cite{MazzeoEdgeI}\footnote{If $u$ is an $x^{\delta}L_{b}^{2}$ solution of $Lu=v$ with $v$
only in $x^{\delta}L_{b}^{2}\left(X\right)$, then $P_{1}u$ is smooth
in the interior by $0$-elliptic regularity, but it only has a \emph{weak}
polyhomogeneous expansion, as explained in \cite{MazzeoEdgeI}.}. Moreover, if one writes down the expansion of $w:=P_{1}u$ with
respect to $V$
\[
w\sim\sum_{\left(\alpha,l\right)\in\mathcal{E}_{\lf}}w_{\alpha,l}x^{\alpha}\left(\log x\right)^{l},
\]
then one quickly realizes that only some of the coefficients $w_{\alpha,l}\in C^{\infty}\left(\partial X\right)$
are formally free: specifically, only the coefficients $w_{\mu,l}$
with $\mu\in\spec_{b}\left(L\right)$ and $\mu\leq M_{\mu}$ are,
while the others are obtained as linear combinations of those ``critical''
coefficients. In particular, denote by $\overline{\delta}$ the infimum
of the injective weights for $L$ (note that this implies that the
line $\Re\left(z\right)=\overline{\delta}$ in $\mathbb{C}$ contains
at least one indicial root): then, if $Lw=0$ and all the coefficients
$w_{\mu,l}$ with $\mu\in\spec_{b}\left(L\right)$, $\Re\left(\mu\right)\in(\delta,\overline{\delta}]$
and $l\leq M_{\mu}$ vanish, then $w$ lies in the $x^{\overline{\delta}+\epsilon}L_{b}^{2}$
kernel of $L$ for some small $\epsilon>0$, which is finite-dimensional
since $\overline{\delta}+\epsilon$ is an injective weight by definition
of $\overline{\delta}$.
\begin{defn}
The \emph{critical strip} for $L$ relative to the weight $\delta$
is the set $\left\{ z\in\mathbb{C}:\Re\left(z\right)\in(\delta,\overline{\delta}]\right\} $.
An indicial root for $L$ is said to be \emph{critical} relative to
the weight $\delta$ if it lies in the critical strip.
\end{defn}

From the discussion above, it is clear that in order to characterize
an $x^{\delta}H_{b}^{\infty}$ solution $u$ of $Lu=v$ up to finite-dimensional
subspaces, it is sufficient to prescribe the terms $w_{\mu,l}$ of
the expansion of $w=P_{1}u$ for which $\mu$ is a critical indicial
root.

\subsubsection{\label{subsubsec:Log-homogeneous-functions}Log-homogeneous functions}

It is quite convenient to interpret the collection $\left\{ w_{\mu,l}\right\} $
with $\mu$ critical, as a section of a natural vector bundle over
$\partial X$. Let us construct this bundle precisely.
\begin{defn}
Let $\left(\alpha,M\right)\in\mathbb{C}\times\mathbb{N}$. A \emph{log-homogeneous
function on $\mathbb{R}_{1}^{1}$ of degree $\leq\left(\alpha,M\right)$}
is a smooth function on $\mathbb{R}^{+}$ which is in the kernel of
the operator $\left(x\partial_{x}-\alpha\right)^{M}$. Equivalently,
a log-homogeneous function of degree $\leq\left(\alpha,M\right)$
is a linear combination of the $M+1$ functions $x^{\alpha}\left(\log x\right)^{j}$
for $0\leq j\leq M$.
\end{defn}

\begin{rem}
Although a log-homogenoeus function on $\mathbb{R}_{1}^{1}$ is defined
only in the interior, it determines canonically an extendible distribution
on $\mathbb{R}_{1}^{1}$.
\end{rem}

One can easily check that being log-homogeneous of degree $\leq\left(\alpha,M\right)$
is a dilation invariant property. For this reason, we can talk about
\emph{fibrewise }log-homogeneous\emph{ }functions on $N^{+}\partial X$
of degree $\leq\left(\alpha,M\right)$. These functions form a $C^{\infty}\left(\partial X\right)$
module, which determines a smooth vector bundle $E_{\alpha,M}\to\partial X$
of rank $M+1$. We can see $E_{\alpha,M}$ as an associated vector
bundle as follows. First, the fiber bundle $N^{+}\partial X\backslash O$
is a principal $\mathbb{R}^{+}$ bundle, with fibrewise action given
by dilations. Now, consider the representation
\begin{align*}
\rho_{\alpha,M}:\mathbb{R}^{+} & \to\GL\left(M+1,\mathbb{C}\right)\\
t & \mapsto t^{-\mathfrak{s}_{\alpha,M}},
\end{align*}
where
\[
\mathfrak{s}_{\alpha,M}=\left(\begin{matrix}\alpha & 1\\
 & \alpha & \ddots\\
 &  & \ddots & M\\
 &  &  & \alpha
\end{matrix}\right).
\]
Explicitly, we have
\[
t^{-\mathfrak{s}_{\alpha,M}}=t^{-\alpha}\begin{pmatrix}1 & -\log t & \frac{\left(-\log t\right)^{2}}{2!} & \cdots & \frac{\left(-\log t\right)^{M}}{M!}\\
0 & 1 & -2\log t & \cdots & \frac{\left(-2\log t\right)^{M-1}}{\left(M-1\right)!}\\
0 & 0 & 1 & \ddots & \vdots\\
\vdots & \vdots & \vdots & \ddots & -M\log t\\
0 & 0 & 0 & 0 & 1
\end{pmatrix}.
\]
Then $E_{\alpha,M}$ is precisely the bundle associated to $N^{+}\partial X\backslash O$
via the representation $\rho_{\alpha,M}$. Indeed, if the function
$f$ corresponds to the vector $\left(f_{0},...,f_{M}\right)$, i.e.
\[
f\left(x\right)=\sum_{l=0}^{M}f_{j}x^{\alpha}\left(\log x\right)^{j},
\]
then calling again $\lambda_{t}$ the dilation by $t$ action on $\mathbb{R}^{+}$,
and expanding
\[
\left(\lambda_{t}^{*}f\right)\left(x\right)=f\left(tx\right)=\sum_{l=0}^{M}f_{j}\left(tx\right)^{\alpha}\left(\log tx\right)^{j},
\]
we obtain that $\lambda_{t}^{*}f$ corresponds to the vector
\[
t^{\mathfrak{s}_{\alpha,M}}\left(\begin{matrix}f_{0}\\
\vdots\\
f_{M}
\end{matrix}\right)=t^{\alpha}\begin{pmatrix}1 & \log t & \frac{\left(\log t\right)^{2}}{2!} & \cdots & \frac{\left(\log t\right)^{M}}{M!}\\
0 & 1 & 2\log t & \cdots & \frac{\left(2\log t\right)^{M-1}}{\left(M-1\right)!}\\
0 & 0 & 1 & \ddots & \vdots\\
\vdots & \vdots & \vdots & \ddots & M\log t\\
0 & 0 & 0 & 0 & 1
\end{pmatrix}\left(\begin{matrix}f_{0}\\
\vdots\\
f_{M}
\end{matrix}\right).
\]
The choice of the vector field $V$ determines a global trivialization
$N^{+}\partial X\equiv\partial X\times\mathbb{R}_{1}^{1}$ with respect
to which the smooth sections of $E_{\alpha,M}$ are precisely the
smooth functions on $N^{+}\partial X\backslash O$ of the form
\[
\sum_{l\leq M}w_{\alpha,l}x^{\alpha}\left(\log x\right)^{l}
\]
with $w_{\alpha,l}\in C^{\infty}\left(\partial X\right)$; in this
formula, $x$ is the global coordinate on $N^{+}\partial X$ induced
by $V$.

For later use, observe that the matrix $\mathfrak{s}_{\alpha,M}$
defined above determines a natural bundle endomorphism of $E_{\alpha,M}$,
which we can still call $\mathfrak{s}_{\alpha,M}$. This endomorphism
corresponds to the fibrewise dilation invariant vector field $x\partial_{x}$
on $N^{+}\partial X$ acting on log-homogeneous functions.

\subsubsection{\label{subsubsec:The-trace-map}The trace map \& boundary conditions}

Let's come back to our problem. For every critical indicial root $\mu$
for $L$ relative to $\delta$, define
\begin{align*}
\tilde{M}_{\mu} & =\max\left\{ l\in\mathbb{N}:\left(\mu,l\right)\in\mathcal{E}_{\lf}\right\} .
\end{align*}
Observe that $\tilde{M}_{\mu}\geq M_{\mu}$, where $M_{\mu}$ is the
multiplicity of $\mu$; more precisely, since $\mathcal{E}_{\lf}$
is the extended union of the index sets generated by the pairs $\left(\mu,M_{\mu}\right)$,
we have $\tilde{M}_{\mu}>M_{\mu}$ if and only if there is another
critical indicial root $\mu'$ with $\mu-\mu'\in\mathbb{N}^{+}$.
Define now
\[
\boldsymbol{E}{}_{L}=\bigoplus_{\text{\ensuremath{\mu} critical}}E_{\mu,\tilde{M}_{\mu}}.
\]
Now, if $w\in\mathcal{A}_{\phg}^{\mathcal{E}_{\lf}}\left(X\right)$
has an expansion with respect to $V$ given by
\[
w\sim\sum_{\left(\alpha,l\right)\in\mathcal{E}_{\lf}}w_{\alpha,l}x^{\alpha}\left(\log x\right)^{l},
\]
for every critical indicial root $\mu$ the term $\sum_{l}w_{\mu,l}x^{\mu}\left(\log x\right)^{l}$
can be interpreted as a smooth section of $E_{\mu,\tilde{M}_{\mu}}$.
Thus, the collection of coefficients $\left\{ w_{\mu,l}\right\} $
can be interpreted as a smooth section of $\boldsymbol{E}_{L}$.
\begin{defn}
The \emph{trace map} for $L$ relative to the weight $\delta$ (and
the auxiliary choices of $x,\omega,V$) is the continuous linear map
\[
\boldsymbol{A}_{L}:x^{\delta}H_{b}^{\infty}\left(X\right)\to C^{\infty}\left(\partial X;\boldsymbol{E}_{L}\right)
\]
sending a function $u\in x^{\delta}H_{b}^{\infty}\left(X\right)$
to the term
\[
\sum_{\mu\text{ critical}}\sum_{l\leq\tilde{M}_{\mu}}w_{\mu,l}x^{\mu}\left(\log x\right)^{l}
\]
in the expansion of $w=P_{1}u$.
\end{defn}

\begin{rem}
From the previous discussion, we know that a $x^{\delta}H_{b}^{\infty}$
solution $w$ of $Lw=0$ is determined, up to a finite-dimensional
subpace, by the coefficients $w_{\mu,l}$ with $\mu$ critical and
$l\leq M_{\mu}$. From the previous definition, however, $\boldsymbol{A}_{L}w$
picks up all the coefficients $w_{\mu,l}$ with $\mu$ critical and
$l\leq\tilde{M}_{\mu}$. Thus, the information contained in the trace
$\boldsymbol{A}_{L}w$ is slightly redundant. This however causes
no issues for our boundary value problem formulation, and the chosen
formulation is slightly easier to manage.
\end{rem}

\begin{defn}
A \emph{boundary condition} for $L$ relative to the weight $\delta$
is a pseudodifferential operator $\boldsymbol{Q}\in\Psi^{\bullet}\left(\partial X;\boldsymbol{E}_{L},\boldsymbol{W}\right)$,
where $\boldsymbol{W}\to\partial X$ is a vector bundle on $\partial X$.
The corresponding boundary value problem (on $x^{\delta}H_{b}^{\infty}$
functions) is
\[
\begin{cases}
Lu=v & u,v\in x^{\delta}H_{b}^{\infty}\left(X\right)\\
\boldsymbol{Q}\boldsymbol{A}_{L}u=\varphi & \varphi\in C^{\infty}\left(\partial X;\boldsymbol{W}\right)
\end{cases}.
\]
We say that the boundary value problem is \emph{well-posed} if a solution
exists for essentially every pair $\left(v,\varphi\right)$, and the
solution is essentially unique. In other words, the boundary value
problem induced by $\boldsymbol{Q}$ is well-posed if the map
\[
L\oplus\boldsymbol{Q}\boldsymbol{A}_{L}:x^{\delta}H_{b}^{\infty}\left(X\right)\to x^{\delta}H_{b}^{\infty}\left(X\right)\oplus C^{\infty}\left(\partial X;\boldsymbol{W}\right)
\]
is \emph{Fredholm}.
\end{defn}

Later, we will restrict ourselves to boundary conditions belonging
to a special subcalculus of pseudodifferential operators, which we
refer to as the \emph{twisted boundary calculus}.

\subsection{\label{subsec:Elliptic-boundary-conditions}Elliptic boundary conditions}

In order to formulate the concept of ellipticity of a boundary condition,
we need to describe the model problem at the Bessel level. Let $p\in\partial X$,
and let $\eta\in T_{p}^{*}\partial X\backslash0$. As $\hat{N}_{\eta}\left(L\right)$
is Fredholm as an operator $x^{\delta}H_{0}^{m}\left(N_{p}^{+}\partial X\right)\to x^{\delta}L_{b}^{2}\left(N_{p}^{+}\partial X\right)$,
one can talk about \emph{the Bessel trace map} for $\hat{N}_{\eta}\left(L\right)$
relative to the weight $\delta$ (and the choices of $V,x,\omega$).
More precisely, denote by $\hat{N}_{\eta}\left(P_{1}\right)$ the
Bessel operator of the orthogonal projector $P_{1}$ onto the kernel
of $L$. We remark that $\hat{N}_{\eta}\left(P_{1}\right)$ is precisely
the orthogonal projector onto the kernel of $\hat{N}_{\eta}\left(L\right)$.
\begin{defn}
The \emph{Bessel trace family} for $L$ relative to the weight $\delta$
(and the auxiliary choices of $V,x,\omega$) is the family $\hat{\boldsymbol{a}}_{L}:\eta\mapsto\hat{\boldsymbol{a}}_{L,\eta}$,
parametrized by $\eta\in T^{*}\partial X\backslash0$, of operators
\[
\hat{\boldsymbol{a}}_{L,\eta}:x^{\delta}L_{b}^{2}\left(N_{p}^{+}\partial X\right)\to\left(\pi^{*}\boldsymbol{E}_{L}\right)_{\eta}
\]
sending a function $u\in x^{\delta}L_{b}^{2}\left(N_{p}^{+}\partial X\right)$
to the term
\[
\sum_{\mu\text{ critical}}\sum_{l\leq\tilde{M}_{\mu}}w_{\mu,l}x^{\mu}\left(\log x\right)^{l}
\]
in the polyhomogeneous expansion of $w=\hat{N}_{\eta}\left(P_{1}\right)u$.
\end{defn}

\begin{rem}
The definition above seems to suggest that the trace map $\boldsymbol{A}_{L}$
defined above belongs to the class of $0$-trace operators introduced
in §\ref{sec:Recap-on-the-0-calc}, and that $\eta\mapsto\hat{\boldsymbol{a}}_{\eta}$
is its Bessel family. Although it is true that $\boldsymbol{A}_{L}$
is a $0$-trace operator, in some sense the class $\Psi_{0\tr}^{-\infty,\bullet}\left(\partial X,X;\boldsymbol{E}_{L}\right)$
is \emph{not} a good fit to describe $\boldsymbol{A}_{L}$ (in particular,
if we think of $\boldsymbol{A}_{L}$ as an element of $\Psi_{0\tr}^{-\infty,\bullet}\left(\partial X,X;\boldsymbol{E}_{L}\right)$,
then $\eta\mapsto\hat{\boldsymbol{a}}_{L,\eta}$ is \emph{not }the
Bessel family of $\boldsymbol{A}_{L}$ in the sense of §\ref{sec:Recap-on-the-0-calc}).
On the other hand, in §\ref{sec:The-symbolic-0-calculus} (see in
particular §\ref{subsec:Twisted--trace-and-poisson}) we will introduce
an alternative class of trace-type operators, which we will call \emph{twisted
symbolic $0$-trace operators}. This class comes equipped with a Bessel
family map. In §\ref{subsec:The-trace-map-for-L}, we shall see that
$\boldsymbol{A}_{L}$ does indeed belong to this class, and that the
Bessel trace family $\eta\mapsto\hat{\boldsymbol{a}}_{L,\eta}$ is
indeed the Bessel family of $\boldsymbol{A}_{L}$ in this sense.
\end{rem}

Observe that, since $\delta$ is a surjective weight, the Bessel trace
map $\hat{\boldsymbol{a}}_{L,\eta}$ restricts to an isomorphism between
the kernel of $\hat{N}_{\eta}\left(P_{1}\right)$ and the range of
$\hat{\boldsymbol{a}}_{L,\eta}$. Indeed:
\begin{enumerate}
\item by construction, we have $\hat{\boldsymbol{a}}_{L,\eta}=\hat{\boldsymbol{a}}_{L,\eta}\hat{N}_{\eta}\left(P_{1}\right)$,
and therefore the range of $\hat{\boldsymbol{a}}_{L,\eta}$ coincides
with the range of $\hat{\boldsymbol{a}}_{L,\eta}$ restricted to the
kernel of $\hat{N}_{\eta}\left(L\right)$;
\item if $\hat{N}_{\eta}\left(L\right)u=0$ and $\hat{\boldsymbol{a}}_{L,\eta}u=0$,
then from the previous discussions we know that $u$ is actually $O\left(x^{\overline{\delta}+\epsilon}\right)$;
but this then implies that $u\equiv0$, because otherwise $\overline{\delta}+\epsilon$
would not be an injective weight, contradicting the fact that $\overline{\delta}$
is the infimum of the injective weights.
\end{enumerate}
Since the map $\hat{N}_{\eta}\left(L\right)$ is surjective on $x^{\delta}L_{b}^{2}\left(N_{p}^{+}\partial X\right)$,
it follows that its $x^{\delta}L_{b}^{2}$ kernel, identified with
the range of $\hat{\boldsymbol{a}}_{L,\eta}$, determines as $\eta$
varies in $T^{*}\partial X\backslash0$ a smooth subbundle $\eta\mapsto\mathcal{C}_{\eta}$
of the pull-back bundle $\pi^{*}\boldsymbol{E}_{L}$, where $\pi:T^{*}\partial X\backslash0\to\partial X$
is the bundle projection.
\begin{defn}
The \emph{Calderón bundle }of $L$ relative to the weight $\delta$
is the subbundle $\mathcal{C}\to T^{*}\partial X\backslash0$ of $\pi^{*}\boldsymbol{E}_{L}$
obtained as the range of the Bessel trace map $\hat{\boldsymbol{a}}_{L,\eta}$.
\end{defn}

Let us sketch the notion of \emph{ellipticity }of a boundary condition.
This is the notion formulated by Mazzeo and Vertman in \cite{MazzeoEdgeII}:
\begin{defn}
A boundary condition $\boldsymbol{Q}\in\Psi^{\bullet}\left(\partial X;\boldsymbol{E}_{L},\boldsymbol{W}\right)$
for $L$ relative to the weight $\delta$ is \emph{elliptic} if, for
every $\eta\in T^{*}\partial X\backslash0$, the principal symbol
$\sigma_{\eta}\left(\boldsymbol{Q}\right):\left(\pi^{*}\boldsymbol{E}_{L}\right)_{\eta}\to\left(\pi^{*}\boldsymbol{W}\right)_{\eta}$
restricts to an isomorphism between the Calderón space $\mathcal{C}_{\eta}$
and $\left(\pi^{*}\boldsymbol{W}\right)_{\eta}$.
\end{defn}

\begin{rem}
Observe that, depending on the nature of the operator $L$, elliptic
boundary conditions might not exist. Indeed, a necessary condition
for an elliptic boundary condition is that the Calderón bundle $\mathcal{C}$
is isomorphic to the pull-back of a vector bundle on $\partial X$
to $T^{*}\partial X$. In particular, $\mathcal{C}$ has to extend
to a trivial bundle on $T_{p}^{*}\partial X\backslash0$, for every
$p\in\partial X$. This topological condition is not always fulfilled.
The easiest interesting counterexamples are the chiral Dirac operators
of even-dimensional and spin conformally compact manifolds.
\end{rem}

\subsection{\label{subsec:analysis-of-The-Bessel-trace}Analysis of the Bessel
trace map}

We will now compute in detail the Bessel trace family $\eta\mapsto\hat{\boldsymbol{a}}_{L,\eta}$
as a function of $\eta\in T^{*}\partial X\backslash0$. In order to
perform concrete computations, we need to choose coordinates $y$
for $\partial X$ centered at $p$. Write near $p$
\[
L=\sum_{j+\left|\alpha\right|\leq m}L_{j,\alpha}\left(x,y\right)\left(x\partial_{x}\right)\left(x\partial_{y}\right)^{\alpha};
\]
then the Bessel family of $L$ (computed with respect to the chosen
vector field $V$) is
\[
\hat{N}_{\eta}\left(L\right)=\sum_{j+\left|\alpha\right|\leq m}L_{j,\alpha}\left(0,0\right)\left(x\partial_{x}\right)\left(ix\eta\right)^{\alpha}.
\]
Assume that an elliptic boundary condition exists. Then the Calderón
bundle $\mathcal{C}$ is trivial along the unit sphere in $\mathbb{R}^{n}\backslash0$.
Since $\hat{\boldsymbol{a}}_{L,\eta}$ provides an isomorphism between
the kernel of $\hat{N}_{\eta}\left(L\right)$ and $\mathcal{C}_{\eta}$,
this means that for $\hat{\eta}=\eta/\left|\eta\right|\in S^{n-1}$
we can choose an orthonormal basis of solutions $\phi_{1}\left(x;\hat{\eta}\right),...,\phi_{k}\left(x;\hat{\eta}\right)$
of $\hat{N}_{\hat{\eta}}\left(L\right)\phi=0$ such that the traces
$\hat{\boldsymbol{a}}_{L,\hat{\eta}}\phi_{i}$ provide a smooth frame
of $\mathcal{C}$ restricted to the unit sphere. But then this allows
us to extend $\phi_{1}\left(x;\hat{\eta}\right),...,\phi_{k}\left(x;\hat{\eta}\right)$
to a basis of the kernel of $\hat{N}_{\eta}\left(L\right)$ for \emph{every
}$\eta\in\mathbb{R}^{n}\backslash0$. Indeed, we extend the $\phi_{i}\left(x;\hat{\eta}\right)$
off $\mathbb{R}_{1}^{1}\times S^{n-1}$ to $\mathbb{R}_{1}^{1}\times\left(\mathbb{R}^{n}\backslash0\right)$
as
\[
\phi_{i}\left(x;\eta\right):=\phi_{i}\left(\left|\eta\right|x;\hat{\eta}\right)\left|\eta\right|^{-\delta}.
\]
Since $\hat{N}_{\eta}\left(L\right)$ satisfies the homogeneity condition
$\lambda_{t}^{*}\circ\hat{N}_{\eta}\left(L\right)\circ\lambda_{t^{-1}}^{*}=\hat{N}_{t\eta}\left(L\right)$,
we have
\begin{align*}
\hat{N}_{\eta}\left(L\right)\phi_{i}\left(\cdot;\eta\right) & =\left(\lambda_{\left|\eta\right|}^{*}\circ\hat{N}_{\hat{\eta}}\left(L\right)\circ\lambda_{\left|\eta\right|^{-1}}^{*}\right)\psi_{i}\left(\cdot;\eta\right)\\
 & =\lambda_{\left|\eta\right|}^{*}\circ\hat{N}_{\hat{\eta}}\left(L\right)\left(\phi_{i}\left(x\left|\eta\right|^{-1};\eta\right)\right)\\
 & =\lambda_{\left|\eta\right|}^{*}\circ\hat{N}_{\hat{\eta}}\left(L\right)\phi_{i}\left(x;\hat{\eta}\right)\left|\eta\right|^{-\delta}\\
 & =0,
\end{align*}
and therefore the $\phi_{i}\left(\cdot;\eta\right)$ span the kernel
of $\hat{N}_{\eta}\left(L\right)$ and are smooth in $\eta$. They
are also orthonormal: indeed, we have
\begin{align*}
\left(\phi_{i}\left(\cdot;\eta\right)\phi_{j}\left(\cdot;\eta\right)\right)_{x^{\delta}L_{b}^{2}} & =\int_{0}^{\infty}\phi_{i}\left(\left|\eta\right|x;\hat{\eta}\right)\left|\eta\right|^{-\delta}\overline{\phi_{j}\left(\left|\eta\right|x;\hat{\eta}\right)}\left|\eta\right|^{-\delta}x^{-2\delta}\frac{dx}{x}\\
 & =\int_{0}^{\infty}\phi_{i}\left(\left|\eta\right|x;\hat{\eta}\right)\overline{\phi_{j}\left(\left|\eta\right|x;\hat{\eta}\right)}\left(\left|\eta\right|x\right)^{-2\delta}\frac{dx}{x}\\
 & =\int_{0}^{\infty}\phi_{i}\left(x;\hat{\eta}\right)\overline{\phi_{j}\left(x;\hat{\eta}\right)}x^{-2\delta}\frac{dx}{x}\\
 & =\left(\phi_{i}\left(\cdot;\hat{\eta}\right),\phi_{j}\left(\cdot;\hat{\eta}\right)\right)_{x^{\delta}L_{b}^{2}}.
\end{align*}
The traces of the solutions $\phi_{i}\left(\cdot;\eta\right)$ provide
a smooth global trivialization of $\mathcal{C}$ along $T_{p}^{*}\partial X\backslash0$.

This allows us to describe explicitly the Schwartz kernel of $\hat{N}_{\eta}\left(P_{1}\right)$.
Indeed, $\hat{N}_{\eta}\left(P_{1}\right)$ is the orthogonal projector
onto the kernel of $\hat{N}_{\eta}\left(L\right)$, which means that
\begin{align*}
\hat{N}_{\eta}\left(P_{1}\right) & =\sum_{i=1}^{k}\phi_{i}\left(x;\eta\right)\overline{\phi_{i}\left(\tilde{x};\eta\right)}\tilde{x}^{-2\delta}\frac{d\tilde{x}}{\tilde{x}}\\
 & =\sum_{i=1}^{k}\phi_{i}\left(x\left|\eta\right|;\frac{\eta}{\left|\eta\right|}\right)\overline{\phi_{i}\left(\tilde{x}\left|\eta\right|;\frac{\eta}{\left|\eta\right|}\right)}\left(\tilde{x}\left|\eta\right|\right)^{-2\delta-1}\left|\eta\right|d\tilde{x}\\
 & =\hat{k}\left(x\left|\eta\right|,\tilde{x}\left|\eta\right|;\frac{\eta}{\left|\eta\right|}\right)\left|\eta\right|d\tilde{x}.
\end{align*}
Since the functions $\phi_{i}\left(x;\hat{\eta}\right)$ are elements
of $\mathcal{A}_{\phg}^{\left(\mathcal{E}_{\lf},\infty\right)}\left(\overline{\mathbb{R}}_{1}^{1}\times S^{n-1}\right)$,
and since by definition $\mathcal{E}_{\rf}=\overline{\mathcal{E}}_{\lf}-2\delta-1$,
the function 
\[
\hat{k}\left(x,\tilde{x};\hat{\eta}\right):=\sum_{i=1}^{k}\phi_{i}\left(x;\hat{\eta}\right)\overline{\phi_{i}\left(\tilde{x};\hat{\eta}\right)}\tilde{x}^{-2\delta-1}
\]
is an element of $\mathcal{A}_{\phg}^{\left(\mathcal{E}_{\lf},\mathcal{E}_{\rf},\infty\right)}\left(\overline{\mathbb{R}}_{2}^{2}\times S^{n-1}\right)$
where $\overline{\mathbb{R}}_{2}^{2}$ is the radial compactification
of the quadrant $\mathbb{R}_{2}^{2}=\left\{ \left(x,\tilde{x}\right)\in\mathbb{R}^{2}:x\geq0,\tilde{x}\geq0\right\} $.

The computations above allow us to describe $\hat{N}_{\eta}\left(P_{1}\right)$
for $\eta\in T_{p}^{*}\partial X\backslash0$. It is nice to interpret
$\hat{N}\left(P_{1}\right)$ invariantly as a section of a pull-back
Fréchet bundle over $T^{*}\partial X\backslash0$.
\begin{defn}
Given $\mathcal{F}=\left(\mathcal{F}_{\lf},\mathcal{F}_{\rf}\right)$
a pair of index sets, denote by $\Psi^{-\infty,\mathcal{F}}\left(\mathbb{R}_{1}^{1}\right)$
the space of very residual operators on functions over the radial
compactification $\overline{\mathbb{R}}_{1}^{1}=\left[0,+\infty\right]$
of the form $F=f\left(x,\tilde{x}\right)d\tilde{x}$, where $f\left(x,\tilde{x}\right)\in\mathcal{A}_{\phg}^{\left(\mathcal{E}_{\lf},\mathcal{E}_{\rf},\infty\right)}\left(\overline{\mathbb{R}}_{2}^{2}\right)$.
\end{defn}

The group $\mathbb{R}^{+}$ acts linearly on $\Psi^{-\infty,\mathcal{F}}\left(\mathbb{R}_{1}^{1}\right)$
via the map
\begin{align*}
\mathbb{R}^{+} & \to\GL\left(\Psi^{-\infty,\mathcal{F}}\left(\mathbb{R}_{1}^{1}\right)\right)\\
t & \mapsto\left(F\mapsto\lambda_{t}^{*}\circ F\circ\lambda_{t^{-1}}^{*}\right),
\end{align*}
and therefore we can define a Fréchet bundle $\Psi^{-\infty,\mathcal{F}}\left(N^{+}\partial X\right)\to\partial X$,
with typical fiber $\Psi^{-\infty,\mathcal{F}}\left(\mathbb{R}_{1}^{1}\right)$,
associated to the principal bundle $N^{+}\partial X\backslash O$
via the representation above. Then the computations above prove that
\begin{prop}
The Bessel family $\eta\mapsto\hat{N}_{\eta}\left(P_{1}\right)$ (induced
by the choices of $V,x,\omega$) is a section of the pull-back bundle
$\pi^{*}\Psi_{b}^{-\infty,\left(\mathcal{E}_{\lf},\mathcal{E}_{\rf}\right)}\left(N^{+}\partial X\right)$
over $T^{*}\partial X\backslash0$, with the following homogeneity
property: for every $t\in\mathbb{R}^{+}$, we have
\[
\hat{N}_{t\eta}\left(P_{1}\right)=\lambda_{t}^{*}\circ\hat{N}_{\eta}\left(P_{1}\right)\circ\lambda_{t^{-1}}^{*}.
\]
\end{prop}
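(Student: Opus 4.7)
The plan is to read off the proposition from the explicit computation carried out in the paragraphs just before its statement. The vector field $V$ trivializes $N^{+}\partial X\equiv\partial X\times\mathbb{R}_{1}^{1}$, so locally a section of the associated Fr\'echet bundle $\pi^{*}\Psi^{-\infty,(\mathcal{E}_{\lf},\mathcal{E}_{\rf})}(N^{+}\partial X)$ amounts to a smooth $\Psi^{-\infty,(\mathcal{E}_{\lf},\mathcal{E}_{\rf})}(\mathbb{R}_{1}^{1})$-valued function of $\eta$; the $\mathbb{R}^{+}$-equivariance of the associated-bundle construction is then exactly encoded by the homogeneity identity. I will therefore (i) verify pointwise membership in $\Psi^{-\infty,(\mathcal{E}_{\lf},\mathcal{E}_{\rf})}(N_{p}^{+}\partial X)$ for each fixed $\eta$, (ii) verify smoothness in $\eta$, and (iii) verify the stated homogeneity by a direct change of variables.

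For (i), I would use the explicit formula established above. Choose any smooth local orthonormal basis $\phi_{1}(\cdot;\hat{\eta}),\ldots,\phi_{k}(\cdot;\hat{\eta})$ of $\ker\hat{N}_{\hat{\eta}}(L)\subseteq x^{\delta}L_{b}^{2}(N_{p}^{+}\partial X)$ parametrized by $\hat{\eta}\in S^{n-1}$, and extend it radially by $\phi_{i}(x;\eta):=|\eta|^{-\delta}\phi_{i}(|\eta|x;\hat{\eta})$. Then
\[
\hat{N}_{\eta}(P_{1})=\hat{k}\!\left(x|\eta|,\tilde{x}|\eta|;\tfrac{\eta}{|\eta|}\right)|\eta|\,d\tilde{x},\qquad \hat{k}(x,\tilde{x};\hat{\eta}):=\sum_{i=1}^{k}\phi_{i}(x;\hat{\eta})\overline{\phi_{i}(\tilde{x};\hat{\eta})}\,\tilde{x}^{-2\delta-1}.
\]
By Theorem \ref{thm:(Mazzeo)-generalized-projectors-and-gen-inverse} (applied to the $0$-elliptic operator $\hat{N}_{\hat{\eta}}(L)$, whose kernel elements are polyhomogeneous with index set $\mathcal{E}_{\lf}$ at $0$ and Schwartz at $\infty$) each $\phi_{i}(\cdot;\hat{\eta})\in\mathcal{A}^{(\mathcal{E}_{\lf},\infty)}_{\phg}(\overline{\mathbb{R}}_{1}^{1})$. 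Combined with the identity $\mathcal{E}_{\rf}=\overline{\mathcal{E}}_{\lf}-2\delta-1$, this yields $\hat{k}(\cdot,\cdot;\hat{\eta})\in\mathcal{A}^{(\mathcal{E}_{\lf},\mathcal{E}_{\rf},\infty)}_{\phg}(\overline{\mathbb{R}}_{2}^{2}\times S^{n-1})$, i.e. precisely the polyhomogeneity required for membership in $\Psi^{-\infty,(\mathcal{E}_{\lf},\mathcal{E}_{\rf})}(N_{p}^{+}\partial X)$.

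For (ii), smoothness of $\eta\mapsto\hat{N}_{\eta}(P_{1})$ away from $0$ reduces to smoothness of $\hat{k}$ in $\hat{\eta}$, which in turn reduces to smoothness of the basis $\phi_{i}(\cdot;\hat{\eta})$. The family $\hat{\eta}\mapsto\hat{N}_{\hat{\eta}}(L):x^{\delta}H_{0}^{m}\to x^{\delta}L_{b}^{2}$ is smooth, and its kernel dimension is independent of $\hat{\eta}$ (as conjugation by $\lambda_{t}^{*}$ gives isomorphisms $\ker\hat{N}_{\hat{\eta}}(L)\simeq\ker\hat{N}_{t\hat{\eta}}(L)$); hence $\bigcup_{\hat{\eta}}\ker\hat{N}_{\hat{\eta}}(L)$ is locally a smooth vector bundle over $S^{n-1}$, and a smooth local orthonormal frame $\phi_{i}$ exists by the standard Gram--Schmidt argument. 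For (iii), the identities $|t\eta|=t|\eta|$ and $\widehat{t\eta}=\hat{\eta}$ for $t>0$ give directly $\hat{N}_{t\eta}(P_{1})=\hat{k}(xt|\eta|,\tilde{x}t|\eta|;\hat{\eta})\,t|\eta|\,d\tilde{x}$; on the other hand, writing out $\lambda_{t}^{*}\circ\hat{N}_{\eta}(P_{1})\circ\lambda_{t^{-1}}^{*}$ as an integral operator and substituting $\tilde{x}\mapsto t\tilde{x}$ produces the same expression. The only mildly delicate step is the smooth dependence of the basis $\phi_{i}$ on $\hat{\eta}$, but since the projector $\hat{N}_{\hat{\eta}}(P_{1})$ itself is canonical and independent of any frame, the local choice of $\phi_{i}$ suffices and presents no genuine obstacle.
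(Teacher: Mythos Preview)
Your proposal is correct and follows essentially the same route as the paper, which simply states that ``the computations above prove'' the proposition; you have recapitulated those computations (the explicit formula for $\hat{k}$, its polyhomogeneity, and the dilation identity) with a bit more care. One small correction: Theorem~\ref{thm:(Mazzeo)-generalized-projectors-and-gen-inverse} concerns $P_{1},P_{2},G$ on $X$ and does not apply directly to $\hat{N}_{\hat{\eta}}(L)$ on the half-line; the polyhomogeneity of $\phi_{i}(\cdot;\hat{\eta})$ at $0$ and Schwartz decay at $\infty$ is instead the fact recorded in \S\ref{subsec:Parametrices-for-fully} about the kernel of the Bessel operator. Your observation that a \emph{local} smooth orthonormal frame suffices (since the projector is frame-independent) is a mild improvement over the paper's argument, which assumes existence of an elliptic boundary condition to obtain a global frame over $S^{n-1}$.
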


\begin{rem}
We want to stress the formal similarity between the usual principal
symbol (a fibrewise homogeneous function on the total space of the
cotangent bundle) and the Bessel family (a fibrewise homogeneous section
of an associated bundle on the total space of the cotangent bundle).
Indeed, as part of our symbolic approach to the $0$-calculus, we
shall see that the Bessel family $\hat{N}\left(P_{1}\right)$ \emph{is
a principal symbol} in a very concrete sense.
\end{rem}

Now that we have an explicit description for the Schwartz kernel of
$\hat{N}_{\eta}\left(P_{1}\right)$, the Schwartz kernel of the Bessel
trace map $\hat{\boldsymbol{a}}_{L,\eta}$ is easily obtained. For
every critical indicial root $\mu$ and every point $p\in\partial X$,
denote by
\[
\tr_{\mu,\tilde{M}_{\mu}}^{V_{p}}:\mathcal{A}_{\phg}^{\left(\mathcal{E}_{\lf},\infty\right)}\left(\overline{N_{p}^{+}\partial X}\right)\to\left(E_{\mu,\tilde{M}_{\mu}}\right)_{p}
\]
the map sending a function $w\in\mathcal{A}_{\phg}^{\left(\mathcal{E}_{\lf},\infty\right)}\left(\overline{N_{p}^{+}\partial X}\right)$
to the term $\sum_{l\leq\tilde{M}_{\mu}}w_{\mu,l}x^{\mu}\left(\log x\right)^{l}$
in its expansion at $x=0$, computed with respect to the trivialization
$N_{p}^{+}\partial X\equiv\mathbb{R}_{1}^{1}$ induced by $V_{p}$.
By definition,
\[
\hat{\boldsymbol{a}}_{L,\eta}u=\bigoplus_{\text{\ensuremath{\mu} critical}}\tr_{\mu,\tilde{M}_{\mu}}^{V_{p}}\circ\hat{N}_{\eta}\left(P_{1}\right)u.
\]
Enumerate the critical indicial roots in a column vector $\left(\mu^{1},...,\mu^{I}\right)^{T}$,
and write $\hat{\boldsymbol{a}}_{L,\eta}=\left(\hat{a}_{L,\eta}^{1},...,\hat{a}_{L,\eta}^{I}\right)$,
where $\hat{a}_{L,\eta}^{i}=\tr_{\mu^{i},\tilde{M}_{\mu^{i}}}\circ\hat{N}_{\eta}\left(P_{1}\right)$.
Then, in the coordinates chosen above, we have
\[
\hat{a}_{L,\eta}^{i}=\tr_{\mu^{i},\tilde{M}_{\mu^{i}}}^{\partial_{x}}\left(\hat{k}\left(x\left|\eta\right|,\tilde{x}\left|\eta\right|;\frac{\eta}{\left|\eta\right|}\right)\right)\left|\eta\right|d\tilde{x}.
\]
Now we come to a crucial point, namely the behavior of the operator
$\tr_{\mu^{i},\tilde{M}_{\mu^{i}}}^{\partial_{x}}$ with respect to
dilations. Indeed, the function $x\mapsto\hat{k}\left(x\left|\eta\right|,\tilde{x}\left|\eta\right|;\frac{\eta}{\left|\eta\right|}\right)$
is the pull-back $\left(\lambda_{\left|\eta\right|}^{*}\hat{k}\right)\left(x,\tilde{x}\left|\eta\right|;\frac{\eta}{\left|\eta\right|}\right)$.
\begin{lem}
\label{lem:trace-and-dilations}Let $t\in\mathbb{R}^{+}$. Then
\[
\tr_{\mu,\tilde{M}_{\mu}}^{\partial_{x}}\circ\lambda_{t}^{*}=t^{\mathfrak{s}_{\mu,\tilde{M}_{\mu}}}\circ\tr_{\mu,\tilde{M}_{\mu}}^{\partial_{x}}.
\]
\end{lem}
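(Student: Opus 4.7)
The proof will be a direct computation by expanding the asymptotic series of $\lambda_t^* w$ at $x = 0$ and collecting terms. Since $\tr_{\mu,\tilde{M}_\mu}^{\partial_x}$ depends only on the coefficients of the terms $x^\mu (\log x)^l$ with $0 \leq l \leq \tilde{M}_\mu$ in the expansion at $x = 0$, and since both sides of the claimed identity are linear in $w$, it suffices to verify the identity on a single basis element $w(x) = x^\mu (\log x)^l$ with $l \leq \tilde{M}_\mu$.

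For such a $w$, I would compute
\begin{align*}
(\lambda_t^* w)(x) = w(tx) = t^\mu x^\mu (\log t + \log x)^l = t^\mu \sum_{k=0}^{l} \binom{l}{k} (\log t)^{l-k} x^\mu (\log x)^k,
\end{align*}
where the second equality uses the binomial theorem. Reading off the coefficients, $\tr_{\mu,\tilde{M}_\mu}^{\partial_x}(\lambda_t^* w)$ is the vector in $(E_{\mu,\tilde{M}_\mu})_p$ whose $k$-th entry is $t^\mu \binom{l}{k} (\log t)^{l-k}$ for $0 \leq k \leq l$ and $0$ for $k > l$.

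It then remains to check that this coincides with $t^{\mathfrak{s}_{\mu,\tilde{M}_\mu}}$ applied to the vector $\tr_{\mu,\tilde{M}_\mu}^{\partial_x}(w)$, which is the $l$-th standard basis vector. Writing $\mathfrak{s}_{\mu,\tilde{M}_\mu} = \mu I + N$ with $N$ the nilpotent upper-triangular part (with superdiagonal entries $1, 2, \ldots, \tilde{M}_\mu$), one has $t^{\mathfrak{s}_{\mu,\tilde{M}_\mu}} = t^\mu \exp\bigl((\log t) N\bigr)$, and an induction shows $(N^p)_{k, k+p} = (k+1)(k+2)\cdots(k+p)$. Hence $\bigl(t^{\mathfrak{s}_{\mu,\tilde{M}_\mu}}\bigr)_{k,j} = t^\mu \binom{j}{k}(\log t)^{j-k}$ for $j \geq k$, which applied to the $l$-th basis vector indeed produces the column $\bigl(t^\mu \binom{l}{k}(\log t)^{l-k}\bigr)_{k}$, matching the computation above.

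The computation is entirely routine; the only mild care needed is to unwind the definition of $t^{\mathfrak{s}_{\mu,\tilde{M}_\mu}}$ via its power-series expansion and confirm it exactly encodes the binomial action of pulling back powers of $\log x$ under dilations. There is no genuine obstacle.
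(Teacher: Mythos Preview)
Your proof is correct and follows essentially the same approach as the paper: both compute directly how dilation acts on the log-homogeneous part $\sum_l w_{\mu,l}\, x^\mu(\log x)^l$ via the binomial expansion of $(\log t + \log x)^l$, and identify the resulting action with $t^{\mathfrak{s}_{\mu,\tilde M_\mu}}$. The paper simply refers back to the computation already done in the section on log-homogeneous functions, whereas you spell out the matrix exponential $t^{\mathfrak{s}_{\mu,\tilde M_\mu}} = t^\mu \exp((\log t)N)$ explicitly; the content is the same.
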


\begin{proof}
This proof is essentially the computation of §\ref{subsubsec:Log-homogeneous-functions}.
Indeed, if the log-homogeneous term of degree $\leq\left(\mu,\tilde{M}_{\mu}\right)$
of a function $u\in\mathcal{A}_{\phg}^{\mathcal{E}_{\lf}}\left(\mathbb{R}_{1}^{1}\right)$
is
\[
\sum_{l\leq\tilde{M}_{\mu}}u_{\mu,l}x^{\mu}\left(\log x\right)^{l},
\]
then the log-homogeneous term of degree $\leq\left(\mu,\tilde{M}_{\mu}\right)$
in the expansion of $\tilde{u}=\lambda_{t}^{*}u$ is
\[
\sum_{l\leq\tilde{M}_{\mu}}\tilde{u}_{\mu,l}x^{\mu}\left(\log x\right)^{l}=\sum_{l\leq\tilde{M}_{\mu}}u_{\mu,l}\left(tx\right)^{\mu}\left(\log tx\right)^{l},
\]
so the vectors $\left(u_{\mu,0},...,u_{\mu,\tilde{M}_{\mu}}\right)$
and $\left(\tilde{u}_{\mu,0},...,\tilde{u}_{\mu,\tilde{M}_{\mu}}\right)$
are related to each other by the formula
\[
\left(\begin{matrix}\tilde{u}_{\mu,0}\\
\vdots\\
\tilde{u}_{\mu,\tilde{M}_{\mu}}
\end{matrix}\right)=t^{\mathfrak{s}_{\mu,\tilde{M}_{\mu}}}\left(\begin{matrix}u_{\mu,0}\\
\vdots\\
u_{\mu,\tilde{M}_{\mu}}
\end{matrix}\right).
\]
\end{proof}
Call now
\[
\boldsymbol{\mathfrak{s}}_{L}=\bigoplus_{\text{\ensuremath{\mu} critical}}\mathfrak{s}_{\mu,\tilde{M}_{\mu}}=\left(\begin{matrix}\mathfrak{s}_{\mu^{1},\tilde{M}_{\mu^{1}}}\\
 & \ddots\\
 &  & \mathfrak{s}_{\mu^{I},\tilde{M}_{\mu^{I}}}
\end{matrix}\right)
\]
the endomorphism of $\boldsymbol{E}_{L}=\bigoplus_{\text{\ensuremath{\mu} critical}}E_{\mu,\tilde{M}_{\mu}}$
induced by the action of the canonical dilation invariant vector field
$x\partial_{x}$ on the space of sections of $\boldsymbol{E}_{L}$,
as explained above. Observe that the eigenvalues of $\boldsymbol{\mathfrak{s}}_{L}$
are precisely the critical indicial roots of $L$. We then have
\[
\hat{\boldsymbol{a}}_{L,\eta}=\left(\begin{matrix}\left(\tr_{\mu^{1},\tilde{M}_{\mu^{1}}}\circ\lambda_{\left|\eta\right|}^{*}\right)\left(\hat{k}\left(\cdot,\tilde{x}\left|\eta\right|;\frac{\eta}{\left|\eta\right|}\right)\right)\\
\vdots\\
\left(\tr_{\mu^{I},\tilde{M}_{\mu^{I}}}\circ\lambda_{\left|\eta\right|}^{*}\right)\left(\hat{k}\left(\cdot,\tilde{x}\left|\eta\right|;\frac{\eta}{\left|\eta\right|}\right)\right)
\end{matrix}\right)\left|\eta\right|d\tilde{x}
\]
which can be rewritten, using the identity $\tr_{\mu,\tilde{M}_{\mu}}^{\partial_{x}}\circ\lambda_{t}^{*}=t^{\mathfrak{s}_{\mu,\tilde{M}_{\mu}}}\circ\tr_{\mu,\tilde{M}_{\mu}}^{\partial_{x}}$,
as
\[
\hat{\boldsymbol{a}}_{L,\eta}=\left|\eta\right|^{\boldsymbol{\mathfrak{s}}_{L}}\left(\begin{matrix}\left(\tr_{\mu^{1},\tilde{M}_{\mu^{1}}}^{\partial_{x}}\hat{k}\right)\left(\tilde{x}\left|\eta\right|;\frac{\eta}{\left|\eta\right|}\right)\\
\vdots\\
\left(\tr_{\mu^{I},\tilde{M}_{\mu^{I}}}^{\partial_{x}}\hat{k}\right)\left(\tilde{x}\left|\eta\right|;\frac{\eta}{\left|\eta\right|}\right)
\end{matrix}\right)\left|\eta\right|d\tilde{x}.
\]
Since $\hat{k}\left(x,\tilde{x};\hat{\eta}\right)\in\mathcal{A}_{\phg}^{\left(\mathcal{E}_{\lf},\mathcal{E}_{\rf},\infty\right)}\left(\overline{\mathbb{R}}_{2}^{2}\times S^{n-1}\right)$,
we have $\left(\tr_{\mu^{i},\tilde{M}_{\mu^{i}}}^{\partial_{x}}\hat{k}\right)\left(\tau;\hat{\eta}\right)\in\mathcal{A}_{\phg}^{\left(\mathcal{E}_{\rf},\infty\right)}\left(\overline{\mathbb{R}}_{1}^{1}\times S^{n-1};\left(E_{\mu,\tilde{M}_{\mu}}\right)_{p}\right)$.
As in the case of the Bessel family $\hat{N}\left(P_{1}\right)$,
the Bessel trace family $\hat{\boldsymbol{a}}_{L,\eta}$ can be given
a nice invariant meaning (modulo the choice of $V$).
\begin{defn}
Let $\mathcal{F}$ be an index set. Denote by $\Psi_{\tr}^{-\infty,\mathcal{F}}\left(\mathbb{R}_{1}^{1}\right)$
the space of operators from functions on $\overline{\mathbb{R}}_{1}^{1}$
to $\mathbb{C}$, given by Schwartz kernels of the form $a\left(\tilde{x}\right)d\tilde{x}$
for some $a\in\mathcal{A}_{\phg}^{\left(\mathcal{F},\infty\right)}\left(\overline{\mathbb{R}}_{1}^{1}\right)$.
\end{defn}

Again, $\mathbb{R}^{+}$ acts linearly on $\Psi_{\tr}^{-\infty,\mathcal{F}}\left(\mathbb{R}_{1}^{1}\right)$
via the map
\begin{align*}
\mathbb{R}^{+} & \to\GL\left(\Psi_{\tr}^{-\infty,\mathcal{F}}\left(\mathbb{R}_{1}^{1}\right)\right)\\
t & \mapsto\left(A\mapsto A\circ\lambda_{t^{-1}}^{*}\right)
\end{align*}
and therefore we can define an associated Fréchet bundle $\Psi_{\tr}^{-\infty,\mathcal{F}}\left(N^{+}\partial X\right)\to\partial X$,
with typical fiber $\Psi_{\tr}^{-\infty,\mathcal{F}}\left(\mathbb{R}_{1}^{1}\right)$.
The computations above show that the Bessel trace family $\eta\mapsto\hat{\boldsymbol{a}}_{L,\eta}$
is a smooth section of the pull-back bundle $\pi^{*}\Psi_{\tr}^{-\infty,\mathcal{F}}\left(N^{+}\partial X;\boldsymbol{E}_{L}\right)$
over $T^{*}\partial X\backslash0$. However, this section is \emph{not}
homogeneous! Rather, it is \emph{twisted} homogeneous in the following
sense:
\begin{prop}
The Bessel trace family $\eta\mapsto\hat{\boldsymbol{a}}_{L,\eta}$
(induced by the choices of $V,x,\omega$) is a section of the pull-back
bundle $\pi^{*}\Psi_{\tr}^{-\infty,\mathcal{E}_{\rf}}\left(N^{+}\partial X;\boldsymbol{E}_{L}\right)$
over $T^{*}\partial X\backslash0$, and satisfies the following\emph{
twisted} homogeneity property: for every $t\in\mathbb{R}^{+}$, we
have
\[
\hat{\boldsymbol{a}}_{L,t\eta}=t^{\boldsymbol{\mathfrak{s}}_{L}}\circ\hat{\boldsymbol{a}}_{L,\eta}\circ\lambda_{t^{-1}}^{*}.
\]
\end{prop}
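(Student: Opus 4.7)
The proof will essentially bundle together three ingredients already assembled in this section: the explicit local description of $\hat{N}_\eta(P_1)$ and its homogeneity property (the preceding proposition), the definition $\hat{\boldsymbol{a}}_{L,\eta}=\bigoplus_{\mu\text{ critical}}\tr_{\mu,\tilde M_\mu}^V\circ \hat{N}_\eta(P_1)$, and Lemma~\ref{lem:trace-and-dilations} relating the trace operator to dilation. Accordingly, the argument splits into a class-membership verification and a one-line homogeneity computation.

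First, I would verify the class membership in coordinates $(x,y)$ centered at $p$ with $Vx\equiv 1$. From the explicit form obtained above,
\[
\hat{N}_\eta(P_1)=\hat{k}\!\left(x|\eta|,\tilde x|\eta|;\tfrac{\eta}{|\eta|}\right)|\eta|\,d\tilde x,\qquad \hat k\in\mathcal{A}_{\phg}^{(\mathcal{E}_{\lf},\mathcal{E}_{\rf},\infty)}(\overline{\mathbb{R}}_2^2\times S^{n-1}).
\]
For fixed $\eta\neq 0$, the rescaled kernel $\hat{k}(\,\cdot\,,\tilde x|\eta|;\eta/|\eta|)$ is polyhomogeneous in the first variable with the same index set $\mathcal{E}_{\lf}$ at $x=0$, so applying the leading-log trace $\tr_{\mu,\tilde M_\mu}^{\partial_x}$ produces a function of $\tilde x$ times a vector in $(E_{\mu,\tilde M_\mu})_p$. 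Since the $\tilde x$-dependence of $\hat k$ is polyhomogeneous with index set $\mathcal{E}_{\rf}$ at $\tilde x=0$ and $\infty$ at infinity, and the trace operator acts only in $x$, the resulting $\tilde x$-kernel lies in $\mathcal{A}_{\phg}^{(\mathcal{E}_{\rf},\infty)}(\overline{\mathbb{R}}_1^1;(\boldsymbol{E}_L)_p)$. Therefore $\hat{\boldsymbol{a}}_{L,\eta}\in\Psi_{\tr}^{-\infty,\mathcal{E}_{\rf}}(N_p^+\partial X;(\boldsymbol{E}_L)_p)$, and smoothness of the family in $\eta$ on $T^*\partial X\setminus 0$ follows from smoothness of $\hat k$ jointly in its arguments together with smoothness of the maps $\eta\mapsto|\eta|$ and $\eta\mapsto\eta/|\eta|$ away from the zero section.

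Second, for the twisted homogeneity I would simply chain the ingredients:
\begin{align*}
\hat{\boldsymbol{a}}_{L,t\eta}
&=\bigoplus_{\mu\text{ critical}}\tr_{\mu,\tilde M_\mu}^V\circ \hat{N}_{t\eta}(P_1)\\
&=\bigoplus_{\mu\text{ critical}}\tr_{\mu,\tilde M_\mu}^V\circ\lambda_t^*\circ \hat{N}_\eta(P_1)\circ\lambda_{t^{-1}}^*\\
&=\bigoplus_{\mu\text{ critical}}t^{\mathfrak{s}_{\mu,\tilde M_\mu}}\circ\tr_{\mu,\tilde M_\mu}^V\circ \hat{N}_\eta(P_1)\circ\lambda_{t^{-1}}^*\\
&=t^{\boldsymbol{\mathfrak{s}}_L}\circ\hat{\boldsymbol{a}}_{L,\eta}\circ\lambda_{t^{-1}}^*,
\end{align*}
where the second equality is the homogeneity of $\hat{N}(P_1)$ established in the preceding proposition, the third is Lemma~\ref{lem:trace-and-dilations} applied componentwise, and the last uses the block-diagonal definition $\boldsymbol{\mathfrak{s}}_L=\bigoplus_\mu\mathfrak{s}_{\mu,\tilde M_\mu}$.

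The only point requiring a brief comment, rather than a real obstacle, is invariance: the section obtained depends a priori on the coordinates $(x,y)$ used to write $\hat k$, but the definition of $\hat{\boldsymbol{a}}_{L,\eta}$ is intrinsic modulo the auxiliary choices $(V,x,\omega)$, since $\hat{N}_\eta(P_1)$ is intrinsic to those choices and $\tr_{\mu,\tilde M_\mu}^V$ depends only on the fibrewise trivialization of $N^+\partial X$ induced by $V$. Hence the local formula defines a global section of $\pi^*\Psi_{\tr}^{-\infty,\mathcal{E}_{\rf}}(N^+\partial X;\boldsymbol{E}_L)$, completing the proof.
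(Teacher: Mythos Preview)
Your proof is correct and uses essentially the same ingredients as the paper. The paper's proof computes directly from the explicit coordinate formula for $\hat{\boldsymbol{a}}_{L,\eta}$ derived just before the proposition (itself obtained using Lemma~\ref{lem:trace-and-dilations}), whereas you chain the homogeneity of $\hat{N}(P_1)$ with Lemma~\ref{lem:trace-and-dilations} more abstractly; these are the same argument organized slightly differently.
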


\begin{proof}
It remains to prove twisted homogeneity. Using the coordinates chosen
above, we have
\begin{align*}
\hat{\boldsymbol{a}}_{L,\eta}\circ\lambda_{t^{-1}}^{*} & =\left|\eta\right|^{\boldsymbol{\mathfrak{s}}_{L}}\left(\begin{matrix}\left(\tr_{\mu^{1},\tilde{M}_{\mu^{1}}}^{\partial_{x}}\hat{k}\right)\left(t\tilde{x}\left|\eta\right|;\frac{\eta}{\left|\eta\right|}\right)\\
\vdots\\
\left(\tr_{\mu^{I},\tilde{M}_{\mu^{I}}}^{\partial_{x}}\hat{k}\right)\left(t\tilde{x}\left|\eta\right|;\frac{\eta}{\left|\eta\right|}\right)
\end{matrix}\right)\left|\eta\right|td\tilde{x}\\
 & =t^{-\boldsymbol{\mathfrak{s}}_{L}}\left|t\eta\right|^{\boldsymbol{\mathfrak{s}}_{L}}\left(\begin{matrix}\left(\tr_{\mu^{1},\tilde{M}_{\mu^{1}}}^{\partial_{x}}\hat{k}\right)\left(\tilde{x}\left|t\eta\right|;\frac{\eta}{\left|\eta\right|}\right)\\
\vdots\\
\left(\tr_{\mu^{I},\tilde{M}_{\mu^{I}}}^{\partial_{x}}\hat{k}\right)\left(\tilde{x}\left|t\eta\right|;\frac{\eta}{\left|\eta\right|}\right)
\end{matrix}\right)\left|t\eta\right|d\tilde{x}\\
 & =t^{-\boldsymbol{\mathfrak{s}}_{L}}\hat{\boldsymbol{a}}_{L,t\eta}
\end{align*}
thus proving the claim.
\end{proof}
\begin{rem}
We remark that, for generic $L$, the Bessel trace family $\eta\mapsto\hat{\boldsymbol{a}}_{L,\eta}$
is \emph{not} homogeneous. This phenomenon is inescapable when the
critical strip contains multiple indicial roots, some critical indicial
roots have non-zero multiplicities, or there exist pairs of critical
indicial roots differing by integers. For example, in the simplest
case where $L$ is genuinely elliptic of order $m$, so that $x^{m}L$
is $0$-elliptic with indicial roots $0,...,m-1$, the trace map for
$L$ associated to a weight $\delta<0$ is twisted homogeneous unless
the weights $>0$ are injective.
\end{rem}

\subsection{\label{subsec:Solving-the-model}Solution of the model problem}

Let's recap the situation. Given an elliptic boundary condition $\boldsymbol{Q}$
for $L$ relative to the weight $\delta$, we can formulate the ``Bessel
model problem'' at any covector $\eta\in T_{p}^{*}\partial X\backslash0$.
\begin{problem}
Given a function $v\in x^{\delta}H_{b}^{\infty}\left(N_{p}^{+}\partial X\right)$
and a vector $\varphi\in\left(\pi^{*}\boldsymbol{W}\right)_{\eta}=\boldsymbol{W}_{p}$,
find a function $u\in x^{\delta}H_{b}^{\infty}\left(N_{p}^{+}\partial X\right)$
such that
\[
\begin{cases}
\hat{N}_{\eta}\left(L\right)u & =v\\
\sigma_{\eta}\left(\boldsymbol{Q}\right)\hat{\boldsymbol{a}}_{L,\eta}u & =\varphi
\end{cases}.
\]
\end{problem}

Naturally, we would like to solve the model problem, i.e. invert the
map $\hat{N}_{\eta}\left(L\right)\oplus\sigma_{\eta}\left(\boldsymbol{Q}\right)\hat{\boldsymbol{a}}_{L,\eta}$,
in such a way that the inverse depends smoothly on $p$ and $\eta$
and possesses the correct homogeneity properties in $\eta$. We have
discussed above that the Bessel family $\eta\mapsto\hat{N}_{\eta}\left(L\right)$
is homogeneous of degree $0$ in $\eta$, i.e.
\[
\hat{N}_{t\eta}\left(L\right)=\lambda_{t}^{*}\circ\hat{N}_{\eta}\left(L\right)\circ\lambda_{t^{-1}}^{*}
\]
for every $t>0$. Moreover, we have discovered that the Bessel trace
family $\eta\mapsto\hat{\boldsymbol{a}}_{L,\eta}$ is \emph{twisted
homogeneous of degree $\boldsymbol{\mathfrak{s}}_{L}$}, in the sense
that
\[
\hat{\boldsymbol{a}}_{L,t\eta}=t^{\boldsymbol{\mathfrak{s}}_{L}}\circ\hat{\boldsymbol{a}}_{L,\eta}\circ\lambda_{t^{-1}}^{*}.
\]
We remind the reader that
\[
\boldsymbol{\mathfrak{s}}_{L}=\bigoplus_{\text{\ensuremath{\mu} critical}}\mathfrak{s}_{\mu,\tilde{M}_{\mu}}
\]
is the canonical endomorphism of $\boldsymbol{E}_{L}$ induced by
the action of the canonical inward-pointing dilation invariant operator
$x\partial_{x}$ on the fibers $N^{+}\partial X$. Moreover, the eigenvalues
of $\boldsymbol{\mathfrak{s}}_{L}$ are the critical indicial roots
of $L$, and are therefore constant along $\partial X$.

In view of the twisted homogeneity property of the Bessel trace family
$\hat{\boldsymbol{a}}_{L}$, it seems natural to ask a similar homogeneity
property for the principal symbol $\sigma\left(\boldsymbol{Q}\right)$.
Abstracting, we formulate the following
\begin{defn}
Let $\boldsymbol{E},\boldsymbol{F}$ be smooth vector bundles over
$\partial X$, equipped with smooth endomorphisms $\boldsymbol{\mathfrak{a}},\boldsymbol{\mathfrak{b}}$
with constant eigenvalues. We say that a smooth section
\[
S:T^{*}\partial X\backslash0\to\pi^{*}\hom\left(\boldsymbol{E},\boldsymbol{F}\right)
\]
is \emph{fibrewise} \emph{twisted homogeneous of degree $\left(\boldsymbol{\mathfrak{a}},\boldsymbol{\mathfrak{b}}\right)$}
if, for every $t\in\mathbb{R}^{+}$ and every $\eta\in T^{*}\partial X\backslash0$,
we have
\[
S_{t\eta}=t^{-\boldsymbol{\mathfrak{b}}}\circ S_{\eta}\circ t^{\boldsymbol{\mathfrak{a}}}.
\]
\end{defn}

\begin{rem}
As already anticipated, in §\ref{sec:The-symbolic-0-calculus} we
will develop an alternative version of the $0$-calculus which contains
naturally the trace map $\boldsymbol{A}_{L}$ and similar objects.
As part of the calculus, we will develop a calculus of pseudodifferential
operators on bundles over $\partial X$ \emph{whose principal symbols
are fibrewise twisted homogeneous}. In fact, the boundary calculus
presented in this paper is a particularly simple subcalculus of the
calculus of \emph{pseudodifferential operators with variable orders}
introduced by Krainer and Mendoza in \cite{KrainerMendozaVariableOrders}.
The aim of Krainer and Mendoza was precisely to formulate elliptic
boundary problems for elliptic \emph{edge} operators. Our analysis
is greatly simplified by the fact that we only allow twisting endomorphisms
with \emph{constant }eigenvalues. The result of this simplification
is that our boundary calculus is essentially a mild generalization
of the usual Douglis--Nirenberg calculus. This is of course strictly
related to the fact that we only consider $0$-elliptic operators
with constant indicial roots.
\end{rem}

Since the precise formulation of the twisted boundary calculus is
somewhat delicate, we postpone it to later sections, and we \emph{assume}
that our elliptic boundary condition $\boldsymbol{Q}$ takes values
in sections of a bundle $\boldsymbol{W}$ equipped with an endomorphism
$\boldsymbol{\mathfrak{t}}:\boldsymbol{W}\to\boldsymbol{W}$ with
constant eigenvalues, and that its principal symbol $\eta\mapsto\sigma_{\eta}\left(\boldsymbol{Q}\right)$
is twisted homogeneous of degree $\left(-\boldsymbol{\mathfrak{s}}_{L},\boldsymbol{\mathfrak{t}}\right)$.
We can now easily construct the inverse of the operator
\[
\left(\begin{matrix}\hat{N}_{\eta}\left(L\right)\\
\sigma_{\eta}\left(\boldsymbol{Q}\right)\hat{\boldsymbol{a}}_{L,\eta}
\end{matrix}\right)
\]
in two steps.
\begin{enumerate}
\item Choose a smooth family 
\[
\hat{\boldsymbol{k}}_{\eta}:T^{*}\partial X\backslash0\to\pi^{*}\hom\left(\boldsymbol{W},\boldsymbol{E}_{L}\right),
\]
twisted homogeneous of degree $\left(\boldsymbol{\mathfrak{t}},-\boldsymbol{\mathfrak{s}}_{L}\right)$,
with the property that for every $\eta\in T^{*}\partial X\backslash0$
the map $\sigma_{\eta}\left(\boldsymbol{Q}\right)\hat{\boldsymbol{k}}_{\eta}$
coincides with $1_{\left(\pi^{*}\boldsymbol{W}\right)_{\eta}}$, and
the map $\hat{\boldsymbol{k}}_{\eta}\sigma_{\eta}\left(\boldsymbol{Q}\right)$
is a projector of $\left(\pi^{*}\boldsymbol{E}_{L}\right)_{\eta}$
onto $\mathcal{C}_{\eta}$ (i.e. $\hat{\boldsymbol{k}}_{\eta}\sigma_{\eta}\left(\boldsymbol{Q}\right)$
is idempotent and has range equal to $\mathcal{C}_{\eta}$). Note
that this family exists thanks to the fact that $\sigma\left(\boldsymbol{Q}\right)_{\eta}:\left(\pi^{*}\boldsymbol{E}_{L}\right)_{\eta}\to\left(\pi^{*}\boldsymbol{W}\right)_{\eta}$
restricts to an isomorphism $\mathcal{C}_{\eta}\to\left(\pi^{*}\boldsymbol{W}\right)_{\eta}$.
\item Construct the \emph{Bessel Poisson} map $\hat{\boldsymbol{b}}_{\eta}:\left(\pi^{*}\boldsymbol{E}_{L}\right)_{\eta}\to x^{\delta}H_{b}^{\infty}\left(N_{p}^{+}\partial X\right)$
associated to the projector $\hat{\boldsymbol{k}}_{\eta}\sigma_{\eta}\left(\boldsymbol{Q}\right)$,
defined as follows: for every $\psi\in\left(\pi^{*}\boldsymbol{E}_{L}\right)_{\eta}$,
$\hat{\boldsymbol{b}}_{\eta}\psi$ is the unique $x^{\delta}L_{b}^{2}$
solution of $\hat{N}_{\eta}\left(L\right)w=0$ with trace $\hat{\boldsymbol{a}}_{L,\eta}w$
equal $\hat{\boldsymbol{k}}_{\eta}\sigma_{\eta}\left(\boldsymbol{Q}\right)\psi$.
The map $\hat{\boldsymbol{b}}_{\eta}$ is well-defined exactly because
$\hat{\boldsymbol{k}}_{\eta}\sigma_{\eta}\left(\boldsymbol{Q}\right)$
is a projector onto the Calderón space $\mathcal{C}_{\eta}$, and
$\hat{\boldsymbol{a}}_{L,\eta}$ restricts to an isomorphism between
the kernel of $\hat{N}_{\eta}\left(L\right)$ and $\mathcal{C}_{\eta}$.
\end{enumerate}
Then an easy computation shows that the inverse of 
\[
\left(\begin{matrix}\hat{N}_{\eta}\left(L\right)\\
\sigma_{\eta}\left(\boldsymbol{Q}\right)\hat{\boldsymbol{a}}_{L,\eta}
\end{matrix}\right)
\]
is precisely
\[
\left(\begin{matrix}\hat{N}_{\eta}\left(G\right) & \hat{\boldsymbol{b}}_{\eta}\hat{\boldsymbol{k}}_{\eta}\end{matrix}\right),
\]
where $\hat{N}_{\eta}\left(G\right)$ is the generalized inverse of
$\hat{N}_{\eta}\left(L\right)$ and coincides with the Bessel operator
of the generalized inverse $G$ of $L$. In §\ref{sec:Parametrix-construction},
the family $\hat{\boldsymbol{k}}_{\eta}$ will be realized as the
principal symbol of an operator in our twisted boundary calculus.
Similarly, our symbolic $0$-calculus contains a class of \emph{twisted
symbolic $0$-Poisson operators}, which by design come equipped with
a Bessel family map; $\hat{\boldsymbol{b}}_{\eta}$ will be realized
as the Bessel family of a twisted symbolic $0$-Poisson operator.

To conclude the section, we clarify the behavior of the inverse $\left(\begin{matrix}\hat{N}_{\eta}\left(G\right) & \hat{\boldsymbol{b}}_{\eta}\hat{\boldsymbol{k}}_{\eta}\end{matrix}\right)$
as $\eta$ varies. The Bessel generalized inverse $\hat{N}\left(G\right)$
was constructed in \cite{MazzeoEdgeI}, and its structure was clarified
further in \cite{Lauter, Hintz0calculus}. We do not need to describe
it in detail (it is again a fibrewise homogeneous section of a pull-back
Fréchet bundle over $T^{*}\partial X\backslash0$), it is sufficient
to note that just like $\hat{N}\left(P_{1}\right)$ and $\hat{N}\left(L\right)$,
$\hat{N}\left(G\right)$ is homogeneous in $\eta$ of degree $0$:
for every $t\in\mathbb{R}^{+}$, we have
\[
\hat{N}_{t\eta}\left(G\right)=\lambda_{t}^{*}\circ\hat{N}_{\eta}\left(G\right)\circ\lambda_{t^{-1}}^{*}.
\]
Let's now describe the structure of $\hat{\boldsymbol{b}}_{\eta}$
in detail. Working again in coordinates centered at a point $p\in\partial X$
as in \ref{subsec:analysis-of-The-Bessel-trace}, let $\phi_{i}\left(x;\eta\right)$
be an orthonormal basis for the $x^{\delta}L_{b}^{2}$ kernel of $\hat{N}_{\eta}\left(L\right)$,
so that
\[
\phi_{i}\left(x;\eta\right)=\phi_{i}\left(\left|\eta\right|x;\frac{\eta}{\left|\eta\right|}\right)\left|\eta\right|^{-\delta}.
\]
Denote by $\boldsymbol{\gamma}_{i}\left(\eta\right)\in\left(\pi^{*}\boldsymbol{E}_{L}\right)_{\eta}=\left(\boldsymbol{E}_{L}\right)_{p}$
the trace of $\phi_{i}\left(x;\eta\right)$, i.e. $\boldsymbol{\gamma}_{i}\left(\eta\right)=\hat{\boldsymbol{a}}_{L,\eta}\phi_{i}\left(\cdot;\eta\right)$.
Observe that, by Lemma \ref{lem:trace-and-dilations}, we have
\begin{align*}
\boldsymbol{\gamma}_{i}\left(t\eta\right) & =\hat{\boldsymbol{a}}_{L,t\eta}\phi_{i}\left(\cdot;t\eta\right)\\
 & =\bigoplus_{\text{\ensuremath{\mu} critical}}\tr_{\mu,\tilde{M}_{\mu}}^{\partial_{x}}\phi_{i}\left(\left|\eta\right|tx;\frac{\eta}{\left|\eta\right|}\right)\left|t\eta\right|^{-\delta}\\
 & =\bigoplus_{\text{\ensuremath{\mu} critical}}\tr_{\mu,\tilde{M}_{\mu}}^{\partial_{x}}\phi_{i}\left(tx;\eta\right)t^{-\delta}\\
 & =t^{\boldsymbol{\mathfrak{s}}_{L}-\delta}\boldsymbol{\gamma}_{i}\left(\eta\right).
\end{align*}
Now, let $\boldsymbol{\chi}^{i}\left(\eta\right)$ be the element
of $\left(\pi^{*}\boldsymbol{E}_{L}\right)_{\eta}^{*}$ sending a
vector $v\in\left(\pi^{*}\boldsymbol{E}_{L}\right)_{\eta}$ to the
coefficient of $\boldsymbol{\gamma}_{i}\left(\eta\right)$ in the
decomposition of $\hat{\boldsymbol{k}}_{\eta}\sigma_{\eta}\left(\boldsymbol{Q}\right)v$
in terms of the basis $\boldsymbol{\gamma}_{i}\left(\eta\right)$.
Then we have
\[
\hat{\boldsymbol{k}}_{\eta}\sigma_{\eta}\left(\boldsymbol{Q}\right)=\sum_{i}\boldsymbol{\gamma}_{i}\left(\eta\right)\otimes\boldsymbol{\chi}^{i}\left(\eta\right).
\]
Since $\sigma_{\eta}\left(\boldsymbol{Q}\right)$ is twisted homogeneous
of degree $\left(-\boldsymbol{\mathfrak{s}}_{L},\boldsymbol{\mathfrak{t}}\right)$,
and $\hat{\boldsymbol{k}}_{\eta}$ is twisted homogeneous of degree
$\left(\boldsymbol{\mathfrak{t}},-\boldsymbol{\mathfrak{s}}_{L}\right)$,
the composition $\hat{\boldsymbol{k}}_{\eta}\sigma_{\eta}\left(\boldsymbol{Q}\right)$
is twisted homogeneous of degree $\left(-\boldsymbol{\mathfrak{s}}_{L},-\boldsymbol{\mathfrak{s}}_{L}\right)$,
and this implies that
\[
\boldsymbol{\chi}^{i}\left(t\eta\right)=\boldsymbol{\chi}^{i}\left(\eta\right)t^{-\boldsymbol{\mathfrak{s}}_{L}+\delta}.
\]
The Bessel Poisson map $\hat{\boldsymbol{b}}_{\eta}$ is then
\begin{align*}
\hat{\boldsymbol{b}}_{\eta} & =\sum_{i}\phi_{i}\left(x;\eta\right)\boldsymbol{\chi}^{i}\left(\eta\right)\\
 & =\sum_{i}\phi_{i}\left(x\left|\eta\right|;\frac{\eta}{\left|\eta\right|}\right)\boldsymbol{\chi}^{i}\left(\frac{\eta}{\left|\eta\right|}\right)\left|\eta\right|^{-\boldsymbol{\mathfrak{s}}_{L}}.
\end{align*}
The fact that the maps $\phi_{i}\left(\tau;\hat{\eta}\right)$ are
in $\mathcal{A}_{\phg}^{\left(\mathcal{E}_{\lf},\infty\right)}\left(\overline{\mathbb{R}}_{1}^{1}\times S^{n-1}\right)$
implies that
\[
\sum_{i}\phi_{i}\left(\tau;\hat{\eta}\right)\boldsymbol{\chi}^{i}\left(\hat{\eta}\right)\in\mathcal{A}_{\phg}^{\left(\mathcal{E}_{\lf},\infty\right)}\left(\overline{\mathbb{R}}_{1}^{1}\times S^{n-1};\boldsymbol{E}_{p}^{*}\right).
\]
To interpret $\hat{\boldsymbol{b}}_{\eta}$ invariantly, we proceed
analogously to the case of the Bessel trace map $\hat{\boldsymbol{a}}_{L,\eta}$.
\begin{defn}
Let $\mathcal{F}$ be an index set. We denote by $\Psi_{\po}^{-\infty,\mathcal{F}}\left(\mathbb{R}_{1}^{1}\right)$
the space of functions $\mathcal{A}_{\phg}^{\left(\mathcal{F},\infty\right)}\left(\overline{\mathbb{R}}_{1}^{1}\right)$,
interpreted as Schwartz kernels of operators $B:\mathbb{C}\to C^{-\infty}\left(\overline{\mathbb{R}}_{1}^{1}\right)$.
\end{defn}

Again, the representation
\begin{align*}
\mathbb{R}^{+} & \to\GL\left(\Psi_{\po}^{-\infty,\mathcal{F}}\left(\mathbb{R}_{1}^{1}\right)\right)\\
t & \mapsto\left(B\mapsto\lambda_{t}^{*}\circ B\right)
\end{align*}
allows us to define the Fréchet bundle $\Psi_{\po}^{-\infty,\mathcal{F}}\left(N^{+}\partial X\right)\to\partial X$
with typical fiber $\Psi_{\po}^{-\infty,\mathcal{F}}\left(\mathbb{R}_{1}^{1}\right)$.
The computations above then show that
\begin{prop}
The Bessel Poisson family $\eta\mapsto\hat{\boldsymbol{b}}_{\eta}$
constructed above is a section of the pull-back Fréchet bundle $\pi^{*}\Psi_{\po}^{-\infty,\mathcal{E}_{\lf}}\left(N^{+}\partial X;\boldsymbol{E}_{L}^{*}\right)$
over $T^{*}\partial X\backslash0$, with the following \emph{twisted}
homogeneity property: for every $t\in\mathbb{R}^{+}$ and every $\eta\in T^{*}\partial X\backslash0$,
we have
\[
\hat{\boldsymbol{b}}_{t\eta}=\lambda_{t}^{*}\circ\hat{\boldsymbol{b}}_{\eta}\circ t^{-\boldsymbol{\mathfrak{s}}_{L}}.
\]
\end{prop}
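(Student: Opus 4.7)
The plan is to verify both assertions directly from the explicit formula
\[
\hat{\boldsymbol{b}}_{\eta}=\sum_{i}\phi_{i}\!\left(x|\eta|;\tfrac{\eta}{|\eta|}\right)\boldsymbol{\chi}^{i}\!\left(\tfrac{\eta}{|\eta|}\right)|\eta|^{-\boldsymbol{\mathfrak{s}}_{L}},
\]
derived immediately above the statement. All the analytic content is already in the preceding paragraphs; what remains is to package it invariantly and to verify the transformation law.

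First I would check that the formula defines a smooth section of the stated pullback bundle. Fix $p\in\partial X$ and coordinates as in \S\ref{subsec:analysis-of-The-Bessel-trace}. The functions $\phi_{i}(\tau;\hat{\eta})$ lie in $\mathcal{A}_{\phg}^{(\mathcal{E}_{\lf},\infty)}(\overline{\mathbb{R}}_{1}^{1}\times S^{n-1})$ and the $\boldsymbol{\chi}^{i}(\hat{\eta})$ are smooth $(\boldsymbol{E}_{L}^{*})_{p}$-valued functions on $S^{n-1}=S_{p}^{*}\partial X$, so the sum
\[
s_{p}(\tau;\hat{\eta}):=\sum_{i}\phi_{i}(\tau;\hat{\eta})\,\boldsymbol{\chi}^{i}(\hat{\eta})
\]
represents a smooth section of $\Psi_{\po}^{-\infty,\mathcal{E}_{\lf}}(\mathbb{R}_{1}^{1};(\boldsymbol{E}_{L}^{*})_{p})$ over the unit cosphere. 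Its extension to $T_{p}^{*}\partial X\backslash 0$ is obtained by first applying the dilation $\lambda_{|\eta|}^{*}$ to the $\tau$-variable (which lies in the $\mathbb{R}^{+}$-action defining the fiber of the Fréchet bundle $\Psi_{\po}^{-\infty,\mathcal{E}_{\lf}}(N^{+}\partial X;\boldsymbol{E}_{L}^{*})$) and then post-composing with the linear map $|\eta|^{-\boldsymbol{\mathfrak{s}}_{L}}$ on $(\boldsymbol{E}_{L}^{*})_{p}$. Since $\phi_{i}$ and $\boldsymbol{\chi}^{i}$ depend smoothly on $p$ as well (the $\phi_{i}$'s are obtained, up to local trivializations, from a smooth family of $x^{\delta}L_{b}^{2}$-orthonormal bases of $\ker\hat{N}_{\hat{\eta}}(L)$, and the $\boldsymbol{\chi}^{i}$'s are defined via the smooth projector $\hat{\boldsymbol{k}}_{\eta}\sigma_{\eta}(\boldsymbol{Q})$), we obtain a smooth section of $\pi^{*}\Psi_{\po}^{-\infty,\mathcal{E}_{\lf}}(N^{+}\partial X;\boldsymbol{E}_{L}^{*})$ over $T^{*}\partial X\backslash 0$, independent of the choices made in its local construction by the uniqueness of the Bessel Poisson map.

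Second I would verify the twisted homogeneity by direct substitution. Using $|t\eta|=t|\eta|$ and $t\eta/|t\eta|=\eta/|\eta|$,
\[
\hat{\boldsymbol{b}}_{t\eta}=\sum_{i}\phi_{i}\!\left(tx|\eta|;\tfrac{\eta}{|\eta|}\right)\boldsymbol{\chi}^{i}\!\left(\tfrac{\eta}{|\eta|}\right)(t|\eta|)^{-\boldsymbol{\mathfrak{s}}_{L}}.
\]
Because $t^{-\boldsymbol{\mathfrak{s}}_{L}}$ and $|\eta|^{-\boldsymbol{\mathfrak{s}}_{L}}$ are both analytic functions of the same endomorphism $\boldsymbol{\mathfrak{s}}_{L}$ they commute, so $(t|\eta|)^{-\boldsymbol{\mathfrak{s}}_{L}}=|\eta|^{-\boldsymbol{\mathfrak{s}}_{L}}\,t^{-\boldsymbol{\mathfrak{s}}_{L}}$. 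On the other hand, $\phi_{i}(tx|\eta|;\eta/|\eta|)=(\lambda_{t}^{*}\phi_{i}(\cdot\,|\eta|;\eta/|\eta|))(x)$, so the sum factors as
\[
\hat{\boldsymbol{b}}_{t\eta}=\lambda_{t}^{*}\!\left(\sum_{i}\phi_{i}(x|\eta|;\tfrac{\eta}{|\eta|})\,\boldsymbol{\chi}^{i}(\tfrac{\eta}{|\eta|})|\eta|^{-\boldsymbol{\mathfrak{s}}_{L}}\right)\circ t^{-\boldsymbol{\mathfrak{s}}_{L}}=\lambda_{t}^{*}\circ\hat{\boldsymbol{b}}_{\eta}\circ t^{-\boldsymbol{\mathfrak{s}}_{L}},
\]
as claimed. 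The only point that is more than bookkeeping is the commutation $t^{-\boldsymbol{\mathfrak{s}}_{L}}|\eta|^{-\boldsymbol{\mathfrak{s}}_{L}}=|\eta|^{-\boldsymbol{\mathfrak{s}}_{L}}t^{-\boldsymbol{\mathfrak{s}}_{L}}$ together with the fact that $\boldsymbol{\chi}^{i}(\hat{\eta})$, as a functional on $(\boldsymbol{E}_{L})_{p}$, commutes through this scalar action on its target — both entirely trivial. The main conceptual obstacle is really upstream in identifying the correct target bundle and twisting endomorphism $\boldsymbol{\mathfrak{s}}_{L}$; once these are in place, as they are by the construction in \S\ref{subsubsec:Log-homogeneous-functions}, the homogeneity law and the bundle statement are forced on us by the one-line calculation above.
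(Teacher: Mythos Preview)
Your proof is correct and follows essentially the same approach as the paper, which does not give a formal proof but simply states that ``the computations above then show'' the proposition; you have faithfully filled in those computations by substituting $t\eta$ into the explicit polar form $\hat{\boldsymbol{b}}_{\eta}=\sum_{i}\phi_{i}(x|\eta|;\hat{\eta})\boldsymbol{\chi}^{i}(\hat{\eta})|\eta|^{-\boldsymbol{\mathfrak{s}}_{L}}$ and factoring out $\lambda_{t}^{*}$ and $t^{-\boldsymbol{\mathfrak{s}}_{L}}$.
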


\section{\label{sec:The-symbolic-0-calculus}The symbolic $0$-calculus}

This section and the next form the core of the paper. We will formulate
the calculus which we shall use in §\ref{sec:Parametrix-construction}
to construct the parametrix for our elliptic boundary value problem.
We will also describe in detail the relation between our calculus,
and the $0$-calculus recalled in §\ref{sec:Recap-on-the-0-calc}.

Our calculus consists of several components:
\begin{enumerate}
\item classes $\hat{\Psi}_{0\tr}^{-\infty,\bullet}\left(X,\partial X\right)$
and $\hat{\Psi}_{0\po}^{-\infty,\bullet}\left(\partial X,X\right)$
of \emph{symbolic $0$-trace and $0$-Poisson }operators;
\item classes $\hat{\Psi}_{0}^{-\infty,\bullet}\left(X\right)$ and $\hat{\Psi}_{0b}^{-\infty,\bullet}\left(X\right)$
of \emph{$0$-interior and $0b$-interior }operators;
\item variants $\hat{\Psi}_{0\tr}^{-\infty,\bullet,\boldsymbol{\mathfrak{s}}}\left(X;\partial X,\boldsymbol{E}\right)$
and $\hat{\Psi}_{0\po}^{-\infty,\bullet,\boldsymbol{\mathfrak{s}}}\left(\partial X,\boldsymbol{E};X\right)$
of the classes of symbolic $0$-trace and $0$-Poisson operators,
which are ``twisted'' by a vector bundle $\boldsymbol{E}\to\partial X$
equipped with an endomorphism $\boldsymbol{\mathfrak{s}}$ with constant
eigenvalues;
\item a class of pseudodifferential operators on $\partial X$ $\Psi_{\phg}^{\bullet,\left(\boldsymbol{\mathfrak{s}},\boldsymbol{\mathfrak{t}}\right)}\left(\partial X;\boldsymbol{E},\boldsymbol{F}\right)$,
called the \emph{twisted boundary calculus}, acting between sections
of vector bundles equipped with endomorphisms with constant eigenvalues.
\end{enumerate}
The classes of operators recalled in §\ref{sec:Recap-on-the-0-calc}
are all characterized as Schwartz kernels which lift to polyhomogeneous
right densities on appropriate blow-ups of the double space. Instead,
the classes in our calculus will be characterized (locally near $\partial\Delta$)
as oscillatory integrals of \emph{symbols} which lift to polyhomogeneous
functions on appropriate blown-up model spaces. Since our approach
is local, the theory involves several familiar steps (e.g. left and
right reductions, asymptotic expansions of symbols, diffeomorphism
invariance etc.) from the standard theory of pseudodifferential operators
on closed manifolds. Most of these arguments are in fact essentially
identical to the traditional arguments for pseudodifferential theory
on closed manifolds, so we decided to omit the ones which do not present
significant differences with the standard theory.

\subsection{\label{subsec:Definitions-and-relation-with-0-calculus}Definitions
and relation with the $0$-calculus}

\subsubsection{\label{subsubsec:Symbolic-0-Poisson-operators}Symbolic $0$-Poisson
operators}

We start with symbolic $0$-Poisson operators. First, we introduce
the relevant model space. Define $\hat{P}^{2}=\overline{\mathbb{R}}_{1}^{1}\times\overline{\mathbb{R}}^{n}$
with coordinates $\left(x,\eta\right)$. This space should be thought
of as the ``frequency counterpart'' of the model space $P^{2}$
used in §\ref{subsubsec:local-physical-0-trace-and-0-Poisson} to
define the class $\Psi_{0\po,\mathcal{S}}^{-\infty,\bullet}\left(\mathbb{R}^{n},\mathbb{R}_{1}^{n+1}\right)$.
Like $P^{2}$, $\hat{P}^{2}$ has three boundary hyperfaces
\[
\of=\left\{ x=0\right\} ,\iif_{\eta}=\left\{ \left|\eta\right|=\infty\right\} ,\iif_{x}=\left\{ x=\infty\right\} .
\]
As in §\ref{subsubsec:local-physical-0-trace-and-0-Poisson}, $\of$
stands for ``original face'', while $\iif$ stands for ``infinity
face''. We now introduce the blow-up
\[
\hat{P}_{0}^{2}=\left[\hat{P}^{2}:\left\{ x=0,\left|\eta\right|=\infty\right\} \right].
\]
This space has four boundary hyperfaces $\of,\ff,\iif_{\eta},\iif_{x}$,
where again $\ff$ stands for ``front face''. The other faces are
the lifts of the corresponding faces of $\hat{P}^{2}$, while $\ff$
is the new face obtained from the blow-up. The space $\hat{P}_{0}^{2}$
is represented in Figure \ref{fig:Phat20}\begin{figure}
	\centering
	\caption{$\hat{P}^2_0$}
	\label{fig:Phat20}

	\tikzmath{\L = 4;\R = \L /3;}
	\begin{tikzpicture}
		

		\coordinate (A) at (-\L, 0);
		\coordinate (B) at (\L, 0);
		\coordinate (C) at (-\L, \L);
		\coordinate (D) at (\L, \L);
		\coordinate (A1) at ({-\L+\R}, 0);
		\coordinate (A2) at (-\L, \R);
		\coordinate (B1) at ({\L-\R}, 0);
		\coordinate (B2) at (\L, \R);

		\draw (A1) -- (B1);
		\draw (A2) -- (C);
		\draw (B2) -- (D);
		\draw (C) -- (D);
		
		\draw (A1) arc(0:90:\R);
		\draw (B1) arc(180:90:\R);


		\node at ({-\L+1.7*\L/6},{1.7*\L/6}) {$\LARGE{\text{ff}}$};
		\node at ({\L-1.7*\L/6},{1.7*\L/6}) {$\LARGE{\text{ff}}$};
		\node at (0,{0.4*\L/6}) {$\LARGE{\text{of}}$};
		\node at (0,{\L - 0.4*\L/6}) {$\LARGE{\text{if}_x}$};
		\node at ({-\L+0.4*\L/6},{\L*2/3}) {$\LARGE{\text{if}_\eta}$};
		\node at ({\L-0.4*\L/6},{\L*2/3}) {$\LARGE{\text{if}_\eta}$};

	\end{tikzpicture}
\end{figure}.
\begin{defn}
($0$-Poisson symbols) Let $\mathcal{E}=\left(\mathcal{E}_{\of},\mathcal{E}_{\ff}\right)$
be a pair of index sets. We define
\begin{align*}
S_{0\po,\mathcal{S}}^{-\infty,\mathcal{E}}\left(\mathbb{R}^{k};\mathbb{R}_{1}^{n+1}\right) & =\mathcal{S}\left(\mathbb{R}^{k}\right)\hat{\otimes}\mathcal{A}_{\phg}^{\left(\mathcal{E}_{\of},\mathcal{E}_{\ff},\infty,\infty\right)}\left(\hat{P}_{0}^{2}\right)\\
S_{\po,\mathcal{S}}^{-\infty,\mathcal{E}_{\of}}\left(\mathbb{R}^{k};\mathbb{R}_{1}^{n+1}\right) & =\mathcal{S}\left(\mathbb{R}^{k}\right)\hat{\otimes}\mathcal{A}_{\phg}^{\left(\mathcal{E}_{\of},\infty,\infty\right)}\left(\hat{P}^{2}\right).
\end{align*}
We call the elements of $S_{0\po,\mathcal{S}}^{-\infty,\mathcal{E}}\left(\mathbb{R}^{k};\mathbb{R}_{1}^{n+1}\right)$
(resp. $S_{\po,\mathcal{S}}^{-\infty,\mathcal{E}_{\of}}\left(\mathbb{R}^{k};\mathbb{R}_{1}^{n+1}\right)$)
\emph{$0$-Poisson} (resp. \emph{residual Poisson})\emph{ }symbols.
\end{defn}

We now define the class of \emph{local symbolic $0$-Poisson operators}.
These operators are obtained as \emph{left quantizations }of $0$-Poisson
symbols. More precisely, denote by $\Op_{L}^{\po}$ the ``left Poisson
quantization'' map
\begin{align*}
\Op_{L}^{\po}:S_{0\po,\mathcal{S}}^{-\infty,\mathcal{E}}\left(\mathbb{R}^{n};\mathbb{R}_{1}^{n+1}\right)\times\mathcal{S}\left(\mathbb{R}^{n}\right) & \to\mathcal{A}_{\phg}^{\left(\mathcal{E}_{\lf},\infty,\infty\right)}\left(\overline{\mathbb{R}}_{1}^{1}\times\overline{\mathbb{R}}^{n}\right)\\
\left(b\left(y;x,\eta\right),u\left(y\right)\right) & \mapsto\frac{1}{\left(2\pi\right)^{n}}\int e^{iy\eta}b\left(y;x,\eta\right)\hat{u}\left(\eta\right)d\eta.
\end{align*}
Let's check that this map is well-defined. Since $u\left(y\right)\in\mathcal{S}\left(\mathbb{R}^{n}\right)$,
its Fourier transform $\hat{u}\left(\eta\right)$ is in $\mathcal{S}\left(\mathbb{R}^{n}\right)$.
Seeing $\hat{u}\left(\eta\right)$ as a function on $\overline{\mathbb{R}}^{n}\times\hat{P}^{2}$
(i.e. pulling $\hat{u}\left(\eta\right)$ back via the canonical projection
$\overline{\mathbb{R}}^{n}\times\hat{P}^{2}\to\overline{\mathbb{R}}^{n}$
given by $\left(y;x,\eta\right)\mapsto\eta$), $\hat{u}\left(\eta\right)$
is in $C^{\infty}\left(\overline{\mathbb{R}}^{n}\right)\hat{\otimes}\mathcal{A}_{\phg}^{\left(0,0,\infty\right)}\left(\hat{P}^{2}\right)$.
By the Pull-back Theorem, its lift to $\overline{\mathbb{R}}^{n}\times\hat{P}_{0}^{2}$
is then in $C^{\infty}\left(\overline{\mathbb{R}}^{n}\right)\hat{\otimes}\mathcal{A}_{\phg}^{\left(0,\infty,0,\infty\right)}\left(\hat{P}_{0}^{2}\right)$.
Therefore, since $b\in S_{0\po,\mathcal{S}}^{-\infty,\mathcal{E}}\left(\mathbb{R}^{n};\mathbb{R}_{1}^{n+1}\right)=\mathcal{S}\left(\mathbb{R}^{n}\right)\hat{\otimes}\mathcal{A}_{\phg}^{\left(\mathcal{E}_{\lf},\mathcal{E}_{\ff},\infty,\infty\right)}\left(\hat{P}_{0}^{2}\right)$
the product $b\left(y;x,\eta\right)\hat{u}\left(\eta\right)$ is in
$\mathcal{S}\left(\mathbb{R}^{n}\right)\hat{\otimes}\mathcal{A}_{\phg}^{\left(\mathcal{E}_{\lf},\infty,\infty,\infty\right)}\left(\hat{P}_{0}^{2}\right)$,
which coincides with $\mathcal{S}\left(\mathbb{R}^{n}\right)\hat{\otimes}\mathcal{A}_{\phg}^{\left(\mathcal{E}_{\lf},\infty\right)}\left(\overline{\mathbb{R}}_{1}^{1}\right)\hat{\otimes}\mathcal{S}\left(\mathbb{R}^{n}\right)$.
Therefore, applying the inverse Fourier transform $\frac{1}{\left(2\pi\right)^{n}}\int e^{iy\eta}\cdot d\eta$,
we obtain an element of $\mathcal{A}_{\phg}^{\left(\mathcal{E}_{\lf},\infty\right)}\left(\overline{\mathbb{R}}_{1}^{1}\right)\hat{\otimes}\mathcal{S}\left(\mathbb{R}^{n}\right)=\mathcal{A}_{\phg}^{\left(\mathcal{E}_{\lf},\infty,\infty\right)}\left(\overline{\mathbb{R}}_{1}^{1}\times\overline{\mathbb{R}}^{n}\right)$.

As in §\ref{subsubsec:local-physical-0-trace-and-0-Poisson}, we denote
by $\Op\left(\mathbb{R}^{n},\mathbb{R}_{1}^{n+1}\right)$ the class
of operators $\mathcal{S}\left(\mathbb{R}^{n}\right)\to C^{-\infty}\left(\overline{\mathbb{R}}_{1}^{1}\times\overline{\mathbb{R}}^{n}\right)$.
Then the left Poisson quantization map defined above determines a
continuous linear map
\begin{align*}
\Op_{L}^{\po}:S_{0\po,\mathcal{S}}^{-\infty,\mathcal{E}}\left(\mathbb{R}^{n};\mathbb{R}_{1}^{n+1}\right) & \to\Op\left(\mathbb{R}^{n},\mathbb{R}_{1}^{n+1}\right)
\end{align*}
given by
\[
b\left(y;x,\eta\right)\mapsto\left[\frac{1}{\left(2\pi\right)^{n}}\int e^{i\left(y-\tilde{y}\right)\eta}b\left(y;x,\eta\right)d\eta\right]d\tilde{y}.
\]

\begin{defn}
(Local symbolic $0$-Poisson operators) We denote by $\hat{\Psi}_{0\po,\mathcal{S}}^{-\infty,\mathcal{E}}\left(\mathbb{R}^{n},\mathbb{R}_{1}^{n+1}\right)$
(note the hat!) the subclass of $\Op\left(\mathbb{R}^{n},\mathbb{R}_{1}^{n+1}\right)$
consisting of operators of the form $\Op_{L}^{\po}\left(b\right)$,
with $b\in S_{0\po,\mathcal{S}}^{-\infty,\mathcal{E}}\left(\mathbb{R}^{n};\mathbb{R}_{1}^{n+1}\right)$.
We call its elements local \emph{symbolic $0$-Poisson operators}.
\end{defn}

We now wish to compare the classes $\Psi_{0\po,\mathcal{S}}^{-\infty,\bullet}\left(\mathbb{R}^{n},\mathbb{R}_{1}^{n+1}\right)$
and $\hat{\Psi}_{0\po,\mathcal{S}}^{-\infty,\bullet}\left(\mathbb{R}^{n},\mathbb{R}_{1}^{n+1}\right)$.
Observe that, if $B=\Op_{L}^{\po}\left(b\right)$ for some $b\in S_{0\po,\mathcal{S}}^{-\infty,\mathcal{E}}\left(\mathbb{R}^{n};\mathbb{R}_{1}^{n+1}\right)$,
then the Schwartz kernel of $B$ is $B\left(y;x,y-\tilde{y}\right)d\tilde{y}$,
where $B\left(y;x,Y\right)$ is the inverse Fourier transform of $b\left(y;x,\eta\right)$
in the $\eta$ variable. Therefore, we need to understand how the
Fourier transform in the $\eta$ variables behaves on functions on
the model space $\hat{P}_{0}^{2}$. Luckily for us, this problem was
considered and solved before by Hintz in \cite{HintzDilations}:
\begin{prop}
\label{prop:Hintz-lemma}(Hintz, Propositions 2.28 and 2.29 of \cite{HintzDilations})
Consider the space $P^{2}=\overline{\mathbb{R}}_{1}^{1}\times\overline{\mathbb{R}}^{n}$
with coordinates $\left(x,Y\right)$, and the space $\hat{P}=\overline{\mathbb{R}}_{1}^{1}\times\overline{\mathbb{R}}^{n}$
with coordinates $\left(x,\eta\right)$. Then:
\begin{enumerate}
\item The Fourier transform in the $Y$ variables extends from $\dot{C}^{\infty}\left(P^{2}\right)$
to a continuous linear map
\[
\mathcal{A}_{\phg}^{\left(\mathcal{E}_{\of},\mathcal{E}_{\ff}-n,\infty,\infty\right)}\left(P_{0}^{2}\right)\to\mathcal{A}_{\phg}^{\left(\mathcal{E}_{\of}\overline{\cup}\mathcal{E}_{\ff},\mathcal{E}_{\ff},\infty,\infty\right)}\left(\hat{P}_{0}^{2}\right).
\]
\item The inverse Fourier transform in the $\eta$ variables extends from
$\dot{C}^{\infty}\left(\hat{P}^{2}\right)$ to a continuous linear
map
\[
\mathcal{A}_{\phg}^{\left(\mathcal{E}_{\of},\mathcal{E}_{\ff},\infty,\infty\right)}\left(\hat{P}_{0}^{2}\right)\to\mathcal{A}_{\phg}^{\left(\mathcal{E}_{\of},\left(\mathcal{E}_{\ff}-n\right)\overline{\cup}\mathcal{E}_{\of},\infty,\infty\right)}\left(P_{0}^{2}\right).
\]
\end{enumerate}
\end{prop}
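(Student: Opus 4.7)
The two statements are equivalent under Fourier inversion (with the roles of $P_{0}^{2}$ and $\hat{P}_{0}^{2}$ swapped), so I would focus on proving~(1). The geometric picture driving the proof is the uncertainty principle: the front face of $P_{0}^{2}$, where $x\to0$ and $\left|Y\right|\to0$ with $Y/x$ bounded, corresponds under partial Fourier transform to the front face of $\hat{P}_{0}^{2}$, where $x\to0$ and $\left|\eta\right|\to\infty$ with $x\eta$ bounded. This correspondence is implemented concretely by the dual coordinate change $\xi=x\eta$ paired with $u=Y/x$.

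The plan is to fix a partition of unity $1=\chi_{\ff}+\chi_{\of}$ on $P_{0}^{2}$, with $\chi_{\ff}$ supported in a neighborhood of $\ff$ of the form $\{|Y|\leq Cx\}$ and $\chi_{\of}$ supported in $\{|Y|\geq c\}$, and to analyze the Fourier transforms of $\chi_{\of}a$ and $\chi_{\ff}a$ separately. For $\chi_{\of}a$, the support condition together with the infinite-order vanishing of $a$ at $\iif_{Y}$ imply that $\chi_{\of}a$ is Schwartz in $Y$ for each fixed $x$, and polyhomogeneous in $x$ with index set $\mathcal{E}_{\of}$. The partial Fourier transform in $Y$ therefore lands in $\mathcal{A}_{\phg}^{\left(\mathcal{E}_{\of},\infty,\infty,\infty\right)}\left(\hat{P}_{0}^{2}\right)$: polyhomogeneous of index $\mathcal{E}_{\of}$ at $\of$, Schwartz in $\eta$, and contributing infinite order at both $\iif_{\eta}$ and $\ff$ of $\hat{P}_{0}^{2}$. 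This yields the $\mathcal{E}_{\of}$ summand of $\mathcal{E}_{\of}\,\overline{\cup}\,\mathcal{E}_{\ff}$.

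For the piece $\chi_{\ff}a$, I would introduce the projective coordinate $u=Y/x$ on the support, so that $(\chi_{\ff}a)(x,Y)=\tilde{a}(x,u)$ with $\tilde{a}$ polyhomogeneous in $x$ of index set $\mathcal{E}_{\ff}-n$ and compactly supported in $u$. Partially Fourier transforming in $Y=xu$ and changing variable $\xi=x\eta$ gives
\[
\widehat{\chi_{\ff}a}(x,\eta)=x^{n}\,b(x,x\eta),\qquad b(x,\xi):=\int e^{-iu\cdot\xi}\,\tilde{a}(x,u)\,du,
\]
where $b$ is polyhomogeneous in $x$ of index $\mathcal{E}_{\ff}-n$ and Schwartz in $\xi$. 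Interpreting $x^{n}b(x,x\eta)$ on $\hat{P}_{0}^{2}$, using that $\xi=x\eta$ is a smooth coordinate which is bounded across $\ff$, vanishes at $\of$, and tends to infinity at $\iif_{\eta}$, one reads off: index set $\mathcal{E}_{\ff}$ at $\ff$ (the $x^{n}$ prefactor shifts $\mathcal{E}_{\ff}-n$ back to $\mathcal{E}_{\ff}$); infinite-order vanishing at $\iif_{\eta}$ from the Schwartz decay of $b$ in $\xi$; and, at $\of$, the Taylor expansion of $b$ at $\xi=0$ contributes only positive integer shifts of $\mathcal{E}_{\ff}$ already contained in $\mathcal{E}_{\ff}$.

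The only real difficulty is the bookkeeping: verifying that the projective description of $\tilde{a}$ near $\ff$ of $P_{0}^{2}$, combined with the factor $x^{n}$ and the evaluation at $\xi=x\eta$, yields the claimed polyhomogeneous structure at all four faces of $\hat{P}_{0}^{2}$, with the precise $-n$ shift that makes the extended union $\mathcal{E}_{\of}\,\overline{\cup}\,\mathcal{E}_{\ff}$ appear at $\of$ rather than a direct sum. This shift is exactly the volume-form mismatch between the function conventions on $P_{0}^{2}$ and $\hat{P}_{0}^{2}$, and is the source of the asymmetry between statements (1) and (2). Continuity of the map is automatic from the continuity of each step (multiplication by the cutoffs, the coordinate change, and the partial Fourier transform on Schwartz functions).
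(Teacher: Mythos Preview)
The paper does not prove this statement; it is quoted from Hintz without argument, so I assess your sketch on its own.

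Your treatment of the piece supported in $\{|Y|\le Cx\}$ is correct: with $u=Y/x$ and $\xi=x\eta$ it becomes $x^{n}b(x,x\eta)$ with $b$ polyhomogeneous of index $\mathcal{E}_{\ff}-n$ in $x$ and Schwartz in $\xi$, and on $\hat P_{0}^{2}$ this has index $\mathcal{E}_{\ff}$ at both $\ff$ and $\of$. The gap is the partition of unity. The set $\{|Y|\le Cx\}=\{|u|\le C\}$ is \emph{not} a neighborhood of all of $\ff$ in $P_{0}^{2}$: it omits the corner $\ff\cap\of$, where $|u|\to\infty$. Consequently there is no $\chi_{\of}$ with $\chi_{\ff}+\chi_{\of}=1$ and support in $\{|Y|\ge c\}$ for a fixed $c>0$; for small $x$ the wedge $\{Cx<|Y|<c\}$ lies in neither support. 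Whatever the complementary cutoff actually is, it still sees the corner $\ff\cap\of$, so $\chi_{\of}a$ is not simply ``polyhomogeneous in $x$ with index $\mathcal{E}_{\of}$ and Schwartz in $Y$''.

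This matters because the corner $\ff\cap\of$ is exactly where the two index sets interact, and a correct analysis there is what forces the \emph{extended} union $\mathcal{E}_{\of}\,\overline{\cup}\,\mathcal{E}_{\ff}$ at $\of$; your argument, were it valid, would give only the ordinary union, and your remark attributing the $\overline{\cup}$ to the $-n$ shift is off --- that shift merely moves $\mathcal{E}_{\ff}-n$ back to $\mathcal{E}_{\ff}$. One repair is a three-region decomposition: your $\{|u|\le C\}$ piece, a piece supported in $\{|Y|\ge c\}$ (which does behave as you describe), and the corner piece supported in $\{C'x\le|Y|\le 2c\}$, handled in the chart $(|Y|,\,v=x/|Y|)$ via the Fourier asymptotics of conormal distributions in $Y$. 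A cleaner route is to recast the partial Fourier transform as a push-forward along a $b$-fibration from a blow-up of $P^{2}\times\overline{\mathbb{R}}^{n}_{\eta}$ and invoke the Push-forward Theorem, which produces the extended union automatically.
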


From this proposition, we immediately obtain the following
\begin{cor}
\label{cor:relation-symbolic-0-poisson-physical-0-poisson}Let $\mathcal{E}=\left(\mathcal{E}_{\of},\mathcal{E}_{\ff}\right)$
be a pair of index sets. Then:
\begin{enumerate}
\item the space $\hat{\Psi}_{0\po,\mathcal{S}}^{-\infty,\left(\mathcal{E}_{\of},\infty\right)}\left(\mathbb{R}^{n},\mathbb{R}_{1}^{n+1}\right)$
coincides with $\Psi_{\po,\mathcal{S}}^{-\infty,\mathcal{E}_{\of}}\left(\mathbb{R}^{n},\mathbb{R}_{1}^{n+1}\right)$;
\item the space $\hat{\Psi}_{0\po,\mathcal{S}}^{-\infty,\mathcal{E}}\left(\mathbb{R}^{n},\mathbb{R}_{1}^{n+1}\right)$
is contained in $\Psi_{0\po,\mathcal{S}}^{-\infty,\left(\mathcal{E}_{\of},\mathcal{E}_{\ff}\overline{\cup}\left(\mathcal{E}_{\of}+n\right)\right)}\left(\mathbb{R}^{n},\mathbb{R}_{1}^{n+1}\right)$;
\item the space $\Psi_{0\po,\mathcal{S}}^{-\infty,\mathcal{E}}\left(\mathbb{R}^{n},\mathbb{R}_{1}^{n+1}\right)$
is contained in $\hat{\Psi}_{0\po,\mathcal{S}}^{-\infty,\left(\mathcal{E}_{\of}\overline{\cup}\mathcal{E}_{\ff},\mathcal{E}_{\ff}\right)}\left(\mathbb{R}^{n},\mathbb{R}_{1}^{n+1}\right)$.
\end{enumerate}
\end{cor}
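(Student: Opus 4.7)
The plan is to apply Proposition~\ref{prop:Hintz-lemma} on the two halves of the Fourier transform between $P_0^2$ and $\hat{P}_0^2$ and then carefully match index sets against the definitions of $\Psi_{0\po,\mathcal{S}}^{-\infty,\bullet}$ and $\hat\Psi_{0\po,\mathcal{S}}^{-\infty,\bullet}$. The starting observation is that, by the very definition of $\Op_L^{\po}$, any $B=\Op_L^{\po}(b)\in\hat\Psi_{0\po,\mathcal{S}}^{-\infty,\mathcal{E}}$ has Schwartz kernel $K(y;x,y-\tilde y)\,d\tilde y$ with $K(y;x,Y)=\mathcal{F}^{-1}_\eta b(y;x,\eta)$, and conversely any operator in $\Psi_{0\po,\mathcal{S}}^{-\infty,\mathcal{E}}$ arises in this form with $b=\mathcal{F}_Y K$.

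For point 2, I would feed $b\in\mathcal{S}(\mathbb{R}^n)\hat\otimes\mathcal{A}_{\phg}^{(\mathcal{E}_{\of},\mathcal{E}_{\ff},\infty,\infty)}(\hat{P}_0^2)$ into the second half of Proposition~\ref{prop:Hintz-lemma} (tensored with the identity on $\mathcal{S}(\mathbb{R}^n_y)$), obtaining
\[
K=\mathcal{F}^{-1}_\eta b\;\in\;\mathcal{S}(\mathbb{R}^n)\hat\otimes\mathcal{A}_{\phg}^{(\mathcal{E}_{\of},\,(\mathcal{E}_{\ff}-n)\,\overline{\cup}\,\mathcal{E}_{\of},\,\infty,\infty)}(P_0^2).
\]
Reading off the definition of $\Psi_{0\po,\mathcal{S}}^{-\infty,\mathcal{F}}$, whose second entry is $\mathcal{F}_{\ff}-n$, one gets $\mathcal{F}_{\of}=\mathcal{E}_{\of}$ and $\mathcal{F}_{\ff}=\mathcal{E}_{\ff}\,\overline{\cup}\,(\mathcal{E}_{\of}+n)$, which is exactly the claimed index family. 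Point 3 is structurally identical but uses the first half of Proposition~\ref{prop:Hintz-lemma} applied to the kernel $K$ of a given $\Psi_{0\po,\mathcal{S}}^{-\infty,\mathcal{E}}$ operator, yielding $b=\mathcal{F}_Y K\in\mathcal{A}_{\phg}^{(\mathcal{E}_{\of}\,\overline{\cup}\,\mathcal{E}_{\ff},\,\mathcal{E}_{\ff},\,\infty,\infty)}(\hat{P}_0^2)$, i.e.\ a symbol in $S_{0\po,\mathcal{S}}^{-\infty,(\mathcal{E}_{\of}\,\overline{\cup}\,\mathcal{E}_{\ff},\,\mathcal{E}_{\ff})}$.

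Point 1 is the special case $\mathcal{E}_{\ff}=\infty$, for which I have to promote the inclusions above to equalities. The key observation is that both blow-downs $\hat{P}_0^2\to\hat{P}^2$ and $P_0^2\to P^2$ induce isomorphisms
\[
\mathcal{A}_{\phg}^{(\mathcal{E}_{\of},\infty,\infty)}(\hat{P}^2)\;\cong\;\mathcal{A}_{\phg}^{(\mathcal{E}_{\of},\infty,\infty,\infty)}(\hat{P}_0^2),\qquad \mathcal{A}_{\phg}^{(\mathcal{E}_{\of},\infty,\infty)}(P^2)\;\cong\;\mathcal{A}_{\phg}^{(\mathcal{E}_{\of},\infty,\infty,\infty)}(P_0^2),
\]
because Schwartz decay at $\iif_\eta$ (resp.\ $\iif_Y$) in the unblown-up model forces infinite-order vanishing at the lifted face $\ff$, while conversely a polyhomogeneous function on the blown-up space that vanishes to infinite order at $\ff$ descends smoothly since $\ff$ collapses to the corner being blown up. Granting this, the ordinary Fourier isomorphism $\mathcal{F}_\eta\colon\mathcal{S}(\mathbb{R}^n_\eta)\cong\mathcal{S}(\mathbb{R}^n_Y)$ applied in the Schwartz tensor factor identifies $S_{0\po,\mathcal{S}}^{-\infty,(\mathcal{E}_{\of},\infty)}\cong S_{\po,\mathcal{S}}^{-\infty,\mathcal{E}_{\of}}$ with the kernels of $\Psi_{\po,\mathcal{S}}^{-\infty,\mathcal{E}_{\of}}$, giving the desired equality.

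The only genuine technical point is the descent/lifting isomorphism used for point 1; it is a direct polar-coordinate computation at the blown-up locus, using that the Schwartz decay at $\iif$ dominates any polynomial behavior at the intersecting face. Everything else is index-set bookkeeping, using identities such as $\infty\,\overline{\cup}\,\mathcal{F}=\mathcal{F}$ and the shift $\mathcal{F}_{\ff}-n$ built into the definition of $\Psi_{0\po,\mathcal{S}}$.
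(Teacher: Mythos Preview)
Your proof is correct and follows essentially the same route as the paper: points 2 and 3 are direct applications of Proposition~\ref{prop:Hintz-lemma} with the $-n$ shift absorbed into the definition, and point 1 reduces to the Fourier isomorphism between $\mathcal{A}_{\phg}^{(\mathcal{E}_{\of},\infty,\infty)}(\hat P^2)$ and $\mathcal{A}_{\phg}^{(\mathcal{E}_{\of},\infty,\infty)}(P^2)$ after identifying $S_{0\po,\mathcal{S}}^{-\infty,(\mathcal{E}_{\of},\infty)}$ with $S_{\po,\mathcal{S}}^{-\infty,\mathcal{E}_{\of}}$.

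One small correction: your parenthetical claim that the blow-down $P_0^2\to P^2$ also induces an isomorphism $\mathcal{A}_{\phg}^{(\mathcal{E}_{\of},\infty,\infty)}(P^2)\cong\mathcal{A}_{\phg}^{(\mathcal{E}_{\of},\infty,\infty,\infty)}(P_0^2)$ is not right, and your stated reason (Schwartz decay at $\iif_Y$) does not apply, since $P_0^2$ is obtained by blowing up $\{x=0,\,Y=0\}$, not a corner meeting $\iif_Y$. A function in $\mathcal{A}_{\phg}^{(\mathcal{E}_{\of},\infty,\infty)}(P^2)$ lifts to one with index set $\mathcal{E}_{\of}$, not $\infty$, at $\ff$. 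Fortunately you do not actually use this claim: only the $\hat P_0^2\to\hat P^2$ identification (which \emph{is} a corner blow-up) is needed for point 1, and that one is correct.
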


\begin{proof}
Recall that $\Psi_{0\po,\mathcal{S}}^{-\infty,\mathcal{F}}\left(\mathbb{R}^{n},\mathbb{R}_{1}^{n+1}\right)$
is the space of operators with Schwartz kernel $K\left(y;x,y-\tilde{y}\right)d\tilde{y}$
with $K\left(y;x,Y\right)\in\mathcal{S}\left(\mathbb{R}^{n}\right)\hat{\otimes}\mathcal{A}_{\phg}^{\left(\mathcal{F}_{\of},\mathcal{F}_{\ff}-n,\infty,\infty\right)}\left(P_{0}^{2}\right)$,
while in the case of $\Psi_{\po,\mathcal{S}}^{-\infty,\mathcal{F}_{\of}}\left(\mathbb{R}^{n},\mathbb{R}_{1}^{n+1}\right)$
the function $K\left(y;x,Y\right)$ is in $\mathcal{S}\left(\mathbb{R}^{n}\right)\hat{\otimes}\mathcal{A}_{\phg}^{\left(\mathcal{F}_{\of},\infty,\infty\right)}\left(P^{2}\right)$.
The first point then follows immediately from the equality $S_{\po,\mathcal{S}}^{-\infty,\mathcal{E}_{\of}}\left(\mathbb{R}^{k};\mathbb{R}_{1}^{n+1}\right)=S_{0\po,\mathcal{S}}^{-\infty,\left(\mathcal{E}_{\of},\infty\right)}\left(\mathbb{R}^{k};\mathbb{R}_{1}^{n+1}\right)$
and the fact that the Fourier transform in the $Y$ variables induces
an isomorphism $\mathcal{A}_{\phg}^{\left(\mathcal{F}_{\of},\infty,\infty\right)}\left(P^{2}\right)\to\mathcal{A}_{\phg}^{\left(\mathcal{F}_{\of},\infty,\infty\right)}\left(\hat{P}^{2}\right)$,
while the second and third points follow from Proposition \ref{prop:Hintz-lemma}.
\end{proof}
We now pass to symbolic $0$-Poisson operators on manifolds. We fix
an auxiliary smooth vector field $V$ on $X$ transversal to $\partial X$
and inward-pointing.
\begin{defn}
(Symbolic $0$-Poisson operators) Let $\mathcal{E}=\left(\mathcal{E}_{\of},\mathcal{E}_{\ff}\right)$
be a pair of index sets. We denote by $\hat{\Psi}_{0\po}^{-\infty,\mathcal{E}}\left(\partial X,X\right)$
the class of operators $B:C^{\infty}\left(\partial X\right)\to C^{-\infty}\left(X\right)$
with the following properties:
\begin{enumerate}
\item in the complement of any neighborhood of $\partial\Delta$, $K_{B}$
coincides with a residual Poisson operator in $\Psi_{\po}^{-\infty,\mathcal{E}_{\of}}\left(\partial X,X\right)=\mathcal{A}_{\phg}^{\mathcal{E}_{\of}}\left(X\times\partial X;\pi_{R}^{*}\mathcal{D}_{\partial X}^{1}\right)$;
\item for every $p\in\partial X$ and coordinates $\left(x,y\right)$ for
$X$ centered at $p$ and compatible with $V$ (i.e. $Vx\equiv1$
near $p$), $B$ coincides near $\left(p,p\right)$ with an element
of $\hat{\Psi}_{0\po,\mathcal{S}}^{-\infty,\mathcal{E}}\left(\mathbb{R}^{n},\mathbb{R}_{1}^{n+1}\right)$;
more precisely, there exists a $0$-Poisson symbol $b\left(y;x,\eta\right)$
in $S_{0\po,\mathcal{S}}^{-\infty,\mathcal{E}}\left(\mathbb{R}^{n};\mathbb{R}_{1}^{n+1}\right)$
such that, in a neighborhood of the origin, we have
\[
K_{B}\equiv K\left(y;x,y-\tilde{y}\right)d\tilde{y}=\frac{1}{\left(2\pi\right)^{n}}\int e^{i\left(y-\tilde{y}\right)\eta}b\left(y;x,\eta\right)d\eta d\tilde{y}.
\]
\end{enumerate}
\end{defn}

Of course, one needs to check that the definition above is well-posed.
More specifically, one needs to check that the condition given in
Point 2 does not depend on the choice of coordinates, and that the
two conditions given in Points 1 and 2 are compatible with each other.
Compatibility of Points 1 and 2 is guaranteed by Corollary \ref{cor:relation-symbolic-0-poisson-physical-0-poisson}:
indeed, if $B\in\hat{\Psi}_{0\po,\mathcal{S}}^{-\infty,\mathcal{E}}\left(\mathbb{R}^{n},\mathbb{R}_{1}^{n+1}\right)$,
then $K_{B}$ is polyhomogeneous with index set $\mathcal{E}_{\of}$
at the original face, in the complement of any neighborhood of $\partial\Delta$.
Coordinate invariance will be discussed in detail in §\ref{subsec:Technical-details-on-proofs}
using standard pseudodifferential techniques.

To summarize the relation between the classes $\hat{\Psi}_{0\po}^{-\infty,\bullet}\left(\partial X,X\right)$
and $\Psi_{0\po}^{-\infty,\bullet}\left(\partial X,X\right)$, from
the discussion above and the local characterization of $\Psi_{0\po}^{-\infty,\bullet}\left(\partial X,X\right)$
provided in §\ref{subsubsec:local-physical-0-trace-and-0-Poisson},
we obtain the following
\begin{cor}
Let $\mathcal{E}=\left(\mathcal{E}_{\of},\mathcal{E}_{\ff}\right)$
be a pair of index sets. Then:
\begin{enumerate}
\item the space $\hat{\Psi}_{0\po}^{-\infty,\left(\mathcal{E}_{\of},\infty\right)}\left(\partial X,X\right)$
coincides with the space of residual Poisson operators $\Psi_{\po}^{-\infty,\mathcal{E}_{\of}}\left(\partial X,X\right)$;
\item the space $\hat{\Psi}_{0\po}^{-\infty,\mathcal{E}}\left(\partial X,X\right)$
is contained in $\Psi_{0\po,\mathcal{S}}^{-\infty,\left(\mathcal{E}_{\of},\mathcal{E}_{\ff}\overline{\cup}\left(\mathcal{E}_{\of}+n\right)\right)}\left(\partial X,X\right)$;
\item the space $\Psi_{0\po}^{-\infty,\mathcal{E}}\left(\partial X,X\right)$
is contained in $\hat{\Psi}_{0\po,\mathcal{S}}^{-\infty,\left(\mathcal{E}_{\of}\overline{\cup}\mathcal{E}_{\ff},\mathcal{E}_{\ff}\right)}\left(\partial X,X\right)$.
\end{enumerate}
\end{cor}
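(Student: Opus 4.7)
The plan is to reduce all three statements to their local analogues proved in Corollary \ref{cor:relation-symbolic-0-poisson-physical-0-poisson}, by combining them with the local characterization of physical $0$-Poisson operators in Lemma \ref{lem:local-characterization-0-poisson} and the definition of $\hat{\Psi}_{0\po}^{-\infty,\bullet}\left(\partial X,X\right)$. Both global classes are defined by the same template: a residual condition with index $\mathcal{E}_{\of}$ on the complement of any neighborhood of $\partial\Delta$, plus a local symbolic/kernel condition near each boundary point, stated in coordinates compatible with the fixed transverse vector field $V$. Since $\mathcal{E}_{\of} \subseteq \mathcal{E}_{\of}\overline{\cup}\mathcal{E}_{\ff}$, the off-diagonal condition transfers verbatim in each direction, so all the content lives in the local comparison of the two calculi near $\partial\Delta$.

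For Point 1, I would first observe the tautological identity $S_{0\po,\mathcal{S}}^{-\infty,(\mathcal{E}_{\of},\infty)} = S_{\po,\mathcal{S}}^{-\infty,\mathcal{E}_{\of}}$: with trivial index set at $\ff$ the blow-up $\hat{P}_0^2 \to \hat{P}^2$ does not change the space of polyhomogeneous symbols, since such symbols already vanish to infinite order along the blown-up submanifold. Point 1 of Corollary \ref{cor:relation-symbolic-0-poisson-physical-0-poisson} then matches the two local classes exactly, and combined with the identical residual off-diagonal conditions this gives the global equality. Points 2 and 3 are two directions of the same reduction: given $B$ in one of the global classes, a local symbol or kernel representation in compatible coordinates is supplied either by the definition of $\hat{\Psi}_{0\po}^{-\infty,\mathcal{E}}$ (for Point 2) or by the forward direction of Lemma \ref{lem:local-characterization-0-poisson} (for Point 3). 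Points 2 and 3 of Corollary \ref{cor:relation-symbolic-0-poisson-physical-0-poisson}, which are in turn consequences of Hintz's Proposition \ref{prop:Hintz-lemma}, then convert between symbol and kernel formulations with the stated index shifts. Feeding the result back through either the converse direction of Lemma \ref{lem:local-characterization-0-poisson} or the definition of $\hat{\Psi}_{0\po}^{-\infty,\bullet}$ closes each inclusion.

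The only real subtlety — and the main obstacle — is not in this corollary but in a prerequisite to it: one must know that $\hat{\Psi}_{0\po}^{-\infty,\mathcal{E}}\left(\partial X,X\right)$ is actually well-defined, i.e.\ that the existence of a local symbolic representation of $K_B$ as $\Op_L^{\po}(b)$ is independent of the choice of compatible coordinate chart around $p$. As flagged in the excerpt, this coordinate invariance is deferred to \S\ref{subsec:Technical-details-on-proofs}. Once that is in hand, the proof above collapses to a bookkeeping exchange between the symbol-side index set conventions of Proposition \ref{prop:Hintz-lemma} and the kernel-side conventions of Lemma \ref{lem:local-characterization-0-poisson}, and no further analytic content is needed.
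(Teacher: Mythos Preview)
Your proposal is correct and follows exactly the approach the paper takes: the paper states that the corollary follows ``from the discussion above and the local characterization of $\Psi_{0\po}^{-\infty,\bullet}\left(\partial X,X\right)$ provided in \S\ref{subsubsec:local-physical-0-trace-and-0-Poisson}'', which is precisely the reduction to Corollary \ref{cor:relation-symbolic-0-poisson-physical-0-poisson} via Lemma \ref{lem:local-characterization-0-poisson} that you describe. Your remark on the coordinate invariance prerequisite also mirrors the paper's own caveat, which it defers to \S\ref{subsec:Technical-details-on-proofs}.
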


\subsubsection{\label{subsubsec:Symbolic-0-trace-operators}Symbolic $0$-trace
operators}

We consider now the case of symbolic $0$-trace operators. These operators
are essentially the formal adjoints of symbolic $0$-Poisson operators,
so we shall be brief.

Define the model space $\hat{T}^{2}=\overline{\mathbb{R}}_{1}^{1}\times\overline{\mathbb{R}}^{n}$,
this time with coordinates $\left(\tilde{x},\eta\right)$. This space
is canonically diffeomorpic to the space $T^{2}$ considered in §\ref{subsubsec:local-physical-0-trace-and-0-Poisson},
and accordingly we call its three boundary hyperfaces
\[
\of=\left\{ \tilde{x}=0\right\} ,\iif_{\eta}=\left\{ \left|\eta\right|=\infty\right\} ,\iif_{x}=\left\{ \tilde{x}=\infty\right\} .
\]
We then define
\[
\hat{T}_{0}^{2}=\left[\hat{T}^{2}:\left\{ \tilde{x}=0,\left|\eta\right|=\infty\right\} \right],
\]
and we call its four boundary hyperfaces $\of,\ff,\iif_{\eta},\iif_{x}$.
\begin{defn}
($0$-trace symbols) Let $\mathcal{E}=\left(\mathcal{E}_{\of},\mathcal{E}_{\ff}\right)$
be a pair of index sets. We define
\begin{align*}
S_{0\tr,\mathcal{S}}^{-\infty,\mathcal{E}}\left(\mathbb{R}^{k};\mathbb{R}_{1}^{n+1}\right) & =\mathcal{S}\left(\mathbb{R}^{k}\right)\hat{\otimes}\mathcal{A}_{\phg}^{\left(\mathcal{E}_{\of},\mathcal{E}_{\ff}-1,\infty,\infty\right)}\left(\hat{T}_{0}^{2}\right)\\
S_{\tr,\mathcal{S}}^{-\infty,\mathcal{E}_{\of}}\left(\mathbb{R}^{k};\mathbb{R}_{1}^{n+1}\right) & =\mathcal{S}\left(\mathbb{R}^{k}\right)\hat{\otimes}\mathcal{A}_{\phg}^{\left(\mathcal{E}_{\of},\infty,\infty\right)}\left(\hat{T}^{2}\right).
\end{align*}
We call the elements of $S_{0\tr,\mathcal{S}}^{-\infty,\mathcal{E}}\left(\mathbb{R}^{k};\mathbb{R}_{1}^{n+1}\right)$
(resp. $S_{\tr,\mathcal{S}}^{-\infty,\mathcal{E}_{\of}}\left(\mathbb{R}^{k};\mathbb{R}_{1}^{n+1}\right)$)
\emph{$0$-trace} (resp. \emph{residual trace})\emph{ }symbols.
\end{defn}

Local symbolic $0$-trace operators are defined as left quantizations
of $0$-trace operators. We define the\emph{ left trace quantization}
map 
\begin{align*}
\Op_{L}^{\tr}:S_{0\tr,\mathcal{S}}^{-\infty,\mathcal{E}}\left(\mathbb{R}^{n};\mathbb{R}_{1}^{n+1}\right)\times\dot{C}^{\infty}\left(\overline{\mathbb{R}}_{1}^{1}\times\overline{\mathbb{R}}^{n}\right) & \to\mathcal{S}\left(\mathbb{R}^{n}\right)
\end{align*}
as
\[
\left(a\left(y;\tilde{x},\eta\right),u\left(x,y\right)\right)\mapsto\frac{1}{\left(2\pi\right)^{n}}\int e^{iy\eta}a\left(y;\tilde{x},\eta\right)\hat{u}\left(\tilde{x},\eta\right)d\tilde{x}d\eta.
\]
Again let's check that the map is well-defined. If $u\left(x,y\right)\in\dot{C}^{\infty}\left(\overline{\mathbb{R}}_{1}^{1}\times\overline{\mathbb{R}}^{n}\right)$,
then its Fourier transform $\hat{u}\left(x,\eta\right)$ in the $y$
variables is again in $\dot{C}^{\infty}\left(\overline{\mathbb{R}}_{1}^{1}\times\overline{\mathbb{R}}^{n}\right)$
since $\dot{C}^{\infty}\left(\overline{\mathbb{R}}_{1}^{1}\times\overline{\mathbb{R}}^{n}\right)=\dot{C}^{\infty}\left(\overline{\mathbb{R}}_{1}^{1}\right)\hat{\otimes}\mathcal{S}\left(\mathbb{R}^{n}\right)$.
Now, pulling back $\hat{u}\left(\tilde{x},\eta\right)$ to $\hat{T}_{0}^{2}$,
we get an element of $\dot{C}^{\infty}\left(\hat{T}_{0}^{2}\right)$.
Multiplying it with $a\left(y;\tilde{x},\eta\right)$, we get an element
of $\dot{C}^{\infty}\left(\overline{\mathbb{R}}^{n}\times\hat{T}_{0}^{2}\right)=\mathcal{S}\left(\mathbb{R}^{n}\right)\hat{\otimes}\dot{C}^{\infty}\left(\overline{\mathbb{R}}_{1}^{1}\right)\hat{\otimes}\mathcal{S}\left(\mathbb{R}^{n}\right)$.
Integrating in the $\tilde{x}$ variable, we get a function of $\left(y,\eta\right)$
in $\mathcal{S}\left(\mathbb{R}^{n}\right)\hat{\otimes}\mathcal{S}\left(\mathbb{R}^{n}\right)$.
Finally, applying the operator $\frac{1}{\left(2\pi\right)^{n}}\int e^{iy\eta}\cdot d\eta$,
we get a function in $\mathcal{S}\left(\mathbb{R}^{n}\right)$.

As in §\ref{subsubsec:local-physical-0-trace-and-0-Poisson}, we denote
by $\Op\left(\mathbb{R}_{1}^{n+1},\mathbb{R}^{n}\right)$ the class
of continuous linear maps $\dot{C}^{\infty}\left(\overline{\mathbb{R}}_{1}^{1}\times\overline{\mathbb{R}}^{n}\right)\to\mathcal{S}'\left(\mathbb{R}^{n}\right)$.
Then the left trace quantization map determines a continuous linear
map
\begin{align*}
\Op_{L}^{\tr}:S_{0\tr,\mathcal{S}}^{-\infty,\mathcal{E}}\left(\mathbb{R}^{n};\mathbb{R}_{1}^{n+1}\right) & \to\Op\left(\mathbb{R}_{1}^{n+1},\mathbb{R}^{n}\right)\\
a\left(y;\tilde{x},\eta\right) & \mapsto\frac{1}{\left(2\pi\right)^{n}}\int e^{i\left(y-\tilde{y}\right)\eta}a\left(y;\tilde{x},\eta\right)d\eta d\tilde{x}d\tilde{y}
\end{align*}

\begin{defn}
(Local symbolic $0$-trace operators) We denote by $\hat{\Psi}_{0\tr,\mathcal{S}}^{-\infty,\mathcal{E}}\left(\mathbb{R}_{1}^{n+1},\mathbb{R}^{n}\right)$
the subspace $\Op^{\tr}\left(S_{0\tr,\mathcal{S}}^{-\infty,\mathcal{E}}\left(\mathbb{R}^{2n};\mathbb{R}_{1}^{n+1}\right)\right)$
of $\Op\left(\mathbb{R}_{1}^{n+1},\mathbb{R}^{n}\right)$. We call
its elements local \emph{symbolic $0$-trace operators}.
\end{defn}

Using Proposition \ref{prop:Hintz-lemma} exactly as in the Poisson
case, we can describe the relationship between the classes $\hat{\Psi}_{0\tr,\mathcal{S}}^{-\infty,\left(\mathcal{E}_{\of},\mathcal{E}_{\ff}\right)}\left(\mathbb{R}_{1}^{n+1},\mathbb{R}^{n}\right)$
and $\Psi_{0\tr,\mathcal{S}}^{-\infty,\bullet}\left(\mathbb{R}_{1}^{n+1},\mathbb{R}^{n}\right)$.
\begin{cor}
\label{cor:symbolic-0-trace-are-physical-0-trace-and-viceversa}let
$\mathcal{E}=\left(\mathcal{E}_{\of},\mathcal{E}_{\ff}\right)$ be
a pair of inded sets. Then:
\begin{enumerate}
\item the space $\hat{\Psi}_{0\tr,\mathcal{S}}^{-\infty,\left(\mathcal{E}_{\of},\infty\right)}\left(\mathbb{R}_{1}^{n+1},\mathbb{R}^{n}\right)$
coincides with $\Psi_{\tr,\mathcal{S}}^{-\infty,\mathcal{E}_{\of}}\left(\mathbb{R}_{1}^{n+1},\mathbb{R}^{n}\right)$;
\item the space $\hat{\Psi}_{0\tr,\mathcal{S}}^{-\infty,\left(\mathcal{E}_{\of},\mathcal{E}_{\ff}\right)}\left(\mathbb{R}_{1}^{n+1},\mathbb{R}^{n}\right)$
is contained in $\Psi_{0\tr,\mathcal{S}}^{-\infty,\left(\mathcal{E}_{\of},\tilde{\mathcal{E}}_{\ff}\right)}\left(\mathbb{R}_{1}^{n+1},\mathbb{R}^{n}\right)$,
where $\tilde{\mathcal{E}}_{\ff}=\mathcal{E}_{\ff}\overline{\cup}\left(\mathcal{E}_{\of}+n+1\right)$;
\item the space $\Psi_{0\tr,\mathcal{S}}^{-\infty,\left(\mathcal{E}_{\of},\mathcal{E}_{\ff}\right)}\left(\mathbb{R}_{1}^{n+1},\mathbb{R}^{n}\right)$
is contained in$A\in\hat{\Psi}_{0\tr,\mathcal{S}}^{-\infty,\left(\tilde{\mathcal{E}}_{\of},\mathcal{E}_{\ff}\right)}\left(\mathbb{R}_{1}^{n+1},\mathbb{R}^{n}\right)$,
where $\tilde{\mathcal{E}}_{\of}=\mathcal{E}_{\of}\overline{\cup}\left(\mathcal{E}_{\ff}-1\right)$.
\end{enumerate}
\end{cor}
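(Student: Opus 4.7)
The plan is to mirror exactly the proof of Corollary \ref{cor:relation-symbolic-0-poisson-physical-0-poisson}, since symbolic $0$-trace operators are dual in structure to symbolic $0$-Poisson operators. More concretely, if $A=\Op_{L}^{\tr}(a)$ with $a\in S_{0\tr,\mathcal{S}}^{-\infty,\mathcal{E}}(\mathbb{R}^{n};\mathbb{R}_{1}^{n+1})$, then the Schwartz kernel of $A$ takes the form $K(y;\tilde{x},y-\tilde{y})\,d\tilde{x}d\tilde{y}$ where $K(y;\tilde{x},Y)$ is obtained from $a(y;\tilde{x},\eta)$ by inverse Fourier transform in $\eta$. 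Thus the task reduces to understanding how the Fourier transform in the frequency variable maps polyhomogeneous functions between the model spaces $\hat{T}_{0}^{2}$ and $T_{0}^{2}$.

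First I would observe that the model spaces $\hat{T}_{0}^{2}$ and $T_{0}^{2}$ are canonically diffeomorphic to $\hat{P}_{0}^{2}$ and $P_{0}^{2}$, respectively (only the label of the first coordinate changes, from $x$ to $\tilde{x}$, and accordingly for the dual variables). Consequently, Proposition \ref{prop:Hintz-lemma} applies verbatim, giving continuous maps
\begin{align*}
\mathcal{A}_{\phg}^{(\mathcal{F}_{\of},\mathcal{F}_{\ff}-n,\infty,\infty)}(T_{0}^{2}) & \longrightarrow \mathcal{A}_{\phg}^{(\mathcal{F}_{\of}\overline{\cup}\mathcal{F}_{\ff},\mathcal{F}_{\ff},\infty,\infty)}(\hat{T}_{0}^{2}),\\
\mathcal{A}_{\phg}^{(\mathcal{G}_{\of},\mathcal{G}_{\ff},\infty,\infty)}(\hat{T}_{0}^{2}) & \longrightarrow \mathcal{A}_{\phg}^{(\mathcal{G}_{\of},(\mathcal{G}_{\ff}-n)\overline{\cup}\mathcal{G}_{\of},\infty,\infty)}(T_{0}^{2})
\end{align*}
induced by the forward and inverse Fourier transforms.

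Next I would unwind the conventions. A local symbolic $0$-trace operator with indices $(\mathcal{E}_{\of},\mathcal{E}_{\ff})$ has symbol with index sets $(\mathcal{E}_{\of},\mathcal{E}_{\ff}-1,\infty,\infty)$ on $\hat{T}_{0}^{2}$, while a local $0$-trace operator with indices $(\mathcal{F}_{\of},\mathcal{F}_{\ff})$ has Schwartz kernel density with index sets $(\mathcal{F}_{\of},\mathcal{F}_{\ff}-n-1,\infty,\infty)$ on $T_{0}^{2}$. Applying the second map above with $\mathcal{G}_{\ff}=\mathcal{E}_{\ff}-1$ and comparing yields $\mathcal{F}_{\ff}=\mathcal{E}_{\ff}\overline{\cup}(\mathcal{E}_{\of}+n+1)$, which is exactly the statement of point 2. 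Applying the first map with $\mathcal{F}_{\ff}=\mathcal{E}_{\ff}$ similarly produces a symbol whose index set at $\of$ is $\mathcal{E}_{\of}\overline{\cup}(\mathcal{E}_{\ff}-1)$ and whose index set at $\ff$ is $\mathcal{E}_{\ff}-1$; re-shifting by one recovers point 3 with $\tilde{\mathcal{E}}_{\of}=\mathcal{E}_{\of}\overline{\cup}(\mathcal{E}_{\ff}-1)$.

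Finally, for point 1 I would observe that when $\mathcal{E}_{\ff}=\infty$ the symbol $a(y;\tilde{x},\eta)$ has Schwartz decay at the front face of $\hat{T}_{0}^{2}$, so the blow-up is irrelevant and $a$ is simply an element of $\mathcal{S}(\mathbb{R}^{n})\hat{\otimes}\mathcal{A}_{\phg}^{(\mathcal{E}_{\of},\infty,\infty)}(\hat{T}^{2})$; since the inverse Fourier transform then acts as the usual one on Schwartz functions in $\eta$, the resulting kernel lies in $\mathcal{S}(\mathbb{R}^{n})\hat{\otimes}\mathcal{A}_{\phg}^{(\mathcal{E}_{\of},\infty,\infty)}(T^{2})$, i.e.\ in $\Psi_{\tr,\mathcal{S}}^{-\infty,\mathcal{E}_{\of}}(\mathbb{R}_{1}^{n+1},\mathbb{R}^{n})$. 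The only nontrivial input is really Hintz's lemma; once one tracks carefully the shifts in the index-set conventions (the extra $-n-1$ in the trace density versus the $-n$ shift in Hintz's theorem, combined with the built-in $-1$ in the symbol normalization), everything is algebraic bookkeeping, and I do not anticipate any genuine obstacle beyond getting the shifts right.
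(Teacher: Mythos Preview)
Your proposal is correct and follows exactly the approach the paper takes: the paper simply states that the corollary follows ``using Proposition \ref{prop:Hintz-lemma} exactly as in the Poisson case'' without spelling out the index-set bookkeeping, and you have carried out precisely that bookkeeping. Your tracking of the shifts (the $-1$ in the trace symbol normalization versus the $-n-1$ in the physical trace kernel, combined with the $-n$ in Hintz's lemma) is accurate and matches the stated conclusions.
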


Let's pass to symbolic $0$-trace operators on manifolds. Again, fix
an auxiliary vector field $V$ on $X$ transversal to $\partial X$
and inward-pointing.
\begin{defn}
(Symbolic $0$-trace operators) Let $\mathcal{E}=\left(\mathcal{E}_{\of},\mathcal{E}_{\ff}\right)$
be a pair of index sets. We denote by $\hat{\Psi}_{0\tr}^{-\infty,\mathcal{E}}\left(X,\partial X\right)$
the class of operators $A:\dot{C}^{\infty}\left(X\right)\to C^{-\infty}\left(\partial X\right)$
with the following properties:
\begin{enumerate}
\item in the complement of any neighborhood of $\partial\Delta$, $K_{A}$
coincides with a residual trace operator in $\Psi_{\tr}^{-\infty,\mathcal{E}_{\of}}\left(X,\partial X\right)=\mathcal{A}_{\phg}^{\mathcal{E}_{\of}}\left(\partial X\times X;\pi_{R}^{*}\mathcal{D}_{X}^{1}\right)$;
\item for every $p\in\partial X$ and coordinates $\left(x,y\right)$ for
$X$ centered at $p$ and compatible with $V$ (i.e. $Vx\equiv1$
near $p$), $A$ coincides near $\left(p,p\right)$ with an element
of $\hat{\Psi}_{0\tr,\mathcal{S}}^{-\infty,\mathcal{E}}\left(\mathbb{R}_{1}^{n+1},\mathbb{R}^{n}\right)$;
more precisely, there exists a $0$-trace symbol $a\left(y;\tilde{x},\eta\right)$
in $S_{0\tr,\mathcal{S}}^{-\infty,\mathcal{E}}\left(\mathbb{R}^{n};\mathbb{R}_{1}^{n+1}\right)$
such that, in a neighborhood of the origin, we have
\[
K_{A}\equiv K\left(y;\tilde{x},y-\tilde{y}\right)d\tilde{x}d\tilde{y}=\frac{1}{\left(2\pi\right)^{n}}\int e^{i\left(y-\tilde{y}\right)\eta}a\left(y;\tilde{x},\eta\right)d\eta d\tilde{x}d\tilde{y}.
\]
\end{enumerate}
\end{defn}

As in the $0$-Poisson case, the discussion above and the results
of §\ref{subsubsec:local-physical-0-trace-and-0-Poisson} imply the
following
\begin{cor}
Let $\mathcal{E}=\left(\mathcal{E}_{\of},\mathcal{E}_{\ff}\right)$
be a pair of index sets. Then:
\begin{enumerate}
\item the space $\hat{\Psi}_{0\tr}^{-\infty,\left(\mathcal{E}_{\of},\infty\right)}\left(X,\partial X\right)$
coincides with $\Psi_{\tr}^{-\infty,\mathcal{E}_{\of}}\left(X,\partial X\right)$;
\item the space $\hat{\Psi}_{0\tr}^{-\infty,\left(\mathcal{E}_{\of},\mathcal{E}_{\ff}\right)}\left(X,\partial X\right)$
is contained in $\Psi_{0\tr}^{-\infty,\left(\mathcal{E}_{\of},\tilde{\mathcal{E}}_{\ff}\right)}\left(X,\partial X\right)$,
where $\tilde{\mathcal{E}}_{\ff}=\mathcal{E}_{\ff}\overline{\cup}\left(\mathcal{E}_{\of}+n+1\right)$;
\item the space $\Psi_{0\tr}^{-\infty,\left(\mathcal{E}_{\of},\mathcal{E}_{\ff}\right)}\left(X,\partial X\right)$
is contained in$A\in\hat{\Psi}_{0\tr}^{-\infty,\left(\tilde{\mathcal{E}}_{\of},\mathcal{E}_{\ff}\right)}\left(X,\partial X\right)$,
where $\tilde{\mathcal{E}}_{\of}=\mathcal{E}_{\of}\overline{\cup}\left(\mathcal{E}_{\ff}-1\right)$.
\end{enumerate}
\end{cor}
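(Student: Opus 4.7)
The proof is essentially the globalization of Corollary \ref{cor:symbolic-0-trace-are-physical-0-trace-and-viceversa} to manifolds, and it follows the same pattern as the analogous statement for symbolic $0$-Poisson operators proved just above. The plan is to compare the defining conditions of $\hat{\Psi}_{0\tr}^{-\infty,\mathcal{E}}\left(X,\partial X\right)$ with the local characterization of $\Psi_{0\tr}^{-\infty,\mathcal{E}}\left(X,\partial X\right)$ given in Lemma \ref{lem:local-characterization-0-trace}. Both classes are defined by the conjunction of two conditions: an ``off-diagonal'' condition on $K_{A}$ away from $\partial\Delta$, and a ``near-diagonal'' condition near each point $\left(p,p\right)\in\partial\Delta$ phrased in coordinates. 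The near-diagonal conditions are related precisely by Corollary \ref{cor:symbolic-0-trace-are-physical-0-trace-and-viceversa}, while the off-diagonal conditions will be seen to be identical in both setups.

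First I would verify Point 1. Suppose $A\in\hat{\Psi}_{0\tr}^{-\infty,\left(\mathcal{E}_{\of},\infty\right)}\left(X,\partial X\right)$. By definition, in the complement of any neighborhood of $\partial\Delta$, $K_{A}$ is a residual trace kernel in $\Psi_{\tr}^{-\infty,\mathcal{E}_{\of}}\left(X,\partial X\right)$; near any point $\left(p,p\right)\in\partial\Delta$, and in coordinates compatible with $V$, $A$ agrees with an element of $\hat{\Psi}_{0\tr,\mathcal{S}}^{-\infty,\left(\mathcal{E}_{\of},\infty\right)}\left(\mathbb{R}_{1}^{n+1},\mathbb{R}^{n}\right)$. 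By Corollary \ref{cor:symbolic-0-trace-are-physical-0-trace-and-viceversa}(1), this local class equals $\Psi_{\tr,\mathcal{S}}^{-\infty,\mathcal{E}_{\of}}\left(\mathbb{R}_{1}^{n+1},\mathbb{R}^{n}\right)$. So near each $\left(p,p\right)$ the kernel $K_{A}$ is smooth (no singularity at the front face) and polyhomogeneous at the original face with index set $\mathcal{E}_{\of}$. Together with the off-diagonal behavior, this is precisely the description of $\Psi_{\tr}^{-\infty,\mathcal{E}_{\of}}\left(X,\partial X\right)$ as a global polyhomogeneous section on $\partial X\times X$. The converse is identical read in reverse.

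For Points 2 and 3 the argument is analogous. For Point 2, if $A\in\hat{\Psi}_{0\tr}^{-\infty,\left(\mathcal{E}_{\of},\mathcal{E}_{\ff}\right)}\left(X,\partial X\right)$, then its local coordinate representations lie in $\hat{\Psi}_{0\tr,\mathcal{S}}^{-\infty,\left(\mathcal{E}_{\of},\mathcal{E}_{\ff}\right)}\left(\mathbb{R}_{1}^{n+1},\mathbb{R}^{n}\right)$, and Corollary \ref{cor:symbolic-0-trace-are-physical-0-trace-and-viceversa}(2) places them in $\Psi_{0\tr,\mathcal{S}}^{-\infty,\left(\mathcal{E}_{\of},\widetilde{\mathcal{E}}_{\ff}\right)}\left(\mathbb{R}_{1}^{n+1},\mathbb{R}^{n}\right)$ with $\widetilde{\mathcal{E}}_{\ff}=\mathcal{E}_{\ff}\overline{\cup}\left(\mathcal{E}_{\of}+n+1\right)$. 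Since the global off-diagonal condition on $K_{A}$ is a residual trace operator with index set $\mathcal{E}_{\of}$ at $\of$—which in particular is also a trivially acceptable off-diagonal condition for $\Psi_{0\tr}^{-\infty,\left(\mathcal{E}_{\of},\widetilde{\mathcal{E}}_{\ff}\right)}\left(X,\partial X\right)$ via the lift along $\beta_{\tr}$—the converse direction of Lemma \ref{lem:local-characterization-0-trace} yields $A\in\Psi_{0\tr}^{-\infty,\left(\mathcal{E}_{\of},\widetilde{\mathcal{E}}_{\ff}\right)}\left(X,\partial X\right)$. Point 3 is obtained symmetrically, using Corollary \ref{cor:symbolic-0-trace-are-physical-0-trace-and-viceversa}(3), noting that a kernel polyhomogeneous with index set $\mathcal{E}_{\of}$ at $\of$ off any neighborhood of $\partial\Delta$ is automatically polyhomogeneous there with the larger index set $\widetilde{\mathcal{E}}_{\of}=\mathcal{E}_{\of}\overline{\cup}\left(\mathcal{E}_{\ff}-1\right)$.

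The only genuine obstacle is that the definition of $\hat{\Psi}_{0\tr}^{-\infty,\mathcal{E}}\left(X,\partial X\right)$ requires the near-diagonal symbolic condition to hold in \emph{every} coordinate system compatible with $V$; in order for this definition to be consistent and for the argument above to apply independently of the coordinate patches used in the local characterization of $\Psi_{0\tr}^{-\infty,\mathcal{E}}\left(X,\partial X\right)$, one needs diffeomorphism invariance of the local symbol class $\hat{\Psi}_{0\tr,\mathcal{S}}^{-\infty,\mathcal{E}}\left(\mathbb{R}_{1}^{n+1},\mathbb{R}^{n}\right)$. This is the standard ``change of variables in a pseudodifferential operator'' computation, but adapted to the $0$-symbol spaces built on $\hat{T}_{0}^{2}$; it is deferred to \S\ref{subsec:Technical-details-on-proofs}, and I would simply cite it here, paralleling the identical remark made right after the definition of $\hat{\Psi}_{0\po}^{-\infty,\mathcal{E}}\left(\partial X,X\right)$.
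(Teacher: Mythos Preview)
Your proposal is correct and follows exactly the approach the paper takes: the paper simply remarks that the result follows from the local Corollary \ref{cor:symbolic-0-trace-are-physical-0-trace-and-viceversa} combined with the local characterization of $\Psi_{0\tr}^{-\infty,\bullet}\left(X,\partial X\right)$ in Lemma \ref{lem:local-characterization-0-trace}, deferring coordinate invariance to \S\ref{subsec:Technical-details-on-proofs}. Your write-up spells out precisely this reduction.
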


\subsubsection{\label{subsubsec:0-interior-and-0b-interior}$0$-interior and $0b$-interior
operators}

We finally get to interior operators. As in the two previous cases,
we start by introducing a model space $\hat{M}^{2}=\overline{\mathbb{R}}_{1}^{1}\times\overline{\mathbb{R}}_{1}^{1}\times\overline{\mathbb{R}}^{n}$,
with coordinates $\left(x,\tilde{x},\eta\right)$. This space is the
frequency counterpart of the model space $M^{2}$ used in §\ref{subsubsec:local-physical-0-calculus-extended-0-calculus}
to define the local classes $\Psi_{0,\mathcal{S}}^{-\infty,\bullet}\left(\mathbb{R}_{1}^{n+1}\right)$
and $\Psi_{0b,\mathcal{S}}^{-\infty,\bullet}\left(\mathbb{R}_{1}^{n+1}\right)$.
Analogously to $M^{2}$, we denote by $\lf$, $\rf$, $\iif_{\eta}$,
$\iif_{x}$, $\iif_{\tilde{x}}$ the faces of $\hat{M}^{2}$:
\[
\lf=\left\{ x=0\right\} ,\rf=\left\{ \tilde{x}=0\right\} 
\]
\[
\iif_{\eta}=\left\{ \left|\eta\right|=\infty\right\} ,\iif_{x}=\left\{ x=\infty\right\} ,\iif_{\tilde{x}}=\left\{ \tilde{x}=\infty\right\} .
\]
Consider now the two blow-ups
\begin{align*}
\hat{M}_{0}^{2} & =\left[\hat{M}^{2}:\left\{ x=\tilde{x}=0\right\} \cap\left\{ \left|\eta\right|=\infty\right\} \right]\\
\hat{M}_{b}^{2} & =\left[\hat{M}^{2}:\left\{ x=\tilde{x}=0\right\} \right].
\end{align*}
The space $\hat{M}_{b}^{2}$ is canonically diffeomorphic to the model
space $M_{b}^{2}$ used in §\ref{subsubsec:local-physical-0-calculus-extended-0-calculus}
(see Figure \ref{fig:M2&M2b}), and therefore we call its boundary
hyperfaces $\lf$, $\rf$, $\ff_{b}$, $\iif_{\eta}$, $\iif_{x}$,
$\iif_{\tilde{x}}$ in the same way. The space $\hat{M}_{0}^{2}$
is \emph{not} diffeomorphic to the space $M_{0}^{2}$ used in §\ref{subsubsec:local-physical-0-calculus-extended-0-calculus};
nevertheless, we call again \emph{$0$-front face} $\ff_{0}$ the
new face obtained from the blow-up. The faces of $\hat{M}_{0}^{2}$
are ordered as $\lf$, $\rf$, $\ff_{0}$, $\iif_{\eta}$, $\iif_{x}$,
$\iif_{\tilde{x}}$. Finally, we consider the iterated blow-up
\begin{align*}
\hat{M}_{0b}^{2} & =\left[\hat{M}_{0}^{2}:\lf\cap\rf\right]\\
 & =\left[\hat{M}_{b}^{2}:\ff_{b}\cap\left\{ \left|\eta\right|=\infty\right\} \right]
\end{align*}
and we call $\lf$, $\rf$, $\ff_{b}$, $\ff_{0}$, $\iif_{\eta}$,
$\iif_{x}$, $\iif_{\tilde{x}}$ its faces, all obtained as lifts
of the corresponding faces of $\hat{M}_{0}^{2}$ and $\hat{M}_{b}^{2}$.
The spaces $\hat{M}_{0}^{2},\hat{M}_{0b}^{2}$ are represented in
Figure \ref{fig:Mhat20&Mhat20b}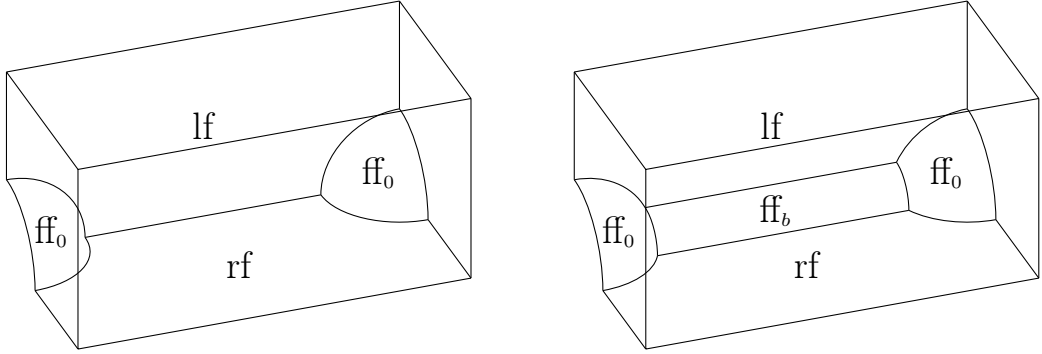
\begin{figure}
\centering
\caption{$\hat{M}^2_0$ and $\hat{M}^2_{0b}$}
\label{fig:Mhat20&Mhat20b}

\begin{minipage}{.49\textwidth}
	\tikzmath{\L = 5;\R = \L /2.5;}
	\tdplotsetmaincoords{60}{160}
	\begin{tikzpicture}[tdplot_main_coords, scale = 0.55]


		\coordinate (A) at (-\L,0,0) ;
		\coordinate (A1) at (-\L+\R,0,0) ;
		\coordinate (A2) at (-\L,\R,0) ;
		\coordinate (A3) at (-\L,0,\R) ;

		\coordinate (B) at (\L,0,0) ;
		\coordinate (B1) at (\L-\R,0,0) ;
		\coordinate (B2) at (\L,\R,0) ;
		\coordinate (B3) at (\L,0,\R) ;

		\coordinate (C) at (-\L,\L,0) ;
		\coordinate (D) at (\L,\L,0) ;
		\coordinate (E) at (-\L,0,\L) ;
		\coordinate (F) at (\L,0,\L) ;
		\coordinate (G) at (-\L,\L,\L);
		\coordinate (H) at (\L,\L,\L);

		\draw 
			(B3) -- (F) 
			(B2) -- (D) 
			(B1) -- (A1) 
			(A2) -- (C) 
			(A3) -- (E)
		;

		\draw
			(F) -- (E)
			(D) -- (C)
			(D) -- (H)
			(H) -- (F)
			(C) -- (G)
			(G) -- (E)
			(G) -- (H)
		;

		\begin{scope}[canvas is zy plane at x=0]
			\draw (B3) arc(0:90:\R);
			\draw (A3) arc(0:90:\R);
		\end{scope}
		\begin{scope}[canvas is xy plane at z=0]
			\draw (B2) arc(90:180:\R);
			\draw (A2) arc(90:0:\R);
		\end{scope}
		\begin{scope}[canvas is xz plane at y=0]
			\draw (B3) arc(90:180:\R);
			\draw (A1) arc(0:90:\R);
		\end{scope}

		\node at (0,0,{\L/2}) {$\text{\LARGE lf}$};
		\node at (0,{\L/2},0) {$\text{\LARGE rf}$};
		\node at ({\L-\L/6},{\L/6},{\L/6}) {$\text{\LARGE ff}_0$};
		\node at ({\L/6-\L},{\L/6},{\L/6}) {$\text{\LARGE ff}_0$};
		
		
		
		
	\end{tikzpicture}

\end{minipage}
\begin{minipage}{.49\textwidth}
\tikzmath{\L = 5;\R = \L /2.5; \u = 180/7;}
	\tdplotsetmaincoords{60}{160}
	\begin{tikzpicture}[tdplot_main_coords, scale = 0.55]


		\coordinate (A) at (-\L,0,0) ;
		\coordinate (A1) at (-\L+\R,0,0) ;
		\coordinate (A2) at (-\L,\R,0) ;
		\coordinate (A3) at (-\L,0,\R) ;
		\coordinate (A4) at ({-\L+\R*cos(\u)},{\R*sin(\u)},0) ;
		\coordinate (A5) at ({-\L+\R*cos(\u)},0,{\R*sin(\u)}) ;

		\coordinate (B) at (\L,0,0) ;
		\coordinate (B1) at (\L-\R,0,0) ;
		\coordinate (B2) at (\L,\R,0) ;
		\coordinate (B3) at (\L,0,\R) ;
		\coordinate (B4) at ({\L-\R*cos(\u)},{\R*sin(\u)},0) ;
		\coordinate (B5) at ({\L-\R*cos(\u)},0,{\R*sin(\u)}) ;

		\coordinate (C) at (-\L,\L,0) ;
		\coordinate (D) at (\L,\L,0) ;
		\coordinate (E) at (-\L,0,\L) ;
		\coordinate (F) at (\L,0,\L) ;
		\coordinate (G) at (-\L,\L,\L);
		\coordinate (H) at (\L,\L,\L);

		\draw 
			(B3) -- (F) 
			(B2) -- (D)
			(A2) -- (C) 
			(A3) -- (E)
			(B4) -- (A4)
			(B5) -- (A5)
		;

		\draw
			(G) -- (H)
			(F) -- (H)
			(H) -- (D)
			(E) -- (G)
			(G) -- (C)
			(F) -- (E)
			(D) -- (C)
		;

		\begin{scope}[canvas is zy plane at x=0]
			\draw (B3) arc(0:90:\R);
			\draw (A3) arc(0:90:\R);
		\end{scope}
		\begin{scope}[canvas is xy plane at z=0]
			\draw (B2) arc(90:{180-\u}:\R);
			\draw (A2) arc(90:{0+\u}:\R);
		\end{scope}
		\begin{scope}[canvas is xz plane at y=0]
			\draw (B3) arc(90:{180-\u}:\R);
			\draw (A5) arc({0+\u}:90:\R);
		\end{scope}
		\begin{scope}[canvas is zy plane at x=0]
			\draw (B5) arc(0:90:{\R*sin(\u)});
			\draw (A5) arc(0:90:{\R*sin(\u)});
		\end{scope}
		
		\node at (0,0,{\L/2}) {$\text{\LARGE lf}$};
		\node at (0,{\L/2},0) {$\text{\LARGE rf}$};
		\node at ({\L-\L/6},{\L/6},{\L/6}) {$\text{\LARGE ff}_0$};
		\node at ({\L/6-\L},{\L/6},{\L/6}) {$\text{\LARGE ff}_0$};
		\node at (0,{\L/15},{\L/15}) {$\text{\LARGE ff}_b$};


		
		

	\end{tikzpicture}

\end{minipage}

\end{figure}.
\begin{defn}
($0$-interior and very residual symbols) Let $\mathcal{E}=\left(\mathcal{E}_{\lf},\mathcal{E}_{\rf},\mathcal{E}_{\ff_{0}}\right)$
be a triple of index sets. We define
\begin{align*}
S_{0,\mathcal{S}}^{-\infty,\mathcal{E}}\left(\mathbb{R}^{k};\mathbb{R}_{2}^{n+2}\right) & =\mathcal{S}\left(\mathbb{R}^{k}\right)\hat{\otimes}\mathcal{A}_{\phg}^{\left(\mathcal{E}_{\lf},\mathcal{E}_{\rf},\mathcal{E}_{\ff_{0}}-1,\infty,\infty,\infty\right)}\left(\hat{M}_{0}^{2}\right)\\
S_{\mathcal{S}}^{-\infty,\left(\mathcal{E}_{\lf},\mathcal{E}_{\rf}\right)}\left(\mathbb{R}^{k};\mathbb{R}_{2}^{n+2}\right) & =\mathcal{S}\left(\mathbb{R}^{k}\right)\hat{\otimes}\mathcal{A}_{\phg}^{\left(\mathcal{E}_{\lf},\mathcal{E}_{\rf},\infty,\infty,\infty\right)}\left(\hat{M}^{2}\right).
\end{align*}
We call the elements of $S_{0,\mathcal{S}}^{-\infty,\mathcal{E}}\left(\mathbb{R}^{k};\mathbb{R}_{2}^{n+2}\right)$
\emph{$0$-interior }symbols, while the elements of $S_{\mathcal{S}}^{-\infty,\left(\mathcal{E}_{\lf},\mathcal{E}_{\rf}\right)}\left(\mathbb{R}^{k};\mathbb{R}_{2}^{n+2}\right)$
will be called \emph{very residual }symbols.
\end{defn}

\begin{defn}
($0b$-interior and $b$-interior symbols) Let $\mathcal{E}=\left(\mathcal{E}_{\lf},\mathcal{E}_{\rf},\mathcal{E}_{\ff_{b}},\mathcal{E}_{\ff_{0}}\right)$
be a quadruple of index sets. We define
\begin{align*}
S_{0b,\mathcal{S}}^{-\infty,\mathcal{E}}\left(\mathbb{R}^{k};\mathbb{R}_{2}^{n+2}\right) & =\mathcal{S}\left(\mathbb{R}^{k}\right)\hat{\otimes}\mathcal{A}_{\phg}^{\left(\mathcal{E}_{\lf},\mathcal{E}_{\rf},\mathcal{E}_{\ff_{b}}-1,\mathcal{E}_{\ff_{0}}-1,\infty,\infty,\infty\right)}\left(\hat{M}_{0b}^{2}\right)\\
S_{b,\mathcal{S}}^{-\infty,\left(\mathcal{E}_{\lf},\mathcal{E}_{\rf},\mathcal{E}_{\ff_{b}}\right)}\left(\mathbb{R}^{k};\mathbb{R}_{2}^{n+2}\right) & =\mathcal{S}\left(\mathbb{R}^{k}\right)\hat{\otimes}\mathcal{A}_{\phg}^{\left(\mathcal{E}_{\lf},\mathcal{E}_{\rf},\mathcal{E}_{\ff_{b}}-1,\infty,\infty,\infty\right)}\left(\hat{M}_{b}^{2}\right).
\end{align*}
We call the elements of $S_{0b,\mathcal{S}}^{-\infty,\mathcal{E}}\left(\mathbb{R}^{k};\mathbb{R}_{2}^{n+2}\right)$
\emph{$0b$-interior }symbols, while the elements of $S_{b,\mathcal{S}}^{-\infty,\left(\mathcal{E}_{\lf},\mathcal{E}_{\rf}\right)}\left(\mathbb{R}^{k};\mathbb{R}_{2}^{n+2}\right)$
will be called \emph{$b$-interior }symbols.
\end{defn}

The local version of the classes of $0$-interior and $0b$-interior
operators, is again obtained by taking left quantizations of $0$-interior
and $0b$-interior symbols. The left interior quantization map
\[
\left(p\left(y;x,\tilde{x},\eta\right),u\left(x,y\right)\right)\mapsto\frac{1}{\left(2\pi\right)^{n}}\int e^{iy\eta}p\left(y;x,\tilde{x},\eta\right)\hat{u}\left(\tilde{x},\eta\right)d\eta d\tilde{x}
\]
determines continuous bilinear maps
\begin{align*}
\Op_{L}^{\inte}:S_{0b,\mathcal{S}}^{-\infty,\mathcal{E}}\left(\mathbb{R}^{n};\mathbb{R}_{2}^{n+2}\right)\times\dot{C}^{\infty}\left(\overline{\mathbb{R}}_{1}^{1}\times\overline{\mathbb{R}}^{n}\right) & \to\mathcal{A}_{\phg}^{\left(\mathcal{E}_{\lf},\infty,\infty\right)}\left(\overline{\mathbb{R}}_{1}^{1}\times\overline{\mathbb{R}}^{n}\right)\\
\Op_{L}^{\inte}:S_{0,\mathcal{S}}^{-\infty,\mathcal{E}}\left(\mathbb{R}^{n};\mathbb{R}_{2}^{n+2}\right)\times\dot{C}^{\infty}\left(\overline{\mathbb{R}}_{1}^{1}\times\overline{\mathbb{R}}^{n}\right) & \to\mathcal{A}_{\phg}^{\left(\mathcal{E}_{\lf},\infty,\infty\right)}\left(\overline{\mathbb{R}}_{1}^{1}\times\overline{\mathbb{R}}^{n}\right).
\end{align*}
Denote by $\Op\left(\mathbb{R}_{1}^{n+1}\right)$ the class of continuous
linear operators $\dot{C}^{\infty}\left(\overline{\mathbb{R}}_{1}^{1}\times\overline{\mathbb{R}}^{n}\right)\to C^{-\infty}\left(\overline{\mathbb{R}}_{1}^{1}\times\overline{\mathbb{R}}^{n}\right)$.
\begin{defn}
(Local $0$-interior and $0b$-interior operators) We denote by $\hat{\Psi}_{0,\mathcal{S}}^{-\infty,\mathcal{E}}\left(\mathbb{R}_{1}^{n+1}\right)$
(resp. $\hat{\Psi}_{0b,\mathcal{S}}^{-\infty,\mathcal{E}}\left(\mathbb{R}_{1}^{n+1}\right)$)
the class of operators $\Op_{L}^{\inte}\left(p\right)$, with $p\in S_{0,\mathcal{S}}^{-\infty,\mathcal{E}}\left(\mathbb{R}^{n};\mathbb{R}_{2}^{n+2}\right)$
(resp. $p\in S_{0b,\mathcal{S}}^{-\infty,\mathcal{E}}\left(\mathbb{R}^{n};\mathbb{R}_{2}^{n+2}\right)$).
We call its elements \emph{local $0$-interior }(resp. \emph{$0b$-interior})
operators.
\end{defn}

We now discuss the relation between the classes $\hat{\Psi}_{0,\mathcal{S}}^{-\infty,\bullet}\left(\mathbb{R}_{1}^{n+1}\right)$,
$\hat{\Psi}_{0b,\mathcal{S}}^{-\infty,\bullet}\left(\mathbb{R}_{1}^{n+1}\right)$
and the classes $\Psi_{0,\mathcal{S}}^{-\infty,\bullet}\left(\mathbb{R}_{1}^{n+1}\right)$,
$\Psi_{0b,\mathcal{S}}^{-\infty,\bullet}\left(\mathbb{R}_{1}^{n+1}\right)$
described in §\ref{subsubsec:local-physical-0-calculus-extended-0-calculus}.
Again, we need to understand how the Fourier transform in the $\eta$
variables behaves on polyhomogeneous functions on $\hat{M}_{0}^{2}$
and $\hat{M}_{0b}^{2}$. The key result is the ``$0b$-version''
of Proposition \ref{prop:Hintz-lemma}:
\begin{prop}
\label{prop:symb-to-phys-and-phys-to-symb-0b}$ $
\begin{enumerate}
\item The Fourier transform in the $Y$ variables extends from $\dot{C}^{\infty}\left(M^{2}\right)$
to a continuous linear map
\[
\mathcal{A}_{\phg}^{\left(\mathcal{E}_{\lf},\mathcal{E}_{\rf},\mathcal{E}_{\ff_{b}}-1,\mathcal{E}_{\ff_{0}}-n-1,\infty,\infty,\infty\right)}\left(M_{0b}^{2}\right)\to\mathcal{A}_{\phg}^{\left(\mathcal{E}_{\lf},\mathcal{E}_{\rf},\tilde{\mathcal{E}}_{\ff_{b}}-1,\mathcal{E}_{\ff_{0}}-1,\infty,\infty,\infty\right)}\left(\hat{M}_{0b}^{2}\right),
\]
where $\tilde{\mathcal{E}}_{\ff_{b}}=\mathcal{E}_{\ff_{b}}\overline{\cup}\mathcal{E}_{\ff_{0}}$.
\item The inverse Fourier transform in the $\eta$ variables extends from
$\dot{C}^{\infty}\left(\hat{M}^{2}\right)$ to a continuous linear
map 
\[
\mathcal{A}_{\phg}^{\left(\mathcal{E}_{\lf},\mathcal{E}_{\rf},\mathcal{E}_{\ff_{b}}-1,\mathcal{E}_{\ff_{0}}-1,\infty,\infty,\infty\right)}\left(\hat{M}_{0b}^{2}\right)\to\mathcal{A}_{\phg}^{\left(\mathcal{E}_{\lf},\mathcal{E}_{\rf},\mathcal{E}_{\ff_{b}}-1,\tilde{\mathcal{E}}_{\ff_{0}}-n-1,\infty,\infty,\infty\right)}\left(M_{0b}^{2}\right)
\]
where $\tilde{\mathcal{E}}_{\ff_{0}}=\mathcal{E}_{\ff_{0}}\overline{\cup}\left(\mathcal{E}_{\ff_{b}}+n\right)$.
\end{enumerate}
\end{prop}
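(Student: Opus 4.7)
I focus on Part 1; Part 2 will follow from a symmetric analysis of the inverse Fourier transform in $\eta$, which I comment on at the end. The strategy is to reduce to Hintz's Proposition \ref{prop:Hintz-lemma} away from the $b$-front face $\ff_b$, and to treat the behavior near $\ff_b$ directly via an asymptotic expansion argument in $\tilde{x}$.

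Decompose $u = \chi_0 u + \chi_b u$ using a partition of unity, with $\chi_0$ supported away from $\ff_b$ and $\chi_b$ supported in a neighborhood of $\ff_b$. On $\mathrm{supp}(\chi_0)$, the canonical blow-down $M_{0b}^2 \to M_0^2$ is a diffeomorphism, so $\chi_0 u$ is identified with an element of $\mathcal{A}_{\phg}^{(\mathcal{E}_{\lf}, \mathcal{E}_{\rf}, \mathcal{E}_{\ff_0} - n - 1, \infty, \infty, \infty)}(M_0^2)$. Hintz's argument is local at $\ff_0$ and extends to this higher-codimension setting essentially verbatim, since the Fourier transform still involves only $Y$ while the two-dimensional $(x, \tilde{x})$ serves as a parameter (handled by using the projective coordinates $\tilde x, u_x = x/\tilde x$ near $\rf$ and symmetrically near $\lf$). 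This produces $\widehat{\chi_0 u}$ on $\hat{M}_0^2$ with the desired $\ff_0$-index set $\mathcal{E}_{\ff_0}$ and trivial behavior at the new faces, and the Pull-back Theorem lifts this to an element of the target space on $\hat{M}_{0b}^2$.

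For $\chi_b u$, I would work in projective coordinates $(s, \tilde{x}, Y)$ with $s = x/\tilde{x}$, valid near $\ff_b$ away from $\lf$ (with symmetric coordinates near $\rf$). Polyhomogeneity at $\ff_b$ yields an asymptotic expansion
\[
\chi_b u \sim \sum_{(\alpha, k) \in \mathcal{E}_{\ff_b}} \tilde{x}^{\alpha - 1} (\log \tilde{x})^k f_{\alpha, k}(s, Y),
\]
with each coefficient $f_{\alpha, k}$ polyhomogeneous on the corresponding restricted model space. The Fourier transform in $Y$ acts term by term, producing at $\ff_b$ in $\hat{M}_{0b}^2$ an expansion with exponents drawn from $\mathcal{E}_{\ff_b}$. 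Near the corner $\ff_b \cap \ff_0$, the further rescaling $Y' = Y/\tilde{x}$ introduces a Jacobian $\tilde{x}^n$ and transforms $e^{-iY\eta}$ into $e^{-iY'(\tilde{x}\eta)}$, so that the $\ff_0$-expansion of each coefficient $f_{\alpha,k}$ (with orders in $\mathcal{E}_{\ff_0} - n - 1$) is converted, after the Jacobian, into behavior at the $\ff_0$ face of $\hat{M}_{0b}^2$ (where $\tilde{x}\eta$ is bounded) with orders in $\mathcal{E}_{\ff_0} - 1$, as required.

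The principal obstacle is the careful bookkeeping at the corner $\ff_b \cap \ff_0$, which is responsible for the extended union $\tilde{\mathcal{E}}_{\ff_b} = \mathcal{E}_{\ff_b} \overline{\cup} \mathcal{E}_{\ff_0}$. Two families of terms contribute to the $\ff_b$-asymptotics of $\widehat{\chi_b u}$: the direct $\ff_b$-expansion above, with exponents in $\mathcal{E}_{\ff_b}$, and the $\ff_0$-expansion of each $f_{\alpha, k}$, which under the rescaling $Y' = Y/\tilde{x}$ contributes additional $\ff_b$-terms with exponents in $\mathcal{E}_{\ff_0}$. Summing via Borel summation in the polyhomogeneous framework and matching the two projective patches (near $\lf$ and near $\rf$) completes Part 1. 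For Part 2, the mirror-image argument applies, with the shift by $n$ in $\tilde{\mathcal{E}}_{\ff_0} = \mathcal{E}_{\ff_0} \overline{\cup} (\mathcal{E}_{\ff_b} + n)$ arising because the inverse rescaling $\eta' = \tilde{x}\eta$ in the inverse Fourier transform introduces a Jacobian $\tilde{x}^{-n}$, displacing the $\mathcal{E}_{\ff_b}$-contribution at $\ff_0$ in the target by $+n$.
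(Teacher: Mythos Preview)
Your strategy is workable but substantially more laborious than the paper's. The paper observes that, after passing to an alternative compactification using polar coordinates $\rho=\sqrt{x^2+\tilde x^2}$ and $\theta_0=x/\rho$ on $\overline{\mathbb{R}}_2^2$, one has a global product decomposition
\[
M_{0b,\text{alt}}^2 \simeq [0,1]_{\theta_0}\times P_0^2,\qquad \hat{M}_{0b,\text{alt}}^2 \simeq [0,1]_{\theta_0}\times \hat{P}_0^2,
\]
where in the $P_0^2$ factor $\rho$ plays the role of the single boundary defining function. Under this identification, $\ff_b$ corresponds to $\of(P_0^2)$ and $\ff_0$ corresponds to $\ff(P_0^2)$, while $\lf,\rf$ are the endpoints of the $[0,1]$ interval. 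Since the Fourier transform in $Y$ does not touch the $[0,1]$ factor, Proposition~\ref{prop:Hintz-lemma} applies verbatim with $\theta_0$ as a smooth parameter; the extended union $\tilde{\mathcal{E}}_{\ff_b}=\mathcal{E}_{\ff_b}\overline{\cup}\mathcal{E}_{\ff_0}$ and the shift by $n$ in Part~2 then fall out directly from Hintz's statement, with no corner analysis required.

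Your approach instead splits into a piece away from $\ff_b$ (where you use projective coordinates $s=x/\tilde x$) and a piece near $\ff_b$ (where you expand asymptotically in $\tilde x$). Both pieces are, in effect, local versions of the same product-structure trick: the projective coordinate $s$ plays the role of $\theta_0$, and $\tilde x$ (or $x$) plays the role of $\rho$ in each patch. The trouble is that you then have to patch the two projective charts, and your treatment near $\ff_b$ amounts to unfolding Hintz's own proof (expansion in the radial variable, rescaling of $Y$, Borel summation at the corner) rather than invoking it as a black box. This can be made rigorous, but it duplicates work already encapsulated in Proposition~\ref{prop:Hintz-lemma}. The single observation that $[\overline{\mathbb{R}}_2^2:0]\simeq [0,1]\times\overline{\mathbb{R}}_1^1$ via polar coordinates collapses your entire argument into a direct application of that proposition.
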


\begin{proof}
The result follows easily by Proposition \ref{prop:Hintz-lemma} if
we use alternative compactifications $\hat{M}_{0b,\text{alt}}^{2}$
and $M_{0b,\text{alt}}^{2}$ of the open manifolds with corners $\hat{M}_{0b}^{2}\backslash\left(\iif_{x}\cup\iif_{\tilde{x}}\right)$
and $M_{0b}^{2}\backslash\left(\iif_{x}\cup\iif_{\tilde{x}}\right)$.
These compactifications are obtained as follows. Instead of starting
with the model spaces $\hat{M}^{2}$ and $M^{2}$, both diffeomorphic
to $\overline{\mathbb{R}}_{1}^{1}\times\overline{\mathbb{R}}_{1}^{1}\times\overline{\mathbb{R}}^{n}$,
we start with the model spaces $\hat{M}_{\text{alt}}^{2}$ and $M_{\text{alt}}^{2}$
diffeomorphic to $\overline{\mathbb{R}}_{2}^{2}\times\overline{\mathbb{R}}^{n}$.
We can now blow up the locus $x=\tilde{x}=0$ and obtain the spaces
$\hat{M}_{b,\text{alt}}^{2}$ and $M_{b,\text{alt}}^{2}$, both diffeomorphic
to $\left[\overline{\mathbb{R}}_{2}^{2}:0\right]\times\overline{\mathbb{R}}^{n}$.
Replacing the coordinates $x,\tilde{x}$ with polar coordinates $\rho,\theta$
for $\overline{\mathbb{R}}_{2}^{2}$, i.e. setting
\[
\rho=\sqrt{x^{2}+\tilde{x}^{2}},\theta_{0}=\frac{x}{\rho},\theta_{1}=\frac{\tilde{x}}{\rho},
\]
we get a diffeomorphism $\left[\overline{\mathbb{R}}_{2}^{2}:0\right]\simeq\left[0,1\right]\times\overline{\mathbb{R}}_{1}^{1}$
(the front face is the quarter circle $S_{2}^{1}=\left\{ \left(\theta_{0},\theta_{1}\right)\in S^{1}:\theta_{0}\geq0,\theta_{1}\geq0\right\} $,
and we identify it with $\left[0,1\right]$ via the projection $\left(\theta_{0},\theta_{1}\right)\mapsto\theta_{0}$).
The space $\hat{M}_{0b,\text{alt}}^{2}$ is obtained from $\hat{M}_{b,\text{alt}}^{2}=\left[0,1\right]\times\overline{\mathbb{R}}_{1}^{1}\times\overline{\mathbb{R}}^{n}$
with coordinates $\theta_{0},\rho,\eta$ by blowing up the locus $\rho=0,\left|\eta\right|=\infty$.
Therefore, $\hat{M}_{0b,\text{alt}}\simeq\left[0,1\right]\times\hat{P}_{0}^{2}$.
Similarly, the space $M_{0b,\text{alt}}^{2}$ is obtained from $M_{b,\text{alt}}^{2}=\left[0,1\right]\times\overline{\mathbb{R}}_{1}^{1}\times\overline{\mathbb{R}}^{n}$
with coordinates $\theta_{0},\rho,Y$ by blowing up the locus $\rho=0,Y=0$.
This time, we get $M_{0b,\text{alt}}^{2}\simeq\left[0,1\right]\times P_{0}^{2}$.
These spaces are represented in Figure \ref{fig:0b-variants}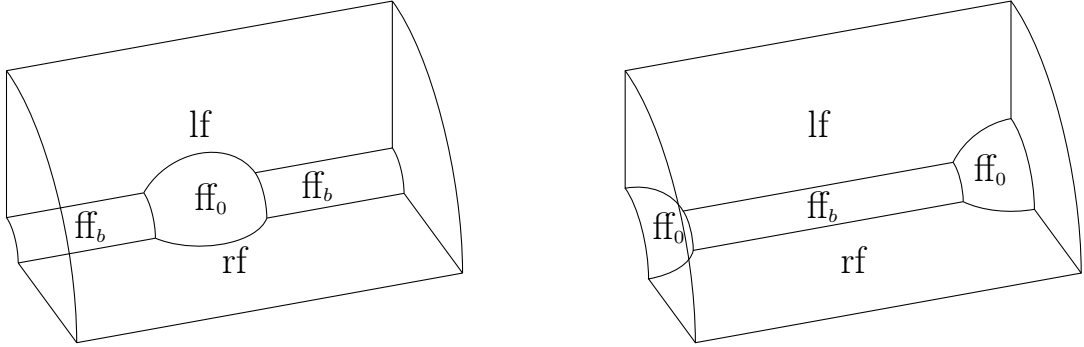
\begin{figure}

\centering
\caption{${M}_{{0b}^{2},\text{alt}}$ and $\hat{M}_{{0b}^{2},\text{alt}}$}
\label{fig:0b-variants}

\begin{minipage}{.49\textwidth}

	\tikzmath{\L = 2.7;\R = \L /3; \u = 180/6;}
	\tdplotsetmaincoords{60}{160}
	\begin{tikzpicture}[tdplot_main_coords]


		\coordinate (A) at (-\L,0,0) ;
		\coordinate (A1) at (-\L+\R,0,0) ;
		\coordinate (A4) at (-\L,{\R*sin(\u)},0) ;
		\coordinate (A5) at (-\L,0,{\R*sin(\u)}) ;
		\coordinate (A10) at ({-\R*cos(\u)},{\R*sin(\u)},0) ;
		\coordinate (A11) at ({-\R*cos(\u)},0,{\R*sin(\u)}) ;

		\coordinate (B) at (\L,0,0) ;
		\coordinate (B3) at (\L,0,\R) ;
		\coordinate (B4) at (\L,{\R*sin(\u)},0) ;
		\coordinate (B5) at (\L,0,{\R*sin(\u)}) ;
		\coordinate (B10) at ({\R*cos(\u)},{\R*sin(\u)},0) ;
		\coordinate (B11) at ({\R*cos(\u)},0,{\R*sin(\u)}) ;

		\coordinate (C) at (-\L,\L,0) ;
		\coordinate (D) at (\L,\L,0) ;
		\coordinate (E) at (-\L,0,\L) ;
		\coordinate (F) at (\L,0,\L) ;
		\coordinate (G) at (-\L,\L,\L);
		\coordinate (H) at (\L,\L,\L);

		\draw 
			(C) -- (D)
			(E) -- (F)
			(B5) -- (F) 
			(B4) -- (D)
			(A4) -- (C) 
			(A5) -- (E)
			(B4) -- (B10)
			(B5) -- (B11)
			(A4) -- (A10)
			(A5) -- (A11)
		;

		\begin{scope}[canvas is xy plane at z=0]
		\end{scope}
		\begin{scope}[canvas is xz plane at y=0]
		\end{scope}
		\begin{scope}[canvas is zy plane at x=0]
			\draw (B5) arc(0:90:{\R*sin(\u)});
			\draw (A5) arc(0:90:{\R*sin(\u)});
			\draw (E) arc(0:90:{\L});
			\draw (F) arc(0:90:{\L});
			\draw (B11) arc(0:90:{\R*sin(\u)});
			\draw (A11) arc(0:90:{\R*sin(\u)});
		\end{scope}
		\begin{scope}[canvas is xy plane at z=0]
			\draw ({\R*cos(\u)},{\R*sin(\u)}) arc(\u:{180-\u}:\R);
		\end{scope}
		\begin{scope}[canvas is xz plane at y=0]
			\draw ({\R*cos(\u)},{\R*sin(\u)}) arc(\u:{180-\u}:\R);
		\end{scope}

		\node at (0,0,{\L/2}) {$\text{\LARGE lf}$};
		\node at (0,{\L/2},0) {$\text{\LARGE rf}$};
		\node at (0,{\L/6},{\L/6}) {$\text{\LARGE ff}_0$};
		\node at ({\L/1.7},{\L/15},{\L/15}) {$\text{\LARGE ff}_b$};
		\node at ({-\L/1.7},{\L/15},{\L/15}) {$\text{\LARGE ff}_b$};

	\end{tikzpicture}

\end{minipage}
\begin{minipage}{.49\textwidth}
	\centering
	\tikzmath{
		\L = 2.7;
		\R = \L /3;
		\u = 180/7;
	}
	\tdplotsetmaincoords{60}{160}
	\begin{tikzpicture}[tdplot_main_coords]


		\coordinate (A) at (-\L,0,0) ;
		\coordinate (A1) at (-\L+\R,0,0) ;
		\coordinate (A2) at (-\L,\R,0) ;
		\coordinate (A3) at (-\L,0,\R) ;
		\coordinate (A4) at ({-\L+\R*cos(\u)},{\R*sin(\u)},0) ;
		\coordinate (A5) at ({-\L+\R*cos(\u)},0,{\R*sin(\u)}) ;

		\coordinate (B) at (\L,0,0) ;
		\coordinate (B1) at (\L-\R,0,0) ;
		\coordinate (B2) at (\L,\R,0) ;
		\coordinate (B3) at (\L,0,\R) ;
		\coordinate (B4) at ({\L-\R*cos(\u)},{\R*sin(\u)},0) ;
		\coordinate (B5) at ({\L-\R*cos(\u)},0,{\R*sin(\u)}) ;

		\coordinate (C) at (-\L,\L,0) ;
		\coordinate (D) at (\L,\L,0) ;
		\coordinate (E) at (-\L,0,\L) ;
		\coordinate (F) at (\L,0,\L) ;
		\coordinate (G) at (-\L,\L,\L);
		\coordinate (H) at (\L,\L,\L);

		\draw
			(C) -- (D) -- (B2) arc(90:{180-\u}:\R) -- (A4) arc({\u}:90:\R) -- cycle;
		
		\begin{scope}[canvas is zx plane at y=0]
			\draw
				(F) -- (E) -- (A3) arc(0:{90-\u}:\R) -- (B5) arc({270+\u}:360:\R) --  cycle;
		\end{scope}
		
		\begin{scope}[canvas is zy plane at x=0]
			\draw (B3) arc(0:90:\R);
			\draw (A3) arc(0:90:\R);
			\draw (B5) arc(0:90:{\R*sin(\u)});
			\draw (A5) arc(0:90:{\R*sin(\u)});
			\draw (F) arc(0:90:\L);
			\draw (E) arc(0:90:\L);
			
		\end{scope}



		\node at (0,0,{\L/2}) {$\text{\LARGE lf}$};
		\node at (0,{\L/2},0) {$\text{\LARGE rf}$};
		\node at ({\L-\L/6},{\L/6},{\L/6}) {$\text{\LARGE ff}_0$};
		\node at ({\L/6-\L},{\L/6},{\L/6}) {$\text{\LARGE ff}_0$};
		\node at (0,{\L/15},{\L/15}) {$\text{\LARGE ff}_b$};


		
		

	\end{tikzpicture}
\end{minipage}

\end{figure}. The faces of $M_{0b,\text{alt}}^{2}$ are $\lf,\rf,\ff_{b},\ff_{0},\iif_{Y}$
with obvious meanings, while the faces $\iif_{x},\iif_{\tilde{x}}$
of $M_{0b}^{2}$ are replaced by the single face $\iif_{\rho}$ corresponding
to the locus $\rho=\infty$. Similarly, we call the faces of $\hat{M}_{0b,\text{alt}}$
$\lf,\rf,\ff_{b},\ff_{0},\iif_{\eta},\iif_{\rho}$. Now, the factor
$\left[0,1\right]$ does not affect the Fourier transform in the $Y$
variables and its inverse in the $\eta$ variables, so from Proposition
\ref{prop:Hintz-lemma} we immediately get the inclusions
\begin{align*}
\mathcal{A}_{\phg}^{\left(\mathcal{E}_{\lf},\mathcal{E}_{\rf},\mathcal{E}_{\ff_{b}}-1,\mathcal{E}_{\ff_{0}}-n-1,\infty,\infty\right)}\left(M_{0b,\text{alt}}^{2}\right) & \to\mathcal{A}_{\phg}^{\left(\mathcal{E}_{\lf},\mathcal{E}_{\rf},\tilde{\mathcal{E}}_{\ff_{b}}-1,\mathcal{E}_{\ff_{0}}-1,\infty,\infty,\infty\right)}\left(\hat{M}_{0b,\text{alt}}^{2}\right)\\
\mathcal{A}_{\phg}^{\left(\mathcal{E}_{\lf},\mathcal{E}_{\rf},\mathcal{E}_{\ff_{b}}-1,\mathcal{E}_{\ff_{0}}-1,\infty,\infty\right)}\left(\hat{M}_{0b,\text{alt}}^{2}\right) & \to\mathcal{A}_{\phg}^{\left(\mathcal{E}_{\lf},\mathcal{E}_{\rf},\mathcal{E}_{\ff_{b}}-1,\tilde{\mathcal{E}}_{\ff_{0}}-n-1,\infty,\infty,\infty\right)}\left(M_{0b,\text{alt}}^{2}\right)
\end{align*}
where $\tilde{\mathcal{E}}_{\ff_{b}}=\mathcal{E}_{\ff_{b}}\overline{\cup}\mathcal{E}_{\ff_{0}}$
and $\tilde{\mathcal{E}}_{\ff_{0}}=\mathcal{E}_{\ff_{0}}\overline{\cup}\left(\mathcal{E}_{\ff_{b}}+n\right)$.
Finally, the proof is concluded by observing that, for any quadruple
$\left(\mathcal{F}_{\lf},\mathcal{F}_{\rf},\mathcal{F}_{\ff_{b}},\mathcal{F}_{\ff_{0}}\right)$,
we have canonical identifications
\begin{align*}
\mathcal{A}_{\phg}^{\left(\mathcal{F}_{\lf},\mathcal{F}_{\rf},\mathcal{F}_{\ff_{b}},\mathcal{F}_{\ff_{0}},\infty,\infty\right)}\left(M_{0b,\text{alt}}^{2}\right) & =\mathcal{A}_{\phg}^{\left(\mathcal{F}_{\lf},\mathcal{F}_{\rf},\mathcal{F}_{\ff_{b}},\mathcal{F}_{\ff_{0}},\infty,\infty,\infty\right)}\left(M_{0b}^{2}\right)\\
\mathcal{A}_{\phg}^{\left(\mathcal{F}_{\lf},\mathcal{F}_{\rf},\mathcal{F}_{\ff_{b}},\mathcal{F}_{\ff_{0}},\infty,\infty\right)}\left(\hat{M}_{0b,\text{alt}}^{2}\right) & =\mathcal{A}_{\phg}^{\left(\mathcal{F}_{\lf},\mathcal{F}_{\rf},\mathcal{F}_{\ff_{b}},\mathcal{F}_{\ff_{0}},\infty,\infty,\infty\right)}\left(\hat{M}_{0b}^{2}\right).
\end{align*}
\end{proof}
We can now use the previous result to describe the relation between
$\hat{\Psi}_{0b,\mathcal{S}}^{-\infty,\bullet}\left(\mathbb{R}_{1}^{n+1}\right)$
and $\Psi_{0b,\mathcal{S}}^{-\infty,\bullet}\left(\mathbb{R}_{1}^{n+1}\right)$:
\begin{cor}
\label{cor:0b-interior-are-extended-0-calc-and-viceversa}Let $\mathcal{E}=\left(\mathcal{E}_{\lf},\mathcal{E}_{\rf},\mathcal{E}_{\ff_{b}},\mathcal{E}_{\ff_{0}}\right)$
be a quadruple of index sets. Then:
\begin{enumerate}
\item the space $\hat{\Psi}_{0b,\mathcal{S}}^{-\infty,\left(\mathcal{E}_{\lf},\mathcal{E}_{\rf},\mathcal{E}_{\ff_{b}},\infty\right)}\left(\mathbb{R}_{1}^{n+1}\right)$
coincides with $\Psi_{b,\mathcal{S}}^{-\infty,\left(\mathcal{E}_{\lf},\mathcal{E}_{\rf},\mathcal{E}_{\ff_{b}}\right)}\left(\mathbb{R}_{1}^{n+1}\right)$;
\item the space $\Psi_{0b,\mathcal{S}}^{-\infty,\mathcal{E}}\left(\mathbb{R}_{1}^{n+1}\right)$
is included in $\hat{\Psi}_{0b,\mathcal{S}}^{-\infty,\left(\mathcal{E}_{\lf},\mathcal{E}_{\rf},\tilde{\mathcal{E}}_{\ff_{b}},\mathcal{E}_{\ff_{0}}\right)}\left(\mathbb{R}_{1}^{n+1}\right)$,
where $\tilde{\mathcal{E}}_{\ff_{b}}=\mathcal{E}_{\ff_{b}}\overline{\cup}\mathcal{E}_{\ff_{0}}$;
\item the space $\hat{\Psi}_{0b,\mathcal{S}}^{-\infty,\mathcal{E}}\left(\mathbb{R}_{1}^{n+1}\right)$
is included in $\Psi_{0b,\mathcal{S}}^{-\infty,\left(\mathcal{E}_{\lf},\mathcal{E}_{\rf},\mathcal{E}_{\ff_{b}},\tilde{\mathcal{E}}_{\ff_{0}}\right)}\left(\mathbb{R}_{1}^{n+1}\right)$,
where $\tilde{\mathcal{E}}_{\ff_{0}}=\mathcal{E}_{\ff_{0}}\overline{\cup}\left(\mathcal{E}_{\ff_{b}}+n\right)$.
\end{enumerate}
\end{cor}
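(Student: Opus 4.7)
The strategy mirrors that of Corollary \ref{cor:relation-symbolic-0-poisson-physical-0-poisson}: via the left interior quantization map $\Op_L^{\inte}$, the Schwartz kernel $K(y;x,\tilde x, Y)$ of a local $0b$-interior operator is the inverse Fourier transform in $\eta$ of its symbol $p(y;x,\tilde x,\eta)$. All three claims therefore reduce to translating the Fourier-transform statements of Proposition \ref{prop:symb-to-phys-and-phys-to-symb-0b} through $\Op_L^{\inte}$, modulo the index-set shifts built into the definitions of the symbol and kernel spaces.

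For Point 2, start from $P \in \Psi_{0b,\mathcal{S}}^{-\infty,\mathcal{E}}(\mathbb{R}_1^{n+1})$, whose kernel satisfies $K(y;x,\tilde x,Y) \in \mathcal{S}(\mathbb{R}^n)\hat\otimes \mathcal{A}_{\phg}^{(\mathcal{E}_{\lf},\mathcal{E}_{\rf},\mathcal{E}_{\ff_{b}}-1,\mathcal{E}_{\ff_{0}}-n-1,\infty,\infty,\infty)}(M_{0b}^2)$. Part 1 of Proposition \ref{prop:symb-to-phys-and-phys-to-symb-0b} says its $Y$-Fourier transform $p$ lies in $\mathcal{S}(\mathbb{R}^n)\hat\otimes \mathcal{A}_{\phg}^{(\mathcal{E}_{\lf},\mathcal{E}_{\rf},\tilde{\mathcal{E}}_{\ff_{b}}-1,\mathcal{E}_{\ff_{0}}-1,\infty,\infty,\infty)}(\hat{M}_{0b}^2)$ with $\tilde{\mathcal{E}}_{\ff_{b}} = \mathcal{E}_{\ff_{b}}\overline{\cup}\mathcal{E}_{\ff_{0}}$; this is precisely $S_{0b,\mathcal{S}}^{-\infty,(\mathcal{E}_{\lf},\mathcal{E}_{\rf},\tilde{\mathcal{E}}_{\ff_{b}},\mathcal{E}_{\ff_{0}})}$, giving $P \in \hat\Psi_{0b,\mathcal{S}}^{-\infty,(\mathcal{E}_{\lf},\mathcal{E}_{\rf},\tilde{\mathcal{E}}_{\ff_{b}},\mathcal{E}_{\ff_{0}})}$. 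Point 3 follows by the dual argument: for a symbol in $S_{0b,\mathcal{S}}^{-\infty,\mathcal{E}}$, Part 2 of the same proposition identifies its inverse $\eta$-Fourier transform as a kernel with updated front-face index $\tilde{\mathcal{E}}_{\ff_{0}} = \mathcal{E}_{\ff_{0}}\overline{\cup}(\mathcal{E}_{\ff_{b}}+n)$, yielding membership in $\Psi_{0b,\mathcal{S}}^{-\infty,(\mathcal{E}_{\lf},\mathcal{E}_{\rf},\mathcal{E}_{\ff_{b}},\tilde{\mathcal{E}}_{\ff_{0}})}$.

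For Point 1, the observation is that $\ff_{0}\subset\hat{M}_{0b}^2$ is produced from blowing up $\ff_{b}\cap\iif_{\eta}$ inside $\hat{M}_{b}^2$; infinite vanishing at $\ff_{0}$ means the symbol extends as the pull-back of a polyhomogeneous function on $\hat{M}_{b}^2$ with unchanged index sets at the lifted faces. Hence a symbol in $S_{0b,\mathcal{S}}^{-\infty,(\mathcal{E}_{\lf},\mathcal{E}_{\rf},\mathcal{E}_{\ff_{b}},\infty)}$ coincides with a $b$-interior symbol in $S_{b,\mathcal{S}}^{-\infty,(\mathcal{E}_{\lf},\mathcal{E}_{\rf},\mathcal{E}_{\ff_{b}})}$. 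Such a symbol is Schwartz in $\eta$ (the $\iif_{\eta}$ index being $\infty$), so its inverse Fourier transform in $\eta$ is Schwartz in $Y$ while retaining its polyhomogeneous dependence on $(x,\tilde x)$ on the $b$-blown space; this is precisely a kernel in $\Psi_{b,\mathcal{S}}^{-\infty,(\mathcal{E}_{\lf},\mathcal{E}_{\rf},\mathcal{E}_{\ff_{b}})}$. The reverse inclusion is identical in spirit: a $b$-residual kernel is Schwartz in $Y$, so its $Y$-Fourier transform is Schwartz in $\eta$, automatically vanishing to infinite order at $\iif_{\eta}$ and, a fortiori, at $\ff_{0}$ after blow-up.

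The only technical subtlety is justifying that infinite vanishing at $\ff_{0}$ implies the symbol descends unambiguously to $\hat{M}_{b}^2$ without index degradation at the remaining faces; this is a standard fact about polyhomogeneous conormal distributions under blow-down. All remaining work is mechanical bookkeeping of the index shifts arising from the singular density factor $r_{\ff_{b}}^{-1}r_{\ff_{0}}^{-n-1}$ on the kernel side versus $r_{\ff_{b}}^{-1}r_{\ff_{0}}^{-1}$ on the symbol side, precisely as accounted for in the definitions of $\hat\Psi_{0b,\mathcal{S}}^{-\infty,\bullet}$ and $\Psi_{0b,\mathcal{S}}^{-\infty,\bullet}$.
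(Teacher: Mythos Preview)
Your proposal is correct and follows essentially the same approach as the paper: Points 2 and 3 are direct applications of Proposition \ref{prop:symb-to-phys-and-phys-to-symb-0b}, while Point 1 rests on the identification $S_{0b,\mathcal{S}}^{-\infty,(\mathcal{E}_{\lf},\mathcal{E}_{\rf},\mathcal{E}_{\ff_b},\infty)} = S_{b,\mathcal{S}}^{-\infty,(\mathcal{E}_{\lf},\mathcal{E}_{\rf},\mathcal{E}_{\ff_b})}$ together with the fact that the Fourier transform in $Y$ is an isomorphism between the relevant polyhomogeneous spaces on $M_b^2$ and $\hat{M}_b^2$. Your additional remarks about the blow-down of $\ff_0$ and the density shifts are helpful elaborations but do not constitute a different route.
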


\begin{proof}
Points 2 and 3 follow immediately from Proposition \ref{prop:symb-to-phys-and-phys-to-symb-0b}.
Point 1 follows by observing that the space $S_{0b,\mathcal{S}}^{-\infty,\left(\mathcal{E}_{\lf},\mathcal{E}_{\rf},\mathcal{E}_{\ff_{b}},\infty\right)}\left(\mathbb{R}^{k};\mathbb{R}_{2}^{n+2}\right)$
coincides with $S_{b,\mathcal{S}}^{-\infty,\left(\mathcal{E}_{\lf},\mathcal{E}_{\rf},\mathcal{E}_{\ff_{b}}\right)}\left(\mathbb{R}^{k};\mathbb{R}_{2}^{n+2}\right)$,
and that the Fourier transform in the $Y$ variables determines an
isomorphism between $\mathcal{A}_{\phg}^{\left(\mathcal{E}_{\lf},\mathcal{E}_{\rf},\mathcal{E}_{\ff_{b}},\infty,\infty,\infty\right)}\left(M_{b}^{2}\right)$
and $\mathcal{A}_{\phg}^{\left(\mathcal{E}_{\lf},\mathcal{E}_{\rf},\mathcal{E}_{\ff_{b}},\infty,\infty,\infty\right)}\left(\hat{M}_{b}^{2}\right)$.
\end{proof}
Let's now pass to $0$-interior operators.
\begin{lem}
\label{lem:0-interior-symbols-are-0b-interior-symbols}Let $\mathcal{E}=\left(\mathcal{E}_{\lf},\mathcal{E}_{\rf},\mathcal{E}_{\ff_{0}}\right)$
be a triple of index sets. Then we have an inclusion
\[
\hat{\Psi}_{0,\mathcal{S}}^{-\infty,\mathcal{E}}\left(\mathbb{R}_{1}^{n+1}\right)\subseteq\hat{\Psi}_{0b,\mathcal{S}}^{-\infty,\left(\mathcal{E}_{\lf},\mathcal{E}_{\rf},\mathcal{E}_{\lf}+\mathcal{E}_{\rf}+1,\mathcal{E}_{\ff_{0}}\right)}\left(\mathbb{R}_{1}^{n+1}\right).
\]
\end{lem}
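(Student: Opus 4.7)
The plan is to reduce the claim to a polyhomogeneity statement on the model spaces and then invoke the Pull-back Theorem.

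First, I would unfold both sides through their defining symbol classes. An operator $P \in \hat{\Psi}_{0,\mathcal{S}}^{-\infty,\mathcal{E}}(\mathbb{R}_{1}^{n+1})$ is by construction the left interior quantization $\Op_{L}^{\inte}(p)$ of a symbol
\[
p \in S_{0,\mathcal{S}}^{-\infty,\mathcal{E}}(\mathbb{R}^{n};\mathbb{R}_{2}^{n+2}) = \mathcal{S}(\mathbb{R}^n) \hat{\otimes} \mathcal{A}_{\phg}^{(\mathcal{E}_{\lf},\mathcal{E}_{\rf},\mathcal{E}_{\ff_{0}}-1,\infty,\infty,\infty)}(\hat{M}_0^2).
\]
Since the left interior quantization map is the same for both classes, it suffices to show that $p$, after lifting through the blow-down $\hat{M}_{0b}^2 \to \hat{M}_0^2$, lies in $\mathcal{S}(\mathbb{R}^n) \hat{\otimes} \mathcal{A}_{\phg}^{(\mathcal{E}_{\lf},\mathcal{E}_{\rf},\mathcal{E}_{\lf}+\mathcal{E}_{\rf},\mathcal{E}_{\ff_{0}}-1,\infty,\infty,\infty)}(\hat{M}_{0b}^2)$. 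Here the target index set $\mathcal{E}_{\lf}+\mathcal{E}_{\rf}+1$ at $\ff_{b}$ appearing in the lemma has been shifted by the conventional $-1$ built into the definition of $S_{0b,\mathcal{S}}^{-\infty,\bullet}$, giving $\mathcal{E}_{\lf}+\mathcal{E}_{\rf}$.

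Since the Schwartz tensor factor in $y$ plays no role in the blow-up, what remains is a purely geometric statement about polyhomogeneity under the simple blow-up $\hat{M}_{0b}^2 = [\hat{M}_0^2 : \lf \cap \rf]$. I would then apply the Pull-back Theorem for a simple blow-up of the transverse intersection of two boundary hyperfaces: the lifts of $\lf$, $\rf$, $\ff_{0}$ and of the three infinity faces retain their original index sets, while the newly created front face $\ff_{b}$ receives as its index set the sum $\mathcal{E}_{\lf}+\mathcal{E}_{\rf}$. This produces exactly the desired index family on $\hat{M}_{0b}^2$.

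I do not expect a genuine obstacle. The only point worth a brief check is that $\lf \cap \rf$ is indeed a $p$-submanifold of $\hat{M}_0^2$ meeting the boundary only along the two hyperfaces $\lf$ and $\rf$; this is transparent from the construction of $\hat{M}_0^2$, since the prior blow-up creating $\ff_{0}$ was performed at $\{x=\tilde{x}=0\} \cap \{|\eta|=\infty\}$, so the remaining portion of $\lf \cap \rf$ in $\hat{M}_0^2$ is disjoint from $\ff_{0}$ and from all the infinity faces. Everything else is bookkeeping of the $-1$ shifts built into the symbol conventions, which account precisely for the ``$+1$'' appearing in the statement of the lemma.
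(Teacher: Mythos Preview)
Your proof is correct and is exactly the paper's approach: reduce to the symbol level and apply the Pull-back Theorem for the blow-down $\hat{M}_{0b}^2 \to \hat{M}_0^2$, which assigns the sum $\mathcal{E}_{\lf}+\mathcal{E}_{\rf}$ to the new face $\ff_b$. One small correction to your parenthetical check: the lifted corner $\lf\cap\rf$ in $\hat{M}_0^2$ is \emph{not} disjoint from $\ff_0$ (they meet along a codimension-three corner), but this is harmless since $\lf\cap\rf$ is not \emph{contained} in $\ff_0$, which is all the Pull-back formula needs.
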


\begin{proof}
It suffices to observe that $S_{0,\mathcal{S}}^{-\infty,\mathcal{E}}\left(\mathbb{R}^{k};\mathbb{R}_{2}^{n+2}\right)$
is included in $S_{0b,\mathcal{S}}^{-\infty,\left(\mathcal{E}_{\lf},\mathcal{E}_{\rf},\mathcal{E}_{\lf}+\mathcal{E}_{\rf}+1,\mathcal{E}_{\ff_{0}}\right)}\left(\mathbb{R}^{k};\mathbb{R}_{2}^{n+2}\right)$:
this is a straightforward application of the Pull-back Theorem.
\end{proof}
This lemma, together with the previous corollary, implies that $0$-interior
operators are in the extended $0$-calculus. However, we can prove
a stronger statement: $0$-interior operators are in fact in the $0$-calculus.
The key is the following improvement of Proposition \ref{prop:symb-to-phys-and-phys-to-symb-0b}:
\begin{prop}
\label{prop:improved-hintz-lemma-for-0-interior}Let $p\left(x,\tilde{x},\eta\right)\in\mathcal{A}_{\phg}^{\left(\mathcal{E}_{\lf},\mathcal{E}_{\rf},\mathcal{E}_{\ff_{0}}-1,\infty,\infty,\infty\right)}\left(\hat{M}_{0}^{2}\right)$.
Then its inverse Fourier transform in the $\eta$ variables is in
$\mathcal{A}_{\phg}^{\left(\mathcal{E}_{\lf},\mathcal{E}_{\rf},\tilde{\mathcal{E}}_{\ff_{0}}-n-1,\infty,\infty,\infty\right)}\left(M_{0}^{2}\right)$,
where $\tilde{\mathcal{E}}_{\ff_{0}}=\mathcal{E}_{\ff_{0}}\overline{\cup}\left(\mathcal{E}_{\lf}+\mathcal{E}_{\rf}+n+1\right)$.
\end{prop}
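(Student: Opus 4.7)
The plan is to run the inverse Fourier transform argument as in Proposition \ref{prop:symb-to-phys-and-phys-to-symb-0b}(2), but to exploit the additional rigidity coming from the fact that $p$ is polyhomogeneous on the smaller space $\hat{M}_0^2$ rather than $\hat{M}_{0b}^2$. The crucial observation is that the inverse Fourier transform acts only in the variable $\eta$, so it respects, term by term, any polyhomogeneous expansion in $(x,\tilde{x})$.

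First, by Lemma \ref{lem:0-interior-symbols-are-0b-interior-symbols}, we may view $p$ as polyhomogeneous on $\hat{M}_{0b}^2$ with index set $\mathcal{E}_{\lf}+\mathcal{E}_{\rf}$ at the new face $\ff_b$. Proposition \ref{prop:symb-to-phys-and-phys-to-symb-0b}(2) then shows that the inverse Fourier transform $P$ of $p$ is polyhomogeneous on $M_{0b}^2$ with index sets $(\mathcal{E}_{\lf},\mathcal{E}_{\rf},\mathcal{E}_{\lf}+\mathcal{E}_{\rf},\tilde{\mathcal{E}}_{\ff_0}-n-1,\infty,\infty,\infty)$. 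Since $M_{0b}^2$ differs from $M_0^2$ only at the face $\ff_b$ (the blow-up of $\lf\cap\rf$), it is enough to verify that $P$ is the lift of a polyhomogeneous function on $M_0^2$, i.e.\ that the asymptotic expansion of $P$ at $\ff_b$ coincides with the product expansion coming from the corner $\lf\cap\rf$ on $M_0^2$.

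To check this I would work in polar coordinates $x=r\cos\phi$, $\tilde{x}=r\sin\phi$ with $r\geq 0$ and $\phi\in[0,\pi/2]$, in a neighborhood of $\lf\cap\rf$ (away from the faces at infinity). Locally this identifies $\hat{M}_0^2$ with $[0,\pi/2]_\phi\times\hat{P}_0^2$ and $M_0^2$ with $[0,\pi/2]_\phi\times P_0^2$. In this picture, polyhomogeneity of $p$ on $\hat{M}_0^2$ with index sets $(\mathcal{E}_{\lf},\mathcal{E}_{\rf},\mathcal{E}_{\ff_0}-1)$ at $(\lf,\rf,\ff_0)$ is equivalent to polyhomogeneity on $[0,\pi/2]_\phi\times\hat{P}_0^2$ with index sets $(\mathcal{E}_{\lf}+\mathcal{E}_{\rf},\mathcal{E}_{\ff_0}-1)$ at $(\of,\ff)$ of $\hat{P}_0^2$, together with the rigidity constraint that each coefficient in the expansion at $\{r=0\}$ factors as a linear combination of monomials $(\cos\phi)^\alpha(\sin\phi)^\beta$ with $(\alpha,\cdot)\in\mathcal{E}_{\lf}$ and $(\beta,\cdot)\in\mathcal{E}_{\rf}$. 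This factorization is exactly what distinguishes descent to $M_0^2$ from plain polyhomogeneity on $M_{0b}^2$.

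I would then apply Proposition \ref{prop:Hintz-lemma}(2) with $\phi\in[0,\pi/2]$ as a smooth compact parameter, producing a polyhomogeneous function on $[0,\pi/2]_\phi\times P_0^2$ with index set $(\mathcal{E}_{\ff_0}-n-1)\overline{\cup}(\mathcal{E}_{\lf}+\mathcal{E}_{\rf})=\tilde{\mathcal{E}}_{\ff_0}-n-1$ at $\ff(P_0^2)$. Because the Fourier transform acts only on $\eta$, the $\phi$-dependence of each coefficient at $\{r=0\}$ is transported unchanged from $p$ to $P$, so the factorization condition persists; this shows that $P$ descends to a polyhomogeneous function on $M_0^2$ near $\lf\cap\rf$ with index sets $(\mathcal{E}_{\lf},\mathcal{E}_{\rf})$ at $(\lf,\rf)$. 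Combined with the index set at $\ff_0$ obtained above, this gives the claim. The main technical obstacle is the bookkeeping in the polar reduction: verifying that the Cartesian $(x,\tilde{x})$-polyhomogeneity on $\hat{M}_0^2$ and the $(r,\phi)$-polyhomogeneity on $[0,\pi/2]\times\hat{P}_0^2$ correspond precisely under the described factorization constraint, and that the parameter form of Proposition \ref{prop:Hintz-lemma} applies uniformly down to $\phi=0,\pi/2$ so that the $\mathcal{E}_{\lf}, \mathcal{E}_{\rf}$ data survive intact to the output.
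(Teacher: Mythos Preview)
Your approach differs from the paper's, and the gaps you flag at the end are genuine and not merely bookkeeping. First, the identification ``$\hat{M}_0^2 \cong [0,\pi/2]_\phi\times\hat{P}_0^2$'' is not correct: the right-hand side is (an alternative compactification of) $\hat{M}_{0b}^2$, so what you are really proposing is a \emph{characterization} of the image of the pullback $\mathcal{A}_{\phg}^{\bullet}(\hat{M}_0^2)\hookrightarrow\mathcal{A}_{\phg}^{\bullet}(\hat{M}_{0b}^2)$ via a factorization constraint on the expansion at $\ff_b$. That characterization is plausible but is not proved, and making it precise (with log terms, and uniformly across the triple corner $\ff_b\cap\ff_0$) would require a nontrivial Borel-type construction together with the fact that functions with index set $\infty$ at $\ff_b$ descend. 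Second, even granting this, you would still need that the inverse Fourier transform carries the $\of$-coefficients of $p$ to the $\of$-coefficients of $P$ term by term; this is believable since $r\partial_r$ commutes with the transform, but it is not the statement of Proposition~\ref{prop:Hintz-lemma} and needs its own argument.

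The paper sidesteps all of this with a cleaner mechanism. It first proves an elementary \emph{conormal} descent lemma, $\mathcal{A}^{(a,b,a+b,c,\ldots)}(M_{0b}^2)\subseteq\mathcal{A}^{(a,b,c,\ldots)}(M_0^2)$, which follows just from how boundary defining functions on $M_0^2$ lift. It then tests polyhomogeneity on $M_0^2$ directly via the operators $Q_\lambda=\prod_{(\alpha,l)\in\mathcal{E}_{\lf},\,\Re\alpha\le\lambda}(x\partial_x-\alpha)$ (and analogues $\tilde{Q}_\lambda$ in $\tilde{x}$, $R_\lambda$ for $\ff_0$). The key point is that $Q_\lambda$ acts on $p$ \emph{on $\hat{M}_0^2$}, improving $\mathcal{E}_{\lf}$ to $\mathcal{E}_{\lf}(\lambda)$; pulling back to $\hat{M}_{0b}^2$ then \emph{automatically} improves the $\ff_b$ index to $\mathcal{E}_{\lf}(\lambda)+\mathcal{E}_{\rf}$. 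Since $Q_\lambda$ commutes with the inverse Fourier transform, Proposition~\ref{prop:symb-to-phys-and-phys-to-symb-0b} gives $Q_\lambda P\in\mathcal{A}_{\phg}^{(\mathcal{E}_{\lf}(\lambda),\mathcal{E}_{\rf},\mathcal{E}_{\lf}(\lambda)+\mathcal{E}_{\rf},\ldots)}(M_{0b}^2)$, and the conormal lemma then places $Q_\lambda P$ in $\mathcal{A}^{(\lambda,\cdot,\cdot,\ldots)}(M_0^2)$. This verifies the defining property of polyhomogeneity at each face of $M_0^2$ without ever needing to decide which functions on $M_{0b}^2$ descend.
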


\begin{proof}
We first prove a preliminary fact. Let $a,b,c\in\mathbb{R}$. Then
we have
\[
\mathcal{A}^{\left(a,b,a+b,c,\infty,\infty,\infty\right)}\left(M_{0b}^{2}\right)\subseteq\mathcal{A}^{\left(a,b,c,\infty,\infty,\infty\right)}\left(M_{0}^{2}\right).
\]
To see this, let $r_{\lf},r_{\rf}$ be boundary defining functions
for the left and right faces of\emph{ $M_{0}^{2}$}. By the Pull-back
Theorem, $r_{\lf}$ lifts via the blow-down map $M_{0b}^{2}\to M_{0}^{2}$
to a product $\tilde{r}_{\lf}\tilde{r}_{\ff_{b}}$, where $\tilde{r}_{\lf}$
and $\tilde{r}_{\ff_{b}}$ are boundary defining functions for the
faces $\lf,\ff_{b}$ of $M_{0b}^{2}$. Similarly, $r_{\rf}$ lifts
to a product $\tilde{r}_{\rf}\tilde{r}_{\ff_{b}}$, where $\tilde{r}_{\rf}$
is a boundary definining function for the right face of $M_{0b}^{2}$.
Call now $r_{\ff_{0}}$ a boundary defining function for the face
$\ff_{0}$ of $M_{0}^{2}$. Then $r_{\ff_{0}}$ lifts to a boundary
defining function for the face $\ff_{0}$ of $M_{0b}^{2}$, which
we denote by $r_{\ff_{0}}$. From this discussion, it follows that
if $f\in\mathcal{A}^{\left(a,b,a+b,c,\infty,\infty,\infty\right)}\left(M_{0b}^{2}\right)$
then 
\[
r_{\lf}^{-a}r_{\rf}^{-b}r_{\ff_{0}}^{-c}f\in\mathcal{A}^{\left(0,0,0,0,\infty,\infty,\infty\right)}\left(M_{0b}^{2}\right).
\]
This implies that $r_{\lf}^{-a}r_{\rf}^{-b}r_{\ff_{0}}^{-c}f\in L^{\infty}\left(M_{0}^{2}\right)$.
Now, let $V_{1},...,V_{k}$ be $b$-vector fields on $M_{0}^{2}$.
Since $M_{0b}^{2}$ is obtained from $M_{0}^{2}$ by blowing up a
corner, the $V_{1},...,V_{k}$ lift to $b$-vector fields on $M_{0b}^{2}$.
It follows that
\begin{align*}
\left(V_{1}\cdots V_{k}\right)\left(r_{\lf}^{-a}r_{\rf}^{-b}r_{\ff_{0}}^{-c}f\right) & \in\mathcal{A}^{\left(0,0,0,0,\infty,\infty,\infty\right)}\left(M_{0b}^{2}\right)\\
 & \subseteq L^{\infty}\left(M_{0}^{2}\right).
\end{align*}
This discussion shows that $r_{\lf}^{-a}r_{\rf}^{-b}r_{\ff_{0}}^{-c}f\in\mathcal{A}^{\left(0,0,0,\infty,\infty,\infty\right)}\left(M_{0}^{2}\right)$,
which is equivalent to
\[
f\in\mathcal{A}^{\left(a,b,c,\infty,\infty,\infty\right)}\left(M_{0}^{2}\right)
\]
as claimed.

We can now prove the lemma. Let $p\left(x,\tilde{x},\eta\right)\in\mathcal{A}_{\phg}^{\left(\mathcal{E}_{\lf},\mathcal{E}_{\rf},\mathcal{E}_{\ff_{0}}-1,\infty,\infty,\infty\right)}\left(\hat{M}_{0}^{2}\right)$.
By the Pull-back Theorem we have
\[
p\left(x,\tilde{x},\eta\right)\in\mathcal{A}_{\phg}^{\left(\mathcal{E}_{\lf},\mathcal{E}_{\rf},\mathcal{E}_{\lf}+\mathcal{E}_{\rf},\mathcal{E}_{\ff_{0}}-1,\infty,\infty,\infty\right)}\left(\hat{M}_{0b}^{2}\right),
\]
and by Proposition \ref{prop:symb-to-phys-and-phys-to-symb-0b} we
have
\begin{align*}
P\left(x,\tilde{x},Y\right) & \in\mathcal{A}_{\phg}^{\left(\mathcal{E}_{\lf},\mathcal{E}_{\rf},\mathcal{E}_{\lf}+\mathcal{E}_{\rf},\tilde{\mathcal{E}}_{\ff_{0}}-n-1,\infty,\infty,\infty\right)}\left(M_{0b}^{2}\right),
\end{align*}
where $\tilde{\mathcal{E}}_{\ff_{0}}=\mathcal{E}_{\ff_{0}}\overline{\cup}\left(\mathcal{E}_{\lf}+\mathcal{E}_{\rf}+n+1\right)$.
Thus, if $\Re\left(\mathcal{E}_{\lf}\right)>a$, $\Re\left(\mathcal{E}_{\rf}\right)>b$
and $\Re\left(\tilde{\mathcal{E}}_{\ff_{0}}-n-1\right)>c$, we have
\[
P\left(x,\tilde{x},Y\right)\in\mathcal{A}^{\left(a,b,c,\infty,\infty,\infty\right)}\left(M_{0}^{2}\right).
\]
Now define
\[
Q_{\lambda}=\prod_{\begin{smallmatrix}\left(\alpha,l\right)\in\mathcal{E}_{\lf}\\
\Re\left(\alpha\right)\leq\lambda
\end{smallmatrix}}\left(x\partial_{x}-\alpha\right)
\]
and
\[
\tilde{Q}_{\lambda}=\prod_{\begin{smallmatrix}\left(\alpha,l\right)\in\mathcal{E}_{\rf}\\
\Re\left(\alpha\right)\leq\lambda
\end{smallmatrix}}\left(\tilde{x}\partial_{\tilde{x}}-\alpha\right).
\]
We then have
\begin{align*}
Q_{\lambda}p & \in\mathcal{A}_{\phg}^{\left(\mathcal{E}_{\lf}\left(\lambda\right),\mathcal{E}_{\rf},\mathcal{E}_{\lf}\left(\lambda\right)+\mathcal{E}_{\rf},\mathcal{E}_{\ff_{0}}-1,\infty,\infty,\infty\right)}\left(\hat{M}_{0b}^{2}\right)\\
\tilde{Q}_{\lambda}p & \in\mathcal{A}_{\phg}^{\left(\mathcal{E}_{\lf},\mathcal{E}_{\rf}\left(\lambda\right),\mathcal{E}_{\lf}+\mathcal{E}_{\rf}\left(\lambda\right),\mathcal{E}_{\ff_{0}}-1,\infty,\infty,\infty\right)}\left(\hat{M}_{0b}^{2}\right)
\end{align*}
where $\mathcal{E}_{\lf}\left(\lambda\right)$ is the set of pairs
$\left(\alpha,l\right)\in\mathcal{E}_{\lf}$ with $\Re\left(\alpha\right)>\lambda$,
and $\mathcal{E}_{\rf}\left(\lambda\right)$ is defined similarly.
It follows that
\begin{align*}
Q_{\lambda}P & \in\mathcal{A}_{\phg}^{\left(\mathcal{E}_{\lf}\left(\lambda\right),\mathcal{E}_{\rf},\mathcal{E}_{\lf}\left(\lambda\right)+\mathcal{E}_{\rf},\tilde{\mathcal{E}}_{\ff_{0}}-n-1,\infty,\infty,\infty\right)}\left(M_{0b}^{2}\right)\\
\tilde{Q}_{\lambda}P & \in\mathcal{A}_{\phg}^{\left(\mathcal{E}_{\lf},\mathcal{E}_{\rf}\left(\lambda\right),\mathcal{E}_{\lf}+\mathcal{E}_{\rf}\left(\lambda\right),\tilde{\mathcal{E}}_{\ff_{0}}-n-1,\infty,\infty,\infty\right)}\left(M_{0b}^{2}\right)
\end{align*}
which implies
\begin{align*}
Q_{\lambda}P & \in\mathcal{A}^{\left(\lambda,b,c,\infty,\infty,\infty\right)}\left(M_{0}^{2}\right)\\
\tilde{Q}_{\lambda}P & \in\mathcal{A}^{\left(a,\lambda,c,\infty,\infty,\infty\right)}\left(M_{0}^{2}\right).
\end{align*}
Finally, let $V$ be a $b$-vector field on $M_{0}^{2}$, transversal
to $\ff_{0}$, inward-pointing, and tangent to the other faces. Let
$\mathcal{F}=\tilde{\mathcal{E}}_{\ff_{0}}-n-1$, and define
\[
R_{\lambda}=\prod_{\begin{smallmatrix}\left(\alpha,l\right)\in\mathcal{F}\\
\Re\left(\alpha\right)\leq\lambda
\end{smallmatrix}}\left(r_{\ff}V-\alpha\right).
\]
Then we have
\[
R_{\lambda}P\in\mathcal{A}_{\phg}^{\left(\mathcal{E}_{\lf},\mathcal{E}_{\rf},\mathcal{E}_{\lf}+\mathcal{E}_{\rf},\mathcal{F}\left(\lambda\right),\infty,\infty,\infty\right)}\left(M_{0b}^{2}\right),
\]
which implies
\[
R_{\lambda}P\in\mathcal{A}^{\left(a,b,\lambda,\infty,\infty,\infty\right)}\left(M_{0}^{2}\right).
\]
This concludes the proof.
\end{proof}
As a consequence, we obtain the following
\begin{cor}
\label{cor:local-0-interior-is-in-0-calculus}Let $\mathcal{E}=\left(\mathcal{E}_{\lf},\mathcal{E}_{\rf},\mathcal{E}_{\ff_{0}}\right)$
be a triple of index sets. Then:
\begin{enumerate}
\item the space $\hat{\Psi}_{0,\mathcal{S}}^{-\infty,\left(\mathcal{E}_{\lf},\mathcal{E}_{\rf},\infty\right)}\left(\mathbb{R}_{1}^{n+1}\right)$
coincides with $\Psi_{\mathcal{S}}^{-\infty,\left(\mathcal{E}_{\lf},\mathcal{E}_{\rf}\right)}\left(\mathbb{R}_{1}^{n+1}\right)$;
\item the space $\hat{\Psi}_{0,\mathcal{S}}^{-\infty,\mathcal{E}}\left(\mathbb{R}_{1}^{n+1}\right)$
is included in $\Psi_{0,\mathcal{S}}^{-\infty,\left(\mathcal{E}_{\lf},\mathcal{E}_{\rf},\tilde{\mathcal{E}}_{\ff_{0}}\right)}\left(\mathbb{R}_{1}^{n+1}\right)$,
where $\tilde{\mathcal{E}}_{\ff_{0}}=\mathcal{E}_{\ff_{0}}\overline{\cup}\left(\mathcal{E}_{\lf}+\mathcal{E}_{\rf}+n+1\right)$.
\end{enumerate}
\end{cor}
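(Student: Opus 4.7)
The plan is to deduce both parts of the corollary essentially directly from the definitions combined with Proposition \ref{prop:improved-hintz-lemma-for-0-interior}, which provides the sharp mapping property of the inverse Fourier transform in $\eta$ from polyhomogeneous functions on $\hat{M}_{0}^{2}$ to polyhomogeneous functions on $M_{0}^{2}$.

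For Point 2, I would proceed as follows. By definition, any $P \in \hat{\Psi}_{0,\mathcal{S}}^{-\infty,\mathcal{E}}\left(\mathbb{R}_{1}^{n+1}\right)$ is of the form $P = \Op_{L}^{\inte}(p)$ for some $p(y;x,\tilde{x},\eta) \in S_{0,\mathcal{S}}^{-\infty,\mathcal{E}}\left(\mathbb{R}^{n};\mathbb{R}_{2}^{n+2}\right) = \mathcal{S}(\mathbb{R}^{n}) \hat{\otimes} \mathcal{A}_{\phg}^{\left(\mathcal{E}_{\lf},\mathcal{E}_{\rf},\mathcal{E}_{\ff_{0}}-1,\infty,\infty,\infty\right)}\left(\hat{M}_{0}^{2}\right)$. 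Its Schwartz kernel is $K(y;x,\tilde{x},y-\tilde{y})\,d\tilde{x}\,d\tilde{y}$, where $K(y;x,\tilde{x},Y)$ is the inverse Fourier transform of $p$ in the $\eta$ variables. Applying Proposition \ref{prop:improved-hintz-lemma-for-0-interior} fiberwise in $y$ (which is legitimate thanks to the completed projective tensor product with $\mathcal{S}(\mathbb{R}^{n})$, since the relevant map is continuous in the Fr\'echet topology), we deduce that $K(y;x,\tilde{x},Y)$ lies in $\mathcal{S}(\mathbb{R}^{n}) \hat{\otimes} \mathcal{A}_{\phg}^{\left(\mathcal{E}_{\lf},\mathcal{E}_{\rf},\tilde{\mathcal{E}}_{\ff_{0}}-n-1,\infty,\infty,\infty\right)}\left(M_{0}^{2}\right)$ with $\tilde{\mathcal{E}}_{\ff_{0}}=\mathcal{E}_{\ff_{0}}\overline{\cup}(\mathcal{E}_{\lf}+\mathcal{E}_{\rf}+n+1)$. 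Comparing with the definition of $\Psi_{0,\mathcal{S}}^{-\infty,\bullet}\left(\mathbb{R}_{1}^{n+1}\right)$ in \S\ref{subsubsec:local-physical-0-calculus-extended-0-calculus}, this is exactly the claim.

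For Point 1, I first observe that when the index set at $\ff_{0}$ is $\infty$, the symbol space trivializes: an element of $\mathcal{A}_{\phg}^{\left(\mathcal{E}_{\lf},\mathcal{E}_{\rf},\infty,\infty,\infty,\infty\right)}\left(\hat{M}_{0}^{2}\right)$ vanishes to infinite order at $\ff_{0}$, $\iif_{\eta}$, $\iif_{x}$, $\iif_{\tilde{x}}$, and so pushes down to an element of $\mathcal{A}_{\phg}^{\left(\mathcal{E}_{\lf},\mathcal{E}_{\rf},\infty,\infty,\infty\right)}\left(\hat{M}^{2}\right)$. Therefore $S_{0,\mathcal{S}}^{-\infty,\left(\mathcal{E}_{\lf},\mathcal{E}_{\rf},\infty\right)}\left(\mathbb{R}^{n};\mathbb{R}_{2}^{n+2}\right) = S_{\mathcal{S}}^{-\infty,\left(\mathcal{E}_{\lf},\mathcal{E}_{\rf}\right)}\left(\mathbb{R}^{n};\mathbb{R}_{2}^{n+2}\right)$. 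Since the symbol is Schwartz in $\eta$, its inverse Fourier transform in $\eta$ produces an element of $\mathcal{A}_{\phg}^{\left(\mathcal{E}_{\lf},\mathcal{E}_{\rf},\infty,\infty,\infty\right)}(M^{2})$ with no singularity at the (unblown) diagonal locus $\{x=\tilde{x}=0,Y=0\}$. This exactly matches the local very residual class $\Psi_{\mathcal{S}}^{-\infty,\left(\mathcal{E}_{\lf},\mathcal{E}_{\rf}\right)}\left(\mathbb{R}_{1}^{n+1}\right)$. The reverse inclusion is seen similarly: a very residual Schwartz kernel $K(y;x,\tilde{x},Y)$ in $\mathcal{S}(\mathbb{R}^{n}) \hat{\otimes} \mathcal{A}_{\phg}^{\left(\mathcal{E}_{\lf},\mathcal{E}_{\rf},\infty,\infty,\infty\right)}(M^{2})$ has an $\eta$-Fourier transform in the corresponding symbol class, which shows that it is the left quantization of a $0$-interior symbol with trivial index set at $\ff_{0}$.

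The only potentially delicate point is the justification that the fiberwise polyhomogeneous regularity results of Proposition \ref{prop:improved-hintz-lemma-for-0-interior} interact correctly with the $\mathcal{S}(\mathbb{R}^{n})\hat{\otimes}(\cdot)$ factor in the $y$ variable. This, however, is a standard consequence of the fact that the relevant continuous linear maps between polyhomogeneous spaces remain continuous after taking completed projective tensor products with $\mathcal{S}(\mathbb{R}^{n})$; I expect no serious obstacle beyond bookkeeping, and the proof should be concluded in a few lines.
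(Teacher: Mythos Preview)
Your proof is correct and follows essentially the same approach as the paper: Point 2 is deduced directly from Proposition \ref{prop:improved-hintz-lemma-for-0-interior}, and Point 1 from the observation that the symbol space with trivial index set at $\ff_{0}$ collapses to the very residual symbol space, together with the fact that the Fourier transform in $Y$ (equivalently $\eta$) is an isomorphism between the corresponding Schwartz-in-$Y$ and Schwartz-in-$\eta$ polyhomogeneous spaces on $M^{2}$ and $\hat{M}^{2}$. Your remark about the compatibility with the $\mathcal{S}(\mathbb{R}^{n})\hat{\otimes}(\cdot)$ factor is a point the paper leaves implicit, but it is indeed routine.
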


\begin{proof}
The first point is a consequence of the fact that the symbol space
$S_{0,\mathcal{S}}^{-\infty,\left(\mathcal{E}_{\lf},\mathcal{E}_{\rf},\infty\right)}\left(\mathbb{R}^{k};\mathbb{R}_{2}^{n+2}\right)$
coincides with $S_{\mathcal{S}}^{-\infty,\left(\mathcal{E}_{\lf},\mathcal{E}_{\rf}\right)}\left(\mathbb{R}^{k};\mathbb{R}_{2}^{n+2}\right)$,
and the fact that the Fourier transform in the $Y$ variables induces
an isomorphism $\mathcal{A}_{\phg}^{\left(\mathcal{E}_{\lf},\mathcal{E}_{\rf},\infty,\infty,\infty\right)}\left(M^{2}\right)\to\mathcal{A}_{\phg}^{\left(\mathcal{E}_{\lf},\mathcal{E}_{\rf},\infty,\infty,\infty\right)}\left(\hat{M}^{2}\right)$.
The second point follows immediately from Proposition \ref{prop:improved-hintz-lemma-for-0-interior}. 
\end{proof}
We are now ready to define the classes $\hat{\Psi}_{0}^{-\infty,\bullet}\left(X\right)$
and $\hat{\Psi}_{0b}^{-\infty,\bullet}\left(X\right)$.
\begin{defn}
(Symbolic $0$-interior operators) Let $\mathcal{E}=\left(\mathcal{E}_{\lf},\mathcal{E}_{\rf},\mathcal{E}_{\ff_{0}}\right)$
be a triple of index sets. We denote by $\hat{\Psi}_{0}^{-\infty,\mathcal{E}}\left(X\right)$
the class of operators $P:\dot{C}^{\infty}\left(X\right)\to C^{-\infty}\left(X\right)$
with the following properties:
\begin{enumerate}
\item in the complement of any neighborhood of $\partial\Delta$, $K_{P}$
coincides with a very residual operator in $\Psi^{-\infty,\left(\mathcal{E}_{\lf},\mathcal{E}_{\rf}\right)}\left(X\right)=\mathcal{A}_{\phg}^{\left(\mathcal{E}_{\lf},\mathcal{E}_{\rf}\right)}\left(X^{2};\pi_{R}^{*}\mathcal{D}_{X}^{1}\right)$;
\item for every $q\in\partial X$ and coordinates $\left(x,y\right)$ for
$X$ centered at $q$ and compatible with $V$ (i.e. $Vx\equiv1$
near $q$), $P$ coincides near $\left(q,q\right)$ with an element
of $\hat{\Psi}_{0,\mathcal{S}}^{-\infty,\mathcal{E}}\left(\mathbb{R}_{1}^{n+1}\right)$;
more precisely, there exists a $0$-interior symbol $p\left(y;x,\tilde{x},\eta\right)$
in $S_{0,\mathcal{S}}^{-\infty,\mathcal{E}}\left(\mathbb{R}^{n};\mathbb{R}_{2}^{n+2}\right)$
such that, in a neighborhood of the origin, we have
\[
K_{P}\equiv K\left(y;x,\tilde{x},y-\tilde{y}\right)d\tilde{x}d\tilde{y}=\frac{1}{\left(2\pi\right)^{n}}\int e^{i\left(y-\tilde{y}\right)\eta}p\left(y;x,\tilde{x},\eta\right)d\eta d\tilde{x}d\tilde{y}.
\]
\end{enumerate}
\end{defn}

\begin{defn}
(Symbolic $0b$-interior operators) Let $\mathcal{E}=\left(\mathcal{E}_{\lf},\mathcal{E}_{\rf},\mathcal{E}_{\ff_{b}},\mathcal{E}_{\ff_{0}}\right)$
be a quadruple of index sets. We denote by $\hat{\Psi}_{0b}^{-\infty,\mathcal{E}}\left(X\right)$
the class of operators $P:\dot{C}^{\infty}\left(X\right)\to C^{-\infty}\left(X\right)$
with the following properties:
\begin{enumerate}
\item in the complement of any neighborhood of $\partial\Delta$, $K_{P}$
coincides with an element of the residual large $b$-calculus $\Psi_{b}^{-\infty,\left(\mathcal{E}_{\lf},\mathcal{E}_{\rf},\mathcal{E}_{\ff_{b}}\right)}\left(X\right)=\mathcal{A}_{\phg}^{\left(\mathcal{E}_{\lf},\mathcal{E}_{\rf},\mathcal{E}_{\ff_{b}}\right)}\left(X_{b}^{2};r_{\ff_{b}}^{-1}\beta_{b,R}^{*}\mathcal{D}_{X}^{1}\right)$;
\item for every $q\in\partial X$ and coordinates $\left(x,y\right)$ for
$X$ centered at $q$ and compatible with $V$ (i.e. $Vx\equiv1$
near $q$), $P$ coincides near $\left(q,q\right)$ with an element
of $\hat{\Psi}_{0b,\mathcal{S}}^{-\infty,\mathcal{E}}\left(\mathbb{R}_{1}^{n+1}\right)$;
more precisely, there exists a $0b$-interior symbol $p\left(y;x,\tilde{x},\eta\right)$
in $S_{0b,\mathcal{S}}^{-\infty,\mathcal{E}}\left(\mathbb{R}^{n};\mathbb{R}_{2}^{n+2}\right)$
such that, in a neighborhood of the origin, we have
\[
K_{P}\equiv K\left(y;x,\tilde{x},y-\tilde{y}\right)d\tilde{x}d\tilde{y}=\frac{1}{\left(2\pi\right)^{n}}\int e^{i\left(y-\tilde{y}\right)\eta}p\left(y;x,\tilde{x},\eta\right)d\eta d\tilde{x}d\tilde{y}.
\]
\end{enumerate}
\end{defn}

Summarizing, the discussions above and the local characterizations
of $\Psi_{0}^{-\infty,\bullet}\left(X\right)$, $\Psi_{0b}^{-\infty,\bullet}\left(X\right)$,
$\Psi^{-\infty,\bullet}\left(X\right)$, $\Psi_{b}^{-\infty,\bullet}\left(X\right)$
given in §\ref{subsubsec:local-physical-0-calculus-extended-0-calculus},
imply the following
\begin{cor}
\label{cor:global-relation-symbolic0b-physical0b}Let $\mathcal{E}_{\lf},\mathcal{E}_{\rf},\mathcal{E}_{\ff_{b}},\mathcal{E}_{\ff_{0}}$
be index sets. Then:
\begin{enumerate}
\item the space $\hat{\Psi}_{0}^{-\infty,\left(\mathcal{E}_{\lf},\mathcal{E}_{\rf},\infty\right)}\left(X\right)$
coincides with the space of very residual operators $\Psi^{-\infty,\left(\mathcal{E}_{\lf},\mathcal{E}_{\rf}\right)}\left(X\right)$;
\item the space $\hat{\Psi}_{0b}^{-\infty,\left(\mathcal{E}_{\lf},\mathcal{E}_{\rf},\mathcal{E}_{\ff_{b}},\infty\right)}\left(X\right)$
coincides with the large residual $b$-calculus $\Psi_{b}^{-\infty,\left(\mathcal{E}_{\lf},\mathcal{E}_{\rf},\mathcal{E}_{\ff_{b}}\right)}\left(X\right)$;
\item the space $\Psi_{0b}^{-\infty,\left(\mathcal{E}_{\lf},\mathcal{E}_{\rf},\mathcal{E}_{\ff_{b}},\mathcal{E}_{\ff_{0}}\right)}\left(X\right)$
is included in $\hat{\Psi}_{0b}^{-\infty,\left(\mathcal{E}_{\lf},\mathcal{E}_{\rf},\tilde{\mathcal{E}}_{\ff_{b}},\mathcal{E}_{\ff_{0}}\right)}\left(X\right)$,
where $\tilde{\mathcal{E}}_{\ff_{b}}=\mathcal{E}_{\ff_{b}}\overline{\cup}\mathcal{E}_{\ff_{0}}$;
\item the space $\hat{\Psi}_{0b}^{-\infty,\left(\mathcal{E}_{\lf},\mathcal{E}_{\rf},\mathcal{E}_{\ff_{b}},\mathcal{E}_{\ff_{0}}\right)}\left(X\right)$
is included in $\Psi_{0b}^{-\infty,\left(\mathcal{E}_{\lf},\mathcal{E}_{\rf},\mathcal{E}_{\ff_{b}},\tilde{\mathcal{E}}_{\ff_{0}}\right)}\left(X\right)$,
where $\tilde{\mathcal{E}}_{\ff_{0}}=\mathcal{E}_{\ff_{0}}\overline{\cup}\left(\mathcal{E}_{\ff_{b}}+n\right)$;
\item the space $\hat{\Psi}_{0}^{-\infty,\left(\mathcal{E}_{\lf},\mathcal{E}_{\rf},\mathcal{E}_{\ff_{0}}\right)}\left(X\right)$
is included in $\Psi_{0}^{-\infty,\left(\mathcal{E}_{\lf},\mathcal{E}_{\rf},\tilde{\mathcal{E}}_{\ff_{0}}\right)}\left(X\right)$,
where $\tilde{\mathcal{E}}_{\ff_{0}}=\mathcal{E}_{\ff_{0}}\overline{\cup}\left(\mathcal{E}_{\lf}+\mathcal{E}_{\rf}+n+1\right)$.
\end{enumerate}
\end{cor}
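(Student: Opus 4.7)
The plan is to deduce each of the five inclusions from a combination of the local characterizations of the physical classes (Lemmas \ref{lem:Local-characterization-0-calculus} and \ref{lem:local-characterization-0b-calculus}), the analogous definitions of the symbolic classes, and the local Euclidean model comparisons already established in Corollaries \ref{cor:0b-interior-are-extended-0-calc-and-viceversa} and \ref{cor:local-0-interior-is-in-0-calculus} together with Lemma \ref{lem:0-interior-symbols-are-0b-interior-symbols}. Every class involved here is characterized by two conditions: one describing the Schwartz kernel away from any neighborhood of $\partial\Delta$, and one describing it locally near each point of $\partial\Delta$. So the strategy is simply to verify these two conditions in each case, relying on the local comparisons for the near-$\partial\Delta$ condition and on the Pull-back Theorem together with the definitions of the residual classes for the away-from-$\partial\Delta$ condition.

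More concretely, for Point 1, the equality $\hat{\Psi}_{0}^{-\infty,(\mathcal{E}_{\lf},\mathcal{E}_{\rf},\infty)}(X) = \Psi^{-\infty,(\mathcal{E}_{\lf},\mathcal{E}_{\rf})}(X)$ follows because both definitions require the Schwartz kernel to lie in $\mathcal{A}_{\phg}^{(\mathcal{E}_{\lf},\mathcal{E}_{\rf})}(X^{2};\pi_{R}^{*}\mathcal{D}_{X}^{1})$ away from $\partial\Delta$, while near $\partial\Delta$ the local condition is exactly the content of Point 1 of Corollary \ref{cor:local-0-interior-is-in-0-calculus}. Point 2 is entirely analogous, using Point 1 of Corollary \ref{cor:0b-interior-are-extended-0-calc-and-viceversa} together with the local characterization of $\Psi_{b}^{-\infty,\bullet}(X)$. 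Points 3 and 4 are obtained by combining the local characterization of the extended $0$-calculus (Lemma \ref{lem:local-characterization-0b-calculus}) with Points 2 and 3 of Corollary \ref{cor:0b-interior-are-extended-0-calc-and-viceversa}, once one checks that the relevant residual parts also match up under the index-set transformation (which is immediate since the residual parts do not involve the front face $\ff_{0}$). Finally, Point 5 combines Point 2 of Corollary \ref{cor:local-0-interior-is-in-0-calculus} with Lemma \ref{lem:Local-characterization-0-calculus}, after observing that a very residual operator in $\Psi^{-\infty,(\mathcal{E}_{\lf},\mathcal{E}_{\rf})}(X)$ lifts to an operator in $\Psi_{0}^{-\infty,(\mathcal{E}_{\lf},\mathcal{E}_{\rf},\mathcal{E}_{\lf}+\mathcal{E}_{\rf}+n+1)}(X)$, and hence contributes to the extended union $\tilde{\mathcal{E}}_{\ff_{0}} = \mathcal{E}_{\ff_{0}}\,\overline{\cup}\,(\mathcal{E}_{\lf}+\mathcal{E}_{\rf}+n+1)$ in the expected way.

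The only genuine technical point is verifying coordinate invariance: the symbolic classes $\hat{\Psi}_{0}^{-\infty,\bullet}(X)$ and $\hat{\Psi}_{0b}^{-\infty,\bullet}(X)$ are defined by requiring the existence of a symbol representation in \emph{some} coordinate chart compatible with the auxiliary vector field $V$, but a priori it is not obvious that this property is independent of the choice of coordinates. However, this will be dealt with separately in \S\ref{subsec:Technical-details-on-proofs} using the standard reduction/asymptotic expansion techniques of pseudodifferential theory, and so for the purpose of the present corollary I assume coordinate invariance is in place. Under this assumption, the main obstacle is merely bookkeeping: ensuring that the index-set transformations produced by the Fourier comparison lemmas (Propositions \ref{prop:Hintz-lemma}, \ref{prop:symb-to-phys-and-phys-to-symb-0b}, and \ref{prop:improved-hintz-lemma-for-0-interior}) propagate correctly from the local Euclidean model to the manifold, and that the away-from-$\partial\Delta$ residual behavior of each globally defined operator is consistent with the corresponding local symbol on overlaps. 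Both issues follow by a standard partition-of-unity argument and the fact that, outside any neighborhood of $\partial\Delta$, both the symbolic and the physical local quantizations produce polyhomogeneous Schwartz kernels on $X^{2}$ with the same index sets at $\lf$ and $\rf$.
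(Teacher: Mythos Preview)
Your proposal is correct and follows essentially the same approach as the paper: the paper states the corollary as an immediate consequence of the local comparison results (Corollaries \ref{cor:0b-interior-are-extended-0-calc-and-viceversa} and \ref{cor:local-0-interior-is-in-0-calculus}) together with the local characterizations of the physical classes given in \S\ref{subsubsec:local-physical-0-calculus-extended-0-calculus}, which is precisely what you outline. Your write-up is in fact more detailed than the paper's, which simply says the result follows from ``the discussions above and the local characterizations''; your explicit check of the away-from-$\partial\Delta$ residual behavior and your remark on coordinate invariance are both appropriate and consistent with the paper's treatment.
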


\begin{rem}
\label{rem:0-calculus-not-in-0-interior}We remark that, despite the
notation, an element of the large residual $0$-calculus $\Psi_{0}^{-\infty,\bullet}\left(X\right)$
is in general \emph{not} a $0$-interior operator. This will be clear
from the analysis of the Bessel family, carried out in the next subsection.
Instead, we have two strict inclusions
\begin{align*}
\Psi^{-\infty,\left(\mathcal{E}_{\lf},\mathcal{E}_{\rf}\right)}\left(X\right) & \subset\hat{\Psi}_{0}^{-\infty,\left(\mathcal{E}_{\lf},\mathcal{E}_{\rf},\mathcal{E}_{\ff_{0}}\right)}\left(X\right)\subset\Psi_{0}^{-\infty,\left(\mathcal{E}_{\lf},\mathcal{E}_{\rf},\tilde{\mathcal{E}}_{\ff_{0}}\right)}\left(X\right)
\end{align*}
where $\tilde{\mathcal{E}}_{\ff_{0}}=\mathcal{E}_{\ff_{0}}\overline{\cup}\left(\mathcal{E}_{\lf}+\mathcal{E}_{\rf}+n+1\right)$.
The intermediate class $\hat{\Psi}_{0}^{-\infty,\bullet}\left(X\right)$
will turn out to be very useful in our theory of boundary value problems.
\end{rem}

\subsection{\label{subsec:Bessel-families}Bessel families}

In §\ref{subsubsec:The-normal-family} and §\ref{subsubsec:The-Bessel-family-physical},
we discussed the normal and Bessel families for operators in the classes
$\Psi_{0\tr}^{-\infty,\bullet}\left(X,\partial X\right)$, $\Psi_{0\po}^{-\infty,\bullet}\left(X,\partial X\right)$,
$\Psi_{0}^{-\infty,\bullet}\left(X\right)$, $\Psi_{0b}^{-\infty,\bullet}\left(X\right)$.
For example, if $Q\in\Psi_{0b}^{-\infty,\mathcal{E}}\left(X\right)$
and $\left[\mathcal{E}_{\ff_{0}}\right]=0$, then the normal family
$N\left(Q\right)$ encodes invariantly the leading order term in the
expansion of $\kappa_{Q}=\beta_{0b}^{*}K_{Q}$ at the $0$-front face
of $X_{0b}^{2}$. We explained how $N\left(Q\right)$ can be interpreted
as a smooth family $p\mapsto N_{p}\left(Q\right)$, parametrized by
$p\in\partial X$, of dilation and translation invariant operators
acting on functions over the model space $X_{p}=T_{p}^{+}X$. We then
explained how, using an auxiliary vector field $V$ on $X$ transversal
to $\partial X$ and inward-pointing, and the invariant Fourier transform
on the fibers of $T\partial X$, we can \emph{transform} $N\left(Q\right)$
into an equivalent object, a smooth family $\eta\mapsto\hat{N}_{\eta}\left(Q\right)$
of operators on functions over the fibers of $N^{+}\partial X$. We
argued similarly for operators in $\Psi_{0\tr}^{-\infty,\bullet}\left(X,\partial X\right)$,
$\Psi_{0\po}^{-\infty,\bullet}\left(X,\partial X\right)$, $\Psi_{0}^{-\infty,\bullet}\left(X\right)$.

It is in general quite difficult to characterize the range of the
Bessel family map on the classes $\Psi_{0\tr}^{-\infty,\bullet}\left(X,\partial X\right)$,
$\Psi_{0\po}^{-\infty,\bullet}\left(X,\partial X\right)$, $\Psi_{0}^{-\infty,\bullet}\left(X\right)$,
$\Psi_{0b}^{-\infty,\bullet}\left(X\right)$ (\cite{Hintz0calculus}
discusses this issue for the classes $\Psi_{0}^{-\infty,\bullet}\left(X\right)$,
$\Psi_{0b}^{-\infty,\bullet}\left(X\right)$). In this subsection
we will define the Bessel family map for the classes $\hat{\Psi}_{0\tr}^{-\infty,\bullet}\left(X,\partial X\right)$,
$\hat{\Psi}_{0\po}^{-\infty,\bullet}\left(X,\partial X\right)$, $\hat{\Psi}_{0}^{-\infty,\bullet}\left(X\right)$,
$\hat{\Psi}_{0b}^{-\infty,\bullet}\left(X\right)$, and we will show
how this map is in a very concrete sense a ``principal symbol map''.
As such, it fits into a short exact sequence formally similar to the
usual principal symbol short exact sequence; in particular, the Bessel
family map is \emph{by design} surjective onto an appropriate space
of \emph{fibrewise homogeneous sections} of an appropriate Fréchet
bundle over $T^{*}\partial X\backslash0$.

\subsubsection{\label{subsubsec:Principal-symbols-of-psidos}Principal symbols of
polyhomogeneous pseudodifferential operators}

We start with a review of the definition of the principal symbol map
for classical pseudodifferential operators on the closed manifold
$\partial X$. In accordance with the approach adopted in this paper,
we can define the usual class of pseudodifferential operators $\Psi^{\bullet}\left(\partial X\right)$
as follows. Given $m\in\mathbb{R}$, an operator $Q:C^{\infty}\left(\partial X\right)\to C^{-\infty}\left(\partial X\right)$
is in $\Psi^{m}\left(\partial X\right)$ if and only if:
\begin{enumerate}
\item the Schwartz kernel $K_{Q}$ coincides, in the complement of any neighborhood
of the diagonal $\Delta$, with an element of $\Psi^{-\infty}\left(\partial X\right)=C^{\infty}\left(\left(\partial X\right)^{2};\pi_{R}^{*}\mathcal{D}_{\partial X}^{1}\right)$;
\item for every $p\in\partial X$ and coordinates $y$ centered at $p$,
there exists a symbol $q\left(y;\eta\right)\in\mathcal{S}\left(\mathbb{R}^{n}\right)\hat{\otimes}\mathcal{A}^{-m}\left(\overline{\mathbb{R}}^{n}\right)=:S_{\mathcal{S}}^{m}\left(\mathbb{R}^{n};\mathbb{R}^{n}\right)$
such that, in the induced coordinates $\left(y,\tilde{y}\right)$
for $Y^{2}$ centered at $\left(p,p\right)$, we have
\[
K_{Y}\equiv K\left(y;y-\tilde{y}\right)d\tilde{y}=\frac{1}{\left(2\pi\right)^{n}}\int e^{i\left(y-\tilde{y}\right)\eta}q\left(y;\eta\right)d\eta d\tilde{y}
\]
near the origin.
\end{enumerate}
The class $\Psi_{\phg}^{m}\left(\partial X\right)$ (where we allow
$m\in\mathbb{C}$) of \emph{classical }pseudodifferential operators
of order $m$ is obtained as above, except that we replace $\mathcal{A}^{-m}\left(\overline{\mathbb{R}}^{n}\right)$
with the space $\left\langle \eta\right\rangle ^{m}C^{\infty}\left(\overline{\mathbb{R}}^{n}\right)=\mathcal{A}_{\phg}^{-m+\mathbb{N}}\left(\overline{\mathbb{R}}^{n}\right)$,
where $\left\langle \eta\right\rangle =\sqrt{1+\left|\eta\right|^{2}}$
(note that $\left\langle \eta\right\rangle ^{-1}$ is a boundary defining
function for $\overline{\mathbb{R}}^{n}$). More in general, given
an index set $\mathcal{E}$, one can define $\Psi_{\phg}^{-\mathcal{E}}\left(Y\right)$
by replacing $\mathcal{A}^{-m}\left(\overline{\mathbb{R}}^{n}\right)$
with $\mathcal{A}_{\phg}^{\mathcal{E}}\left(\overline{\mathbb{R}}^{n}\right)$.
We denote by
\[
S_{\phg,\mathcal{S}}^{-\mathcal{E}}\left(\mathbb{R}^{n};\mathbb{R}^{n}\right)=\mathcal{S}\left(\mathbb{R}^{n}\right)\hat{\otimes}\mathcal{A}_{\phg}^{\mathcal{E}}\left(\overline{\mathbb{R}}^{n}\right)
\]
the corresponding class of symbols.

The \emph{principal symbol }$\sigma\left(Q\right)$ of an operator
$Q\in\Psi_{\phg}^{m}\left(\partial X\right)$ (or, more generally,
$Q\in\Psi_{\phg}^{-\mathcal{E}}\left(\partial X\right)$ with $\left[\mathcal{E}\right]=-m$)
is a smooth, fibrewise homogeneous function $\eta\mapsto\sigma\left(Q\right)$
defined on $T^{*}\partial X\backslash0$, constructed as follows.
Given $p\in\partial X$, choose coordinates $y$ for $\partial X$
centered at $p$, and let $q\left(y;\eta\right)\in S_{\phg,\mathcal{S}}^{-\mathcal{E}}\left(\mathbb{R}^{n};\mathbb{R}^{n}\right)$
be a classical symbol such that $Q\equiv\Op\left(q\right)$ near the
origin in the given coordinates. Then, in these coordinates, for every
$\eta\in T_{p}^{*}\partial X\backslash0\equiv\mathbb{R}^{n}\backslash0$
we have
\begin{align*}
\sigma_{\eta}\left(Q\right) & :=\lim_{t\to0^{+}}t^{m}q\left(0;t^{-1}\eta\right).
\end{align*}
This limit picks up the leading order term in the expansion of $q\left(0;\eta\right)$
as $\left|\eta\right|\to+\infty$. More precisely, since $q\left(y;\eta\right)\in S_{\phg,\mathcal{S}}^{-\mathcal{E}}\left(\mathbb{R}^{n};\mathbb{R}^{n}\right)$,
by Taylor's Theorem we can write, away from the locus $\left|\eta\right|=0$,
\[
q\left(0;\eta\right)=q_{0}\left(\frac{\eta}{\left|\eta\right|},\left|\eta\right|^{-1}\right)\left|\eta\right|^{m}+r\left(\frac{\eta}{\left|\eta\right|},\left|\eta\right|^{-1}\right)\left|\eta\right|^{m-1}
\]
where $q_{0}\left(\hat{\eta},\rho\right),r\left(\hat{\eta},\rho\right)\in C^{\infty}\left(S^{n-1}\times\mathbb{R}_{1}^{1}\right)$.
We then get
\begin{align*}
\lim_{t\to0^{+}}t^{m}q\left(0;t^{-1}\eta\right) & =\lim_{t\to0^{+}}t^{m}\left\langle t^{-1}\eta\right\rangle ^{m}q_{0}'\left(\frac{\eta}{\left|\eta\right|},t\left|\eta\right|^{-1}\right)\\
 & +t^{m}\left\langle t^{-1}\eta\right\rangle ^{m-1}r\left(\hat{\eta},t\left|\eta\right|^{-1}\right)\\
 & =\left|\eta\right|^{m}q_{0}'\left(\frac{\eta}{\left|\eta\right|},0\right)\\
 & =\left|\eta\right|^{m}\tilde{q}_{0}\left(\frac{\eta}{\left|\eta\right|}\right),
\end{align*}
where $\tilde{q}_{0}\left(\hat{\eta}\right)=q_{0}'\left(\hat{\eta},0\right)\in C^{\infty}\left(S^{n-1}\right)$.
This function does not depend on the particular symbol but only on
the operator $Q$ and the choice of coordinates, so that $\tilde{q}_{0}\left(\eta/\left|\eta\right|\right)\left|\eta\right|^{m}$
is invariantly a well-defined homogeneous of degree $m$ function
on $T_{p}^{*}\partial X\backslash0$. The principal symbol map fits
into a short exact sequence
\[
\xymatrix{0\ar[r] & \Psi_{\phg}^{-\left(\mathcal{E}\backslash\left\{ m\right\} \right)}\left(\partial X\right)\ar[r] & \Psi_{\phg}^{-\mathcal{E}}\left(\partial X\right)\ar[r]^{\sigma} & S^{\left[m\right]}\left(T^{*}\partial X\backslash0\right)\ar[r] & 0}
\]
where $S^{\left[m\right]}\left(T^{*}\partial X\backslash0\right)$
denotes the space of smooth, fibrewise homogeneous of degree $m$,
functions on $T^{*}Y\backslash0$. Moreover, the principal symbol
map $\sigma$ is compatible with compositions and formal adjoints.

\subsubsection{\label{subsubsec:The-Bessel-family-of-symbolic-0-poisson}The Bessel
family of a symbolic $0$-Poisson operator}

The Bessel family map for the class $\hat{\Psi}_{0\po}^{-\infty,\bullet}\left(\partial X,X\right)$
is conceptually very similar to the principal symbol map described
above. Fix a pair of index sets $\mathcal{E}=\left(\mathcal{E}_{\of},\mathcal{E}_{\ff}\right)$
with $\left[\mathcal{E}_{\ff}\right]=m\in\mathbb{C}$. First of all,
we recall from §\ref{subsec:Solving-the-model} the definition of
the Fréchet bundle over $T^{*}\partial X\backslash0$ where the Bessel
family map takes values. We denote by $\Psi_{\po}^{-\infty,\mathcal{E}_{\of}}\left(\mathbb{R}_{1}^{1}\right)$
the space of functions in $\mathcal{A}_{\phg}^{\left(\mathcal{E}_{\of},\infty\right)}\left(\overline{\mathbb{R}}_{1}^{1}\right)$,
thought of as ``model Poisson operators'' $\mathbb{C}\to\mathcal{A}_{\phg}^{\left(\mathcal{E}_{\of},\infty\right)}\left(\overline{\mathbb{R}}_{1}^{1}\right)$.
The space is acted upon by $\mathbb{R}^{+}$ via the representation
\begin{align*}
\mathbb{R}^{+} & \to\GL\left(\Psi_{\po}^{-\infty,\mathcal{E}_{\of}}\left(\mathbb{R}_{1}^{1}\right)\right)\\
t & \mapsto\left(B\mapsto\lambda_{t}^{*}\circ B\right),
\end{align*}
and therefore we can define a smooth Fréchet bundle with typical fiber
$\Psi_{\po}^{-\infty,\mathcal{E}_{\of}}\left(\mathbb{R}_{1}^{1}\right)$
as the bundle associated to the principal $\mathbb{R}^{+}$ bundle
$N^{+}\partial X\backslash O$ via the representation above. We denote
this bundle by $\Psi_{\po}^{-\infty,\mathcal{E}_{\of}}\left(N^{+}\partial X\right)\to\partial X$.
Its fiber at a point $p\in\partial X$ is the space of operators $\mathbb{C}\to\mathcal{A}_{\phg}^{\left(\mathcal{E}_{\of},\infty\right)}\left(\overline{N_{p}^{+}\partial X}\right)$.
The choice of a vector field $V$ on $X$ transversal to $\partial X$
and inward pointing, determines a global trivialization of $N^{+}\partial X$
and hence also a global trivialization of $\Psi_{\po}^{-\infty,\mathcal{E}_{\of}}\left(N^{+}\partial X\right)$.

Now, let $B\in\hat{\Psi}_{0\po}^{-\infty,\mathcal{E}}\left(\partial X,X\right)$.
The Bessel family of $B$ is a \emph{section} $\hat{N}\left(B\right)$
of the pull-back bundle $\pi^{*}\Psi_{\po}^{-\infty,\mathcal{E}_{\of}}\left(N^{+}\partial X\right)\to T^{*}\partial X\backslash0$,
with the following homogeneity property: for every $\eta\in T^{*}\partial X\backslash0$
and $t\in\mathbb{R}^{+}$, we have
\[
\hat{N}_{t\eta}\left(B\right)=\lambda_{t}^{*}\circ\hat{N}_{\eta}\left(B\right)\circ t^{-m}.
\]
Just as the principal symbol of an operator $Q\in\Psi_{\phg}^{m}\left(Y\right)$
encodes the leading order term in the expansion at infinity of a symbol
representing $Q$, the Bessel family of $B$ encodes the leading order
term in the expansion \emph{at the front face} of a $0$-Poisson symbol
representing $B$.

To be more precise, choose coordinates $\left(x,y\right)$ for $X$
centered at $p$ and compatible with $V$, in the sense that $Vx\equiv1$
near $p$. By definition of $\hat{\Psi}_{0\po}^{-\infty,\mathcal{E}}\left(\partial X,X\right)$,
there is a $0$-Poisson symbol $b\left(y;x,\eta\right)$ in $S_{0\po,\mathcal{S}}^{-\infty,\mathcal{E}}\left(\mathbb{R}^{n};\mathbb{R}_{1}^{n+1}\right)$
such that, in the induced coordinates $\left(x,y,\tilde{y}\right)$
on $X\times\partial X$, we have near the origin
\[
K_{B}\equiv\frac{1}{\left(2\pi\right)^{n}}\int e^{i\left(y-\tilde{y}\right)\eta}b\left(y;x,\eta\right)d\eta d\tilde{y}.
\]
Given $x\in\mathbb{R}^{+}$ and $\eta\in\mathbb{R}^{n}\backslash0$,
we define
\[
\hat{N}_{\eta}\left(B\right):=\lim_{t\to0^{+}}b\left(0;tx,t^{-1}\eta\right)t^{-m}.
\]
We claim that this limit is well-defined and has the form
\[
\hat{N}_{\eta}\left(B\right)=\tilde{b}_{0}\left(x\left|\eta\right|,\frac{\eta}{\left|\eta\right|}\right)\left|\eta\right|^{-m},
\]
where $\tilde{b}_{0}\left(\tau,\hat{\eta}\right)\in\mathcal{A}^{\left(\mathcal{E}_{\of},\infty\right)}\left(\overline{\mathbb{R}}_{1}^{1}\times S^{n-1}\right)$
does not depend on the particular choice of the $0$-Poisson symbol
$b$, but only on the operator $B$ and on the particular choice of
the coordinates.

To see this, observe that since $b\left(0;x,\eta\right)\in\mathcal{A}_{\phg}^{\left(\mathcal{E}_{\of},\mathcal{E}_{\ff},\infty,\infty\right)}\left(\hat{P}_{0}^{2}\right)$,
we can write away from the locus $\left|\eta\right|=0$
\[
b\left(0;x,\eta\right)=b_{0}\left(x\left|\eta\right|,\hat{\eta},\left|\eta\right|^{-1}\right)\left|\eta\right|^{-m}+r\left(x\left|\eta\right|,\hat{\eta},\left|\eta\right|^{-1}\right)\left|\eta\right|^{-m-1},
\]
where $b_{0}\left(\tau,\hat{\eta},\rho\right)$ and $r\left(\tau,\hat{\eta},\rho\right)$
are elements of $\mathcal{A}_{\phg}^{\left(\mathcal{E}_{\of},\infty\right)}\left(\overline{\mathbb{R}}_{1}^{1}\times S^{n-1}\right)\hat{\otimes}\mathcal{A}_{\phg}^{\mathcal{E}_{\ff}-m}\left(\mathbb{R}_{1}^{1}\right)$.
Since $\left[\mathcal{E}_{\ff}\right]=m$, the evaluations $\tilde{b}_{0}\left(\tau,\hat{\eta}\right)=b_{0}\left(\tau,\hat{\eta},0\right)$
and $r\left(\tau,\hat{\eta},0\right)$ are elements of $\mathcal{A}_{\phg}^{\left(\mathcal{E}_{\of},\infty\right)}\left(\overline{\mathbb{R}}_{1}^{1}\times S^{n-1}\right)$,
and we have
\begin{align*}
\lim_{t\to0^{+}}b\left(0;tx,t^{-1}\eta\right)t^{-m} & =\tilde{b}_{0}\left(x\left|\eta\right|,\frac{\eta}{\left|\eta\right|}\right)\left|\eta\right|^{-m}.
\end{align*}
as claimed. Observe that this limit does not depend on the particular
choice of the symbol $b$, but only on the operator $B$ and the choice
of coordinates: indeed, if $b_{1}$ and $b_{2}$ are two $0$-Poisson
symbols such that $B\equiv\Op_{L}^{\po}\left(b_{1}\right)\equiv\Op_{L}^{\po}\left(b_{2}\right)$
near the origin in the given coordinates, then this means that the
inverse Fourier transform $B_{1}\left(0;x,Y\right)-B_{2}\left(0;x,Y\right)$
must \emph{vanish} in a neighborhood of $x=0,\left|Y\right|=0$. Therefore,
$B_{1}\left(0;x,Y\right)-B_{2}\left(0;x,Y\right)$ must be in $\mathcal{A}_{\phg}^{\left(\mathcal{E}_{\of},\infty,\infty\right)}\left(P^{2}\right)$,
which implies that $b_{1}\left(0;x,\eta\right)-b_{2}\left(0;x,\eta\right)$
must be in $\mathcal{A}_{\phg}^{\left(\mathcal{E}_{\of},\infty,\infty\right)}\left(\hat{P}_{0}^{2}\right)$.
Thus, the lifts of $b_{1}\left(0;x,\eta\right)$ and $b_{2}\left(0;x,\eta\right)$
to $\hat{P}_{0}^{2}$ must agree to infinite order at $\ff$.

For reasons of space, we omit the annoying (but elementary) check
that the correspondence $\hat{N}\left(B\right):\eta\mapsto\tilde{b}_{0}\left(x\left|\eta\right|,\frac{\eta}{\left|\eta\right|}\right)\left|\eta\right|^{-m}$
defined above does indeed define invariantly a smooth section of the
pull-back Fréchet bundle $\pi^{*}\Psi_{\po}^{-\infty,\mathcal{E}_{\of}}\left(N^{+}\partial X\right)\to T^{*}\partial X\backslash0$.
Let us just check that $\hat{N}\left(B\right)$ is fibrewise homogeneous
of degree $-m$. Using the coordinates above, given $\eta\in T_{p}^{*}\partial X\backslash0\equiv\mathbb{R}^{n}\backslash0$
and $t\in\mathbb{R}^{+}$, we have
\begin{align*}
\hat{N}_{t\eta}\left(B\right) & =\tilde{b}_{0}\left(tx\left|\eta\right|,\frac{\eta}{\left|\eta\right|}\right)\left|\eta\right|^{-m}t^{-m}\\
 & =\left(\lambda_{t}^{*}\tilde{b}_{0}\right)\left(x\left|\eta\right|,\frac{\eta}{\left|\eta\right|}\right)\left|\eta\right|^{-m}t^{-m}\\
 & =\lambda_{t}^{*}\circ\hat{N}_{\eta}\left(B\right)t^{-m}.
\end{align*}
By construction, it is clear that the Bessel family map $\hat{N}$
maps $\hat{\Psi}_{0\po}^{-\infty,\mathcal{E}}\left(\partial X,X\right)$
surjectively onto the space of smooth sections of $\pi^{*}\Psi_{\po}^{-\infty,\mathcal{E}_{\of}}\left(N^{+}\partial X\right)$
over $T^{*}\partial X\backslash0$ which are fibrewise homogeneous
of degree $-m$ in the sense above. Furthermore, the kernel of $\hat{N}$
consists of the subclass $\hat{\Psi}_{0\po}^{-\infty,\left(\mathcal{E}_{\of},\mathcal{E}_{\ff}\backslash\left\{ m\right\} \right)}\left(\partial X,X\right)$.

\subsubsection{\label{subsubsec:The-Bessel-family-of-symbolic-0-trace}The Bessel
family of a symbolic $0$-trace operator}

The construction of the Bessel family map for symbolic $0$-trace
operators is carried out in a very similar way. As above, the Bessel
map takes values in a space of fibrewise homogeneous sections of a
natural Fréchet bundle over $T^{*}\partial X\backslash0$. Let's recall
its definition from §\ref{subsec:analysis-of-The-Bessel-trace}. We
denote by $\Psi_{\tr}^{-\infty,\mathcal{E}_{\of}}\left(\mathbb{R}_{1}^{1}\right)$
the space of operators $\dot{C}^{\infty}\left(\overline{\mathbb{R}}_{1}^{1}\right)\to\mathbb{C}$
of the form $f\left(\tilde{x}\right)d\tilde{x}$ for some $f\left(\tilde{x}\right)\in\mathcal{A}_{\phg}^{\left(\mathcal{E}_{\of},\infty\right)}\left(\overline{\mathbb{R}}_{1}^{1}\right)$.
This space is acted upon by $\mathbb{R}^{+}$ via the representation
\begin{align*}
\mathbb{R}^{+} & \to\Psi_{\tr}^{-\infty,\mathcal{E}_{\of}}\left(\mathbb{R}_{1}^{1}\right)\\
t & \mapsto\left(A\mapsto A\circ\lambda_{t^{-1}}^{*}\right)
\end{align*}
and this allows us to define a bundle $\Psi_{\tr}^{-\infty,\mathcal{E}_{\of}}\left(N^{+}\partial X\right)\to\partial X$,
with typical fiber $\mathcal{A}_{\phg}^{\left(\mathcal{E}_{\of},\infty\right)}\left(\overline{\mathbb{R}}_{1}^{1}\right)$,
associated to $N^{+}\partial X\backslash O$ via the representation
above. Given an operator $A\in\hat{\Psi}_{0\tr}^{-\infty,\mathcal{E}}\left(X,\partial X\right)$
with $\mathcal{E}=\left(\mathcal{E}_{\of},\mathcal{E}_{\ff}\right)$
and $\left[\mathcal{E}_{\ff}\right]=m\in\mathbb{C}$, the \emph{Bessel
family} of $A$ is a smooth, fibrewise homogeneous of degree $-m$,
section of the Fréchet bundle $\pi^{*}\Psi_{\tr}^{-\infty,\mathcal{E}_{\of}}\left(N^{+}\partial X\right)\to T^{*}\partial X\backslash0$,
and is defined as follows.

Choose $p\in\partial X$, and let $\left(x,y\right)$ be coordinates
for $X$ centered at $p$ and compatible with $V$. Then we can find
a $0$-trace symbol $a\left(y;\tilde{x},\eta\right)\in S_{0\tr,\mathcal{S}}^{-\infty,\mathcal{E}}\left(\mathbb{R}^{n};\mathbb{R}_{1}^{n+1}\right)$
such that in the induced coordinates $\left(y,\tilde{x},\tilde{y}\right)$
for $\partial X\times X$ centered at $\left(p,p\right)$ we have
\[
A\equiv\frac{1}{\left(2\pi\right)^{n}}\int e^{i\left(y-\tilde{y}\right)\eta}a\left(y;\tilde{x},\eta\right)d\eta d\tilde{x}.
\]
Given $\eta\in T_{p}^{*}\partial X\backslash0\equiv\mathbb{R}^{n}\backslash0$,
we define
\[
\hat{N}_{\eta}\left(A\right)=\lim_{t\to0^{+}}t^{-m}a\left(0;t\tilde{x},t^{-1}\eta\right)td\tilde{x}.
\]
We claim that this limit determines a density of the form
\[
\hat{N}_{\eta}\left(A\right)=\left|\eta\right|^{-m}\tilde{a}_{0}\left(\tilde{x}\left|\eta\right|,\frac{\eta}{\left|\eta\right|}\right)\left|\eta\right|d\tilde{x},
\]
where $\tilde{a}_{0}\left(\tau,\hat{\eta}\right)\in\mathcal{A}_{\phg}^{\left(\mathcal{E}_{\of},\infty\right)}\left(\overline{\mathbb{R}}_{1}^{1}\times S^{n-1}\right)$.
To show this, we argue as in the $0$-Poisson case: since $a\left(0;\tilde{x},\eta\right)\in\mathcal{A}_{\phg}^{\left(\mathcal{E}_{\of},\mathcal{E}_{\ff}-1,\infty,\infty\right)}\left(\hat{T}_{0}^{2}\right)$
and $\left[\mathcal{E}_{\ff}\right]=m$, we can write $a\left(0;\tilde{x},\eta\right)$
away from $\left|\eta\right|=0$ as
\[
a\left(0;\tilde{x},\eta\right)=\left|\eta\right|^{-m+1}a_{0}\left(\tilde{x}\left|\eta\right|,\frac{\eta}{\left|\eta\right|},\left|\eta\right|^{-1}\right)+\left|\eta\right|^{-m}r\left(\tilde{x}\left|\eta\right|,\frac{\eta}{\left|\eta\right|},\left|\eta\right|^{-1}\right),
\]
where $a_{0}\left(\tau,\hat{\eta},\rho\right)$ and $r\left(\tau,\hat{\eta},\rho\right)$
are in $\mathcal{A}_{\phg}^{\left(\mathcal{E}_{\of},\infty\right)}\left(\overline{\mathbb{R}}_{1}^{1}\times S^{n-1}\right)\hat{\otimes}\mathcal{A}_{\phg}^{\mathcal{E}_{\ff}-m}\left(\mathbb{R}_{1}^{1}\right)$.
Thus, we have
\[
\lim_{t\to0^{+}}t^{-m}a\left(0;t\tilde{x},t^{-1}\eta\right)td\tilde{x}=\left|\eta\right|^{-m}\tilde{a}_{0}\left(\tilde{x}\left|\eta\right|,\frac{\eta}{\left|\eta\right|}\right)\left|\eta\right|d\tilde{x},
\]
where $\tilde{a}_{0}\left(\tau,\hat{\eta}\right)=a_{0}\left(\tau,\hat{\eta},0\right)\in\mathcal{A}_{\phg}^{\left(\mathcal{E}_{\of},\infty\right)}\left(\overline{\mathbb{R}}_{1}^{1}\times S^{n-1}\right)$.
One proves exactly as in the $0$-Poisson case that if two $0$-trace
symbols $a_{1},a_{2}$ are such that $A=\Op_{L}^{\tr}\left(a_{1}\right)=\Op_{L}^{\tr}\left(a_{2}\right)$
in the chosen coordinates near the origin, then $a_{1}$ and $a_{2}$
must agree to infinite order at $\ff$, so $\hat{N}_{\eta}\left(A\right)$
depends only on $A$ and the coordinate choices.

As mentioned above, the correspondence $\eta\mapsto\hat{N}_{\eta}\left(A\right)$
determines invariantly a smooth section of the Fréchet bundle $\pi^{*}\Psi_{\tr}^{-\infty,\mathcal{E}_{\of}}\left(N^{+}\partial X\right)\to T^{*}\partial X\backslash0$,
fibrewise homogeneous of degree $-m$. Let's check this at the fiber
$T_{p}^{*}\partial X\backslash0$ in the coordinates above. We have
\begin{align*}
\hat{N}_{t\eta}\left(A\right) & =t^{-m+1}\left|\eta\right|^{-m}\tilde{a}_{0}\left(t\tilde{x}\left|\eta\right|,\frac{\eta}{\left|\eta\right|}\right)\left|\eta\right|d\tilde{x}.
\end{align*}
On the other hand, we have
\begin{align*}
t^{-m}\hat{N}_{\eta}\left(A\right)\left(\lambda_{t^{-1}}^{*}u\right) & =t^{-m}\int\left|\eta\right|^{-m}\tilde{a}_{0}\left(\tilde{x}\left|\eta\right|,\frac{\eta}{\left|\eta\right|}\right)u\left(t^{-1}\tilde{x}\right)\left|\eta\right|d\tilde{x}\\
 & =t^{-m}\int\left|\eta\right|^{-m}\tilde{a}_{0}\left(t\tilde{X}\left|\eta\right|,\frac{\eta}{\left|\eta\right|}\right)u\left(\tilde{X}\right)\left|\eta\right|td\tilde{X}
\end{align*}
so indeed we get
\[
\hat{N}_{t\eta}\left(A\right)=t^{-m}\hat{N}_{\eta}\left(A\right)\circ\lambda_{t^{-1}}^{*}
\]
as claimed.

\subsubsection{\label{subsubsec:The-Bessel-family-of-interior}The Bessel family
of a $0$-interior or $0b$-interior operator}

We finally discuss the Bessel family map for the classes of $0$-interior
and $0b$-interior operators. As above, we first define the Fréchet
bundles where the Bessel family lives. For the $0$-interior case,
we already encountered the relevant Fréchet bundle in §\ref{subsec:analysis-of-The-Bessel-trace}
where we discussed the Bessel family $\hat{N}\left(P_{1}\right)$
of the orthogonal projector $P_{1}$ onto the kernel of a $0$-elliptic
operator $L$.

Fix index sets $\mathcal{E}_{\lf},\mathcal{E}_{\rf},\mathcal{E}_{\ff_{b}},\mathcal{E}_{\ff_{0}}$.
Define $\Psi^{-\infty,\left(\mathcal{E}_{\lf},\mathcal{E}_{\rf}\right)}\left(\mathbb{R}_{1}^{1}\right)$
as the space of operators $\dot{C}^{\infty}\left(\overline{\mathbb{R}}_{1}^{1}\right)\to C^{-\infty}\left(\overline{\mathbb{R}}_{1}^{1}\right)$
given by right densities $f\left(x,\tilde{x}\right)d\tilde{x}$, where
$f\in\mathcal{A}_{\phg}^{\left(\mathcal{E}_{\lf},\mathcal{E}_{\rf},\infty\right)}\left(\overline{\mathbb{R}}_{2}^{2}\right)$.
Again, the space $\Psi^{-\infty,\left(\mathcal{E}_{\lf},\mathcal{E}_{\rf}\right)}\left(\mathbb{R}_{1}^{1}\right)$
is acted upon by $\mathbb{R}^{+}$ via the representation associating
$t\in\mathbb{R}^{+}$ to the conjugation $\lambda_{t}^{*}\circ\cdot\circ\lambda_{t^{-1}}^{*}$,
and this allows us to define the associated Fréchet bundle $\Psi^{-\infty,\left(\mathcal{E}_{\lf},\mathcal{E}_{\rf}\right)}\left(N^{+}\partial X\right)\to\partial X$.

Similarly, define $\Psi_{b}^{-\infty,\left(\mathcal{E}_{\lf},\mathcal{E}_{\rf},\mathcal{E}_{\ff_{b}}\right)}\left(\mathbb{R}_{1}^{1}\right)$
as the space of operators $\dot{C}^{\infty}\left(\overline{\mathbb{R}}_{1}^{1}\right)\to C^{-\infty}\left(\overline{\mathbb{R}}_{1}^{1}\right)$
given by \emph{distributional} right densities $f\left(x,\tilde{x}\right)d\tilde{x}$,
where $f$ \emph{lifts} from $\overline{\mathbb{R}}_{2}^{2}$ to an
element of $\mathcal{A}_{\phg}^{\left(\mathcal{E}_{\lf},\mathcal{E}_{\rf},\mathcal{E}_{\ff_{b}}-1,\infty\right)}\left(\left[\overline{\mathbb{R}}_{2}^{2}:0\right]\right)$.
Here the index set $\mathcal{E}_{\ff_{b}}-1$ refers to the front
face of the blow-up $\left[\overline{\mathbb{R}}_{2}^{2}:0\right]$.
Again, the conjugation action $t\mapsto\lambda_{t}^{*}\circ\cdot\circ\lambda_{t^{-1}}^{*}$
determines a representation of $\mathbb{R}^{+}$ in $\Psi_{b}^{-\infty,\left(\mathcal{E}_{\lf},\mathcal{E}_{\rf},\mathcal{E}_{\ff_{b}}\right)}\left(\mathbb{R}_{1}^{1}\right)$,
and therefore we can define the bundle $\Psi_{b}^{-\infty,\left(\mathcal{E}_{\lf},\mathcal{E}_{\rf},\mathcal{E}_{\ff_{b}}\right)}\left(N^{+}\partial X\right)\to\partial X$
as an associated bundle to $N^{+}\partial X\backslash O$.

If $P\in\hat{\Psi}_{0}^{-\infty,\left(\mathcal{E}_{\lf},\mathcal{E}_{\rf},\mathcal{E}_{\ff_{0}}\right)}\left(X\right)$
and $\left[\mathcal{E}_{\ff_{0}}\right]=m$, then its Bessel family
$\hat{N}\left(P\right)$ is a smooth section of the pull-back bundle
$\pi^{*}\Psi^{-\infty,\left(\mathcal{E}_{\lf},\mathcal{E}_{\rf}\right)}\left(N^{+}\partial X\right)\to T^{*}\partial X\backslash0$,
fibrewise homogeneous of degree $-m$, in the sense that
\[
\hat{N}_{t\eta}\left(P\right)=t^{-m}\lambda_{t}^{*}\circ\hat{N}_{\eta}\left(P\right)\circ\lambda_{t^{-1}}^{*}.
\]
Similarly, if $P\in\hat{\Psi}_{0b}^{-\infty,\left(\mathcal{E}_{\lf},\mathcal{E}_{\rf},\mathcal{E}_{\ff_{b}},\mathcal{E}_{\ff_{0}}\right)}\left(X\right)$
and $\left[\mathcal{E}_{\ff_{0}}\right]=m$, then its Bessel family
$\hat{N}\left(P\right)$ is a smooth section of the pull-back bundle
$\pi^{*}\Psi_{b}^{-\infty,\left(\mathcal{E}_{\lf},\mathcal{E}_{\rf},\mathcal{E}_{\ff_{b}}\right)}\left(N^{+}\partial X\right)\to T^{*}\partial X\backslash0$,
fibrewise homogeneous of degree $-m$.

The definition is similar to the previous two cases. Let $P$ be either
in $P\in\hat{\Psi}_{0}^{-\infty,\left(\mathcal{E}_{\lf},\mathcal{E}_{\rf},\mathcal{E}_{\ff_{0}}\right)}\left(X\right)$
or in $\hat{\Psi}_{0b}^{-\infty,\left(\mathcal{E}_{\lf},\mathcal{E}_{\rf},\mathcal{E}_{\ff_{b}},\mathcal{E}_{\ff_{0}}\right)}\left(X\right)$.
Let $q\in\partial X$, and let $\left(x,y\right)$ be coordinates
for $X$ centered at $q$. Then there exists a $0$-interior or $0b$-interior
symbol $p\left(y;x,\tilde{x},\eta\right)$ such that, near the origin
in the induced coordinates $\left(x,y,\tilde{x},\tilde{y}\right)$
for $X^{2}$ centered at $\left(q,q\right)$, we have
\[
P\equiv\frac{1}{\left(2\pi\right)^{n}}\int e^{i\left(y-\tilde{y}\right)\eta}p\left(y;x,\tilde{x},\eta\right)d\eta d\tilde{x}d\tilde{y}.
\]
Given $\eta\in T_{q}^{*}\partial X\backslash0\equiv\mathbb{R}^{n}\backslash0$,
we define
\[
\hat{N}_{\eta}\left(P\right)=\lim_{t\to0^{+}}t^{-m}p\left(0;tx,t\tilde{x},t^{-1}\eta\right)td\tilde{x}.
\]
This limit takes the form
\[
\hat{N}_{\eta}\left(P\right)=\left|\eta\right|^{-m}\tilde{p}_{0}\left(x\left|\eta\right|,\tilde{x}\left|\eta\right|,\frac{\eta}{\left|\eta\right|}\right)\left|\eta\right|d\tilde{x};
\]
in the $0$-interior case, we have $\tilde{p}_{0}\left(\tau,\tilde{\tau},\hat{\eta}\right)\in\mathcal{A}_{\phg}^{\left(\mathcal{E}_{\lf},\mathcal{E}_{\rf},\infty\right)}\left(\overline{\mathbb{R}}_{2}^{2}\times S^{n-1}\right)$,
while in the $0b$-interior case the function $\tilde{p}_{0}\left(\tau,\tilde{\tau},\hat{\eta}\right)$
\emph{lifts }to a polyhomogeneous function in $\mathcal{A}_{\phg}^{\left(\mathcal{E}_{\lf},\mathcal{E}_{\rf},\mathcal{E}_{\ff_{b}}-1,\infty\right)}\left(\left[\overline{\mathbb{R}}_{2}^{2}:0\right]\times S^{n-1}\right)$.
To find $\tilde{p}_{0}$, in both cases we write
\begin{align*}
p\left(0;x,\tilde{x},\eta\right) & =p_{0}\left(x\left|\eta\right|,\tilde{x}\left|\eta\right|,\frac{\eta}{\left|\eta\right|},\left|\eta\right|^{-1}\right)\left|\eta\right|^{-m+1}\\
 & +r\left(x\left|\eta\right|,\tilde{x}\left|\eta\right|,\frac{\eta}{\left|\eta\right|},\left|\eta\right|^{-1}\right)\left|\eta\right|^{-m},
\end{align*}
where in the $0$-interior case $p_{0}\left(\tau,\tilde{\tau},\hat{\eta},\rho\right)$
is in $\mathcal{A}_{\phg}^{\left(\mathcal{E}_{\lf},\mathcal{E}_{\rf},\infty\right)}\left(\overline{\mathbb{R}}_{2}^{2}\times S^{n-1}\right)\hat{\otimes}\mathcal{A}_{\phg}^{\mathcal{E}_{\ff_{0}}-m}\left(\mathbb{R}_{1}^{1}\right)$,
while in the $0b$-interior case $p_{0}\left(\tau,\tilde{\tau},\hat{\eta},\rho\right)$
lifts to an element of $\mathcal{A}_{\phg}^{\left(\mathcal{E}_{\lf},\mathcal{E}_{\rf},\mathcal{E}_{\ff_{b}}-1,\infty\right)}\left(\left[\overline{\mathbb{R}}_{2}^{2}:0\right]\times S^{n-1}\right)\hat{\otimes}\mathcal{A}_{\phg}^{\mathcal{E}_{\ff_{0}}-m}\left(\mathbb{R}_{1}^{1}\right)$.
$\tilde{p}_{0}\left(\tau,\tilde{\tau},\hat{\eta}\right)$ is simply
the evaluation of $p_{0}\left(\tau,\tilde{\tau},\hat{\eta},\rho\right)$
to $\rho=0$. One can again check that if $p_{1},p_{2}$ are two symbols
such that $P\equiv\Op_{L}^{\inte}\left(p_{1}\right)\equiv\Op_{L}^{\inte}\left(p_{2}\right)$
near the origin, then $p_{1}$ and $p_{2}$ must agree to infinite
order at $\ff_{0}$, so the coefficient $\tilde{p}_{0}\left(\tau,\tilde{\tau},\hat{\eta}\right)$
is in fact only dependent on $P$ and on the choice of coordinates.
Finally, let us check the homogeneity property: we have
\begin{align*}
\lambda_{t}^{*}\circ\hat{N}_{\eta}\left(P\right)\circ\lambda_{t^{-1}}^{*} & =\left|\eta\right|^{-m}\tilde{p}_{0}\left(tx\left|\eta\right|,t\tilde{x}\left|\eta\right|,\frac{\eta}{\left|\eta\right|}\right)\left|\eta\right|td\tilde{x}\\
\hat{N}_{t\eta}\left(P\right) & =t^{-m}\left|\eta\right|^{-m}\tilde{p}_{0}\left(tx\left|\eta\right|,t\tilde{x}\left|\eta\right|,\frac{\eta}{\left|\eta\right|}\right)\left|\eta\right|td\tilde{x},
\end{align*}
from which it follows that
\[
\hat{N}_{t\eta}\left(P\right)=t^{-m}\lambda_{t}^{*}\circ\hat{N}_{\eta}\left(P\right)\circ\lambda_{t^{-1}}^{*}
\]
as claimed.
\begin{rem}
We previously claimed (cf. Remark \ref{rem:0-calculus-not-in-0-interior})
that the elements of the $0$-calculus $\Psi_{0}^{-\infty,\bullet}\left(X\right)$
are in general \emph{not} $0$-interior operators. The Bessel family
map described above shows why. If $P$ is in $\Psi_{0}^{-\infty,\left(\mathcal{E}_{\lf},\mathcal{E}_{\rf},\mathcal{E}_{\ff_{0}}\right)}\left(X\right)$,
then by the Pull-back Theorem we have $P\in\Psi_{0b}^{-\infty,\left(\mathcal{E}_{\lf},\mathcal{E}_{\rf},\mathcal{E}_{\ff_{b}},\mathcal{E}_{\ff_{0}}\right)}\left(X\right)$,
where $\mathcal{E}_{\ff_{b}}=\mathcal{E}_{\lf}+\mathcal{E}_{\rf}+1$.
It follows then by Corollary \ref{cor:0b-interior-are-extended-0-calc-and-viceversa}
that $P\in\Psi_{0b}^{-\infty,\left(\mathcal{E}_{\lf},\mathcal{E}_{\rf},\tilde{\mathcal{E}}_{\ff_{b}},\mathcal{E}_{\ff_{0}}\right)}\left(X\right)$,
where $\tilde{\mathcal{E}}_{\ff_{b}}=\mathcal{E}_{\ff_{b}}\overline{\cup}\left(\mathcal{E}_{\lf}+\mathcal{E}_{\rf}+1\right)$.
Therefore, if $\left[\mathcal{E}_{\ff_{0}}\right]=m$, then the Bessel
family $\hat{N}\left(P\right)$ is a smooth, fibrewise homogeneous
of degree $-m$, section of $\pi^{*}\Psi_{b}^{-\infty,\left(\mathcal{E}_{\lf},\mathcal{E}_{\rf},\tilde{\mathcal{E}}_{\ff_{b}}\right)}\left(N^{+}\partial X\right)\to T^{*}\partial X\backslash0$.
This is generically the best we can say. In particular, it is in general
\emph{not true} that $\hat{N}\left(P\right)$ is a section of $\pi^{*}\Psi^{-\infty,\left(\mathcal{E}_{\lf},\mathcal{E}_{\rf}\right)}\left(N^{+}\partial X\right)\to T^{*}\partial X\backslash0$,
and therefore in general $P$ is \emph{not} a $0$-interior operator.
An example of this phenomenon comes from the parametrix construction
for $0$-elliptic operators carried out in \cite{MazzeoEdgeI}. More
precisely, if $L\in\Diff_{0}^{m}\left(X\right)$ is $0$-elliptic
with constant indicial roots, and $\delta\in\mathbb{R}$ is an invertible
weight, then one can construct an operator $Q\in\Psi_{0}^{-m}\left(X\right)+\Psi_{0}^{-\infty,\mathcal{H}}\left(X\right)$,
with $\mathcal{H}=\left(\mathcal{H}_{\lf},\mathcal{H}_{\rf},\mathcal{H}_{\ff_{0}}\right)$
satisfying $\Re\left(\mathcal{H}_{\lf}\right)>\delta$, $\Re\left(\mathcal{H}_{\rf}\right)>-\delta-1$,
and $\left[\mathcal{H}_{\ff_{0}}\right]=0$, such that $LQ$ and $QL$
differ from the identity by very residual operators. The crucial step
in the construction of $Q$ is the inversion of the Bessel family
$\hat{N}\left(L\right)$: it is explained in \cite{MazzeoEdgeI} (see
also \cite{Hintz0calculus}) that, decomposing $Q=Q_{0}+Q_{1}$ with
$Q_{0}\in\Psi_{0}^{-m}\left(X\right)$ and $Q_{1}\in\Psi_{0}^{-\infty,\mathcal{H}}\left(X\right)$,
then for any $p\in\partial X$ and $\eta\in T_{p}^{*}\partial X\backslash0$,
we have
\begin{align*}
\hat{N}_{\eta}\left(Q_{0}\right) & \in\Psi_{b,\text{sc}}^{-m}\left(N_{p}^{+}\partial X\right)\\
\hat{N}_{\eta}\left(Q_{1}\right) & \in\Psi_{b}^{-\infty,\mathcal{H}}\left(N_{p}^{+}\partial X\right).
\end{align*}
The space $\Psi_{b,\text{sc}}^{-m}\left(N_{p}^{+}\partial X\right)$
where $\hat{N}_{\eta}\left(Q_{0}\right)$ lives is the \emph{$b$-scattering
calculus} on $N_{p}^{+}\partial X$, and we do not need to recall
it here. The main point for us is that $\hat{N}_{\eta}\left(Q_{1}\right)$
is \emph{not} a very residual operator on $N_{p}^{+}\partial X$.
Therefore $Q_{1}$ is an element of the large residual $0$-calculus,
but it is not a $0$-interior operator.
\end{rem}

\subsection{\label{subsec:Twisted--trace-and-poisson}Twisted symbolic $0$-trace
and $0$-Poisson operators}

In the previous subsections, we introduced the classes $\hat{\Psi}_{0\tr}^{-\infty,\bullet}\left(X,\partial X\right)$
and $\hat{\Psi}_{0\po}^{-\infty,\bullet}\left(\partial X,X\right)$,
and we constructed the Bessel family maps, associating to elements
of $\hat{\Psi}_{0\tr}^{-\infty,\bullet}\left(X,\partial X\right)$
(resp. $\hat{\Psi}_{0\po}^{-\infty,\bullet}\left(\partial X,X\right)$)
fibrewise homogeneous sections of certain Fréchet bundles over $T^{*}\partial X\backslash0$. 

Now, in §\ref{subsec:analysis-of-The-Bessel-trace}, we constructed
the \emph{Bessel trace family }$\hat{N}\left(\boldsymbol{A}_{L}\right)$
for a $0$-elliptic operator $L\in\Diff_{0}^{m}\left(X\right)$ with
constant indicial roots, relative to a surjective, not injective weight
$\delta$ for $L$. More precisely, $\hat{N}\left(\boldsymbol{A}_{L}\right)$
depends on auxiliary choices of a vector field $V$ transversal to
$X$ and inward-pointing, a boundary defining function $x$ compatible
with $V$, and a smooth positive density $\omega$ on $X$. Given
these choices, for $p\in\partial X$ and $\eta\in T_{p}^{*}\partial X\backslash0$,
$\hat{N}_{\eta}\left(\boldsymbol{A}_{L}\right)$ maps functions $u\in x^{\delta}L_{b}^{2}\left(N_{p}^{+}\partial X\right)$
to the terms in the expansion of $\hat{N}_{\eta}\left(P_{1}\right)u$
(the orthogonal projection of $u$ onto the kernel of $\hat{N}_{\eta}\left(L\right)$,
with respect to the inner product induced by the choices above) corresponding
to the critical indicial roots of $L$. As such, $\hat{N}_{\eta}\left(\boldsymbol{A}_{L}\right)$
takes values in the fibre $\boldsymbol{E}_{L|p}$, where 
\[
\boldsymbol{E}_{L}=\bigoplus_{\text{\ensuremath{\mu} critical}}E_{\mu,\tilde{M}_{\mu}}
\]
and $E_{\mu,\tilde{M}_{\mu}}\to\partial X$ is the bundle whose sections
are the fibrewise log-homogeneous functions on $N^{+}\partial X$
of degree $\leq\left(\mu,\tilde{M}_{\mu}\right)$ (cf. §\ref{subsubsec:Log-homogeneous-functions}).
We observed that $\boldsymbol{E}_{L}$ comes equipped with a natural
endomorphism $\boldsymbol{\mathfrak{s}}_{L}:\boldsymbol{E}_{L}\to\boldsymbol{E}_{L}$
with constant eigenvalues, induced by the action of the canonical
dilation invariant vector field on $N^{+}\partial X$ tangent to the
fibres. This endomorphism has constant eigenvalues, corresponding
to the critical indicial roots of $L$. We then observed that $\hat{N}\left(\boldsymbol{A}_{L}\right)$
is a smooth section of $\pi^{*}\Psi_{\tr}^{-\infty,\mathcal{E}_{\rf}}\left(N^{+}\partial X;\boldsymbol{E}_{L}\right)\to T^{*}\partial X\backslash0$,
\emph{fibrewise} \emph{twisted homogeneous of} \emph{degree} $\boldsymbol{\mathfrak{s}}_{L}$,
in the sense that for every $\eta\in T^{*}\partial X\backslash0$
and $t\in\mathbb{R}^{+}$, we have
\[
\hat{N}_{t\eta}\left(\boldsymbol{A}_{L}\right)=t^{\boldsymbol{\mathfrak{s}}_{L}}\hat{N}_{\eta}\left(\boldsymbol{A}_{L}\right)\circ\lambda_{t^{-1}}^{*}.
\]
Furthermore, in §\ref{subsec:Solving-the-model}, in order to solve
our model boundary value problem at the Bessel level, we had to construct
a \emph{Bessel Poisson family} $\hat{N}\left(\boldsymbol{B}\right)$
associated to an elliptic boundary condition. This Bessel Poisson
family $\hat{N}\left(\boldsymbol{B}\right)$ is a smooth section of
$\pi^{*}\Psi_{\po}^{-\infty,\mathcal{E}_{\lf}}\left(N^{+}\partial X;\boldsymbol{E}_{L}^{*}\right)\to T^{*}\partial X\backslash0$.
Given $p\in\partial X$ and $\eta\in T_{p}^{*}\partial X\backslash0$,
$\hat{N}_{\eta}\left(\boldsymbol{B}\right)$ maps a vector $\varphi\in\boldsymbol{E}_{L|p}$
to the unique $x^{\delta}L_{b}^{2}$ solution $u$ of $\hat{N}_{\eta}\left(L\right)u=0$
for which its trace $\hat{N}_{\eta}\left(\boldsymbol{A}_{L}\right)u$
equals a certain projection of $\varphi$ onto the range of $\hat{N}_{\eta}\left(\boldsymbol{A}_{L}\right)$,
where this projection is determined by the boundary condition. We
observed that this time $\hat{N}\left(\boldsymbol{B}\right)$ is \emph{fibrewise
twisted homogeneous }of degree $-\boldsymbol{\mathfrak{s}}_{L}$ ,
meaning that for every $\eta\in T^{*}\partial X\backslash0$ and every
$t\in\mathbb{R}^{+}$, we have
\[
\hat{N}_{t\eta}\left(\boldsymbol{B}\right)=\lambda_{t}^{*}\circ\hat{N}_{\eta}\left(\boldsymbol{B}\right)t^{-\boldsymbol{\mathfrak{s}}_{L}}.
\]

From this discussion, we see the need to generalize the classes $\hat{\Psi}_{0\tr}^{-\infty,\bullet}\left(X,\partial X\right)$
and $\hat{\Psi}_{0\po}^{-\infty,\bullet}\left(\partial X,X\right)$
to classes whose Bessel families are fibrewise \emph{twisted} homogeneous
sections of the relevant Fréchet bundles over $T^{*}\partial X\backslash0$.
In this subsection, we construct these generalizations, which we call
\emph{twisted symbolic $0$-trace }and \emph{$0$-Poisson operators}.

\subsubsection{\label{subsec:Definitions-twisted-trace-poisson}Definitions}

Let us establish some notations and some easy properties. Let $\boldsymbol{E}\to\partial X$
be a complex vector bundle, and let $\boldsymbol{\mathfrak{s}}:\boldsymbol{E}\to\boldsymbol{E}$
be a smooth bundle endomorphism with\emph{ }constant\emph{ }eigenvalues.
Enumerate the eigenvalues of $\boldsymbol{\mathfrak{s}}$ as $\boldsymbol{\mu}=\left(\mu_{1},...,\mu_{I}\right)$.
The fact that the eigenvalues are constant implies that the algebraic
multiplicity $M_{i}$ of $\mu_{i}$ is constant as well, for every
$i$. Recall that the \emph{generalized eigenbundle }of $\boldsymbol{\mathfrak{s}}$
with respect to $\mu_{i}$ is the kernel of $\left(\boldsymbol{\mathfrak{s}}-\mu_{i}\right)^{M_{i}}$.
\begin{lem}
The generalized eigenbundle $E_{i}$ of $\boldsymbol{\mathfrak{s}}$
with respect to $\mu_{i}$ is a smooth subbundle of $\boldsymbol{E}$
of rank $M_{i}$. The bundle $\boldsymbol{E}$ decomposes as the direct
sum $\boldsymbol{E}=\bigoplus_{i}E_{i}$, and $\boldsymbol{\mathfrak{s}}$
decomposes accordingly as $\boldsymbol{\mathfrak{s}}=\bigoplus_{i}\mathfrak{s}_{i}$
where $\mathfrak{s}_{i}:E_{i}\to E_{i}$ has a unique eigenvalue $\mu_{i}$.
\end{lem}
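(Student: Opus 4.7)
The proof will proceed by the Riesz spectral projection construction, which works smoothly because the eigenvalues are constant. Here is my plan.

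Since the eigenvalues $\mu_1,\ldots,\mu_I$ of $\boldsymbol{\mathfrak{s}}$ do not depend on the base point, I can choose once and for all $I$ disjoint small positively oriented loops $\gamma_1,\ldots,\gamma_I$ in $\mathbb{C}$ such that $\gamma_i$ encloses $\mu_i$ and no other $\mu_j$. Define smooth bundle endomorphisms
\[
P_i \;=\; \frac{1}{2\pi i}\oint_{\gamma_i}(z-\boldsymbol{\mathfrak{s}})^{-1}\,dz \;\in\; \End(\boldsymbol{E}).
\]
The integrand makes sense and depends smoothly on the base point $p\in\partial X$ because the spectrum of $\boldsymbol{\mathfrak{s}}_p$ is contained in $\{\mu_1,\ldots,\mu_I\}$ for every $p$, so $z-\boldsymbol{\mathfrak{s}}_p$ is invertible for $z$ on any $\gamma_i$, and its inverse varies smoothly with $p$ by Cramer's rule. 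Hence each $P_i$ is a smooth endomorphism of $\boldsymbol{E}$.

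The key step is then purely fibrewise linear algebra, which I would recall briefly rather than reprove: at each $p$, the operators $P_{i,p}$ are the classical Riesz projectors, so they satisfy $P_{i,p}P_{j,p}=\delta_{ij}P_{i,p}$, $\sum_i P_{i,p}=\mathrm{id}$, and the range of $P_{i,p}$ coincides with the generalized eigenspace $\ker(\boldsymbol{\mathfrak{s}}_p-\mu_i)^{M_i}$, which has dimension equal to the algebraic multiplicity $M_i$. These identities are pointwise polynomial in $P_i$ and $\boldsymbol{\mathfrak{s}}$, so they hold globally as identities of smooth endomorphisms of $\boldsymbol{E}$.

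From here the conclusion is immediate. Each $P_i$ is a smooth projector on $\boldsymbol{E}$ of constant rank $M_i$ (the rank is constant because the algebraic multiplicity of each eigenvalue is constant), hence its image $E_i=\mathrm{range}(P_i)$ is a smooth subbundle of rank $M_i$; the pointwise identification of this image with $\ker(\boldsymbol{\mathfrak{s}}_p-\mu_i)^{M_i}$ proves the first claim. The relations $\sum_i P_i=\mathrm{id}$ and $P_iP_j=\delta_{ij}P_i$ yield the direct sum decomposition $\boldsymbol{E}=\bigoplus_i E_i$, and since each $E_i$ is preserved by $\boldsymbol{\mathfrak{s}}$ (which commutes with every $P_i$, as is clear from the contour integral definition), $\boldsymbol{\mathfrak{s}}$ splits as $\bigoplus_i \mathfrak{s}_i$ with $\mathfrak{s}_i:=\boldsymbol{\mathfrak{s}}|_{E_i}$; by construction the spectrum of $\mathfrak{s}_{i,p}$ is $\{\mu_i\}$ at every $p$.

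The only mild subtlety, and the place one must be careful, is the constancy of the algebraic multiplicities: the statement that the $\mu_i$ are constant functions of $p$ does not a priori force the algebraic multiplicities to be constant, but in fact it does, because $\dim\ker(\boldsymbol{\mathfrak{s}}_p-\mu_i)^{M}=\mathrm{tr}(P_{i,p})$ for $M$ sufficiently large, and the trace of a smooth family of projectors is locally constant, hence constant on $\partial X$ (which we may assume connected, treating components separately otherwise). With this observation in place, the rest of the argument is routine, and I would not expect any genuine obstacle.
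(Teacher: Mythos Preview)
Your argument via Riesz spectral projectors is correct and complete. The paper takes a shorter, more elementary route: it simply observes that $(\boldsymbol{\mathfrak{s}}-\mu_i)^{M_i}$ restricts to an isomorphism on $\bigoplus_{j\neq i}E_j$ at each fibre (standard Jordan theory), so this endomorphism of $\boldsymbol{E}$ has constant rank, and therefore its kernel $E_i$ is a smooth subbundle. Your approach buys you explicit smooth projectors $P_i$ and a self-contained justification of the constancy of the algebraic multiplicities (which the paper asserts without proof just before the lemma); the paper's approach avoids the contour-integral machinery entirely and appeals only to the constant-rank criterion for smooth subbundles. Both are standard, and either would be acceptable here.
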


\begin{proof}
The fact that, for every $p\in\partial X$, the fiber $E_{i|p}$ has
dimension $M_{i}$ is a standard linear algebra fact. Now, the endomorphism
$\left(\boldsymbol{\mathfrak{s}}-\mu_{i}\right)^{M_{i}}$ acts fibrewise
isomorphically on $\bigoplus_{j\not=i}E_{j}$. It follows that $\left(\boldsymbol{\mathfrak{s}}-\mu_{i}\right)^{M_{i}}$,
as an endomorphism of $\boldsymbol{E}$, has constant rank. This guarantees
that $E_{i}$ is a smooth subbundle of $\boldsymbol{E}$.
\end{proof}
\begin{rem}
The example we need to keep in mind is the bundle $\boldsymbol{E}_{L}\to\partial X$
associated to a $0$-elliptic operator $L$ with constant indicial
roots and a surjective, not injective weight $\delta$. $\boldsymbol{E}_{L}$
comes equipped with an endomorphism $\boldsymbol{\mathfrak{s}}_{L}$
whose eigenvalues are the indicial roots of $L$. It is easy to check
that the decomposition $\boldsymbol{E}_{L}$ as a direct sum $\bigoplus_{\text{\ensuremath{\mu} critical}}E_{\mu,\tilde{M}_{\mu}}$,
and the corresponding decomposition $\boldsymbol{\mathfrak{s}}_{L}=\bigoplus_{\text{\ensuremath{\mu} critical}}\mathfrak{s}_{\mu,\tilde{M}_{\mu}}$,
correspond precisely to the generalized eigenbundle decomposition
of $\boldsymbol{E}_{L}$ with respect to $\boldsymbol{\mathfrak{s}}_{L}$.
\end{rem}

We now discuss the local theory. Fix $M\in\mathbb{N}$. We will now
define subclasses of $\hat{\Psi}_{0\po,\mathcal{S}}^{-\infty,\bullet}\left(\mathbb{R}^{n},\mathbb{C}^{M};\mathbb{R}_{1}^{n+1}\right)$
and $\hat{\Psi}_{0\tr,\mathcal{S}}^{-\infty,\bullet}\left(\mathbb{R}_{1}^{n+1};\mathbb{R}^{n},\mathbb{C}^{M}\right)$,
which we call the classes of \emph{twisted $0$-Poisson }and $0$\emph{-trace}
operators. In what follows, we distinguish the space $\mathbb{C}^{M}$,
whose elements are interpreted as column vectors, from its dual $\left(\mathbb{C}^{M}\right)^{*}$
whose elements are interpreted as row vectors.
\begin{defn}
(Twisted $0$-trace and $0$-Poisson symbols) Let $\mathfrak{s}:\overline{\mathbb{R}}^{n}\to\mathfrak{gl}\left(M,\mathbb{C}\right)$,
$\mathfrak{s}=\mathfrak{s}\left(y\right)$, be a smooth family of
matrices \emph{with a constant, single eigenvalue}. We define
\begin{align*}
S_{0\po,\mathcal{S}}^{-\infty,\mathcal{E},\mathfrak{s}}\left(\mathbb{R}^{n};\mathbb{R}_{1}^{n+1};\left(\mathbb{C}^{M}\right)^{*}\right) & =S_{0\po,\mathcal{S}}^{-\infty,\mathcal{E}}\left(\mathbb{R}^{n};\mathbb{R}_{1}^{n+1};\left(\mathbb{C}^{M}\right)^{*}\right)\left\langle \eta\right\rangle ^{\mathfrak{s}\left(y\right)}\\
S_{0\tr,\mathcal{S}}^{-\infty,\mathcal{E},\mathfrak{s}}\left(\mathbb{R}^{n};\mathbb{R}_{1}^{n+1};\mathbb{C}^{M}\right) & =\left\langle \eta\right\rangle ^{-\mathfrak{s}\left(y\right)}S_{0\tr,\mathcal{S}}^{-\infty,\mathcal{E}}\left(\mathbb{R}^{n};\mathbb{R}_{1}^{n+1};\mathbb{C}^{M}\right).
\end{align*}
We call the elements of $S_{0\po,\mathcal{S}}^{-\infty,\mathcal{E},\mathfrak{s}}\left(\mathbb{R}^{n};\mathbb{R}_{1}^{n+1};\left(\mathbb{C}^{M}\right)^{*}\right)$
(resp. $S_{0\tr,\mathcal{S}}^{-\infty,\mathcal{E},\mathfrak{s}}\left(\mathbb{R}^{n};\mathbb{R}_{1}^{n+1};\mathbb{C}^{M}\right)$)
\emph{twisted $0$-Poisson symbols }(resp. \emph{twisted $0$-trace
symbols}).
\end{defn}

\begin{rem}
Note the different sign convention for the exponent of the twisting
factor in the $0$-trace and $0$-Poisson cases. This is just an arbitrary
choice, made in order to make the composition theorems look more natural.
\end{rem}

\begin{rem}
If $\mu$ is the only eigenvalue of $\mathfrak{s}$, call $\mathfrak{s}'=\mathfrak{s}-\mu$.
Then $\mathfrak{s}'$ is a smooth family of \emph{nilpotent} matrices,
and
\begin{align*}
S_{0\po,\mathcal{S}}^{-\infty,\mathcal{E},\mathfrak{s}}\left(\mathbb{R}^{n};\mathbb{R}_{1}^{n+1};\left(\mathbb{C}^{M}\right)^{*}\right) & =S_{0\po,\mathcal{S}}^{-\infty,\left(\mathcal{E}_{\of},\mathcal{E}_{\ff}-\mu\right),\mathfrak{s}'}\left(\mathbb{R}^{n};\mathbb{R}_{1}^{n+1};\left(\mathbb{C}^{M}\right)^{*}\right)\\
S_{0\tr,\mathcal{S}}^{-\infty,\mathcal{E},\mathfrak{s}}\left(\mathbb{R}^{n};\mathbb{R}_{1}^{n+1};\mathbb{C}^{M}\right) & =S_{0\tr,\mathcal{S}}^{-\infty,\left(\mathcal{E}_{\of},\mathcal{E}_{\ff}+\mu\right),\mathfrak{s}'}\left(\mathbb{R}^{n};\mathbb{R}_{1}^{n+1};\mathbb{C}^{M}\right).
\end{align*}
For this reason, it is sufficient to establish the local theory when
$\mathfrak{s}$ is nilpotent.
\end{rem}

The next lemma shows that twisted $0$-trace and $0$-Poisson symbols
can be interpreted as (vector-valued) un-twisted $0$-trace and $0$-Poisson
symbols.
\begin{lem}
\label{lem:twisting-poisson-trace-are-polyhomogeneous-1}Let $\mathfrak{s}:\overline{\mathbb{R}}^{n}\to\mathfrak{gl}\left(M,\mathbb{C}\right)$
be a smooth family of matrices with a single constant eigenvalue $\mu\in\mathbb{C}$,
and let $\mathcal{E}=\left(\mathcal{E}_{\of},\mathcal{E}_{\ff}\right)$
be a pair of index sets. Then
\begin{align*}
S_{0\po,\mathcal{S}}^{-\infty,\mathcal{E},\mathfrak{s}}\left(\mathbb{R}^{n};\mathbb{R}_{1}^{n+1};\left(\mathbb{C}^{M}\right)^{*}\right) & \subseteq S_{0\po,\mathcal{S}}^{-\infty,\left(\mathcal{E}_{\of},\mathcal{F}\right)}\left(\mathbb{R}^{n};\mathbb{R}_{1}^{n+1};\left(\mathbb{C}^{M}\right)^{*}\right)\\
S_{0\tr,\mathcal{S}}^{-\infty,\mathcal{E},\mathfrak{s}}\left(\mathbb{R}^{n};\mathbb{R}_{1}^{n+1};\mathbb{C}^{M}\right) & \subseteq S_{0\tr,\mathcal{S}}^{-\infty,\left(\mathcal{E}_{\of},\mathcal{G}\right)}\left(\mathbb{R}^{n};\mathbb{R}_{1}^{n+1};\mathbb{C}^{M}\right),
\end{align*}
where $\mathcal{F}$ (resp. $\mathcal{G}$) is the index set generated
by the pairs $\left(\alpha-\mu,k+l\right)$ (resp. $\left(\alpha+\mu,k+l\right)$)
such that $\left(\alpha,k\right)\in\mathcal{E}_{\ff}$ and $0\leq l<M$.
\end{lem}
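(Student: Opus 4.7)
The plan is to reduce everything to a scalar factor plus a nilpotent factor, expand the nilpotent factor as a finite polynomial in $\log\langle\eta\rangle$, and then do book-keeping with the Pull-back Theorem on $\hat{P}_{0}^{2}$ (respectively $\hat{T}_{0}^{2}$). More precisely, I write $\mathfrak{s}(y)=\mu I+\mathfrak{s}'(y)$, where $\mathfrak{s}'(y)$ is smooth on $\overline{\mathbb{R}}^{n}$ with values in the nilpotent cone of $\mathfrak{gl}(M,\mathbb{C})$, and $\mathfrak{s}'(y)^{M}=0$. Since $\mathfrak{s}'(y)$ and $\mu I$ commute, I get
\[
\langle\eta\rangle^{\mathfrak{s}(y)}=\langle\eta\rangle^{\mu}\cdot\exp\!\bigl(\mathfrak{s}'(y)\log\langle\eta\rangle\bigr)=\langle\eta\rangle^{\mu}\sum_{l=0}^{M-1}\frac{1}{l!}\mathfrak{s}'(y)^{l}(\log\langle\eta\rangle)^{l},
\]
a \emph{finite} sum. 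The problem therefore reduces to understanding how multiplication by $\langle\eta\rangle^{\mu}$ and $(\log\langle\eta\rangle)^{l}$ acts on the index family at the front face of $\hat{P}_{0}^{2}$.

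The function $\langle\eta\rangle^{\mu}$ is smooth on $\hat{P}^{2}=\overline{\mathbb{R}}_{1}^{1}\times\overline{\mathbb{R}}^{n}$ away from $\iif_{\eta}$, where it is comparable to $|\eta|^{\mu}$; hence it is polyhomogeneous on $\hat{P}^{2}$ with index family $(0,-\mu,0)$ at $(\of,\iif_{\eta},\iif_{x})$. By the Pull-back Theorem, its lift to $\hat{P}_{0}^{2}$ is polyhomogeneous with index family $(0,-\mu,-\mu,0)$ at $(\of,\ff,\iif_{\eta},\iif_{x})$, the $-\mu$ at $\ff$ arising as the sum of the orders at the two faces whose intersection was blown up. An analogous computation for $\log\langle\eta\rangle=\tfrac{1}{2}\log(1+|\eta|^{2})$ shows that near $\iif_{\eta}$ it behaves like $-\log\rho_{\iif_{\eta}}$ modulo smooth terms, so it is polyhomogeneous on $\hat{P}^{2}$ with minimal index set $\{(0,0),(0,1)\}$ at $\iif_{\eta}$; lifting to $\hat{P}_{0}^{2}$ yields $\{(0,0),(0,1)\}$ at both $\ff$ and $\iif_{\eta}$ and trivial behavior elsewhere. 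Raising to the $l$-th power gives $\{(0,l')\,:\,0\leq l'\leq l\}$ at $\ff$ and $\iif_{\eta}$.

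Now I multiply. For $b(y;x,\eta)\in S_{0\po,\mathcal{S}}^{-\infty,\mathcal{E}}\bigl(\mathbb{R}^{n};\mathbb{R}_{1}^{n+1};(\mathbb{C}^{M})^{*}\bigr)=\mathcal{S}(\mathbb{R}^{n})\hat{\otimes}\mathcal{A}_{\phg}^{(\mathcal{E}_{\of},\mathcal{E}_{\ff},\infty,\infty)}(\hat{P}_{0}^{2};(\mathbb{C}^{M})^{*})$, the standard multiplication rule for polyhomogeneous functions gives, term by term,
\[
b\cdot\langle\eta\rangle^{\mu}\mathfrak{s}'(y)^{l}(\log\langle\eta\rangle)^{l}\in\mathcal{S}(\mathbb{R}^{n})\hat{\otimes}\mathcal{A}_{\phg}^{(\mathcal{E}_{\of},\mathcal{F}_{l},\infty,\infty)}\bigl(\hat{P}_{0}^{2};(\mathbb{C}^{M})^{*}\bigr),
\]
where $\mathcal{F}_{l}$ is generated by the pairs $(\alpha-\mu,k+l')$ with $(\alpha,k)\in\mathcal{E}_{\ff}$ and $0\leq l'\leq l$, and where the Schwartz decay at $\iif_{\eta},\iif_{x}$ absorbs the polynomial and logarithmic growth of the multiplier (smoothness of $\mathfrak{s}'(y)$ on $\overline{\mathbb{R}}^{n}$, together with boundedness of all its derivatives, preserves Schwartz decay in $y$). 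Taking the extended union over $0\leq l\leq M-1$ gives exactly $\mathcal{F}$, which yields the desired inclusion for $0$-Poisson symbols. The $0$-trace case is strictly parallel: one lifts from $\hat{T}^{2}$ to $\hat{T}_{0}^{2}$ and applies the same argument to $\langle\eta\rangle^{-\mathfrak{s}(y)}=\langle\eta\rangle^{-\mu}\sum_{l=0}^{M-1}\frac{(-1)^{l}}{l!}\mathfrak{s}'(y)^{l}(\log\langle\eta\rangle)^{l}$, with the opposite sign producing the index set $\mathcal{G}$ generated by $(\alpha+\mu,k+l)$.

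No step is genuinely hard; the entire argument is book-keeping of index sets under the Pull-back Theorem. The only place one has to be mildly careful is the polyhomogeneity of $\log\langle\eta\rangle$ at the front face $\ff$ created by the blow-up (which contributes $\{(0,0),(0,1)\}$ via the standard fact that the pull-back of a log-smooth function picks up a logarithmic factor under blow-up of a corner), and the fact that nilpotence of $\mathfrak{s}'$ of order $\leq M$ truncates the exponential series at $l=M-1$, which is precisely why the index range of $l$ in the statement is $0\leq l<M$.
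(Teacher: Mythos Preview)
Your proof is correct and follows essentially the same approach as the paper: split $\mathfrak{s}=\mu I+\mathfrak{s}'$ with $\mathfrak{s}'$ nilpotent, expand $\langle\eta\rangle^{\mathfrak{s}}$ as a finite sum in $(\log\langle\eta\rangle)^{l}$, compute the index sets of $\langle\eta\rangle^{\mu}$ and $(\log\langle\eta\rangle)^{l}$ on $\hat{P}_{0}^{2}$ via the Pull-back Theorem, and multiply. The paper's version is slightly terser (it computes the index sets of $\langle\eta\rangle$ first and then of $\langle\eta\rangle^{\mu}(\log\langle\eta\rangle)^{j}$ in one line), but the argument is the same.
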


\begin{proof}
We prove only the $0$-Poisson case, since the $0$-trace case is
essentially equal. Consider a twisted $0$-Poisson symbol of the form
\[
\tilde{b}\left(y;x,\eta\right)=b\left(y;x,\eta\right)\left\langle \eta\right\rangle ^{\mathfrak{s}\left(y\right)},
\]
with $b\in S_{0\po,\mathcal{S}}^{-\infty,\mathcal{E}}$, so that $\tilde{b}\in S_{0\po,\mathcal{S}}^{-\infty,\mathcal{E},\mathfrak{s}}$.
Since $\mathfrak{s}\left(y\right)$ has a unique constant eigenvalue
$\mu$, the difference $\mathfrak{s}'\left(y\right)=\mathfrak{s}\left(y\right)-\mu$
is a smooth family of nilpotent matrices, and therefore we can write
\[
\left\langle \eta\right\rangle ^{\mathfrak{s}\left(y\right)}=\left\langle \eta\right\rangle ^{\mu}\sum_{j<M}\frac{1}{j!}\left(\log\left\langle \eta\right\rangle \right)^{j}\left(\mathfrak{s}'\left(y\right)\right)^{j}.
\]
Now, observe that $\left\langle \eta\right\rangle $ has index sets
$0$ at the faces $\of,\iif_{x}$ of $\hat{P}^{2}$, while it has
index set $-1$ at $\iif_{\eta}$. Consequently, by the Pull-back
Theorem, its lift to $\hat{P}_{0}^{2}$ has index sets $\left(0,-1,-1,0\right)$.
It follows that $\left\langle \eta\right\rangle ^{\mu}\left(\log\left\langle \eta\right\rangle \right)^{j}$
has index set generated by $\left(-\mu,j\right)$ at $\ff$ and $\iif_{\eta}$,
and $0$ at $\of$ and $\iif_{x}$. Since the entries of $\mathfrak{s}'$
are smooth in $\overline{\mathbb{R}}^{n}$, it follows that the entries
of $\left\langle \eta\right\rangle ^{\mathfrak{s}\left(y\right)}$
are in $C^{\infty}\left(\overline{\mathbb{R}}^{n}\right)\hat{\otimes}\mathcal{A}_{\phg}^{\left(0,\tilde{\mathcal{F}},\tilde{\mathcal{F}},0\right)}\left(\hat{P}_{0}^{2}\right)$
where $\tilde{\mathcal{F}}$ is the index set generated by the pairs
$\left(-\mu,j\right)$ for $j<M$. Since the entries of $b\left(y;x,\eta\right)$
are in $S_{0\po,\mathcal{S}}^{-\infty,\mathcal{E}}\left(\mathbb{R}^{n};\mathbb{R}_{1}^{n+1}\right)=\mathcal{S}\left(\mathbb{R}^{n}\right)\hat{\otimes}\mathcal{A}_{\phg}^{\left(\mathcal{E}_{\of},\mathcal{E}_{\ff},\infty,\infty\right)}\left(\hat{P}_{0}^{2}\right)$,
it follows that the entries of $\tilde{b}\left(y;x,\eta\right)$ are
in $\mathcal{S}\left(\mathbb{R}^{n}\right)\hat{\otimes}\mathcal{A}_{\phg}^{\left(\mathcal{E}_{\of},\mathcal{E}_{\ff}+\tilde{\mathcal{F}},\infty,\infty\right)}\left(\hat{P}_{0}^{2}\right)$.
The index set $\mathcal{F}$ defined in the statement of the lemma
is exactly $\mathcal{E}_{\ff}+\tilde{\mathcal{F}}$, so the proof
is concluded.
\end{proof}
We now define the local version of the twisted $0$-trace and $0$-Poisson
classes:
\begin{defn}
(Local twisted symbolic $0$-trace operators) Let $\mathfrak{s}:\overline{\mathbb{R}}^{n}\to\mathfrak{gl}\left(M,\mathbb{C}\right)$
be a smooth family of matrices with a single constant eigenvalue,
and let $\mathcal{E}=\left(\mathcal{E}_{\of},\mathcal{E}_{\ff}\right)$
be a pair of index sets. We denote by $\hat{\Psi}_{0\tr,\mathcal{S}}^{-\infty,\mathcal{E},\mathfrak{s}}\left(\mathbb{R}_{1}^{n+1};\mathbb{R}^{n},\mathbb{C}^{M}\right)$
the class of operators $\dot{C}^{\infty}\left(\overline{\mathbb{R}}_{1}^{1}\times\overline{\mathbb{R}}^{n}\right)\to\mathcal{S}'\left(\mathbb{R}^{n};\mathbb{C}^{M}\right)$
of the form $\Op_{L}^{\tr}\left(a\right)$, where $a\in S_{0\tr,\mathcal{S}}^{-\infty,\mathcal{E},\mathfrak{s}}\left(\mathbb{R}_{1}^{n+1};\mathbb{R}^{n},\mathbb{C}^{M}\right)$.
\end{defn}

\begin{defn}
(Local twisted symbolic $0$-Poisson operators) Let $\mathfrak{s}:\overline{\mathbb{R}}^{n}\to\mathfrak{gl}\left(M,\mathbb{C}\right)$
be a smooth family of matrices with a single constant eigenvalue,
and let $\mathcal{E}=\left(\mathcal{E}_{\of},\mathcal{E}_{\ff}\right)$
be a pair of index sets. We denote by $\hat{\Psi}_{0\po,\mathcal{S}}^{-\infty,\mathcal{E},\mathfrak{s}}\left(\mathbb{R}^{n},\mathbb{C}^{M};\mathbb{R}_{1}^{n+1}\right)$
the class of operators $\mathcal{S}\left(\mathbb{R}^{n};\mathbb{C}^{M}\right)\to C^{-\infty}\left(\overline{\mathbb{R}}_{1}^{1}\times\overline{\mathbb{R}}^{n}\right)$
of the form $\Op_{L}^{\po}\left(b\right)$, where $b\in S_{0\po,\mathcal{S}}^{-\infty,\mathcal{E},\mathfrak{s}}\left(\mathbb{R}^{n};\mathbb{R}_{1}^{n+1};\left(\mathbb{C}^{M}\right)^{*}\right)$.
\end{defn}

Lemma \ref{lem:twisting-poisson-trace-are-polyhomogeneous-1} guarantees
that if $A\in\hat{\Psi}_{0\tr,\mathcal{S}}^{-\infty,\mathcal{E},\mathfrak{s}}\left(\mathbb{R}_{1}^{n+1};\mathbb{R}^{n},\mathbb{C}^{M}\right)$
and $B\in\hat{\Psi}_{0\po,\mathcal{S}}^{-\infty,\mathcal{E},\mathfrak{s}}\left(\mathbb{R}^{n},\mathbb{C}^{M};\mathbb{R}_{1}^{n+1}\right)$,
then the Schwartz kernels of $A$ and $B$ are polyhomogeneous with
index set $\mathcal{E}_{\of}$ away from a neighborhood of $\partial\Delta$.
We can thus pass immediately to the definition on manifolds. Fix a
smooth vector bundle $\boldsymbol{E}\to\partial X$, equipped with
a smooth endomorphism $\boldsymbol{\mathfrak{s}}:\boldsymbol{E}\to\boldsymbol{E}$
with constant eigenvalues. Call $\boldsymbol{\mu}=\left(\mu_{1},...,\mu_{I}\right)$
the vector of eigenvalues of $\boldsymbol{\mathfrak{s}}$, and decompose
$\boldsymbol{E}=\bigoplus_{i}E_{i}$ and $\boldsymbol{\mathfrak{s}}=\bigoplus_{i}\mathfrak{s}_{i}$
into generalized eigenbundles. Call $M_{i}$ the rank of $E_{i}$.
\begin{defn}
(Twisted symbolic $0$-trace operators) Let $\mathcal{E}=\left(\mathcal{E}_{\of},\mathcal{E}_{\ff}\right)$
be a pair of index sets. We define $\hat{\Psi}_{0\tr}^{-\infty,\left(\mathcal{E}_{\of},\left[\mathcal{E}_{\ff}\right]\right),\boldsymbol{\mathfrak{s}}}\left(X;\partial X,\boldsymbol{E}\right)$
as the space of operators $\boldsymbol{A}:\dot{C}^{\infty}\left(X\right)\to C^{-\infty}\left(\partial X;\boldsymbol{E}\right)$
of the form 
\[
\boldsymbol{A}=\left(\begin{matrix}A^{1}\\
\vdots\\
A^{I}
\end{matrix}\right),
\]
where $A^{i}:\dot{C}^{\infty}\left(X\right)\to C^{-\infty}\left(\partial X;E_{i}\right)$
satisfies the following properties:
\begin{enumerate}
\item in the complement of any neighborhood of $\partial\Delta$, $K_{A_{i}}$
coincides with a residual trace operator in $\Psi_{0\tr}^{-\infty,\mathcal{E}_{\of}}\left(X;\partial X,E_{i}\right)$;
\item for every $p\in\partial X$, given coordinates $\left(x,y\right)$
centered at $p$ such that $Vx\equiv1$ near $p$, and a local frame
for $E_{i}$ defined near $p$, denote by $\mathfrak{s}_{i}\left(y\right)$
the local expression of $\mathfrak{s}_{i}$ in the given coordinates
and frame; then there exists an index set $\tilde{\mathcal{E}}_{\ff}$
with $\left[\tilde{\mathcal{E}}_{\ff}\right]=\left[\mathcal{E}_{\ff}\right]$,
and a twisted $0$-trace symbol $\left\langle \eta\right\rangle ^{-\mathfrak{s}\left(y\right)}a\left(y;\tilde{x},\eta\right)\in S_{0\tr,\mathcal{S}}^{-\infty,\left(\mathcal{E}_{\of},\tilde{\mathcal{E}}_{\ff}\right),\mathfrak{s}}\left(\mathbb{R}^{n};\mathbb{R}_{1}^{n+1};\mathbb{C}^{M_{i}}\right)$
such that in the chosen coordinates and frame we have
\[
K_{A^{i}}=\frac{1}{\left(2\pi\right)^{n}}\int e^{i\left(y-\tilde{y}\right)}\left\langle \eta\right\rangle ^{-\mathfrak{s}_{i}\left(y\right)}a\left(y;\tilde{x},\eta\right)d\eta d\tilde{x}d\tilde{y}
\]
in a neighborhood of the origin.
\end{enumerate}
\end{defn}

\begin{defn}
(Twisted symbolic $0$-Poisson operators) Let $\mathcal{E}=\left(\mathcal{E}_{\of},\mathcal{E}_{\ff}\right)$
be a pair of index sets. We define $\hat{\Psi}_{0\po}^{-\infty,\left(\mathcal{E}_{\of},\left[\mathcal{E}_{\ff}\right]\right),\boldsymbol{\mathfrak{s}}}\left(\partial X,\boldsymbol{E};X\right)$
as the space of operators $\boldsymbol{B}:C^{\infty}\left(\partial X;\boldsymbol{E}\right)\to C^{-\infty}\left(X\right)$
of the form 
\[
\boldsymbol{B}=\left(\begin{matrix}B_{1} & \cdots & B_{I}\end{matrix}\right),
\]
where $B_{i}:C^{\infty}\left(\partial X;E_{i}\right)\to C^{-\infty}\left(X\right)$
satisfies the following properties:
\begin{enumerate}
\item in the complement of any neighborhood of $\partial\Delta$, $K_{B_{i}}$
coincides with a residual Poisson operator in $\Psi_{0\po}^{-\infty,\mathcal{E}_{\of}}\left(\partial X,E_{i};X\right)$;
\item for every $p\in\partial X$, given coordinates $\left(x,y\right)$
centered at $p$ such that $Vx\equiv1$ near $p$, and a local frame
for $E_{i}$ defined near $p$, denote by $\mathfrak{s}_{i}\left(y\right)$
the local expression of $\mathfrak{s}_{i}$ in the given coordinates
and frame; then there exists an index set $\tilde{\mathcal{E}}_{\ff}$
with $\left[\tilde{\mathcal{E}}_{\ff}\right]=\left[\mathcal{E}_{\ff}\right]$,
and a twisted $0$-Poisson symbol $b\left(y;x,\eta\right)\left\langle \eta\right\rangle ^{\mathfrak{s}\left(y\right)}\in S_{0\po,\mathcal{S}}^{-\infty,\left(\mathcal{E}_{\of},\tilde{\mathcal{E}}_{\ff}\right),\mathfrak{s}}\left(\mathbb{R}^{n},\left(\mathbb{C}^{M_{i}}\right)^{*};\mathbb{R}_{1}^{n+1}\right)$
such that in the chosen coordinates and frame we have
\[
K_{B_{i}}=\frac{1}{\left(2\pi\right)^{n}}\int e^{i\left(y-\tilde{y}\right)}b\left(y;x,\eta\right)\left\langle \eta\right\rangle ^{\mathfrak{s}\left(y\right)}d\eta d\tilde{y}
\]
in a neighborhood of the origin.
\end{enumerate}
\end{defn}

\begin{rem}
\label{rem:why-only-leading-sets-1}We remark that in the definitions
of $\hat{\Psi}_{0\tr}^{-\infty,\left(\mathcal{E}_{\of},\left[\mathcal{E}_{\ff}\right]\right),\boldsymbol{\mathfrak{s}}}\left(X;\partial X,\boldsymbol{E}\right)$
and $\hat{\Psi}_{0\po}^{-\infty,\left(\mathcal{E}_{\of},\left[\mathcal{E}_{\ff}\right]\right),\boldsymbol{\mathfrak{s}}}\left(\partial X,\boldsymbol{E};X\right)$
we are not keeping track of the precise index set at the front face,
but only of its leading set. The reason for this choice is that, due
to the presence of the twisting factor $\left\langle \eta\right\rangle ^{\mathfrak{s}\left(y\right)}$
in the symbol, the full index set $\mathcal{E}_{\ff}$ is not invariant
under changes of coordinates; however, the leading set $\left[\mathcal{E}_{\ff}\right]$
is, and this is enough for our applications. It is not clear whether
this phenomenon is just an artifact of the method used to prove coordinate
invariance. We will discuss this more in detail in §\ref{subsec:Technical-details-on-proofs}.

By Lemma \ref{lem:twisting-poisson-trace-are-polyhomogeneous-1},
we immediately obtain the following
\end{rem}

\begin{cor}
\label{cor:twisted-0-trace-0-poisson-are-untwisted}Let $\boldsymbol{E}=\bigoplus_{i}E_{i}$,
$\boldsymbol{\mathfrak{s}}=\bigoplus_{i}\mathfrak{s}_{i}$, $M_{1},...,M_{I}$
and $\boldsymbol{\mu}=\left(\mu_{1},...,\mu_{I}\right)$ be as above.
\begin{enumerate}
\item Let $\boldsymbol{A}=\left(A^{1},...,A^{I}\right)^{T}\in\hat{\Psi}_{0\tr}^{-\infty,\left(\mathcal{E}_{\of},\left[\mathcal{E}_{\ff}\right]\right),\boldsymbol{\mathfrak{s}}}\left(X;\partial X,\boldsymbol{E}\right)$.
Then $A^{i}\in\hat{\Psi}_{0\tr}^{-\infty,\left(\mathcal{E}_{\of},\tilde{\mathcal{E}}_{\ff}\right)}\left(X;\partial X,E_{i}\right)$,
where $\tilde{\mathcal{E}}_{\ff}$ is an index set such that $\left[\tilde{\mathcal{E}}_{\ff}\right]$
consists of the pairs $\left(\alpha+\mu_{i},k+l\right)$ with $\left(\alpha,k\right)\in\left[\mathcal{E}_{\ff}\right]$
and $0\leq l<M_{i}$.
\item Let $\boldsymbol{B}=\left(B_{1},...,B_{I}\right)\in\hat{\Psi}_{0\po}^{-\infty,\left(\mathcal{E}_{\of},\left[\mathcal{E}_{\ff}\right]\right),\boldsymbol{\mathfrak{s}}}\left(\partial X,\boldsymbol{E};X\right)$.
Then $B_{i}\in\hat{\Psi}_{0\po}^{-\infty,\left(\mathcal{E}_{\of},\tilde{\mathcal{E}}_{\ff}\right)}\left(\partial X,E_{i},X\right)$,
where $\tilde{\mathcal{E}}_{\ff}$ is an index set such that $\left[\tilde{\mathcal{E}}_{\ff}\right]$
consists of the pairs $\left(\alpha-\mu_{i},k+l\right)$ with $\left(\alpha,k\right)\in\left[\mathcal{E}_{\ff}\right]$
and $0\leq l<M_{i}$.
\end{enumerate}
\end{cor}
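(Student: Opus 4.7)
The plan is to reduce the corollary to Lemma \ref{lem:twisting-poisson-trace-are-polyhomogeneous-1} component by component, exploiting the fact that the statement respects the generalized eigenbundle decomposition $\boldsymbol{E} = \bigoplus_i E_i$, $\boldsymbol{\mathfrak{s}} = \bigoplus_i \mathfrak{s}_i$. By construction, each $\mathfrak{s}_i$ has the single constant eigenvalue $\mu_i$, and this is precisely the hypothesis appearing in the local lemma. So I would fix an index $i$ and show that $A^i$ (resp.\ $B_i$) satisfies both conditions in the definition of $\hat{\Psi}_{0\tr}^{-\infty,(\mathcal{E}_{\of},\tilde{\mathcal{E}}_{\ff})}(X;\partial X,E_i)$ (resp.\ $\hat{\Psi}_{0\po}^{-\infty,(\mathcal{E}_{\of},\tilde{\mathcal{E}}_{\ff})}(\partial X,E_i;X)$) for an appropriate $\tilde{\mathcal{E}}_{\ff}$.

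The first condition, i.e.\ that $K_{A^i}$ (resp.\ $K_{B_i}$) coincides with a residual trace (resp.\ Poisson) operator of index set $\mathcal{E}_{\of}$ away from any neighborhood of $\partial\Delta$, is inherited for free: it is already part of the definition of the twisted classes, and the twisting factor $\langle\eta\rangle^{\pm\mathfrak{s}_i(y)}$ does not affect the Schwartz kernel away from the diagonal.

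For the second condition, I would pick a point $p \in \partial X$, choose adapted coordinates $(x,y)$ with $Vx \equiv 1$, and a local frame for $E_i$. The local twisted symbol representing $A^i$ (resp.\ $B_i$) has the form $\langle\eta\rangle^{-\mathfrak{s}_i(y)} a(y;\tilde{x},\eta)$ (resp.\ $b(y;x,\eta)\langle\eta\rangle^{\mathfrak{s}_i(y)}$) for some untwisted symbol of index $(\mathcal{E}_{\of}, \tilde{\mathcal{E}}_{\ff})$ with $[\tilde{\mathcal{E}}_{\ff}] = [\mathcal{E}_{\ff}]$. Since $\mathfrak{s}_i(y)$ is a smooth family of matrices with the single constant eigenvalue $\mu_i$, Lemma \ref{lem:twisting-poisson-trace-are-polyhomogeneous-1} applies directly with $\mu = \mu_i$ and $M = M_i$, giving the inclusion of the twisted symbol class into an \emph{untwisted} symbol class whose front-face index set is obtained from $\tilde{\mathcal{E}}_{\ff}$ by adjoining pairs $(\alpha + \mu_i, k+l)$ (resp.\ $(\alpha - \mu_i, k+l)$) for $(\alpha,k) \in \tilde{\mathcal{E}}_{\ff}$ and $0 \le l < M_i$. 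Passing to leading sets and using $[\tilde{\mathcal{E}}_{\ff}] = [\mathcal{E}_{\ff}]$ yields exactly the description claimed in the corollary. This exhibits $A^i$ (resp.\ $B_i$) as an untwisted symbolic $0$-trace (resp.\ $0$-Poisson) operator with the stated index sets.

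I do not expect any substantial obstacle: the corollary is essentially the invariant, global packaging of the local symbol inclusion from Lemma \ref{lem:twisting-poisson-trace-are-polyhomogeneous-1}. The one point worth a sentence of care is that, although the full index set $\tilde{\mathcal{E}}_{\ff}$ is not coordinate-invariant (as noted in Remark \ref{rem:why-only-leading-sets-1}), its leading set is, and all index-set arithmetic in the statement is phrased at the level of leading sets; so the conclusion is coordinate-independent and matches the intrinsic definition of $\hat{\Psi}_{0\tr}^{-\infty,(\mathcal{E}_{\of},[\tilde{\mathcal{E}}_{\ff}])}$ and $\hat{\Psi}_{0\po}^{-\infty,(\mathcal{E}_{\of},[\tilde{\mathcal{E}}_{\ff}])}$.
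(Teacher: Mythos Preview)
Your proposal is correct and matches the paper's approach exactly: the paper simply states that the corollary follows immediately from Lemma \ref{lem:twisting-poisson-trace-are-polyhomogeneous-1}, and your write-up spells out precisely how that reduction goes, component by component along the generalized eigenbundle decomposition. Your attention to the leading-set bookkeeping and the remark on coordinate invariance is apt and consistent with Remark \ref{rem:why-only-leading-sets-1}.
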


This lemma ensures that twisted symbolic $0$-Poisson and $0$-trace
operators can be seen as vectors of un-twisted symbolic $0$-Poisson
and $0$-trace operators, at the expense of losing some information
at the front face. This information is however crucial in order to
correctly compute the Bessel family for these operators.

\subsubsection{\label{subsubsec:The-Bessel-family-of-twisted-poisson}The Bessel
family of a twisted symbolic $0$-Poisson operator}

Let $\boldsymbol{B}\in\hat{\Psi}_{0\po}^{-\infty,\left(\mathcal{E}_{\of},\left[\mathcal{E}_{\ff}\right]\right),\boldsymbol{\mathfrak{s}}}\left(\partial X,\boldsymbol{E};X\right)$
as above. Assume that $\left[\mathcal{E}_{\ff}\right]=m$ for some
$m\in\mathbb{C}$. We will now construct the Bessel family of $\boldsymbol{B}$.
As in the un-twisted case, the Bessel family $\hat{N}\left(\boldsymbol{B}\right)$
is a smooth section of the Fréchet bundle $\Psi_{0\po}^{-\infty,\mathcal{E}_{\of}}\left(N^{+}\partial X;\boldsymbol{E}^{*}\right)\to T^{*}\partial X$.
However, unlike the un-twisted case, $\hat{N}\left(\boldsymbol{B}\right)$
is \emph{not }fibrewise homogeneous: rather, it is fibrewise \emph{twisted}
homogeneous of degree $-m+\boldsymbol{\mathfrak{s}}$ in the following
sense: for every $\eta\in T^{*}\partial X\backslash0$ and for every
$t\in\mathbb{R}^{+}$, we have
\[
\hat{N}_{t\eta}\left(\boldsymbol{B}\right)=\lambda_{t}^{*}\circ\hat{N}_{t\eta}\left(\boldsymbol{B}\right)t^{-m+\boldsymbol{\mathfrak{s}}}.
\]
The construction proceeds almost exactly as in §\ref{subsubsec:Principal-symbols-of-psidos}.
Choose a point $p\in\partial X$, and let $\left(x,y\right)$ be coordinates
for $X$ centered at $p$ and compatible with $V$, so that $Vx\equiv1$.
Choose also frames of $E_{i}$ near $p$, for every $i$. Then we
can find an index set $\tilde{\mathcal{E}}_{\ff}$ with $\left[\tilde{\mathcal{E}}_{\ff}\right]=\left[\mathcal{E}_{\ff}\right]$,
and a $0$-Poisson symbol $b_{i}\left(y;x,\eta\right)\in S_{0\po,\mathcal{S}}^{-\infty,\left(\mathcal{E}_{\of},\tilde{\mathcal{E}}_{\ff}\right)}\left(\mathbb{R}^{n};\mathbb{R}_{1}^{n+1},\left(\mathbb{C}^{M_{i}}\right)^{*}\right)$
such that, in the chosen coordinates and frames, we have
\[
B_{i}\equiv\frac{1}{\left(2\pi\right)^{n}}\int e^{i\left(y-\tilde{y}\right)\eta}b_{i}\left(y;x,\eta\right)\left\langle \eta\right\rangle ^{\mathfrak{s}_{i}\left(y\right)}d\eta d\tilde{y}
\]
near the origin. Given $\eta\in T_{p}^{*}\partial X\backslash0\equiv\mathbb{R}^{n}\backslash0$,
we define
\[
\hat{N}_{\eta}\left(\boldsymbol{B}\right)=\lim_{t\to0^{+}}b_{i}\left(0;tx,t^{-1}\eta\right)\left\langle t^{-1}\eta\right\rangle ^{\mathfrak{s}_{i}\left(0\right)}t^{-m+\mathfrak{s}_{i}\left(0\right)}.
\]
To check that this limit is indeed well-defined, similarly to §\ref{subsubsec:Principal-symbols-of-psidos}
write in the locus $\left|\eta\right|\not=0$
\begin{align*}
b_{i}\left(0;x,\eta\right) & =b_{i,0}\left(x\left|\eta\right|,\frac{\eta}{\left|\eta\right|},\left|\eta\right|^{-1}\right)\left|\eta\right|^{-m}\\
 & +r_{i}\left(x\left|\eta\right|,\frac{\eta}{\left|\eta\right|},\left|\eta\right|^{-1}\right)\left|\eta\right|^{-m-1},
\end{align*}
where $b_{i,0}\left(\tau,\hat{\eta},\rho\right)$ and $r_{i}\left(\tau,\hat{\eta},\rho\right)$
are in $\mathcal{A}_{\phg}^{\left(\mathcal{E}_{\of},\infty,\tilde{\mathcal{E}}_{\ff}-m\right)}\left(\overline{\mathbb{R}}_{1}^{1}\times S^{n-1}\times\mathbb{R}_{1}^{1};\left(\mathbb{C}^{M_{i}}\right)^{*}\right)$.
As in the un-twisted case, from $\left[\tilde{\mathcal{E}}_{\ff}\right]=m$
we have
\begin{align*}
\lim_{t\to0^{+}}b_{i,0}\left(x\left|\eta\right|,\frac{\eta}{\left|\eta\right|},t\left|\eta\right|^{-1}\right)\left|\eta\right|^{-m} & =\tilde{b}_{i,0}\left(x\left|\eta\right|,\frac{\eta}{\left|\eta\right|}\right)\left|\eta\right|^{-m}\\
\lim_{t\to0^{+}}r_{i}\left(x\left|\eta\right|,\frac{\eta}{\left|\eta\right|},t\left|\eta\right|^{-1}\right)\left|\eta\right|^{-m-1}t & =0
\end{align*}
where $\tilde{b}_{i,0}\left(\tau,\hat{\eta}\right)\in\mathcal{A}_{\phg}^{\left(\mathcal{E}_{\of},\infty\right)}\left(\overline{\mathbb{R}}_{1}^{1}\times S^{n-1};\left(\mathbb{C}^{M_{i}}\right)^{*}\right)$,
Finally, we have
\begin{align*}
\lim_{t\to0^{+}}\left\langle t^{-1}\eta\right\rangle ^{\mathfrak{s}_{i}\left(0\right)}t^{\mathfrak{s}_{i}\left(0\right)} & =\lim_{t\to0^{+}}\left(\left\langle t^{-1}\eta\right\rangle t\right)^{\mathfrak{s}_{i}\left(0\right)}\\
 & =\left|\eta\right|^{\mathfrak{s}_{i}\left(0\right)}.
\end{align*}
Calling 
\[
\tilde{\boldsymbol{b}}_{0}=\left(\begin{matrix}\tilde{b}_{1,0} & \cdots & \tilde{b}_{1,0}\end{matrix}\right),
\]
we then have
\[
\hat{N}_{\eta}\left(\boldsymbol{B}\right)=\tilde{\boldsymbol{b}}_{0}\left(x\left|\eta\right|,\frac{\eta}{\left|\eta\right|}\right)\left|\eta\right|^{-m+\boldsymbol{\mathfrak{s}}\left(0\right)}.
\]
From this formula, it follows that $\hat{N}\left(\boldsymbol{B}\right)$
is twisted homogeneous in the sense above.

\subsubsection{\label{subsubsec:The-Bessel-family-of-twisted-0-trace}The Bessel
family of a twisted symbolic $0$-trace operator}

The $0$-trace case is essentially equal to the $0$-Poisson case.
Let $\boldsymbol{A}\in\hat{\Psi}_{0\tr}^{-\infty,\left(\mathcal{E}_{\of},\left[\mathcal{E}_{\ff}\right]\right),\boldsymbol{\mathfrak{s}}}\left(X;\partial X,\boldsymbol{E}\right)$
as above. Assume that $\left[\mathcal{E}_{\ff}\right]=m$ for some
$m\in\mathbb{C}$. The Bessel family of $\boldsymbol{A}$ is a smooth
section of the Fréchet bundle $\Psi_{0\tr}^{-\infty,\mathcal{E}_{\of}}\left(N^{+}\partial X;\boldsymbol{E}\right)\to T^{*}\partial X$,
fibrewise \emph{twisted }homogeneous of degree $-m-\boldsymbol{\mathfrak{s}}$
in the following sense: for every $\eta\in T^{*}\partial X\backslash0$
and for every $t\in\mathbb{R}^{+}$, we have
\[
\hat{N}_{t\eta}\left(\boldsymbol{A}\right)=t^{-m-\boldsymbol{\mathfrak{s}}}\circ\hat{N}_{t\eta}\left(\boldsymbol{A}\right)\circ\lambda_{t^{-1}}^{*}.
\]

Choose a point $p\in\partial X$, and let $\left(x,y\right)$ be coordinates
for $X$ centered at $p$ and compatible with $V$, so that $Vx\equiv1$.
Choose also frames of $E_{i}$ near $p$, for every $i$. Then we
can find an index set $\tilde{\mathcal{E}}_{\ff}$ with $\left[\tilde{\mathcal{E}}_{\ff}\right]=\left[\mathcal{E}_{\ff}\right]$,
and a $0$-trace symbol $a^{i}\left(y;\tilde{x},\eta\right)\in S_{0\tr,\mathcal{S}}^{-\infty,\left(\mathcal{E}_{\of},\tilde{\mathcal{E}}_{\ff}\right)}\left(\mathbb{R}^{n};\mathbb{R}_{1}^{n+1},\mathbb{C}^{M_{i}}\right)$
such that, in the chosen coordinates and frames, we have
\[
A^{i}\equiv\frac{1}{\left(2\pi\right)^{n}}\int e^{i\left(y-\tilde{y}\right)\eta}\left\langle \eta\right\rangle ^{-\mathfrak{s}_{i}\left(y\right)}a^{i}\left(y;\tilde{x},\eta\right)d\eta d\tilde{x}d\tilde{y}
\]
near the origin. Given $\eta\in T_{p}^{*}\partial X\backslash0\equiv\mathbb{R}^{n}\backslash0$,
we define
\[
\hat{N}_{\eta}\left(\boldsymbol{A}\right)=\lim_{t\to0^{+}}t^{-m-\mathfrak{s}_{i}\left(0\right)}\left\langle t^{-1}\eta\right\rangle ^{-\mathfrak{s}_{i}\left(y\right)}a^{i}\left(0;t\tilde{x},t^{-1}\eta\right)td\tilde{x}.
\]
To check that this limit is well-defined, write in the locus $\left|\eta\right|\not=0$
\begin{align*}
a^{i}\left(0;\tilde{x},\eta\right) & =\left|\eta\right|^{-m+1}\tilde{a}_{0}^{i}\left(\tilde{x}\left|\eta\right|,\frac{\eta}{\left|\eta\right|},\left|\eta\right|^{-1}\right)\left|\eta\right|^{-m+1}\\
 & +\left|\eta\right|^{-m}r^{i}\left(\tilde{x}\left|\eta\right|,\frac{\eta}{\left|\eta\right|},\left|\eta\right|^{-1}\right),
\end{align*}
where $a_{0}^{i}\left(\tau,\hat{\eta},\rho\right)$ and $r_{i}\left(\tau,\hat{\eta},\rho\right)$
are in $\mathcal{A}_{\phg}^{\left(\mathcal{E}_{\of},\infty,\tilde{\mathcal{E}}_{\ff}-m\right)}\left(\overline{\mathbb{R}}_{1}^{1}\times S^{n-1}\times\mathbb{R}_{1}^{1};\mathbb{C}^{M_{i}}\right)$.
Taking the limit as $t\to0^{+}$, we get
\[
\hat{N}_{\eta}\left(A^{i}\right)=\left|\eta\right|^{-m-\mathfrak{s}_{i}\left(y\right)}\tilde{a}_{0}^{i}\left(\tilde{x}\left|\eta\right|,\frac{\eta}{\left|\eta\right|},t\left|\eta\right|^{-1}\right)\left|\eta\right|d\tilde{x}
\]
where $\tilde{a}_{0}^{i}\left(\tau,\hat{\eta}\right)\in\mathcal{A}_{\phg}^{\left(\mathcal{E}_{\of},\infty\right)}\left(\overline{\mathbb{R}}_{1}^{1}\times S^{n-1};\mathbb{C}^{M_{i}}\right)$.
Calling 
\[
\tilde{\boldsymbol{a}}_{0}=\left(\begin{matrix}\tilde{a}_{0}^{1}\\
\vdots\\
\tilde{a}_{0}^{I}
\end{matrix}\right),
\]
we have
\[
\hat{N}_{\eta}\left(\boldsymbol{A}\right)=\left|\eta\right|^{-m-\boldsymbol{\mathfrak{s}}\left(0\right)}\tilde{\boldsymbol{a}}_{0}\left(\tilde{x}\left|\eta\right|,\frac{\eta}{\left|\eta\right|}\right)\left|\eta\right|d\tilde{x}.
\]
From this formula, it follows that $\hat{N}\left(\boldsymbol{A}\right)$
is twisted homogeneous in the sense above.

\subsection{\label{subsec:The-twisted-boundary-calculus}The twisted boundary
calculus}

To conclude the section, we will introduce the calculus of pseudodifferential
operators on the boundary which we will use to formulate boundary
value problems for $0$-elliptic operators. This calculus fits perfectly
the classes $\hat{\Psi}_{0\tr}^{-\infty,\bullet,\boldsymbol{\mathfrak{s}}}\left(X;\partial X,\boldsymbol{E}\right)$
and $\hat{\Psi}_{0\po}^{-\infty,\bullet,\boldsymbol{\mathfrak{s}}}\left(\partial X,\boldsymbol{E};X\right)$
described above. We remark that this calculus is not really new. Indeed,
it is a particularly simple subcalculus of the calculus of ``elliptic
systems with variable orders'' introduced by Krainer and Mendoza
in \cite{KrainerMendozaVariableOrders}. In fact, this calculus was
developed exactly to formulate elliptic boundary conditions for \emph{edge
}elliptic operators, with possibly variable indicial roots. Our simplifications
stem from the fact that we restricts ourselves to the constant indicial
root case, and this ultimately allows us to work with polyhomogeneous
symbols.

\subsubsection{Definitions}

Operators in the twisted boundary calculus act between sections of
vector bundles equipped with smooth endomorphisms with constant eigenvalues.
Before getting into that, let's establish the local theory. For reference,
given an index set $\mathcal{E}$, denote by $S_{\phg,\mathcal{S}}^{-\mathcal{E}}\left(\mathbb{R}^{n};\mathbb{R}^{n}\right)$
the class of polyhomogeneous symbols with Schwartz coefficients, i.e.
$S_{\phg,\mathcal{S}}^{-\mathcal{E}}\left(\mathbb{R}^{n};\mathbb{R}^{n}\right)=\mathcal{S}\left(\mathbb{R}^{n}\right)\hat{\otimes}\mathcal{A}_{\phg}^{\mathcal{E}}\left(\overline{\mathbb{R}}^{n}\right)$.
Given $m\in\mathbb{R}$, denote by $S_{\mathcal{S}}^{-m}\left(\mathbb{R}^{n};\mathbb{R}^{n}\right)$
the class of standard symbols with Schwartz coefficients $\mathcal{S}\left(\mathbb{R}^{n}\right)\hat{\otimes}\mathcal{A}^{m}\left(\overline{\mathbb{R}}^{n}\right)$.
Now, fix $\mathfrak{a}:\overline{\mathbb{R}}^{n}\to\mathfrak{gl}\left(\mathbb{C}^{N}\right)$
and $\mathfrak{b}:\overline{\mathbb{R}}^{n}\to\mathfrak{gl}\left(\mathbb{C}^{M}\right)$
smooth families of matrices with a constant, single eigenvalue. Call
$\mu$ (resp. $\lambda$) the eigenvalue of $\mathfrak{a}$ (resp.
$\mathfrak{b}$).
\begin{defn}
(Twisted boundary symbols) We define $S_{\phg,\mathcal{S}}^{-\mathcal{E},\left(\mathfrak{a},\mathfrak{b}\right)}\left(\mathbb{R}^{n};\mathbb{R}^{n};\hom\left(\mathbb{C}^{N},\mathbb{C}^{M}\right)\right)$
as the space
\[
\left\langle \eta\right\rangle ^{-\mathfrak{b}\left(y\right)}S_{\phg,\mathcal{S}}^{-\mathcal{E}}\left(\mathbb{R}^{n};\mathbb{R}^{n};\hom\left(\mathbb{C}^{N},\mathbb{C}^{M}\right)\right)\left\langle \eta\right\rangle ^{\mathfrak{a}\left(y\right)}.
\]
We call the symbols in these classes \emph{twisted boundary} \emph{symbols}.
\end{defn}

The following lemma is proved exactly as Lemma \ref{lem:twisting-poisson-trace-are-polyhomogeneous-1}:
\begin{lem}
\label{lem:twisted-boundary-symbols-are-phg-1}Let $q\left(y;\eta\right)\in S_{\phg,\mathcal{S}}^{-\mathcal{E},\left(\mathfrak{a},\mathfrak{b}\right)}\left(\mathbb{R}^{n};\mathbb{R}^{n};\hom\left(\mathbb{C}^{N},\mathbb{C}^{M}\right)\right)$,
and think of $q\left(y;\eta\right)$ as an $M\times N$ matrix of
scalar symbols. Then its entries are elements of $S_{\phg,\mathcal{S}}^{-\tilde{\mathcal{E}}}\left(\mathbb{R}^{n};\mathbb{R}^{n}\right)$,
where $\tilde{\mathcal{E}}$ is the index set consisting of pairs
of the form $\left(\alpha-\mu+\lambda,l+l'+l''\right)$, where $\left(\alpha,l\right)\in\mathcal{E}$
and $0\leq l'<N$ and $0\leq l''<M$. Consequently, these entries
are in $S_{\mathcal{S}}^{-m}\left(\mathbb{R}^{n};\mathbb{R}^{n}\right)$
for every $m>\Re\left(\mathcal{E}-\mu+\lambda\right)$.
\end{lem}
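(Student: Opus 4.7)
The plan is to mimic the proof of Lemma \ref{lem:twisting-poisson-trace-are-polyhomogeneous-1} applied on both sides of $p$. First, I would write
\[
q(y;\eta) = \langle \eta \rangle^{-\mathfrak{b}(y)} \, p(y;\eta) \, \langle \eta \rangle^{\mathfrak{a}(y)}
\]
with $p \in S_{\phg,\mathcal{S}}^{-\mathcal{E}}(\mathbb{R}^n;\mathbb{R}^n;\hom(\mathbb{C}^N,\mathbb{C}^M))$, and exploit the fact that, because $\mathfrak{a}$ and $\mathfrak{b}$ each have a single constant eigenvalue, the matrices $\mathfrak{a}'(y) := \mathfrak{a}(y) - \mu$ and $\mathfrak{b}'(y) := \mathfrak{b}(y) - \lambda$ are smooth families of nilpotent matrices with $(\mathfrak{a}'(y))^N = 0$ and $(\mathfrak{b}'(y))^M = 0$ for every $y$.

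The key step is then to expand the two matrix exponentials as the \emph{finite} sums
\[
\langle \eta \rangle^{\mathfrak{a}(y)} = \langle \eta \rangle^{\mu} \sum_{l' = 0}^{N-1} \frac{(\log \langle \eta \rangle)^{l'}}{l'!}(\mathfrak{a}'(y))^{l'}, \qquad \langle \eta \rangle^{-\mathfrak{b}(y)} = \langle \eta \rangle^{-\lambda} \sum_{l'' = 0}^{M-1}\frac{(-\log \langle \eta \rangle)^{l''}}{l''!}(\mathfrak{b}'(y))^{l''},
\]
substitute these into the expression for $q$, and read off the matrix entries. Multiplying out, each entry of $q(y;\eta)$ is then a finite $\mathbb{C}$-linear combination, with coefficients smooth in $y$ (built from entries of $(\mathfrak{a}'(y))^{l'}$ and $(\mathfrak{b}'(y))^{l''}$), of terms of the form
\[
\langle \eta \rangle^{\mu - \lambda} (\log \langle \eta \rangle)^{l'+l''}\, p_{ij}(y;\eta),
\]
where $p_{ij}$ is an entry of $p$, $0 \leq l' < N$ and $0 \leq l'' < M$. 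At this point it suffices to observe that, with respect to the boundary defining function $\rho = \langle \eta \rangle^{-1}$ at infinity, the scalar factor $\langle \eta \rangle^{\mu-\lambda}(\log \langle \eta \rangle)^{l'+l''}$ is polyhomogeneous on $\overline{\mathbb{R}}^n$ with the single index pair $(\lambda - \mu,\, l' + l'')$; hence multiplication by it shifts the index set of $p_{ij}$ from $\mathcal{E}$ to the set of pairs $(\alpha + \lambda - \mu,\, l + l' + l'')$ with $(\alpha, l) \in \mathcal{E}$. Taking the union over $0 \leq l' < N$ and $0 \leq l'' < M$ recovers exactly the index set $\tilde{\mathcal{E}}$ in the statement, so each entry of $q$ lies in $\mathcal{S}(\mathbb{R}^n) \mathbin{\hat{\otimes}} \mathcal{A}_{\phg}^{\tilde{\mathcal{E}}}(\overline{\mathbb{R}}^n) = S_{\phg,\mathcal{S}}^{-\tilde{\mathcal{E}}}(\mathbb{R}^n;\mathbb{R}^n)$, which proves the first assertion.

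The consequence $S_{\phg,\mathcal{S}}^{-\tilde{\mathcal{E}}} \subset S_{\mathcal{S}}^{-m}$ for every $m > \Re(\mathcal{E}-\mu+\lambda)$ is then the standard inclusion of a polyhomogeneous class into a conormal symbol class whose order dominates the leading real part of the index set, applied to $\tilde{\mathcal{E}}$ (whose leading real part is $\Re(\mathcal{E}) + \Re(\lambda - \mu)$); tensoring with $\mathcal{S}(\mathbb{R}^n)$ in the base variable $y$ is automatic. I do not expect any genuine obstacle in the argument: the whole proof is bookkeeping in the polyhomogeneous index-set calculus once one uses the nilpotent decomposition of the twisting endomorphisms to replace the matrix powers $\langle \eta \rangle^{\pm \mathfrak{a}}$ and $\langle \eta \rangle^{\pm \mathfrak{b}}$ by finite logarithmic sums, and it parallels the one-sided argument in Lemma \ref{lem:twisting-poisson-trace-are-polyhomogeneous-1} essentially line for line.
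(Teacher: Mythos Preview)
Your proposal is correct and follows exactly the approach the paper indicates: the paper explicitly states that this lemma ``is proved exactly as Lemma \ref{lem:twisting-poisson-trace-are-polyhomogeneous-1}'', and your two-sided nilpotent expansion of $\langle\eta\rangle^{\mathfrak{a}}$ and $\langle\eta\rangle^{-\mathfrak{b}}$ into finite logarithmic sums, followed by index-set bookkeeping on $\overline{\mathbb{R}}^n$, is precisely that argument carried out in the boundary-symbol setting.
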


Observe that the lemma above implies that the residual class $S_{\phg,\mathcal{S}}^{-\infty,\left(\mathfrak{a},\mathfrak{b}\right)}\left(\mathbb{R}^{n};\mathbb{R}^{n};\hom\left(\mathbb{C}^{N},\mathbb{C}^{M}\right)\right)$
does not depend on $\mathfrak{a}$ and $\mathfrak{b}$, and coincides
with the class $S_{\mathcal{S}}^{-\infty}\left(\mathbb{R}^{n};\mathbb{R}^{n};\hom\left(\mathbb{C}^{N},\mathbb{C}^{M}\right)\right)$
of $\hom\left(\mathbb{C}^{N},\mathbb{C}^{M}\right)$-valued smoothing
symbols with Schwartz coefficients.

Since the entries of $S_{\phg,\mathcal{S}}^{-\mathcal{E},\left(\mathfrak{a},\mathfrak{b}\right)}\left(\mathbb{R}^{n};\mathbb{R}^{n};\hom\left(\mathbb{C}^{N},\mathbb{C}^{M}\right)\right)$
are standard symbols, we can apply the left quantization map:
\begin{defn}
(Local twisted boundary symbols) We define $\Psi_{\phg,\mathcal{S}}^{-\mathcal{E},\left(\mathfrak{a},\mathfrak{b}\right)}\left(\mathbb{R}^{n};\mathbb{C}^{N},\mathbb{C}^{M}\right)$
as the class of operators $\Op_{L}\left(q\right)$ with $q\in S_{\phg,\mathcal{S}}^{-\tilde{\mathcal{E}},\left(\mathfrak{a},\mathfrak{b}\right)}\left(\mathbb{R}^{n};\mathbb{R}^{n};\hom\left(\mathbb{C}^{N},\mathbb{C}^{M}\right)\right)$,
for some index set $\tilde{\mathcal{E}}$ with $\left[\tilde{\mathcal{E}}\right]=\left[\mathcal{E}\right]$.
\end{defn}

From Lemma \ref{lem:twisted-boundary-symbols-are-phg-1}, we get immediately
the following
\begin{cor}
\label{cor:local-twisted-psidos-have-phg-entries}Let $Q\in\Psi_{\phg,\mathcal{S}}^{-\mathcal{E},\left(\mathfrak{a},\mathfrak{b}\right)}\left(\mathbb{R}^{n};\mathbb{C}^{N},\mathbb{C}^{M}\right)$.
Then $Q\in\Psi_{\phg,\mathcal{S}}^{-\mathcal{F}}\left(\mathbb{R}^{n};\mathbb{C}^{N},\mathbb{C}^{M}\right)$,
where $\mathcal{F}$ is an index set whose leading set consists of
pairs of the form $\left(\alpha-\mu+\lambda,l+l'+l''\right)$, where
$\left(\alpha,l\right)\in\lead\left(\mathcal{E}\right)$, $0\leq l'<N$
and $0\leq l''<M$.
\end{cor}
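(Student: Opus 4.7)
The statement is essentially a packaging of Lemma~\ref{lem:twisted-boundary-symbols-are-phg-1} together with the definition of $\Psi_{\phg,\mathcal{S}}^{-\mathcal{E},(\mathfrak{a},\mathfrak{b})}\left(\mathbb{R}^{n};\mathbb{C}^{N},\mathbb{C}^{M}\right)$, so I plan to give a short direct argument rather than reprove anything. The plan is to unpack the definition, apply the previous lemma entrywise, and then track leading sets.

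First I would write $Q=\Op_{L}(q)$ for some twisted boundary symbol $q\in S_{\phg,\mathcal{S}}^{-\tilde{\mathcal{E}},(\mathfrak{a},\mathfrak{b})}\left(\mathbb{R}^{n};\mathbb{R}^{n};\hom\left(\mathbb{C}^{N},\mathbb{C}^{M}\right)\right)$, where $\tilde{\mathcal{E}}$ is an index set with $[\tilde{\mathcal{E}}]=[\mathcal{E}]$; this is just what it means for $Q$ to lie in the twisted class. Apply Lemma~\ref{lem:twisted-boundary-symbols-are-phg-1} to $q$: each scalar entry of $q$ belongs to $S_{\phg,\mathcal{S}}^{-\tilde{\mathcal{E}}'}\left(\mathbb{R}^{n};\mathbb{R}^{n}\right)$, where
\[
\tilde{\mathcal{E}}'=\left\{ (\alpha-\mu+\lambda,\;l+l'+l''):(\alpha,l)\in\tilde{\mathcal{E}},\;0\leq l'<N,\;0\leq l''<M\right\} .
\]
Quantizing entrywise, the matrix of operators $Q$ therefore lies in $\Psi_{\phg,\mathcal{S}}^{-\tilde{\mathcal{E}}'}\left(\mathbb{R}^{n};\mathbb{C}^{N},\mathbb{C}^{M}\right)$, and we can simply set $\mathcal{F}=\tilde{\mathcal{E}}'$.

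It remains to verify that $\lead(\mathcal{F})$ has the form claimed in terms of $\lead(\mathcal{E})$. The real part of the first coordinate of a pair in $\tilde{\mathcal{E}}'$ equals $\Re(\alpha)+\Re(\lambda-\mu)$, which is minimised precisely when $(\alpha,l)\in\lead(\tilde{\mathcal{E}})$. Hence
\[
\lead(\tilde{\mathcal{E}}')=\left\{ (\alpha-\mu+\lambda,\;l+l'+l''):(\alpha,l)\in\lead(\tilde{\mathcal{E}}),\;0\leq l'<N,\;0\leq l''<M\right\} ,
\]
and invoking the hypothesis $\lead(\tilde{\mathcal{E}})=\lead(\mathcal{E})$ (which is the content of $[\tilde{\mathcal{E}}]=[\mathcal{E}]$) yields the description of $\lead(\mathcal{F})$ asserted in the statement.

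There is essentially no obstacle here: the whole content of the corollary sits inside Lemma~\ref{lem:twisted-boundary-symbols-are-phg-1}, and the only thing to be slightly careful about is that $\tilde{\mathcal{E}}$ (the full front-face index set of the symbol) is not intrinsic to $Q$ — only its leading set is, per Remark~\ref{rem:why-only-leading-sets-1}. This is exactly why the corollary only records the leading set of $\mathcal{F}$ and leaves the rest of $\mathcal{F}$ implicit; no further invariance check is needed.
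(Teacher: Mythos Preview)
Your argument is correct and is exactly the approach the paper intends: the corollary is stated as an immediate consequence of Lemma~\ref{lem:twisted-boundary-symbols-are-phg-1}, and your unpacking of the definition, application of the lemma entrywise, and leading-set bookkeeping is precisely what ``immediate'' means here. The only minor imprecision is the phrase ``minimised precisely when $(\alpha,l)\in\lead(\tilde{\mathcal{E}})$'', since the leading set $[\tilde{\mathcal{E}}]$ captures the whole strip $\inf\Re(\tilde{\mathcal{E}})\leq\Re(\alpha)<\inf\Re(\tilde{\mathcal{E}})+1$ rather than just the minimum, but your displayed formula for $\lead(\tilde{\mathcal{E}}')$ is correct regardless.
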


In particular, the Schwartz kernel of an element of $\Psi_{\phg,\mathcal{S}}^{-\mathcal{E},\left(\mathfrak{a},\mathfrak{b}\right)}\left(\mathbb{R}^{n};\mathbb{C}^{N},\mathbb{C}^{M}\right)$
is smooth in the complement of any neighborhood of the diagonal. This
allows us to define the corresponding calculus on the manifold $\partial X$.
Fix two smooth vector bundles $\boldsymbol{E},\boldsymbol{F}\to\partial X$
equipped with endomorphisms $\boldsymbol{\mathfrak{s}},\boldsymbol{\mathfrak{t}}$
with constant eigenvalues. Call $\boldsymbol{\mu}=\left(\mu_{1},...,\mu_{I}\right)$
and $\boldsymbol{\lambda}=\left(\lambda_{1},...,\lambda_{J}\right)$
the eigenvalues of $\boldsymbol{\mathfrak{s}},\boldsymbol{\mathfrak{t}}$,
and decompose $\boldsymbol{E}=\bigoplus_{i}E_{i}$, $\boldsymbol{F}=\bigoplus_{j}F_{j}$
and $\boldsymbol{\mathfrak{s}}=\bigoplus_{i}\mathfrak{s}_{i}$, $\boldsymbol{\mathfrak{t}}=\bigoplus_{j}\mathfrak{t}_{j}$
in terms of the generalized eigenbundle decompositions of $\boldsymbol{\mathfrak{s}},\boldsymbol{\mathfrak{t}}$.
Call $N_{i},M_{j}$ the ranks of $E_{i},F_{j}$.
\begin{defn}
(Twisted boundary operators) Let $\mathcal{E}$ be an index set. We
denote by $\Psi_{\phg}^{-\left[\mathcal{E}\right],\left(\boldsymbol{\mathfrak{s}},\boldsymbol{\mathfrak{t}}\right)}\left(\partial X;\boldsymbol{E},\boldsymbol{F}\right)$
the class of operators $\boldsymbol{Q}:C^{\infty}\left(\partial X;\boldsymbol{E}\right)\to C^{-\infty}\left(\partial X;\boldsymbol{F}\right)$
of the form
\[
\boldsymbol{Q}=\left(\begin{matrix}Q_{1}^{1} & \cdots & Q_{I}^{1}\\
\vdots & \ddots & \vdots\\
Q_{1}^{J} & \cdots & Q_{I}^{J}
\end{matrix}\right),
\]
where $Q_{i}^{j}:C^{\infty}\left(\partial X;E_{i}\right)\to C^{-\infty}\left(\partial X;F_{j}\right)$
satisfies the following properties:
\begin{enumerate}
\item in the complement of any neighborhood of $\partial\Delta$, $K_{Q_{i}^{j}}$
is smooth;
\item for every $p\in\partial X$, in coordinates $y$ for $\partial X$
centered at $p$, and with respect to local frames for $E_{i}$ and
$F_{j}$ defined near $p$, there exists an index set $\tilde{\mathcal{E}}$
with $\left[\tilde{\mathcal{E}}\right]=\left[\mathcal{E}\right]$
and a symbol $\left\langle \eta\right\rangle ^{\mathfrak{t}_{j}\left(y\right)}q\left(y;\eta\right)\left\langle \eta\right\rangle ^{-\mathfrak{s}_{i}\left(y\right)}$
in $S_{\phg,\mathcal{S}}^{-\mathcal{E},\left(\mathfrak{s}_{i},\mathfrak{t}_{j}\right)}\left(\mathbb{R}^{n};\mathbb{R}^{n};\hom\left(\mathbb{C}^{N_{i}},\mathbb{C}^{M_{j}}\right)\right)$
such that, near the origin, we have
\[
K_{Q_{i}^{j}}=\frac{1}{\left(2\pi\right)^{n}}\int e^{i\left(y-\tilde{y}\right)\eta}\left\langle \eta\right\rangle ^{-\mathfrak{t}_{j}\left(y\right)}q\left(y;\eta\right)\left\langle \eta\right\rangle ^{\mathfrak{s}_{i}\left(y\right)}d\eta d\tilde{y}.
\]
\end{enumerate}
\end{defn}

\begin{rem}
The reader should think of the calculus $\Psi_{\phg}^{-\left[\mathcal{E}\right],\left(\boldsymbol{\mathfrak{s}},\boldsymbol{\mathfrak{t}}\right)}\left(\partial X;\boldsymbol{E},\boldsymbol{F}\right)$
as a mild generalization of the usual class of classical pseudodifferential
operators of Douglis--Nirenberg type. Indeed, if $\boldsymbol{Q}\in\Psi_{\phg}^{-\left[\mathcal{E}\right],\left(\boldsymbol{\mathfrak{s}},\boldsymbol{\mathfrak{t}}\right)}\left(\partial X;\boldsymbol{E},\boldsymbol{F}\right)$
and $\left[\mathcal{E}\right]=m$, then by Corollary \ref{cor:local-twisted-psidos-have-phg-entries}
the entry $Q_{i}^{j}\in\Psi_{\phg}^{-\left[\mathcal{E}\right],\left(\mathfrak{s}_{i},\mathfrak{t}_{j}\right)}\left(\partial X;E_{i},F_{j}\right)$
is an element of $\Psi_{\phg}^{-\mathcal{F}}\left(\partial X;E_{i},F_{j}\right)$,
where $\mathcal{F}$ is an index set with $\left[\mathcal{F}\right]$
equal to the family of pairs $\left(m-\mu_{i}+\lambda_{j},k+l\right)$
with $k<N_{i}$ an $l<M_{j}$. In particular, if $E_{i}$ and $F_{j}$
are line bundles, then $Q_{i}^{j}$ is an element of $\Psi_{\phg}^{-m+\mu_{i}-\lambda_{j}}\left(\partial X\right)$.
\end{rem}

The previous corollary then implies the following
\begin{cor}
\label{cor:twisted-psidos-have-phg-entries}Let $\boldsymbol{Q}\in\Psi_{\phg}^{-\left[\mathcal{E}\right],\left(\boldsymbol{\mathfrak{s}},\boldsymbol{\mathfrak{t}}\right)}\left(\partial X;\boldsymbol{E},\boldsymbol{F}\right)$.
Then $Q_{i}^{j}\in\Psi_{\phg}^{-\mathcal{F}}\left(\partial X;E_{i},F_{j}\right)$,
where $\mathcal{F}$ is an index set whose leading set consists of
pairs of the form $\left(\alpha-\mu_{i}+\lambda_{j},l+l'+l''\right)$,
where $\left(\alpha,l\right)\in\lead\left(\mathcal{E}\right)$, $0\leq l'<N_{i}$
and $0\leq l''<M_{j}$.
\end{cor}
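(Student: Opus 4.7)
The strategy is to reduce the statement to its already-established local version, Corollary \ref{cor:local-twisted-psidos-have-phg-entries}, and then patch the local conclusions together by a standard partition-of-unity argument, exploiting the fact that classical polyhomogeneity on a closed manifold is a coordinate-invariant notion whose leading index set is intrinsic.

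First, I would fix a block $Q_{i}^{j}:C^{\infty}\left(\partial X;E_{i}\right)\to C^{-\infty}\left(\partial X;F_{j}\right)$ extracted from the matrix decomposition of $\boldsymbol{Q}$, and observe that by the very definition of $\Psi_{\phg}^{-\left[\mathcal{E}\right],\left(\boldsymbol{\mathfrak{s}},\boldsymbol{\mathfrak{t}}\right)}\left(\partial X;\boldsymbol{E},\boldsymbol{F}\right)$ this block satisfies:
(i) the Schwartz kernel $K_{Q_{i}^{j}}$ is smooth off any neighborhood of the diagonal $\partial\Delta\subseteq\left(\partial X\right)^{2}$; and
(ii) for every $p\in\partial X$ there exist coordinates $y$ and local frames such that, near the origin, $K_{Q_{i}^{j}}$ coincides with the Schwartz kernel of an element of $\Psi_{\phg,\mathcal{S}}^{-\tilde{\mathcal{E}},\left(\mathfrak{s}_{i},\mathfrak{t}_{j}\right)}\left(\mathbb{R}^{n};\mathbb{C}^{N_{i}},\mathbb{C}^{M_{j}}\right)$ for some index set $\tilde{\mathcal{E}}$ with $\left[\tilde{\mathcal{E}}\right]=\left[\mathcal{E}\right]$. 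The local Corollary \ref{cor:local-twisted-psidos-have-phg-entries} then asserts that this local representative belongs to $\Psi_{\phg,\mathcal{S}}^{-\mathcal{F}_{p}}\left(\mathbb{R}^{n};\mathbb{C}^{N_{i}},\mathbb{C}^{M_{j}}\right)$ for some index set $\mathcal{F}_{p}$ whose leading set is precisely the one described in the statement (depending only on $\lead\left(\mathcal{E}\right)$, $N_{i}$, $M_{j}$, $\mu_{i}$, $\lambda_{j}$).

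Next, I would choose a finite cover $\left\{ U_{\alpha}\right\} $ of $\partial X$ by coordinate charts trivializing $E_{i}$ and $F_{j}$, together with a subordinate partition of unity $\left\{ \chi_{\alpha}\right\} $ and cut-offs $\tilde{\chi}_{\alpha}\in C_{c}^{\infty}\left(U_{\alpha}\right)$ with $\tilde{\chi}_{\alpha}\equiv1$ on $\supp\chi_{\alpha}$. Decomposing
\[
Q_{i}^{j}=\sum_{\alpha}\tilde{\chi}_{\alpha}Q_{i}^{j}\chi_{\alpha}+\sum_{\alpha}\left(1-\tilde{\chi}_{\alpha}\right)Q_{i}^{j}\chi_{\alpha},
\]
the off-diagonal smoothness property (i) guarantees that each term in the second sum is a smoothing operator; each term in the first sum, read in the chart $U_{\alpha}$, is a compactly supported operator whose Schwartz kernel agrees with an element of $\Psi_{\phg,\mathcal{S}}^{-\mathcal{F}_{p_{\alpha}}}\left(\mathbb{R}^{n};\mathbb{C}^{N_{i}},\mathbb{C}^{M_{j}}\right)$ near the diagonal. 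Since the index set $\mathcal{F}_{p}$ produced by the local corollary has a leading set that does not depend on $p$, and since classical polyhomogeneous symbolic calculus on $\mathbb{R}^{n}$ is invariant under diffeomorphisms (changes of coordinates only modify the full symbol by terms of strictly lower order, leaving the leading polyhomogeneous structure intact), each cut-off piece $\tilde{\chi}_{\alpha}Q_{i}^{j}\chi_{\alpha}$ invariantly represents an element of $\Psi_{\phg}^{-\mathcal{F}}\left(\partial X;E_{i},F_{j}\right)$ for a globally defined index set $\mathcal{F}$ with the claimed $\lead\left(\mathcal{F}\right)$. Summing over $\alpha$ and absorbing the smoothing correction concludes the proof.

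The main technical point, and really the only non-cosmetic one, is verifying that the leading set of $\mathcal{F}_{p}$ furnished by Corollary \ref{cor:local-twisted-psidos-have-phg-entries} is independent of $p$ and of the choice of local coordinates and frames. This hinges on two facts already implicit in the definitions: the twisting endomorphisms $\mathfrak{s}_{i}$ and $\mathfrak{t}_{j}$ have \emph{constant} eigenvalues $\mu_{i}$ and $\lambda_{j}$ (so the ``shift'' $-\mu_{i}+\lambda_{j}$ is intrinsic, even though $\mathfrak{s}_{i}$ and $\mathfrak{t}_{j}$ themselves are only smooth endomorphisms); and the extra logarithmic orders $l'<N_{i}$, $l''<M_{j}$ account for the Jordan block contributions from $\langle\eta\rangle^{\mathfrak{s}_{i}\left(y\right)}$ and $\langle\eta\rangle^{-\mathfrak{t}_{j}\left(y\right)}$, which are bounded uniformly in $y$ by the rank. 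Once this is recorded, the rest of the argument is routine pseudodifferential bookkeeping identical to the scalar Douglis--Nirenberg case.
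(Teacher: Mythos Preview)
Your proposal is correct and follows exactly the route the paper takes: the paper simply states that the global corollary is implied by the local Corollary~\ref{cor:local-twisted-psidos-have-phg-entries} together with the definition of $\Psi_{\phg}^{-\left[\mathcal{E}\right],\left(\boldsymbol{\mathfrak{s}},\boldsymbol{\mathfrak{t}}\right)}\left(\partial X;\boldsymbol{E},\boldsymbol{F}\right)$, and your partition-of-unity argument is the standard way to spell this out. Your discussion of why the leading set is chart-independent (constant eigenvalues, rank-bounded Jordan contributions) is a welcome elaboration of a point the paper leaves implicit.
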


\subsubsection{Principal symbols and ellipticity}

Let's finally discuss the principal symbol map for these operators.
The idea is exactly the same as above. If $\boldsymbol{Q}\in\Psi_{\phg}^{-\left[\mathcal{E}\right],\left(\boldsymbol{\mathfrak{s}},\boldsymbol{\mathfrak{t}}\right)}\left(\partial X;\boldsymbol{E},\boldsymbol{F}\right)$
and $\left[\mathcal{E}\right]=m$, then $\sigma\left(\boldsymbol{Q}\right)$
is a smooth section of $\pi^{*}\left(\boldsymbol{E}^{*}\otimes\boldsymbol{F}\right)\to T^{*}\partial X\backslash0$,
fibrewise \emph{twisted }homogeneous of degree $-m+\left(\boldsymbol{\mathfrak{s}},\boldsymbol{\mathfrak{t}}\right)$
in the following sense: for every $\eta\in T^{*}\partial X\backslash0$
and $t\in\mathbb{R}^{+}$, we have
\[
\sigma_{t\eta}\left(\boldsymbol{Q}\right)=t^{-m}t^{-\boldsymbol{\mathfrak{t}}}\sigma_{\eta}\left(\boldsymbol{Q}\right)t^{\boldsymbol{\mathfrak{s}}}.
\]
To construct $\sigma\left(\boldsymbol{Q}\right)$ in the fiber $T_{p}^{*}\partial X\backslash0$,
choose coordinates $y$ for $\partial X$ centered at $p$ and local
frames for the bundles $E_{i},F_{j}$. Then there exists an index
set $\tilde{\mathcal{E}}$ with $\left[\tilde{\mathcal{E}}\right]=\left[\mathcal{E}\right]$,
and symbols $q_{i}^{j}\left(y;\eta\right)\in S_{\phg}^{-\tilde{\mathcal{E}}}\left(\mathbb{R}^{n};\mathbb{R}^{n};\hom\left(\mathbb{C}^{N_{i}},\mathbb{C}^{M_{j}}\right)\right)$
such that near the point $\left(p,p\right)$ in the given coordinates
and frames, we have
\[
Q_{i}^{j}=\frac{1}{\left(2\pi\right)^{n}}\int e^{i\left(y-\tilde{y}\right)\eta}\left\langle \eta\right\rangle ^{-\mathfrak{t}_{j}\left(y\right)}q_{i}^{j}\left(y;\eta\right)\left\langle \eta\right\rangle ^{\mathfrak{s}_{i}\left(y\right)}d\eta d\tilde{y}.
\]
Let $\boldsymbol{q}\left(y;\eta\right)$ be the block-matrix of symbols
$\boldsymbol{q}\left(y;\eta\right)=\left(q_{i}^{j}\left(y;\eta\right)\right)$.
Then, for $\eta\in T_{p}^{*}\partial X\backslash0\equiv\mathbb{R}^{n}\backslash0$,
we define
\[
\sigma_{\eta}\left(\boldsymbol{Q}\right)=\lim_{t\to0^{+}}t^{\mathfrak{t}_{j}\left(0\right)}\left\langle t^{-1}\eta\right\rangle ^{\mathfrak{t}_{j}\left(y\right)}t^{-m}\boldsymbol{q}\left(0;t^{-1}\eta\right)\left\langle t^{-1}\eta\right\rangle ^{\mathfrak{s}_{i}\left(0\right)}t^{\mathfrak{s}_{i}\left(0\right)}.
\]
To check that this limit exists, observe that since the entries $q_{i}^{j}\left(y;\eta\right)$
are in $S_{\phg}^{-\tilde{\mathcal{E}}}\left(\mathbb{R}^{n};\mathbb{R}^{n};\hom\left(\mathbb{C}^{N_{i}},\mathbb{C}^{M_{j}}\right)\right)$
and $\left[\tilde{\mathcal{E}}\right]=m$, we can write away from
$\left|\eta\right|=0$
\begin{align*}
q_{i}^{j}\left(0;\eta\right) & =q_{i,0}^{j}\left(\frac{\eta}{\left|\eta\right|},\left|\eta\right|^{-1}\right)\left|\eta\right|^{-m}\\
 & +r_{i}^{j}\left(\frac{\eta}{\left|\eta\right|},\left|\eta\right|^{-1}\right)\left|\eta\right|^{-m-1}
\end{align*}
for some $q_{i,0}^{j},r_{i}^{j}\in\mathcal{A}_{\phg}^{\tilde{\mathcal{E}}-m}\left(S^{n-1}\times\mathbb{R}_{1}^{1};\hom\left(\mathbb{C}^{N_{i}},\mathbb{C}^{M_{j}}\right)\right)$.
Calling $\tilde{q}_{i,0}^{j}\left(\hat{\eta}\right)=q_{i,0}^{j}\left(\hat{\eta},0\right)$,
and $\tilde{\boldsymbol{q}}_{0}=\left(\tilde{q}_{i,0}^{j}\right)$,
we have
\begin{align*}
\sigma_{\eta}\left(\boldsymbol{Q}\right) & =\left|\eta\right|^{-m}\left|\eta\right|^{-\boldsymbol{\mathfrak{t}}\left(0\right)}\tilde{\boldsymbol{q}}_{0}\left(\frac{\eta}{\left|\eta\right|}\right)\left|\eta\right|^{\boldsymbol{\mathfrak{s}}\left(0\right)}.
\end{align*}
This shows that indeed $\sigma\left(\boldsymbol{Q}\right)$ is fibrewise
twisted homogeneous in the sense described above.
\begin{defn}
An operator $\boldsymbol{Q}\in\Psi_{\phg}^{-\left[0\right],\left(\boldsymbol{\mathfrak{s}},\boldsymbol{\mathfrak{t}}\right)}\left(\partial X;\boldsymbol{E},\boldsymbol{F}\right)$
is called \emph{elliptic} if the principal symbol $\sigma_{\eta}\left(\boldsymbol{Q}\right)$
is invertible for every $\eta\in T^{*}\partial X\backslash0$.
\end{defn}

The following theorem can be proved exactly as in the standard case:
\begin{thm}
Let $\boldsymbol{Q}\in\Psi_{\phg}^{\left[0\right],\left(\boldsymbol{\mathfrak{s}},\boldsymbol{\mathfrak{t}}\right)}\left(\partial X;\boldsymbol{E},\boldsymbol{F}\right)$
be elliptic. Then there exist operators $\boldsymbol{K}_{1},\boldsymbol{K}_{2}\in\Psi_{\phg}^{\left[0\right],\left(\boldsymbol{\mathfrak{t}},\boldsymbol{\mathfrak{s}}\right)}\left(\partial X;\boldsymbol{F},\boldsymbol{E}\right)$
such that
\begin{align*}
\boldsymbol{Q}\boldsymbol{K}_{1} & \equiv I\mod\Psi^{-\infty}\left(\partial X;\boldsymbol{F}\right)\\
\boldsymbol{K}_{2}\boldsymbol{Q} & \equiv I\mod\Psi^{-\infty}\left(\partial X;\boldsymbol{E}\right).
\end{align*}
\end{thm}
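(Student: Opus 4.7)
The plan is to mimic the classical Hadamard parametrix construction, adapted to the twisted calculus. Since $\boldsymbol{Q}$ has order $[0]$ and is elliptic, its principal symbol $\sigma(\boldsymbol{Q})$ is a smooth section of $\pi^{*}\hom(\boldsymbol{E},\boldsymbol{F})$ over $T^{*}\partial X\setminus 0$ which is pointwise invertible and twisted homogeneous of degree $[0]$ in the sense
\[
\sigma_{t\eta}(\boldsymbol{Q})=t^{-\boldsymbol{\mathfrak{t}}}\,\sigma_{\eta}(\boldsymbol{Q})\,t^{\boldsymbol{\mathfrak{s}}}.
\]
Hence $\tau_{\eta}:=\sigma_{\eta}(\boldsymbol{Q})^{-1}$ exists pointwise, depends smoothly on $\eta$, and satisfies $\tau_{t\eta}=t^{-\boldsymbol{\mathfrak{s}}}\tau_{\eta}\,t^{\boldsymbol{\mathfrak{t}}}$; this is precisely the twisted homogeneity for the principal symbol of an order-$[0]$ operator in $\Psi_{\phg}^{[0],(\boldsymbol{\mathfrak{t}},\boldsymbol{\mathfrak{s}})}(\partial X;\boldsymbol{F},\boldsymbol{E})$.

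Next I would promote $\tau$ to an actual operator $\boldsymbol{K}_{0}$. Working blockwise, this amounts to finding, for each pair $(j,i)$, a classical symbol with twisting factors $\langle\eta\rangle^{-\mathfrak{s}_{i}(y)}\,k^{i}_{j}(y;\eta)\,\langle\eta\rangle^{\mathfrak{t}_{j}(y)}$ whose principal part is (the local expression of) $\tau$. Since the principal symbol map in the twisted calculus is surjective onto twisted-homogeneous sections (the construction mirrors the surjectivity of $\sigma$ for ordinary polyhomogeneous pseudodifferential operators, using a partition of unity on $\partial X$ and a cutoff away from $|\eta|=0$), such a $\boldsymbol{K}_{0}\in\Psi_{\phg}^{[0],(\boldsymbol{\mathfrak{t}},\boldsymbol{\mathfrak{s}})}(\partial X;\boldsymbol{F},\boldsymbol{E})$ exists. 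By the (assumed) composition theorem and multiplicativity of the principal symbol,
\[
\boldsymbol{Q}\boldsymbol{K}_{0}\in\Psi_{\phg}^{[0],(\boldsymbol{\mathfrak{t}},\boldsymbol{\mathfrak{t}})}(\partial X;\boldsymbol{F}),\qquad \sigma(\boldsymbol{Q}\boldsymbol{K}_{0})=\mathrm{Id},
\]
so $\boldsymbol{R}_{1}:=I-\boldsymbol{Q}\boldsymbol{K}_{0}$ lies in $\Psi_{\phg}^{-\mathcal{E}_{1},(\boldsymbol{\mathfrak{t}},\boldsymbol{\mathfrak{t}})}(\partial X;\boldsymbol{F})$ with $\Re[\mathcal{E}_{1}]>0$ (the next level in the asymptotic expansion after the leading constant term).

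Now I would set up a Neumann-type formal inverse: iterating gives $\boldsymbol{R}_{1}^{N}\in\Psi_{\phg}^{-\mathcal{E}_{N},(\boldsymbol{\mathfrak{t}},\boldsymbol{\mathfrak{t}})}$ with $\Re[\mathcal{E}_{N}]\to\infty$, so by asymptotic summation in the twisted calculus one can choose $\boldsymbol{S}\sim\sum_{N\geq 0}\boldsymbol{R}_{1}^{N}$ in $\Psi_{\phg}^{[0],(\boldsymbol{\mathfrak{t}},\boldsymbol{\mathfrak{t}})}(\partial X;\boldsymbol{F})$. Setting $\boldsymbol{K}_{1}:=\boldsymbol{K}_{0}\boldsymbol{S}$ then yields $\boldsymbol{Q}\boldsymbol{K}_{1}\equiv I\mod \Psi^{-\infty}(\partial X;\boldsymbol{F})$. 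The left parametrix $\boldsymbol{K}_{2}$ is constructed identically, starting from $\boldsymbol{K}_{0}\boldsymbol{Q}=I-\boldsymbol{R}'_{1}$ and summing $\sum \boldsymbol{R}'^{N}_{1}$ on the left.

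The two main obstacles are (i) the composition theorem and multiplicativity of the principal symbol in the twisted calculus, and (ii) asymptotic summation. For (i), one reduces to the scalar polyhomogeneous Douglis--Nirenberg case by conjugating out the twisting factors $\langle\eta\rangle^{\pm\mathfrak{s}}$, $\langle\eta\rangle^{\pm\mathfrak{t}}$; by Corollary \ref{cor:twisted-psidos-have-phg-entries} each block $Q^{j}_{i}$ already lies in an ordinary polyhomogeneous class, and the twisting factors multiply correctly because conjugation by $\langle\eta\rangle^{\mathfrak{t}(y)}$ differs from the corresponding symbol multiplication only by lower-order commutator terms (the logs generated by nilpotent parts and the $y$-dependence of $\mathfrak{s},\mathfrak{t}$), which stay inside the calculus since we only track leading sets. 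For (ii), the standard Borel-type asymptotic summation in $\Psi_{\phg}$ carries over unchanged because, after removing the prefactors $\langle\eta\rangle^{-\mathfrak{t}(y)}\,\cdot\,\langle\eta\rangle^{\mathfrak{s}(y)}$, one is summing a sequence of ordinary polyhomogeneous symbols of strictly decreasing order. The delicate bookkeeping is that $[\,\cdot\,]$ is preserved under composition but the full index set is not invariant under coordinate change; however, since only the leading set enters the definition of $\Psi_{\phg}^{[0],(\boldsymbol{\mathfrak{s}},\boldsymbol{\mathfrak{t}})}$ (see Remark \ref{rem:why-only-leading-sets-1}), this is not an obstruction.
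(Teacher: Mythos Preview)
Your proposal is correct and follows essentially the same approach as the paper: invert the twisted-homogeneous principal symbol, lift to an initial parametrix $\boldsymbol{K}_{0}$ via surjectivity of the principal symbol map, then iterate away the remainders of positive order using the composition theorem for the twisted boundary calculus and asymptotic summation. The paper likewise flags the composition theorem (Theorem~\ref{thm:global-twisted-compositions-involving-boundary}) as the main input and notes that only the leading set needs to be controlled, exactly as you observe.
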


A much stronger theorem of this type is proved in \cite{KrainerMendozaVariableOrders}
for elliptic operators in their more general calculus. Anyway, the
proof is essentially equal to the usual proof in standard elliptic
theory and uses the composition Theorem \ref{thm:global-twisted-compositions-involving-boundary},
which we still have to prove. If $\boldsymbol{Q}\in\Psi_{\phg}^{\left[0\right],\left(\boldsymbol{\mathfrak{s}},\boldsymbol{\mathfrak{t}}\right)}\left(\partial X;\boldsymbol{E},\boldsymbol{F}\right)$
is elliptic, then the inverse $\sigma\left(\boldsymbol{Q}\right)^{-1}$
of the principal symbol is a smooth section of $\pi^{*}\left(\boldsymbol{F}^{*}\otimes\boldsymbol{E}\right)\to T^{*}\partial X\backslash0$,
fibrewise twisted homogeneous of degree $\left(\boldsymbol{\mathfrak{t}},\boldsymbol{\mathfrak{s}}\right)$.
Given $\boldsymbol{K}_{0}\in\Psi_{\phg}^{\left[0\right],\left(\boldsymbol{\mathfrak{t}},\boldsymbol{\mathfrak{s}}\right)}\left(\partial X;\boldsymbol{F},\boldsymbol{E}\right)$
with $\sigma\left(\boldsymbol{K}_{0}\right)=\sigma\left(\boldsymbol{Q}\right)^{-1}$,
then by Theorem \ref{thm:global-twisted-compositions-involving-boundary}
we have $\boldsymbol{K}_{0}\boldsymbol{Q}\in\Psi_{\phg}^{\left[0\right],\left(\boldsymbol{\mathfrak{s}},\boldsymbol{\mathfrak{s}}\right)}\left(\partial X;\boldsymbol{E}\right)$
and $\boldsymbol{Q}\boldsymbol{K}_{0}\in\Psi_{\phg}^{\left[0\right],\left(\boldsymbol{\mathfrak{t}},\boldsymbol{\mathfrak{t}}\right)}\left(\partial X;\boldsymbol{F}\right)$,
and moreover $\sigma\left(\boldsymbol{K}_{0}\boldsymbol{Q}\right)\equiv1$,
$\sigma\left(\boldsymbol{K}_{0}\boldsymbol{Q}\right)\equiv1$. This
means that the remainders $\boldsymbol{R}_{1}=I-\boldsymbol{K}_{0}\boldsymbol{Q}$
and $\boldsymbol{R}_{2}=I-\boldsymbol{Q}\boldsymbol{K}_{0}$ are respectively
in $\Psi_{\phg}^{\left[\mathcal{H}\right],\left(\boldsymbol{\mathfrak{s}},\boldsymbol{\mathfrak{s}}\right)}\left(\partial X;\boldsymbol{E}\right)$
and $\Psi_{\phg}^{\left[\mathcal{H}\right],\left(\boldsymbol{\mathfrak{t}},\boldsymbol{\mathfrak{t}}\right)}\left(\partial X;\boldsymbol{F}\right)$,
for a certain index set $\mathcal{H}$ with $\Re\left(\mathcal{H}\right)>0$.
We can now promote $\boldsymbol{K}_{0}$ to two left and right parametrices
$\boldsymbol{K}_{2},\boldsymbol{K}_{1}$ as above, by composing $\boldsymbol{K}_{0}$
with asymptotic sums for $\left(I-\boldsymbol{R}_{2}\right)^{-1}$
and $\left(I-\boldsymbol{R}_{1}\right)^{-1}$.

\section{\label{sec:Adjoints-compositions-mapping}Adjoints, compositions,
mapping properties}

In this section, we discuss various properties of the calcululus developed
in §\ref{sec:The-symbolic-0-calculus}, the most important of which
are the \emph{composition theorems} between the various parts of the
calculus.

\subsection{\label{subsec:The-pull-back-push-forward-technique}The pull-back
/ push-forward technique}

Before starting, let us briefly review the technique used to prove
the composition theorems for the classes of operators described in
\ref{sec:Recap-on-the-0-calc}. This well-established technique is
due to Melrose, and is fundamental (albeit in a different form) for
our approach as well. To exemplify the argument, let's consider two
operators $A\in\Psi_{0\tr}^{-\infty,\mathcal{E}}\left(X,\partial X\right)$
and $P\in\Psi_{0}^{-\infty,\mathcal{F}}\left(X\right)$. We want to
understand whether the composition $AP$ is well-defined, and if it
is, on which space does it live.

In order to understand whether $AP$ is well-defined, we need to understand
mapping properties of $A$ and $P$ on polyhomogeneous functions.
Firstly, one proves (cf. \cite{MazzeoEdgeI}) that $P$ induces a
continuous linear map $P:\dot{C}^{\infty}\left(X\right)\to\mathcal{A}_{\phg}^{\mathcal{F}_{\lf}}\left(X\right)$.
The way to prove this is to observe that, if $u\in\dot{C}^{\infty}\left(X\right)$,
then
\begin{align*}
Pu & =\left(\pi_{L}\right)_{*}\left(K_{P}\cdot\pi_{R}^{*}u\right)\\
 & =\left(\beta_{0,L}\right)_{*}\left(\kappa_{P}\cdot\beta_{0,R}^{*}u\right)
\end{align*}
where $\kappa_{P}=\beta_{0}^{*}K_{P}$. We can then apply Melrose's
Pull-back Theorem to recognize $\beta_{0,R}^{*}u$ as a polyhomogeneous
function on $X_{0}^{2}$ with appropriate index sets, and then Melrose's
Push-forward Theorem to prove that the push-forward $\left(\beta_{0,L}\right)_{*}\left(\kappa_{P}\cdot\beta_{0,R}^{*}u\right)$
is well-defined and polyhomogeneous. Secondly, one proves (cf. Lemma
2.43 of \cite{UsulaPhD}) that if $\Re\left(\mathcal{E}_{\of}+\mathcal{F}_{\lf}\right)>-1$
and $\Re\left(\mathcal{E}_{\ff}+\mathcal{F}_{\lf}\right)>0$, then
$A$ extends from $\dot{C}^{\infty}\left(X\right)$ to a continuous
linear map $\mathcal{A}_{\phg}^{\mathcal{F}_{\lf}}\left(X\right)\to C^{\infty}\left(\partial X\right)$.
The technique is the same. If $u\in\dot{C}^{\infty}\left(X\right)$,
then $Au=\left(\beta_{\tr,L}\right)_{*}\left(\kappa_{A}\cdot\beta_{\tr,R}^{*}u\right)$
and this formula implies that $A$ is well-defined as a map $\dot{C}^{\infty}\left(X\right)\to C^{\infty}\left(\partial X\right)$;
the same argument also implies that if $u\in\mathcal{A}_{\phg}^{\mathcal{F}_{\lf}}\left(X\right)$
and the inequalities above are satisfied, then one can push-forward
(i.e. integrate along the fibers) $\kappa_{A}\cdot\beta_{\tr,R}^{*}u$
via $\beta_{\tr,L}$. The inequalities above are indeed integrability
conditions for this push-forward to be well-defined. The final result
is that \emph{if the integrability conditions $\Re\left(\mathcal{E}_{\of}+\mathcal{F}_{\lf}\right)>-1$}
\emph{and} $\Re\left(\mathcal{E}_{\ff}+\mathcal{F}_{\lf}\right)>0$\emph{
are satisfied, then $AP$ is well-defined}.

Now that we know that $AP$ is well-defined, we need to understand
where $K_{AP}$ lives. In order to do this, one needs to work on the
triple space $\partial X\times X\times X$. Calling 
\begin{align*}
\pi_{LC} & :\partial X\times X\times X\to\partial X\times X\\
\pi_{CR} & :\partial X\times X\times X\to X\times X\\
\pi_{LR} & :\partial X\times X\times X\to\partial X\times X
\end{align*}
the various projections, where $L,C,R$ stand for ``left'', ``center''
and ``right'', one has
\[
K_{AP}=\left(\pi_{LR}\right)_{*}\left(\pi_{LC}^{*}K_{A}\cdot\pi_{CR}^{*}K_{P}\right)
\]
where the push-forward is well-defined thanks to the integrability
conditions above. To desingularize $K_{AP}$, we have to lift $K_{AP}$
and the maps $\pi_{LC},\pi_{CR},\pi_{LR}$ to appropriate blow-ups
of the spaces involved. Calling
\begin{align*}
D_{LC} & =\partial\Delta\times X\\
D_{CR} & =\partial X\times\partial\Delta\\
D_{LR} & =\left\{ \left(p,q,p\right):p\in\partial X,q\in X\right\} \\
S & =D_{LC}\cap D_{CR}\cap D_{LR},
\end{align*}
one forms the blown-up triple space
\[
T=\left[\partial X\times X^{2}:T:D_{LC}\cup D_{CR}\cup D_{LR}\right]
\]
and then realizes that, by switching the order of the blow-ups, one
can lift the partial projections $\pi_{LC},\pi_{CR},\pi_{LR}$ to
\emph{$b$-fibrations}
\begin{align*}
\gamma_{LC} & :T\to\left(\partial X\times X\right)_{0}\\
\gamma_{CR} & :T\to X_{0}^{2}\\
\gamma_{LR} & :T\to\left(\partial X\times X\right)_{0}.
\end{align*}
Calling $\kappa_{AP}=\beta_{\tr}^{*}K_{AP}$ and lifting the formula
above for $K_{AP}$, one obtains the identity
\[
\kappa_{AP}=\left(\gamma_{LR}\right)_{*}\left(\gamma_{LC}^{*}\kappa_{A}\cdot\gamma_{CR}^{*}\kappa_{P}\right).
\]
Understanding $\kappa_{AP}$ is now a matter of applying the Pull-back
and Push-forward Theorems as above.

The approach we just outlined is very elegant and extremely powerful.
It is for example one of the main ingredients in the proof of Theorem
\ref{thm:(Mazzeo=002013Melrose)Good_parametrices_fully_0-elliptic}.
However, unfortunately, it does not seem to bring the desired results
for our applications to boundary value problems. More precisely, in
\cite{MazzeoEdgeII} this approach is used to discuss compositions
\begin{align*}
\Psi_{0\tr}^{-\infty,\bullet}\left(X,\partial X\right)\circ\Psi_{0}^{-\infty,\bullet}\left(X\right)\\
\Psi_{0}^{-\infty,\bullet}\left(X\right)\circ\Psi_{0\po}^{-\infty,\bullet}\left(\partial X,X\right)\\
\Psi_{0\po}^{-\infty,\bullet}\left(\partial X,X\right)\circ\Psi_{0\tr}^{-\infty,\bullet}\left(X,\partial X\right) & .
\end{align*}
The discussion of these composition rules in \cite{MazzeoEdgeII}
however presents some technical issues, which are corrected in §2.9
of \cite{UsulaPhD}. Unfortunately, the corrected results are not
sufficient to construct parametrices for $0$-elliptic boundary value
problems.

Another serious problem with the approach outlined above is that it
does not really work to accurately discuss the compositions
\begin{align*}
\Psi^{\bullet}\left(\partial X\right)\circ\Psi_{0\tr}^{-\infty,\bullet}\left(X,\partial X\right)\\
\Psi_{0\po}^{-\infty,\bullet}\left(\partial X,X\right)\circ\Psi^{\bullet}\left(\partial X\right)\\
\Psi_{0\tr}^{-\infty,\bullet}\left(X,\partial X\right)\circ\Psi_{0\po}^{-\infty,\bullet}\left(\partial X,X\right).
\end{align*}
In the first two cases, the problem is that the Schwartz kernel of
an operator $Q\in\Psi^{\bullet}\left(\partial X\right)$ has a singularity
at $\partial\Delta$ which is \emph{not} polyhomogeneous in general.
Rather, it is a more general \emph{interior conormal singularity}.
Concerning the third case, if one tries to apply the technique above
to $AB$ for $A\in\Psi_{0\tr}^{-\infty,\bullet}\left(X,\partial X\right)$
and $B\in\Psi_{0\po}^{-\infty,\bullet}\left(\partial X,X\right)$,
one obtains that $AB$ is indeed a pseudodifferential operator on
$\partial X$, with a polyhomogeneous singularity at $\partial\Delta$;
however, the result tends to be much worse than one would expect,
because of the appearance of ``spurious'' extended unions by index
sets at the front face of $\left[\partial X\times\partial X:\partial\Delta\right]$.

We will not go into a detailed discussion of these delicate issues,
for which we refer to §2 of \cite{UsulaPhD}. However, we claim that
\emph{all these issues disappear in the symbolic approach}. The core
of our proofs is again the pull-back / push-forward technique; however,
this technique is applied \emph{in frequency space} rather than at
the Schwartz kernel level. This is the reason for which we insist
on using symbols which are polyhomogeneous on appropriate blown-up
model spaces. We remark that one could use symbols which have \emph{boundary
conormal singularities} at the various faces of the blown-up model
spaces. This would be necessary if one wants to adapt this theory
to $0$-elliptic boundary value problems with non-constant indicial
roots. However, some very delicate extra issues (for which we refer
to Krainer and Mendoza's works) arise in this case, so extra care
is needed.

\subsection{\label{subsec:Basic-mapping-properties}Basic mapping properties}

We will now establish mapping properties on polyhomogeneous functions.
The basic idea is to imitate the standard proof of the fact that if
$Q$ is a standard pseudodifferential operator on $\partial X$, then
$Q$ induces a continuous linear map $Q:C^{\infty}\left(\partial X\right)\to C^{\infty}\left(\partial X\right)$.
Let's sketch the argument. Let's say that $Q\in\Psi^{m}\left(\partial X\right)$.
Then, by definition of $\Psi^{m}\left(\partial X\right)$, given an
atlas $\left\{ \left(U_{i},\varphi_{i}\right)\right\} $ for $\partial X$
one can decompose $Q$ as a finite sum $Q=\sum_{i}Q_{i}+R$, where
$Q_{i}\in\Psi^{m}\left(\partial X\right)$ is compactly supported
on $U_{i}\times U_{i}$ and $R\in\Psi^{-\infty}\left(\partial X\right)$
vanishes in a neighborhood of $\partial\Delta$. To prove that $Q$
induces a continuous map $C^{\infty}\left(\partial X\right)\to C^{\infty}\left(\partial X\right)$,
it suffices to prove that each $Q_{i}$ and $R$ are continuous as
maps $C^{\infty}\left(\partial X\right)\to C^{\infty}\left(\partial X\right)$.
It is straightforward to check the claim for $R$. Concerning $Q_{i}$,
one observes that if $u\in C^{\infty}\left(\partial X\right)$ and
$\chi$ is a bump function compactly supported on $U_{i}$ and such
that $\chi\boxtimes\chi$ is equal to $1$ on the support of $Q_{i}$,
then $Q_{i}u=\chi Q_{i}\chi u$. Now, using the coordinates $y$ on
$U_{i}$ induced by the chart $\varphi_{i}$, we have $Q_{i}=\Op_{L}\left(q_{i}\left(y;\eta\right)\right)\in\Psi_{\mathcal{S}}^{m}\left(\mathbb{R}^{n}\right)$
for some $q_{i}\left(y;\eta\right)\in\mathcal{S}\left(\mathbb{R}^{n}\right)\hat{\otimes}\mathcal{A}^{-m}\left(\overline{\mathbb{R}}^{n}\right)$,
and therefore it is sufficient to prove that if $Q\in\Psi_{\mathcal{S}}^{m}\left(\mathbb{R}^{n}\right)$
then $Q$ is continuous as a map $\mathcal{S}\left(\mathbb{R}^{n}\right)\to\mathcal{S}\left(\mathbb{R}^{n}\right)$.
This is a consequence of the definition of $\Op_{L}$. We have
\[
\Op_{L}\left(q\left(y;\eta\right)\right)u=\frac{1}{\left(2\pi\right)^{n}}\int e^{iy\eta}q\left(y;\eta\right)\hat{u}\left(\eta\right)d\eta,
\]
and:
\begin{enumerate}
\item the Fourier transform $u\left(\tilde{y}\right)\mapsto\hat{u}\left(\eta\right)$
is continuous $\mathcal{S}\left(\mathbb{R}^{n}\right)\to\mathcal{S}\left(\mathbb{R}^{n}\right)$;
\item the map $\hat{u}\left(\eta\right)\mapsto q\left(y;\eta\right)\hat{u}\left(\eta\right)$
is continuous $\mathcal{S}\left(\mathbb{R}^{n}\right)\to S_{\mathcal{S}}^{-\infty}\left(\mathbb{R}^{n};\mathbb{R}^{n}\right)$;
\item the inverse Fourier transform in the $\eta$ variables $\tilde{q}\left(y;\eta\right)\mapsto\frac{1}{\left(2\pi\right)^{n}}\int e^{iy\eta}\tilde{q}\left(y;\eta\right)d\eta$
is continuous $S_{\mathcal{S}}^{-\infty}\left(\mathbb{R}^{n};\mathbb{R}^{n}\right)=\mathcal{S}\left(\mathbb{R}^{n}\right)\hat{\otimes}\mathcal{S}\left(\mathbb{R}^{n}\right)\to\mathcal{S}\left(\mathbb{R}^{n}\right)$.
\end{enumerate}
This concludes the proof.

All our proofs in this subsection are variations on this basic argument.
Every step is essentially unchanged, except for the second step. We
first discuss the un-twisted case.
\begin{thm}
\label{thm:mapping-properties-on-phg-untwisted}Let $\mathcal{F}$
be an index set.
\begin{enumerate}
\item Let $A\in\hat{\Psi}_{0\tr}^{-\infty,\mathcal{E}}\left(X,\partial X\right)$.
If $\Re\left(\mathcal{E}_{\of}+\mathcal{F}\right)>0$, then $A$ extends
from $\dot{C}^{\infty}\left(X\right)$ to a continuous linear map
\[
A:\mathcal{A}_{\phg}^{\mathcal{F}}\left(X\right)\to C^{\infty}\left(\partial X\right);
\]
\item Let $B\in\hat{\Psi}_{0\po}^{-\infty,\mathcal{E}}\left(\partial X,X\right)$.
Then $B$ induces a continuous linear map
\[
B:C^{\infty}\left(\partial X\right)\to\mathcal{A}_{\phg}^{\mathcal{E}_{\of}}\left(X\right);
\]
\item Let $P\in\hat{\Psi}_{0}^{-\infty,\mathcal{E}}\left(X\right)$. If
$\Re\left(\mathcal{E}_{\rf}+\mathcal{F}\right)>-1$, then $P$ extends
from $\dot{C}^{\infty}\left(X\right)$ to a continuous linear map
\[
P:\mathcal{A}_{\phg}^{\mathcal{F}}\left(X\right)\to\mathcal{A}_{\phg}^{\mathcal{E}_{\lf}}\left(X\right);
\]
\item Let $P\in\hat{\Psi}_{0b}^{-\infty,\mathcal{E}}\left(X\right)$. If
$\Re\left(\mathcal{E}_{\rf}+\mathcal{F}\right)>-1$, then $P$ extends
from $\dot{C}^{\infty}\left(X\right)$ to a continuous linear map
\[
P:\mathcal{A}_{\phg}^{\mathcal{F}}\left(X\right)\to\mathcal{A}_{\phg}^{\mathcal{E}_{\lf}\overline{\cup}\left(\mathcal{F}+\mathcal{E}_{\ff_{b}}\right)}\left(X\right).
\]
\end{enumerate}
\end{thm}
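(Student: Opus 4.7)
The plan is to follow the template sketched just before the theorem (the standard proof that $\Psi^m(\partial X)$ preserves $C^\infty(\partial X)$), adapted to each of the four classes. Using a partition of unity subordinate to a cover of $X$ by coordinate charts compatible with $V$, one reduces to two kinds of pieces: the ``off-diagonal'' component, which by definition lies in the relevant residual class (very residual, residual trace, residual Poisson, or large residual $b$-calculus) and whose Schwartz kernel is polyhomogeneous on the unblown double space, and the ``near-diagonal'' component, whose Schwartz kernel is locally given by an oscillatory integral against a polyhomogeneous symbol on one of the model spaces $\hat{T}_0^2, \hat{P}_0^2, \hat{M}_0^2, \hat{M}_{0b}^2$. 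The residual pieces are handled directly by the Pull-back and Push-forward Theorems applied to the unblown doubles, under the same (or weaker) integrability hypotheses as stated. All four assertions therefore reduce to the local near-diagonal analysis.

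For the local pieces one applies the three-step ``Fourier transform, multiply, inverse Fourier transform'' decomposition. In the trace case, say $K_A\equiv \frac{1}{(2\pi)^n}\int e^{i(y-\tilde y)\eta} a(y;\tilde x,\eta)\,d\eta\,d\tilde x\,d\tilde y$ with $a\in S_{0\tr,\mathcal{S}}^{-\infty,\mathcal{E}}$, and $u\in \mathcal{A}_{\phg}^{\mathcal{F}}(X)$ cut off to the chart, one writes
\[
(Au)(y)=\int\!\Big[\tfrac{1}{(2\pi)^n}\!\int e^{iy\eta}a(y;\tilde x,\eta)\,\hat u(\tilde x,\eta)\,d\eta\Big]d\tilde x
\]
and checks, via the Pull-back Theorem, that $\hat u$ lifts to $\hat{T}_0^2$ with index set $(\mathcal{F},\infty,\infty,\infty)$, so the product $a\cdot\hat u$ lies in $\mathcal{S}(\mathbb{R}^n_y)\,\hat\otimes\,\mathcal{A}_{\phg}^{(\mathcal{E}_{\of}+\mathcal{F},\infty,\infty,\infty)}(\hat{T}_0^2)$. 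Under $\Re(\mathcal{E}_{\of}+\mathcal{F})>0$ (the condition for push-forward along the fiber $\tilde x\mapsto\ast$), the integral in $\tilde x$ lands in $\mathcal{S}(\mathbb{R}^n_y)\,\hat\otimes\,\mathcal{S}(\mathbb{R}^n_\eta)$; the inverse Fourier transform then yields an element of $\mathcal{S}(\mathbb{R}^n_y)=\dot C^\infty(\overline{\mathbb{R}}^n)$, which glues into $C^\infty(\partial X)$. The Poisson case is strictly dual and simpler since there is no $\tilde x$-integration: the product $b(y;x,\eta)\hat u(\eta)$ lies in $\mathcal{S}(\mathbb{R}^n_y)\,\hat\otimes\,\mathcal{A}_{\phg}^{\mathcal{E}}(\hat{P}_0^2)$, and Proposition~\ref{prop:Hintz-lemma} identifies its inverse Fourier transform as lifting to $P_0^2$ with index set $\mathcal{E}_{\of}$ at the lift of $\lf$; restricting to $\tilde y=y$ produces polyhomogeneity at $x=0$ with index $\mathcal{E}_{\of}$.

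For the two interior cases the same three-step recipe applies on the model spaces $\hat{M}_0^2$ and $\hat{M}_{0b}^2$: after Fourier transform in $\tilde y$ and multiplication by the symbol, the integrability condition $\Re(\mathcal{E}_{\rf}+\mathcal{F})>-1$ is exactly what guarantees convergence of the $\tilde x$-push-forward at $\rf$. In the $0$-interior case only the faces $\lf,\rf,\ff_0$ contribute to the model space, and the Push-forward Theorem gives polyhomogeneity in $x$ with index set $\mathcal{E}_{\lf}$. In the $0b$-interior case the additional face $\ff_b$ in $\hat{M}_{0b}^2$ meets $\rf$, so its contribution to the expansion of the output at $x=0$ produces precisely the extra summand $\mathcal{F}+\mathcal{E}_{\ff_b}$ in the extended union, yielding index set $\mathcal{E}_{\lf}\,\overline{\cup}\,(\mathcal{F}+\mathcal{E}_{\ff_b})$.

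The main obstacle is the bookkeeping of index sets at all the faces of the blown-up frequency-side model spaces and matching them with the integrability conditions of the push-forward. Especially in the $0b$-case, one must carefully trace how the front face $\ff_b$ in $\hat{M}_{0b}^2$ interacts with the polyhomogeneous lift of $\hat u$ and with the symbol's own $\ff_b$-index set, in order to pin down the extended-union term in the conclusion; the rest is a routine instance of the general Pull-back/Push-forward mechanism recalled in \S\ref{subsec:The-pull-back-push-forward-technique}.
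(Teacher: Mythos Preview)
Your overall strategy matches the paper's: localize, treat the residual piece by the classical pull-back/push-forward on the unblown double, and for the near-diagonal piece run the three-step Fourier argument with pull-back/push-forward applied to the polyhomogeneous symbol on the frequency-side model space. For parts (1) and (2) your sketch is fine (indeed for (2) you do not even need Proposition~\ref{prop:Hintz-lemma}: since $\hat u$ is Schwartz in $\eta$, the product $b\cdot\hat u$ already vanishes to infinite order at $\ff$ and $\iif_\eta$, so it lives on $\hat P^2$ and the inverse Fourier transform is immediate, exactly as in the verification that $\Op_L^{\po}$ is well-defined).

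There is, however, a genuine gap in your treatment of the interior cases (3) and (4). You write that after multiplying the symbol by $\hat u$ on $\hat M_0^2$ or $\hat M_{0b}^2$, ``the Push-forward Theorem gives polyhomogeneity in $x$'' and that ``the rest is a routine instance of the general Pull-back/Push-forward mechanism.'' But the natural projection $(x,\tilde x,\eta)\mapsto(x,\eta)$ is \emph{not} a $b$-fibration out of $\hat M_0^2$ or $\hat M_{0b}^2$: the front face $\ff_0$ is sent to the corner $\{x=0,\ |\eta|=\infty\}$, so Melrose's Push-forward Theorem does not apply directly. The paper handles this by introducing an auxiliary blow-up
\[
Z_b=\bigl[\hat M_{0b}^2:\iif_\eta\cap(\lf\cup\rf)\bigr],
\]
from which the lifted projections $\gamma_L:Z_b\to\hat P_0^2$ and $\gamma_R:Z_b\to\hat T_0^2$ \emph{are} $b$-fibrations; the push-forward is then performed from $Z_b$, and it is the combinatorics of $\gamma_L$ (specifically, that both $\lf$ and $\ff_b$ map to $\of\subset\hat P_0^2$) that produces the extended union $\mathcal{E}_{\lf}\,\overline{\cup}\,(\mathcal{F}+\mathcal{E}_{\ff_b})$ in part (4). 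An equivalent fix, closer in spirit to your sketch, is to observe that $p\cdot\hat u$ vanishes to infinite order at $\ff_0$ (because $\hat u$ does), blow $\ff_0$ back down to land on $\hat M_b^2$, and push forward from there to $\hat P^2$, which \emph{is} a $b$-fibration. Either way, the step you call ``routine'' requires this extra geometric input.
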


\begin{proof}
For simplicity, we only discuss the $0b$-interior case since the
other proofs are very similar but simpler. Recall that the classes
above have been defined using a fixed auxiliary vector field $V$
on $X$, transversal to the boundary and inward-pointing. We can use
$V$ to define a collar neighborhood $[0,\varepsilon)\times\partial X$
of $\partial X$ in $X$, and therefore we can identify a neighborhood
of the corner $\partial X\times\partial X$ in $X^{2}$ with $\left([0,\varepsilon)\times\left(\partial X\right)\right)^{2}$.
Now, let $Q\in\hat{\Psi}_{0b}^{-\infty,\mathcal{H}}\left(X\right)$.
Then, by definition of $\hat{\Psi}_{0b}^{-\infty,\mathcal{H}}\left(X\right)$,
given an atlas $\left\{ \left(U_{i},\varphi_{i}\right)\right\} $
for $\partial X$ we can decompose $Q$ as a finite sum $\sum_{i}Q_{i}+R$
where $Q_{i}$ is compactly supported on $\left([0,\varepsilon)\times U_{i}\right)^{2}$
and $R$ is an element of $\Psi_{b}^{-\infty,\left(\mathcal{H}_{\lf},\mathcal{H}_{\rf},\mathcal{H}_{\ff_{b}}\right)}\left(X\right)$
which vanishes in a neighborhood of the lift of $\partial\Delta$
to $X_{b}^{2}$.

The fact that the residual part $R$ satisfies the mapping property
claimed above is proved in \cite{MazzeoEdgeI} using the pull-back
/ push-forward technique. Now, since $Q_{i}$ is compactly supported
on $\left([0,\varepsilon)\times U_{i}\right)^{2}$, we can find compactly
supported bump functions $\chi_{1}$ on $[0,\varepsilon)$ and $\chi_{2}$
on $U_{i}$ such that $Q_{i}=\chi_{1}\chi_{2}Q_{i}\chi_{1}\chi_{2}$.
Finally, calling $y$ the coordinates induced by the chart $\varphi_{i}$,
we have $Q_{i}=\Op_{L}^{\inte}\left(q_{i}\left(y;x,\tilde{x},\eta\right)\right)$
for some $0b$-interior symbol $q_{i}\left(y;x,\tilde{x},\eta\right)$.
Therefore, it is sufficient to prove that if $Q=\Op_{L}^{\inte}\left(q\left(y;x,\tilde{x},\eta\right)\right)$
for some $q\in S_{0b,\mathcal{S}}^{-\infty,\mathcal{E}}\left(\mathbb{R}^{n};\mathbb{R}_{2}^{n+2}\right)$,
and $\Re\left(\mathcal{E}_{\rf}+\mathcal{F}\right)>-1$, then $Q$
induces a continuous linear map\emph{
\[
Q:\mathcal{A}_{\phg}^{\left(\mathcal{F},\infty,\infty\right)}\left(\overline{\mathbb{R}}_{1}^{1}\times\overline{\mathbb{R}}^{n}\right)\to\mathcal{A}_{\phg}^{\left(\mathcal{E}_{\lf}\overline{\cup}\left(\mathcal{F}+\mathcal{E}_{\ff_{b}}\right),\infty,\infty\right)}\left(\overline{\mathbb{R}}_{1}^{1}\times\overline{\mathbb{R}}^{n}\right).
\]
}By definition of the left interior quantization map $\Op_{L}^{\inte}$,
$Q=\Op_{L}^{\inte}\left(q\right)$ acts on a function $u$ as
\[
u\left(x,y\right)\mapsto\frac{1}{\left(2\pi\right)^{n}}\int e^{iy\eta}q\left(y;x,\tilde{x},\eta\right)\hat{u}\left(\tilde{x},\eta\right)d\tilde{x}d\eta.
\]
From this formula and the decomposition
\begin{align*}
S_{0b,\mathcal{S}}^{-\infty,\mathcal{H}}\left(\mathbb{R}^{n};\mathbb{R}_{2}^{n+2}\right) & =\mathcal{S}\left(\mathbb{R}^{n}\right)\hat{\otimes}\mathcal{A}_{\phg}^{\left(\mathcal{H}_{\lf},\mathcal{H}_{\rf},\mathcal{H}_{\ff_{b}}-1,\mathcal{H}_{\ff_{0}}-1,\infty,\infty,\infty\right)}\left(\hat{M}_{0b}^{2}\right),
\end{align*}
it is clear that it suffices to prove that the ``contraction mapping''
\begin{align*}
\mathcal{A}_{\phg}^{\left(\mathcal{F},\infty,\infty\right)}\left(\overline{\mathbb{R}}_{1}^{1}\times\overline{\mathbb{R}}^{n}\right) & \to\mathcal{A}_{\phg}^{\left(\mathcal{H}_{\lf}\overline{\cup}\left(\mathcal{F}+\mathcal{H}_{\ff_{b}}\right),\infty,\infty\right)}\left(\overline{\mathbb{R}}_{1}^{1}\times\overline{\mathbb{R}}^{n}\right)\\
v\left(x,\eta\right) & \mapsto\int q\left(x,\tilde{x};\eta\right)v\left(\tilde{x},\eta\right)d\tilde{x}
\end{align*}
is well-defined and continuous.

This is where we apply the pull-back / push-forward technique. Recall
that we denoted by $\beta_{0b}:\hat{M}_{0b}^{2}\to\hat{M}^{2}$ the
blow-down map; denote by $\beta_{0b,L}$ and $\beta_{0b,R}$ the lifts
of the canonical projections $\pi_{L},\pi_{R}:\hat{M}^{2}\to\text{\ensuremath{\overline{\mathbb{R}}_{1}^{1}\times}}\overline{\mathbb{R}}^{n}$
given by $\pi_{L}:\left(x,\tilde{x},\eta\right)\mapsto\left(x,\eta\right)$
and $\pi_{R}:\left(x,\tilde{x},\eta\right)\mapsto\left(\tilde{x},\eta\right)$.
Note that the range of $\pi_{L}$ should be interpreted as the model
space $\hat{P}^{2}$, while the range of $\pi_{R}$ should be interpreted
as the model space $\hat{T}^{2}$. The integral above can be rewritten
as
\[
\left(\beta_{0b,L}\right)_{*}\left(\beta_{0b}^{*}q\cdot\beta_{0b,R}^{*}v\right).
\]
However this is not the correct way to interpret this integral, since
the maps $\beta_{0b,L}$ and $\beta_{0b,R}$ are not $b$-fibrations
(they both push $\ff_{0}$ to a corner). To fix this, we blow $\hat{M}_{0b}^{2}$
up at the corners $\iif_{\eta}\cap\lf$ and $\iif_{\eta}\cap\rf$,
obtaining the space
\[
Z_{b}=\left[\hat{M}_{0b}^{2}:\iif_{\eta}\cap\left(\lf\cup\rf\right)\right].
\]
We denote by $\ef_{x}$ and $\ef_{\tilde{x}}$ the two ``extra faces''
obtained from the blow-up. This space is represented in Figure \ref{fig:Zb}\begin{figure}
	\centering
	\caption{$Z_b$}
	\label{fig:Zb}
	

	\tikzmath{\L = 5;\R = \L /2.5;\u=180/7;}
	\tdplotsetmaincoords{60}{160}
	\begin{tikzpicture}[tdplot_main_coords, scale = 0.65]


		\coordinate (A) at (-\L,0,0) ;
		\coordinate (A1) at (-\L+\R,0,0) ;
		\coordinate (A2) at (-\L,\R,0) ;
		\coordinate (A3) at (-\L,0,\R) ;
		\coordinate (A4) at ({-\L+\R*cos(\u)},{\R*sin(\u)},0) ;
		\coordinate (A5) at ({-\L+\R*cos(\u)},0,{\R*sin(\u)}) ;
		\coordinate (A6) at (-\L,{\R*cos(\u)},{\R*sin(\u)}) ;
		\coordinate (A7) at (-\L,{\R*cos(90-\u)},{\R*sin(90-\u)}) ;
		\coordinate (A8) at ({-\L+\R*cos(90-\u)},{\R*sin(90-\u)},0) ;
		\coordinate (A9) at ({-\L+\R*sin(\u)},0,{\R*cos(\u)}) ;

		\coordinate (B) at (\L,0,0) ;
		\coordinate (B1) at (\L-\R,0,0) ;
		\coordinate (B2) at (\L,\R,0) ;
		\coordinate (B3) at (\L,0,\R) ;
		\coordinate (B4) at ({\L-\R*cos(\u)},{\R*sin(\u)},0) ;
		\coordinate (B5) at ({\L-\R*cos(\u)},0,{\R*sin(\u)}) ;
		\coordinate (B6) at (\L,{\R*cos(\u)},{\R*sin(\u)}) ;
		\coordinate (B7) at (\L,{\R*cos(90-\u)},{\R*sin(90-\u)}) ;
		\coordinate (B8) at ({\L-\R*cos(90-\u)},{\R*sin(90-\u)},0) ;
		\coordinate (B9) at ({\L-\R*sin(\u)},0,{\R*cos(\u)}) ;

		\coordinate (C) at (-\L,\L,0) ;
		\coordinate (C1) at ({-\L+\R*cos(90-\u)},\L,0) ;
		\coordinate (C2) at (-\L,\L,{\R*sin(\u)}) ;
		
		\coordinate (D) at (\L,\L,0) ;
		\coordinate (D1) at ({\L-\R*cos(90-\u)},\L,0) ;
		\coordinate (D2) at (\L,\L,{\R*sin(\u)}) ;
		
		\coordinate (E) at (-\L,0,\L) ;
		\coordinate (E1) at (-\L,{\R*cos(90-\u)},\L) ;
		\coordinate (E2) at ({-\L+\R*sin(\u)},0,\L) ;

		\coordinate (F) at (\L,0,\L) ;
		\coordinate (F1) at (\L,{\R*cos(90-\u)},\L) ;
		\coordinate (F2) at ({\L-\R*sin(\u)},0,\L) ;
		
		\coordinate (G) at (-\L,\L,\L);
		\coordinate (H) at (\L,\L,\L);

		\draw
			(B4) -- (A4)
			(B5) -- (A5)
			(A6) -- (C2)
			(A7) -- (E1)
			(A8) -- (C1)
			(A9) -- (E2)
			(B7) -- (F1)
			(B9) -- (F2)
			(B6) -- (D2)
			(B8) -- (D1)
						
		;

		\draw[]
			(F1) -- (H)
			(H) -- (D2)
			(E1) -- (G)
			(G) -- (C2)
			(F2) -- (E2)
			(D1) -- (C1)
			(G) -- (H)
		;

		\begin{scope}[canvas is zy plane at x=0]
			\draw (B7) arc(\u:{90-\u}:\R);
			\draw (A7) arc(\u:{90-\u}:\R);
			\draw (B4) arc({90-\u}:{\u}:{\R*cos(\u)});
			\draw (A4) arc({90-\u}:{\u}:{\R*cos(\u)});
		\end{scope}
		\begin{scope}[canvas is xy plane at z=0]
			\draw (B8) arc({90+\u}:{180-\u}:\R);
			\draw (A8) arc({90-\u}:{\u}:\R);
			\draw (A9) arc(0:90:{\R*sin(\u)});
			\draw[] (E2) arc(0:90:{\R*sin(\u)});
			\draw (B7) arc(90:180:{\R*sin(\u)});
			\draw[] (F1) arc(90:180:{\R*sin(\u)});
		\end{scope}
		\begin{scope}[canvas is xz plane at y=0]
			\draw (B9) arc({90+\u}:{180-\u}:\R);
			\draw (A9) arc({90-\u}:{\u}:\R);
			\draw (A8) arc(0:90:{\R*sin(\u)});
			\draw[] (C1) arc(0:90:{\R*sin(\u)});
			\draw (B6) arc(90:180:{\R*sin(\u)});
			\draw[] (D2) arc(90:180:{\R*sin(\u)});
		\end{scope}
		
		
		\node at (0,{2/3*\L},0) {$\text{rf}$};
		
		\node at (0,0,{2/3*\L}) {$\text{lf}$};
		
		\node at ({\L-\L/6},{\L/6},{\L/6}) {$\text{ff}_0$};
		\node at ({-\L+\L/6},{\L/6},{\L/6}) {$\text{ff}_0$};
			
		\node at ({-\L+\L/12},{\L/12},{2/3*\L}) {$\text{ef}_x$};
		\node at ({\L-\L/8},{\L/8},{2/3*\L}) {$\text{ef}_x$};

		\node at ({-\L+\L/12},{2/3*\L},{\L/12}) {$\text{ef}_{\tilde{x}}$};
		\node at ({\L-\L/8},{2/3*\L},{\L/8}) {$\text{ef}_{\tilde{x}}$};
		
		
		\node at (0,{0.1*\L},{0.1*\L}) {$\text{ff}_b$};

	\begin{scope}[shift = {(0,0,{2*\L})}]

		\coordinate (A) at (-\L,0,0) ;
		\coordinate (A1) at (-\L+\R/2,0,0) ;
		\coordinate (A2) at (-\L,\R/2,0) ;
		
		\coordinate (B) at (\L,0,0) ;
		\coordinate (B1) at (\L-\R/2,0,0) ;
		\coordinate (B2) at (\L,\R/2,0) ;
		
		\coordinate (C) at (-\L,\L,0) ;
		\coordinate (D) at (\L,\L,0) ;
		
		\draw 
			(B2) -- (D) 
			(B1) -- (A1) 
			(A2) -- (C) 
		;

		\draw[]
			(D) -- (C)
		;

		\begin{scope}[canvas is xy plane at z=0]
			\draw (B2) arc(90:180:\R/2);
			\draw (A2) arc(90:0:\R/2);
		\end{scope}
		
		
		\node at (0,{\L/9},0) {$\text{of}$};

		\node at ({\L-\L/4},{\L/4},0) {$\text{ff}$};
		\node at ({-\L+\L/4},{\L/4},0) {$\text{ff}$};


	\end{scope}


	\begin{scope}[shift = {(0,{2.5*\L},0)}]

		\coordinate (A) at (-\L,0,0) ;
		\coordinate (A1) at (-\L+\R/2,0,0) ;
		\coordinate (A3) at (-\L,0,\R/2) ;

		\coordinate (B) at (\L,0,0) ;
		\coordinate (B1) at (\L-\R/2,0,0) ;
		\coordinate (B3) at (\L,0,\R/2) ;

		\coordinate (E) at (-\L,0,\L) ;
		\coordinate (F) at (\L,0,\L) ;
		
		\draw 
			(B3) -- (F) 
			(B1) -- (A1) 
			(A3) -- (E)
		;

		\draw[]
			(F) -- (E)
		;

		\begin{scope}[canvas is xz plane at y=0]
			\draw (B3) arc(90:180:\R/2);
			\draw (A1) arc(0:90:\R/2);
		\end{scope}
		

		\node at (0,0,{\L/9}) {$\text{of}$};

		\node at ({\L-\L/4},0,{\L/4}) {$\text{ff}$};
		\node at ({-\L+\L/4},0,{\L/4}) {$\text{ff}$};


	\end{scope}

	
	\draw[-latex] (0,0,{\L+0.1*\L}) -- (0,0,{3/2*\L}) node[midway, right] {$\gamma_L$};
	\draw[-latex] (0,{\L+0.1*\L},0) -- (0,{3/2*\L},0) node[midway, right] {$\gamma_R$};
		
	\end{tikzpicture}

\end{figure}. Now, the $b$-maps $\beta_{0b,L}:\hat{M}_{0b}^{2}\to\hat{P}^{2}$
and $\beta_{0b,R}:\hat{M}_{0b}^{2}\to\hat{T}^{2}$ lift to two $b$-fibrations
$\gamma_{L}:Z_{b}\to\hat{P}_{0}^{2}$ and $\gamma_{R}:Z_{b}\to\hat{T}_{0}^{2}$,
and we can write the integral above as
\[
\left(\gamma_{L}\right)_{*}\left(\gamma^{*}q\cdot\gamma_{R}^{*}v\cdot\gamma_{R}^{*}d\tilde{x}\right).
\]
Choose boundary defining functions $\rho_{\of},\rho_{\ff},\rho_{\iif_{\eta}},\rho_{\iif_{x}}$
for $\hat{P}_{0}^{2}$, and boundary defining functions $\tilde{\rho}_{\of},\tilde{\rho}_{\ff},\tilde{\rho}_{\iif_{\eta}},\tilde{\rho}_{\iif_{\tilde{x}}}$
for $\hat{T}_{0}^{2}$. Then we have
\[
\begin{array}{ll}
\gamma_{L}^{*}\rho_{\of}=r_{\ff_{b}}r_{\lf} & \gamma_{R}^{*}\tilde{\rho}_{\of}=r_{\ff_{b}}r_{\rf}\\
\gamma_{L}^{*}\rho_{\ff}=r_{\ff_{0}}r_{\ef_{x}} & \gamma_{R}^{*}\tilde{\rho}_{\ff}=r_{\ff_{0}}r_{\ef_{\tilde{x}}}\\
\gamma_{L}^{*}\rho_{\iif_{\eta}}=r_{\iif_{\eta}}r_{\ef_{\tilde{x}}} & \gamma_{R}^{*}\tilde{\rho}_{\iif_{\eta}}=r_{\iif_{\eta}}r_{\ef_{x}}\\
\gamma_{L}^{*}\rho_{\iif_{x}}=r_{\iif_{x}} & \gamma_{R}^{*}\tilde{\rho}_{\iif_{\tilde{x}}}=r_{\iif_{\tilde{x}}}
\end{array}
\]
where $r_{\lf},r_{\rf},r_{\ff_{b}},r_{\ff_{0}},r_{\ef_{x}},r_{\ef_{\tilde{x}}},r_{\iif_{\eta}},r_{\iif_{x}},r_{\iif_{\tilde{x}}}$
are boundary defining functions for $Z_{b}$. Moreover, $\gamma_{L}$
sends $\rf$ to the interior of $\hat{P}_{0}^{2}$, and $\gamma_{R}$
sends $\lf$ to the interior of $\hat{T}_{0}^{2}$. These relations
are clear from Figure \ref{fig:Zb}. Now, we have $v\in\mathcal{A}_{\phg}^{\left(\mathcal{E}_{\of},\infty,\infty\right)}\left(\overline{\mathbb{R}}_{1}^{1}\times\overline{\mathbb{R}}^{n}\right)$,
so by the Pull-back Theorem $\gamma_{R}^{*}v$ has index sets $0$
at $\lf$, $\mathcal{F}$ at $\rf$ and $\ff_{b}$, and $\infty$
at all the other faces. From the index sets of $\beta_{0b}^{*}q$
at $\hat{M}_{0b}^{2}$, we obtain that $\gamma^{*}q\cdot\gamma_{R}^{*}v$
has index sets $\mathcal{E}_{\lf}$ at $\lf$, $\mathcal{E}_{\rf}+\mathcal{F}$
at $\rf$, $\mathcal{E}_{\ff_{b}}+\mathcal{F}-1$ at $\ff_{b}$, and
$\infty$ at all the other faces. Now, using the properties of $\gamma_{L}$
and $\gamma_{R}$ above, with some elementary computations we see
that $\gamma^{*}q\cdot\gamma_{R}^{*}v\cdot\gamma_{R}^{*}d\tilde{x}$
is a polyhomogeneous section of $^{b}\mathcal{D}_{Z}^{1}\otimes\gamma_{L}^{*}{^{b}\mathcal{D}_{\hat{P}_{0}^{2}}^{-1}}$
with index sets $\mathcal{E}_{\lf}$ at $\lf$, $\mathcal{E}_{\rf}+\mathcal{F}+1$
at $\rf$, $\mathcal{E}_{\ff_{b}}+\mathcal{F}$ at $\ff_{b}$, and
$\infty$ at all the other faces. Therefore, $\left(\gamma_{L}\right)_{*}\left(\gamma^{*}q\cdot\gamma_{R}^{*}v\cdot\gamma_{R}^{*}d\tilde{x}\right)$
is polyhomogeneous on $\hat{P}_{0}^{2}$ with index sets $\mathcal{E}_{\lf}\overline{\cup}\left(\mathcal{F}+\mathcal{E}_{\ff_{b}}\right)$
at $\of$ and $\infty$ at all the other faces. In particular, the
infinite order of vanishing at $\ff$ implies that $\left(\gamma_{L}\right)_{*}\left(\gamma^{*}q\cdot\gamma_{R}^{*}v\cdot\gamma_{R}^{*}d\tilde{x}\right)$
is polyhomogeneous on the blow-down $\hat{P}^{2}$. This concludes
the proof.
\end{proof}
\begin{rem}
Point 3 of the previous theorem shows once again that $0$-interior
operators are generically better than operators in the $0$-calculus.
Indeed, by the mapping properties proved in \cite{MazzeoEdgeI}, if
$P\in\Psi_{0}^{-\infty,\mathcal{E}}\left(X\right)$ and $\Re\left(\mathcal{E}_{\rf}+\mathcal{F}\right)>-1$,
then $P$ induces a continuous linear map $P:\mathcal{A}_{\phg}^{\mathcal{F}}\left(X\right)\to\mathcal{A}_{\phg}^{\mathcal{E}_{\lf}\overline{\cup}\left(\mathcal{F}+\mathcal{E}_{\ff_{0}}\right)}\left(X\right)$.
By contrast, if $P\in\hat{\Psi}_{0}^{-\infty,\mathcal{E}}\left(X\right)$,
then $P$ maps into $\mathcal{A}_{\phg}^{\mathcal{E}_{\lf}}\left(X\right)$\emph{
independently of the index set $\mathcal{E}_{\ff_{0}}$}.
\end{rem}

The analogous statements for twisted symbolic $0$-trace and $0$-Poisson
operators, and for the twisted boundary calculus, follow easily. Fix
$\boldsymbol{E},\boldsymbol{F}\to\partial X$ vector bundles, and
$\boldsymbol{\mathfrak{s}},\boldsymbol{\mathfrak{t}}$ smooth endomorphisms
of $\boldsymbol{E},\boldsymbol{F}$ with constant indicial roots.
\begin{cor}
\label{cor:mapping-properties-phg-twisted}Let $\mathcal{F}$ be an
index set.
\begin{enumerate}
\item Let $\boldsymbol{A}\in\hat{\Psi}_{0\tr}^{-\infty,\left(\mathcal{E}_{\of},\left[\mathcal{E}_{\ff}\right]\right),\boldsymbol{\mathfrak{s}}}\left(X;\partial X,\boldsymbol{E}\right)$.
If $\Re\left(\mathcal{E}_{\of}+\mathcal{F}\right)>0$, then $\boldsymbol{A}$
extends from $\dot{C}^{\infty}\left(X\right)$ to a continuous linear
map
\[
\boldsymbol{A}:\mathcal{A}_{\phg}^{\mathcal{F}}\left(X\right)\to C^{\infty}\left(\partial X;\boldsymbol{E}\right).
\]
\item Let $\boldsymbol{B}\in\hat{\Psi}_{0\po}^{-\infty,\left(\mathcal{E}_{\of},\left[\mathcal{E}_{\ff}\right]\right),\boldsymbol{\mathfrak{s}}}\left(\partial X,\boldsymbol{E};X\right)$.
Then $\boldsymbol{B}$ induces a continuous linear map
\[
\boldsymbol{B}:C^{\infty}\left(\partial X;\boldsymbol{E}\right)\to\mathcal{A}_{\phg}^{\mathcal{E}_{\of}}\left(X\right).
\]
\item Let $\boldsymbol{Q}\in\Psi_{\phg}^{-\left[\mathcal{E}\right],\left(\boldsymbol{\mathfrak{s}},\boldsymbol{\mathfrak{t}}\right)}\left(\partial X;\boldsymbol{E},\boldsymbol{F}\right)$.
Then $\boldsymbol{Q}$ induces a continuous linear map
\[
\boldsymbol{Q}:C^{\infty}\left(\partial X;\boldsymbol{E}\right)\to C^{\infty}\left(\partial X;\boldsymbol{F}\right).
\]
\end{enumerate}
\end{cor}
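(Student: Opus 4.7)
The plan is to reduce each of the three statements to results already established in the excerpt, exploiting the eigenbundle decompositions and the structural results relating twisted operators to untwisted ones.

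For Part 1, I would decompose $\boldsymbol{E}=\bigoplus_i E_i$ and $\boldsymbol{\mathfrak{s}}=\bigoplus_i\mathfrak{s}_i$ into generalized eigenbundles, so that $\boldsymbol{A}=(A^1,\dots,A^I)^T$ with $A^i:\dot{C}^{\infty}(X)\to C^{-\infty}(\partial X;E_i)$. By Corollary~\ref{cor:twisted-0-trace-0-poisson-are-untwisted} applied to each $A^i$, there is an index set $\tilde{\mathcal{E}}_{\ff}^{(i)}$ (whose leading set is described explicitly in terms of $\mu_i$ and $M_i$) such that $A^i\in\hat{\Psi}_{0\tr}^{-\infty,(\mathcal{E}_{\of},\tilde{\mathcal{E}}_{\ff}^{(i)})}(X;\partial X,E_i)$. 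Note that this promotion changes only the index set at $\ff$, while the relevant integrability condition in Theorem~\ref{thm:mapping-properties-on-phg-untwisted}(1) involves only $\mathcal{E}_{\of}$. Therefore, under the hypothesis $\Re(\mathcal{E}_{\of}+\mathcal{F})>0$, the untwisted mapping result applies to each component $A^i$ separately, and assembling the components yields the desired continuous map $\boldsymbol{A}:\mathcal{A}_{\phg}^{\mathcal{F}}(X)\to C^{\infty}(\partial X;\boldsymbol{E})$.

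For Part 2, the argument is parallel: write $\boldsymbol{B}=(B_1,\dots,B_I)$ with respect to the eigenbundle decomposition, apply Corollary~\ref{cor:twisted-0-trace-0-poisson-are-untwisted} to each $B_i$ to obtain $B_i\in\hat{\Psi}_{0\po}^{-\infty,(\mathcal{E}_{\of},\tilde{\mathcal{E}}_{\ff}^{(i)})}(\partial X,E_i;X)$, and then invoke Theorem~\ref{thm:mapping-properties-on-phg-untwisted}(2), which requires no integrability condition at all and produces polyhomogeneous output with index set $\mathcal{E}_{\of}$. Summing over $i$ gives the claim. Here it is important that the untwisted theorem's output index set depends only on $\mathcal{E}_{\of}$ and not on the $\ff$ data, so the eigenvalue shifts introduced by Corollary~\ref{cor:twisted-0-trace-0-poisson-are-untwisted} are irrelevant.

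For Part 3, I would decompose $\boldsymbol{Q}=(Q_i^j)$ according to the eigenbundle decompositions of both $\boldsymbol{E}$ and $\boldsymbol{F}$. By Corollary~\ref{cor:twisted-psidos-have-phg-entries}, each entry $Q_i^j$ lies in $\Psi_{\phg}^{-\mathcal{F}^{(i,j)}}(\partial X;E_i,F_j)$ for a suitable polyhomogeneous index set $\mathcal{F}^{(i,j)}$. Since polyhomogeneous pseudodifferential operators with Schwartz-type symbols are contained in the standard classical calculus on the closed manifold $\partial X$ (they have symbols of some finite standard order, as recorded in Lemma~\ref{lem:twisted-boundary-symbols-are-phg-1}), the classical mapping result $C^{\infty}(\partial X)\to C^{\infty}(\partial X)$ applies to every entry $Q_i^j$. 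Reassembling gives continuity $C^{\infty}(\partial X;\boldsymbol{E})\to C^{\infty}(\partial X;\boldsymbol{F})$.

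The only mild subtlety, and the step I would treat most carefully, is that in Parts 1 and 2 one must verify that the eigenbundle decompositions are compatible with the local symbolic description used to define twisted symbolic operators — this is essentially immediate from the definitions since the decomposition is invariant under the endomorphism, but it needs to be checked that the local twisting factors $\langle\eta\rangle^{\pm\mathfrak{s}_i(y)}$ produced by restricting to $E_i$ or $F_j$ are genuinely of the form assumed in Corollary~\ref{cor:twisted-0-trace-0-poisson-are-untwisted} (single-eigenvalue family). Once this compatibility is in hand, the three statements all reduce cleanly to the untwisted cases already proved.
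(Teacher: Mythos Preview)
Your proposal is correct and follows essentially the same approach as the paper's proof. The paper's argument is very terse: Points 1 and 2 are reduced to the untwisted case via Corollary~\ref{cor:twisted-0-trace-0-poisson-are-untwisted} and then Theorem~\ref{thm:mapping-properties-on-phg-untwisted} is applied, while Point 3 uses Corollary~\ref{cor:twisted-psidos-have-phg-entries} together with the standard fact that pseudodifferential operators on a closed manifold preserve smooth sections --- exactly the reductions you outline, and your observation that the relevant integrability conditions depend only on $\mathcal{E}_{\of}$ (and not on the $\ff$ data modified by the twisting) is the key point that makes the reduction work.
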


\begin{proof}
Point 3 follows from Corollary \ref{cor:twisted-psidos-have-phg-entries}
and the fact that pseudodifferential operators on $\partial X$ map
smooth sections to smooth sections. Similarly, Points 1 and 2 follow
from Corollary \ref{cor:twisted-0-trace-0-poisson-are-untwisted}
and the previous theorem.
\end{proof}

\subsection{\label{subsec:Asymptotic-sums-and-differentiations-of-symbols}Asymptotic
sums and differentiations of symbols}

Before discussing formal adjoints and composition theorems, we need
to pause for a moment and discuss asmyptotic sums and differentiations
of $0$-trace, $0$-Poisson, $0$-interior and $0b$-interior symbols.

Let's first briefly recall the standard theory. If $q_{j}\left(y;\eta\right)\in S_{\mathcal{S}}^{m-j}\left(\mathbb{R}^{n};\mathbb{R}^{n}\right)$,
an \emph{asymptotic sum }$q\sim\sum_{j}q_{j}$ is a symbol $q\left(y;\eta\right)\in S_{\mathcal{S}}^{m}\left(\mathbb{R}^{n};\mathbb{R}^{n}\right)$
such that, for every $M$, we have
\[
q\left(y;\eta\right)-\sum_{j\leq M}q_{j}\left(y;\eta\right)\in S_{\mathcal{S}}^{m-M}\left(\mathbb{R}^{n};\mathbb{R}^{n}\right).
\]
The symbol space $S_{\mathcal{S}}^{m}\left(\mathbb{R}^{n};\mathbb{R}^{n}\right)$
is \emph{asymptotically complete}, meaning that given a sequence $q_{j}\left(y;\eta\right)\in S_{\mathcal{S}}^{m-j}\left(\mathbb{R}^{n};\mathbb{R}^{n}\right)$,
an asymptotic sum $q\sim\sum_{j}q_{j}$ exists. Such $q$ is unique
modulo $S_{\mathcal{S}}^{-\infty}\left(\mathbb{R}^{n};\mathbb{R}^{n}\right)$:
indeed, if $q,q'\sim\sum_{j}q_{j}$, and we call $q_{M}=\sum_{j\leq M}q_{j}$,
we have
\[
q-q'=\left(q-q_{M}\right)-\left(q'-q_{M}\right)\in S_{\mathcal{S}}^{m-M}\left(\mathbb{R}^{n};\mathbb{R}^{n}\right)
\]
for every $M$, which implies that $q-q'\in S_{\mathcal{S}}^{-\infty}\left(\mathbb{R}^{n};\mathbb{R}^{n}\right)$.
Concerning differentiation of symbols, we have continuous linear maps
\begin{align*}
\partial_{y}^{\alpha} & :S_{\mathcal{S}}^{m}\left(\mathbb{R}^{n};\mathbb{R}^{n}\right)\to S_{\mathcal{S}}^{m}\left(\mathbb{R}^{n};\mathbb{R}^{n}\right)\\
\partial_{\eta}^{\alpha} & :S_{\mathcal{S}}^{m}\left(\mathbb{R}^{n};\mathbb{R}^{n}\right)\to S_{\mathcal{S}}^{m-\left|\alpha\right|}\left(\mathbb{R}^{n};\mathbb{R}^{n}\right).
\end{align*}
Since $S_{\mathcal{S}}^{m}\left(\mathbb{R}^{n};\mathbb{R}^{n}\right)=\mathcal{S}\left(\mathbb{R}^{n}\right)\hat{\otimes}\mathcal{A}^{-m}\left(\overline{\mathbb{R}}^{n}\right)$,
the statement about $\partial_{y}^{\alpha}$ is obvious, while for
the statement about $\partial_{\eta}^{\alpha}$ it suffices to observe
that:
\begin{enumerate}
\item $\left\langle \eta\right\rangle ^{\left|\alpha\right|}\partial_{\eta}^{\alpha}$
is a $b$-differential operator on $\overline{\mathbb{R}}^{n}$, and
therefore it acts continuously on $\mathcal{A}^{-m}\left(\overline{\mathbb{R}}^{n}\right)$;
\item $\left\langle \eta\right\rangle ^{-1}$ vanishes simply at $\partial\overline{\mathbb{R}}^{n}$,
and therefore the multiplication by $\left\langle \eta\right\rangle ^{-\left|\alpha\right|}$
induces a continuous linear map $\mathcal{A}^{-m}\left(\overline{\mathbb{R}}^{n}\right)\to\mathcal{A}^{-m+\left|\alpha\right|}\left(\overline{\mathbb{R}}^{n}\right)$.
\end{enumerate}
Now consider the symbol spaces $S_{0\tr,\mathcal{S}}^{-\infty,\mathcal{E}}\left(\mathbb{R}^{n};\mathbb{R}_{1}^{n+1}\right)$,
$S_{0\po,\mathcal{S}}^{-\infty,\mathcal{E}}\left(\mathbb{R}^{n};\mathbb{R}_{1}^{n+1}\right)$,
$S_{0,\mathcal{S}}^{-\infty,\mathcal{E}}\left(\mathbb{R}^{n};\mathbb{R}_{2}^{n+2}\right)$,
$S_{0b,\mathcal{S}}^{-\infty,\mathcal{E}}\left(\mathbb{R}^{n};\mathbb{R}_{2}^{n+2}\right)$.
These spaces are all asymptotically complete at each face. This is
just a feature of polyhomogeneous functions on manifolds with corners,
cf. \cite{MelroseCorners}. We are only concerned about asymptotic
completeness at the front face for $0$-trace and $0$-Poisson symbols,
and at the $0$-front face for $0$-interior and $0b$-interior symbols.
For example, in the $0$-Poisson case, consider a sequence $b_{j}\in S_{0\po,\mathcal{S}}^{-\infty,\left(\mathcal{E}_{\of},\mathcal{E}_{\ff}+j\right)}\left(\mathbb{R}^{n};\mathbb{R}_{1}^{n+1}\right)$.
Then asymptotic completeness of $S_{0\po,\mathcal{S}}^{-\infty,\left(\mathcal{E}_{\of},\mathcal{E}_{\ff}\right)}\left(\mathbb{R}^{n};\mathbb{R}_{1}^{n+1}\right)$
at $\ff$ asserts that there exists a $0$-Poisson symbol $b\in S_{0\po,\mathcal{S}}^{-\infty,\left(\mathcal{E}_{\of},\mathcal{E}_{\ff}\right)}\left(\mathbb{R}^{n};\mathbb{R}_{1}^{n+1}\right)$
such that $b\sim\sum_{j}b_{j}$ in the sense that, for every $M$
we have
\[
b-\sum_{j\leq M}b_{j}\in\mathcal{S}\left(\mathbb{R}^{n}\right)\hat{\otimes}\mathcal{B}^{\left(\mathcal{E}_{\of},\inf\left(\mathcal{E}_{\ff}\right)+M,\infty,\infty\right)}\left(\hat{P}_{0}^{2}\right).
\]
Here $\mathcal{B}^{\left(\mathcal{E}_{\of},m,\infty,\infty\right)}\left(\hat{P}_{0}^{2}\right)$
is the space of conormal functions on $\hat{P}_{0}^{2}$ which are
polyhomogeneous at $\of$ with index set $\mathcal{E}_{\of}$, vanish
to infinite order at $\iif_{\eta},\iif_{x}$, and are conormal of
order $m$ at $\ff$. A similar discussion holds for $0$-trace, $0$-interior
and $0b$-interior symbols.

Now let's discuss differentiations of symbols. The un-twisted case
is straightforward:
\begin{lem}
\label{lem:differentiation-of-untwisted-symbols}$ $
\begin{enumerate}
\item The operator $\partial_{y}^{\alpha}$ acts continuously on the symbol
spaces $S_{0\tr,\mathcal{S}}^{-\infty,\mathcal{E}}\left(\mathbb{R}^{n};\mathbb{R}_{1}^{n+1}\right)$,
$S_{0\po,\mathcal{S}}^{-\infty,\mathcal{E}}\left(\mathbb{R}^{n};\mathbb{R}_{1}^{n+1}\right)$,
$S_{0,\mathcal{S}}^{-\infty,\mathcal{E}}\left(\mathbb{R}^{n};\mathbb{R}_{2}^{n+2}\right)$,
$S_{0b,\mathcal{S}}^{-\infty,\mathcal{E}}\left(\mathbb{R}^{n};\mathbb{R}_{2}^{n+2}\right)$.
\item The operator $\partial_{\eta}^{\alpha}$ induces continuous linear
maps
\begin{align*}
S_{0\tr,\mathcal{S}}^{-\infty,\mathcal{E}}\left(\mathbb{R}^{n};\mathbb{R}_{1}^{n+1}\right) & \to S_{0\tr,\mathcal{S}}^{-\infty,\left(\mathcal{E}_{\of},\mathcal{E}_{\ff}+\left|\alpha\right|\right)}\left(\mathbb{R}^{n};\mathbb{R}_{1}^{n+1}\right)\\
S_{0\po,\mathcal{S}}^{-\infty,\mathcal{E}}\left(\mathbb{R}^{n};\mathbb{R}_{1}^{n+1}\right) & \to S_{0\po,\mathcal{S}}^{-\infty,\left(\mathcal{E}_{\of},\mathcal{E}_{\ff}+\left|\alpha\right|\right)}\left(\mathbb{R}^{n};\mathbb{R}_{1}^{n+1}\right)\\
S_{0,\mathcal{S}}^{-\infty,\mathcal{E}}\left(\mathbb{R}^{n};\mathbb{R}_{2}^{n+2}\right) & \to S_{0,\mathcal{S}}^{-\infty,\left(\mathcal{E}_{\lf},\mathcal{E}_{\rf},\mathcal{E}_{\ff_{0}}+\left|\alpha\right|\right)}\left(\mathbb{R}^{n};\mathbb{R}_{2}^{n+2}\right)\\
S_{0b,\mathcal{S}}^{-\infty,\mathcal{E}}\left(\mathbb{R}^{n};\mathbb{R}_{2}^{n+2}\right) & \to S_{0b,\mathcal{S}}^{-\infty,\left(\mathcal{E}_{\lf},\mathcal{E}_{\rf},\mathcal{E}_{\ff_{b}},\mathcal{E}_{\ff_{0}}+\left|\alpha\right|\right)}\left(\mathbb{R}^{n};\mathbb{R}_{2}^{n+2}\right).
\end{align*}
\end{enumerate}
\end{lem}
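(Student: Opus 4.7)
\medskip

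\noindent\textbf{Proof proposal.} The plan is to deduce both points from the tensor product structure of the symbol spaces together with a single structural observation about the lift of $\partial_{\eta_i}$ to the various blown-up model spaces.

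Point 1 is essentially trivial. Each of the four symbol spaces has the form $\mathcal{S}(\mathbb{R}^n) \hat{\otimes} \mathcal{A}_{\phg}^{\mathcal{F}}(Z)$ for an appropriate blown-up model space $Z \in \{\hat{P}_0^2, \hat{T}_0^2, \hat{M}_0^2, \hat{M}_{0b}^2\}$ and an appropriate index family $\mathcal{F}$. The operator $\partial_y^\alpha$ acts only on the first tensor factor, it is continuous on $\mathcal{S}(\mathbb{R}^n)$, and by standard functorial properties of $\hat{\otimes}$ it extends to a continuous linear operator on the whole symbol space.

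Point 2 reduces to the following key structural claim: if $Z$ is any of the four blown-up model spaces, then the lift of $\partial_{\eta_i}$ (a priori only defined in the interior) equals $r_{\iif_\eta}\, r_{\ff} \cdot \tilde{V}_i$ on $Z$, where $r_{\iif_\eta}, r_\ff$ are boundary defining functions for the $\eta$-infinity face and the (new) front face of $Z$, and $\tilde{V}_i$ is a smooth $b$-vector field on $Z$ (tangent to all boundary hyperfaces). Granting this, the conclusion is immediate: the action of the $b$-vector field $\tilde{V}_i$ preserves $\mathcal{A}_{\phg}^{\mathcal{F}}(Z)$ without shifting any index set, while multiplication by $r_{\iif_\eta}\, r_\ff$ shifts the index sets at $\iif_\eta$ and at $\ff$ (or $\ff_0$) both by $+1$. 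Since the index sets at all $\iif$ faces are $\infty$, the only visible effect is a shift of $+1$ at the front face; iterating gives the stated shift by $+\lvert\alpha\rvert$.

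The key structural claim itself is proved as follows. On the single factor $\overline{\mathbb{R}}^n$, the vector field $\partial_{\eta_i}$ equals $\rho \cdot V_i$, where $\rho = |\eta|^{-1}$ is a boundary defining function for $\partial \overline{\mathbb{R}}^n$ and $V_i$ is a smooth $b$-vector field on $\overline{\mathbb{R}}^n$ tangent to $\partial \overline{\mathbb{R}}^n$; this is the standard computation
\[
\partial_{\eta_i} = -\rho\,\hat\eta_i\,(\rho \partial_\rho) + \sum_j (\delta_{ij} - \hat\eta_i \hat\eta_j)\,\rho\, \partial_{\hat\eta_j},
\]
in polar coordinates $(\rho,\hat\eta)$ near infinity. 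Taking a product with $\overline{\mathbb{R}}_1^1$ (or $\overline{\mathbb{R}}_1^1 \times \overline{\mathbb{R}}_1^1$ in the interior cases), $\partial_{\eta_i}$ remains $\rho \cdot V_i$ with $V_i$ now a $b$-vector field on the unblown product tangent to the $\iif_\eta$ face and, trivially (since it contains no $x$ or $\tilde x$ derivatives), tangent to all other boundary faces as well. In particular $V_i$ is tangent to the $p$-submanifolds that are blown up to produce $\hat{P}_0^2$, $\hat{T}_0^2$, $\hat{M}_0^2$ and $\hat{M}_{0b}^2$, so by the functoriality of blow-ups $V_i$ lifts to a smooth $b$-vector field $\tilde V_i$ on $Z$. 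On the other hand, the Pull-back Theorem identifies the lift of $\rho$ as a product of boundary defining functions for all faces of $Z$ that sit above $\iif_\eta \subset \hat P^2$ (respectively $\hat T^2$, $\hat M^2$, $\hat M_b^2$); by inspection of the blow-up loci, these faces are exactly $\iif_\eta$ and the front face ($\ff$ in the trace/Poisson cases, $\ff_0$ in the $0$- and $0b$-interior cases). This gives the factorization $\partial_{\eta_i} = r_{\iif_\eta}\, r_\ff \cdot \tilde V_i$ claimed above.

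The only step requiring genuine care is the verification that $V_i$ is tangent to the blown-up $p$-submanifolds in the interior cases, since these submanifolds sit in deeper corners; but as $V_i$ involves only $\eta$-derivatives and is $b$ at $\iif_\eta$, tangency at submanifolds of the form $\{x=0\} \cap \{\lvert\eta\rvert = \infty\}$, $\{\tilde x=0\}\cap\{\lvert\eta\rvert = \infty\}$, $\{x=\tilde x=0\}\cap\{\lvert\eta\rvert=\infty\}$ or $\{x=\tilde x = 0\}$ is automatic from the corresponding tangency statements on each factor.
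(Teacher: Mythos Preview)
Your proof is correct and follows essentially the same approach as the paper: factor $\partial_{\eta_i}$ as a boundary defining function for $\iif_\eta$ times a $b$-vector field, lift both to the blow-up, and read off the index-set shift from the lift of the boundary defining function. The only cosmetic difference is that the paper uses the global smooth choice $\langle\eta\rangle^{-1}$ rather than $|\eta|^{-1}$ (which is singular at the origin), and then invokes the general fact that $b$-differential operators lift under blow-ups of corners rather than checking tangency by hand; your more explicit tangency verification is equally valid.
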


\begin{proof}
The proof is essentially the same for all the four cases, so let's
discuss in detail the $0$-Poisson case. By definition, we have
\[
S_{0\po,\mathcal{S}}^{-\infty,\mathcal{E}}\left(\mathbb{R}^{n};\mathbb{R}_{1}^{n+1}\right)=\mathcal{S}\left(\mathbb{R}^{n}\right)\hat{\otimes}\mathcal{A}_{\phg}^{\left(\mathcal{E}_{\of},\mathcal{E}_{\ff},\infty,\infty\right)}\left(\hat{P}_{0}^{2}\right),
\]
so the statement about the operator $\partial_{y}^{\alpha}$ is obvious.
Concerning the operator $\partial_{\eta}^{\alpha}$, it suffices to
show that it induces a continuous linear map
\[
\partial_{\eta}^{\alpha}:\mathcal{A}_{\phg}^{\left(\mathcal{E}_{\of},\mathcal{E}_{\ff},\infty,\infty\right)}\left(\hat{P}_{0}^{2}\right)\to\mathcal{A}_{\phg}^{\left(\mathcal{E}_{\of},\mathcal{E}_{\ff}+\left|\alpha\right|,\infty,\infty\right)}\left(\hat{P}_{0}^{2}\right).
\]
Now, since $\left\langle \eta\right\rangle ^{\alpha}\partial_{\eta}^{\alpha}$
is a $b$-differential operator on $\overline{\mathbb{R}}^{n}$, its
lift to $\hat{P}^{2}=\overline{\mathbb{R}}_{1}^{1}\times\overline{\mathbb{R}}^{n}$
is a $b$-differential operator again. But $\hat{P}_{0}^{2}$ is obtained
from $\hat{P}^{2}$ by blowing up a corner; since $\left\langle \eta\right\rangle ^{\alpha}\partial_{\eta}^{\alpha}$
is tangent to the corner, it lifts to a $b$-differential operator
on $\hat{P}_{0}^{2}$ as well. This implies that $\left\langle \eta\right\rangle ^{\alpha}\partial_{\eta}^{\alpha}$
acts continuously on $\mathcal{A}_{\phg}^{\left(\mathcal{E}_{\of},\mathcal{E}_{\ff},\infty,\infty\right)}\left(\hat{P}_{0}^{2}\right)$.
Finally, since $\left\langle \eta\right\rangle ^{\alpha}$ has index
sets $-\left|\alpha\right|$ at $\ff$ and $\iif_{\eta}$ and $0$
at $\of$ and $\ff_{x}$, multiplication by $\left\langle \eta\right\rangle ^{-\alpha}$
maps $\mathcal{A}_{\phg}^{\left(\mathcal{E}_{\of},\mathcal{E}_{\ff},\infty,\infty\right)}\left(\hat{P}_{0}^{2}\right)$
continuously to $\mathcal{A}_{\phg}^{\left(\mathcal{E}_{\of},\mathcal{E}_{\ff}+\left|\alpha\right|,\infty,\infty\right)}\left(\hat{P}_{0}^{2}\right)$.
\end{proof}
The twisted\emph{ }case is trickier. Let us fix smooth families $\mathfrak{s}:\overline{\mathbb{R}}^{n}\to\mathfrak{gl}\left(M,\mathbb{C}^{N}\right)$
and $\mathfrak{t}:\overline{\mathbb{R}}^{n}\to\mathfrak{gl}\left(M,\mathbb{C}^{M}\right)$
of matrices with a constant, single eigenvalue.
\begin{lem}
\label{lem:differentiation-of-twisted-symbols}Let $b\in S_{0\po,\mathcal{S}}^{-\infty,\mathcal{E},\mathfrak{s}}\left(\mathbb{R}^{n};\mathbb{R}_{1}^{n+1};\left(\mathbb{C}^{N}\right)^{*}\right)$,
$a\in S_{0\tr,\mathcal{S}}^{-\infty,\mathcal{E},\mathfrak{s}}\left(\mathbb{R}^{n};\mathbb{R}_{1}^{n+1};\mathbb{C}^{N}\right)$,
and $q\in S_{\phg,\mathcal{S}}^{-\mathcal{E}_{\ff},\left(\mathfrak{s},\mathfrak{t}\right)}\left(\mathbb{R}^{n};\hom\left(\mathbb{C}^{N},\mathbb{C}^{M}\right)\right)$.
Then:
\begin{enumerate}
\item we have
\begin{align*}
\partial_{\eta}^{\alpha}b & \in S_{0\po,\mathcal{S}}^{-\infty,\left(\mathcal{E}_{\of},\mathcal{E}_{\ff}+\left|\alpha\right|\right),\mathfrak{s}}\left(\mathbb{R}^{n};\mathbb{R}_{1}^{n+1};\left(\mathbb{C}^{N}\right)^{*}\right)\\
\partial_{\eta}^{\alpha}a & \in S_{0\tr,\mathcal{S}}^{-\infty,\left(\mathcal{E}_{\of},\mathcal{E}_{\ff}+\left|\alpha\right|\right),\mathfrak{s}}\left(\mathbb{R}^{n};\mathbb{R}_{1}^{n+1};\mathbb{C}^{N}\right)\\
\partial_{\eta}^{\alpha}q & \in S_{\phg,\mathcal{S}}^{-\left(\mathcal{E}_{\ff}+\left|\alpha\right|\right),\left(\mathfrak{s},\mathfrak{t}\right)}\left(\mathbb{R}^{n};\hom\left(\mathbb{C}^{N},\mathbb{C}^{M}\right)\right);
\end{align*}
\item we have
\begin{align*}
\partial_{y}^{\alpha}b & \in S_{0\po,\mathcal{S}}^{-\infty,\left(\mathcal{E}_{\of},\tilde{\mathcal{E}}_{\ff}\right),\mathfrak{s}}\left(\mathbb{R}^{n};\mathbb{R}_{1}^{n+1};\left(\mathbb{C}^{N}\right)^{*}\right)\\
\partial_{y}^{\alpha}a & \in S_{0\tr,\mathcal{S}}^{-\infty,\left(\mathcal{E}_{\of},\tilde{\mathcal{E}}_{\ff}\right),\mathfrak{s}}\left(\mathbb{R}^{n};\mathbb{R}_{1}^{n+1};\mathbb{C}^{N}\right)\\
\partial_{y}^{\alpha}q & \in S_{\phg,\mathcal{S}}^{-\tilde{\mathcal{E}}_{\ff},\left(\mathfrak{s},\mathfrak{t}\right)}\left(\mathbb{R}^{n};\hom\left(\mathbb{C}^{N},\mathbb{C}^{M}\right)\right),
\end{align*}
where $\tilde{\mathcal{E}}_{\ff}$ is an index set obtained from $\mathcal{E}_{\ff}$
by increasing some of the logarithmic orders.
\end{enumerate}
\end{lem}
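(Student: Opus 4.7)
The plan is to use the Leibniz rule to reduce each statement to the corresponding fact for untwisted symbols (Lemma~\ref{lem:differentiation-of-untwisted-symbols}), together with a precise description of the derivatives of the twisting factor $\langle\eta\rangle^{\mathfrak{s}(y)}$. The pivotal computation is that, since $\mathfrak{s}(y)$ commutes with $\langle\eta\rangle^{\mathfrak{s}(y)}$,
\begin{equation*}
\partial_{\eta_i}\langle\eta\rangle^{\mathfrak{s}(y)}=\mathfrak{s}(y)\,\eta_i\,\langle\eta\rangle^{-2}\,\langle\eta\rangle^{\mathfrak{s}(y)},
\end{equation*}
so by induction on $|\gamma|$ we have $\partial_\eta^\gamma\langle\eta\rangle^{\mathfrak{s}(y)}=T_\gamma(y;\eta)\,\langle\eta\rangle^{\mathfrak{s}(y)}$, where $T_\gamma$ is a matrix-valued standard symbol in $\eta$ of order $-|\gamma|$ with smooth coefficients in $y$. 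Since $\langle\eta\rangle^{-|\gamma|}$ lifts to $\hat{P}_0^2$ with index set $|\gamma|$ at $\ff$ and $\iif_\eta$ and $0$ elsewhere, multiplication by $T_\gamma$ maps $S_{0\po,\mathcal{S}}^{-\infty,\mathcal{E}}$ continuously into $S_{0\po,\mathcal{S}}^{-\infty,(\mathcal{E}_{\of},\mathcal{E}_{\ff}+|\gamma|)}$.

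For part~(1), writing $b=b_0\langle\eta\rangle^{\mathfrak{s}(y)}$ with $b_0\in S_{0\po,\mathcal{S}}^{-\infty,\mathcal{E}}$, the Leibniz rule gives
\begin{equation*}
\partial_\eta^\alpha b=\Bigl(\sum_{\beta+\gamma=\alpha}\tbinom{\alpha}{\beta}(\partial_\eta^\beta b_0)\,T_\gamma(y;\eta)\Bigr)\langle\eta\rangle^{\mathfrak{s}(y)}.
\end{equation*}
Each factor $\partial_\eta^\beta b_0$ lies in $S_{0\po,\mathcal{S}}^{-\infty,(\mathcal{E}_{\of},\mathcal{E}_{\ff}+|\beta|)}$ by Lemma~\ref{lem:differentiation-of-untwisted-symbols}, and multiplication by $T_\gamma$ contributes a further $|\gamma|$ units at $\ff$; hence the bracket lies in $S_{0\po,\mathcal{S}}^{-\infty,(\mathcal{E}_{\of},\mathcal{E}_{\ff}+|\alpha|)}$ and $\partial_\eta^\alpha b\in S_{0\po,\mathcal{S}}^{-\infty,(\mathcal{E}_{\of},\mathcal{E}_{\ff}+|\alpha|),\mathfrak{s}}$. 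The $0$-trace case is identical with $\langle\eta\rangle^{-\mathfrak{s}(y)}$ on the left, using the dual identity $\partial_\eta^\gamma\langle\eta\rangle^{-\mathfrak{s}(y)}=\langle\eta\rangle^{-\mathfrak{s}(y)}\tilde{T}_\gamma(y;\eta)$; and the pseudodifferential case follows by applying Leibniz to the triple product $\langle\eta\rangle^{-\mathfrak{t}(y)}q_0\langle\eta\rangle^{\mathfrak{s}(y)}$ and grouping the $T$'s with the twists they came from.

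For part~(2) the new phenomenon is that $\partial_y$ hits the twisting factor and produces logarithms in $\eta$. Writing $\mathfrak{s}(y)=\mu I+\mathfrak{s}'(y)$ with $\mathfrak{s}'(y)$ nilpotent of index $<N$, one has
\begin{equation*}
\langle\eta\rangle^{\mathfrak{s}(y)}=\langle\eta\rangle^\mu\sum_{j<N}\frac{(\log\langle\eta\rangle)^j}{j!}\,\mathfrak{s}'(y)^j,
\end{equation*}
and iterated $y$-differentiation produces further polynomials in $\log\langle\eta\rangle$ with smooth matrix coefficients. Factoring $\langle\eta\rangle^{\mathfrak{s}(y)}$ back out on the right (using the matrix inverse $\langle\eta\rangle^{-\mathfrak{s}'(y)}$, itself a polynomial in $\log\langle\eta\rangle$), we obtain $\partial_y^\alpha\langle\eta\rangle^{\mathfrak{s}(y)}=L_\alpha(y;\eta)\,\langle\eta\rangle^{\mathfrak{s}(y)}$, where $L_\alpha$ is a polynomial in $\log\langle\eta\rangle$ of degree bounded in terms of $|\alpha|$ and $N$, with smooth matrix coefficients in $y$. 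Now $\log\langle\eta\rangle$ lifts to $\hat{P}_0^2$ as a polyhomogeneous function with index set $(0,1)$ at both $\ff$ and $\iif_\eta$ and smooth at $\of,\iif_x$; since the symbols in question already vanish to infinite order at $\iif_\eta,\iif_x$, multiplication by $L_\alpha$ only raises the logarithmic orders at $\ff$ by a uniformly bounded amount, leaving the real parts of the index set and the other faces unchanged. Combining this with Leibniz,
\begin{equation*}
\partial_y^\alpha b=\Bigl(\sum_{\beta+\gamma=\alpha}\tbinom{\alpha}{\beta}(\partial_y^\beta b_0)\,L_\gamma(y;\eta)\Bigr)\langle\eta\rangle^{\mathfrak{s}(y)}\in S_{0\po,\mathcal{S}}^{-\infty,(\mathcal{E}_{\of},\tilde{\mathcal{E}}_{\ff}),\mathfrak{s}},
\end{equation*}
and the $0$-trace and pseudodifferential cases proceed by the same bookkeeping.

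The only point where the argument genuinely departs from the classical symbol calculus, and hence the main thing to verify carefully, is this last passage in part~(2): that the factors $(\log\langle\eta\rangle)^j$ arising from $\partial_y$ hitting $\langle\eta\rangle^{\mathfrak{s}(y)}$ raise only the logarithmic orders of $\mathcal{E}_{\ff}$ and do not perturb the real parts, nor the infinite-order decay at the infinity faces. Once the lift of $\log\langle\eta\rangle$ to $\hat{P}_0^2$ is analysed as above, this reduces to a direct application of the Pull-back Theorem.
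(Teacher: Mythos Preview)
Your proof is correct and follows essentially the same approach as the paper: both arguments use the Leibniz rule to separate the untwisted symbol from the twisting factor, then analyse $\partial_{\eta}\langle\eta\rangle^{\mathfrak{s}(y)}$ and $\partial_{y}\langle\eta\rangle^{\mathfrak{s}(y)}$ via the nilpotent expansion of $\langle\eta\rangle^{\mathfrak{s}'(y)}$, identifying the extra factor as a classical symbol of order $-|\gamma|$ in the $\eta$-derivative case and as a polynomial in $\log\langle\eta\rangle$ in the $y$-derivative case. Your presentation is slightly more systematic in that you carry out the full induction for $\partial_\eta^\alpha$ and explicitly check the lift of $\log\langle\eta\rangle$ to $\hat P_0^2$, whereas the paper does one derivative and iterates; conversely, the paper reduces to the nilpotent case at the outset rather than carrying the eigenvalue $\mu$ along, but this is cosmetic.
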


\begin{proof}
For simplicity we prove the statement only in the $0$-Poisson case,
since the others are very similar. By definition of $S_{0\po,\mathcal{S}}^{-\infty,\mathcal{E},\mathfrak{s}}\left(\mathbb{R}^{n};\mathbb{R}_{1}^{n+1};\left(\mathbb{C}^{N}\right)^{*}\right)$,
we can write
\[
b\left(y;x,\eta\right)=\tilde{b}\left(y;x,\eta\right)\left\langle \eta\right\rangle ^{\mathfrak{s}\left(y\right)}
\]
where $\tilde{b}\in S_{0\po,\mathcal{S}}^{-\infty,\mathcal{E}}\left(\mathbb{R}^{n};\mathbb{R}_{1}^{n+1};\left(\mathbb{C}^{N}\right)^{*}\right)$.
By incorporating the eigenvalue of $\mathfrak{s}$ into the index
set $\mathcal{E}_{\ff}$, we can assume without loss of generality
that $\mathfrak{s}$ is nilpotent. Now, we have
\begin{align*}
\partial_{\eta_{j}}\left(\tilde{b}\left\langle \eta\right\rangle ^{\mathfrak{s}}\right) & =\left(\partial_{\eta_{j}}\tilde{b}\right)\left\langle \eta\right\rangle ^{\mathfrak{s}}+\tilde{b}\left(\partial_{\eta_{j}}\left\langle \eta\right\rangle ^{\mathfrak{s}}\right)\\
 & =\left(\partial_{\eta_{j}}\tilde{b}+\tilde{b}\left(\mathfrak{s}\left\langle \eta\right\rangle ^{-1}\frac{\eta^{j}}{\left\langle \eta\right\rangle }\right)\right)\left\langle \eta\right\rangle ^{\mathfrak{s}}.
\end{align*}
This computation is valid because $\mathfrak{s}=\mathfrak{s}\left(y\right)$
does not depend on $\eta_{j}$. Now, the function $\eta_{j}/\left\langle \eta\right\rangle $
is smooth on $\overline{\mathbb{R}}^{n}$, $\mathfrak{s}\left(y\right)$
is a smooth factor, and $\left\langle \eta\right\rangle ^{-1}$ has
index sets $1$ at $\ff$ and $\iif_{\eta}$ and $0$ at $\of$ and
$\iif_{x}$; it then follows that the term $\tilde{b}\left(\mathfrak{s}\left\langle \eta\right\rangle ^{-1}\frac{\eta^{j}}{\left\langle \eta\right\rangle }\right)$
is in $S_{0\po,\mathcal{S}}^{-\infty,\left(\mathcal{E}_{\of},\mathcal{E}_{\ff}+1\right)}\left(\mathbb{R}^{n};\mathbb{R}_{1}^{n+1};\left(\mathbb{C}^{N}\right)^{*}\right)$.
We already proved that $\partial_{\eta_{j}}$ increases the index
set at the front face by $1$, so $\partial_{\eta_{j}}\tilde{b}$
is in $S_{0\po,\mathcal{S}}^{-\infty,\left(\mathcal{E}_{\of},\mathcal{E}_{\ff}+1\right)}\left(\mathbb{R}^{n};\mathbb{R}_{1}^{n+1};\left(\mathbb{C}^{N}\right)^{*}\right)$
as well. Therefore, iterating this argument, we have
\begin{align*}
\partial_{\eta_{j}}\left(\tilde{b}\left\langle \eta\right\rangle ^{\mathfrak{s}}\right) & \in S_{0\po,\mathcal{S}}^{-\infty,\left(\overline{\mathcal{E}}_{\of},\overline{\mathcal{E}}_{\ff}+\left|\alpha\right|\right),\mathfrak{s}}\left(\mathbb{R}^{n};\mathbb{R}_{1}^{n+1};\left(\mathbb{C}^{N}\right)^{*}\right).
\end{align*}
Now let's consider the action of $\partial_{y}^{\alpha}$. Here the
hypothesis of nilpotency of $\mathfrak{s}$ helps. Indeed, write
\begin{align*}
\partial_{y_{j}}\left(\tilde{b}\left\langle \eta\right\rangle ^{\mathfrak{s}}\right) & =\left(\partial_{y_{j}}\tilde{b}\right)\left\langle \eta\right\rangle ^{\mathfrak{s}}+\tilde{b}\left(\sum_{k<N}\frac{1}{k!}\left(\log\left\langle \eta\right\rangle \right)^{k}\partial_{y_{j}}\left(\mathfrak{s}^{k}\right)\right)\\
 & =\left(\partial_{y_{j}}\tilde{b}+\tilde{b}\varphi\left(y,\eta\right)\right)\left\langle \eta\right\rangle ^{\mathfrak{s}},
\end{align*}
where
\[
\varphi\left(y,\eta\right)=\left[\sum_{k<M}\frac{1}{k!}\left(\log\left\langle \eta\right\rangle \right)^{k}\partial_{y_{j}}\left(\mathfrak{s}^{k}\right)\right]\left\langle \eta\right\rangle ^{-\mathfrak{s}}.
\]
This factor is smooth in $y$, but it is \emph{not} smooth in $\eta$.
Rather, it is \emph{polyhomogeneous}, with index set generated by
the pair $\left(0,N-1\right)$. It follows that
\begin{align*}
\partial_{y_{j}}\left(\tilde{b}\left\langle \eta\right\rangle ^{\mathfrak{s}}\right) & \in S_{0\po,\mathcal{S}}^{-\infty,\left(\mathcal{E}_{\of},\tilde{\mathcal{E}}_{\ff}\right),-\mathfrak{s}^{*}}\left(\mathbb{R}^{n};\mathbb{R}_{1}^{n+1};\left(\mathbb{C}^{N}\right)^{*}\right),
\end{align*}
where $\tilde{\mathcal{E}}_{\ff}$ is obtained from $\mathcal{E}_{\ff}$
by increasing some of the logarithmic orders.
\end{proof}
\begin{rem}
The phenomenon we encountered in the previous proof is essentially
the reason for which, when we defined the classes $\hat{\Psi}_{0\tr}^{-\infty,\left(\mathcal{E}_{\of},\left[\mathcal{E}_{\ff}\right]\right),\boldsymbol{\mathfrak{s}}}\left(X;\partial X,\boldsymbol{E}\right)$,
$\hat{\Psi}_{0\po}^{-\infty,\left(\mathcal{E}_{\of},\left[\mathcal{E}_{\ff}\right]\right),\boldsymbol{\mathfrak{s}}}\left(\partial X,\boldsymbol{E};X\right)$,
and $\Psi_{\phg}^{-\left[\mathcal{E}\right],\left(\boldsymbol{\mathfrak{s}},\boldsymbol{\mathfrak{t}}\right)}\left(\partial X;\boldsymbol{E},\boldsymbol{F}\right)$,
we did not keep track of the precise index set at the front face (or
at $\left|\eta\right|=\infty$ in the boundary case), but only of
its leading set. The reason is essentially that, when differentiating
symbols in the classes
\begin{align*}
S_{0\tr,\mathcal{S}}^{-\infty,\mathcal{E},\mathfrak{s}}\left(\mathbb{R}^{n};\mathbb{R}_{1}^{n+1};\mathbb{C}^{N}\right)\\
S_{0\po,\mathcal{S}}^{-\infty,\mathcal{E},\mathfrak{s}}\left(\mathbb{R}^{n};\mathbb{R}_{1}^{n+1};\left(\mathbb{C}^{N}\right)^{*}\right)\\
S_{\phg,\mathcal{S}}^{-\mathcal{E},\left(\mathfrak{s},\mathfrak{t}\right)}\left(\mathbb{R}^{n};\hom\left(\mathbb{C}^{N},\mathbb{C}^{M}\right)\right) & ,
\end{align*}
the operator $\partial_{y}^{\alpha}$ causes a logarithmic loss of
regularity (at the front face in the first two cases, and at $\left|\eta\right|=\infty$
in the third case). When we will discuss formal adjoints and composition
theorems for the classes $\hat{\Psi}_{0\tr}^{-\infty,\left(\mathcal{E}_{\of},\left[\mathcal{E}_{\ff}\right]\right),\boldsymbol{\mathfrak{s}}}\left(X;\partial X,\boldsymbol{E}\right)$,
$\hat{\Psi}_{0\po}^{-\infty,\left(\mathcal{E}_{\of},\left[\mathcal{E}_{\ff}\right]\right),\boldsymbol{\mathfrak{s}}}\left(\partial X,\boldsymbol{E};X\right)$,
and $\Psi_{\phg}^{-\left[\mathcal{E}\right],\left(\boldsymbol{\mathfrak{s}},\boldsymbol{\mathfrak{t}}\right)}\left(\partial X;\boldsymbol{E},\boldsymbol{F}\right)$,
we will need to use asymptotic sums argument; because of this logarithmic
loss, we will not be able to control the whole index set, but only
its leading set. This issue is however tamed by the fact that we are
dealing with twisting endomorphisms with constant eigenvalues: indeed,
thanks to this property, the logarithmic loss is still \emph{finite},
and therefore we can stay in the polyhomogeneous class. If we want
to allow twisting endomorphisms with non-constant eigenvalues (a necessary
thing to do if one wants to consider $0$-elliptic problems with non-constant
indicial roots), things get more complicated. The paper \cite{KrainerMendozaVariableOrders}
discusses this issue extensively.
\end{rem}

\subsection{\label{subsec:Formal-adjoints}Formal adjoints}

We now discuss formal adjoints. For simplicity, let's fix a boundary
defining function $x$ and a smooth positive density $\omega$ on
$X$, and choose an arbitrary real weight $\delta$. This allows us
to define an inner product on $x^{\delta}L_{b}^{2}\left(X\right)$
by
\[
\left(u,v\right)_{x^{\delta}L_{b}^{2}}=\int_{X}u\overline{v}x^{-2\delta-1}\omega.
\]
Moreover, given the choice of our auxiliary vector field $V$, contracting
$\omega_{|\partial X}$ with $V_{|\partial X}$ we obtain a smooth
positive density on $\partial X$ which we call $\nu$. This allows
us to define an inner product on $L^{2}\left(\partial X\right)$.
We denote by $\dagger$ the formal adjoint operator with respect to
these choices of inner products.

The discussion of the properties of the operator $\dagger$ is again
reduced to a local discussion. For example, let us summarize how to
prove that if $Q\in\Psi^{m}\left(\partial X\right)$, then $Q^{\dagger}\in\Psi^{m}\left(\partial X\right)$
as well. Given a covering of $\partial X$ into coordinate charts
$\left(U_{i},\varphi_{i}\right)$, we can write $Q=\sum_{i}Q_{i}+R$
where $Q_{i}$ is compactly supported on $U_{i}\times U_{i}$ and
$R\in\Psi^{-\infty}\left(\partial X\right)$. The proof that $R^{\dagger}\in\Psi^{-\infty}\left(\partial X\right)$
is straightforward. Concerning the terms $Q_{i}$, call $y$ the coordinates
induced by the chart and write $Q_{i}=\Op_{L}\left(q_{i}\right)\in\Psi_{\mathcal{S}}^{m}\left(\mathbb{R}^{n}\right)$
for some $q_{i}\left(y;\eta\right)\in\mathcal{S}\left(\mathbb{R}^{n}\right)\hat{\otimes}\mathcal{A}^{-m}\left(\overline{\mathbb{R}}^{n}\right)$.
Then, up to conjugating $Q_{i}$ by a smooth compactly supported function,
it suffices to prove that $Q_{i}^{*}$, the formal adjoint with respect
to the coordinate density $dy$, is again in $\Psi_{\mathcal{S}}^{m}\left(\mathbb{R}^{n}\right)$.

A way to prove this is to use the \emph{right quantization map}. Formally,
this map is defined as
\begin{align*}
\Op_{R}:S_{\mathcal{S}}^{m}\left(\mathbb{R}^{n};\mathbb{R}^{n}\right)\times\mathcal{S}\left(\mathbb{R}^{n}\right) & \to\mathcal{S}\left(\mathbb{R}^{n}\right)\\
\left(q\left(\tilde{y};\eta\right),u\left(y\right)\right) & \mapsto\frac{1}{\left(2\pi\right)^{n}}\int e^{i\left(y-\tilde{y}\right)\eta}q\left(\tilde{y};\eta\right)u\left(\tilde{y}\right)d\tilde{y}d\eta.
\end{align*}
One way to check that this map is well-defined is to observe that
$S_{\mathcal{S}}^{m}\left(\mathbb{R}^{n};\mathbb{R}^{n}\right)=\mathcal{S}\left(\mathbb{R}^{n}\right)\hat{\otimes}\mathcal{A}^{-m}\left(\overline{\mathbb{R}}^{n}\right)$,
and on decomposable symbols $q\left(\tilde{y};\eta\right)=a\left(\tilde{y}\right)q'\left(\eta\right)$
we have
\[
\Op_{R}\left(a\left(\tilde{y}\right)q'\left(\eta\right)\right)u=\frac{1}{\left(2\pi\right)^{n}}\int e^{iy\eta}q'\left(\eta\right)\widehat{a\cdot u}\left(\eta\right)d\eta;
\]
this map is clearly bilinear continuous in $a$ and $q'$, so by the
universal property of the completed projective tensor product $\hat{\otimes}$
$\Op_{R}$ extends to $S_{\mathcal{S}}^{m}\left(\mathbb{R}^{n};\mathbb{R}^{n}\right)$.
Now, it is not hard to prove that the range of $\Op_{R}$ on $S_{\mathcal{S}}^{m}\left(\mathbb{R}^{n};\mathbb{R}^{n}\right)$
coincides with $\Psi_{\mathcal{S}}^{m}\left(\mathbb{R}^{n}\right)$.
More precisely, for every ``right-reduced symbol'' $q\left(\tilde{y};\eta\right)\in S_{\mathcal{S}}^{m}\left(\mathbb{R}^{n};\mathbb{R}^{n}\right)$,
there exists a unique ``left-reduced symbol'' $q_{L}\left(y;\eta\right)\in S_{\mathcal{S}}^{m}\left(\mathbb{R}^{n};\mathbb{R}^{n}\right)$
such that $\Op_{R}\left(q\left(\tilde{y};\eta\right)\right)=\Op_{L}\left(q_{L}\left(y;\eta\right)\right)$.
Moreover, $q_{L}$ is determined modulo $S_{\mathcal{S}}^{-\infty}\left(\mathbb{R}^{n};\mathbb{R}^{n}\right)$
as an asymptotic sum
\[
q_{L}\left(y;\eta\right)\sim\sum_{\alpha}\frac{1}{\alpha!}\left(D_{\eta}^{\alpha}\partial_{\tilde{y}}^{\alpha}q\right)_{|\tilde{y}=y}.
\]
Now, if $Q=\Op_{L}\left(q\left(y;\eta\right)\right)$, its Schwartz
kernel is $Q\left(y;y-\tilde{y}\right)d\tilde{y}$, with $Q\left(y;Y\right)$
equal to the inverse Fourier transform of $q\left(y;\eta\right)$.
Similarly, if $Q=\Op_{R}\left(q\left(\tilde{y};\eta\right)\right)$,
its Schwartz kernel is $Q\left(\tilde{y};y-\tilde{y}\right)d\tilde{y}$
where $Q\left(\tilde{y};Y\right)$ is the inverse Fourier transform
of $q\left(\tilde{y};\eta\right)$. Finally, if $Q=\Op_{L}\left(q\right)=Q\left(y;y-\tilde{y}\right)d\tilde{y}$,
then it is straightforward to check that $Q^{*}=\overline{Q\left(\tilde{y};\tilde{y}-y\right)}d\tilde{y}$.
Therefore, $Q^{*}=\Op_{R}\left(q^{*}\right)$, where $q^{*}\left(\tilde{y};\eta\right)=\overline{q\left(\tilde{y};-\eta\right)}$.
Since $q\in S_{\mathcal{S}}^{m}\left(\mathbb{R}^{n};\mathbb{R}^{n}\right)$,
it is clear that $q^{*}\in S_{\mathcal{S}}^{m}\left(\mathbb{R}^{n};\mathbb{R}^{n}\right)$
as well and therefore $Q^{*}\in\Psi_{\mathcal{S}}^{m}\left(\mathbb{R}^{n}\right)$.

The argument is very similar for our classes of operators. Let's start
with the un-twisted case:
\begin{prop}
\label{prop:formal-adjoints}$ $
\begin{enumerate}
\item Let $A\in\hat{\Psi}_{0\tr}^{-\infty,\mathcal{E}}\left(X,\partial X\right)$.
Then $A^{\dagger}\in\hat{\Psi}_{0\po}^{-\infty,\mathcal{F}}\left(\partial X,X\right)$,
where
\begin{align*}
\mathcal{F}_{\of} & =\overline{\mathcal{E}}_{\of}+2\delta+1\\
\mathcal{F}_{\ff} & =\overline{\mathcal{E}}_{\ff}+2\delta.
\end{align*}
\item Let $B\in\hat{\Psi}_{0\po}^{-\infty,\mathcal{E}}\left(\partial X,X\right)$.
Then $B^{\dagger}\in\hat{\Psi}_{0\tr}^{-\infty,\mathcal{F}}\left(\partial X,X\right)$,
where
\begin{align*}
\mathcal{F}{}_{\of} & :=\overline{\mathcal{E}}_{\of}-2\delta-1\\
\mathcal{F}_{\ff} & :=\overline{\mathcal{E}}_{\ff}-2\delta.
\end{align*}
\item Let $P\in\hat{\Psi}_{0}^{-\infty,\mathcal{E}}\left(X\right)$. Then
$P^{\dagger}\in\hat{\Psi}_{0}^{-\infty,\mathcal{F}}\left(X\right)$,
where
\begin{align*}
\mathcal{F}_{\lf} & =\overline{\mathcal{E}}_{\rf}+2\delta+1\\
\mathcal{F}_{\rf} & =\overline{\mathcal{E}}_{\lf}-2\delta-1\\
\mathcal{F}_{\ff_{0}} & =\overline{\mathcal{E}}_{\ff_{0}}.
\end{align*}
\item Let $P\in\hat{\Psi}_{0b}^{-\infty,\mathcal{E}}\left(X\right)$. Then
$P^{\dagger}\in\hat{\Psi}_{0b}^{-\infty,\mathcal{F}}\left(X\right)$,
where
\begin{align*}
\mathcal{F}_{\lf} & =\overline{\mathcal{E}}_{\rf}+2\delta+1\\
\mathcal{F}_{\rf} & =\overline{\mathcal{E}}_{\lf}-2\delta-1\\
\mathcal{F}_{\ff_{0}} & =\overline{\mathcal{E}}_{\ff_{0}}\\
\mathcal{F}_{\ff_{b}} & =\overline{\mathcal{E}}_{\ff_{b}}.
\end{align*}
\end{enumerate}
\end{prop}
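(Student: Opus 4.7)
The plan is to reduce each of the four statements to a local computation in coordinates and then apply a right-quantization/left-quantization reduction analogous to the classical closed-manifold theory. First I would cover $\partial X$ by coordinate charts compatible with $V$ and, using a subordinate partition of unity, decompose the operator in question as a sum of terms locally supported near points of $\partial\Delta$ plus a residual piece in one of the classes $\Psi^{-\infty,\cdot}(X)$, $\Psi_b^{-\infty,\cdot}(X)$, $\Psi_{\tr}^{-\infty,\cdot}(X,\partial X)$, or $\Psi_{\po}^{-\infty,\cdot}(\partial X,X)$. The formal adjoint of a residual piece is residual with the claimed conjugated index sets and weight shifts: this follows directly from the identification of its Schwartz kernel with a polyhomogeneous section of $\pi_R^*\mathcal{D}^1$ on the appropriate (non-blown-up) double space, together with elementary bookkeeping of the density factors $x^{-2\delta-1}\omega$ and $\nu = \iota_V\omega_{|\partial X}$.

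For the local pieces, I would introduce right quantizations, e.g.\
\[
\Op_R^{\inte}(p)(u) = \frac{1}{(2\pi)^n}\int e^{i(y-\tilde y)\eta}\,p(\tilde y;x,\tilde x,\eta)\,u(\tilde x,\tilde y)\,d\eta\,d\tilde x\,d\tilde y,
\]
and analogous versions $\Op_R^{\tr}$, $\Op_R^{\po}$, $\Op_R^{0b\text{-int}}$ for the other three classes. A direct manipulation of the Schwartz kernels shows that the formal adjoint of $\Op_L^{\inte}(p)$ with respect to the inner product induced by $x^{-2\delta-1}\omega$ equals $\Op_R^{\inte}(p^*)$, where
\[
p^*(\tilde y;x,\tilde x,\eta) = \overline{p(\tilde y;\tilde x,x,-\eta)}\cdot (x/\tilde x)^{-2\delta-1}\cdot \omega(\tilde y,\tilde x)/\omega(\tilde y,x),
\]
and analogous formulas in the other three cases (with the appropriate swap of $\lf$ and $\rf$, respectively $\of$, and the appropriate shift by $2\delta+1$ or $2\delta$ accounting for whether the density factor $d\tilde x$ is present in the kernel). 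The map $p\mapsto p^*$ sends $S_{0,\mathcal{S}}^{-\infty,\mathcal{E}}$ to $S_{0,\mathcal{S}}^{-\infty,\mathcal{F}}$ with the claimed shifted index sets because the swap $x\leftrightarrow\tilde x$ interchanges $\lf$ and $\rf$ on $\hat M_0^2$, complex conjugation acts on index sets as $\mathcal{E}\mapsto\overline{\mathcal{E}}$, the smooth factor $\omega(\tilde y,\tilde x)/\omega(\tilde y,x)$ does not alter index sets at any face, and multiplication by $(x/\tilde x)^{-2\delta-1}$ produces the $\pm(2\delta+1)$ shifts at $\lf,\rf$. The cases of trace, Poisson, and $0b$-interior operators proceed identically on their respective model spaces.

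Once this is established, it remains to show that every right-quantized operator $\Op_R^\bullet(q)$ equals $\Op_L^\bullet(q_L)$ modulo a smoothing (very residual, residual Poisson, residual trace, or $b$-residual) remainder, with $q_L$ in the same symbol class as $q$. Taylor expansion of $q$ in the $\tilde y$ variables about $\tilde y = y$, followed by integration by parts against $e^{i(y-\tilde y)\eta}$, formally yields the asymptotic expansion
\[
q_L(y;\cdot,\eta)\sim \sum_{\alpha\in\mathbb{N}^n}\frac{1}{\alpha!}\bigl(D_\eta^\alpha\partial_{\tilde y}^\alpha q(\tilde y;\cdot,\eta)\bigr)_{|\tilde y = y}.
\]
By Lemma~\ref{lem:differentiation-of-untwisted-symbols}, each summand lies in the correct symbol class with its front-face (resp.\ $\ff_0$) index set shifted by $+|\alpha|$, so the series can be summed asymptotically using the asymptotic completeness of the symbol spaces at the front face. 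This produces $q_L$ in the same class as $q$ (same index sets at every face other than the front face, same leading behavior at the front face) with $\Op_R(q)-\Op_L(q_L)$ a smoothing remainder.

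The main obstacle will be the rigorous justification of this asymptotic expansion in the polyhomogeneous setting. In the classical theory, one invokes standard oscillatory-integral / stationary-phase machinery for symbols in $S^m_{1,0}$; here the symbols are polyhomogeneous on the blown-up model spaces $\hat M_0^2,\hat M_{0b}^2,\hat P_0^2,\hat T_0^2$, and one must check that the integration-by-parts procedure and the control of the remainder after $M$ terms respect this polyhomogeneous structure at every face, most delicately at the front face. The standard approach is a variant of Kuranishi's trick: the discrepancy $\Op_R(q)-\Op_L(q_L^{(M)})$ for the $M$-th partial sum $q_L^{(M)}$ can be written as $\Op_L$ of a symbol lying in a space with $M$-extra orders of vanishing at the front face, and this extra vanishing matches the $+M$ shift from $\partial_\eta$-derivatives provided by Lemma~\ref{lem:differentiation-of-untwisted-symbols}, ensuring that the sequence has an asymptotic sum in the correct symbol space. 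Secondary but unavoidable bookkeeping concerns the precise shifts of index sets at the various faces induced by the inner-product weight $x^{-2\delta-1}$ and by the difference in density conventions (namely the ``$+1$'' shifts distinguishing the trace and Poisson cases, and the $b$-front-face shifts in the $0b$-interior case), all of which follow the same pattern as in the local analysis of $p\mapsto p^*$ sketched above.
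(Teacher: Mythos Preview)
Your approach is essentially identical to the paper's: localize via an atlas, handle the residual pieces directly, pass to right quantization to compute the adjoint symbol, then reduce back to left quantization via the standard asymptotic expansion. The paper treats the left--right reduction you flag as the ``main obstacle'' as entirely routine, omitting the argument as ``essentially equal to the standard one''; also note a sign slip in your interior formula---the weight factor should be $(x/\tilde x)^{2\delta+1}$ (equivalently $x^{2\delta+1}\tilde x^{-2\delta-1}$) to produce the claimed shifts at $\lf$ and $\rf$.
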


\begin{proof}
We exemplify the argument for the $0$-trace case, since all the other
cases are proved similarly. Let $A\in\hat{\Psi}_{0\tr}^{-\infty,\mathcal{E}}\left(X,\partial X\right)$.
Choose again an atlas $\left(U_{i},\varphi_{i}\right)$ for $\partial X$,
and decompose as above $A=\sum_{i}A_{i}+R_{A}$ where $R_{A}\in\Psi_{\tr}^{-\infty,\mathcal{E}_{\of}}\left(X,\partial X\right)$
and the $A_{i}$ are compactly supported in an open $U_{i}\times[0,\varepsilon)\times U_{i}$
of $\partial X\times X$. It is straightforward to check that $R_{A}^{\dagger}\in\Psi_{\tr}^{-\infty,\mathcal{E}_{\of}}\left(X,\partial X\right)$,
and in the chosen coordinates we have $A_{i}\in\hat{\Psi}_{0\tr,\mathcal{S}}^{-\infty,\mathcal{E}}\left(\mathbb{R}_{1}^{n+1},\mathbb{R}^{n}\right)$.
It suffices to prove that the formal adjoint $A_{i}^{*}$ with respect
to the density $x^{-2\delta-1}dxdy$ on $\mathbb{R}_{1}^{n+1}$ and
the density $dy$ on $\mathbb{R}^{n}$ is in $\hat{\Psi}_{0\po,\mathcal{S}}^{-\infty,\mathcal{E}}\left(\mathbb{R}^{n},\mathbb{R}_{1}^{n+1}\right)$.
As in the standard case, one construct the right trace and Poisson
quantization maps, and one proves that the left and right quantization
maps have the same range. The proof is omitted because is essentially
equal to the standard one. Now, if $A=\Op_{L}^{\tr}\left(a\right)\in\hat{\Psi}_{0\tr,\mathcal{S}}^{-\infty,\mathcal{E}}\left(\mathbb{R}_{1}^{n+1},\mathbb{R}^{n}\right)$
for some $a=a\left(y;\tilde{x},\eta\right)\in S_{0\tr,\mathcal{S}}^{-\infty,\mathcal{E}}\left(\mathbb{R}^{n};\mathbb{R}_{1}^{n+1}\right)$,
and we write the Schwartz kernel of $A$ as $A\left(y;\tilde{x},y-\tilde{y}\right)d\tilde{x}d\tilde{y}$
so that $A\left(y;\tilde{x},Y\right)$ is the inverse Fourier transform
of $a\left(y;\tilde{x},\eta\right)$, then the Schwartz kernel of
$A^{*}$ is $x^{2\delta+1}\overline{A\left(\tilde{y};x,\tilde{y}-y\right)}d\tilde{y}$.
Therefore, $A^{*}=\Op_{R}^{\po}\left(a^{*}\right)$ , where $a^{*}\left(\tilde{y};x,\eta\right)=x^{2\delta+1}\overline{a\left(\tilde{y};x,-\eta\right)}$.
Since
\begin{align*}
a\left(y;\tilde{x},\eta\right) & \in S_{0\tr,\mathcal{S}}^{-\infty,\mathcal{E}}\left(\mathbb{R}^{n};\mathbb{R}_{1}^{n+1}\right)\\
 & =\mathcal{S}\left(\mathbb{R}^{n}\right)\hat{\otimes}\mathcal{A}_{\phg}^{\left(\mathcal{E}_{\of},\mathcal{E}_{\ff}-1,\infty,\infty\right)}\left(\hat{T}_{0}^{2}\right),
\end{align*}
it follows that $\overline{a\left(\tilde{y};x,-\eta\right)}\in\mathcal{S}\left(\mathbb{R}^{n}\right)\hat{\otimes}\mathcal{A}_{\phg}^{\left(\overline{\mathcal{E}}_{\of},\overline{\mathcal{E}}_{\ff}-1,\infty,\infty\right)}\left(\hat{P}_{0}^{2}\right)$.
Finally, $x$ is polyhomogeneous on $\hat{P}^{2}$ with index sets
$\left(1,0,0\right)$, so its lift to $\hat{P}_{0}^{2}$ has index
sets $\left(1,1,0,0\right)$. Therefore, 
\begin{align*}
x^{2\delta+1}\overline{a\left(\tilde{y};x,-\eta\right)} & \in\mathcal{S}\left(\mathbb{R}^{n}\right)\hat{\otimes}\mathcal{A}_{\phg}^{\left(\overline{\mathcal{E}}_{\of}+2\delta+1,\overline{\mathcal{E}}_{\ff}+2\delta,\infty,\infty\right)}\left(\hat{P}_{0}^{2}\right)\\
 & =S_{0\po}^{-\infty,\mathcal{F}}\left(\mathbb{R}^{n};\mathbb{R}_{1}^{n+1}\right).
\end{align*}
This concludes the proof.
\end{proof}
Let's now consider the twisted case.
\begin{prop}
\label{prop:twisted-adjoint}Let $\boldsymbol{E},\boldsymbol{F}$
be smooth vector bundles on $\partial X$, and let $\boldsymbol{\mathfrak{s}},\boldsymbol{\mathfrak{t}}$
be smooth endomorphisms of $\boldsymbol{E},\boldsymbol{F}$ with constant
eigenvalues. Choose Hermitian metrics on $\boldsymbol{E},\boldsymbol{F}$.
Denote by $\boldsymbol{\mathfrak{s}}^{*},\boldsymbol{\mathfrak{t}}^{*}$
the Hermitian adjoints of the operators above with respect to the
metrics.
\begin{enumerate}
\item Let $\boldsymbol{A}\in\hat{\Psi}_{0\text{tr}}^{-\infty,\left(\mathcal{E}_{\of},\left[\mathcal{E}_{\ff}\right]\right),\boldsymbol{\mathfrak{s}}}\left(X;\partial X,\boldsymbol{E}\right)$.
Then $\boldsymbol{A}^{\dagger}\in\hat{\Psi}_{0\text{P}}^{-\infty,\left(\mathcal{F}_{\of},\left[\mathcal{F}_{\ff}\right]\right),-\boldsymbol{\mathfrak{s}}^{*}}\left(\partial X,X;\boldsymbol{E}\right)$,
where
\begin{align*}
\mathcal{F}{}_{\of} & :=\overline{\mathcal{E}}_{\of}+2\delta+1\\
\left[\mathcal{F}_{\ff}\right] & :=\left[\overline{\mathcal{E}}_{\ff}\right]+2\delta.
\end{align*}
\item Let $\boldsymbol{B}\in\hat{\Psi}_{0\text{P}}^{-\infty,\left(\mathcal{E}_{\of},\left[\mathcal{E}_{\ff}\right]\right),\boldsymbol{\mathfrak{s}}}\left(\partial X,\boldsymbol{E};X\right)$.
Then $\boldsymbol{B}^{\dagger}\in\hat{\Psi}_{0\text{tr}}^{-\infty,\left(\mathcal{F}_{\of},\left[\mathcal{F}_{\ff}\right]\right),-\boldsymbol{\mathfrak{s}}^{*}}\left(X,\partial X;\boldsymbol{E}\right)$,
where
\begin{align*}
\mathcal{F}{}_{\of} & :=\overline{\mathcal{E}}_{\of}-2\delta-1\\
\left[\mathcal{F}_{\ff}\right] & :=\left[\overline{\mathcal{E}}_{\ff}\right]-2\delta.
\end{align*}
\item Let $\boldsymbol{Q}\in\Psi_{\phg}^{-\left[\mathcal{E}\right],\left(\boldsymbol{\mathfrak{s}},\boldsymbol{\mathfrak{t}}\right)}\left(\partial X;\boldsymbol{E},\boldsymbol{F}\right)$.
Then $\boldsymbol{Q}^{*}\in\Psi_{\phg}^{-\left[\overline{\mathcal{E}}\right],\left(-\boldsymbol{\mathfrak{t}}^{*},-\boldsymbol{\mathfrak{s}}^{*}\right)}\left(\partial X;\boldsymbol{F},\boldsymbol{E}\right)$.
\end{enumerate}
\end{prop}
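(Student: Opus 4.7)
The plan is to adapt the strategy of Proposition \ref{prop:formal-adjoints} to the twisted setting, treating the three parts in parallel; I focus on the trace case, from which the Poisson and boundary cases follow by essentially the same argument. After an atlas/partition-of-unity reduction on $\partial X$ (with residual pieces handled by the untwisted Proposition \ref{prop:formal-adjoints} via Corollary \ref{cor:twisted-0-trace-0-poisson-are-untwisted}), the problem reduces to the local claim: if $A = \Op_L^{\tr}\bigl(\langle\eta\rangle^{-\mathfrak{s}(y)}\, a(y;\tilde{x},\eta)\bigr)$ with $a \in S_{0\tr,\mathcal{S}}^{-\infty,\mathcal{E}}$ column-valued, then the formal $*$-adjoint with respect to $x^{-2\delta-1}\,dx\,dy$ and $dy$ lies in the local twisted $0$-Poisson class with twist $-\mathfrak{s}^{*}$. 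A direct Schwartz-kernel computation, parallel to the untwisted proof, gives
\[
A^{*} = \Op_R^{\po}\bigl(x^{2\delta+1}\, a(\tilde{y};x,-\eta)^{*}\, \langle\eta\rangle^{-\mathfrak{s}(\tilde{y})^{*}}\bigr),
\]
where $a^{*}$ denotes the conjugate transpose (so the symbol is now row-valued) and $\Op_R^{\po}$ is the right Poisson quantization. The factor $x^{2\delta+1}$ has index sets $(1,1,0,0)$ on $\hat{P}_0^2$, so multiplying by it shifts the index sets at $\of$ and $\ff$ by $2\delta+1$ and $2\delta$ respectively, exactly as in the untwisted case.

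The technically decisive step is a \emph{left-right reduction} in the twisted setting: for every right-quantized twisted $0$-Poisson symbol there exists an asymptotically unique left-quantized one (with the same twist and the same leading set at the front face) defining the same operator, given by the standard formal series
\[
p_L(y;x,\eta) \sim \sum_{\alpha} \frac{1}{\alpha!}\, \bigl(D_\eta^\alpha \partial_{\tilde{y}}^\alpha p\bigr)\bigr|_{\tilde{y}=y}.
\]
Convergence is justified by the asymptotic completeness at the front face discussed in \S\ref{subsec:Asymptotic-sums-and-differentiations-of-symbols}, together with the gain of one order at $\ff$ per application of $D_\eta$ supplied by Lemma \ref{lem:differentiation-of-twisted-symbols}(1). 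The differentiations $\partial_{\tilde{y}}^\alpha$ act on both $a^{*}(\tilde{y};x,-\eta)$ and the twisting factor $\langle\eta\rangle^{-\mathfrak{s}(\tilde{y})^{*}}$, and Lemma \ref{lem:differentiation-of-twisted-symbols}(2) ensures that each term stays within the twisted class with twist $-\mathfrak{s}^{*}$, at the cost of possibly increasing some logarithmic orders at $\ff$.

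The main obstacle is precisely this logarithmic loss: because $\partial_{\tilde{y}}$ derivatives of the twisting factor $\langle\eta\rangle^{-\mathfrak{s}(\tilde{y})^{*}}$ produce $\log\langle\eta\rangle$-weights, the full index set at the front face is not preserved under the left-right reduction, but only its leading set -- which is exactly why the statement tracks $[\mathcal{F}_\ff]$ rather than $\mathcal{F}_\ff$ (cf.~Remark \ref{rem:why-only-leading-sets-1}). The loss is uniformly finite at each step thanks to the constancy of the eigenvalues of the twisting endomorphisms, so the polyhomogeneous framework is preserved; this is precisely the simplification over the variable-order case of \cite{KrainerMendozaVariableOrders}. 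The Poisson-to-trace case is entirely parallel with left/right exchanged, and the twisted boundary case proceeds by the same Schwartz-kernel computation (now without the $x^{2\delta+1}$ factor, since the adjoint uses only $dy$) followed by a left-right reduction in the symbol class $S_{\phg,\mathcal{S}}^{-\mathcal{E},(\mathfrak{s},\mathfrak{t})}$, producing the twist $(-\boldsymbol{\mathfrak{t}}^{*}, -\boldsymbol{\mathfrak{s}}^{*})$ claimed.
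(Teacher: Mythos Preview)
Your proposal is correct and follows essentially the same approach as the paper: reduce to the local setting, compute the right-quantized symbol of the adjoint directly from the Schwartz kernel, and then left-reduce using the asymptotic formula together with Lemma~\ref{lem:differentiation-of-twisted-symbols} and asymptotic completeness at the front face, accepting the logarithmic loss so that only $[\mathcal{F}_{\ff}]$ is controlled. The paper's proof additionally simplifies by first reducing (via the generalized eigenbundle decomposition) to a single-eigenvalue twist, absorbing the eigenvalue into $\mathcal{E}_{\ff}$ to make $\mathfrak{s}$ nilpotent, and rescaling so that $\delta=-1/2$; these are cosmetic normalizations that you handle equivalently by carrying the factor $x^{2\delta+1}$ through explicitly.
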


\begin{proof}
Again, let's only prove the $0$-trace case for simplicity. As in
the proof of the un-twisted case, it suffices to prove the local version
of the statement. Moreover, decomposing the operator in terms of the
generalized eigenbundle decomposition of $\boldsymbol{\mathfrak{s}}$,
we can assume that the twisting endomorphism has only one eigenvalue.
Incorporating the eigenvalue in the index set at the front face, we
can assume without loss of generality that the endomorphism is nilpotent.
Finally, up to rescaling the operator, we can also assume that $\delta=-1/2$.
Thus, we are left to prove the following statement: given $A=\Op_{L}^{\tr}\left(\left\langle \eta\right\rangle ^{-\mathfrak{s}\left(y\right)}a\left(y;\tilde{x},\eta\right)\right)\in\hat{\Psi}_{0\tr,\mathcal{S}}^{-\infty,\left(\mathcal{E}_{\of},\mathcal{E}_{\ff}\right),\mathfrak{s}}\left(\mathbb{R}_{1}^{n+1};\mathbb{R}^{n},\mathbb{C}^{N}\right)$,
where $a\left(y;\tilde{x},\eta\right)\in S_{0\tr,\mathcal{S}}^{-\infty,\left(\mathcal{E}_{\of},\mathcal{E}_{\ff}\right)}\left(\mathbb{R}^{n};\mathbb{R}_{1}^{n+1};\mathbb{C}^{N}\right)$,
and $\mathfrak{s}:\overline{\mathbb{R}}^{n}\to\GL\left(N,\mathbb{C}\right)$
a smooth family of nilpotent matrices, the formal adjoint $A^{*}$
with respect to the density $dxdy$ on $\mathbb{R}_{1}^{n+1}$ and
the density $dy$ on $\mathbb{R}^{n}$ is an element of $\hat{\Psi}_{0\po,\mathcal{S}}^{-\infty,\left(\overline{\mathcal{E}}_{\of},\mathcal{F}_{\ff}\right),-\mathfrak{s}^{*}}\left(\mathbb{R}^{n},\mathbb{C}^{N};\mathbb{R}_{1}^{n+1}\right)$
where $\mathcal{F}_{\ff}$ is an index set such that $\left[\mathcal{F}_{\ff}\right]=\left[\overline{\mathcal{E}}_{\ff}\right]-1$.

Call $\tilde{a}\left(y;\tilde{x},\eta\right)=\left\langle \eta\right\rangle ^{-\mathfrak{s}\left(y\right)}a\left(y;\tilde{x},\eta\right)$.
We know from Lemma \ref{lem:twisting-poisson-trace-are-polyhomogeneous-1}
that $\tilde{a}\in S_{0\tr,\mathcal{S}}^{-\infty,\left(\mathcal{E}_{\of},\mathcal{E}_{\ff}'\right)}\left(\mathbb{R}^{n};\mathbb{R}_{1}^{n+1};\mathbb{C}^{N}\right)$,
where $\mathcal{E}_{\ff}'$ is an index set obtained from $\mathcal{E}_{\ff}$
by increasing some of the logarithmic orders. From the proof of the
previous proposition, we know that $A^{*}=\Op_{R}^{\po}\left(\tilde{b}\left(\tilde{y};x,\eta\right)\right)$,
where 
\[
\tilde{b}\left(\tilde{y};x,\eta\right)=\left(\tilde{a}\left(\tilde{y};x,-\eta\right)\right)^{*}\in S_{0\po,\mathcal{S}}^{-\infty,\left(\overline{\mathcal{E}}_{\of},\overline{\mathcal{E}}_{\ff}'-1\right)}\left(\mathbb{R}^{n};\mathbb{R}_{1}^{n+1};\left(\mathbb{C}^{N}\right)^{*}\right)
\]
and the $*$ on the right denotes transpose conjugate. Now, write
$\tilde{b}_{L}\left(y;x,\eta\right)$ for the left reduction of $\tilde{b}\left(\tilde{y};x,\eta\right)$.
Analogously to the standard case of symbols in $\mathbb{R}^{n}$,
$\tilde{b}_{L}\left(y;x,\eta\right)$ is determined modulo $S_{\po}^{-\infty,\mathcal{E}_{\of}}\left(\mathbb{R}^{n};\mathbb{R}_{1}^{n+1};\left(\mathbb{C}^{N}\right)^{*}\right)$
as an asymptotic sum
\[
\tilde{b}_{L}\left(y;x,\eta\right)\sim\sum_{\alpha}\frac{1}{\alpha!}\left(D_{\eta}^{\alpha}\partial_{\tilde{y}}^{\alpha}\tilde{b}\right)_{|\tilde{y}=y}.
\]
Now, we have
\begin{align*}
\tilde{b}\left(\tilde{y};x,\eta\right) & =\left(\tilde{a}\left(\tilde{y};x,-\eta\right)\right)^{*}\\
 & =a\left(\tilde{y};\tilde{x},-\eta\right)^{*}\left\langle \eta\right\rangle ^{-\mathfrak{s}^{*}\left(\tilde{y}\right)}\\
 & \in S_{0\po,\mathcal{S}}^{-\infty,\left(\overline{\mathcal{E}}_{\of},\overline{\mathcal{E}}_{\ff}-1\right),-\mathfrak{s}^{*}}\left(\mathbb{R}^{n};\mathbb{R}_{1}^{n+1};\left(\mathbb{C}^{N}\right)^{*}\right).
\end{align*}
Therefore, by Lemma \ref{lem:differentiation-of-twisted-symbols},
we have for $\left|\alpha\right|>0$ 
\[
\frac{1}{\alpha!}\left(D_{\eta}^{\alpha}\partial_{\tilde{y}}^{\alpha}\tilde{b}\right)_{|\tilde{y}=y}\in S_{0\po,\mathcal{S}}^{-\infty,\left(\overline{\mathcal{E}}_{\of},\overline{\mathcal{E}}_{\ff}^{\left(\alpha\right)}+\left|\alpha\right|-1\right),-\mathfrak{s}^{*}}\left(\mathbb{R}^{n};\mathbb{R}_{1}^{n+1};\left(\mathbb{C}^{N}\right)^{*}\right),
\]
where $\overline{\mathcal{E}}_{\ff}^{\left(\alpha\right)}$ is an
index set obtained from $\overline{\mathcal{E}}_{\ff}$ by increasing
some of the logarithmic orders. Note that the union
\[
\mathcal{F}_{\ff}=\overline{\mathcal{E}}_{\ff}\bigcup_{\left|\alpha\right|>0}\left(\overline{\mathcal{E}}_{\ff}^{\left(\alpha\right)}+\left|\alpha\right|-1\right)
\]
is an index set, because only finitely many log terms can be added
at each level. Moreover, we have $\left[\mathcal{F}_{\ff}\right]=\left[\overline{\mathcal{E}}_{\ff}\right]-1$.
By asymptotic completeness, we have
\[
\tilde{b}_{L}\in S_{0\po,\mathcal{S}}^{-\infty,\left(\overline{\mathcal{E}}_{\of},\mathcal{F}_{\ff}\right),-\mathfrak{s}^{*}}\left(\mathbb{R}^{n};\mathbb{R}_{1}^{n+1};\left(\mathbb{C}^{N}\right)^{*}\right)
\]
which implies that
\[
\hat{\Psi}_{0\po,\mathcal{S}}^{-\infty,\left(\overline{\mathcal{E}}_{\of},\mathcal{F}_{\ff}\right),-\mathfrak{s}^{*}}\left(\mathbb{R}^{n},\mathbb{C}^{N};\mathbb{R}_{1}^{n+1}\right)
\]
as claimed.
\end{proof}

\subsection{\label{subsec:Composition-theorems}Composition theorems}

We now discuss compositions results. As in the previous subsections,
we imitate the standard proof of the composition theorem for pseudodifferential
operators on a closed manifold, i.e. the fact that
\[
\Psi^{m}\left(\partial X\right)\circ\Psi^{m'}\left(\partial X\right)\subseteq\Psi^{m+m'}\left(\partial X\right).
\]
Let's briefly review the proof. Let $A\in\Psi^{m}\left(\partial X\right)$
and $B\in\Psi^{m'}\left(\partial X\right)$. Choose an atlas $\left(U_{i},\varphi_{i}\right)$
such that, for every pair $i,j$ for which $U_{i}\cap U_{j}\not=\emptyset$,
the union $U_{i}\cup U_{j}$ is contained in the domain $\tilde{U}_{ij}$
of a chart $\tilde{\varphi}_{ij}$. Now, decompose $A=\sum_{i}A_{i}+R_{A}$
and $B=\sum_{i}B_{i}+R_{B}$, where $A_{i},B_{i}$ are compactly supported
in $U_{i}\times U_{i}$, and $R_{A},R_{B}\in\Psi^{-\infty}\left(\partial X\right)$.
Then
\[
AB=\sum_{ij}A_{i}B_{j}+\sum_{i}\left(A_{i}R_{B}+R_{A}B_{i}\right)+R_{A}R_{B}.
\]
It is straightforward to prove that $R_{A}R_{B}\in\Psi^{-\infty}\left(\partial X\right)$.
It is also easy to check that $A_{i}R_{B}\in\Psi^{-\infty}\left(\partial X\right)$:
since $A_{i}$ acts continuously as a map $A_{i}:C^{\infty}\left(\partial X\right)\to C^{\infty}\left(\partial X\right)$,
and since $\Psi^{-\infty}\left(\partial X\right)=C^{\infty}\left(\partial X\right)\hat{\otimes}C^{\infty}\left(\partial X;\mathcal{D}_{\partial X}^{1}\right)$,
the action of $A_{i}$ on the left factor determines a continuous
linear map $A_{i}:\Psi^{-\infty}\left(\partial X\right)\to\Psi^{-\infty}\left(\partial X\right)$.
To prove that $R_{A}B_{i}\in\Psi^{-\infty}\left(\partial X\right)$
as well, we observe that $R_{A}B_{i}=\left(B_{i}^{*}R_{A}^{*}\right)^{*}$;
since $B_{i}^{*}\in\Psi^{m}\left(\partial X\right)$ and $R_{A}^{*}\in\Psi^{-\infty}\left(\partial X\right)$,
$B_{i}^{*}R_{A}^{*}\in\Psi^{-\infty}\left(\partial X\right)$ and
therefore $R_{A}B_{i}\in\Psi^{-\infty}\left(\partial X\right)$ as
well. Finally it remains to prove that $A_{i}B_{j}\in\Psi^{m+m'}\left(\partial X\right)$.
Now, the point is that if $U_{i}$ and $U_{j}$ are disjoint, then
$A_{i}B_{j}=0$, while if $U_{i}\cap U_{j}\not=\emptyset$, we can
think of $A_{i}$, $B_{j}$ in coordinates induced by the chart $\left(\tilde{U}_{ij},\tilde{\varphi}_{ij}\right)$
as operators $A_{i}\in\Psi_{\mathcal{S}}^{m}\left(\mathbb{R}^{n}\right)$,
$B_{j}\in\Psi_{\mathcal{S}}^{m'}\left(\mathbb{R}^{n}\right)$. Thus,
it suffices to prove that $\Psi_{\mathcal{S}}^{m}\left(\mathbb{R}^{n}\right)\circ\Psi_{\mathcal{S}}^{m'}\left(\mathbb{R}^{n}\right)\subseteq\Psi_{\mathcal{S}}^{m+m'}\left(\mathbb{R}^{n}\right)$.

To prove this statement, we use the \emph{full quantization map}
\begin{align*}
\Op:S_{\mathcal{S}}^{m}\left(\mathbb{R}^{2n};\mathbb{R}^{n}\right) & \to\Op\left(\mathbb{R}^{n}\right)\\
q\left(y,\tilde{y};\eta\right) & \mapsto\frac{1}{\left(2\pi\right)^{n}}\int e^{i\left(y-\tilde{y}\right)\eta}q\left(y,\tilde{y};\eta\right)d\eta d\tilde{y}.
\end{align*}
Analogously to the right quantization map used in §\ref{subsec:Formal-adjoints},
to properly interpret this map we decompose $S_{\mathcal{S}}^{m}\left(\mathbb{R}^{2n};\mathbb{R}^{n}\right)=\mathcal{S}\left(\mathbb{R}^{n}\right)\hat{\otimes}\mathcal{S}\left(\mathbb{R}^{n}\right)\hat{\otimes}\mathcal{A}^{-m}\left(\overline{\mathbb{R}}^{n}\right)$
and then we observe that the map
\[
\left(a\left(y\right),b\left(\tilde{y}\right),q\left(\eta\right),u\left(y\right)\right)\mapsto a\left(y\right)\frac{1}{\left(2\pi\right)^{n}}\int e^{iy\eta}\widehat{b\cdot u}\left(\eta\right)d\eta
\]
is well-defined and continuous multilinear from $\mathcal{S}\left(\mathbb{R}^{n}\right)\times\mathcal{S}\left(\mathbb{R}^{n}\right)\times\mathcal{A}^{-m}\left(\overline{\mathbb{R}}^{n}\right)\times\mathcal{S}\left(\mathbb{R}^{n}\right)$
to $\mathcal{S}\left(\mathbb{R}^{n}\right)$, so from the universal
property of $\hat{\otimes}$ it induces a continuous linear map $S_{\mathcal{S}}^{m}\left(\mathbb{R}^{2n};\mathbb{R}^{n}\right)\to\Op\left(\mathbb{R}^{n}\right)$.
Again, one can prove that $\Op\left(S_{\mathcal{S}}^{m}\left(\mathbb{R}^{2n};\mathbb{R}^{n}\right)\right)$
coincides with $\Psi_{\mathcal{S}}^{m}\left(\mathbb{R}^{n}\right)$,
i.e. for every full symbol $q\left(y,\tilde{y};\eta\right)\in S_{\mathcal{S}}^{m}\left(\mathbb{R}^{2n};\mathbb{R}^{n}\right)$
there exists a unique left-reduced symbol $q_{L}\left(y;\eta\right)\in S_{\mathcal{S}}^{m}\left(\mathbb{R}^{n};\mathbb{R}^{n}\right)$
such that $\Op\left(q\right)=\Op_{L}\left(q_{L}\right)$. Now, if
$A\in\Psi_{\mathcal{S}}^{m}\left(\mathbb{R}^{n}\right)$ and $B\in\Psi_{\mathcal{S}}^{m'}\left(\mathbb{R}^{n}\right)$,
we can write $A=\Op_{L}\left(a\right)$ and $B=\Op_{R}\left(b\right)$
for a left-reduced symbol $a\left(y;\eta\right)$ and a right-reduced
symbol $b\left(\tilde{y};\eta\right)$. By definition of $\Op_{L}$
and $\Op_{R}$, we have
\begin{align*}
ABu & =\frac{1}{\left(2\pi\right)^{n}}\int e^{iy\eta}a\left(y;\eta\right)\widehat{Bu}\left(\eta\right)d\eta\\
 & =\frac{1}{\left(2\pi\right)^{n}}\int e^{i\left(y-\tilde{y}\right)\eta}a\left(y;\eta\right)b\left(\tilde{y};\eta\right)u\left(\tilde{y}\right)d\tilde{y}d\eta.
\end{align*}
The proof is concluded if we can show that $a\left(y;\eta\right)b\left(\tilde{y};\eta\right)\in S_{\mathcal{S}}^{m+m'}\left(\mathbb{R}^{2n};\mathbb{R}^{n}\right)$:
using the definition $S_{\mathcal{S}}^{m}\left(\mathbb{R}^{k};\mathbb{R}^{n}\right)=\mathcal{S}\left(\mathbb{R}^{k}\right)\hat{\otimes}\mathcal{A}^{-m}\left(\overline{\mathbb{R}}^{n}\right)$,
this follows immediately by observing that the multiplication map
\begin{align*}
\mathcal{A}^{-m}\left(\overline{\mathbb{R}}^{n}\right)\times\mathcal{A}^{-m'}\left(\overline{\mathbb{R}}^{n}\right) & \to\mathcal{A}^{-\left(m+m'\right)}\left(\overline{\mathbb{R}}^{n}\right)\\
\left(a\left(\eta\right),b\left(\eta\right)\right) & \mapsto a\left(\eta\right)b\left(\eta\right)
\end{align*}
is well-defined and continuous. It is often useful to express all
the operators $A,B$ and $C=AB$ as left quantizations: if $A=\Op_{L}\left(a\right)$
and $B=\Op_{L}\left(b\right)$, then the composition $C=AB$ is expressed
as a left quantization $C=\Op_{L}\left(c\right)$, where the symbol
$c\left(y;\eta\right)$ is uniquely determined modulo $S_{\mathcal{S}}^{-\infty}\left(\mathbb{R}^{n};\mathbb{R}^{n}\right)$
as an asymptotic sum
\[
c\left(y;\eta\right)\sim\sum_{\alpha}\frac{1}{\alpha!}\left(\partial_{\eta}^{\alpha}a\right)\left(D_{\tilde{y}}^{\alpha}b\right).
\]

\subsubsection{\label{subsubsec:Local-composition-theorems-untwisted}Local composition
theorems for un-twisted operators}

Let us first prove the local versions of the composition results which
do not involve twisted symbolic $0$-trace, $0$-Poisson and boundary
operators. We prove three theorems:
\begin{thm}
\label{thm:compositions-involving-boundary}Let $\mathcal{E}=\left(\mathcal{E}_{\of},\mathcal{E}_{\ff}\right)$,
$\mathcal{F}=\left(\mathcal{F}_{\of},\mathcal{F}_{\ff}\right)$ and
$\mathcal{G}$ be index sets. Let $A\in\hat{\Psi}_{0\tr,\mathcal{S}}^{-\infty,\mathcal{F}}\left(\mathbb{R}_{1}^{n+1},\mathbb{R}^{n}\right)$,
$B\in\hat{\Psi}_{0\po,\mathcal{S}}^{-\infty,\mathcal{E}}\left(\mathbb{R}^{n},\mathbb{R}_{1}^{n+1}\right)$
and $Q\in\Psi_{\phg,\mathcal{S}}^{-\mathcal{G}}\left(\mathbb{R}^{n}\right)$.
Then:
\begin{enumerate}
\item $BQ$ is well-defined and in $\hat{\Psi}_{0\po,\mathcal{S}}^{-\infty,\left(\mathcal{E}_{\of},\mathcal{E}_{\ff}+\mathcal{G}\right)}\left(\mathbb{R}^{n},\mathbb{R}_{1}^{n+1}\right)$;
\item $QA$ is well-defined and in $\hat{\Psi}_{0\tr,\mathcal{S}}^{-\infty,\left(\mathcal{F}_{\of},\mathcal{F}_{\ff}+\mathcal{G}\right)}\left(\mathbb{R}_{1}^{n+1},\mathbb{R}^{n}\right)$;
\item $BA$ is well-defined and in $\hat{\Psi}_{0,\mathcal{S}}^{-\infty,\left(\mathcal{E}_{\of},\mathcal{F}_{\of},\mathcal{E}_{\ff}+\mathcal{F}_{\ff}\right)}\left(\mathbb{R}_{1}^{n+1}\right)$;
\item if $\Re\left(\mathcal{F}_{\of}+\mathcal{E}_{\of}\right)>-1$, then
$AB$ is well-defined and in $\Psi_{\phg,\mathcal{S}}^{-\left(\mathcal{E}_{\ff}+\mathcal{F}_{\ff}\right)}\left(\mathbb{R}^{n}\right)$.
\end{enumerate}
\end{thm}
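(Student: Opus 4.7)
The plan is to mimic the standard proof that $\Psi_{\mathcal{S}}^m\circ\Psi_{\mathcal{S}}^{m'}\subseteq\Psi_{\mathcal{S}}^{m+m'}$ reviewed at the start of this subsection. First, I would introduce right quantizations $\Op_R^{\tr}$, $\Op_R^{\po}$, $\Op_R^{\inte}$ and full quantizations $\Op^{\tr}$, $\Op^{\po}$, $\Op^{\inte}$ for the three symbol classes, with symbols depending on both $y$ and $\tilde y$. I would then verify that each full quantization reduces to a left quantization modulo the corresponding smoothing class, via the usual asymptotic formula $c_L(y;\,\cdot\,)\sim\sum_{\alpha}\frac{1}{\alpha!}\bigl(D_\eta^\alpha\partial_{\tilde y}^\alpha c\bigr)_{|\tilde y=y}$. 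Lemma \ref{lem:differentiation-of-untwisted-symbols} ensures that $\partial_\eta$ gains a unit at the front face while $\partial_{\tilde y}$ preserves it, so these asymptotic sums live in the claimed classes by the asymptotic completeness at the front face of the $0$-trace, $0$-Poisson and $0$-interior symbol spaces.

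For parts (1), (2), (3) the composition is handled by a direct kernel computation after pairing a left quantization with a right one. For example, writing $B=\Op_L^{\po}(b_L)$ and $Q=\Op_R(q_R)$, the kernel composition immediately yields $BQ=\Op^{\po}\bigl(b_L(y;x,\eta)\,q_R(\tilde y;\eta)\bigr)$. Since $q_R(\tilde y;\eta)$ is smooth in $x$ and classical of index set $\mathcal G$ at $|\eta|=\infty$, its lift from $\hat P^2$ to $\hat P_0^2$ has index sets $(0,\mathcal G,\mathcal G,0)$ by the Pull-back Theorem; thus $b_L\cdot q_R$ is a full $0$-Poisson symbol with front-face index set $\mathcal E_{\ff}+\mathcal G$, and the full-to-left reduction finishes (1). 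Parts (2) and (3) are strictly analogous: the full symbol of $QA$ (resp.\ $BA$) is $q_L(y;\eta)\,a_R(\tilde y;\tilde x,\eta)$ (resp.\ $b_L(y;x,\eta)\,a_R(\tilde y;\tilde x,\eta)$), the second being recognised as a full $0$-interior symbol since $\hat T_0^2$ and $\hat P_0^2$ are canonically identified as the same blow-up of $\overline{\mathbb R}_1^1\times\overline{\mathbb R}^n$ at $\{\tilde x=0,|\eta|=\infty\}$.

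Part (4) is the main obstacle and requires an actual integration in the normal direction. I would write $A=\Op_L^{\tr}(a)$ and $B=\Op_R^{\po}(b_R)$, and obtain the kernel identity $K_{AB}(y;\tilde y)=\frac{1}{(2\pi)^n}\int e^{i(y-\tilde y)\eta}c(y,\tilde y;\eta)\,d\eta$ with full boundary symbol
\[
c(y,\tilde y;\eta)=\int_0^\infty a(y;\tilde x,\eta)\,b_R(\tilde y;\tilde x,\eta)\,d\tilde x.
\]
The integrand is Schwartz in $\tilde x$ at infinity by construction, and the hypothesis $\Re(\mathcal F_{\of}+\mathcal E_{\of})>-1$ is precisely the integrability condition needed at $\tilde x=0$. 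The heart of the proof is to show $c\in S_{\phg,\mathcal S}^{-(\mathcal E_{\ff}+\mathcal F_{\ff})}(\mathbb R^{2n};\mathbb R^n)$. Using the identification $\hat T_0^2\simeq\hat P_0^2$, the product $a\cdot b_R$ is polyhomogeneous on $\overline{\mathbb R}^{2n}\times\hat P_0^2$ with index set $\mathcal E_{\of}+\mathcal F_{\of}$ at the original face and $\mathcal E_{\ff}+\mathcal F_{\ff}-1$ at the front face. Near the front face, the projective coordinates $\tau=\tilde x|\eta|$, $\hat\eta=\eta/|\eta|$ convert $d\tilde x$ into $|\eta|^{-1}d\tau$, which absorbs the $-1$ shift and yields precisely the index set $\mathcal E_{\ff}+\mathcal F_{\ff}$ at $|\eta|=\infty$ after pushing forward along the projection $(\tilde x,\eta)\mapsto\eta$; a formal application of the Push-forward Theorem (to a suitable $b$-fibration refinement of this projection) delivers the claim. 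A final full-to-left reduction then produces a left-reduced boundary symbol of the required class, completing $AB\in\Psi_{\phg,\mathcal S}^{-(\mathcal E_{\ff}+\mathcal F_{\ff})}(\mathbb R^n)$.
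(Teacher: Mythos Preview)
Your overall strategy matches the paper's: write each composition as a full quantization of a product or contraction of a left-reduced and a right-reduced symbol, then left-reduce. Parts (1), (2), (4) are essentially the paper's arguments; the paper derives (2) from (1) by taking formal adjoints rather than repeating the multiplication, and for (4) applies the Push-forward Theorem to the $b$-fibration $\hat T_0^2\to\overline{\mathbb R}^n$ lifting $(\tilde x,\eta)\mapsto\eta$, just as you outline.

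The gap is in part (3). You assert that $b_L(y;x,\eta)\,a_R(\tilde y;\tilde x,\eta)$ is ``recognised as a full $0$-interior symbol'' because $\hat T_0^2$ and $\hat P_0^2$ are canonically identified. That identification is irrelevant: the two factors live on different copies (one in the variables $(x,\eta)$, the other in $(\tilde x,\eta)$), and the product is a function of $(x,\tilde x,\eta)$ whose polyhomogeneity on $\hat M_0^2$ must actually be proved. The projection $(x,\tilde x,\eta)\mapsto(x,\eta)$ does \emph{not} lift to a $b$-map $\hat M_0^2\to\hat P_0^2$: at the corner $\lf\cap\iif_\eta$ of $\hat M_0^2$ (where $x=0$, $|\eta|=\infty$, $\tilde x>0$, a region untouched by the blow-up defining $\hat M_0^2$) the pull-back of $b_L$ alone is not polyhomogeneous. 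The paper fixes this by passing to the auxiliary space $Z=[\hat M_0^2:\iif_\eta\cap(\lf\cup\rf)]$, on which both projections \emph{do} lift to $b$-fibrations $\gamma_L:Z\to\hat P_0^2$ and $\gamma_R:Z\to\hat T_0^2$; the Pull-back Theorem then gives $\gamma_L^*b\cdot\gamma_R^*a$ index set $\infty$ at both extra faces $\ef_x,\ef_{\tilde x}$ (coming from the Schwartz decay of each factor at $\iif_\eta$ in the region where the \emph{other} factor is singular), so the product descends to a polyhomogeneous function on the blow-down $\hat M_0^2$ with the claimed index sets. Your sketch omits this mechanism entirely.
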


\begin{thm}
\label{thm:compositions-mixed}Let $\mathcal{E}=\left(\mathcal{E}_{\lf},\mathcal{E}_{\rf},\mathcal{E}_{\ff_{b}},\mathcal{E}_{\ff_{0}}\right)$
and $\mathcal{F}=\left(\mathcal{F}_{\of},\mathcal{F}_{\ff}\right)$
be index sets. Define also $\mathcal{E}'=\left(\mathcal{E}_{\lf},\mathcal{E}_{\rf},\mathcal{E}_{\ff_{0}}\right)$.
Let $A\in\hat{\Psi}_{0\tr,\mathcal{S}}^{-\infty,\mathcal{F}}\left(\mathbb{R}_{1}^{n+1},\mathbb{R}^{n}\right)$,
$B\in\hat{\Psi}_{0\po,\mathcal{S}}^{-\infty,\mathcal{F}}\left(\mathbb{R}^{n},\mathbb{R}_{1}^{n+1}\right)$,
$P_{0}\in\hat{\Psi}_{0,\mathcal{S}}^{-\infty,\mathcal{E}'}\left(\mathbb{R}_{1}^{n+1}\right)$
and $P_{0b}\in\hat{\Psi}_{0b,\mathcal{S}}^{-\infty,\mathcal{E}}\left(\mathbb{R}_{1}^{n+1}\right)$.
\begin{enumerate}
\item \textup{\emph{If $\Re\left(\mathcal{E}_{\rf}+\mathcal{F}_{\of}\right)>-1$,
then $P_{0}B$ is well-defined and in $\hat{\Psi}_{0\po,\mathcal{S}}^{-\infty,\left(\mathcal{E}_{\lf},\mathcal{F}_{\ff}+\mathcal{E}_{\ff_{0}}\right)}\left(\mathbb{R}^{n},\mathbb{R}_{1}^{n+1}\right)$.}}
\item \textup{\emph{If $\Re\left(\mathcal{E}_{\rf}+\mathcal{F}_{\of}\right)>-1$,
then $P_{0b}B$ is well-defined and in $\hat{\Psi}_{0\po,\mathcal{S}}^{-\infty,\left(\mathcal{E}_{\lf}\overline{\cup}\left(\mathcal{F}_{\of}+\mathcal{E}_{\ff_{b}}\right),\mathcal{F}_{\ff}+\mathcal{E}_{\ff_{0}}\right)}\left(\mathbb{R}^{n},\mathbb{R}_{1}^{n+1}\right)$.}}
\item \textup{\emph{If $\Re\left(\mathcal{F}_{\of}+\mathcal{E}_{\lf}\right)>-1$,
then $AP_{0}$ is well-defined and in $\hat{\Psi}_{0\tr,\mathcal{S}}^{-\infty,\left(\mathcal{E}_{\rf},\mathcal{F}_{\ff}+\mathcal{E}_{\ff_{0}}\right)}\left(\mathbb{R}_{1}^{n+1},\mathbb{R}^{n}\right)$.}}
\item \textup{\emph{If $\Re\left(\mathcal{F}_{\of}+\mathcal{E}_{\lf}\right)>-1$,
then $AP_{0b}$ is well-defined and in $\hat{\Psi}_{0\tr,\mathcal{S}}^{-\infty,\left(\mathcal{E}_{\rf}\overline{\cup}\left(\mathcal{F}_{\of}+\mathcal{E}_{\ff_{b}}\right),\mathcal{F}_{\ff}+\mathcal{E}_{\ff_{0}}\right)}\left(\mathbb{R}_{1}^{n+1},\mathbb{R}^{n}\right)$.}}
\end{enumerate}
\end{thm}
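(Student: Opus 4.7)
The plan is to reduce (3) and (4) to (1) and (2) by taking adjoints, and to prove (1) and (2) via the symbolic method: derive an asymptotic expansion for the symbol of the composition, then control each term using the Pull-back and Push-forward Theorems \emph{in frequency space}. The main obstacle will be setting up the left and right projections between the source blow-ups $\hat{M}_0^2$, $\hat{M}_{0b}^2$ and the target $\hat{P}_0^2$ as genuine b-fibrations, and then tracking the index sets carefully under push-forward.

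\textbf{Reduction to (1) and (2).} Since $AP_0=(P_0^\dagger A^\dagger)^\dagger$ and $AP_{0b}=(P_{0b}^\dagger A^\dagger)^\dagger$, Proposition \ref{prop:formal-adjoints} converts (3) and (4) into instances of (1) and (2) applied to $P_0^\dagger$ (resp.\ $P_{0b}^\dagger$) and $A^\dagger$. Under these adjoints the index sets swap $\lf\leftrightarrow\rf$ and shift by $\pm(2\delta+1)$ or $\pm 2\delta$, so a direct check shows that the hypothesis $\Re(\mathcal{E}_{\rf}^\dagger+\mathcal{F}_{\of}^\dagger)>-1$ of (1)--(2) is equivalent to $\Re(\mathcal{F}_{\of}+\mathcal{E}_{\lf})>-1$, and that dualising the resulting $0$-Poisson index sets back via Proposition \ref{prop:formal-adjoints} recovers exactly the $0$-trace index sets claimed in (3) and (4).

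\textbf{Asymptotic expansion of the symbol.} For (1) and (2), write $P_0=\Op_L^{\inte}(p)$, $P_{0b}=\Op_L^{\inte}(p_b)$ and $B=\Op_L^{\po}(b)$. A direct Fourier-analytic computation -- Taylor-expand $b(\tilde y;\tilde x,\eta)$ about $\tilde y=y$ in the Schwartz kernel of $P_{0(b)}B$ and use $v^\alpha e^{-iv\eta}=i^{|\alpha|}\partial_\eta^\alpha e^{-iv\eta}$ -- gives $P_{0(b)}B=\Op_L^{\po}(c)$ with
\[
c(y;x,\eta)\;\sim\;\sum_{\alpha\in\mathbb{N}^n}\frac{1}{\alpha!}\int_0^\infty \bigl(\partial_\eta^\alpha p_{(b)}\bigr)(y;x,\tilde x,\eta)\cdot\bigl(D_y^\alpha b\bigr)(y;\tilde x,\eta)\,d\tilde x.
\]
By Lemma \ref{lem:differentiation-of-untwisted-symbols}, $\partial_\eta^\alpha p_{(b)}$ gains $|\alpha|$ at $\ff_0$ while $D_y^\alpha b$ remains in the class of $b$. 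It therefore suffices to show that each term lies in the claimed target class with an additional $+|\alpha|$ at $\ff$; asymptotic completeness at $\ff$ of the $0$-Poisson symbol class then produces $c$.

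\textbf{Pull-back and push-forward in frequency space.} On the product $\overline{\mathbb{R}}_1^1\times\overline{\mathbb{R}}_1^1\times\overline{\mathbb{R}}^n$ with coordinates $(x,\tilde x,\eta)$, the right projection $(x,\tilde x,\eta)\mapsto(\tilde x,\eta)$ and the left projection $(x,\tilde x,\eta)\mapsto(x,\eta)$ lift, via the universal property of the blow-up, to b-fibrations $\gamma_R,\gamma_L:\hat M_0^2\to\hat P_0^2$ and, in case (2), $\gamma_R,\gamma_L:\hat M_{0b}^2\to\hat P_0^2$: the blow-up centre in the source maps submersively onto the blow-up centre in the target in each case. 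By the Pull-back Theorem, $\gamma_R^*(D_y^\alpha b)$ is polyhomogeneous with index $\mathcal{F}_{\of}$ at $\rf$ -- and also at $\ff_b$ in case (2), since both faces lie over $\{\tilde x=0\}\subseteq\hat M^2$ -- index $\mathcal{F}_{\ff}$ at $\ff_0$, and $\infty$ at the remaining faces. Multiplying by $\partial_\eta^\alpha p_{(b)}$ produces a polyhomogeneous function whose index sets are the term-wise sums, and the $\alpha$-th term of $c$ is then its push-forward along $\gamma_L$ together with the density $d\tilde x$. The integrability conditions at the fibre-parallel faces $\rf$ and $\iif_{\tilde x}$ reduce, respectively, to $\Re(\mathcal{E}_{\rf}+\mathcal{F}_{\of})>-1$ (the hypothesis, once the $+1$ contribution of $d\tilde x$ at $\rf$ is absorbed) and to Schwartz decay (automatic). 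The Push-forward Theorem then delivers polyhomogeneity on $\hat P_0^2$: in case (1) only $\lf$ and $\ff_0$ project onto $\of$ and $\ff$, giving $\mathcal{E}_{\lf}$ at $\of$ and $\mathcal{F}_{\ff}+\mathcal{E}_{\ff_0}+|\alpha|$ at $\ff$; in case (2) the face $\ff_b$ also projects onto $\of$, producing the extended union $\mathcal{E}_{\lf}\,\overline{\cup}\,(\mathcal{F}_{\of}+\mathcal{E}_{\ff_b})$ at $\of$.
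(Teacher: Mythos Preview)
Your overall strategy matches the paper's: reduce (3)--(4) to (1)--(2) by formal adjoints, express the composition symbol as an asymptotic sum of contractions, and analyse each contraction by pull-back/push-forward in frequency space. The gap is in the geometric claim at the heart of the last paragraph: the projections $(x,\tilde x,\eta)\mapsto(x,\eta)$ and $(x,\tilde x,\eta)\mapsto(\tilde x,\eta)$ do \emph{not} lift to $b$-fibrations $\hat M_0^2\to\hat P_0^2$ (or $\hat M_{0b}^2\to\hat P_0^2$). The universal property of the blow-up is not enough here, because the preimage of the blow-up centre of $\hat P_0^2$ in $\hat M_0^2$ is not just $\ff_0$: it also contains the corner $\lf\cap\iif_\eta$ (points with $x=0$, $|\eta|=\infty$, $\tilde x>0$), which the blow-up producing $\hat M_0^2$ does not touch. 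Concretely, near that corner the pull-back of a defining function for $\ff\subset\hat P_0^2$ is $\sqrt{r_{\lf}^2+r_{\iif_\eta}^2}$, not a monomial in boundary defining functions, so the lifted map is not even a $b$-map there. The symmetric problem occurs at $\rf\cap\iif_\eta$ for $\gamma_R$, and both persist in $\hat M_{0b}^2$.

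The paper's fix is exactly the extra blow-up you are missing: one passes to $Z=[\hat M_0^2:\iif_\eta\cap(\lf\cup\rf)]$ (respectively $Z_b=[\hat M_{0b}^2:\iif_\eta\cap(\lf\cup\rf)]$), creating two new faces $\ef_x,\ef_{\tilde x}$; only then do the projections lift to genuine $b$-fibrations $\gamma_L:Z\to\hat P_0^2$, $\gamma_R:Z\to\hat T_0^2$ (this is the same space used in the proof of Theorem~\ref{thm:mapping-properties-on-phg-untwisted} and in the $0$-Poisson~$\circ$~$0$-trace case). After this correction your argument goes through: one of the two factors always has index set $\infty$ at each of $\ef_x,\ef_{\tilde x}$, so the product is Schwartz there and the push-forward along $\gamma_L$ yields exactly the index sets claimed at $\of$ and $\ff$.
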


\begin{thm}
\label{thm:compositions-involving-interior}Let $\mathcal{E}=\left(\mathcal{E}_{\lf},\mathcal{E}_{\rf},\mathcal{E}_{\ff_{b}},\mathcal{E}_{\ff_{0}}\right)$\textup{\emph{
and}} $\mathcal{F}=\left(\mathcal{F}_{\lf},\mathcal{F}_{\rf},\mathcal{F}_{\ff_{b}},\mathcal{F}_{\ff_{0}}\right)$\textup{\emph{
be index sets. Define also }}$\mathcal{E}'=\left(\mathcal{E}_{\lf},\mathcal{E}_{\rf},\mathcal{E}_{\ff_{0}}\right)$
and $\mathcal{F}'=\left(\mathcal{F}_{\lf},\mathcal{F}_{\rf},\mathcal{F}_{\ff_{0}}\right)$.
\textup{\emph{Define $\mathcal{G}=\left(\mathcal{G}_{\lf},\mathcal{G}_{\rf},\mathcal{G}_{\ff_{b}},\mathcal{G}_{\ff_{0}}\right)$
and $\mathcal{G}'=\left(\mathcal{G}_{\lf},\mathcal{G}_{\rf},\mathcal{G}_{\ff_{0}}\right)$,
with
\begin{align*}
\mathcal{G}_{\lf} & =\mathcal{E}_{\lf}\overline{\cup}\left(\mathcal{F}_{\lf}+\mathcal{E}_{\ff_{b}}\right)\\
\mathcal{G}_{\rf} & =\mathcal{F}_{\rf}\overline{\cup}\left(\mathcal{E}_{\rf}+\mathcal{F}_{\ff_{b}}\right)\\
\mathcal{G}_{\ff_{0}} & =\mathcal{E}_{\ff_{0}}+\mathcal{F}_{\ff_{0}}\\
\mathcal{G}_{\ff_{b}} & =\left(\mathcal{E}_{\ff_{b}}+\mathcal{F}_{\ff_{b}}\right)\overline{\cup}\left(\mathcal{E}_{\lf}+\mathcal{F}_{\rf}+1\right).
\end{align*}
}}Let $P_{0}\in\hat{\Psi}_{0,\mathcal{S}}^{-\infty,\mathcal{E}'}\left(\mathbb{R}_{1}^{n+1}\right)$,
$P_{0b}\in\hat{\Psi}_{0b,\mathcal{S}}^{-\infty,\mathcal{E}}\left(\mathbb{R}_{1}^{n+1}\right)$,
$Q_{0}\in\hat{\Psi}_{0,\mathcal{S}}^{-\infty,\mathcal{F}'}\left(\mathbb{R}_{1}^{n+1}\right)$,
$Q_{0b}\in\hat{\Psi}_{0b,\mathcal{S}}^{-\infty,\mathcal{F}}\left(\mathbb{R}_{1}^{n+1}\right)$.\textup{\emph{
If $\Re\left(\mathcal{E}_{\rf}+\mathcal{F}_{\lf}\right)>-1$, then:}}
\begin{enumerate}
\item \textup{\emph{$P_{0b}Q_{0b}$ is well-defined and in $\hat{\Psi}_{0b,\mathcal{S}}^{-\infty,\mathcal{G}}\left(\mathbb{R}_{1}^{n+1}\right)$;}}
\item \textup{\emph{$P_{0b}Q_{0}$ is well-defined and in $\hat{\Psi}_{0,\mathcal{S}}^{-\infty,\left(\mathcal{G}_{\lf},\mathcal{F}_{\rf},\mathcal{G}_{\ff_{0}}\right)}\left(\mathbb{R}_{1}^{n+1}\right)$;}}
\item \textup{\emph{$P_{0}Q_{0b}$ is well-defined and in $\hat{\Psi}_{0,\mathcal{S}}^{-\infty,\left(\mathcal{E}_{\lf},\mathcal{G}_{\rf},\mathcal{G}_{\ff_{0}}\right)}\left(\mathbb{R}_{1}^{n+1}\right)$;}}
\item \textup{\emph{$P_{0}Q_{0}$ is well-defined and in $\hat{\Psi}_{0,\mathcal{S}}^{-\infty,\left(\mathcal{E}_{\lf},\mathcal{F}_{\rf},\mathcal{G}_{\ff_{0}}\right)}\left(\mathbb{R}_{1}^{n+1}\right)$.}}
\end{enumerate}
\end{thm}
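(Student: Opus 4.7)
The plan is to imitate the standard composition proof for pseudodifferential operators on $\mathbb{R}^{n}$ sketched at the beginning of \S\ref{subsec:Composition-theorems}, but carrying out the key pull-back/push-forward argument in frequency space, exactly as was done for the mapping property in Theorem \ref{thm:mapping-properties-on-phg-untwisted}. First I would reduce to a purely local statement on $\mathbb{R}_{1}^{n+1}$: each global interior operator decomposes as a finite sum of pieces supported in a coordinate chart plus a residual $b$-calculus piece, and the residual interactions $R_{P}Q$, $PR_{Q}$ are controlled by the mapping properties of Theorem \ref{thm:mapping-properties-on-phg-untwisted} together with Proposition \ref{prop:formal-adjoints}. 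The task then reduces to: given $P=\Op_{L}^{\inte}(p)$ and $Q=\Op_{L}^{\inte}(q)$ with $p,q$ in the local symbol classes, show $PQ=\Op_{L}^{\inte}(c_{L})$ for some symbol $c_{L}$ in the advertised class.

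Next I would introduce a \emph{right} interior quantization $\Op_{R}^{\inte}$ and establish, via the asymptotic sum $p_{R}(\tilde{y};x,\tilde{x},\eta)\sim\sum_{\alpha}\tfrac{(-1)^{|\alpha|}}{\alpha!}(D_{\eta}^{\alpha}\partial_{y}^{\alpha}p)_{|y=\tilde{y}}$, that $\Op_{R}^{\inte}$ has the same range as $\Op_{L}^{\inte}$; Lemma \ref{lem:differentiation-of-untwisted-symbols} shows each term of the sum gains order $|\alpha|$ at $\ff_{0}$, so asymptotic completeness at $\ff_{0}$ (and at $\ff_{b}$ in the $0b$-case) produces a bona-fide symbol in the required class. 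Writing $P=\Op_{L}^{\inte}(p)$ and $Q=\Op_{R}^{\inte}(q_{R})$, a direct unfolding of the integrals gives $PQ=\Op(c)$, where $\Op$ denotes the full interior quantization and the full composite symbol is
\[
c(y,\tilde{y};x,\tilde{\tilde{x}},\eta)=\int p(y;x,\tilde{x},\eta)\, q_{R}(\tilde{y};\tilde{x},\tilde{\tilde{x}},\eta)\, d\tilde{x}.
\]
The integrability hypothesis $\Re(\mathcal{E}_{\rf}+\mathcal{F}_{\lf})>-1$ is exactly what makes this $\tilde{x}$-integral converge at $\tilde{x}=0$. A left-reduction analogous to the one above then converts $\Op(c)$ into $\Op_{L}^{\inte}(c_{L})$ modulo smoothing, with derivatives in $\tilde{y}$ raising the order at $\ff_{0}$ but being absorbed by asymptotic completeness.

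The technical heart of the proof --- and the main obstacle --- is to show that $c$ lies in the advertised symbol class with precisely the index sets $\mathcal{G}$ stated. I would build a blown-up triple model space starting from $\overline{\mathbb{R}}_{1}^{1}\times\overline{\mathbb{R}}_{1}^{1}\times\overline{\mathbb{R}}_{1}^{1}\times\overline{\mathbb{R}}^{n}$ with coordinates $(x,\tilde{x},\tilde{\tilde{x}},\eta)$, iteratively blowing up the loci $\{x=\tilde{x}=0,|\eta|=\infty\}$ and $\{\tilde{x}=\tilde{\tilde{x}}=0,|\eta|=\infty\}$ (and the $b$-corners $\{x=\tilde{x}=0\}$, $\{\tilde{x}=\tilde{\tilde{x}}=0\}$ in the $0b$-cases), together with any further blow-ups needed to turn the ``forget $\tilde{x}$'' projection into a $b$-fibration onto $\hat{M}_{0}^{2}$ or $\hat{M}_{0b}^{2}$ --- in particular a blow-up of the triple corner $\{x=\tilde{x}=\tilde{\tilde{x}}=0\}$, whose lifted face is what produces the characteristic term $\mathcal{E}_{\lf}+\mathcal{F}_{\rf}+1$ appearing in $\mathcal{G}_{\ff_{b}}$. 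The two partial ``forget $\tilde{\tilde{x}}$'' and ``forget $x$'' projections lift to $b$-maps under which $p$ and $q_{R}$ pull back to polyhomogeneous functions with computable index sets, their product is polyhomogeneous with additive index sets, and the Push-forward Theorem along the $b$-fibration yields $c$ with the stated index sets, the extended unions in $\mathcal{G}_{\lf},\mathcal{G}_{\rf},\mathcal{G}_{\ff_{b}}$ appearing as the Push-forward Theorem's prescription for a face lying over a corner of the target. Point 1 is obtained by taking both $p$ and $q_{R}$ $0b$-interior; Points 2--4 must be argued directly, using the simpler blow-up structure corresponding to $0$-interior symbols (omitting the $b$-blow-ups $\{x=\tilde{x}=0\}$ or $\{\tilde{x}=\tilde{\tilde{x}}=0\}$ on the relevant source side), rather than deduced from Point 1 via Lemma \ref{lem:0-interior-symbols-are-0b-interior-symbols}, since that inclusion would introduce spurious contributions at $\ff_{b}$ that would pollute the left- and right-face index sets of the composition.
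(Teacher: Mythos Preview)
Your overall strategy is plausible, but it diverges from the paper's proof in an instructive way, and your outline also contains a minor misreading of what the theorem asks.

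First, the small point: this theorem is already \emph{local}, about operators in $\hat{\Psi}_{0,\mathcal{S}}^{-\infty,\bullet}(\mathbb{R}_{1}^{n+1})$ and $\hat{\Psi}_{0b,\mathcal{S}}^{-\infty,\bullet}(\mathbb{R}_{1}^{n+1})$. Your first paragraph about decomposing global operators into chart-supported pieces plus residual $b$-calculus terms is not needed here; that argument belongs to the separate global composition theorems (Theorems \ref{thm:compositions-involving-interior-1} and \ref{thm:global-composition-0b-0b}). For this theorem you can begin directly with the symbol-level computation.

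On the substantive approach: the paper does \emph{not} build a blown-up triple model space. Instead it uses a coordinate change to an alternative compactification that decouples the $\eta$-dependence entirely. Concretely, one passes from $\hat{M}_{0}^{2}$ (coordinates $x,\tilde{x},\eta$) to the space $\hat{D}_{0}^{2}=\overline{\mathbb{R}}_{2}^{2}\times\overline{\mathbb{R}}^{n}$ via $\tau=x\langle\eta\rangle$, $\tilde{\tau}=\tilde{x}\langle\eta\rangle$. Under this change, a $0$-interior symbol becomes an element of $\mathcal{A}_{\phg}^{(\mathcal{E}_{\lf},\mathcal{E}_{\rf},\infty)}(\overline{\mathbb{R}}_{2}^{2})\hat{\otimes}\mathcal{A}_{\phg}^{\mathcal{E}_{\ff_{0}}-1}(\overline{\mathbb{R}}^{n})$, and the contraction $\int p(\,\cdot\,,\tau',\eta)q(\tau',\,\cdot\,,\eta)\,d\tau'$ factors as the tensor product of a very-residual composition on $\overline{\mathbb{R}}_{1}^{1}$ with pointwise multiplication in $\eta$. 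For the $0b$-case one blows up the corner of $\overline{\mathbb{R}}_{2}^{2}$ to get $\hat{D}_{0b}^{2}=[\overline{\mathbb{R}}_{2}^{2}:0]\times\overline{\mathbb{R}}^{n}$, and the contraction reduces to the already-known composition theorem for the residual large $b$-calculus on $\overline{\mathbb{R}}_{1}^{1}$, which is exactly where the index sets $\mathcal{G}_{\lf},\mathcal{G}_{\rf},\mathcal{G}_{\ff_{b}}$ come from. The $\ff_{0}$ index set is simply $\mathcal{E}_{\ff_{0}}+\mathcal{F}_{\ff_{0}}$ because in the $\hat{D}$-picture the $\eta$-factors multiply pointwise.

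Your triple-space route should work in principle, but it is considerably more laborious: you would need to resolve not only the loci you listed but also $\{x=\tilde{\tilde{x}}=0,\ |\eta|=\infty\}$ (for the $\ff_{0}$ of the \emph{target}), and likely further ``extra'' blow-ups analogous to the $\ef_{x},\ef_{\tilde{x}}$ faces that appeared in the proof of Theorem \ref{thm:mapping-properties-on-phg-untwisted}, in order to make all three partial projections genuine $b$-fibrations. Checking that these blow-ups commute correctly and computing the boundary-exponent matrices is nontrivial bookkeeping that the $\hat{D}_{0}^{2}$ trick bypasses completely. The paper's method also makes transparent why the mixed cases (Points 2--4) yield $0$-interior rather than $0b$-interior output: when one factor is a very-residual symbol on $\overline{\mathbb{R}}_{2}^{2}$, the one-dimensional composition stays very residual, with no $\ff_{b}$ face appearing.
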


The basic strategy to prove all these results is the same as in the
standard case. Analogously to the case of symbols in $\mathbb{R}^{n}$,
for each operator type (Poisson, trace, interior) we can define a
``full quantization map'': formally, these full quantization maps
are defined as
\begin{align*}
\Op^{\tr}\left(a\left(y,\tilde{y};\tilde{x},\eta\right)\right) & =\left[\frac{1}{\left(2\pi\right)^{n}}\int e^{i\left(y-\tilde{y}\right)\eta}a\left(y,\tilde{y};\tilde{x},\eta\right)d\eta\right]d\tilde{x}d\tilde{y}\\
\Op^{\po}\left(p\left(y,\tilde{y};x,\eta\right)\right) & =\left[\frac{1}{\left(2\pi\right)^{n}}\int e^{i\left(y-\tilde{y}\right)\eta}b\left(y,\tilde{y};x,\eta\right)d\eta\right]d\tilde{y}\\
\Op^{\inte}\left(p\left(y,\tilde{y};x,\tilde{x},\eta\right)\right) & =\left[\frac{1}{\left(2\pi\right)^{n}}\int e^{i\left(y-\tilde{y}\right)\eta}p\left(y,\tilde{y};x,\tilde{x},\eta\right)d\eta\right]d\tilde{x}d\tilde{y}.
\end{align*}
One can prove, exactly as in the standard case, that the space of
quantizations of full symbols (of a given type, and with given index
sets) coincides with the space of quantizations of left-reduced or
right-reduced symbols (with the same type and index sets). Now, given
two operators $Q_{1},Q_{2}$ in the classes above, we write $Q_{1}=\Op_{L}^{\bullet}\left(q_{1}\right)$
and $Q_{2}=\Op_{R}^{\bullet}\left(q_{2}\right)$ for appropriate left
and right reduced symbols of the appropriate type. Then, if the composition
$Q_{1}Q_{2}$ is well-defined, $Q_{1}Q_{2}$ is the quantization of
the full symbol $q_{1}*q_{2}$. Here $*$ is a bilinear continuous
``contraction map'', which depends on the symbol types of $q_{1}$
and $q_{2}$. For example, if $Q\in\Psi_{\phg,\mathcal{S}}^{-\mathcal{G}}\left(\mathbb{R}^{n}\right)$
and $A\in\hat{\Psi}_{0\tr,\mathcal{S}}^{-\infty,\mathcal{F}}\left(\mathbb{R}^{n}\right)$,
then $Q=\Op_{L}\left(q\left(y;\eta\right)\right)$, $A=\Op_{R}^{\tr}\left(a\left(\tilde{y};\tilde{x},\eta\right)d\tilde{x}\right)$,
and $q*a$ is simply the product $q\left(y;\eta\right)a\left(\tilde{y};\tilde{x},\eta\right)$.
If instead $A\in\hat{\Psi}_{0\tr,\mathcal{S}}^{-\infty,\mathcal{F}}\left(\mathbb{R}^{n}\right)$
and $B\in\hat{\Psi}_{0\po,\mathcal{S}}^{-\infty,\mathcal{E}}\left(\mathbb{R}^{n}\right)$,
then $A=\Op_{L}^{\tr}\left(a\left(y;\tilde{x},\eta\right)\right)$,
$B=\Op_{R}^{\po}\left(b\left(\tilde{y};x,\eta\right)\right)$, and
$a*b$ is the contraction map
\[
c\left(y,\tilde{y};\eta\right)=\int a\left(y;\tilde{x},\eta\right)b\left(\tilde{y};x,\eta\right)d\tilde{x}.
\]
Of course in this case $a*b$ is not always well-defined, but the
integrability condition for the integral above is precisely the integrability
condition that ensures (by the mapping properties proved in §\ref{subsec:Basic-mapping-properties})
that the composition $AB$ is well-defined. Finally, exactly as in
the standard case, one can prove that if $Q_{1}=\Op_{L}^{\bullet}\left(q_{1}\right)$
and $Q_{2}=\Op_{L}^{\bullet}\left(q_{2}\right)$, and the composition
$Q=Q_{1}Q_{2}$ is well-defined, then the left symbol $q$ for which
$Q=\Op_{L}^{\bullet}\left(q\right)$ is determined up to residual
symbols (of the appropriate type) as an asymptotic sum
\[
q\sim\sum_{\alpha}\frac{1}{\alpha!}\left(\partial_{\eta}^{\alpha}q_{1}\right)*\left(D_{\tilde{y}}^{\alpha}q_{2}\right).
\]
Summarizing, in each case we are reduced to prove the well-definedness
of various bilinear ``contraction maps'' between appropriate symbol
spaces. As in the standard case, we can further simplify the argument
by considering symbols with ``no coefficients''.
\begin{proof}
(Proof of Theorems \ref{thm:compositions-involving-boundary} and
\ref{thm:compositions-mixed})\textbf{ ($0$-Poisson $\circ$ boundary)
}First of all, if $B\in\hat{\Psi}_{0\po,\mathcal{S}}^{-\infty,\mathcal{E}}\left(\mathbb{R}^{n},\mathbb{R}_{1}^{n+1}\right)$
and $Q\in\Psi_{\phg,\mathcal{S}}^{-\mathcal{G}}\left(\mathbb{R}^{n}\right)$,
then we know that $Q$ induces a continuous linear map $Q:\mathcal{S}\left(\mathbb{R}^{n}\right)\to\mathcal{S}\left(\mathbb{R}^{n}\right)$,
and by the proof of Theorem \ref{thm:mapping-properties-on-phg-untwisted}
$B$ induces a continuous linear map $\mathcal{S}\left(\mathbb{R}^{n}\right)\to\mathcal{A}_{\phg}^{\left(\mathcal{E}_{\of},\infty,\infty\right)}\left(\overline{\mathbb{R}}_{1}^{1}\times\overline{\mathbb{R}}^{n}\right)$.
Therefore, the composition $BQ$ is well-defined. Moreover, if $B=\Op_{L}^{\po}\left(b\right)$
and $Q=\Op_{R}\left(q\right)$, then for every $u\in\mathcal{S}\left(\mathbb{R}^{n}\right)$
we have
\begin{align*}
\left(BQu\right)\left(x,y\right) & =\frac{1}{\left(2\pi\right)^{n}}\int e^{iy\eta}b\left(y;x,\eta\right)\widehat{Qu}\left(\eta\right)d\eta\\
 & =\frac{1}{\left(2\pi\right)^{n}}\int e^{i\left(y-\tilde{y}\right)\eta}b\left(y;x,\eta\right)q\left(\tilde{y};\eta\right)u\left(\tilde{y}\right)d\tilde{y}d\eta.
\end{align*}
Therefore, we need to show that $b\left(y;x,\eta\right)q\left(\tilde{y};\eta\right)\in S_{0\po,\mathcal{S}}^{-\infty,\left(\mathcal{E}_{\of},\mathcal{E}_{\ff}+\mathcal{G}\right)}\left(\mathbb{R}^{2n},\mathbb{R}_{1}^{n+1}\right)$.
In order to do so, it suffices to prove that scalar multiplication
in the interior determines a continuous bilinear map
\begin{align}
\mathcal{A}_{\phg}^{\left(\mathcal{E}_{\of},\mathcal{E}_{\ff},\infty,\infty\right)}\left(\hat{P}_{0}^{2}\right)\times\mathcal{A}_{\phg}^{-\mathcal{G}}\left(\overline{\mathbb{R}}^{n}\right) & \to\mathcal{A}_{\phg}^{\left(\mathcal{E}_{\of},\mathcal{E}_{\ff}+\mathcal{G},\infty,\infty\right)}\left(\hat{P}_{0}^{2}\right)\nonumber \\
\left(b\left(x,\eta\right),q\left(\eta\right)\right) & \mapsto b\left(x,\eta\right)q\left(\eta\right).\label{eq:Poisson*boundary}
\end{align}
The well-definedness and continuity of \ref{eq:Poisson*boundary}
is an easy application of the Pull-back Theorem. Indeed, the lifted
projection $\beta_{\po,R}:\hat{P}_{0}^{2}\to\overline{\mathbb{R}}^{n}$
lifts $\left\langle \eta\right\rangle ^{-1}$ to a product $r_{\ff}r_{\iif_{\eta}}$,
where $r_{\ff}$ and $r_{\iif_{\eta}}$ are boundary defining functions
for the faces $\ff$ and $\iif_{\eta}$ of $\hat{P}_{0}^{2}$. Therefore,
\begin{align*}
\beta_{\po}^{*}\left(b\left(x,\eta\right)q\left(\eta\right)\right) & =\beta_{\po}^{*}b\cdot\beta_{\po,R}^{*}q\\
 & \in\mathcal{A}_{\phg}^{\left(\mathcal{E}_{\of},\mathcal{E}_{\ff}+\mathcal{G},\infty,\infty\right)}\left(\hat{P}_{0}^{2}\right)
\end{align*}
as claimed.

\textbf{(boundary $\circ$ $0$-trace) }The claim follows from the
composition rule we just proved by taking formal adjoints.

\textbf{($0$-Poisson $\circ$ $0$-trace) }Arguing as above, it suffices
to prove that pointwise multiplication in the interior determines
a continuous bilinear map
\begin{align*}
\mathcal{A}_{\phg}^{\left(\mathcal{E}_{\of},\mathcal{E}_{\ff},\infty,\infty\right)}\left(\hat{P}_{0}^{2}\right)\times\mathcal{A}_{\phg}^{\left(\mathcal{F}_{\of},\mathcal{F}_{\ff}-1,\infty,\infty\right)}\left(\hat{T}_{0}^{2}\right) & \to\mathcal{A}_{\phg}^{\left(\mathcal{E}_{\of},\mathcal{F}_{\of},\mathcal{E}_{\ff}+\mathcal{F}_{\ff}-1,\infty,\infty\right)}\left(\hat{M}_{0}^{2}\right)\\
\left(b\left(x,\eta\right),a\left(\tilde{y},\eta\right)\right) & \mapsto b\left(x,\eta\right)a\left(\tilde{y},\eta\right).
\end{align*}
This result is proved using an argument similar to the one used in
the proof of Theorem \ref{thm:mapping-properties-on-phg-untwisted}.
Define the space $Z=\left[\hat{M}_{0}^{2}:\iif_{\eta}\cap\left(\lf\cap\rf\right)\right]$
and the $b$-fibrations $\gamma_{L}:Z\to\hat{P}_{0}^{2}$ and $\gamma_{R}:Z\to\hat{T}_{0}^{2}$.
The space $Z$ is the blow-down of the space $Z_{b}$ used in the
proof of Theorem \ref{thm:mapping-properties-on-phg-untwisted}. Accordingly,
denote the the faces of $Z$ by $\lf$, $\rf$, $\ff_{0}$, $\ef_{x}$,
$\ef_{\tilde{x}}$, $\iif_{\eta}$, $\iif_{x}$, $\iif_{\tilde{x}}$.
The ``extra faces'' $\ef_{x},\ef_{\tilde{x}}$ are obtained from
the blow-ups of the corners $\iif_{\eta}\cap\lf$ and $\iif_{\eta}\cap\rf$
respectively, while the other faces are lifted from $\hat{M}_{0}^{2}$.
The graphical representation of $Z$ and $\gamma_{L},\gamma_{R}$
is given in Figure \ref{fig:Z}\begin{figure}
	\centering
	\caption{$Z$}
	\label{fig:Z}
	

	\tikzmath{\L = 5;\R = \L /2.5;\u=180/7;}
	\tdplotsetmaincoords{60}{160}
	\begin{tikzpicture}[tdplot_main_coords, scale = 0.65]


		\coordinate (A) at (-\L,0,0) ;
		\coordinate (A1) at (-\L+\R,0,0) ;
		\coordinate (A2) at (-\L,\R,0) ;
		\coordinate (A3) at (-\L,0,\R) ;
		\coordinate (A4) at ({-\L+\R*cos(\u)},{\R*sin(\u)},0) ;
		\coordinate (A5) at ({-\L+\R*cos(\u)},0,{\R*sin(\u)}) ;
		\coordinate (A6) at (-\L,{\R*cos(\u)},{\R*sin(\u)}) ;
		\coordinate (A7) at (-\L,{\R*cos(90-\u)},{\R*sin(90-\u)}) ;
		\coordinate (A8) at ({-\L+\R*cos(90-\u)},{\R*sin(90-\u)},0) ;
		\coordinate (A9) at ({-\L+\R*sin(\u)},0,{\R*cos(\u)}) ;

		\coordinate (B) at (\L,0,0) ;
		\coordinate (B1) at (\L-\R,0,0) ;
		\coordinate (B2) at (\L,\R,0) ;
		\coordinate (B3) at (\L,0,\R) ;
		\coordinate (B4) at ({\L-\R*cos(\u)},{\R*sin(\u)},0) ;
		\coordinate (B5) at ({\L-\R*cos(\u)},0,{\R*sin(\u)}) ;
		\coordinate (B6) at (\L,{\R*cos(\u)},{\R*sin(\u)}) ;
		\coordinate (B7) at (\L,{\R*cos(90-\u)},{\R*sin(90-\u)}) ;
		\coordinate (B8) at ({\L-\R*cos(90-\u)},{\R*sin(90-\u)},0) ;
		\coordinate (B9) at ({\L-\R*sin(\u)},0,{\R*cos(\u)}) ;

		\coordinate (C) at (-\L,\L,0) ;
		\coordinate (C1) at ({-\L+\R*cos(90-\u)},\L,0) ;
		\coordinate (C2) at (-\L,\L,{\R*sin(\u)}) ;
		
		\coordinate (D) at (\L,\L,0) ;
		\coordinate (D1) at ({\L-\R*cos(90-\u)},\L,0) ;
		\coordinate (D2) at (\L,\L,{\R*sin(\u)}) ;
		
		\coordinate (E) at (-\L,0,\L) ;
		\coordinate (E1) at (-\L,{\R*cos(90-\u)},\L) ;
		\coordinate (E2) at ({-\L+\R*sin(\u)},0,\L) ;

		\coordinate (F) at (\L,0,\L) ;
		\coordinate (F1) at (\L,{\R*cos(90-\u)},\L) ;
		\coordinate (F2) at ({\L-\R*sin(\u)},0,\L) ;
		
		\coordinate (G) at (-\L,\L,\L);
		\coordinate (H) at (\L,\L,\L);

		\draw
			(B1) -- (A1)
			(A6) -- (C2)
			(A7) -- (E1)
			(A8) -- (C1)
			(A9) -- (E2)
			(B7) -- (F1)
			(B9) -- (F2)
			(B6) -- (D2)
			(B8) -- (D1)
						
		;

		\draw[]
			(F1) -- (H)
			(H) -- (D2)
			(E1) -- (G)
			(G) -- (C2)
			(F2) -- (E2)
			(D1) -- (C1)
			(G) -- (H)
		;

		\begin{scope}[canvas is zy plane at x=0]
			\draw (B7) arc(\u:{90-\u}:\R);
			\draw (A7) arc(\u:{90-\u}:\R);	
		\end{scope}
		\begin{scope}[canvas is xy plane at z=0]
			\draw (B8) arc({90+\u}:{180}:\R);
			\draw (A8) arc({90-\u}:{0}:\R);
			\draw (A9) arc(0:90:{\R*sin(\u)});
			\draw[] (E2) arc(0:90:{\R*sin(\u)});
			\draw (B7) arc(90:180:{\R*sin(\u)});
			\draw[] (F1) arc(90:180:{\R*sin(\u)});
		\end{scope}
		\begin{scope}[canvas is xz plane at y=0]
			\draw (B9) arc({90+\u}:{180}:\R);
			\draw (A9) arc({90-\u}:{0}:\R);
			\draw (A8) arc(0:90:{\R*sin(\u)});
			\draw[] (C1) arc(0:90:{\R*sin(\u)});
			\draw (B6) arc(90:180:{\R*sin(\u)});
			\draw[] (D2) arc(90:180:{\R*sin(\u)});
		\end{scope}
		
		
		\node at (0,{2/3*\L},0) {$\text{rf}$};
		
		\node at (0,0,{2/3*\L}) {$\text{lf}$};
		
		\node at ({\L-\L/6},{\L/6},{\L/6}) {$\text{ff}_0$};
		\node at ({-\L+\L/6},{\L/6},{\L/6}) {$\text{ff}_0$};
			
		\node at ({-\L+\L/12},{\L/12},{2/3*\L}) {$\text{ef}_x$};
		\node at ({\L-\L/8},{\L/8},{2/3*\L}) {$\text{ef}_x$};

		\node at ({-\L+\L/12},{2/3*\L},{\L/12}) {$\text{ef}_{\tilde{x}}$};
		\node at ({\L-\L/8},{2/3*\L},{\L/8}) {$\text{ef}_{\tilde{x}}$};
		
		

	\begin{scope}[shift = {(0,0,{2*\L})}]

		\coordinate (A) at (-\L,0,0) ;
		\coordinate (A1) at (-\L+\R/2,0,0) ;
		\coordinate (A2) at (-\L,\R/2,0) ;
		
		\coordinate (B) at (\L,0,0) ;
		\coordinate (B1) at (\L-\R/2,0,0) ;
		\coordinate (B2) at (\L,\R/2,0) ;
		
		\coordinate (C) at (-\L,\L,0) ;
		\coordinate (D) at (\L,\L,0) ;
		
		\draw 
			(B2) -- (D) 
			(B1) -- (A1) 
			(A2) -- (C) 
		;

		\draw[]
			(D) -- (C)
		;

		\begin{scope}[canvas is xy plane at z=0]
			\draw (B2) arc(90:180:\R/2);
			\draw (A2) arc(90:0:\R/2);
		\end{scope}
		
		
		\node at (0,{\L/9},0) {$\text{of}$};

		\node at ({\L-\L/4},{\L/4},0) {$\text{ff}$};
		\node at ({-\L+\L/4},{\L/4},0) {$\text{ff}$};


	\end{scope}


	\begin{scope}[shift = {(0,{2.5*\L},0)}]

		\coordinate (A) at (-\L,0,0) ;
		\coordinate (A1) at (-\L+\R/2,0,0) ;
		\coordinate (A3) at (-\L,0,\R/2) ;

		\coordinate (B) at (\L,0,0) ;
		\coordinate (B1) at (\L-\R/2,0,0) ;
		\coordinate (B3) at (\L,0,\R/2) ;

		\coordinate (E) at (-\L,0,\L) ;
		\coordinate (F) at (\L,0,\L) ;
		
		\draw 
			(B3) -- (F) 
			(B1) -- (A1) 
			(A3) -- (E)
		;

		\draw[]
			(F) -- (E)
		;

		\begin{scope}[canvas is xz plane at y=0]
			\draw (B3) arc(90:180:\R/2);
			\draw (A1) arc(0:90:\R/2);
		\end{scope}
		

		\node at (0,0,{\L/9}) {$\text{of}$};

		\node at ({\L-\L/4},0,{\L/4}) {$\text{ff}$};
		\node at ({-\L+\L/4},0,{\L/4}) {$\text{ff}$};


	\end{scope}

	
	\draw[-latex] (0,0,{\L+0.1*\L}) -- (0,0,{3/2*\L}) node[midway, right] {$\gamma_L$};
	\draw[-latex] (0,{\L+0.1*\L},0) -- (0,{3/2*\L},0) node[midway, right] {$\gamma_R$};
		
	\end{tikzpicture}

\end{figure}. Choose boundary defining functions $\rho_{\of},\rho_{\ff},\rho_{\iif_{\eta}},\rho_{\iif_{x}}$
for $\hat{P}_{0}^{2}$, and $\tilde{\rho}_{\of},\tilde{\rho}_{\ff},\tilde{\rho}_{\iif_{\eta}},\tilde{\rho}_{\iif_{\tilde{x}}}$
for $\hat{T}_{0}^{2}$. Then we have
\[
\begin{array}{ll}
\gamma_{L}^{*}\rho_{\of}=r_{\lf} & \gamma_{R}^{*}\tilde{\rho}_{\of}=r_{\rf}\\
\gamma_{L}^{*}\rho_{\ff}=r_{\ff_{0}}r_{\ef_{x}} & \gamma_{R}^{*}\tilde{\rho}_{\ff}=r_{\ff_{0}}r_{\ef_{\tilde{x}}}\\
\gamma_{L}^{*}\rho_{\iif_{\eta}}=r_{\iif_{\eta}}r_{\ef_{\tilde{x}}} & \gamma_{R}^{*}\tilde{\rho}_{\iif_{\eta}}=r_{\iif_{\eta}}r_{\ef_{x}}\\
\gamma_{L}^{*}\rho_{\iif_{x}}=r_{\iif_{x}} & \gamma_{R}^{*}\tilde{\rho}_{\iif_{\tilde{x}}}=r_{\iif_{\tilde{x}}}
\end{array}
\]
where $r_{\lf},r_{\rf},r_{\ff_{0}},r_{\ef_{x}},r_{\ef_{\tilde{x}}},r_{\iif_{\eta}},r_{\iif_{x}},r_{\iif_{\tilde{x}}}$
are boundary defining functions for $Z$. Consequently, the index
sets of $\gamma_{L}^{*}b$ are $\left(\mathcal{E}_{\of},0,\mathcal{E}_{\ff},\mathcal{E}_{\ff},\infty,\infty,\infty,\infty\right)$
and the index sets of $\gamma_{R}^{*}a$ are $\left(0,\mathcal{F}_{\of},\mathcal{F}_{\ff}-1,\infty,\mathcal{F}_{\ff}-1,\infty,\infty,\infty\right)$.
Therefore, the index sets of the product are $\left(\mathcal{E}_{\of},\mathcal{F}_{\of},\mathcal{E}_{\ff}+\mathcal{F}_{\ff}-1,\infty,\infty,\infty,\infty,\infty\right)$.
In particular, observe that the index sets at both $\ef_{x}$ and
$\ef_{\tilde{x}}$ is $\infty$, and therefore $\gamma_{L}^{*}b\cdot\gamma_{R}^{*}a$
is polyhomogeneous on the blow-down $\hat{M}_{0}^{2}$, with index
sets $\left(\mathcal{E}_{\of},\mathcal{F}_{\of},\mathcal{E}_{\ff}+\mathcal{F}_{\ff}-1,\infty,\infty,\infty\right)$.
This concludes the proof.

\textbf{($0$-trace $\circ$ $0$-Poisson)} If $A\in\hat{\Psi}_{0\tr,\mathcal{S}}^{-\infty,\mathcal{F}}\left(\mathbb{R}_{1}^{n+1},\mathbb{R}^{n}\right)$
and $B\in\hat{\Psi}_{0\po,\mathcal{S}}^{-\infty,\mathcal{E}}\left(\mathbb{R}^{n},\mathbb{R}_{1}^{n+1}\right)$,
then by the proof of Theorem \ref{thm:mapping-properties-on-phg-untwisted}
$B$ defines a continuous linear map $B:\mathcal{S}\left(\mathbb{R}^{n}\right)\to\mathcal{A}_{\phg}^{\left(\mathcal{E}_{\of},\infty,\infty\right)}\left(\overline{\mathbb{R}}_{1}^{1}\times\overline{\mathbb{R}}^{n}\right)$,
and since $\Re\left(\mathcal{F}_{\of}+\mathcal{E}_{\of}\right)>-1$,
$A$ induces a continuous linear map $A:\mathcal{A}_{\phg}^{\left(\mathcal{E}_{\of},\infty,\infty\right)}\left(\overline{\mathbb{R}}_{1}^{1}\times\overline{\mathbb{R}}^{n}\right)\to\mathcal{S}\left(\mathbb{R}^{n}\right)$.
Therefore, the composition is well-defined; if $A=\Op_{L}^{\tr}\left(a\left(y;\tilde{x},\eta\right)d\tilde{x}\right)$
and $B=\Op_{R}^{\po}\left(b\left(\tilde{y};x,\eta\right)\right)$,
then $AB=\Op\left(c\left(y,\tilde{y};\eta\right)\right)$ where
\[
c\left(y,\tilde{y};\eta\right)=\int a\left(y;\tilde{x},\eta\right)b\left(\tilde{y};\tilde{x},\eta\right)d\tilde{x}.
\]
Arguing as above, it suffices to prove that integration by $\tilde{x}$
defines a continuous linear map
\begin{align*}
\mathcal{A}_{\phg}^{\left(\mathcal{E}_{\of},\mathcal{E}_{\ff}-1,\infty,\infty\right)}\left(\hat{T}_{0}^{2}\right)\times\mathcal{A}_{\phg}^{\left(\mathcal{F}_{\of},\mathcal{F}_{\ff},\infty,\infty\right)}\left(\hat{T}_{0}^{2}\right) & \to\mathcal{A}_{\phg}^{\mathcal{E}_{\ff}+\mathcal{F}_{\ff}}\left(\overline{\mathbb{R}}^{n}\right)\\
\left(a\left(\tilde{x},\eta\right),b\left(\tilde{x},\eta\right)\right) & \mapsto\int a\left(\tilde{x},\eta\right)b\left(\tilde{x},\eta\right)d\tilde{x}.
\end{align*}
The multiplication $a\left(\tilde{x},\eta\right)b\left(\tilde{x},\eta\right)$
is clearly in $\mathcal{A}_{\phg}^{\left(\mathcal{E}_{\of}+\mathcal{F}_{\of},\mathcal{E}_{\ff}+\mathcal{F}_{\ff}-1,\infty,\infty\right)}\left(\hat{T}_{0}^{2}\right)$.
The integral on the right is simply the push-forward $\left(\beta_{\tr,R}\right)_{*}\left(a\left(\tilde{x},\eta\right)b\left(\tilde{x},\eta\right)d\tilde{x}\right)$
via the $b$-fibration $\beta_{\tr,R}:\hat{T}_{0}^{2}\to\overline{\mathbb{R}}^{n}$
obtained by lifting the canonical projection $\left(\tilde{x},\eta\right)\mapsto\eta$.
We know that the pull-back of $\left\langle \eta\right\rangle ^{-1}$
is of the form $r_{\iif_{\eta}}r_{\ff}$, where $r_{\iif_{\eta}},r_{\ff}$
are boundary defining functions for the hyperfaces $\iif_{\eta},\ff$
of $\hat{T}_{0}^{2}$. Moreover, calling $\beta_{\tr,L}:\hat{T}_{0}^{2}\to\overline{\mathbb{R}}_{1}^{1}$
the other lifted projection, we have
\begin{align*}
\beta_{\tr,L}^{*}\mathcal{D}_{\overline{\mathbb{R}}_{1}^{1}}^{1} & =\beta_{\tr}^{*}\mathcal{D}_{\hat{T}^{2}}^{1}\otimes\beta_{\tr,R}^{*}\mathcal{D}_{\overline{\mathbb{R}}^{n}}^{-1}\\
 & =r_{\ff}\mathcal{D}_{\hat{T}_{0}^{2}}^{1}\otimes\beta_{\tr,R}^{*}\mathcal{D}_{\overline{\mathbb{R}}^{n}}^{-1}\\
 & =r_{\of}r_{\ff}^{2}\cdot{^{b}\mathcal{D}_{\hat{T}_{0}^{2}}^{1}}\otimes\beta_{\tr,R}^{*}\mathcal{D}_{\overline{\mathbb{R}}^{n}}^{-1}
\end{align*}
where $r_{\of}$ is a boundary defining function for $\of$. Therefore,
$a\left(\tilde{x},\eta\right)b\left(\tilde{x},\eta\right)$ lifts
to a polyhomogeneous section of the bundle ${^{b}\mathcal{D}_{\hat{T}_{0}^{2}}^{1}}\otimes\beta_{\tr,R}^{*}\mathcal{D}_{\overline{\mathbb{R}}^{n}}^{-1}$
over $\hat{T}_{0}^{2}$, with index sets $\mathcal{E}_{\of}+\mathcal{F}_{\of}+1,\mathcal{E}_{\ff}+\mathcal{F}_{\ff}+1,\infty,\infty$.
By the Push-forward Theorem, since $\Re\left(\mathcal{E}_{\of}+\mathcal{F}_{\of}\right)>-1$,
$\left(\beta_{\tr,R}^{*}\right)_{*}\left(a\left(\tilde{x},\eta\right)b\left(\tilde{x},\eta\right)d\tilde{x}\right)$
is well-defined and a polyhomogeneous section of $^{b}\mathcal{D}_{\overline{\mathbb{R}}^{n}}^{1}\otimes\mathcal{D}_{\overline{\mathbb{R}}^{n}}^{-1}$
with index set $\mathcal{E}_{\ff}+\mathcal{F}_{\ff}+1$. Finally,
we have $^{b}\mathcal{D}_{\overline{\mathbb{R}}^{n}}^{1}=\left\langle \eta\right\rangle \mathcal{D}_{\overline{\mathbb{R}}^{n}}^{1}$,
so in fact $\left(\beta_{\tr,R}^{*}\right)_{*}\left(a\left(\tilde{x},\eta\right)b\left(\tilde{x},\eta\right)d\tilde{x}\right)$
is just a polyhomogeneous function with index set $\mathcal{E}_{\ff}+\mathcal{F}_{\ff}$,
as claimed.

\textbf{(interior $\circ$ $0$-Poisson) }For the $0b$-interior case,
the proof is essentially equal to the proof of point 2 of Theorem
\ref{thm:mapping-properties-on-phg-untwisted}. In the $0$-interior
case, it suffices to use the simpler blow-up $Z$ used to discuss
the \textbf{$0$}-Poisson\textbf{ $\circ$ $0$}-trace composition
rule. We omit the details, which can be easily filled by decorating
Figures \ref{fig:Z} and \ref{fig:Zb} with the appropriate index
sets.

\textbf{($0$-trace $\circ$ interior) }These results follow immediately
from the previous cases by taking adjoints.
\end{proof}
\begin{proof}
(Proof of Theorem \ref{thm:compositions-mixed}) Let us first prove
the \textbf{$0$}-interior\textbf{ $\circ$ $0$}-interior\textbf{
}case. As in the previous proofs, it is sufficient to prove that if
\begin{align*}
a\left(x,\tilde{x},\eta\right) & \in\mathcal{A}_{\phg}^{\left(\mathcal{E}_{\lf},\mathcal{E}_{\rf},\mathcal{E}_{\ff_{0}}-1,\infty,\infty,\infty\right)}\left(\hat{M}_{0}^{2}\right)\\
b\left(x,\tilde{x},\eta\right) & \in\mathcal{A}_{\phg}^{\left(\mathcal{F}_{\lf},\mathcal{F}_{\rf},\mathcal{F}_{\ff_{0}}-1,\infty,\infty,\infty\right)}\left(\hat{M}_{0}^{2}\right)
\end{align*}
and $\Re\left(\mathcal{E}_{\rf}+\mathcal{F}_{\lf}\right)>-1$, then
the distribution
\[
c\left(x,\tilde{x},\eta\right)=\int a\left(x,x',\eta\right)b\left(x',\tilde{x},\eta\right)dx'
\]
is well-defined and in $\mathcal{A}_{\phg}^{\left(\mathcal{E}_{\lf},\mathcal{F}_{\rf},\mathcal{E}_{\ff_{0}}+\mathcal{F}_{\ff_{0}}-1,\infty,\infty,\infty\right)}\left(\hat{M}_{0}^{2}\right)$.

To prove this, it is convenient to introduce an alternative compactification
of the locus $\hat{M}_{0}^{2}\backslash\left(\iif_{\eta}\cup\iif_{x}\cup\iif_{\tilde{x}}\right)$.
Consider the functions
\[
\tau=x\left\langle \eta\right\rangle ,\tilde{\tau}=\tilde{x}\left\langle \eta\right\rangle .
\]
These functions are polyhomogeneous on $\hat{M}_{0}^{2}$. More precisely,
recalling that the order of the faces of $\hat{M}_{0}^{2}$ is $\lf$,
$\rf$, $\ff_{0}$, $\iif_{\eta}$, $\iif_{x}$, $\iif_{\tilde{x}}$,
the index sets are:
\begin{enumerate}
\item $\left(1,0,1,0,-1,0\right)$ for $x$;
\item $\left(0,1,1,0,0,-1\right)$ for $\tilde{x}$;
\item $\left(0,0,-1,-1,0,0\right)$ for $\left\langle \eta\right\rangle $;
\end{enumerate}
therefore, the index sets of $\tau$ are $\left(1,0,0,-1,-1,0\right)$,
and the index sets of $\tilde{\tau}$ are $\left(0,1,0,-1,-1,0\right)$.
In particular, on $\hat{M}_{0}^{2}\backslash\left(\iif_{\eta}\cup\iif_{x}\cup\iif_{\tilde{x}}\right)$,
$\tau$ is a boundary defining function for $\lf$ and $\tilde{\tau}$
is a boundary defining function for $\rf$, while $\left\langle \eta\right\rangle ^{-1}$
is a boundary defining function for $\ff_{0}$. Now, consider the
diffeomorphism
\begin{align*}
\mathbb{R}_{2}^{2}\times\mathbb{R}^{n} & \to\hat{M}^{2}\backslash\left(\iif_{\eta}\cup\iif_{x}\cup\iif_{\tilde{x}}\right)\\
\left(\tau,\tilde{\tau},\eta\right) & \mapsto\left(\tau\left\langle \eta\right\rangle ^{-1},\tilde{\tau}\left\langle \eta\right\rangle ^{-1},\eta\right).
\end{align*}
This diffeomorphism lifts to a diffeomorphism
\begin{align*}
\varphi:\mathbb{R}_{2}^{2}\times\overline{\mathbb{R}}^{n} & \to\hat{M}_{0}^{2}\backslash\left(\iif_{\eta}\cup\iif_{x}\cup\iif_{\tilde{x}}\right),
\end{align*}
so that the coordinates $\tau,\tilde{\tau}$ restricted to the locus
$\left\langle \eta\right\rangle =\infty$ parametrize $\ff_{0}\backslash\iif_{\eta}$
and identify it with $\mathbb{R}_{2}^{2}$. We define $\hat{D}_{0}^{2}$
as the compactification $\overline{\mathbb{R}}_{2}^{2}\times\overline{\mathbb{R}}^{n}$.
This space is represented in Figure \ref{fig:Dhat}\begin{figure}
\centering
\caption{$\hat{D}^2_0$ and $\hat{D}^2_{0b}$}
\label{fig:Dhat}

\begin{minipage}{.49\textwidth}
	\tikzmath{
		\L = 5;
		\u = 0;
		\R0 = \L/2.5;
		\Rb = \R0 * sin(\u);
		\v = asin((1/2 *sqrt(4* \R0^2 - \R0^4 / \L^2))/\R0);
		\w = asin((1/2 *sqrt(4* \R0^2 - \R0^4 / \L^2))/\L);
	}
	\tdplotsetmaincoords{60}{160}
	\begin{tikzpicture}[tdplot_main_coords, scale = 0.55]

		\coordinate (A) at (-\L,0,0);
		\coordinate (A1) at ({-\L + \R0*cos(\v)},0,{\R0*sin(\v)});
		\coordinate (A2) at ({-\L + \R0*cos(\u)},0,{\R0*sin(\u)});
		\coordinate (A3) at ({-\L + \R0*cos(\v)},{\R0*sin(\v)},0);
		\coordinate (A4) at ({-\L + \R0*cos(\u)},{\R0*sin(\u)},0);
		
		\coordinate (B) at (\L,0,0);
		\coordinate (B1) at ({\L - \R0*cos(\v)},0,{\R0*sin(\v)});
		\coordinate (B2) at ({\L - \R0*cos(\u)},0,{\R0*sin(\u)});
		\coordinate (B3) at ({\L - \R0*cos(\v)},{\R0*sin(\v)},0);
		\coordinate (B4) at ({\L - \R0*cos(\u)},{\R0*sin(\u)},0);

		\draw
			(A2) -- (B2)
			(A4) -- (B4)
		;
		\draw[dashed] (0,0,\L) -- (0,0,\Rb);
		\draw[dashed] (0,\L,0) -- (0,\Rb,0);

		\begin{scope}[canvas is xz plane at y=0]
			\draw[] (B1) arc ({\w}:{180-\w}:\L);
			\draw (A2) arc ({\u}:{\v}:\R0);
			\draw (B2) arc ({180-\u}:{180-\v}:\R0);
		\end{scope}
		\begin{scope}[canvas is xy plane at z=0]
			\draw[] (B3) arc ({\w}:{180-\w}:\L);
			\draw (A4) arc ({\u}:{\v}:\R0);
			\draw (B4) arc ({180-\u}:{180-\v}:\R0);
		\end{scope}
		\begin{scope}[canvas is zy plane at x=0]
			\draw (B1) arc(0:90:\R0);
			\draw (B2) arc(0:90:\Rb);
			\draw (A2) arc(0:90:\Rb);
			\draw (A1) arc(0:90:\R0);
			\draw[dashed] (\L,0) arc(0:90:\L);
			\draw[dashed] (\Rb,0) arc(0:90:\Rb);
		\end{scope}

		\node at (0,0,{\L/2}) {$\text{\LARGE lf}$};
		\node at (0,{\L/2},0) {$\text{\LARGE rf}$};
		\node at ({\L-\L/4},{\L/6},{\L/6}) {$\text{\LARGE ff}_0$};
		\node at ({\L/6-\L},{\L/6},{\L/6}) {$\text{\LARGE ff}_0$};
	
	\end{tikzpicture}

\end{minipage}
\begin{minipage}{.49\textwidth}
	\tikzmath{
		\L = 5;
		\u = 180/7;
		\R0 = \L/2.5;
		\Rb = \R0 * sin(\u);
		\v = asin((1/2 *sqrt(4* \R0^2 - \R0^4 / \L^2))/\R0);
		\w = asin((1/2 *sqrt(4* \R0^2 - \R0^4 / \L^2))/\L);
	}
	\tdplotsetmaincoords{60}{160}
	\begin{tikzpicture}[tdplot_main_coords, scale = 0.55]

		\coordinate (A) at (-\L,0,0);
		\coordinate (A1) at ({-\L + \R0*cos(\v)},0,{\R0*sin(\v)});
		\coordinate (A2) at ({-\L + \R0*cos(\u)},0,{\R0*sin(\u)});
		\coordinate (A3) at ({-\L + \R0*cos(\v)},{\R0*sin(\v)},0);
		\coordinate (A4) at ({-\L + \R0*cos(\u)},{\R0*sin(\u)},0);
		
		\coordinate (B) at (\L,0,0);
		\coordinate (B1) at ({\L - \R0*cos(\v)},0,{\R0*sin(\v)});
		\coordinate (B2) at ({\L - \R0*cos(\u)},0,{\R0*sin(\u)});
		\coordinate (B3) at ({\L - \R0*cos(\v)},{\R0*sin(\v)},0);
		\coordinate (B4) at ({\L - \R0*cos(\u)},{\R0*sin(\u)},0);

		\draw
			(A2) -- (B2)
			(A4) -- (B4)
		;
		\draw[dashed] (0,0,\L) -- (0,0,\Rb);
		\draw[dashed] (0,\L,0) -- (0,\Rb,0);

		\begin{scope}[canvas is xz plane at y=0]
			\draw[] (B1) arc ({\w}:{180-\w}:\L);
			\draw (A2) arc ({\u}:{\v}:\R0);
			\draw (B2) arc ({180-\u}:{180-\v}:\R0);
		\end{scope}
		\begin{scope}[canvas is xy plane at z=0]
			\draw[] (B3) arc ({\w}:{180-\w}:\L);
			\draw (A4) arc ({\u}:{\v}:\R0);
			\draw (B4) arc ({180-\u}:{180-\v}:\R0);
		\end{scope}
		\begin{scope}[canvas is zy plane at x=0]
			\draw (B1) arc(0:90:\R0);
			\draw (B2) arc(0:90:\Rb);
			\draw (A2) arc(0:90:\Rb);
			\draw (A1) arc(0:90:\R0);
			\draw[dashed] (\L,0) arc(0:90:\L);
			\draw[dashed] (\Rb,0) arc(0:90:\Rb);
		\end{scope}

		\node at (0,0,{\L/2}) {$\text{\LARGE lf}$};
		\node at (0,{\L/2},0) {$\text{\LARGE rf}$};
		\node at ({\L-\L/4},{\L/6},{\L/6}) {$\text{\LARGE ff}_0$};
		\node at ({\L/6-\L},{\L/6},{\L/6}) {$\text{\LARGE ff}_0$};
		\node at (0,{\L/8},{\L/8}) {$\text{\LARGE ff}_b$};
		
	\end{tikzpicture}

\end{minipage}

\end{figure}
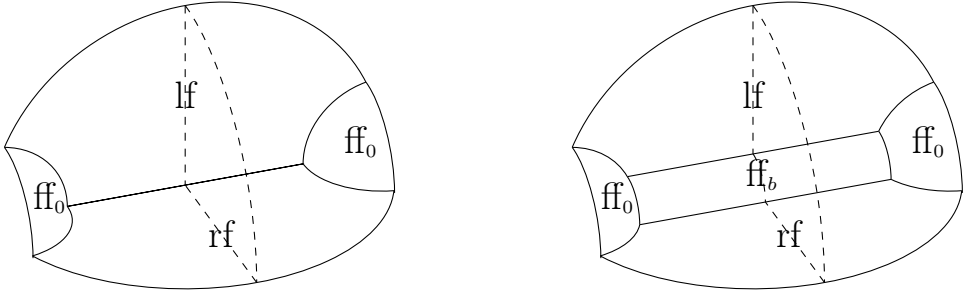, and it has four hyperfaces $\lf$, $\rf$, $\ff_{0}$ and $\iif$:
the infinity face $\iif$ is the locus $\tau^{2}+\tilde{\tau}^{2}=+\infty$
produced by radially compactifying the $\mathbb{R}_{2}^{2}$ factor.
Clearly, the manifolds with corners $\hat{D}_{0}^{2}$ and $\hat{M}_{0}^{2}$
are not diffeomorphic, and in fact the map $\varphi:\hat{D}_{0}^{2}\backslash\iif\to\hat{M}_{0}^{2}\backslash\left(\iif_{\eta}\cup\iif_{x}\cup\iif_{\tilde{x}}\right)$
defined above does not even extend to a $b$-map $\hat{D}_{0}^{2}\to\hat{M}_{0}^{2}$.
Nevertheless, $\varphi$ does lift canonically to a diffeomorphism
from the iterated blow-up $\left[\hat{D}_{0}^{2}:\ff_{0}\cap\iif:\left(\iif\cap\lf\right)\cup\left(\iif\cap\rf\right)\right]$
of $\hat{D}_{0}^{2}$ to the simple blow-up $\left[\hat{M}_{0}^{2}:\iif_{x}\cup\iif_{\tilde{x}}\right]$
of $\hat{M}_{0}^{2}$. We need this only to observe that $\varphi$
determines (by pull-back) an identification between the space of extendible
distributions on $\hat{M}_{0}^{2}$ and the space of extendible distributions
on $\hat{D}_{0}^{2}$. In particular, an extendible distribution $a\left(x,\tilde{x},\eta\right)$
on $\hat{M}_{0}^{2}$ vanishes to infinite order at $\iif_{\eta}\cup\iif_{x}\cup\iif_{\tilde{x}}$
if and only if its pull-back $\left(\varphi^{*}a\right)\left(\tau,\tilde{\tau},\eta\right)=a\left(\tau\left\langle \eta\right\rangle ^{-1},\tilde{\tau}\left\langle \eta\right\rangle ^{-1},\eta\right)$,
seen as an extendible distribution on $\hat{D}_{0}^{2}$, vanishes
to infinite order at $\iif$. Moreover, from the index sets computations
above, a distribution $a\left(x,\tilde{x},\eta\right)$ on $\hat{M}_{0}^{2}$
is in $\mathcal{A}_{\phg}^{\left(\mathcal{E}_{\lf},\mathcal{E}_{\rf},\mathcal{E}_{\ff_{0}}-1,\infty,\infty,\infty\right)}\left(\hat{M}_{0}^{2}\right)$
if and only if its pull-back $\varphi^{*}a$ is in $\mathcal{A}_{\phg}^{\left(\mathcal{E}_{\lf},\mathcal{E}_{\rf},\mathcal{E}_{\ff_{0}}-1,\infty\right)}\left(\hat{D}_{0}^{2}\right)$.

Why does this help us? Consider again our symbols
\begin{align*}
a\left(x,\tilde{x},\eta\right) & \in\mathcal{A}_{\phg}^{\left(\mathcal{E}_{\lf},\mathcal{E}_{\rf},\mathcal{E}_{\ff_{0}}-1,\infty,\infty,\infty\right)}\left(\hat{M}_{0}^{2}\right)\\
b\left(x,\tilde{x},\eta\right) & \in\mathcal{A}_{\phg}^{\left(\mathcal{F}_{\lf},\mathcal{F}_{\rf},\mathcal{F}_{\ff_{0}}-1,\infty,\infty,\infty\right)}\left(\hat{M}_{0}^{2}\right)
\end{align*}
and assume that $\Re\left(\mathcal{E}_{\rf}+\mathcal{F}_{\lf}\right)>-1$.
We need to prove that the distribution
\[
c\left(x,\tilde{x},\eta\right)=\int a\left(x,x',\eta\right)b\left(x',\tilde{x},\eta\right)dx'
\]
is well-defined and in $\mathcal{A}_{\phg}^{\left(\mathcal{E}_{\lf},\mathcal{F}_{\rf},\mathcal{E}_{\ff_{0}}+\mathcal{F}_{\ff_{0}}-1,\infty,\infty,\infty\right)}\left(\hat{M}_{0}^{2}\right)$.
In order to do that, observe that the distribution $c'\left(\tau,\tilde{\tau},\eta\right)$
given by
\begin{align*}
c'\left(\tau,\tilde{\tau},\eta\right) & =\int\left(\varphi^{*}a\right)\left(\tau,\tau',\eta\right)\left(\varphi^{*}b\right)\left(\tau',\tilde{\tau},\eta\right)d\tau'
\end{align*}
is well-defined and in $\mathcal{A}_{\phg}^{\left(\mathcal{E}_{\lf},\mathcal{F}_{\rf},\mathcal{E}_{\ff_{0}}+\mathcal{F}_{\ff_{0}}-2,\infty\right)}\left(\hat{D}_{0}^{2}\right)$.
Indeed, since $\hat{D}_{0}^{2}=\overline{\mathbb{R}}_{2}^{2}\times\overline{\mathbb{R}}^{n}$,
we have
\begin{align*}
\left(\varphi^{*}a\right)\left(\tau,\tau',\eta\right) & \in\mathcal{A}_{\phg}^{\left(\mathcal{E}_{\lf},\mathcal{E}_{\rf},\infty\right)}\left(\overline{\mathbb{R}}_{2}^{2}\right)\hat{\otimes}\mathcal{A}_{\phg}^{\mathcal{E}_{\ff_{0}}-1}\left(\overline{\mathbb{R}}^{n}\right)\\
\left(\varphi^{*}b\right)\left(\tau,\tau',\eta\right) & \in\mathcal{A}_{\phg}^{\left(\mathcal{F}_{\lf},\mathcal{F}_{\rf},\infty\right)}\left(\overline{\mathbb{R}}_{2}^{2}\right)\hat{\otimes}\mathcal{A}_{\phg}^{\mathcal{F}_{\ff_{0}}-1}\left(\overline{\mathbb{R}}^{n}\right).
\end{align*}
Now it suffices to observe that if $f\left(x,\tilde{x}\right)\in\mathcal{A}_{\phg}^{\left(\mathcal{E}_{\lf},\mathcal{E}_{\rf},\infty\right)}\left(\overline{\mathbb{R}}_{2}^{2}\right)$
and $g\left(x,\tilde{x}\right)\in\mathcal{A}_{\phg}^{\left(\mathcal{F}_{\lf},\mathcal{F}_{\rf},\infty\right)}\left(\overline{\mathbb{R}}_{2}^{2}\right)$,
then the condition $\Re\left(\mathcal{E}_{\rf}+\mathcal{F}_{\lf}\right)>-1$
ensures that the integral $h\left(x,\tilde{x}\right)=\int f\left(x,x'\right)g\left(x',\tilde{x}\right)dx'$
is well-defined and in $\mathcal{A}_{\phg}^{\left(\mathcal{E}_{\lf},\mathcal{F}_{\rf},\infty\right)}\left(\overline{\mathbb{R}}_{2}^{2}\right)$.
Using the universal property of $\hat{\otimes}$, we conclude that
$c'\left(\tau,\tilde{\tau},\eta\right)$ is well-defined and in $\mathcal{A}_{\phg}^{\left(\mathcal{E}_{\lf},\mathcal{F}_{\rf},\infty\right)}\left(\overline{\mathbb{R}}_{2}^{2}\right)\hat{\otimes}\mathcal{A}_{\phg}^{\mathcal{E}_{\ff_{0}}+\mathcal{F}_{\ff_{0}}-2}\left(\overline{\mathbb{R}}^{n}\right)$.
Now, observe that
\begin{align*}
c\left(x,\tilde{x},\eta\right) & :=c'\left(x\left\langle \eta\right\rangle ,\tilde{x}\left\langle \eta\right\rangle ,\eta\right)\left\langle \eta\right\rangle ^{-1}\\
 & =\int\left(\varphi^{*}a\right)\left(x\left\langle \eta\right\rangle ,\tau',\eta\right)\left(\varphi^{*}b\right)\left(\tau',\tilde{x}\left\langle \eta\right\rangle ,\eta\right)\left\langle \eta\right\rangle ^{-1}d\tau'\\
 & =\int\left(\varphi^{*}a\right)\left(x\left\langle \eta\right\rangle ,x'\left\langle \eta\right\rangle ,\eta\right)\left(\varphi^{*}b\right)\left(x'\left\langle \eta\right\rangle ,\tilde{x}\left\langle \eta\right\rangle ,\eta\right)dx'\\
 & =\int a\left(x,x',\eta\right)b\left(x',\tilde{x},\eta\right)dx'.
\end{align*}
Since $c'\left(\tau,\tilde{\tau},\eta\right)$ is in $\mathcal{A}_{\phg}^{\left(\mathcal{E}_{\lf},\mathcal{F}_{\rf},\mathcal{E}_{\ff_{0}}+\mathcal{F}_{\ff_{0}}-2,\infty\right)}\left(\hat{D}_{0}^{2}\right)$
and $\left\langle \eta\right\rangle ^{-1}$ vanishes simply on $\ff_{0}$,
$c'\left(\tau,\tilde{\tau},\eta\right)\left\langle \eta\right\rangle ^{-1}$
is in $\mathcal{A}_{\phg}^{\left(\mathcal{E}_{\lf},\mathcal{F}_{\rf},\mathcal{E}_{\ff_{0}}+\mathcal{F}_{\ff_{0}}-1,\infty\right)}\left(\hat{D}_{0}^{2}\right)$.
Moreover, by construction, $c'\left(\tau,\tilde{\tau},\eta\right)\left\langle \eta\right\rangle ^{-1}=\left(\varphi^{*}c\right)\left(\tau\left\langle \eta\right\rangle ^{-1},\tilde{\tau}\left\langle \eta\right\rangle ^{-1},\eta\right)$.
This ensures that $c\left(x,\tilde{x},\eta\right)$ is in $\mathcal{A}_{\phg}^{\left(\mathcal{E}_{\lf},\mathcal{F}_{\rf},\mathcal{E}_{\ff_{0}}+\mathcal{F}_{\ff_{0}}-1,\infty,\infty,\infty\right)}\left(\hat{M}_{0}^{2}\right)$
as claimed.

Now, consider the $0b$-interior $\circ$ $0b$-interior case. The
proof is essentially unchanged, except that we need to blow $\hat{D}_{0}^{2}$
up at the corner $\lf\cap\rf$. This blow-up is denoted by $\hat{D}_{0b}^{2}$,
it has five boundary hyperfaces $\lf$, $\rf$, $\ff_{b}$, $\ff_{0}$,
$\iif$ with $\ff_{b}$ created from the blow-up, and is represented
graphically in Figure \ref{fig:Dhat}. Note that $\hat{D}_{0b}^{2}\backslash\iif=\left[\mathbb{R}_{2}^{2}:0\right]\times\overline{\mathbb{R}}^{n}$.
Moreover, the diffeomorphism $\varphi:\mathbb{R}_{2}^{2}\times\overline{\mathbb{R}}^{n}\to\hat{M}_{0}^{2}\backslash\left(\iif_{\eta}\cup\iif_{x}\cup\iif_{\tilde{x}}\right)$
defined above lifts to a diffeomorphism
\[
\left[\mathbb{R}_{2}^{2}:0\right]\times\overline{\mathbb{R}}^{n}\to\hat{M}_{0b}^{2}\backslash\left(\iif_{\eta}\cup\iif_{x}\cup\iif_{\tilde{x}}\right),
\]
and an extendible distribution $a\left(x,\tilde{x},\eta\right)$ is
in $\mathcal{A}_{\phg}^{\left(\mathcal{E}_{\lf},\mathcal{E}_{\rf},\mathcal{E}_{\ff_{b}}-1,\mathcal{E}_{\ff_{0}}-1,\infty,\infty,\infty\right)}\left(\hat{M}_{0b}^{2}\right)$
if and only if its pull-back $\varphi^{*}a\left(\tau,\tilde{\tau},\eta\right)=a\left(\tau\left\langle \eta\right\rangle ^{-1},\tilde{\tau}\left\langle \eta\right\rangle ^{-1},\eta\right)$
lifts from $\hat{D}_{0}^{2}$ to an element of $\mathcal{A}_{\phg}^{\left(\mathcal{E}_{\lf},\mathcal{E}_{\rf},\mathcal{E}_{\ff_{b}}-1,\mathcal{E}_{\ff_{0}}-1,\infty\right)}\left(\hat{D}_{0b}^{2}\right)$.
Now, the condition $\Re\left(\mathcal{E}_{\rf}+\mathcal{F}_{\lf}\right)>-1$
guarantees that the distribution
\[
c'\left(\tau,\tilde{\tau},\eta\right):=\int\left(\varphi^{*}a\right)\left(\tau,\tau',\eta\right)\left(\varphi^{*}b\right)\left(\tau',\tilde{\tau},\eta\right)d\tau'
\]
is well-defined and in $\mathcal{A}_{\phg}^{\left(\mathcal{G}_{\lf},\mathcal{G}_{\rf},\mathcal{G}_{\ff_{b}}-1,\mathcal{E}_{\ff_{0}}+\mathcal{F}_{\ff_{0}}-2,\infty\right)}\left(\hat{D}_{0}^{2}\right)$,
where
\begin{align*}
\mathcal{G}_{\lf} & =\mathcal{E}_{\lf}\overline{\cup}\left(\mathcal{F}_{\lf}+\mathcal{E}_{\ff_{b}}\right)\\
\mathcal{G}_{\rf} & =\mathcal{F}_{\rf}\overline{\cup}\left(\mathcal{E}_{\rf}+\mathcal{F}_{\ff_{b}}\right)\\
\mathcal{G}_{\ff_{b}} & =\left(\mathcal{E}_{\ff_{b}}+\mathcal{F}_{\ff_{b}}\right)\overline{\cup}\left(\mathcal{E}_{\lf}+\mathcal{F}_{\rf}+1\right).
\end{align*}
To see this, we use again the fact that $\hat{D}_{0b}^{2}=\left[\overline{\mathbb{R}}_{2}^{2}:0\right]\times\overline{\mathbb{R}}^{n}$
so
\[
\mathcal{A}_{\phg}^{\left(\mathcal{E}_{\lf},\mathcal{E}_{\rf},\mathcal{E}_{\ff_{b}}-1,\mathcal{E}_{\ff_{0}}-1,\infty\right)}\left(\hat{D}_{0b}^{2}\right)=\mathcal{A}_{\phg}^{\left(\mathcal{E}_{\lf},\mathcal{E}_{\rf},\mathcal{E}_{\ff_{b}}-1,\infty\right)}\left(\left[\overline{\mathbb{R}}_{2}^{2}:0\right]\right)\hat{\otimes}\mathcal{A}_{\phg}^{\mathcal{E}_{\ff_{0}}-1}\left(\overline{\mathbb{R}}^{n}\right).
\]
Then the proof then is essentially a consequence of the composition
theorem for the residual large $b$-calculus on $\mathbb{R}_{1}^{1}$,
i.e. the class $\Psi_{b,\mathcal{S}}^{-\infty,\left(\mathcal{E}_{\lf},\mathcal{E}_{\rf},\mathcal{E}_{\ff_{b}}\right)}\left(\mathbb{R}_{1}^{1}\right)$
of operators on functions over $\overline{\mathbb{R}}_{1}^{1}$ with
Schwartz kernels $f\left(x,\tilde{x}\right)d\tilde{x}$, where $f\left(x,\tilde{x}\right)$
lifts from $\overline{\mathbb{R}}_{2}^{2}$ to an element of $\mathcal{A}_{\phg}^{\left(\mathcal{E}_{\lf},\mathcal{E}_{\rf},\mathcal{E}_{\ff_{b}}-1,\infty\right)}\left(\left[\overline{\mathbb{R}}_{2}^{2}:0\right]\right)$.
This composition theorem is proved many times in the literature, and
we refer for example to \cite{MazzeoEdgeI, Hintz0calculus}. Finally,
defining again $c\left(x,\tilde{x},\eta\right):=c'\left(x\left\langle \eta\right\rangle ,\tilde{x}\left\langle \eta\right\rangle ,\eta\right)\left\langle \eta\right\rangle ^{-1}$,
we conclude that $c$ is indeed in $\mathcal{A}_{\phg}^{\left(\mathcal{G}_{\lf},\mathcal{G}_{\rf},\mathcal{G}_{\ff_{b}}-1,\mathcal{E}_{\ff_{0}}+\mathcal{F}_{\ff_{0}}-1,\infty,\infty,\infty\right)}\left(\hat{M}_{0b}^{2}\right)$
as claimed. The statements about compositions between $0b$-interior
and $0$-interior operators are proved similarly, except that one
is reduced to a composition theorem between elements of the large
$b$-calculus on $\mathbb{R}_{1}^{1}$ and very residual operators
on $\mathbb{R}_{1}^{1}$. This composition theorem is again well-known
from the literature, and is anyway an easy application of the pull-back
/ push-forward technique.
\end{proof}

\subsubsection{\label{subsubsec:Local-composition-theorems-twisted}Local composition
theorems for twisted operators}

We now prove the local versions of the composition results involving
twisted symbolic $0$-trace, $0$-Poisson and boundary operators.
Fix smooth families $\mathfrak{s}:\overline{\mathbb{R}}^{n}\to\mathfrak{gl}\left(M,\mathbb{C}\right)$,
$\mathfrak{t}:\overline{\mathbb{R}}^{n}\to\mathfrak{gl}\left(N,\mathbb{C}\right)$,
$\mathfrak{u}:\overline{\mathbb{R}}^{n}\to\mathfrak{gl}\left(P,\mathbb{C}\right)$
of matrices, each with a constant single eigenvalue.
\begin{thm}
\label{thm:twisted-compositions-involving-boundary}Let $\mathcal{E}=\left(\mathcal{E}_{\of},\mathcal{E}_{\ff}\right)$,
$\mathcal{F}=\left(\mathcal{F}_{\of},\mathcal{F}_{\ff}\right)$ and
$\mathcal{G}$ be index sets. Let $A\in\hat{\Psi}_{0\tr,\mathcal{S}}^{-\infty,\mathcal{F},\mathfrak{s}}\left(\mathbb{R}_{1}^{n+1};\mathbb{R}^{n},\mathbb{C}^{M}\right)$,
$B\in\hat{\Psi}_{0\po,\mathcal{S}}^{-\infty,\mathcal{E},\mathfrak{t}}\left(\mathbb{R}^{n},\mathbb{C}^{N};\mathbb{R}_{1}^{n+1}\right)$,
$Q\in\Psi_{\phg,\mathcal{S}}^{-\mathcal{G},\left(\mathfrak{s},\mathfrak{t}\right)}\left(\mathbb{R}^{n};\mathbb{C}^{M},\mathbb{C}^{N}\right)$,
and $Q'\in\Psi_{\phg,\mathcal{S}}^{-\mathcal{H},\left(\mathfrak{t},\mathfrak{u}\right)}\left(\mathbb{R}^{n};\mathbb{C}^{N},\mathbb{C}^{P}\right)$.
Then:
\begin{enumerate}
\item $Q'Q$ is well-defined and in $\Psi_{\phg,\mathcal{S}}^{-\left(\mathcal{G}+\mathcal{H}\right),\left(\mathfrak{s},\mathfrak{u}\right)}\left(\mathbb{R}^{n};\mathbb{C}^{M},\mathbb{C}^{P}\right)$;
\item $BQ$ is well-defined and in $\hat{\Psi}_{0\po,\mathcal{S}}^{-\infty,\left(\mathcal{E}_{\of},\mathcal{H}\right),\mathfrak{t}}\left(\mathbb{R}^{n},\mathbb{C}^{N};\mathbb{R}_{1}^{n+1}\right)$,
where $\left[\mathcal{H}\right]=\left[\mathcal{E}_{\ff}+\mathcal{G}\right]$;
\item $QA$ is well-defined and in $\hat{\Psi}_{0\tr,\mathcal{S}}^{-\infty,\left(\mathcal{F}_{\of},\mathcal{H}\right),\mathfrak{s}}\left(\mathbb{R}_{1}^{n+1};\mathbb{R}^{n},\mathbb{C}^{M}\right)$,
where $\left[\mathcal{H}\right]=\left[\mathcal{F}_{\ff}+\mathcal{G}\right]$;
\item assume that $N=M$ and $\mathfrak{s}=\mathfrak{t}$; then $BA$ is
well-defined and in $\hat{\Psi}_{0,\mathcal{S}}^{-\infty,\left(\mathcal{E}_{\of},\mathcal{F}_{\of},\mathcal{H}\right)}\left(\mathbb{R}_{1}^{n+1}\right)$,
where $\left[\mathcal{H}\right]=\left[\mathcal{E}_{\ff}+\mathcal{F}_{\ff}\right]$;
\item if $\Re\left(\mathcal{F}_{\of}+\mathcal{E}_{\of}\right)>-1$, then
$AB$ is well-defined and in $\Psi_{\phg,\mathcal{S}}^{-\mathcal{H},\left(\mathfrak{t},\mathfrak{s}\right)}\left(\mathbb{R}^{n};\mathbb{C}^{N},\mathbb{C}^{M}\right)$,
where$\left[\mathcal{H}\right]=\left[\mathcal{E}_{\ff}+\mathcal{F}_{\ff}\right]$.
\end{enumerate}
\end{thm}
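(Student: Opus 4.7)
The plan is to mimic the proof of the un-twisted Theorem \ref{thm:compositions-involving-boundary}, replacing at each step the appeals to the product/integration maps between un-twisted symbol spaces with their twisted counterparts. As in the un-twisted case, one defines full quantization maps $\Op^{\po}$, $\Op^{\tr}$, $\Op^{\inte}$ together with right-quantization variants, verifies that left, right, and full quantizations generate the same operator classes (modulo residual symbols), and then computes the symbol of each composition as a product (or $*$-contraction) of a left-reduced symbol of one factor with a right-reduced symbol of the other. The left-reduced form of the result is then recovered via the standard asymptotic formula
\[
q \sim \sum_{\alpha}\frac{1}{\alpha!}(\partial_\eta^\alpha q_1)*(D_{\tilde y}^\alpha q_2),
\]
and the theorem reduces to showing that this asymptotic sum lies in the appropriate twisted symbol class. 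Asymptotic completeness of the twisted symbol spaces (inherited from the un-twisted ones via Lemma \ref{lem:twisting-poisson-trace-are-polyhomogeneous-1}) will make this possible.

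\textbf{Role of the twisting factors.} The central algebraic observation in every case is the following: when two operators with compatible twistings are composed, the "inner" twisting factors produce a middle term of the schematic form $\langle\eta\rangle^{\mathfrak{a}(y)}\langle\eta\rangle^{-\mathfrak{a}(\tilde y)}$, which equals the identity on the diagonal $y=\tilde y$ but not off it. Writing
\[
\langle\eta\rangle^{\mathfrak{a}(y)}\langle\eta\rangle^{-\mathfrak{a}(\tilde y)} = I + \bigl(\langle\eta\rangle^{\mathfrak{a}(y)-\mathfrak{a}(\tilde y)}-I\bigr)
\]
and Taylor-expanding in $(y-\tilde y)$ (the $\mathfrak{a}$'s commute with themselves after differentiation since they are nilpotent modulo their common eigenvalue) shows that each Taylor term carries a factor $(y-\tilde y)^\alpha$ multiplied by polynomials in $\log\langle\eta\rangle$ of degree at most $\rank\mathfrak{a}-1$. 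Under left reduction, each factor of $(y-\tilde y)$ is converted into a $D_\eta$ derivative, which lowers the order at the front face (or at $\left|\eta\right|=\infty$) by one, at the cost of a controlled logarithmic loss governed by Lemma \ref{lem:differentiation-of-twisted-symbols}. Thus the outer twisting factors $\langle\eta\rangle^{-\mathfrak{u}(\tilde y)}\cdots \langle\eta\rangle^{\mathfrak{s}(y)}$ can be left in place (becoming the twisting of the resulting operator), while the middle factor produces an asymptotic series of symbols with monotonically increasing orders and logarithmic losses that sum (in the sense of asymptotic completeness) to a single twisted symbol.

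\textbf{Assembly of the five cases.} For Part 1, write $Q=\Op_L(q_L)$ and $Q'=\Op_R(q'_R)$; the full symbol of $Q'Q$ is a product of three twisting blocks and two polyhomogeneous factors, reducing via the mechanism above to an element of $S_{\phg,\mathcal S}^{-(\mathcal E+\mathcal H),(\mathfrak s,\mathfrak u)}$. Parts 2 and 3 are entirely analogous: represent $B$ or $A$ by a left quantization and $Q$ by a right one, so that the two $\mathfrak{t}$-factors meet and Taylor-expand as above, producing a new left symbol of the stated twisted $0$-Poisson (resp.\ $0$-trace) class with leading front-face set $[\mathcal{E}_\ff+\mathcal{G}]$ (resp.\ $[\mathcal{F}_\ff+\mathcal{G}]$). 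For Part 4, use the hypothesis $\mathfrak{s}=\mathfrak{t}$ and write $A=\Op_L^{\tr}$ of a twisted trace symbol and $B=\Op_R^{\po}$ of a twisted Poisson symbol; the twisting blocks $\langle\eta\rangle^{\mathfrak s(y)}$ and $\langle\eta\rangle^{-\mathfrak s(\tilde y)}$ that survive after the $\tilde x$-integration (which, as in the un-twisted case, uses the fact that Poisson and trace kernels live on canonically identified half-spaces $\hat P_0^2\simeq\hat T_0^2$) collapse under left reduction to produce an un-twisted $0$-interior symbol, again modulo the log-losses encoded in $\mathcal H$. For Part 5, the integrability condition $\Re(\mathcal F_\of+\mathcal E_\of)>-1$ lets us execute the same $\tilde x$-push-forward using the model space $\hat T_0^2$ as in Theorem \ref{thm:compositions-involving-boundary}; the outer twisting factors $\langle\eta\rangle^{-\mathfrak s(y)}$ and $\langle\eta\rangle^{\mathfrak t(\tilde y)}$ remain, producing after left reduction an element of $S_{\phg,\mathcal S}^{-\mathcal H,(\mathfrak t,\mathfrak s)}$.

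\textbf{Main obstacle.} The technical heart of the argument is controlling the Taylor remainder of $\langle\eta\rangle^{\mathfrak a(y)-\mathfrak a(\tilde y)}$ and the ensuing logarithmic loss in the left-reduction asymptotic sum. This is the reason we only track leading sets at the front face (or at $\left|\eta\right|=\infty$): each term in the asymptotic sum may carry additional log powers, but because $\mathfrak a$ has a single constant eigenvalue and nilpotent nilpotent part, only finitely many new log orders appear at each $\alpha$-level, and for $\left|\alpha\right|$ large enough the order reduction at the front face dominates. Asymptotic completeness of the twisted symbol classes then delivers a bona fide left symbol whose leading front-face set is the sum of the leading front-face sets of the factors. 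This is precisely the robustness afforded by the restriction to twisting endomorphisms with \emph{constant} eigenvalues, and is the step where a variable-eigenvalue generalization (in the spirit of Krainer--Mendoza) would require genuinely new technology.
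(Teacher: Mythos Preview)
Your overall strategy is correct and close in spirit to the paper's, but the paper executes it more economically. Rather than isolating a ``middle'' factor $\langle\eta\rangle^{\mathfrak a(y)}\langle\eta\rangle^{-\mathfrak a(\tilde y)}$ and Taylor-expanding it separately, the paper simply views the full twisted symbols $\tilde a=\langle\eta\rangle^{-\mathfrak s}a$ and $\tilde b=b\langle\eta\rangle^{\mathfrak t}$ as un-twisted symbols with enlarged front-face index sets (Lemma~\ref{lem:twisting-poisson-trace-are-polyhomogeneous-1}), applies the un-twisted composition theorem (Theorem~\ref{thm:compositions-involving-boundary}) directly to obtain well-definedness and the standard asymptotic expansion $\tilde c\sim\sum_\alpha\frac{1}{\alpha!}(D_\eta^\alpha\tilde a)*(\partial_y^\alpha\tilde b)$, and then invokes Lemma~\ref{lem:differentiation-of-twisted-symbols} term by term to write $D_\eta^\alpha\tilde a=\langle\eta\rangle^{-\mathfrak s}a^{(\alpha)}$ and $\partial_y^\alpha\tilde b=b^{(\alpha)}\langle\eta\rangle^{\mathfrak t}$, so that each summand is manifestly of the form $\langle\eta\rangle^{-\mathfrak s}c^{(\alpha)}\langle\eta\rangle^{\mathfrak t}$. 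This is slicker because the logarithmic loss has a single, already-analyzed source (the action of $\partial_y$ on the twisting factor, handled once in Lemma~\ref{lem:differentiation-of-twisted-symbols}), and one never has to manipulate the product of two twisting factors at distinct base points.

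One small inaccuracy: your parenthetical ``the $\mathfrak a$'s commute with themselves after differentiation since they are nilpotent modulo their common eigenvalue'' is not right as stated---nilpotence does not imply that $\mathfrak a(y)$ and $\mathfrak a(\tilde y)$ commute, and indeed $\langle\eta\rangle^{\mathfrak a(y)}\langle\eta\rangle^{-\mathfrak a(\tilde y)}\neq\langle\eta\rangle^{\mathfrak a(y)-\mathfrak a(\tilde y)}$ in general. Fortunately your argument does not actually need this: the Taylor expansion of $\tilde y\mapsto\langle\eta\rangle^{-\mathfrak a(\tilde y)}$ around $\tilde y=y$ works regardless, and the leading term $\langle\eta\rangle^{\mathfrak a(y)}\langle\eta\rangle^{-\mathfrak a(y)}=I$ is automatic. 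So the gap is cosmetic, but you should drop that justification.
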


\begin{thm}
\label{thm:twisted-compositions-mixed}Let $\mathcal{E}=\left(\mathcal{E}_{\lf},\mathcal{E}_{\rf},\mathcal{E}_{\ff_{b}},\mathcal{E}_{\ff_{0}}\right)$
and $\mathcal{F}=\left(\mathcal{F}_{\of},\mathcal{F}_{\ff}\right)$
be index sets. Define also $\mathcal{E}'=\left(\mathcal{E}_{\lf},\mathcal{E}_{\rf},\mathcal{E}_{\ff_{0}}\right)$.
Let $A\in\hat{\Psi}_{0\tr,\mathcal{S}}^{-\infty,\mathcal{F},\mathfrak{s}}\left(\mathbb{R}_{1}^{n+1};\mathbb{R}^{n},\mathbb{C}^{M}\right)$,
$B\in\hat{\Psi}_{0\po,\mathcal{S}}^{-\infty,\mathcal{F},\mathfrak{s}}\left(\mathbb{R}^{n},\mathbb{C}^{M};\mathbb{R}_{1}^{n+1}\right)$,
$P_{0}\in\hat{\Psi}_{0,\mathcal{S}}^{-\infty,\mathcal{E}'}\left(\mathbb{R}_{1}^{n+1}\right)$
and $P_{0b}\in\hat{\Psi}_{0b,\mathcal{S}}^{-\infty,\mathcal{E}}\left(\mathbb{R}_{1}^{n+1}\right)$.
If \textup{\emph{$\Re\left(\mathcal{E}_{\rf}+\mathcal{F}_{\of}\right)>-1$,
then:}}
\begin{enumerate}
\item \textup{\emph{$P_{0}B$ is well-defined and in $\hat{\Psi}_{0\po,\mathcal{S}}^{-\infty,\left(\mathcal{E}_{\lf},\mathcal{H}\right),\mathfrak{s}}\left(\mathbb{R}^{n},\mathbb{C}^{M};\mathbb{R}_{1}^{n+1}\right)$,
where $\left[\mathcal{H}\right]=\left[\mathcal{F}_{\ff}+\mathcal{E}_{\ff_{0}}\right]$;}}
\item \textup{\emph{$P_{0b}B$ is well-defined and in $\hat{\Psi}_{0\po,\mathcal{S}}^{-\infty,\left(\mathcal{E}_{\lf}\overline{\cup}\left(\mathcal{F}_{\of}+\mathcal{E}_{\ff_{b}}\right),\mathcal{H}\right),\mathfrak{s}}\left(\mathbb{R}^{n},\mathbb{C}^{M};\mathbb{R}_{1}^{n+1}\right)$,
where $\left[\mathcal{H}\right]=\left[\mathcal{F}_{\ff}+\mathcal{E}_{\ff_{0}}\right]$;}}
\item \textup{\emph{$AP_{0}$ is well-defined and in $\hat{\Psi}_{0\tr,\mathcal{S}}^{-\infty,\left(\mathcal{E}_{\rf},\mathcal{H}\right),\mathfrak{s}}\left(\mathbb{R}_{1}^{n+1};\mathbb{R}^{n},\mathbb{C}^{M}\right)$,
where $\left[\mathcal{H}\right]=\left[\mathcal{F}_{\ff}+\mathcal{E}_{\ff_{0}}\right]$;}}
\item \textup{\emph{$AP_{0b}$ is well-defined and in $\hat{\Psi}_{0\tr,\mathcal{S}}^{-\infty,\left(\mathcal{E}_{\rf}\overline{\cup}\left(\mathcal{F}_{\of}+\mathcal{E}_{\ff_{b}}\right),\mathcal{H}\right),\mathfrak{s}}\left(\mathbb{R}_{1}^{n+1};\mathbb{R}^{n},\mathbb{C}^{M}\right)$,
where $\left[\mathcal{H}\right]=\left[\mathcal{F}_{\ff}+\mathcal{E}_{\ff_{0}}\right]$.}}
\end{enumerate}
\end{thm}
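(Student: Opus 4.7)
The plan is to reduce this theorem to the untwisted composition Theorem \ref{thm:compositions-mixed} by isolating the twisting factor $\langle \eta \rangle^{\mathfrak{s}(y)}$ and tracking how it interacts with the standard left/right symbol-reduction machinery. As in the earlier composition results, the argument is local in $y$ via an atlas decomposition and via the full quantization map, so it suffices to prove the claim at the symbol level on $\mathbb{R}^n$. I would focus on case (2), the composition $P_{0b} B$ with $B = \Op_L^{\po}(b(y;x,\eta) \langle \eta \rangle^{\mathfrak{s}(y)})$ and $P_{0b} = \Op_L^{\inte}(p(y;x,\tilde{x},\eta))$, where $b \in S_{0\po,\mathcal{S}}^{-\infty,\mathcal{F}'}$ for some $\mathcal{F}'$ with $[\mathcal{F}']=[\mathcal{F}_\ff]$. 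Cases (1), (3) and (4) follow by the same argument (with simpler or adjoint-dual bookkeeping), appealing to Proposition \ref{prop:twisted-adjoint} when convenient.

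The first step is to right-reduce $B$ to $\Op_R^{\po}(\tilde{b}(\tilde{y};x,\eta))$. Running the usual reduction formula as an asymptotic sum in $D_\eta^\alpha \partial_y^\alpha b \cdot \langle\eta\rangle^{\mathfrak{s}}$ and invoking Lemma \ref{lem:differentiation-of-twisted-symbols}, each term is of the form $\tilde{b}_\alpha(\tilde{y};x,\eta)\langle \eta\rangle^{\mathfrak{s}(\tilde{y})}$ where $\tilde{b}_\alpha$ gains $|\alpha|$ orders at $\ff$ but picks up at most a bounded logarithmic loss (controlled by the dimension of the single generalized eigenspace of $\mathfrak{s}$); asymptotic completeness of the twisted $0$-Poisson symbol class then yields $\tilde{b}=\tilde{b}_0\langle\eta\rangle^{\mathfrak{s}(\tilde{y})}$ with $\tilde{b}_0$ an untwisted $0$-Poisson symbol whose leading datum at $\ff$ is still $[\mathcal{F}_\ff]$. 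Since the twisting factor depends only on $\tilde{y}$ it passes through the $x'$-integral, so $P_{0b} B = \Op^{\po}(c)$ with full symbol
\[
c(y,\tilde{y};x,\eta) \;=\; \left(\int p(y;x,x',\eta)\,\tilde{b}_0(\tilde{y};x',\eta)\,dx'\right)\langle\eta\rangle^{\mathfrak{s}(\tilde{y})}.
\]
The bracket is exactly the $x'$-contraction treated in the proof of Theorem \ref{thm:compositions-mixed} via the blow-up $\hat{D}_{0b}^{2}$, so the hypothesis $\Re(\mathcal{E}_\rf+\mathcal{F}_\of)>-1$ guarantees convergence and places it in the expected untwisted full-symbol class with indices $\mathcal{E}_\lf\overline{\cup}(\mathcal{F}_\of+\mathcal{E}_{\ff_b})$ at $\of$ and $\mathcal{H}'$ at $\ff$ with $[\mathcal{H}']=[\mathcal{F}_\ff+\mathcal{E}_{\ff_0}]$.

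The second step is to left-reduce $c$ via
\[
c_L(y;x,\eta) \;\sim\; \sum_\alpha \frac{1}{\alpha!}\,D_\eta^\alpha\partial_{\tilde{y}}^\alpha c\big|_{\tilde{y}=y}.
\]
Leibniz-expanding across the product $c_0\langle\eta\rangle^{\mathfrak{s}(\tilde{y})}$ and applying Lemma \ref{lem:differentiation-of-twisted-symbols} once more presents each summand as $d_\alpha(y;x,\eta)\langle\eta\rangle^{\mathfrak{s}(y)}$ with $d_\alpha$ untwisted; the $D_\eta^\alpha$ derivatives gain $|\alpha|$ orders at $\ff$ while the $\partial_{\tilde{y}}^\alpha$ differentiations of $\langle\eta\rangle^{\mathfrak{s}(\tilde{y})}$ contribute only a bounded logarithmic loss per level. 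Asymptotic completeness of the twisted symbol class then produces $d \in S_{0\po,\mathcal{S}}^{-\infty,(\mathcal{E}_\lf\overline{\cup}(\mathcal{F}_\of+\mathcal{E}_{\ff_b}),\mathcal{H})}$ with $[\mathcal{H}]=[\mathcal{F}_\ff+\mathcal{E}_{\ff_0}]$, establishing (2). Cases (1), (3), (4) go through identically (replacing $P_{0b}$ by $P_0$ removes the $\overline{\cup}(\mathcal{F}_\of+\mathcal{E}_{\ff_b})$ contribution; the trace cases are adjoint-dual).

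The main obstacle is precisely the logarithmic loss in $\partial_{\tilde{y}}^\alpha\langle\eta\rangle^{\mathfrak{s}(\tilde{y})}$. At each individual order this loss is finite, so the twisted polyhomogeneous class is preserved step by step; but summed over $\alpha$ it is unbounded, which is why no statement stronger than control of the leading set $[\mathcal{F}_\ff+\mathcal{E}_{\ff_0}]$ at $\ff$ can be extracted. This is the structural reason for defining the twisted classes with bracketed front-face data $[\mathcal{E}_\ff]$, and it is also where the constant-eigenvalue hypothesis on $\mathfrak{s}$ is essential: were the eigenvalues variable, the per-level logarithmic loss would fail to be uniformly bounded, the polyhomogeneous framework would collapse, and one would be forced into the conormal-symbol calculus of Krainer--Mendoza.
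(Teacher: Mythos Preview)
Your proposal is correct and follows essentially the same strategy as the paper: reduce to the untwisted Theorem~\ref{thm:compositions-mixed} by isolating the factor $\langle\eta\rangle^{\mathfrak{s}}$, control its interaction with the reduction formula via Lemma~\ref{lem:differentiation-of-twisted-symbols}, and conclude by asymptotic completeness with only leading-set control at $\ff$. The paper exemplifies with a different case (trace~$\circ$~Poisson from Theorem~\ref{thm:twisted-compositions-involving-boundary}) and works directly with the left--left asymptotic formula $c\sim\sum_\alpha\frac{1}{\alpha!}(\partial_\eta^\alpha q_1)*(D_{\tilde y}^\alpha q_2)$ rather than first right-reducing $B$, but this is an organizational difference only; one small correction: the $x'$-contraction for interior~$\circ$~Poisson is handled in the paper via the space $Z_b$ (as in the proof of Theorem~\ref{thm:mapping-properties-on-phg-untwisted}), not $\hat D_{0b}^2$, which is reserved for the interior~$\circ$~interior case.
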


\begin{proof}
The proof is essentially the same for all the cases above, so to exemplify
the argument let's prove in detail that if $A\in\hat{\Psi}_{0\tr,\mathcal{S}}^{-\infty,\mathcal{F},\mathfrak{s}}\left(\mathbb{R}_{1}^{n+1};\mathbb{R}^{n},\mathbb{C}^{M}\right)$,
$B\in\hat{\Psi}_{0\po,\mathcal{S}}^{-\infty,\mathcal{E},\mathfrak{t}}\left(\mathbb{R}^{n},\mathbb{C}^{N};\mathbb{R}_{1}^{n+1}\right)$
and $\Re\left(\mathcal{F}_{\of}+\mathcal{E}_{\of}\right)>-1$ then
$AB$ is well-defined and in $\Psi_{\phg,\mathcal{S}}^{-\mathcal{H},\left(\mathfrak{t},\mathfrak{s}\right)}\left(\mathbb{R}^{n};\mathbb{C}^{N},\mathbb{C}^{M}\right)$,
where$\left[\mathcal{H}\right]=\left[\mathcal{E}_{\ff}+\mathcal{F}_{\ff}\right]$.
Let
\begin{align*}
A & =\Op_{L}^{\tr}\left(\left\langle \eta\right\rangle ^{-\mathfrak{s}\left(y\right)}a\left(y;\tilde{x},\eta\right)\right)\\
B & =\Op_{L}^{\po}\left(b\left(y;x,\eta\right)\left\langle \eta\right\rangle ^{\mathfrak{t}\left(y\right)}\right)
\end{align*}
for some symbols
\begin{align*}
a & \in S_{0\tr,\mathcal{S}}^{-\infty,\mathcal{F}}\left(\mathbb{R}^{n};\mathbb{R}_{1}^{n+1};\mathbb{C}^{M}\right)\\
b & \in S_{0\po,\mathcal{S}}^{-\infty,\mathcal{E}}\left(\mathbb{R}^{n};\mathbb{R}_{1}^{n+1};\left(\mathbb{C}^{N}\right)^{*}\right).
\end{align*}
Write $\tilde{a}=\left\langle \eta\right\rangle ^{-\mathfrak{s}\left(y\right)}a\left(y;\tilde{x},\eta\right)$
and $\tilde{b}=b\left(y;x,\eta\right)\left\langle \eta\right\rangle ^{\mathfrak{t}\left(y\right)}$.
From Lemma \ref{lem:twisting-poisson-trace-are-polyhomogeneous-1},
we know that 
\begin{align*}
\tilde{a} & \in S_{0\tr,\mathcal{S}}^{-\infty,\left(\mathcal{F}_{\of},\tilde{\mathcal{F}}_{\ff}\right)}\left(\mathbb{R}^{n};\mathbb{R}_{1}^{n+1}\right)\\
\tilde{b} & \in S_{0\po,\mathcal{S}}^{-\infty,\left(\mathcal{E}_{\of},\tilde{\mathcal{E}}_{\ff}\right)}\left(\mathbb{R}^{n};\mathbb{R}_{1}^{n+1}\right)
\end{align*}
where $\tilde{\mathcal{E}}_{\ff},\tilde{\mathcal{F}}_{\ff}$ are obtaned
from $\mathcal{E}_{\ff},\mathcal{F}_{\ff}$ by increasing some of
the logarithmic orders. Now, applying Point 4 of Theorem \ref{thm:compositions-involving-boundary},
we know that since $\Re\left(\mathcal{F}_{\of}+\mathcal{E}_{\of}\right)>-1$,
the composition $AB$ is well-defined and in $\Psi_{\phg,\mathcal{S}}^{-\left(\tilde{\mathcal{E}}_{\ff}+\tilde{\mathcal{F}}_{\ff}\right)}\left(\mathbb{R}^{n}\right)$.
Now, write $C=AB=\Op_{L}\left(\tilde{c}\right)$, where $\tilde{c}\in S_{\mathcal{S}}^{-\left(\tilde{\mathcal{E}}_{\ff}+\tilde{\mathcal{F}}_{\ff}\right)}\left(\mathbb{R}^{n};\mathbb{R}^{n};\hom\left(\mathbb{C}^{N},\mathbb{C}^{M}\right)\right)$
is determined modulo residual symbols as an asymptotic sum
\[
\tilde{c}\left(y;\eta\right)\sim\sum_{\alpha}\frac{1}{\alpha!}\left(D_{\eta}^{\alpha}\tilde{a}\right)*\left(\partial_{y}^{\alpha}\tilde{b}\right),
\]
where the contraction map $*$ in this case is
\[
\left(D_{\eta}^{\alpha}\tilde{a}\right)*\left(\partial_{y}^{\alpha}\tilde{b}\right)=\int\left(D_{\eta}^{\alpha}\tilde{a}\right)\left(y;\tilde{x},\eta\right)\left(\partial_{y}^{\alpha}\tilde{b}\right)\left(y;\tilde{x},\eta\right)d\tilde{x}.
\]
We want to prove that in fact
\[
\tilde{c}\in S_{\phg,\mathcal{S}}^{-\mathcal{H},\left(\mathfrak{t},\mathfrak{s}\right)}\left(\mathbb{R}^{n};\mathbb{R}^{n};\hom\left(\mathbb{C}^{N},\mathbb{C}^{M}\right)\right),
\]
where $\mathcal{H}$ is an index set with $\left[\mathcal{H}\right]=\left[\mathcal{E}_{\ff}+\mathcal{F}_{\ff}\right]$.
Since
\begin{align*}
\tilde{a}\left(y;\tilde{x},\eta\right) & \in S_{0\tr,\mathcal{S}}^{-\infty,\mathcal{F},\mathfrak{s}}\left(\mathbb{R}^{n};\mathbb{R}_{1}^{n+1};\mathbb{C}^{M}\right)\\
\tilde{b}\left(y;x,\eta\right) & \in S_{0\po,\mathcal{S}}^{-\infty,\mathcal{E},\mathfrak{t}}\left(\mathbb{R}^{n};\mathbb{R}_{1}^{n+1};\left(\mathbb{C}^{N}\right)^{*}\right),
\end{align*}
from Lemma \ref{lem:differentiation-of-twisted-symbols} we obtain
that
\begin{align*}
D_{\eta}^{\alpha}\tilde{a}=\left\langle \eta\right\rangle ^{-\mathfrak{s}}a^{\left(\alpha\right)}, & a^{\left(\alpha\right)}\in S_{0\tr,\mathcal{S}}^{-\infty,\left(\mathcal{F}_{\of},\mathcal{F}_{\ff}+\left|\alpha\right|\right)}\left(\mathbb{R}^{n};\mathbb{R}_{1}^{n+1};\mathbb{C}^{M}\right)\\
\partial_{y}^{\alpha}\tilde{b}=b^{\left(\alpha\right)}\left\langle \eta\right\rangle ^{\mathfrak{t}}, & b^{\left(\alpha\right)}\in S_{0\tr,\mathcal{S}}^{-\infty,\left(\mathcal{E}_{\of},\tilde{\mathcal{E}}_{\ff}^{\left(\alpha\right)}\right)}\left(\mathbb{R}^{n};\mathbb{R}_{1}^{n+1};\left(\mathbb{C}^{N}\right)^{*}\right),
\end{align*}
where $\tilde{\mathcal{E}}_{\ff}^{\left(\alpha\right)}$ is an index
set obtained from $\mathcal{E}_{\ff}$ by increasing some of the logarithmic
orders. From the proof of Point 4 of Theorem \ref{thm:compositions-involving-boundary},
the condition $\Re\left(\mathcal{F}_{\of}+\mathcal{E}_{\of}\right)>-1$
guarantees that
\[
c^{\left(\alpha\right)}:=a^{\left(\alpha\right)}*b^{\left(\alpha\right)}\in S_{\phg,\mathcal{S}}^{-\left(\tilde{\mathcal{E}}_{\ff}+\mathcal{F}_{\ff}+\left|\alpha\right|\right)}\left(\mathbb{R}^{n};\mathbb{R}_{1}^{n+1};\left(\mathbb{C}^{N}\right)^{*}\right).
\]
Define
\[
\mathcal{H}=\left(\mathcal{E}_{\ff}+\mathcal{F}_{\ff}\right)\cup\bigcup_{\left|\alpha\right|>0}\left(\tilde{\mathcal{E}}_{\ff}^{\left(\alpha\right)}+\mathcal{F}_{\ff}+\left|\alpha\right|\right).
\]
Then $\mathcal{H}$ is an index set with $\left[\mathcal{H}\right]=\left[\mathcal{E}_{\ff}+\mathcal{F}_{\ff}\right]$.
From the formula
\[
\tilde{c}\left(y;\eta\right)\sim\sum_{\alpha}\frac{1}{\alpha!}\left\langle \eta\right\rangle ^{-\mathfrak{s}}c^{\left(\alpha\right)}\left(y;\eta\right)\left\langle \eta\right\rangle ^{\mathfrak{t}},
\]
it follows by asymptotic completeness that 
\[
\tilde{c}\in S_{\phg,\mathcal{S}}^{-\mathcal{H},\left(\mathfrak{t},\mathfrak{s}\right)}\left(\mathbb{R}^{n};\mathbb{R}^{n};\hom\left(\mathbb{C}^{N},\mathbb{C}^{M}\right)\right),
\]
concluding the proof.
\end{proof}

\subsubsection{\label{subsubsec:Global-composition-theorems}Global composition
theorems}

We now discuss the global composition theorems. Unfortunately, we
will only be able to prove global analogues of the local composition
results which do not involve $0b$-interior operators. The author
strongly believes that all the local composition theorems mentioned
above should extend to the global setting, but the proof below does
not work straightforwardly when $0b$-interior operators are involved.
A slightly weaker result, involving the composition $0b$-interior
$\circ$ $0b$-interior, will be proved at the end of the paragraph.

We will now prove the following theorems:
\begin{thm}
\label{thm:global-compositions-involving-boundary}Let $\mathcal{E}=\left(\mathcal{E}_{\of},\mathcal{E}_{\ff}\right)$,
$\mathcal{F}=\left(\mathcal{F}_{\of},\mathcal{F}_{\ff}\right)$ and
$\mathcal{G}$ be index sets. Let $A\in\hat{\Psi}_{0\tr}^{-\infty,\mathcal{F}}\left(X,\partial X\right)$,
$B\in\hat{\Psi}_{0\po}^{-\infty,\mathcal{E}}\left(\partial X,X\right)$
and $Q\in\Psi_{\phg}^{-\mathcal{G}}\left(\partial X\right)$. Then:
\begin{enumerate}
\item $BQ$ is well-defined and in $\hat{\Psi}_{0\po}^{-\infty,\left(\mathcal{E}_{\of},\mathcal{E}_{\ff}+\mathcal{G}\right)}\left(\partial X,X\right)$;
\item $QA$ is well-defined and in $\hat{\Psi}_{0\tr}^{-\infty,\left(\mathcal{F}_{\of},\mathcal{F}_{\ff}+\mathcal{G}\right)}\left(X,\partial X\right)$;
\item $BA$ is well-defined and in $\hat{\Psi}_{0}^{-\infty,\left(\mathcal{E}_{\of},\mathcal{F}_{\of},\mathcal{E}_{\ff}+\mathcal{F}_{\ff}\right)}\left(X\right)$;
\item if $\Re\left(\mathcal{F}_{\of}+\mathcal{E}_{\of}\right)>-1$, then
$AB$ is well-defined and in $\Psi_{\phg}^{-\left(\mathcal{E}_{\ff}+\mathcal{F}_{\ff}\right)}\left(\partial X\right)$.
\end{enumerate}
Moreover, if $\left[\mathcal{E}_{\ff}\right]=m_{1}$, $\left[\mathcal{F}_{\ff}\right]=m_{2}$
and $\left[\mathcal{G}\right]=m_{3}$ for $m_{1},m_{2},m_{3}\in\mathbb{C}$,
then for every $\eta\in T^{*}\partial X\backslash0$, we have
\begin{align*}
\hat{N}_{\eta}\left(BQ\right) & =\hat{N}_{\eta}\left(B\right)\sigma_{\eta}\left(Q\right)\\
\hat{N}_{\eta}\left(QA\right) & =\sigma_{\eta}\left(Q\right)\hat{N}_{\eta}\left(A\right)\\
\hat{N_{\eta}}\left(BA\right) & =\hat{N}_{\eta}\left(B\right)\hat{N}_{\eta}\left(A\right)\\
\sigma_{\eta}\left(AB\right) & =\hat{N}_{\eta}\left(A\right)\hat{N}_{\eta}\left(B\right).
\end{align*}

\end{thm}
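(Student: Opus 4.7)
The plan is to reduce the four composition results to the local versions established in Theorem~\ref{thm:compositions-involving-boundary} via a partition of unity argument, and then verify the Bessel family identities by a direct calculation of leading symbols. First I would fix a finite atlas $\{(U_i,\varphi_i)\}$ of $\partial X$ compatible with $V$ (so each $U_i$ corresponds to a coordinate box $(x,y)$ on a neighborhood $\tilde U_i \subseteq X$ with $Vx\equiv 1$), and a subordinate partition of unity $\{\chi_i\}$ on $\partial X$. Using this atlas together with the local characterizations of $\hat\Psi_{0\tr}^{-\infty,\bullet}$, $\hat\Psi_{0\po}^{-\infty,\bullet}$, and $\Psi_{\phg}^{\bullet}(\partial X)$, each of $A$, $B$, $Q$ can be written as a finite sum of a \emph{localized} piece (whose symbol is compactly supported in $y$, hence lies in the local class $S_{\mathcal S}^{\bullet}$) and a \emph{residual} piece whose Schwartz kernel is supported away from $\partial\Delta$ and belongs to the fully polyhomogeneous classes $\Psi_{\tr}^{-\infty,\mathcal F_{\of}}$, $\Psi_{\po}^{-\infty,\mathcal E_{\of}}$, $\Psi^{-\infty}(\partial X)$, respectively.

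Next, for each of the four compositions I would expand the product into (localized $\circ$ localized), (localized $\circ$ residual), (residual $\circ$ localized), and (residual $\circ$ residual) pieces. By choosing the atlas so that overlapping charts $U_i \cup U_j$ lie inside a common coordinate chart $\tilde U_{ij}$, each localized-by-localized term can be viewed as a composition in the local class $\hat\Psi^{-\infty,\bullet}_{\cdot,\mathcal S}(\mathbb R^{n}_{\bullet})$ and is therefore handled directly by Theorem~\ref{thm:compositions-involving-boundary}; the resulting local symbols then glue back to a global operator in the claimed class by inspection of the local characterization. The terms involving at least one residual factor are handled by Theorem~\ref{thm:mapping-properties-on-phg-untwisted}: a residual operator factors through $\dot C^\infty$ or $C^\infty$, and the composition with a continuous operator mapping into or out of these spaces stays within the residual class, which can be verified by the standard pull-back/push-forward technique on the unblown-up product spaces (this is essentially the content of \cite{MazzeoEdgeII, UsulaPhD} for the residual calculi). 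The integrability hypothesis $\Re(\mathcal F_{\of}+\mathcal E_{\of})>-1$ in (4) is precisely the condition needed to ensure that $B$ maps into a space on which $A$ can act, and it coincides with the integrability used in the local version.

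For the four Bessel family and principal symbol identities, the proof is local and reduces to inspecting the leading term of the symbol of the composition. At any $p\in\partial X$, in coordinates $(x,y)$ centered at $p$ and compatible with $V$, each factor is a left quantization of a local symbol; the local composition formulas in the proof of Theorem~\ref{thm:compositions-involving-boundary} express the composition as a left quantization of an asymptotic sum. The Bessel family map $\hat N$ and the principal symbol $\sigma$ both extract, by a limit $t\to 0^+$, the leading behavior of the relevant symbol at the critical face ($\ff$ for trace/Poisson symbols, $\ff_0$ for interior symbols, and $\left|\eta\right|=\infty$ for boundary symbols). Only the zeroth-order term in the asymptotic sum contributes: the higher-order terms acquire strictly larger index sets at the critical face by Lemma~\ref{lem:differentiation-of-untwisted-symbols}, so they vanish in the defining limit. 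The zeroth-order term is the product of the leading terms of the factors, which is exactly $\hat N_\eta(\,\cdot\,)\hat N_\eta(\,\cdot\,)$ or $\sigma_\eta(\,\cdot\,)\hat N_\eta(\,\cdot\,)$ in the appropriate sense.

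The main obstacle I expect is the careful bookkeeping of the residual and cross-chart terms in the partition of unity decomposition: one must check that no spurious index sets appear at the front face when patching, and that the various residual pieces combine into operators of exactly the claimed index families. Rapid decay in $y$ of the local symbols, infinite order of vanishing at $\iif_\eta$, $\iif_x$, $\iif_{\tilde x}$, and the fact that the composition of two residual operators can be handled by the unblown-up pull-back/push-forward theorem, all conspire to make this go through, but it requires some attention to avoid introducing extended unions at the wrong faces—the very issue that forced us to adopt the symbolic viewpoint in the first place.
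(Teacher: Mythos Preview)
Your proposal is correct and follows essentially the same route as the paper: decompose via a partition of unity into localized plus residual pieces, handle localized-by-localized terms with the local composition theorem (Theorem~\ref{thm:compositions-involving-boundary}) on a common chart $\tilde U_{ij}$, and treat cross terms using the mapping properties on polyhomogeneous spaces (the paper makes this precise by writing residual classes as completed tensor products and, for terms like $R_A P_i$, passing to adjoints via Proposition~\ref{prop:formal-adjoints}); the Bessel identities are obtained exactly as you describe, by observing that only the $\alpha=0$ term in the asymptotic composition formula survives the defining limit.
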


\begin{thm}
\label{thm:global-compositions-mixed}Let $\mathcal{E}=\left(\mathcal{E}_{\lf},\mathcal{E}_{\rf},\mathcal{E}_{\ff_{0}}\right)$
and $\mathcal{F}=\left(\mathcal{F}_{\of},\mathcal{F}_{\ff}\right)$
be index sets. Let $A\in\hat{\Psi}_{0\tr}^{-\infty,\mathcal{F}}\left(X,\partial X\right)$,
$B\in\hat{\Psi}_{0\po}^{-\infty,\mathcal{F}}\left(\partial X,X\right)$
and $P\in\hat{\Psi}_{0}^{-\infty,\mathcal{E}}\left(X\right)$.
\begin{enumerate}
\item \textup{\emph{If $\Re\left(\mathcal{E}_{\rf}+\mathcal{F}_{\of}\right)>-1$,
then $PB$ is well-defined and in $\hat{\Psi}_{0\po}^{-\infty,\left(\mathcal{E}_{\lf},\mathcal{F}_{\ff}+\mathcal{E}_{\ff_{0}}\right)}\left(\partial X,X\right)$;}}
\item \textup{\emph{if $\Re\left(\mathcal{F}_{\of}+\mathcal{E}_{\lf}\right)>-1$,
then $AP$ is well-defined and in $\hat{\Psi}_{0\tr}^{-\infty,\left(\mathcal{E}_{\rf},\mathcal{F}_{\ff}+\mathcal{E}_{\ff_{0}}\right)}\left(X,\partial X\right)$.}}
\end{enumerate}
Moreover, if $\left[\mathcal{E}_{\ff_{0}}\right]=m_{1}$ and $\left[\mathcal{F}_{\ff_{b}}\right]=m_{2}$
for $m_{1},m_{2}\in\mathbb{C}$, then for every $\eta\in T^{*}\partial X\backslash0$,
we have
\begin{align*}
\hat{N}_{\eta}\left(PB\right) & =\hat{N}_{\eta}\left(P\right)\hat{N}_{\eta}\left(B\right)\\
\hat{N}_{\eta}\left(AP\right) & =\hat{N}_{\eta}\left(A\right)\hat{N}_{\eta}\left(P\right).
\end{align*}

\end{thm}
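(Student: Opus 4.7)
The plan is to reduce the global statements to the local composition results of Theorem~\ref{thm:compositions-mixed} via a partition of unity argument, and to verify the Bessel family identities by unwinding the definitions in coordinates. I will treat the $PB$ case; the $AP$ case is analogous (or follows by taking formal adjoints via Proposition~\ref{prop:formal-adjoints}).

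First I would fix a locally finite cover $\{U_i\}$ of $\partial X$ by coordinate charts $(U_i, \varphi_i)$, each compatible with the auxiliary vector field $V$, such that the closures $\overline{U_i}$ lift to collar neighborhoods $[0,\varepsilon)\times U_i \hookrightarrow X$, and choose a subordinate partition of unity $\{\chi_i^2\}$ along with slightly larger bump functions $\psi_i$ equal to $1$ on $\mathrm{supp}(\chi_i)$. Using these, I would decompose
\[
P = \sum_i \psi_i P \chi_i^2 \psi_i + R_P, \qquad B = \sum_i \psi_i B \chi_i^2 \psi_i + R_B,
\]
where $R_P$ is a very residual operator in $\Psi^{-\infty,(\mathcal{E}_{\lf},\mathcal{E}_{\rf})}(X)$ (by Corollary~\ref{cor:global-relation-symbolic0b-physical0b}, Point~1) and $R_B$ is a residual Poisson operator in $\Psi_{\mathrm{P}}^{-\infty,\mathcal{F}_{\of}}(\partial X, X)$. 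The products $\psi_i P \chi_i^2 \psi_i$ and $\psi_i B \chi_i^2 \psi_i$, read in the coordinate chart, are local $0$-interior and local symbolic $0$-Poisson operators respectively. Thus
\[
PB \;=\; \sum_i (\psi_i P \chi_i^2 \psi_i)(\psi_i B \chi_i^2 \psi_i) \;+\; \sum_i (\psi_i P \chi_i^2 \psi_i) R_B \;+\; \sum_i R_P (\psi_i B \chi_i^2 \psi_i) \;+\; R_P R_B,
\]
modulo cross terms whose supports are disjoint from $\partial\Delta$ and which therefore lie in the very residual classes. The diagonal pieces are handled by applying Point~1 of the local Theorem~\ref{thm:compositions-mixed}: each $(\psi_i P \chi_i^2 \psi_i)(\psi_i B \chi_i^2 \psi_i)$ is a local symbolic $0$-Poisson operator with the claimed index sets, and its Schwartz kernel contributes to $PB$ near $(U_i,U_i)\subseteq \partial\Delta$. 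The mixed terms involving residuals are handled by the mapping properties of Theorem~\ref{thm:mapping-properties-on-phg-untwisted} and Corollary~\ref{cor:mapping-properties-phg-twisted}, together with the elementary fact that a very residual operator composed with a smoothly supported operator away from $\partial \Delta$ remains very residual; these contribute only to $\Psi_{\mathrm{P}}^{-\infty,\mathcal{E}_{\lf}}(\partial X, X)$, which by Corollary in \S\ref{subsubsec:Symbolic-0-Poisson-operators} injects into $\hat{\Psi}_{0\mathrm{P}}^{-\infty,(\mathcal{E}_{\lf},\infty)}(\partial X, X)$ and is absorbed in the target class.

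For the Bessel family identities, I would argue as follows. Both sides depend only on the local symbolic expressions near each point $p \in \partial X$; the residual pieces contribute trivially to the Bessel family (they are smooth at the front face to infinite order, so their limits as defined in \S\ref{subsubsec:The-Bessel-family-of-symbolic-0-poisson} and \S\ref{subsubsec:The-Bessel-family-of-interior} vanish). In coordinates compatible with $V$, writing $P = \Op_L^{\mathrm{int}}(p(y;x,\tilde{x},\eta))$ and $B = \Op_L^{\mathrm{P}}(b(y;x,\eta))$, the composition is $\Op_L^{\mathrm{P}}(c(y;x,\eta))$ with left symbol
\[
c(y;x,\eta) \sim \sum_\alpha \frac{1}{\alpha!} \int \bigl(\partial_\eta^\alpha p\bigr)(y;x,\tilde{x},\eta)\,\bigl(D_{\tilde{y}}^\alpha b\bigr)(\tilde{y};\tilde{x},\eta)\Big|_{\tilde{y}=y}\, d\tilde{x}.
\]
Taking the rescaling limit that defines $\hat{N}_\eta$ as in \S\ref{subsubsec:The-Bessel-family-of-twisted-poisson}, only the $\alpha=0$ leading term survives (the $\alpha \geq 1$ terms pick up an extra vanishing factor at the front face), and the resulting integral is precisely the integral that defines the composition of the Bessel operators $\hat{N}_\eta(P) \circ \hat{N}_\eta(B)$.

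The main obstacle is verifying that the sum over $i$ of the diagonal symbolic pieces actually assembles into a globally defined element of $\hat{\Psi}_{0\mathrm{P}}^{-\infty,\bullet}(\partial X, X)$; this requires coordinate invariance of the symbolic $0$-Poisson class, which is precisely the content referenced in \S\ref{subsec:Technical-details-on-proofs} and is proved by the standard argument that the difference between left quantizations in overlapping charts is a residual operator (here, a symbol with infinite-order vanishing at the front face). The integrability hypotheses $\Re(\mathcal{E}_{\rf} + \mathcal{F}_{\of}) > -1$ in Point~1 and $\Re(\mathcal{F}_{\of} + \mathcal{E}_{\lf}) > -1$ in Point~2 are exactly what is needed both for the local composition theorem and for the mapping properties ensuring that the residual cross terms are well-defined.
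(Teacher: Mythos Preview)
Your overall strategy matches the paper's: localize via an atlas, apply the local composition Theorem~\ref{thm:compositions-mixed} to the near-diagonal pieces, absorb the residual cross terms using the tensor-product decompositions and mapping properties of Theorem~\ref{thm:mapping-properties-on-phg-untwisted} (together with the adjoint trick of Proposition~\ref{prop:formal-adjoints} for terms of the form $R_P B_j$), and compute the Bessel family by observing that only the $\alpha=0$ term of the left-symbol asymptotic expansion survives the rescaling limit. The Bessel family argument you sketch is correct and is exactly what the paper does.

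There is, however, a genuine gap in your localization step. You write $PB$ as a sum of diagonal pieces $\sum_i P_i B_i$ plus residual terms, ``modulo cross terms whose supports are disjoint from $\partial\Delta$.'' This is false: for $i\neq j$ with $U_i\cap U_j\neq\emptyset$, the composition $P_iB_j$ has Schwartz kernel supported in $[0,\varepsilon)\times U_i\times U_j$, which meets $\partial\Delta$ along the overlap, so these off-diagonal terms are \emph{not} automatically residual Poisson operators. The paper's fix is to choose the atlas so that whenever $U_i\cap U_j\neq\emptyset$ the union $U_i\cup U_j$ is contained in a larger coordinate chart $\tilde U_{ij}$; then each nonzero $P_iB_j$ can be viewed in $\tilde U_{ij}$ as a local composition and Theorem~\ref{thm:compositions-mixed} applies to it just as to the diagonal terms. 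Without this refinement your decomposition does not cover a neighborhood of $\partial\Delta$, and the argument as written does not establish membership in $\hat\Psi_{0\mathrm{P}}^{-\infty,(\mathcal E_{\lf},\mathcal F_{\ff}+\mathcal E_{\ff_0})}(\partial X,X)$.
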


\begin{thm}
\label{thm:compositions-involving-interior-1}Let\textup{\emph{ }}$\mathcal{E}=\left(\mathcal{E}_{\lf},\mathcal{E}_{\rf},\mathcal{E}_{\ff_{0}}\right)$
and $\mathcal{F}=\left(\mathcal{F}_{\lf},\mathcal{F}_{\rf},\mathcal{F}_{\ff_{0}}\right)$.
Let $P\in\hat{\Psi}_{0}^{-\infty,\mathcal{E}}\left(X\right)$ and
$Q\in\hat{\Psi}_{0}^{-\infty,\mathcal{F}}\left(X\right)$.\textup{\emph{
If $\Re\left(\mathcal{E}_{\rf}+\mathcal{F}_{\lf}\right)>-1$, then
$PQ$ is well-defined and in $\hat{\Psi}_{0}^{-\infty,\left(\mathcal{E}_{\lf},\mathcal{F}_{\rf},\mathcal{E}_{\ff_{0}}+\mathcal{F}_{\ff_{0}}\right)}\left(X\right)$.
Moreover, if $\left[\mathcal{E}_{\ff_{0}}\right]=m_{1}$ and $\left[\mathcal{F}_{\ff_{0}}\right]=m_{2}$
for $m_{1},m_{2}\in\mathbb{C}$, then for every $\eta\in T^{*}\partial X\backslash0$
we have $\hat{N}_{\eta}\left(PQ\right)=\hat{N}_{\eta}\left(P\right)\hat{N}_{\eta}\left(Q\right)$.}}
\end{thm}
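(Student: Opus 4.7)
The plan is to imitate the standard argument used to prove that pseudodifferential operators on a closed manifold are closed under composition, using Theorem~\ref{thm:compositions-involving-interior}(4) as the local building block and reading off the Bessel identity from the leading term at $\ff_{0}$ of the composed symbol. First, well-definedness of $PQ$ as a continuous linear map $\dot C^{\infty}(X)\to C^{-\infty}(X)$ follows from Theorem~\ref{thm:mapping-properties-on-phg-untwisted}(3): its hypothesis is trivially satisfied for Schwartz input, so $Q$ extends to $Q:\dot C^{\infty}(X)\to\mathcal{A}_{\phg}^{\mathcal{F}_{\lf}}(X)$, and the assumption $\Re(\mathcal{E}_{\rf}+\mathcal{F}_{\lf})>-1$ then allows $P$ to act on the image, producing $PQ:\dot C^{\infty}(X)\to\mathcal{A}_{\phg}^{\mathcal{E}_{\lf}}(X)$.

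To establish the symbol structure globally, I would pick a finite atlas $\{(U_{i},\varphi_{i})\}$ of $\partial X$ and cut-off functions $\chi_{i}\in C^{\infty}(X)$ supported in collar extensions of the $U_{i}$, summing to $1$ near $\partial X$ and arranged so that, whenever two supports overlap, a single chart contains them. Writing $P=\sum_{ij}\chi_{i}P\chi_{j}+R_{P}$ and $Q=\sum_{ij}\chi_{i}Q\chi_{j}+R_{Q}$ with residuals $R_{P}\in\Psi^{-\infty,(\mathcal{E}_{\lf},\mathcal{E}_{\rf})}(X)$ and $R_{Q}\in\Psi^{-\infty,(\mathcal{F}_{\lf},\mathcal{F}_{\rf})}(X)$ (which by Corollary~\ref{cor:global-relation-symbolic0b-physical0b}(1) embed into $\hat\Psi_{0}^{-\infty,(\cdot,\cdot,\infty)}(X)$), I expand $PQ$ bilinearly. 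The non-trivial main terms $(\chi_{i}P\chi_{j})(\chi_{k}Q\chi_{\ell})$, which vanish unless $\supp(\chi_{j})\cap\supp(\chi_{k})\neq\emptyset$, land by construction in a common chart, where both factors become compactly supported local symbolic $0$-interior operators; Theorem~\ref{thm:compositions-involving-interior}(4), whose integrability hypothesis is exactly $\Re(\mathcal{E}_{\rf}+\mathcal{F}_{\lf})>-1$, places the product in $\hat\Psi_{0,\mathcal{S}}^{-\infty,(\mathcal{E}_{\lf},\mathcal{F}_{\rf},\mathcal{E}_{\ff_{0}}+\mathcal{F}_{\ff_{0}})}(\mathbb R_{1}^{n+1})$. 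The mixed and residual--residual terms are handled uniformly by the same local theorem, since treating a very residual factor as a $0$-interior operator with index set $\infty$ at $\ff_{0}$ makes $\mathcal{E}_{\ff_{0}}+\infty=\infty$ absorb into the target class. Coordinate invariance of $\hat\Psi_{0}^{-\infty,\bullet}(X)$, proved by the same asymptotic-expansion technique that yields diffeomorphism invariance of the standard pseudodifferential calculus, then ensures that the local pieces patch into a global element of $\hat\Psi_{0}^{-\infty,(\mathcal{E}_{\lf},\mathcal{F}_{\rf},\mathcal{E}_{\ff_{0}}+\mathcal{F}_{\ff_{0}})}(X)$.

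For the Bessel identity, I would work in local coordinates $(x,y)$ centered at $p\in\partial X$ and compatible with $V$, writing $P\equiv\Op_{L}^{\inte}(p)$ and $Q\equiv\Op_{L}^{\inte}(q)$ near $(p,p)$. The product is $\Op_{L}^{\inte}(c)$ with the usual left-quantization asymptotic sum
\[
c(y;x,\tilde x,\eta)\sim\sum_{\alpha}\frac{1}{\alpha!}\bigl(\partial_{\eta}^{\alpha}p\bigr)\ast\bigl(D_{\tilde y}^{\alpha}q\bigr),\qquad(a\ast b)(y;x,\tilde x,\eta):=\int a(y;x,x',\eta)\,b(y;x',\tilde x,\eta)\,dx'.
\]
By Lemma~\ref{lem:differentiation-of-untwisted-symbols}(2), each factor of $\partial_{\eta_{j}}$ shifts the index set at $\ff_{0}$ by $+1$, so only the $\alpha=0$ term can contribute at the leading order $|\eta|^{-(m_{1}+m_{2})+1}$ there. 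Extracting the leading coefficient of $(p\ast q)(0;x,\tilde x,\eta)$ at $\ff_{0}$ via the substitution $s=x'|\eta|$ produces
\[
|\eta|^{-(m_{1}+m_{2})+1}\int\tilde p_{0}(x|\eta|,s,\hat\eta)\,\tilde q_{0}(s,\tilde x|\eta|,\hat\eta)\,ds,
\]
which by the explicit description in \S\ref{subsubsec:The-Bessel-family-of-interior} is precisely the Schwartz kernel of $\hat N_{\eta}(P)\hat N_{\eta}(Q)$; hence $\hat N_{\eta}(PQ)=\hat N_{\eta}(P)\hat N_{\eta}(Q)$. In my view the main obstacle is the careful bookkeeping to confirm that the $|\alpha|>0$ terms of the asymptotic sum fall strictly below $|\eta|^{-(m_{1}+m_{2})+1}$ at $\ff_{0}$ \emph{after} the $x'$ integration, which amounts to checking that the order-shift of $\partial_{\eta}^{\alpha}$ established in \S\ref{subsec:Asymptotic-sums-and-differentiations-of-symbols} survives the push-forward computation underlying the $\ast$-contraction; everything else reduces to invoking the local composition theorem, the mapping properties, and the change-of-variable computation above.
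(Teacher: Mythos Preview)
Your overall strategy matches the paper's: reduce to the local composition theorem (Theorem~\ref{thm:compositions-involving-interior}(4)) for the chart-localized pieces, handle the residuals separately, and read off the Bessel identity from the $\alpha=0$ term of the left-reduced asymptotic expansion. The Bessel computation you sketch is essentially the paper's (it does the same substitution, phrased via the scaling limit $t\to 0^{+}$ rather than $s=x'|\eta|$).

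There is, however, a genuine gap in your treatment of the mixed terms. You claim that terms like $(\chi_{i}P\chi_{j})R_{Q}$ and $R_{P}R_{Q}$ are ``handled uniformly by the same local theorem'' by viewing the very residual factor as a $0$-interior operator with index set $\infty$ at $\ff_{0}$. But Theorem~\ref{thm:compositions-involving-interior}(4) is a statement about operators in $\hat\Psi_{0,\mathcal S}^{-\infty,\bullet}(\mathbb R_{1}^{n+1})$, i.e.\ operators compactly supported in a single coordinate chart. The residuals $R_{P},R_{Q}\in\Psi^{-\infty,(\cdot,\cdot)}(X)$ are globally defined on $X^{2}$ and cannot be placed in any chart, so the local theorem does not apply to these compositions. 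The paper handles these terms by a different mechanism: it uses the tensor decomposition $\Psi^{-\infty,(\mathcal F_{\lf},\mathcal F_{\rf})}(X)=\mathcal A_{\phg}^{\mathcal F_{\lf}}(X)\hat\otimes\mathcal A_{\phg}^{\mathcal F_{\rf}}(X;\mathcal D_{X}^{1})$ and lets $P_{i}$ act on the left tensor factor via the mapping property Theorem~\ref{thm:mapping-properties-on-phg-untwisted}(3) (which is exactly where the hypothesis $\Re(\mathcal E_{\rf}+\mathcal F_{\lf})>-1$ enters), yielding $P_{i}R_{Q}\in\Psi^{-\infty,(\mathcal E_{\lf},\mathcal F_{\rf})}(X)$; the other order $R_{P}Q_{j}$ is then obtained by taking formal adjoints via Proposition~\ref{prop:formal-adjoints}. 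This is the step you are missing, and it is precisely the reason the paper singles out the $0b$-interior case (Theorem~\ref{thm:global-composition-0b-0b}) as requiring a different argument: there the residuals live in $\Psi_{b}^{-\infty,\bullet}(X)$, which does \emph{not} split as a tensor product.
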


Fix $\boldsymbol{E}\to\partial X$ and $\boldsymbol{F}\to\partial X$
smooth vector bundles, and let $\boldsymbol{\mathfrak{s}}:\boldsymbol{E}\to\boldsymbol{E}$,
$\boldsymbol{\mathfrak{t}}:\boldsymbol{F}\to\boldsymbol{F}$, and
$\boldsymbol{\mathfrak{u}}:\boldsymbol{W}\to\boldsymbol{W}$ be smooth
endomorphisms with constant eigenvalues.
\begin{thm}
\label{thm:global-twisted-compositions-involving-boundary}Let $\mathcal{E}=\left(\mathcal{E}_{\of},\mathcal{E}_{\ff}\right)$,
$\mathcal{F}=\left(\mathcal{F}_{\of},\mathcal{F}_{\ff}\right)$ and
$\mathcal{G}$ be index sets. Let $\boldsymbol{A}\in\hat{\Psi}_{0\tr}^{-\infty,\left(\mathcal{F}_{\of},\left[\mathcal{F}_{\ff}\right]\right),\boldsymbol{\mathfrak{s}}}\left(X;\partial X,\boldsymbol{E}\right)$,
$\boldsymbol{B}\in\hat{\Psi}_{0\po}^{-\infty,\left(\mathcal{E}_{\of},\left[\mathcal{E}_{\ff}\right]\right),\boldsymbol{\mathfrak{t}}}\left(\partial X,\boldsymbol{F};X\right)$,
$\boldsymbol{Q}\in\Psi_{\phg}^{-\left[\mathcal{G}\right],\left(\boldsymbol{\mathfrak{s}},\boldsymbol{\mathfrak{t}}\right)}\left(\partial X;\boldsymbol{E},\boldsymbol{F}\right)$,
and $\boldsymbol{Q}'\in\Psi_{\phg}^{-\mathcal{\left[H\right]},\left(\boldsymbol{\mathfrak{t}},\boldsymbol{\mathfrak{u}}\right)}\left(\partial X;\boldsymbol{F},\boldsymbol{W}\right)$.
Then:
\begin{enumerate}
\item $\boldsymbol{Q}'\boldsymbol{Q}$ is well-defined and in $\Psi_{\phg}^{-\left[\mathcal{G}+\mathcal{H}\right],\left(\boldsymbol{\mathfrak{s}},\boldsymbol{\mathfrak{u}}\right)}\left(\partial X;\boldsymbol{E},\boldsymbol{W}\right)$;
\item $\boldsymbol{B}\boldsymbol{Q}$ is well-defined and in $\hat{\Psi}_{0\po}^{-\infty,\left(\mathcal{E}_{\of},\left[\mathcal{E}_{\ff}+\mathcal{G}\right]\right),\boldsymbol{\mathfrak{t}}}\left(\partial X,\boldsymbol{E};X\right)$;
\item $\boldsymbol{Q}\boldsymbol{A}$ is well-defined and in $\hat{\Psi}_{0\tr}^{-\infty,\left(\mathcal{F}_{\of},\left[\mathcal{F}_{\ff}+\mathcal{G}\right]\right),\boldsymbol{\mathfrak{s}}}\left(X;\partial X,\boldsymbol{F}\right)$;
\item assume that $\boldsymbol{E}=\boldsymbol{F}$ and $\boldsymbol{\mathfrak{s}}=\boldsymbol{\mathfrak{t}}$;
then $\boldsymbol{B}\boldsymbol{A}$ is well-defined and in $\hat{\Psi}_{0}^{-\infty,\left(\mathcal{E}_{\of},\mathcal{F}_{\of},\mathcal{H}\right)}\left(X\right)$,
where $\mathcal{H}$ is an index set with $\left[\mathcal{H}\right]=\left[\mathcal{E}_{\ff}+\mathcal{F}_{\ff}\right]$;
\item if $\Re\left(\mathcal{F}_{\of}+\mathcal{E}_{\of}\right)>-1$, then
$\boldsymbol{A}\boldsymbol{B}$ is well-defined and in $\Psi_{\phg}^{-\left[\mathcal{E}_{\ff}+\mathcal{F}_{\ff}\right],\left(\boldsymbol{\mathfrak{t}},\boldsymbol{\mathfrak{s}}\right)}\left(\partial X;\boldsymbol{F},\boldsymbol{E}\right)$.
\end{enumerate}
Moreover, if $\left[\mathcal{E}_{\ff}\right]=m_{1}$, $\left[\mathcal{F}_{\ff}\right]=m_{2}$
and $\left[\mathcal{G}\right]=m_{3}$ for $m_{1},m_{2},m_{3}\in\mathbb{C}$,
then for every $\eta\in T^{*}\partial X\backslash0$ we have
\begin{align*}
\hat{N}_{\eta}\left(\boldsymbol{B}\boldsymbol{Q}\right) & =\hat{N}_{\eta}\left(\boldsymbol{B}\right)\sigma_{\eta}\left(\boldsymbol{Q}\right)\\
\hat{N}_{\eta}\left(\boldsymbol{Q}\boldsymbol{A}\right) & =\sigma_{\eta}\left(\boldsymbol{Q}\right)\hat{N}_{\eta}\left(\boldsymbol{A}\right)\\
\hat{N_{\eta}}\left(\boldsymbol{B}\boldsymbol{A}\right) & =\hat{N}_{\eta}\left(\boldsymbol{B}\right)\hat{N}_{\eta}\left(\boldsymbol{A}\right)\\
\sigma_{\eta}\left(\boldsymbol{A}\boldsymbol{B}\right) & =\hat{N}_{\eta}\left(\boldsymbol{A}\right)\hat{N}_{\eta}\left(\boldsymbol{B}\right).
\end{align*}

\end{thm}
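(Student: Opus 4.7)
The plan is to reduce every statement to the corresponding local statement (Theorem \ref{thm:twisted-compositions-involving-boundary}) proved in \S\ref{subsubsec:Local-composition-theorems-twisted}, following the same pattern used to globalize the un-twisted composition theorems \ref{thm:global-compositions-involving-boundary}. First I would decompose the bundles $\boldsymbol{E},\boldsymbol{F},\boldsymbol{W}$ into generalized eigenbundles of $\boldsymbol{\mathfrak{s}},\boldsymbol{\mathfrak{t}},\boldsymbol{\mathfrak{u}}$, so that each operator becomes a (block) matrix whose entries are twisted by a single-eigenvalue endomorphism. Then, choosing an atlas $\{(U_i,\varphi_i)\}$ for $\partial X$ fine enough that each pairwise union $U_i\cup U_j$ lies in a common chart $\tilde U_{ij}$, and extending this to a collar atlas on a neighborhood of $\partial X$ in $X$, I would decompose each of $\boldsymbol{A},\boldsymbol{B},\boldsymbol{Q},\boldsymbol{Q}'$ as a finite sum $\sum_i(\cdot)_i+R$, where each $(\cdot)_i$ is compactly supported in a localization of the relevant chart and $R$ is a very residual / residual-trace / residual-Poisson / smoothing operator. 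The localized pieces have a common coordinate description in $\tilde U_{ij}$ and can therefore be treated by the local twisted composition theorem, yielding an operator whose Schwartz kernel in those coordinates has the required form.

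The cross terms (products where one factor is residual, or where the two localized pieces come from disjoint charts) are handled directly from the mapping properties of \S\ref{subsec:Basic-mapping-properties} and the formal adjoint result Proposition \ref{prop:twisted-adjoint}. Concretely, residual trace and Poisson operators, smoothing boundary operators, and very residual interior operators all map smooth (or polyhomogeneous, with the usual integrability constraints) sections on one side to smooth sections on the other side; composing them with any of the operator classes above lands one in the appropriate residual subclass, which by Corollary \ref{cor:twisted-0-trace-0-poisson-are-untwisted} and Corollary \ref{cor:twisted-psidos-have-phg-entries} is automatically absorbed into the twisted classes. For the cross terms of the form $R\cdot(\cdot)_i$, I would pass to adjoints using Proposition \ref{prop:twisted-adjoint} to convert them into compositions of the form $(\cdot)_i^\dagger R^\dagger$ and apply the previous case; the twisting endomorphism changes sign under adjunction but this does not affect the classes involved, only the bookkeeping of the twisting parameter.

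For the Bessel family / principal symbol identities, the idea is that both the Bessel family (\S\ref{subsubsec:The-Bessel-family-of-twisted-poisson}, \S\ref{subsubsec:The-Bessel-family-of-twisted-0-trace}) and the twisted principal symbol (\S\ref{sec:The-symbolic-0-calculus}) are extracted from the leading behavior of the symbol at the front face (resp.\ at $|\eta|=\infty$), and both are compatible with the leading-order asymptotic expansion of the composed symbol. After establishing in local coordinates that the left-reduced symbol of each composition is given by the expected asymptotic sum $\sum_\alpha\tfrac{1}{\alpha!}(\partial_\eta^\alpha\cdot)\ast(D_{\tilde y}^\alpha\cdot)$ modulo residual symbols, I would evaluate the leading term by passing to the dilation limit $t\to0^+$ as in the proofs of \S\ref{subsec:Bessel-families}--\ref{subsec:Twisted--trace-and-poisson}: all $\alpha\neq0$ contributions land one order deeper at the front face and vanish in the limit, so only the leading term $\tilde a_0\cdot \tilde b_0$ (or $\tilde q_0 \cdot \tilde p_0$, etc.) survives, which is exactly the product of the corresponding Bessel/symbol leading terms.

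The main obstacle will be checking coordinate invariance of the localized descriptions in the twisted setting. As noted in Remark \ref{rem:why-only-leading-sets-1}, the presence of the twisting factor $\langle\eta\rangle^{\mathfrak{s}(y)}$ implies that the full index set at the front face is not preserved by changes of coordinates or frames, only its leading set. This is already the reason why the global twisted classes are only indexed by $\left[\mathcal{E}_{\ff}\right]$, and Lemma \ref{lem:differentiation-of-twisted-symbols} shows that tangential differentiations cause only a controlled logarithmic loss (thanks to the constant-eigenvalue hypothesis on $\boldsymbol{\mathfrak{s}},\boldsymbol{\mathfrak{t}},\boldsymbol{\mathfrak{u}}$), so that the asymptotic sums defining the reduced symbols converge in the appropriate polyhomogeneous class and the leading set at $\ff$ is preserved in all compositions. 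Once one accepts this bookkeeping, the proof is a routine upgrade of the un-twisted argument.
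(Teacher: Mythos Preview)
Your proposal is correct and follows essentially the same route as the paper: the paper treats Theorems \ref{thm:global-compositions-involving-boundary}--\ref{thm:global-twisted-compositions-mixed} in a single proof block, exemplifying the argument for one case and remarking that, given the local composition theorems, the globalization is ``almost identical to the standard proof for pseudodifferential operators''. Your atlas decomposition, treatment of cross terms via mapping properties and formal adjoints, reduction of the localized pieces to Theorem \ref{thm:twisted-compositions-involving-boundary}, and extraction of the Bessel/principal symbol identities from the leading term of the asymptotic sum all match the paper's scheme.
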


\begin{thm}
\label{thm:global-twisted-compositions-mixed}Let $\mathcal{E}=\left(\mathcal{E}_{\lf},\mathcal{E}_{\rf},\mathcal{E}_{\ff_{0}}\right)$
and $\mathcal{F}=\left(\mathcal{F}_{\of},\mathcal{F}_{\ff}\right)$
be index sets. Let $\boldsymbol{A}\in\hat{\Psi}_{0\tr}^{-\infty,\left(\mathcal{F}_{\of},\left[\mathcal{F}_{\ff}\right]\right),\boldsymbol{\mathfrak{s}}}\left(X;\partial X,\boldsymbol{E}\right)$,
$\boldsymbol{B}\in\hat{\Psi}_{0\po}^{-\infty,\left(\mathcal{F}_{\of},\left[\mathcal{F}_{\ff}\right]\right),\boldsymbol{\mathfrak{s}}}\left(\partial X,\boldsymbol{E};X\right)$
and $P\in\hat{\Psi}_{0}^{-\infty,\mathcal{E}}\left(X\right)$.
\begin{enumerate}
\item If \textup{\emph{$\Re\left(\mathcal{E}_{\rf}+\mathcal{F}_{\of}\right)>-1$,
then $P\boldsymbol{B}$ is well-defined and in $\hat{\Psi}_{0\po}^{-\infty,\left(\mathcal{E}_{\lf},\left[\mathcal{F}_{\ff}+\mathcal{E}_{\ff_{0}}\right]\right),\boldsymbol{\mathfrak{s}}}\left(\partial X,\boldsymbol{E};X\right)$;}}
\item \textup{\emph{If $\Re\left(\mathcal{F}_{\of}+\mathcal{E}_{\lf}\right)>-1$,
then $\boldsymbol{A}P$ is well-defined and in $\hat{\Psi}_{0\tr}^{-\infty,\left(\mathcal{E}_{\rf},\left[\mathcal{F}_{\ff}+\mathcal{E}_{\ff_{0}}\right]\right),\boldsymbol{\mathfrak{s}}}\left(X;\partial X,\boldsymbol{E}\right)$.}}
\end{enumerate}
Moreover, if $\left[\mathcal{E}_{\ff_{0}}\right]=m_{1}$ and $\left[\mathcal{F}_{\ff}\right]=m_{2}$
for $m_{1},m_{2}\in\mathbb{C}$, then for every $\eta\in T^{*}\partial X\backslash0$,
we have
\begin{align*}
\hat{N}_{\eta}\left(P\boldsymbol{B}\right) & =\hat{N}_{\eta}\left(P\right)\hat{N}_{\eta}\left(\boldsymbol{B}\right)\\
\hat{N}_{\eta}\left(\boldsymbol{A}P\right) & =\hat{N}_{\eta}\left(\boldsymbol{A}\right)\hat{N}_{\eta}\left(P\right).
\end{align*}

\end{thm}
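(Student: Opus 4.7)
The plan is to reduce this to the local twisted composition results already established and then transfer them back to the manifold. First, I would cover $\partial X$ by coordinate charts $(U_i,\varphi_i)$ chosen so that overlapping charts lie in a common larger chart, and trivialize the bundle $\boldsymbol{E}$ over each $U_i$. Using a partition of unity argument adapted to the collar neighborhood $[0,\varepsilon)\times\partial X$ induced by $V$, decompose $P=\sum_i P_i+R$ and $\boldsymbol{B}=\sum_i \boldsymbol{B}_i+\boldsymbol{R}_B$, where each $P_i,\boldsymbol{B}_i$ is compactly supported on a chart neighborhood of $\partial\Delta$, while $R\in\Psi^{-\infty,(\mathcal{E}_{\lf},\mathcal{E}_{\rf})}(X)$ and $\boldsymbol{R}_B\in\Psi_{\po}^{-\infty,\mathcal{F}_{\of}}(\partial X,\boldsymbol{E};X)$ vanish near $\partial\Delta$. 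The cross terms $P_iR_B$, $R\boldsymbol{B}_j$, and $R\boldsymbol{R}_B$ are handled by the mapping property results of \S\ref{subsec:Basic-mapping-properties} (in particular Corollary \ref{cor:mapping-properties-phg-twisted}) combined with standard pull-back/push-forward arguments for residual operators, exactly as in Theorem \ref{thm:global-compositions-mixed}.

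Next, for the diagonal pieces $P_i\boldsymbol{B}_i$, I would use the generalized eigenbundle decomposition $\boldsymbol{E}=\bigoplus_k E_k$, $\boldsymbol{\mathfrak{s}}=\bigoplus_k \mathfrak{s}_k$ and work one summand at a time; after absorbing the unique eigenvalue of $\mathfrak{s}_k$ into the index set $\mathcal{F}_{\ff}$, we may assume $\mathfrak{s}_k$ is a smooth family of nilpotent matrices. In local coordinates $(x,y)$ compatible with $V$ and in the chosen frame, the problem reduces to showing that if $P_0=\Op_L^{\inte}(p)$ with $p\in S_{0,\mathcal{S}}^{-\infty,\mathcal{E}}(\mathbb{R}^n;\mathbb{R}_2^{n+2})$ and $\boldsymbol{B}_0=\Op_L^{\po}(b\langle\eta\rangle^{\mathfrak{s}(y)})$ with $b\in S_{0\po,\mathcal{S}}^{-\infty,(\mathcal{F}_{\of},\tilde{\mathcal{F}}_{\ff})}$ (and $[\tilde{\mathcal{F}}_{\ff}]=[\mathcal{F}_{\ff}]$), then the composition $P_0\boldsymbol{B}_0$ admits a left-reduced symbol of the twisted form $b'\langle\eta\rangle^{\mathfrak{s}(y)}$. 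Writing $\boldsymbol{B}_0=\Op_R^{\po}(\tilde b\langle\eta\rangle^{\mathfrak{s}})$ with a right-reduced $\tilde b$, the full symbol of $P_0\boldsymbol{B}_0$ is the contraction of $p$ with $\tilde b$ in the $\tilde x$ variable times $\langle\eta\rangle^{\mathfrak{s}(\tilde y)}$; the left reduction is then an asymptotic sum whose $\alpha$-th term is $\frac{1}{\alpha!}(\partial_\eta^\alpha p)*(D_{\tilde y}^\alpha(\tilde b\langle\eta\rangle^{\mathfrak{s}}))_{|\tilde y=y}$. By Lemma \ref{lem:differentiation-of-twisted-symbols} each $D_{\tilde y}^\alpha(\tilde b\langle\eta\rangle^{\mathfrak{s}})$ factors again as (an untwisted 0-Poisson symbol, possibly with inflated log orders at $\ff$)$\cdot\langle\eta\rangle^{\mathfrak{s}(y)}$; applying the untwisted local composition result Theorem \ref{thm:compositions-mixed}(1) with the integrability hypothesis $\Re(\mathcal{E}_{\rf}+\mathcal{F}_{\of})>-1$ produces a sequence of 0-Poisson symbols whose asymptotic sum lies in $S_{0\po,\mathcal{S}}^{-\infty,(\mathcal{E}_{\lf},\mathcal{H})}$ with $[\mathcal{H}]=[\mathcal{F}_{\ff}+\mathcal{E}_{\ff_0}]$, concluding part 1. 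Part 2 then follows by formal adjoint via Proposition \ref{prop:twisted-adjoint}, since the adjoint of a 0-interior is 0-interior and the adjoint of a twisted 0-trace is a twisted 0-Poisson with negated twisting.

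Finally, the Bessel family formulas $\hat N_\eta(P\boldsymbol{B})=\hat N_\eta(P)\hat N_\eta(\boldsymbol{B})$ and $\hat N_\eta(\boldsymbol{A}P)=\hat N_\eta(\boldsymbol{A})\hat N_\eta(P)$ I would verify by reading off the leading behavior of the composed symbol in the local model. Concretely, in the left-reduced symbol $p'\cdot\langle\eta\rangle^{\mathfrak{s}(y)}$ obtained above, only the $\alpha=0$ term contributes to the leading term at the front face (all higher $\alpha$ lose at least one order in $\left\langle\eta\right\rangle$), and the leading coefficient of that term is exactly the integral in $\tilde x$ of the leading coefficients of $p$ and $\tilde b$; after applying the twisting factor and passing to the limit defining the Bessel family as in \S\ref{subsubsec:The-Bessel-family-of-twisted-poisson} and \S\ref{subsubsec:The-Bessel-family-of-interior}, this is precisely the composition $\hat N_\eta(P)\hat N_\eta(\boldsymbol{B})$ of the two Bessel operators on the model fiber. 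The main obstacle I anticipate is a purely bookkeeping one: tracking the twisting factor $\langle\eta\rangle^{\mathfrak{s}(y)}$ through the left/right reduction machinery and ensuring that the logarithmic inflation caused by $D_{\tilde y}^\alpha$ acting on $\langle\eta\rangle^{\mathfrak{s}}$ still produces polyhomogeneous (not merely conormal) symbols with the correct leading set, which is why the statement only records $[\mathcal{F}_{\ff}+\mathcal{E}_{\ff_0}]$ rather than a full index set at the front face.
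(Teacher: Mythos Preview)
Your proposal is correct and follows essentially the same approach as the paper: decompose into local pieces plus residual terms via an atlas and collar, handle cross terms by mapping properties and adjoints, apply the local twisted composition result (Theorem~\ref{thm:twisted-compositions-mixed}) on the diagonal pieces, and read off the Bessel family formula from the $\alpha=0$ term of the asymptotic expansion. The only minor difference is that you re-derive the local twisted step (the asymptotic sum with Lemma~\ref{lem:differentiation-of-twisted-symbols}) inline, whereas the paper treats that as already established in Theorem~\ref{thm:twisted-compositions-mixed} and simply cites it.
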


\begin{proof}
Given the local composition Theorems \ref{thm:compositions-involving-boundary},
\ref{thm:compositions-mixed}, \ref{thm:compositions-involving-interior},
\ref{thm:twisted-compositions-involving-boundary}, and \ref{thm:twisted-compositions-mixed},
the proof is almost identical to the standard proof for pseudodifferential
operators. We exemplify the argument for the case $0$-trace $\circ$
$0$-interior.

Let $A\in\hat{\Psi}_{0\tr}^{-\infty,\mathcal{F}}\left(X,\partial X\right)$
and $P\in\hat{\Psi}_{0}^{-\infty,\mathcal{E}}\left(X\right)$, with
$\mathcal{E}=\left(\mathcal{E}_{\lf},\mathcal{E}_{\rf},\mathcal{E}_{\ff_{0}}\right)$
and $\mathcal{F}=\left(\mathcal{F}_{\of},\mathcal{F}_{\ff}\right)$.
Assume that $\Re\left(\mathcal{F}_{\of}+\mathcal{E}_{\lf}\right)>-1$.
We want to show that $AP\in\hat{\Psi}_{0\tr}^{-\infty,\left(\mathcal{E}_{\rf},\mathcal{F}_{\ff}+\mathcal{E}_{\ff_{0}}\right)}\left(X,\partial X\right)$.
First of all, we check that the composition $AP$ is well-defined.
By Points 1 and 3 of Theorem \ref{thm:mapping-properties-on-phg-untwisted},
$P$ induces a continuous linear map
\[
P:\dot{C}^{\infty}\left(X\right)\to\mathcal{A}_{\phg}^{\mathcal{E}_{\lf}}\left(X\right),
\]
and since $\Re\left(\mathcal{F}_{\of}+\mathcal{E}_{\lf}\right)>-1$,
$A$ induces a continuous linear map
\[
A:\mathcal{A}_{\phg}^{\mathcal{E}_{\lf}}\left(X\right)\to C^{\infty}\left(\partial X\right).
\]
This ensures that $AP$ is well-defined. Now, choose an atlas $\left\{ \left(U_{i},\varphi_{i}\right)\right\} $
of $\partial X$ such that, for every pair $i,j$ for which $U_{i}\cap U_{j}\not=\emptyset$,
the union $U_{i}\cup U_{j}$ is contained in the domain $\tilde{U}_{ij}$
of a chart $\tilde{\varphi}_{ij}$. Choose also a boundary defining
function $x$ for $X$, compatible with the auxiliary vector field
$V$ chosen to define the classes $\hat{\Psi}_{0\tr}^{-\infty,\mathcal{F}}\left(X,\partial X\right)$
and $\hat{\Psi}_{0}^{-\infty,\mathcal{E}}\left(X\right)$. Using the
vector field $V$ to define a collar $[0,\varepsilon)\times\partial X\to X$,
call $U_{i}'=[0,\varepsilon/2)\times U_{i}$ interpreted as an open
subset of $X$. Note that the family $\left\{ U_{i}'\times U_{i}'\right\} $
covers the locus $\partial\Delta$ in $X^{2}$, and the family $\left\{ U_{i}\times U_{i}'\right\} $
covers the locus $\partial\Delta$ in $\partial X\times X$. Now,
decompose
\begin{align*}
A & =\sum_{i}A_{i}+R_{A}\\
P & =\sum_{i}P_{i}+R_{P},
\end{align*}
where $A_{i}$ is compactly supported in $U_{i}\times U_{i}'$, $P_{i}$
is compactly supported in $U_{i}'\times U_{i}'$, $R_{A}$ is residual
trace, and $R_{P}$ is very residual. More precisely,
\begin{align*}
R_{A} & \in\Psi_{\tr}^{-\infty,\mathcal{F}_{\of}}\left(X,\partial X\right)\\
R_{P} & \in\Psi^{-\infty,\left(\mathcal{E}_{\lf},\mathcal{E}_{\rf}\right)}\left(X\right).
\end{align*}
We can write the composition as

\[
AP=\sum_{ij}A_{i}P_{j}+\sum_{i}\left(A_{i}R_{P}+R_{A}P_{i}\right)+R_{A}R_{P}.
\]

We now want to show that the term
\[
\sum_{i}\left(A_{i}R_{P}+R_{A}P_{i}\right)+R_{A}R_{P}
\]
is a residual trace operator in $\Psi_{\tr}^{-\infty,\mathcal{E}_{\rf}}\left(X,\partial X\right)$.
To see this, decompose
\begin{align*}
\Psi^{-\infty,\left(\mathcal{E}_{\lf},\mathcal{E}_{\rf}\right)}\left(X\right) & =\mathcal{A}_{\phg}^{\mathcal{E}_{\lf}}\left(X\right)\hat{\otimes}\mathcal{A}_{\phg}^{\mathcal{E}_{\rf}}\left(X;\mathcal{D}_{X}^{1}\right).
\end{align*}
By Point 1 of Theorem \ref{thm:mapping-properties-on-phg-untwisted},
the condition $\Re\left(\mathcal{F}_{\of}+\mathcal{E}_{\lf}\right)>-1$
guarantees that $A_{i}$ and $R_{A}$ induce continuous linear maps
$\mathcal{A}_{\phg}^{\mathcal{E}_{\lf}}\left(X\right)\to C^{\infty}\left(\partial X\right)$.
Therefore, the actions of $A_{i}$ and $R_{A}$ on the left determine
continuous linear maps $\Psi^{-\infty,\left(\mathcal{E}_{\lf},\mathcal{E}_{\rf}\right)}\left(X\right)\to C^{\infty}\left(\partial X\right)\hat{\otimes}\mathcal{A}_{\phg}^{\mathcal{E}_{\rf}}\left(X;\mathcal{D}_{X}^{1}\right)$.
The latter space is precisely $\Psi_{\tr}^{-\infty,\mathcal{E}_{\rf}}\left(X,\partial X\right)$.
Similarly, to prove the claim for $R_{A}P_{i}$, taking adjoints with
respect to a smooth positive density $\omega$ on $X$ and a positive
density $\nu$ on $\partial X$, by Proposition \ref{prop:formal-adjoints}
we have
\begin{align*}
P_{i}^{*} & \in\hat{\Psi}_{0}^{-\infty,\left(\overline{\mathcal{E}}_{\rf},\overline{\mathcal{E}}_{\lf},\overline{\mathcal{E}}_{\ff_{0}}\right)}\left(X\right)\\
R_{A}^{*} & \in\Psi_{\po}^{-\infty,\overline{\mathcal{F}}_{\of}}\left(\partial X,X\right).
\end{align*}
The space $\Psi_{\po}^{-\infty,\overline{\mathcal{F}}_{\of}}\left(\partial X,X\right)$
decomposes as $\mathcal{A}_{\phg}^{\overline{\mathcal{F}}_{\of}}\left(X\right)\hat{\otimes}C^{\infty}\left(\partial X;\mathcal{D}_{\partial X}^{1}\right)$.
Now, by Point 3 of Theorem \ref{thm:mapping-properties-on-phg-untwisted},
the condition $\Re\left(\mathcal{F}_{\of}+\mathcal{E}_{\lf}\right)>-1$
implies that $P_{i}^{*}$ induces a continuous linear map $\mathcal{A}_{\phg}^{\overline{\mathcal{F}}_{\of}}\left(X\right)\to\mathcal{A}_{\phg}^{\overline{\mathcal{E}}_{\rf}}\left(X\right)$.
Therefore, the action of $P_{i}^{*}$ on the left determines a continuous
linear map $\mathcal{A}_{\phg}^{\overline{\mathcal{F}}_{\of}}\left(X\right)\hat{\otimes}C^{\infty}\left(\partial X;\mathcal{D}_{\partial X}^{1}\right)\to\mathcal{A}_{\phg}^{\overline{\mathcal{E}}_{\of}}\left(X\right)\hat{\otimes}C^{\infty}\left(\partial X;\mathcal{D}_{\partial X}^{1}\right)$.
This shows that $P_{i}^{*}R_{A}^{*}\in\Psi_{\po}^{-\infty,\overline{\mathcal{E}}_{\rf}}\left(\partial X,X\right)$,
so that taking adjoints again and applying Proposition \ref{prop:formal-adjoints},
we have $R_{A}P_{i}\in\Psi_{\tr}^{-\infty,\mathcal{E}_{\rf}}\left(X,\partial X\right)$.

It remains to prove that $A_{i}P_{j}\in\hat{\Psi}_{0\tr}^{-\infty,\left(\mathcal{E}_{\rf},\mathcal{F}_{\ff}+\mathcal{E}_{\ff_{0}}\right)}\left(X,\partial X\right)$.
If $U_{i}\cap U_{j}=\emptyset$, then clearly $U_{i}'\cap U_{j}'=\emptyset$
and therefore $A_{i}P_{j}=0$. If instead $U_{i}\cap U_{j}\not=\emptyset$,
we can think of $A_{i},P_{j}$ in terms of the coordinate chart $\left(\tilde{U}_{ij},\tilde{\varphi}_{ij}\right)$
and a boundary defining function $x$ for $X$ compatible with the
chosen auxiliary vector field $V$ on the collar, as operators
\begin{align*}
A_{i} & \in\hat{\Psi}_{0\tr,\mathcal{S}}^{-\infty,\mathcal{F}}\left(\mathbb{R}_{1}^{n+1},\mathbb{R}^{n}\right)\\
P_{j} & \in\hat{\Psi}_{0,\mathcal{S}}^{-\infty,\mathcal{E}}\left(\mathbb{R}_{1}^{n+1}\right).
\end{align*}
By Point 3 of Theorem \ref{thm:compositions-mixed}, we have $A_{i}P_{j}\in\hat{\Psi}_{0\tr,\mathcal{S}}^{-\infty,\left(\mathcal{E}_{\rf},\mathcal{F}_{\ff}+\mathcal{E}_{\ff_{0}}\right)}\left(\mathbb{R}_{1}^{n+1},\mathbb{R}^{n}\right)$.
Moreover, $A_{i}P_{j}$ is compactly supported in $\tilde{U}_{ij}\times\tilde{U}_{ij}\times[0,\varepsilon)$,
so it extends off to an operator in $\hat{\Psi}_{0\tr}^{-\infty,\left(\mathcal{E}_{\rf},\mathcal{F}_{\ff}+\mathcal{E}_{\ff_{0}}\right)}\left(X,\partial X\right)$.

Finally, assume that $\left[\mathcal{F}_{\ff}\right]=m_{1}$ and $\left[\mathcal{E}_{\ff_{0}}\right]=m_{2}$.
Choose coordinates near a point $p\in\partial X$, and write in these
coordinates
\begin{align*}
A & \equiv\Op_{L}^{\tr}\left(a\right)\\
P & \equiv\Op_{L}^{\inte}\left(p\right)
\end{align*}
in a neighborhood of $\left(p,p\right)\in\partial\Delta$, where $a\left(y;\tilde{x},\eta\right)\in S_{0\tr,\mathcal{S}}^{-\infty,\mathcal{F}}\left(\mathbb{R}^{n};\mathbb{R}_{1}^{n+1}\right)$
and $p\left(y;x,\tilde{x},\eta\right)\in S_{0,\mathcal{S}}^{-\infty,\mathcal{E}}\left(\mathbb{R}^{n};\mathbb{R}_{2}^{n+2}\right)$.
As explained in §\ref{subsubsec:The-Bessel-family-of-symbolic-0-trace}
and §\ref{subsubsec:The-Bessel-family-of-interior}, in these coordinates
the Bessel families of $A$ and $P$ at the point $p$ are defined
as
\begin{align*}
\hat{N}_{\eta}\left(A\right) & =\lim_{t\to0^{+}}t^{-m_{1}}a\left(0;t\tilde{x},t^{-1}\eta\right)td\tilde{x}\\
\hat{N}_{\eta}\left(P\right) & =\lim_{t\to0^{+}}t^{-m_{2}}p\left(0;tx,t\tilde{x},t^{-1}\eta\right)td\tilde{x}.
\end{align*}
On the other hand since $AP\in\hat{\Psi}_{0\tr}^{-\infty,\left(\mathcal{E}_{\rf},\mathcal{F}_{\ff}+\mathcal{E}_{\ff_{0}}\right)}\left(X,\partial X\right)$
and $\left[\mathcal{F}_{\ff}+\mathcal{E}_{\ff_{0}}\right]=m_{1}+m_{2}$,
we have
\[
\hat{N}_{\eta}\left(AP\right)=\lim_{t\to0^{+}}t^{-\left(m_{1}+m_{2}\right)}q\left(0;t\tilde{x},t^{-1}\eta\right)td\tilde{x}
\]
where $q\left(y;\tilde{x},\eta\right)\in S_{0\tr,\mathcal{S}}^{-\infty,\left(\mathcal{E}_{\rf},\mathcal{F}_{\ff}+\mathcal{E}_{\ff_{0}}\right)}\left(\mathbb{R}^{n};\mathbb{R}_{1}^{n+1}\right)$
is a left-reduced $0$-trace symbol such that $AP\equiv\Op_{L}^{\tr}\left(q\right)$
in the chosen coordinates near the point $\left(p,p\right)$. Now,
the symbol $q$ is determined, modulo residual trace symbols, as an
asymptotic sum
\[
q\left(y;\tilde{x},\eta\right)\sim\sum_{\alpha}\frac{1}{\alpha!}\int\left(\partial_{\eta}^{\alpha}a\right)\left(y;x',\eta\right)\left(D_{\tilde{y}}^{\alpha}p\right)\left(y;x',\tilde{x},\eta\right)dx'.
\]
Taking the limit of $t^{-\left(m_{1}+m_{2}\right)}q\left(0;t\tilde{x},t^{-1}\eta\right)t$
as $t\to0^{+}$, all the terms in the summation above go to zero except
for the first one: therefore,
\begin{align*}
\hat{N}_{\eta}\left(AP\right) & =\left[\lim_{t\to0^{+}}t^{-\left(m_{1}+m_{2}\right)}q\left(0;t\tilde{x},t^{-1}\eta\right)t\right]d\tilde{x}\\
 & =\left[\lim_{t\to0^{+}}t^{-\left(m_{1}+m_{2}\right)}\int a\left(0;x',t^{-1}\eta\right)p\left(0;x',t\tilde{x},t^{-1}\eta\right)tdx'\right]d\tilde{x}\\
 & =\left[\lim_{t\to0^{+}}t^{-\left(m_{1}+m_{2}\right)}\int a\left(0;tx',t^{-1}\eta\right)p\left(0;tx',t\tilde{x},t^{-1}\eta\right)t^{2}dx'\right]d\tilde{x}\\
 & =\left[\int\left(\lim_{t\to0^{+}}t^{-m_{1}}a\left(0;tx',t^{-1}\eta\right)t\right)\left(\lim_{t\to0^{+}}t^{-m_{2}}p\left(0;tx',t\tilde{x},t^{-1}\eta\right)t\right)dx'\right]d\tilde{x}\\
 & =\hat{N}_{\eta}\left(A\right)\hat{N}_{\eta}\left(P\right).
\end{align*}
\end{proof}
Unfortunately, the strategy used in the previous proof does not work
immediately if operators in the class $\hat{\Psi}_{0b}^{-\infty,\bullet}\left(X\right)$
are involved. Consider for example the case $0b$-interior $\circ$
$0b$-interior. If $P\in\hat{\Psi}_{0b}^{-\infty,\mathcal{E}}\left(X\right)$
and $Q\in\hat{\Psi}_{0b}^{-\infty,\mathcal{F}}\left(X\right)$ with
$\mathcal{E}=\left(\mathcal{E}_{\lf},\mathcal{E}_{\rf},\mathcal{E}_{\ff_{b}},\mathcal{E}_{\ff_{0}}\right)$
and $\mathcal{F}=\left(\mathcal{F}_{\lf},\mathcal{F}_{\rf},\mathcal{F}_{\ff_{b}},\mathcal{F}_{\ff_{0}}\right)$,
and $\Re\left(\mathcal{E}_{\rf}+\mathcal{F}_{\lf}\right)>-1$, we
would like to prove that $PQ\in\hat{\Psi}_{0b}^{-\infty,\mathcal{G}}\left(X\right)$,
where $\mathcal{G}=\left(\mathcal{G}_{\lf},\mathcal{G}_{\rf},\mathcal{G}_{\ff_{b}},\mathcal{G}_{\ff_{0}}\right)$
is defined as\emph{
\begin{align*}
\mathcal{G}_{\lf} & =\mathcal{E}_{\lf}\overline{\cup}\left(\mathcal{F}_{\lf}+\mathcal{E}_{\ff_{b}}\right)\\
\mathcal{G}_{\rf} & =\mathcal{F}_{\rf}\overline{\cup}\left(\mathcal{E}_{\rf}+\mathcal{F}_{\ff_{b}}\right)\\
\mathcal{G}_{\ff_{0}} & =\mathcal{E}_{\ff_{0}}+\mathcal{F}_{\ff_{0}}\\
\mathcal{G}_{\ff_{b}} & =\left(\mathcal{E}_{\ff_{b}}+\mathcal{F}_{\ff_{b}}\right)\overline{\cup}\left(\mathcal{E}_{\lf}+\mathcal{F}_{\rf}+1\right)
\end{align*}
}as one would expect from Point 1 of Theorem \ref{thm:compositions-involving-interior}.
Decompose $P=\sum_{i}P_{i}+R_{P}$ and $Q=\sum_{i}Q_{i}+R_{Q}$ as
in the previous proof. Now the residual terms $R_{P},R_{Q}$ are in
the large residual $b$-calculus: more precisely,
\begin{align*}
R_{P} & \in\Psi_{b}^{-\infty,\left(\mathcal{E}_{\lf},\mathcal{E}_{\rf},\mathcal{E}_{\ff_{b}}\right)}\left(X\right)\\
R_{Q} & \in\Psi_{b}^{-\infty,\left(\mathcal{F}_{\lf},\mathcal{F}_{\rf},\mathcal{F}_{\ff_{b}}\right)}\left(X\right).
\end{align*}
Writing $PQ=\sum_{ij}P_{i}Q_{j}+\sum_{i}\left(P_{i}R_{Q}+R_{P}Q_{i}\right)+R_{P}R_{Q}$
analogously to the previous proof, Point 1 of Theorem \ref{thm:compositions-involving-interior}
ensures that the terms $P_{i}Q_{j}$ are in $\hat{\Psi}_{0b}^{-\infty,\mathcal{G}}\left(X\right)$.
Moreover, by the composition theorem for the large $b$-calculus proved
in \cite{MazzeoEdgeI}, the term $R_{P}R_{Q}$ is in $\Psi_{b}^{-\infty,\left(\mathcal{G}_{\lf},\mathcal{G}_{\rf},\mathcal{G}_{\ff_{b}}\right)}\left(X\right)$
which is the residual part of $\hat{\Psi}_{0b}^{-\infty,\mathcal{G}}\left(X\right)$.
Now, again by Point 1 of Theorem \ref{thm:compositions-involving-interior},
we would expect that the terms $P_{i}R_{Q}$ and $R_{P}Q_{i}$ are
in $\Psi_{b}^{-\infty,\left(\mathcal{G}_{\lf},\mathcal{G}_{\rf},\mathcal{G}_{\ff_{b}}\right)}\left(X\right)$.
However, we cannot use the strategy used in the previous proof, because
the space $\Psi_{b}^{-\infty,\left(\mathcal{G}_{\lf},\mathcal{G}_{\rf},\mathcal{G}_{\ff_{b}}\right)}\left(X\right)$
does not decompose as a completed projective tensor product of spaces
of polyhomogeneous functions on the product $X^{2}$. A similar problem
arises when trying to compose symbolic $0$-trace and $0$-Poisson
operators with operators in $\hat{\Psi}_{0b}^{-\infty,\bullet}\left(X\right)$.

However, we can still prove a slighly weaker version of Point 1 of
Theorem \ref{thm:compositions-involving-interior} using a different
approach.
\begin{thm}
\label{thm:global-composition-0b-0b}Let $\mathcal{E}=\left(\mathcal{E}_{\lf},\mathcal{E}_{\rf},\mathcal{E}_{\ff_{b}},\mathcal{E}_{\ff_{0}}\right)$\textup{\emph{
and}} $\mathcal{F}=\left(\mathcal{F}_{\lf},\mathcal{F}_{\rf},\mathcal{F}_{\ff_{b}},\mathcal{F}_{\ff_{0}}\right)$\textup{\emph{
be index sets. Let }}$P\in\hat{\Psi}_{0b}^{-\infty,\mathcal{E}}\left(X\right)$
and $Q\in\hat{\Psi}_{0b}^{-\infty,\mathcal{F}}\left(X\right)$. If
$\Re\left(\mathcal{E}_{\rf}+\mathcal{F}_{\lf}\right)>-1$, then $PQ$
is well-defined and in $\hat{\Psi}_{0b}^{-\infty,\left(\tilde{\mathcal{G}}_{\lf},\tilde{\mathcal{G}}_{\rf},\mathcal{G}_{\ff_{b}},\mathcal{G}_{\ff_{0}}\right)}\left(X\right)$\textup{\emph{
, where
\begin{align*}
\tilde{\mathcal{G}}_{\lf} & =\mathcal{E}_{\lf}\overline{\cup}\left(\mathcal{F}_{\lf}+\mathcal{E}_{\ff_{b}}\right)\overline{\cup}\left(\mathcal{F}_{\lf}+\tilde{\mathcal{E}}_{\ff_{0}}\right)\\
\tilde{\mathcal{G}}_{\rf} & =\mathcal{F}_{\rf}\overline{\cup}\left(\mathcal{E}_{\rf}+\mathcal{F}_{\ff_{b}}\right)\overline{\cup}\left(\mathcal{E}_{\rf}+\tilde{\mathcal{F}}_{\ff_{0}}\right)\\
\mathcal{G}_{\ff_{b}} & =\left(\mathcal{E}_{\ff_{b}}+\mathcal{F}_{\ff_{b}}\right)\overline{\cup}\left(\mathcal{E}_{\lf}+\mathcal{F}_{\rf}+1\right)\\
\mathcal{G}_{\ff_{0}} & =\mathcal{E}_{\ff_{0}}+\mathcal{F}_{\ff_{0}}
\end{align*}
and $\tilde{\mathcal{E}}_{\ff_{0}}=\mathcal{E}_{\ff_{0}}\overline{\cup}\left(\mathcal{E}_{\ff_{b}}+n\right)$,
$\tilde{\mathcal{F}}_{\ff_{0}}=\mathcal{F}_{\ff_{0}}\overline{\cup}\left(\mathcal{F}_{\ff_{b}}+n\right)$.}}
\end{thm}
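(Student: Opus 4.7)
The plan adapts the localization scheme from the proof of Theorem \ref{thm:global-compositions-mixed}, but must work around the obstruction identified above: the residual parts live in $\Psi_b^{-\infty,\bullet}(X)$, whose global structure does not split as a projective tensor product. Fix an atlas $\{(U_i,\varphi_i)\}$ of $\partial X$ with overlapping charts admitting common refinements $(\tilde U_{ij},\tilde\varphi_{ij})$, and decompose
\[
P = \sum_i P_i + R_P, \qquad Q = \sum_i Q_i + R_Q,
\]
where the $P_i \in \hat\Psi_{0b}^{-\infty,\mathcal{E}}(X)$ and $Q_i \in \hat\Psi_{0b}^{-\infty,\mathcal{F}}(X)$ are compactly supported in $(U_i')^2$, and $R_P \in \Psi_b^{-\infty,(\mathcal{E}_\lf,\mathcal{E}_\rf,\mathcal{E}_{\ff_b})}(X)$, $R_Q \in \Psi_b^{-\infty,(\mathcal{F}_\lf,\mathcal{F}_\rf,\mathcal{F}_{\ff_b})}(X)$ vanish in a neighborhood of the lift of $\partial\Delta$ to $X_b^2$. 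Well-definedness of $PQ$ follows from iterated application of Point 4 of Theorem \ref{thm:mapping-properties-on-phg-untwisted} using the integrability hypothesis $\Re(\mathcal{E}_\rf + \mathcal{F}_\lf) > -1$. Expanding
\[
PQ = \sum_{ij} P_iQ_j + \sum_i(P_iR_Q + R_PQ_i) + R_PR_Q
\]
gives four types of summands.

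The diagonal pieces $P_iQ_j$ vanish when $U_i \cap U_j = \emptyset$ (since $U_i' \cap U_j' = \emptyset$), and otherwise are read in the common chart $(\tilde U_{ij},\tilde\varphi_{ij})$ as compositions in the local class $\hat\Psi_{0b,\mathcal{S}}^{-\infty,\bullet}(\mathbb{R}_1^{n+1})$; by Point 1 of Theorem \ref{thm:compositions-involving-interior}, they lie in $\hat\Psi_{0b,\mathcal{S}}^{-\infty,\mathcal{G}}(\mathbb{R}_1^{n+1})$ with $\mathcal{G}_\lf \subseteq \tilde{\mathcal{G}}_\lf$ and $\mathcal{G}_\rf \subseteq \tilde{\mathcal{G}}_\rf$, and extend off to an element of the target class. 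The very residual term $R_PR_Q$ belongs to $\Psi_b^{-\infty,(\mathcal{G}_\lf,\mathcal{G}_\rf,\mathcal{G}_{\ff_b})}(X)$ by the composition theorem for the large $b$-calculus from \cite{MazzeoEdgeI}, which coincides with $\hat\Psi_{0b}^{-\infty,(\mathcal{G}_\lf,\mathcal{G}_\rf,\mathcal{G}_{\ff_b},\infty)}(X)$ by Point 2 of Corollary \ref{cor:global-relation-symbolic0b-physical0b} and so sits inside the target class.

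The mixed terms $P_iR_Q$ and $R_PQ_i$ are where the ``tilde'' enlargement of the $\lf$ and $\rf$ indices arises. For $P_iR_Q$, promote $P_i$ via Point 4 of Corollary \ref{cor:global-relation-symbolic0b-physical0b} to an element of the classical large residual $0b$-calculus $\Psi_{0b}^{-\infty,(\mathcal{E}_\lf,\mathcal{E}_\rf,\mathcal{E}_{\ff_b},\tilde{\mathcal{E}}_{\ff_0})}(X)$, and promote $R_Q$ via the Lemma at the end of Section \ref{subsec:-calculus,--calculus,-extended} to an element of $\Psi_{0b}^{-\infty,(\mathcal{F}_\lf,\mathcal{F}_\rf,\mathcal{F}_{\ff_b},\mathcal{F}_{\ff_b}+n)}(X)$. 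The classical composition theorem for the large $0b$-calculus (from \cite{MazzeoEdgeI, Lauter, Hintz0calculus}, proved by the pull-back / push-forward technique on the triple space) then places $P_iR_Q$ in $\Psi_{0b}^{-\infty,(\tilde{\mathcal{G}}_\lf,\mathcal{F}_\rf,\mathcal{G}_{\ff_b},\mathcal{H}_{\ff_0})}(X)$ for some $\mathcal{H}_{\ff_0}$; the extra summand $\mathcal{F}_\lf + \tilde{\mathcal{E}}_{\ff_0}$ appearing in $\tilde{\mathcal{G}}_\lf$ is precisely the byproduct of the index-set inflation $\mathcal{E}_{\ff_0} \mapsto \tilde{\mathcal{E}}_{\ff_0}$ forced by the embedding step. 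A symmetric treatment of $R_PQ_i$ produces the excess term $\mathcal{E}_\rf + \tilde{\mathcal{F}}_{\ff_0}$ in $\tilde{\mathcal{G}}_\rf$.

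The main obstacle is to upgrade these $\Psi_{0b}$-statements to $\hat\Psi_{0b}$-statements, i.e., to verify the symbolic structure of the mixed terms near $\partial\Delta$. The key observation is that because $R_Q$ vanishes in a neighborhood of $\beta_b^{-1}(\partial\Delta)$ in $X_b^2$, the middle-variable integration in $(P_iR_Q)(z_1,z_2) = \int P_i(z_1,z')R_Q(z',z_2)\,dz'$ smooths out the pseudodifferential singularity of $P_i$ at $\partial\Delta$: arguing on the $0b$-triple space and tracking supports, one shows that the lifted kernel of $P_iR_Q$ actually vanishes in a neighborhood of the $\ff_0$ face of $X_{0b}^2$. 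Hence $P_iR_Q$ lies in $\Psi_b^{-\infty,(\tilde{\mathcal{G}}_\lf,\mathcal{F}_\rf,\mathcal{G}_{\ff_b})}(X) = \hat\Psi_{0b}^{-\infty,(\tilde{\mathcal{G}}_\lf,\mathcal{F}_\rf,\mathcal{G}_{\ff_b},\infty)}(X)$, and similarly for $R_PQ_i$. Summing the four contributions proves the inclusion. Finally, the Bessel identity $\hat{N}_\eta(PQ) = \hat{N}_\eta(P)\hat{N}_\eta(Q)$ follows as in Theorem \ref{thm:global-compositions-mixed}: only the diagonal summands $\sum_{ij}P_iQ_j$ contribute to the leading $\ff_0$-asymptotics, since the mixed and very residual pieces have infinite order of vanishing there, and the local identity for $P_iQ_j$ is the one already established in Theorem \ref{thm:compositions-involving-interior}.
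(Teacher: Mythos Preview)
Your decomposition into $\sum P_iQ_j + \sum(P_iR_Q + R_PQ_i) + R_PR_Q$ and your treatment of the diagonal and fully residual pieces are correct and match the paper. The gap is in the mixed terms.

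Your claim that ``the lifted kernel of $P_iR_Q$ actually vanishes in a neighborhood of the $\ff_0$ face of $X_{0b}^2$'' is false. The vanishing of $R_Q$ near $\beta_b^{-1}(\partial\Delta)$ constrains only the \emph{middle--right} variables $(z',z_2)$, not the \emph{left--right} variables $(z_1,z_2)$. Concretely, take $z_1=(t,0)$, $z_2=(t,0)$ with $t\to 0$ (so we approach $\ff_0$); the integrand $P_i(z_1,z')R_Q(z',z_2)$ is nonzero on the region where $z'=(x',y')$ has $|y'|\ge\epsilon_0$ (so $(z',z_2)$ is far from $\beta_b^{-1}(\partial\Delta)$) yet $(z_1,z')\in U_i'\times U_i'$. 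On the triple space this says the same thing: the support of $\gamma_{CR}^*\kappa_{R_Q}$ avoids the faces lying over $\ff_0$ of the $CR$ double space, but that set does \emph{not} contain the faces lying over $\ff_0$ of the $LR$ double space, so the push-forward does not vanish there. The composition $P_iR_Q$ genuinely has nontrivial behaviour at $\ff_0$ of $X_{0b}^2$; what one needs is that this behaviour is \emph{inherited from $\ff_b$}, i.e.\ that $P_iR_Q$ actually lives on $X_b^2$.

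The paper establishes this not by a support argument but by a tailored pull-back/push-forward computation. One embeds $P_i\in\Psi_{0b}^{-\infty,(\mathcal E_\lf,\mathcal E_\rf,\mathcal E_{\ff_b},\tilde{\mathcal E}_{\ff_0})}(X)$ (as you do), then works on the triple space
\[
Z=[X^3:T_b:LC_0:LC_b:CR_b:LR_b],
\]
with $b$-fibrations $\gamma_{LC}:Z\to X_{0b}^2$, $\gamma_{CR}:Z\to X_b^2$, and crucially $\gamma_{LR}:Z\to X_b^2$ (not $X_{0b}^2$). One computes the boundary matrices explicitly and applies the Push-forward Theorem to $\kappa_{P_iR_Q}=(\gamma_{LR})_*(\gamma_{LC}^*\kappa_{P_i}\cdot\gamma_{CR}^*\kappa_{R_Q})$. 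The face $\ff_{LC_0}$ of $Z$ (carrying the index set $\mathcal F_\lf+\tilde{\mathcal E}_{\ff_0}$) is mapped by $\gamma_{LR}$ to $\lf$ of $X_b^2$; this is precisely where the extra summand $\mathcal F_\lf+\tilde{\mathcal E}_{\ff_0}$ in $\tilde{\mathcal G}_\lf$ originates. The result is $P_iR_Q\in\Psi_b^{-\infty,(\tilde{\mathcal G}_\lf,\mathcal G_\rf,\mathcal G_{\ff_b})}(X)=\hat\Psi_{0b}^{-\infty,(\tilde{\mathcal G}_\lf,\mathcal G_\rf,\mathcal G_{\ff_b},\infty)}(X)$, and the case $R_PQ_i$ follows by adjoints.

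Finally, the Bessel identity you append is not part of this theorem's statement and should be removed.
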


\begin{proof}
Decomposing $PQ=\sum_{ij}P_{i}Q_{j}+\sum_{i}\left(P_{i}R_{Q}+R_{P}Q_{i}\right)+R_{P}R_{Q}$
as in the discussion above, we see that it suffices to prove that
the terms $P_{i}R_{Q}$ and $R_{P}Q_{i}$ are in $\Psi_{b}^{-\infty,\left(\tilde{\mathcal{G}}_{\lf},\tilde{\mathcal{G}}_{\rf},\mathcal{G}_{\ff_{b}}\right)}\left(X\right)$.
It suffices to prove that if $P\in\hat{\Psi}_{0b}^{-\infty,\mathcal{E}}\left(X\right)$
and $Q\in\Psi_{b}^{-\infty,\mathcal{F}}\left(X\right)$ with $\Re\left(\mathcal{E}_{\rf}+\mathcal{F}_{\lf}\right)>-1$,
then $PQ\in\Psi_{b}^{-\infty,\left(\tilde{\mathcal{G}}_{\lf},\mathcal{G}_{\rf},\mathcal{G}_{\ff_{b}}\right)}\left(X\right)$
since the result for $QP$ follows by taking adjoints.

From Corollary \ref{cor:0b-interior-are-extended-0-calc-and-viceversa},
we know that $P\in\Psi_{0b}^{-\infty,\left(\mathcal{E}_{\lf},\mathcal{E}_{\rf},\mathcal{E}_{\ff_{b}},\tilde{\mathcal{E}}_{\ff_{0}}\right)}\left(X\right)$
where \emph{$\tilde{\mathcal{E}}_{\ff_{0}}=\mathcal{E}_{\ff_{0}}\overline{\cup}\left(\mathcal{E}_{\ff_{b}}+n\right)$}.
We complete the proof using the pull-back / push-forward technique.
The proof is very similar to the proof of the composition theorem
for the $b$-calculus proved in \cite{MazzeoEdgeI}, so we will be
brief. Consider the triple space $X^{3}$, and consider its p-submanifolds
\begin{align*}
LC_{0} & =\partial\Delta\times X\\
LC_{b} & =\partial X\times\partial X\times X\\
CR_{b} & =X\times\partial X\times\partial X\\
LR_{b} & =\partial X\times X\times\partial X\\
T_{b} & =LC_{b}\cap CR_{b}\cap LR_{b}.
\end{align*}
We consider the iterated blow-up
\[
Z=\left[X^{3}:T_{b}:LC_{0}:LC_{b}:CR_{b}:LR_{b}\right].
\]
We denote by $H_{100},H_{010},H_{001}$ the boundary hyperfaces of
$Z$ obtained by lifting the hyperfaces $\partial X\times X\times X$,
$X\times\partial X\times X$, $X\times X\times\partial X$ of $X^{3}$.
Moreover, we call $\ff_{T},\ff_{LC_{0}},\ff_{LC_{b}},\ff_{CR_{b}},\ff_{LR_{b}}$
the other boundary hyperfaces of $Z$ obtained from the blow-up. We
can define three $b$-fibrations
\begin{align*}
\gamma_{LC} & :Z\to X_{0b}^{2}\\
\gamma_{CR} & :Z\to X_{b}^{2}\\
\gamma_{LR} & :Z\to X_{b}^{2}
\end{align*}
as follows. $\gamma_{CR}$ is defined by first blowing down $\ff_{LC_{b}},\ff_{LC_{0}},\ff_{LR_{b}}$
so that we end up with $\left[X^{3}:T_{b}:CR_{b}\right]=\left[X^{3}:CR_{b}:T_{b}\right]$;
we now blow down $\ff_{T}$, ending up with $\left[X^{3}:CR_{b}\right]=X\times X_{b}^{2}$;
finally, we project to $X_{b}^{2}$. The map $\gamma_{LR}$ is defined
similarly. Concerning $\gamma_{LC}$, we first blow down $\ff_{CR_{b}},\ff_{LR_{b}}$,
and then we blow down $\ff_{T}$, ending up with $\left[X^{3}:LC_{0}:LC_{b}\right]=X_{0b}^{2}\times X$;
we then project to the factor $X_{0b}^{2}$.

The boundary matrix of $\gamma_{CR}$ (cf. §2 of \cite{MelroseCorners})
is
\[
e_{\gamma_{CR}}=\begin{matrix} & H_{100} & H_{010} & H_{001} & \ff_{T} & \ff_{LC_{0}} & \ff_{LC_{b}} & \ff_{CR_{b}} & \ff_{LR_{b}}\\
\lf & 0 & 1 & 0 & 0 & 1 & 1 & 0 & 0\\
\rf & 0 & 0 & 1 & 0 & 0 & 0 & 0 & 1\\
\ff_{b} & 0 & 0 & 0 & 1 & 0 & 0 & 1 & 0
\end{matrix}.
\]
This is a convenient way to record the index sets of the pull-backs
via $\gamma_{CR}$ of a triple of boundary defining functions for
$X_{b}^{2}$. More precisely, if $r_{\lf},r_{\rf},r_{\ff_{b}}$ are
boundary defining functions for $X_{b}^{2}$, we have
\begin{align*}
\gamma_{CR}^{*}r_{\lf} & =\rho_{010}\rho_{LC_{b}}\rho_{LC_{0}}\\
\gamma_{CR}^{*}r_{\rf} & =\rho_{001}\rho_{LR_{b}}\\
\gamma_{CR}^{*}r_{\ff_{b}} & =\rho_{T}\rho_{CR_{b}}
\end{align*}
where $\rho_{\bullet}$ are various boundary defining functions for
$Z$. The matrix of boundary exponents is easy to compute, as it is
a composition of matrices of boundary exponents for blow-down maps
and canonical projections. Similarly, one computes the boundary matrix
of $\gamma_{LR}$
\[
e_{\gamma_{LR}}=\begin{matrix} & H_{100} & H_{010} & H_{001} & \ff_{T} & \ff_{LC_{0}} & \ff_{LC_{b}} & \ff_{CR_{b}} & \ff_{LR_{b}}\\
\lf & 1 & 0 & 0 & 0 & 1 & 1 & 0 & 0\\
\rf & 0 & 0 & 1 & 0 & 0 & 0 & 1 & 0\\
\ff_{b} & 0 & 0 & 0 & 1 & 0 & 0 & 0 & 1
\end{matrix}
\]
and the boundary matrix of $\gamma_{LC}$
\[
e_{\gamma_{LC}}=\begin{matrix} & H_{100} & H_{010} & H_{001} & \ff_{T} & \ff_{LC_{0}} & \ff_{LC_{b}} & \ff_{CR_{b}} & \ff_{LR_{b}}\\
\lf & 1 & 0 & 0 & 0 & 0 & 0 & 0 & 1\\
\rf & 0 & 1 & 0 & 0 & 0 & 0 & 1 & 0\\
\ff_{b} & 0 & 0 & 0 & 1 & 0 & 1 & 0 & 0\\
\ff_{0} & 0 & 0 & 0 & 0 & 1 & 0 & 0 & 0
\end{matrix}.
\]
Now, the lifted Schwartz kernel $\kappa_{PQ}=\beta_{b}^{*}K_{PQ}$
is equal to
\[
\left(\gamma_{LR}\right)_{*}\left(\gamma_{LC}^{*}\kappa_{P}\cdot\gamma_{CR}^{*}\kappa_{Q}\right).
\]
From the Pull-back Theorem, to compute the index sets of $\gamma_{LC}^{*}\kappa_{P}$
we simply multiply the formal row vector $\left(\mathcal{E}_{\lf},\mathcal{E}_{\rf},\mathcal{E}_{\ff_{b}},\tilde{\mathcal{E}}_{\ff_{0}}\right)^{T}$
on the left by $e_{\gamma_{LC}}$, ending up with
\[
\left(\mathcal{E}_{\lf},\mathcal{E}_{\rf},0,\mathcal{E}_{\ff_{b}},\tilde{\mathcal{E}}_{\ff_{0}},\mathcal{E}_{\ff_{b}},\mathcal{E}_{\rf},\mathcal{E}_{\lf}\right).
\]
Similarly, the index sets of $\gamma_{CR}^{*}\kappa_{Q}$ are obtained
by multiplying $\left(\mathcal{F}_{\lf},\mathcal{F}_{\rf},\mathcal{F}_{\ff_{b}}\right)^{T}$
on the left by $e_{\gamma_{CR}}$, and we obtain
\[
\left(0,\mathcal{F}_{\lf},\mathcal{F}_{\rf},\mathcal{F}_{\ff_{b}},\mathcal{F}_{\lf},\mathcal{F}_{\lf},\mathcal{F}_{\ff_{b}},\mathcal{F}_{\rf}\right).
\]
The index sets of the product $\gamma_{LC}^{*}\kappa_{P}\cdot\gamma_{CR}^{*}\kappa_{Q}$
are obtained by adding the two index sets above. The resulting tuple
refers to the index sets of $\gamma_{LC}^{*}\kappa_{P}\cdot\gamma_{CR}^{*}\kappa_{Q}$
interpreted as a section of the bundle
\[
\gamma_{LC}^{*}\left(r_{\ff_{b}}^{-1}r_{\ff_{0}}^{-n-1}\beta_{0b,R}^{*}\mathcal{D}_{X}^{1}\right)\otimes\gamma_{CR}^{*}\left(r_{\ff_{b}}^{-1}\beta_{b,R}^{*}\mathcal{D}_{X}^{1}\right).
\]
Before applying the Push-forward Theorem, we need to express it as
a rescaling of the bundle
\[
\gamma_{LR}^{*}\left(r_{\ff_{b}}^{-1}\beta_{b,R}^{*}\mathcal{D}_{X}^{1}\right)\otimes\gamma_{LR}^{*}\left(^{b}\mathcal{D}_{X_{b}^{2}}^{-1}\right)\otimes{^{b}\mathcal{D}_{Z}}.
\]
This is a routine computation, which requires the following fundamental
fact (cf. Lemma C.3 of \cite{EpsteinMelroseShrinking}): if $Y$ is
a manifold with corners, $S$ is a p-submanifold of codimension $c$,
and $\beta:\left[Y:S\right]\to Y$ is the blow-down map, then 
\[
\beta^{*}\left(C^{\infty}\left(Y;\mathcal{D}_{Y}^{1}\right)\right)=r_{\ff}^{c-1}C^{\infty}\left(\left[Y:S\right];\mathcal{D}_{\left[X:S\right]}^{1}\right).
\]
We omit all the elementary computations and only report the results:
the index sets are
\begin{align*}
\mathcal{E}_{\lf} & \text{ at \ensuremath{H_{100}}}\\
\mathcal{E}_{\rf}+\mathcal{F}_{\lf}+1 & \text{ at \ensuremath{H_{010}}}\\
\mathcal{F}_{\rf} & \text{ at \ensuremath{H_{001}}}\\
\mathcal{E}_{\ff_{b}}+\mathcal{F}_{\ff_{b}} & \text{ at \ensuremath{\ff_{T}}}\\
\mathcal{F}_{\lf}+\tilde{\mathcal{E}}_{\ff_{0}} & \text{ at \ensuremath{\ff_{LC_{0}}}}\\
\mathcal{F}_{\lf}+\mathcal{E}_{\ff_{b}} & \text{ at \ensuremath{\ff_{LC_{b}}}}\\
\mathcal{E}_{\rf}+\mathcal{F}_{\ff_{b}} & \text{ at \ensuremath{\ff_{CR_{b}}}}\\
\mathcal{E}_{\lf}+\mathcal{F}_{\rf}+1 & \text{ at \ensuremath{\ff_{LR_{b}}}}.
\end{align*}
Once we have these index sets, the integrability condition for the
push-forward of $\left(\gamma_{LR}\right)_{*}\left(\gamma_{LC}^{*}\kappa_{P}\cdot\gamma_{CR}^{*}\kappa_{Q}\right)$
is that the index set at $H_{010}$ has real part $>0$ ($H_{010}$
is the only column of $e_{\gamma_{LR}}$ containing only zeros). The
index sets for the push-forward are obtained by taking the extended
unions of the index sets corresponding to ones, in each row of $e_{\gamma_{LR}}$.
The result is exactly the triple of index sets $\tilde{\mathcal{G}}_{\lf},\mathcal{G}_{\rf},\mathcal{G}_{\ff_{0}}$
defined above.
\end{proof}

\subsection{Mapping properties on Sobolev spaces}

We finally discuss mapping properties on Sobolev spaces. Given $k\in\mathbb{N}$,
we denote by $H_{0}^{k}\left(X\right)$ the space of $L_{b}^{2}$
functions $u$ on $X$ such that, for any set $V_{1},...,V_{j}\in\mathcal{V}_{0}\left(X\right)$
with $j\leq k$, the weak multiple $0$-derivative $V_{1}\cdots V_{j}u$
is still in $L_{b}^{2}\left(X\right)$. Given $\delta\in\mathbb{R}$,
we denote by $x^{\delta}H_{0}^{k}\left(X\right)$ the space of functions
of the form $x^{\delta}u$, where $u\in H_{0}^{k}\left(X\right)$
and $x$ is a boundary defining functions. This space does not depend
on the choice of $x$. We can define a natural Banach topology on
$x^{\delta}H_{0}^{k}\left(X\right)$ in the usual way, by means of
a choice of a finite set of vector fields on $X$ which span $^{0}TX$
pointwise. As usual, this choice does not depend on the choices made.

Let's recall some basic mapping properties of the $0$-calculus and
the extended $0$-calculus:
\begin{thm}
(Theorem 3.25, Corollary 3.23, and Proposition 3.29 of \cite{MazzeoEdgeI},
Theorem 5.2.2 of \cite{Lauter})
\begin{enumerate}
\item If $P\in\Psi_{0}^{m}\left(X\right)$, then $P$ is bounded as a map
$x^{\delta}H_{0}^{k+m}\left(X\right)\to x^{\delta}H_{0}^{k}\left(X\right)$
for every $k$ and $\delta$.
\item If $P\in\Psi_{0}^{-\infty,\mathcal{E}}\left(X\right)$, $\Re\left(\mathcal{E}_{\lf}\right)>\delta$,
$\Re\left(\mathcal{E}_{\rf}\right)>-\delta-1$, and $\Re\left(\mathcal{E}_{\ff_{0}}\right)\geq0$,
then $P$ is bounded as a map $x^{\delta}H_{0}^{m_{1}}\left(X\right)\to x^{\delta}H_{0}^{m_{2}}\left(X\right)$
for every $m_{1},m_{2}$. If moreover $\Re\left(\mathcal{E}_{\ff_{0}}\right)>0$,
then the map is compact.
\item If $P\in\Psi_{0b}^{-\infty,\mathcal{E}}\left(X\right)$, $\Re\left(\mathcal{E}_{\lf}\right)>\delta$,
$\Re\left(\mathcal{E}_{\rf}\right)>-\delta-1$, $\Re\left(\mathcal{E}_{\ff_{b}}\right)>0$,
and $\Re\left(\mathcal{E}_{\ff_{0}}\right)\geq0$, then $P$ is bounded
as a map $x^{\delta}H_{0}^{m_{1}}\left(X\right)\to x^{\delta}H_{0}^{m_{2}}\left(X\right)$
for every $m_{1},m_{2}$. If moreover $\Re\left(\mathcal{E}_{\ff_{0}}\right)>0$,
the map is compact.
\end{enumerate}
\end{thm}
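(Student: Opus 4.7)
The plan is to reduce each point to a local analysis on the stretched double spaces and to leverage the pull-back / push-forward technique alongside the standard square-root trick from pseudodifferential theory. For Point 1, I would first establish $L_b^2$-boundedness of $P\in\Psi_0^0(X)$ via H\"ormander's argument: choosing $c$ larger than the sup of $|{}^0\sigma(P)|$, the operator $c^2-P^*P$ is $0$-elliptic and non-negative, so it admits a square root $Q\in\Psi_0^0(X)$ modulo a smoothing remainder in the residual $0$-calculus. This gives $\|Pu\|_{L_b^2}^2\le c^2\|u\|_{L_b^2}^2+\langle Ru,u\rangle$, and the remainder term is controlled by Point 2. The weighted estimate then follows by conjugating by $x^\delta$. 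To extend to general order $m$ and higher $k$, I would pick an elliptic $\Lambda\in\Psi_0^m(X)$ with a parametrix $\Lambda^{-1}\in\Psi_0^{-m}(X)$ (available from the Mazzeo--Melrose parametrix for the small calculus) and use $\Lambda$ to identify the weighted $0$-Sobolev spaces of different orders, reducing the claim to the order-zero case.

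For Point 2, I would work with the lifted Schwartz kernel $\kappa_P$ on $X_0^2$ and write $Pu=(\beta_{0,L})_*(\kappa_P\cdot\beta_{0,R}^*u)$. The boundedness $x^\delta L_b^2\to x^\delta L_b^2$ follows from a Schur test: the kernel of the conjugated operator $x^{-\delta}Px^\delta$ is $\kappa_P \cdot (\text{ratio of weights})$, and the hypotheses $\Re(\mathcal{E}_{\lf})>\delta$, $\Re(\mathcal{E}_{\rf})>-\delta-1$, combined with $\Re(\mathcal{E}_{\ff_0})\ge 0$, guarantee that the integrals
\[
\sup_z \int |\kappa_P(z,\tilde z)|\,\tilde x^{-2\delta-1}\omega(\tilde z)\quad\text{and its transpose}
\]
are finite (the integral over $\ff_0$ is uniform in the fibre thanks to the non-negative index there, and the index sets at $\lf,\rf$ exactly balance the singular density). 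Once $L_b^2$-boundedness is established, boundedness $x^\delta H_0^{m_1}\to x^\delta H_0^{m_2}$ follows by commuting with $0$-vector fields: writing $V_1\cdots V_j\circ P=P\circ V_1\cdots V_j + (\text{commutators})$, the commutators lie in the same residual class with the same or better index sets, so the estimate propagates to any order. For compactness under $\Re(\mathcal{E}_{\ff_0})>0$, I would split $\kappa_P=\chi_\varepsilon\kappa_P+(1-\chi_\varepsilon)\kappa_P$ where $\chi_\varepsilon$ is a cutoff near $\ff_0$; the first piece has operator norm $O(\varepsilon^{\Re[\mathcal{E}_{\ff_0}]})$ by the same Schur estimate, while the second has a kernel that is smooth with respect to a $b$-metric and compactly supported away from $\ff_0$, hence defines a compact operator on weighted $L_b^2$ by the usual Rellich argument applied to the resulting smoothing operator in $\Psi_b^{-\infty,\bullet}(X)$.

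Point 3 is proved in the same spirit, but with $X_{0b}^2$ replacing $X_0^2$. The additional hypothesis $\Re(\mathcal{E}_{\ff_b})>0$ compensates for the blow-up of the corner $\lf\cap\rf$: after the Schur estimate, the contribution from the $b$-front face $\ff_b$ is an integral with the singular factor $r_{\ff_b}^{-1}$, which converges precisely when the index set at $\ff_b$ has strictly positive real part. The compactness statement again reduces, via a cutoff argument, to showing that kernels supported away from $\ff_0$ but possibly singular at $\ff_b$ produce compact operators, which follows from the corresponding compactness result for the residual large $b$-calculus $\Psi_b^{-\infty,\bullet}(X)$ under the positivity condition at $\ff_b$.

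The main obstacle I anticipate is the careful bookkeeping of boundary defining functions, density factors, and the dimension $n$ across the various blow-ups, particularly in the $0b$-case where the lifted density $r_{\ff_0}^{-n-1}r_{\ff_b}^{-1}\beta_{0b,R}^*\mathcal{D}_X^1$ has singularities at two different new faces whose fibres have different dimensions. In the Schur estimates one has to check that the integrability conditions hold uniformly along the fibres of $\ff_0\to\partial X$ (of dimension $n+1$) and along the fibre of $\ff_b$ (of dimension $n$), and it is easy to miscount and end up with an off-by-one error in the required real parts. The compactness in the $0b$-case is especially delicate because one must simultaneously achieve equicontinuity at $\ff_0$ (for short distances) and at $\ff_b$ (for intermediate scales), and only the combined strict positivity of $\Re(\mathcal{E}_{\ff_0})$ and $\Re(\mathcal{E}_{\ff_b})$ delivers this; verifying it cleanly requires either a two-step approximation or an explicit partition of unity on $X_{0b}^2$ adapted to the two front faces.
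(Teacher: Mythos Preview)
The paper does not provide its own proof of this theorem; it is stated purely as a citation of results from Mazzeo's edge paper and Lauter's thesis, and is then used as a black box to deduce Corollary~\ref{cor:mapping-symbolic-0b-0-sobolev}. So there is no ``paper's proof'' to compare against.

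That said, your sketch is a faithful outline of the standard arguments in those references. The H\"ormander square-root trick for Point~1, the Schur test on the stretched double space for Point~2, and the extension to $X_{0b}^2$ for Point~3 are precisely the approaches taken by Mazzeo and Lauter. One small logical remark: in Point~1 you invoke Point~2 to control the smoothing remainder $R$, which is fine because $R\in\Psi_0^{-\infty}(X)$ has index family $(\infty,\infty,0)$ and so the hypotheses of Point~2 hold for every $\delta$; but to avoid circularity one typically first establishes boundedness of operators in the \emph{small} residual class $\Psi_0^{-\infty}(X)$ directly via Schur (where the infinite-order vanishing at $\lf$ and $\rf$ makes this immediate), and only afterwards addresses the large calculus with general index sets. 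Your concern about the off-by-one bookkeeping in the density factors is well-founded and is indeed the place where careful proofs in the literature spend most of their effort.
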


Directly by the previous theorem, and by Corollary \ref{cor:global-relation-symbolic0b-physical0b},
we get mapping properties for $0$-interior and $0b$-interior operators:
\begin{cor}
\label{cor:mapping-symbolic-0b-0-sobolev}$ $
\begin{enumerate}
\item If $P\in\hat{\Psi}_{0b}^{-\infty,\mathcal{E}}\left(X\right)$, $\Re\left(\mathcal{E}_{\lf}\right)>\delta$,
$\Re\left(\mathcal{E}_{\rf}\right)>-\delta-1$, $\Re\left(\mathcal{E}_{\ff_{b}}\right)>0$,
and $\Re\left(\mathcal{E}_{\ff_{0}}\right)\geq0$, then $P$ is bounded
as a map $x^{\delta}H_{0}^{m_{1}}\left(X\right)\to x^{\delta}H_{0}^{m_{2}}\left(X\right)$
for every $m_{1},m_{2}$. Moreover, if $\Re\left(\mathcal{E}_{\ff_{0}}\right)>0$,
then the map is compact.
\item If $P\in\hat{\Psi}_{0}^{-\infty,\mathcal{E}}\left(X\right)$, $\Re\left(\mathcal{E}_{\lf}\right)>\delta$,
$\Re\left(\mathcal{E}_{\rf}\right)>-\delta-1$, and $\Re\left(\mathcal{E}_{\ff_{0}}\right)\geq0$,
then $P$ is bounded as a map $x^{\delta}H_{0}^{m_{1}}\left(X\right)\to x^{\delta}H_{0}^{m_{2}}\left(X\right)$
for every $m_{1},m_{2}$. Moreover, if $\Re\left(\mathcal{E}_{\ff_{0}}\right)>0$,
then the map is compact.
\end{enumerate}
\end{cor}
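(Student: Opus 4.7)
The plan is to reduce both assertions to the corresponding mapping properties for the large residual $0$-calculus and extended $0$-calculus (the theorem stated just above) via the inclusions recorded in Corollary \ref{cor:global-relation-symbolic0b-physical0b}. No new analysis is needed; the only thing to verify is that the hypotheses on the index sets of $P$ in $\hat{\Psi}_{0}^{-\infty,\mathcal{E}}(X)$ or $\hat{\Psi}_{0b}^{-\infty,\mathcal{E}}(X)$ imply the hypotheses required by the cited theorem after passing to the enlarged $0$-front-face index set produced by the inclusion.

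For Point 2, I would apply inclusion (5) of Corollary \ref{cor:global-relation-symbolic0b-physical0b} to write $P\in\Psi_{0}^{-\infty,(\mathcal{E}_{\lf},\mathcal{E}_{\rf},\tilde{\mathcal{E}}_{\ff_{0}})}(X)$ with $\tilde{\mathcal{E}}_{\ff_{0}}=\mathcal{E}_{\ff_{0}}\overline{\cup}(\mathcal{E}_{\lf}+\mathcal{E}_{\rf}+n+1)$. The left/right-face conditions $\Re(\mathcal{E}_{\lf})>\delta$ and $\Re(\mathcal{E}_{\rf})>-\delta-1$ are unchanged, and
\[
\Re(\mathcal{E}_{\lf}+\mathcal{E}_{\rf}+n+1)>\delta+(-\delta-1)+n+1=n\geq 0,
\]
so $\Re(\tilde{\mathcal{E}}_{\ff_{0}})\geq 0$ whenever $\Re(\mathcal{E}_{\ff_{0}})\geq 0$, and the strict inequality $\Re(\tilde{\mathcal{E}}_{\ff_{0}})>0$ holds as soon as $\Re(\mathcal{E}_{\ff_{0}})>0$. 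An invocation of point (2) of the previous theorem then gives both boundedness and compactness.

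For Point 1, the argument is identical using inclusion (4) of Corollary \ref{cor:global-relation-symbolic0b-physical0b}: $P\in\Psi_{0b}^{-\infty,(\mathcal{E}_{\lf},\mathcal{E}_{\rf},\mathcal{E}_{\ff_{b}},\tilde{\mathcal{E}}_{\ff_{0}})}(X)$ with $\tilde{\mathcal{E}}_{\ff_{0}}=\mathcal{E}_{\ff_{0}}\overline{\cup}(\mathcal{E}_{\ff_{b}}+n)$. Here $\Re(\mathcal{E}_{\ff_{b}})>0$ and $n\geq 0$, so $\Re(\mathcal{E}_{\ff_{b}}+n)>0$ and thus $\Re(\tilde{\mathcal{E}}_{\ff_{0}})\geq 0$ (resp.\ $>0$ if $\Re(\mathcal{E}_{\ff_{0}})>0$). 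The remaining hypotheses of point (3) of the previous theorem are met by assumption, so that theorem delivers the required boundedness on $x^{\delta}H_{0}^{\bullet}(X)$ and the compactness under the strict front-face inequality. There is no genuine obstacle in the argument; the only point to be careful about is that the extended-union with $\mathcal{E}_{\lf}+\mathcal{E}_{\rf}+n+1$ (Point 2) and with $\mathcal{E}_{\ff_{b}}+n$ (Point 1) does not destroy the non-negativity of the $0$-front face index set, which is precisely the content of the elementary inequalities above.
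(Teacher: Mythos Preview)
Your proposal is correct and follows exactly the paper's approach: the paper states that the corollary follows ``directly by the previous theorem, and by Corollary \ref{cor:global-relation-symbolic0b-physical0b}'', and you have spelled out precisely this reduction, including the verification that the extended-union contributions $\mathcal{E}_{\lf}+\mathcal{E}_{\rf}+n+1$ and $\mathcal{E}_{\ff_b}+n$ have strictly positive real part under the stated hypotheses, so the $0$-front-face condition survives.
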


On $\partial X$, we will use Sobolev spaces adapted to the twisted
boundary calculus. These Sobolev spaces were introduced in \cite{KrainerMendozaVariableOrders}.
Given a smooth family of matrices $\mathfrak{a}:\overline{\mathbb{R}}^{n}\to\hom\left(\mathbb{C}^{N}\right)$
with a constant, single eigenvalue, we define $H^{\mathfrak{a}}\left(\mathbb{R}^{n};\mathbb{C}^{N}\right)$
as the space of measurable $\mathbb{C}^{N}$-valued functions $u:\mathbb{R}^{n}\to\mathbb{C}^{N}$
such that $\left\langle \eta\right\rangle ^{\mathfrak{a}}\hat{u}\left(\eta\right)\in L^{2}\left(\mathbb{R}^{n};\mathbb{C}^{N}\right)$,
where $\hat{u}\left(\eta\right)$ is the Fourier transform on $\mathbb{R}^{n}$.
Globally, given a vector bundle $E\to\partial X$ and a smooth endomorphism
$\mathfrak{a}:E\to E$ with a constant, single eigenvalue, we define
$H^{\mathfrak{a}}\left(\partial X;E\right)$ as the space of measurable
sections of $E$ which are locally in $H^{\mathfrak{a}}\left(\mathbb{R}^{n};\mathbb{C}^{N}\right)$
in terms of coordinates on $\partial X$ and a trivialization for
$E$. Observe that, if $\mu$ is the unique eigenvalue of $\mathfrak{a}$,
then the norm of $H^{\mathfrak{a}}\left(\partial X;E\right)$ is slightly
stronger than the usual norm of $H^{\Re\left(\mu\right)}\left(\partial X;E\right)$;
the reason is that, in local coordinates the entries of $\left\langle \eta\right\rangle ^{\mathfrak{a}}$
are linear combinations of $\left\langle \eta\right\rangle ^{\mu}\left(\log\left\langle \eta\right\rangle \right)^{l}$,
which has (for $l>0$) a worse decay rate at $\left|\eta\right|\to\infty$
than $\left\langle \eta\right\rangle ^{\Re\left(\mu\right)}$. Finally,
if $\boldsymbol{E}\to\partial X$ is a vector bundle, $\boldsymbol{\mathfrak{a}}:\boldsymbol{E}\to\boldsymbol{E}$
is a bundle endomorphism with constant eigenvalue, and $\boldsymbol{E}=\bigoplus_{i}E_{i}$,
$\boldsymbol{\mathfrak{a}}=\bigoplus_{i}\mathfrak{a}_{i}$ is the
decomposition into generalized eigenbundles of $\boldsymbol{\mathfrak{a}}$,
we define $H^{\boldsymbol{\mathfrak{a}}}\left(X;\boldsymbol{E}\right)=\bigoplus_{i}H^{\mathfrak{a}_{i}}\left(X;E_{i}\right)$.

The following basic mapping property is a direct consequence of the
definitions, but it can also be derived by the more general mapping
properties for the calculus developed in \cite{KrainerMendozaVariableOrders}:
\begin{prop}
\label{prop:mapping-twisted-boundary-sobolev}Let $\boldsymbol{E},\boldsymbol{F}$
be vector bundles on $\partial X$ equipped with smooth endomorphisms
$\boldsymbol{\mathfrak{a}}:\boldsymbol{E}\to\boldsymbol{E}$, $\boldsymbol{\mathfrak{b}}:\boldsymbol{F}\to\boldsymbol{F}$
with constant eigenvalues. Let $\boldsymbol{Q}\in\Psi_{\phg}^{\left[0\right],\left(\boldsymbol{\mathfrak{a}},\boldsymbol{\mathfrak{b}}\right)}\left(\partial X;\boldsymbol{E},\boldsymbol{F}\right)$.
Then $\boldsymbol{Q}$ induces a continuous linear map
\[
\boldsymbol{Q}:H^{\boldsymbol{\mathfrak{a}}}\left(\partial X;\boldsymbol{E}\right)\to H^{\boldsymbol{\mathfrak{b}}}\left(\partial X;\boldsymbol{F}\right).
\]
If $\sigma\left(\boldsymbol{Q}\right)=0$, then the map above is compact.
\end{prop}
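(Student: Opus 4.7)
The strategy is to reduce the continuity statement to the classical $L^{2}$-boundedness of polyhomogeneous pseudodifferential operators of order zero on $\partial X$, by conjugating $\boldsymbol{Q}$ with invertible ``twisting operators'' that identify the twisted Sobolev spaces $H^{\boldsymbol{\mathfrak{a}}}(\partial X;\boldsymbol{E})$ and $H^{\boldsymbol{\mathfrak{b}}}(\partial X;\boldsymbol{F})$ with plain $L^{2}$.

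First I would construct twisting operators $T_{\boldsymbol{\mathfrak{a}}} \in \Psi_{\phg}^{[0],(\boldsymbol{\mathfrak{a}},0)}(\partial X;\boldsymbol{E},\boldsymbol{E})$ and $T_{\boldsymbol{\mathfrak{b}}} \in \Psi_{\phg}^{[0],(\boldsymbol{\mathfrak{b}},0)}(\partial X;\boldsymbol{F},\boldsymbol{F})$ whose twisted principal symbols are $\langle\eta\rangle^{\boldsymbol{\mathfrak{a}}}$ and $\langle\eta\rangle^{\boldsymbol{\mathfrak{b}}}$, respectively. Locally these are just the left quantizations of $\langle\eta\rangle^{\boldsymbol{\mathfrak{a}}(y)}$ and $\langle\eta\rangle^{\boldsymbol{\mathfrak{b}}(y)}$, and a partition-of-unity construction glues them into global elements of the twisted calculus. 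Because $\langle\eta\rangle^{\boldsymbol{\mathfrak{a}}}$ is pointwise invertible on each fiber of $T^{*}\partial X\setminus 0$ (the exponential of an endomorphism is always invertible), both operators are elliptic in the twisted sense, and the parametrix theorem for $\Psi_{\phg}^{[0],(\boldsymbol{\mathfrak{s}},\boldsymbol{\mathfrak{t}})}$ stated at the end of Section \ref{subsec:The-twisted-boundary-calculus} furnishes two-sided parametrices $T_{\boldsymbol{\mathfrak{a}}}^{-1} \in \Psi_{\phg}^{[0],(0,\boldsymbol{\mathfrak{a}})}(\partial X;\boldsymbol{E},\boldsymbol{E})$ and $T_{\boldsymbol{\mathfrak{b}}}^{-1} \in \Psi_{\phg}^{[0],(0,\boldsymbol{\mathfrak{b}})}(\partial X;\boldsymbol{F},\boldsymbol{F})$ modulo smoothing remainders in $\Psi^{-\infty}$.

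The definition of $H^{\boldsymbol{\mathfrak{a}}}$ (via the Fourier-side condition $\langle\eta\rangle^{\boldsymbol{\mathfrak{a}}}\hat u \in L^{2}$ in local trivializations) then makes $T_{\boldsymbol{\mathfrak{a}}}$ an isomorphism $H^{\boldsymbol{\mathfrak{a}}}(\partial X;\boldsymbol{E}) \to L^{2}(\partial X;\boldsymbol{E})$ modulo a smoothing, and hence compact, perturbation; similarly for $T_{\boldsymbol{\mathfrak{b}}}$. Applying the global twisted composition Theorem \ref{thm:global-twisted-compositions-involving-boundary} twice produces
\[
\tilde{\boldsymbol{Q}} := T_{\boldsymbol{\mathfrak{b}}} \circ \boldsymbol{Q} \circ T_{\boldsymbol{\mathfrak{a}}}^{-1} \in \Psi_{\phg}^{[0],(0,0)}(\partial X;\boldsymbol{E},\boldsymbol{F}),
\]
which is simply the classical class of polyhomogeneous pseudodifferential operators of leading order zero acting between sections of $\boldsymbol{E}$ and $\boldsymbol{F}$. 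This operator is bounded on $L^{2}$ by the standard theory, so factoring $\boldsymbol{Q} = T_{\boldsymbol{\mathfrak{b}}}^{-1}\,\tilde{\boldsymbol{Q}}\,T_{\boldsymbol{\mathfrak{a}}}$ and chaining the isomorphisms yields the desired continuity $H^{\boldsymbol{\mathfrak{a}}} \to H^{\boldsymbol{\mathfrak{b}}}$.

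For the compactness assertion, vanishing of $\sigma(\boldsymbol{Q})$ forces the principal symbol of $\tilde{\boldsymbol{Q}}$ to vanish as well, placing $\tilde{\boldsymbol{Q}}$ in the polyhomogeneous class of strictly negative leading order, which is compact on $L^{2}$ by the standard Rellich argument on the closed manifold $\partial X$. The main obstacle will be the careful construction of the twisting operators and the verification that $T_{\boldsymbol{\mathfrak{a}}}$ truly intertwines the intrinsic Sobolev space $H^{\boldsymbol{\mathfrak{a}}}(\partial X;\boldsymbol{E})$ with $L^{2}$: because $\boldsymbol{\mathfrak{a}}(y)$ depends genuinely on $y$, the local quantizations $\Op_{L}(\langle\eta\rangle^{\boldsymbol{\mathfrak{a}}(y)})$ are not pure Fourier multipliers, and one needs the constancy of the eigenvalues of $\boldsymbol{\mathfrak{a}}$ (which keeps the nilpotent part uniformly bounded, hence the associated logarithmic losses finite) to ensure that the discrepancy is absorbed into admissible remainders within the twisted calculus.
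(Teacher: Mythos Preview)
Your approach is correct and is essentially an elaboration of what the paper itself provides: the paper does not give a proof of this proposition but simply states that it is ``a direct consequence of the definitions'' (and alternatively refers to the general mapping properties in Krainer--Mendoza). Your conjugation-by-twisting-operators argument is precisely the natural way to unpack that remark, since the twisted symbol class $\langle\eta\rangle^{-\boldsymbol{\mathfrak{b}}}S_{\phg}^{[0]}\langle\eta\rangle^{\boldsymbol{\mathfrak{a}}}$ and the Sobolev norm $\|\langle\eta\rangle^{\boldsymbol{\mathfrak{a}}}\hat u\|_{L^2}$ are designed to match under exactly this conjugation.

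One small caution: when you say that $T_{\boldsymbol{\mathfrak{a}}}$ is an isomorphism $H^{\boldsymbol{\mathfrak{a}}}\to L^{2}$ modulo smoothing, you are implicitly using a special case of the boundedness you are proving (namely that $T_{\boldsymbol{\mathfrak{a}}}\in\Psi_{\phg}^{[0],(\boldsymbol{\mathfrak{a}},0)}$ is bounded $H^{\boldsymbol{\mathfrak{a}}}\to H^{0}=L^{2}$). To avoid circularity, you should verify this particular instance directly from the local definition of the $H^{\boldsymbol{\mathfrak{a}}}$-norm rather than invoking the proposition; this is immediate once one observes that, in a fixed chart and trivialization, $T_{\boldsymbol{\mathfrak{a}}}$ differs from the exact Fourier-side multiplication by $\langle\eta\rangle^{\boldsymbol{\mathfrak{a}}(y)}$ only by lower-order terms controlled by the constant-eigenvalue hypothesis. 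With that base case in hand, the rest of your argument goes through cleanly.
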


We will now discuss mapping properties for symbolic $0$-trace and
$0$-Poisson operators. We formulate the results directly in the twisted
case, since the un-twisted case is obtained by taking the twisting
endomorphism equal to a real multiple of the identity.
\begin{thm}
\label{thm:mapping-twisted-trace-interior-sobolev}Let $\boldsymbol{\mathfrak{s}}:\boldsymbol{E}\to\boldsymbol{E}$
be a smooth bundle endomorphism with constant eigenvalues.
\begin{enumerate}
\item Let $\boldsymbol{A}\in\hat{\Psi}_{0\tr}^{-\infty,\left(\mathcal{E}_{\of},\left[\mathcal{E}_{\ff}\right]\right),\boldsymbol{\mathfrak{s}}}\left(X;\partial X,\boldsymbol{E}\right)$.
Let $\delta\in\mathbb{R}$, and suppose that
\begin{align*}
\Re\left(\mathcal{E}_{\of}\right) & >-\delta-1\\
\left[\mathcal{E}_{\ff}\right] & =0.
\end{align*}
Then $\boldsymbol{A}$ induces a continuous linear map
\[
x^{\delta}L_{b}^{2}\left(X\right)\to H^{\boldsymbol{\mathfrak{s}}}\left(\partial X;\boldsymbol{E}\right).
\]
If moreover $\hat{N}\left(\boldsymbol{A}\right)\equiv0$, then the
map above is compact.
\item Let $\boldsymbol{B}\in\hat{\Psi}_{0\text{P}}^{-\infty,\left(\mathcal{E}_{\of},\left[\mathcal{E}_{\ff}\right]\right),\boldsymbol{\mathfrak{s}}}\left(\partial X,\boldsymbol{E};X\right)$.
Let $\delta\in\mathbb{R}$, and suppose that
\begin{align*}
\Re\left(\mathcal{E}_{\of}\right) & >\delta\\
\left[\mathcal{E}_{\ff}\right] & =0.
\end{align*}
Then $\boldsymbol{B}$ induces a continuous linear map
\[
\boldsymbol{B}:H^{\boldsymbol{\mathfrak{s}}}\left(\partial X;\boldsymbol{E}\right)\to x^{\delta}L_{b}^{2}\left(X\right).
\]
If moreover $\hat{N}\left(\boldsymbol{B}\right)\equiv0$, then the
map above is compact.
\end{enumerate}
\end{thm}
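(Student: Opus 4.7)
The strategy is to reduce the twisted case to the un-twisted one by composing with elliptic reduction operators in the twisted boundary calculus, and then invoke mapping properties of un-twisted symbolic $0$-Poisson and $0$-trace operators on weighted $L_b^2$ spaces.

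First I would construct an elliptic reduction operator $\Lambda \in \Psi_{\phg}^{[0],(\boldsymbol{\mathfrak{s}},0)}(\partial X;\boldsymbol{E})$ whose principal symbol is (the twisted-homogeneous extension of) $\langle\eta\rangle^{\boldsymbol{\mathfrak{s}}}$. Since $\boldsymbol{\mathfrak{s}}$ has constant eigenvalues the matrix $\langle\eta\rangle^{\boldsymbol{\mathfrak{s}}}$ is everywhere invertible, so $\Lambda$ is elliptic in the sense of the twisted boundary calculus and admits a parametrix $\Lambda^{-1}\in\Psi_{\phg}^{[0],(0,\boldsymbol{\mathfrak{s}})}(\partial X;\boldsymbol{E})$ with $\Lambda\Lambda^{-1}-I$, $\Lambda^{-1}\Lambda-I\in\Psi^{-\infty}(\partial X;\boldsymbol{E})$. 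By Proposition \ref{prop:mapping-twisted-boundary-sobolev}, $\Lambda$ restricts to a continuous map $H^{\boldsymbol{\mathfrak{s}}}(\partial X;\boldsymbol{E})\to L^{2}(\partial X;\boldsymbol{E})$ and $\Lambda^{-1}$ a continuous inverse, modulo a smoothing (and hence compact) remainder.

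For Point 2, set $\widetilde{\boldsymbol{B}}:=\boldsymbol{B}\Lambda^{-1}$. By Point 2 of Theorem \ref{thm:global-twisted-compositions-involving-boundary}, $\widetilde{\boldsymbol{B}}\in\hat{\Psi}_{0\po}^{-\infty,(\mathcal{E}_{\of},[0])}(\partial X,\boldsymbol{E};X)$ is an \emph{un-twisted} symbolic $0$-Poisson operator, and its Bessel family is $\hat{N}(\boldsymbol{B})\sigma(\Lambda^{-1})$. For $u\in H^{\boldsymbol{\mathfrak{s}}}(\partial X;\boldsymbol{E})$ we write $\boldsymbol{B}u=\widetilde{\boldsymbol{B}}(\Lambda u)+\boldsymbol{B}(\Lambda^{-1}\Lambda-I)u$; the second term involves a smoothing operator and maps into $C^\infty(\partial X;\boldsymbol{E})$, hence continuously into $x^\delta L_b^2(X)$ via the residual Poisson action; the first term reduces the problem to showing that an un-twisted $\widetilde{\boldsymbol{B}}\in\hat{\Psi}_{0\po}^{-\infty,(\mathcal{E}_{\of},[0])}$ with $\Re(\mathcal{E}_{\of})>\delta$ maps $L^{2}(\partial X;\boldsymbol{E})\to x^{\delta}L_{b}^{2}(X)$ continuously. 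This un-twisted boundedness is proved by the $T^{*}T$ trick: Proposition \ref{prop:formal-adjoints} gives $\widetilde{\boldsymbol{B}}^{\dagger}\in\hat{\Psi}_{0\tr}^{-\infty,(\overline{\mathcal{E}}_{\of}-2\delta-1,[-2\delta])}$, and the integrability condition $\Re(\mathcal{E}_{\of})>\delta$ for Theorem \ref{thm:global-compositions-involving-boundary} (Point 4) is precisely $\Re(\overline{\mathcal{E}}_{\of}-2\delta-1+\mathcal{E}_{\of})>-1$; thus $\widetilde{\boldsymbol{B}}^{\dagger}\widetilde{\boldsymbol{B}}$ is a classical pseudodifferential operator on $\partial X$ of order $0$, hence $L^2$-bounded, yielding the desired mapping property for $\widetilde{\boldsymbol{B}}$. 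Point 1 follows by an entirely symmetric argument, either directly by composing $\boldsymbol{A}$ with $\Lambda^{-1}$ on the \emph{left} to un-twist, or by duality using Proposition \ref{prop:twisted-adjoint} applied to Point 2.

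For the compactness statement, if $\hat{N}(\boldsymbol{B})\equiv0$ then $\boldsymbol{B}$ in fact lies in a subclass $\hat{\Psi}_{0\po}^{-\infty,(\mathcal{E}_{\of},\mathcal{F}_{\ff}),\boldsymbol{\mathfrak{s}}}$ where $\mathcal{F}_{\ff}$ is an index set whose leading real part is strictly positive. Repeating the reduction with $\widetilde{\boldsymbol{B}}=\boldsymbol{B}\Lambda^{-1}$, the pseudodifferential composition $\widetilde{\boldsymbol{B}}^{\dagger}\widetilde{\boldsymbol{B}}$ is now of strictly negative order on $\partial X$, hence compact on $L^{2}(\partial X;\boldsymbol{E})$; this implies compactness of $\widetilde{\boldsymbol{B}}$ as a map $L^{2}\to x^{\delta}L_{b}^{2}$, and thus of $\boldsymbol{B}:H^{\boldsymbol{\mathfrak{s}}}\to x^{\delta}L_{b}^{2}$. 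The compactness for $\boldsymbol{A}$ follows dually. The principal technical obstacle is not any single step but the careful bookkeeping required in Step 2: ensuring that the composition theorem applies (the integrability conditions translate exactly into the weight condition on $\mathcal{E}_{\of}$), that the twisting endomorphisms cancel as claimed under composition with $\Lambda^{\pm 1}$, and that the index set tracking through Corollary \ref{cor:twisted-0-trace-0-poisson-are-untwisted} does not spoil the un-twisted mapping hypothesis.
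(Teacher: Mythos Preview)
Your approach is essentially the paper's: reduce the twisted case to the un-twisted one by composing with an elliptic element of the twisted boundary calculus, then apply a $T^{*}T$ argument to the resulting un-twisted operator, and deduce compactness from vanishing of the Bessel family. The paper uses the same reduction (calling the elliptic operator $\boldsymbol{Q}$ rather than $\Lambda$, and writing $\boldsymbol{A}=\boldsymbol{K}\boldsymbol{Q}\boldsymbol{A}+\boldsymbol{R}\boldsymbol{A}$) and the same $T^{*}T$ idea; the only variation is that for the trace case the paper computes $A^{\dagger}A$ as a $0$-\emph{interior} operator and invokes Corollary~\ref{cor:mapping-symbolic-0b-0-sobolev} on $x^{\delta}L_b^2$, whereas for the Poisson case it computes $B^{\dagger}B$ as a boundary pseudodifferential operator exactly as you do.

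One slip in your bookkeeping: you correctly record $\widetilde{\boldsymbol{B}}^{\dagger}\in\hat{\Psi}_{0\tr}^{-\infty,(\overline{\mathcal{E}}_{\of}-2\delta-1,[-2\delta])}$ (the adjoint being taken relative to $x^{\delta}L_b^2$), but then by Theorem~\ref{thm:global-compositions-involving-boundary} the composite $\widetilde{\boldsymbol{B}}^{\dagger}\widetilde{\boldsymbol{B}}$ lies in $\Psi_{\phg}^{-\mathcal{G}}(\partial X)$ with $[\mathcal{G}]=[0]+[-2\delta]=-2\delta$, so it is of order $2\delta$, not $0$. The paper deals with this by first conjugating with a power of the boundary defining function to reduce to $\delta=0$ before running the $T^{*}T$ step; you should insert the same reduction. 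For compactness the paper takes a slightly different (but equivalent) route: rather than arguing that the $T^{*}T$ composite becomes negative-order, it observes that $\hat N(\boldsymbol{B})\equiv 0$ allows one to regard $\boldsymbol{B}$ as an element of $\hat{\Psi}_{0\po}^{-\infty,(\mathcal{E}_{\of},[\mathcal{H}]),\boldsymbol{\mathfrak{s}}-\epsilon}$ for small $\epsilon>0$ with $\Re(\mathcal{H})>0$, applies the boundedness already proved to get $H^{\boldsymbol{\mathfrak{s}}-\epsilon}\to x^{\delta}L_b^2$, and then uses the compact embedding $H^{\boldsymbol{\mathfrak{s}}}\hookrightarrow H^{\boldsymbol{\mathfrak{s}}-\epsilon}$. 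Your compactness argument via $T^{*}T$ is a valid alternative.
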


\begin{proof}
(1) Let's first reduce ourselves to the scalar, untwisted case. Let
$\boldsymbol{Q}\in\Psi^{\left[0\right],\left(\boldsymbol{\mathfrak{a}},\boldsymbol{0}\right)}\left(\partial X;\boldsymbol{E}\right)$
be an elliptic boundary operator. Then $\boldsymbol{Q}$ admits a
left parametrix $\boldsymbol{K}\in\Psi^{\left[0\right],\left(\boldsymbol{0},\boldsymbol{\mathfrak{a}}\right)}\left(\partial X;\boldsymbol{E}\right)$
such that $\boldsymbol{K}\boldsymbol{Q}=I-\boldsymbol{R}$, with $\boldsymbol{R}\in\Psi^{-\infty}\left(\partial X;\boldsymbol{E}\right)$.
Now write
\[
\boldsymbol{A}=\boldsymbol{K}\boldsymbol{Q}\boldsymbol{A}+\boldsymbol{R}\boldsymbol{A}.
\]
We know that $\boldsymbol{K}$ induces a continuous linear map
\[
\boldsymbol{K}:L^{2}\left(\partial X;\boldsymbol{E}\right)\to H^{\boldsymbol{\mathfrak{a}}}\left(\partial X;\boldsymbol{E}\right).
\]
Now, by Theorem \ref{thm:global-twisted-compositions-involving-boundary},
we have
\[
\boldsymbol{Q}\boldsymbol{A}\in\hat{\Psi}_{0\tr}^{-\infty,\left(\mathcal{E}_{\of},\left[\mathcal{E}_{\ff}\right]\right),\boldsymbol{0}}\left(X;\partial X,\boldsymbol{E}\right)
\]
which means that $\boldsymbol{Q}\boldsymbol{A}$ is a un-twisted symbolic
$0$-trace operator, $\hat{\Psi}_{0\tr}^{-\infty,\left(\mathcal{E}_{\of},\tilde{\mathcal{E}}_{\ff}\right)}\left(X,\partial X;\boldsymbol{E}\right)$
where $\tilde{\mathcal{E}}_{\ff}$ is an index set with $\left[\tilde{\mathcal{E}}_{\ff}\right]=\left[\mathcal{E}_{\ff}\right]$.
Similarly, since $\boldsymbol{R}$ is smoothing, the composition $\boldsymbol{R}\boldsymbol{A}$
is a very residual trace operator,
\[
\boldsymbol{R}\boldsymbol{A}\in\Psi_{\tr}^{-\infty,\mathcal{E}_{\of}}\left(X;\partial X,\boldsymbol{E}\right).
\]
Since $\boldsymbol{K}\in\Psi^{\left[0\right],\left(\boldsymbol{0},\boldsymbol{\mathfrak{a}}\right)}\left(\partial X;\boldsymbol{E}\right)$,
it induces a continuous linear map
\[
\boldsymbol{K}:L^{2}\left(\partial X;\boldsymbol{E}\right)\to H^{\boldsymbol{\mathfrak{s}}}\left(\partial X;\boldsymbol{E}\right).
\]
Therefore, it suffices to prove that if $\Re\left(\mathcal{E}_{\of}\right)>-\delta-1$
and $\left[\mathcal{E}_{\ff}\right]=0$ both $\boldsymbol{R}\boldsymbol{A}$
and $\boldsymbol{Q}\boldsymbol{A}$ induce continuous linear maps
$x^{\delta}L_{b}^{2}\left(X\right)\to L^{2}\left(\partial X;\boldsymbol{E}\right)$.
In fact, we can also assume that $\delta=0$: indeed, if $x$ is a
boundary defining function for $X$, then we have $\boldsymbol{A}x^{-\delta}\in\hat{\Psi}_{0\tr}^{-\infty,\left(\mathcal{E}_{\of}-\delta,\mathcal{E}_{\ff}-\delta\right)}\left(X;\partial X,\boldsymbol{E}\right)$.

From the discussion above, we can without loss of generality work
with scalar operators, and we are left to prove that if $A\in\hat{\Psi}_{0\tr}^{-\infty,\mathcal{E}}\left(X,\partial X\right)$,
$\Re\left(\mathcal{E}_{\of}\right)>-1$ and $\left[\mathcal{E}_{\ff}\right]=0$,
then $A$ induces a continuous linear map
\[
A:L_{b}^{2}\left(X\right)\to L^{2}\left(\partial X\right).
\]
Choose positive smooth densities for $X$ and $\partial X$, inducing
$L^{2}$ inner products on the spaces above. Let $A^{\dagger}$ be
the formal adjoint of $A$ with respect to these inner products. By
§\ref{thm:global-twisted-compositions-involving-boundary}, we have
$A^{\dagger}\in\hat{\Psi}_{0\po}^{-\infty,\mathcal{F}}\left(\partial X,X\right)$
where
\begin{align*}
\mathcal{F}_{\of} & :=\overline{\mathcal{E}}_{\of}+1\\
\mathcal{F}_{\ff} & :=\overline{\mathcal{E}}_{\ff}.
\end{align*}
Now, if $u\in\dot{C}^{\infty}\left(X;\boldsymbol{E}\right)$, we have
\begin{align*}
\left|\left|Au\right|\right|_{L^{2}}^{2} & =\left(Au,Au\right)_{L^{2}}\\
 & =\left(A^{\dagger}Au,u\right)_{L_{b}^{2}}.
\end{align*}
By Theorem \ref{thm:global-compositions-involving-boundary}, the
operator $A^{\dagger}A$ is well-defined and in $\hat{\Psi}_{0}^{-\infty,\left(\mathcal{F}_{\of},\mathcal{E}_{\of},\mathcal{E}_{\ff}+\mathcal{F}_{\ff}\right)}\left(X\right)$.
These index sets satisfy
\begin{align*}
\Re\left(\mathcal{F}_{\of}\right) & >0\\
\Re\left(\mathcal{E}_{\of}\right) & >-1\\
\left[\mathcal{E}_{\ff}+\mathcal{F}_{\ff}\right] & =0,
\end{align*}
so by Corollary \ref{cor:mapping-symbolic-0b-0-sobolev} $A^{\dagger}A$
is bounded as a map $L_{b}^{2}\left(X\right)\to L_{b}^{2}\left(X\right)$.
Therefore, $\left|\left|Au\right|\right|_{L^{2}}^{2}\leq\left|\left|A^{\dagger}A\right|\right|\left|\left|u\right|\right|_{L_{b}^{2}}^{2}$
where $\left|\left|A^{\dagger}A\right|\right|$ is the operator norm
of $A^{\dagger}A$ on $L_{b}^{2}\left(X\right)$. Thus, we can extend
$A$ by continuity from $\dot{C}^{\infty}\left(X\right)$ to a bounded
map $L_{b}^{2}\left(X\right)\to L^{2}\left(\partial X\right)$.

Concerning compactness, if $\boldsymbol{A}\in\hat{\Psi}_{0\tr}^{-\infty,\left(\mathcal{E}_{\of},\left[\mathcal{E}_{\ff}\right]\right),\boldsymbol{\mathfrak{s}}}\left(X;\partial X,\boldsymbol{E}\right)$,
$\Re\left(\mathcal{E}_{\of}\right)>-\delta-1$ and $\left[\mathcal{E}_{\ff}\right]=0$,
and moreover $\hat{N}\left(\boldsymbol{A}\right)\equiv0$, then for
$\epsilon>0$ sufficiently small we can write $\boldsymbol{A}\in\hat{\Psi}_{0\tr}^{-\infty,\left(\mathcal{E}_{\of},\left[\mathcal{H}\right]\right),\boldsymbol{\mathfrak{s}}+\epsilon}\left(X;\partial X,\boldsymbol{E}\right)$,
where $\Re\left(\mathcal{H}\right)>0$. Indeed, if $\hat{N}\left(\boldsymbol{A}\right)\equiv0$,
then locally we can write $\boldsymbol{A}=\Op\left(\left\langle \eta\right\rangle ^{-\boldsymbol{\mathfrak{s}}}\boldsymbol{a}\right)$
for some $\boldsymbol{a}\in S_{0\tr,\mathcal{S}}^{-\infty,\left(\mathcal{E}_{\of},\mathcal{H}'\right)}$
with $\Re\left(\mathcal{H}'\right)>0$. But then, choosing $\epsilon>0$
so that $\mathcal{H}=\mathcal{H}'-\epsilon$ satisfies $\Re\left(\mathcal{H}\right)>0$,
we can write $\left\langle \eta\right\rangle ^{-\boldsymbol{\mathfrak{s}}}\boldsymbol{a}=\left\langle \eta\right\rangle ^{-\boldsymbol{\mathfrak{s}}-\epsilon}\left\langle \eta\right\rangle ^{\epsilon}\boldsymbol{a}$
and $\left\langle \eta\right\rangle ^{\epsilon}\boldsymbol{a}\in S_{0\tr,\mathcal{S}}^{-\infty,\left(\mathcal{E}_{\of},\mathcal{H}\right)}$,
so $\boldsymbol{A}\in\hat{\Psi}_{0\tr}^{-\infty,\left(\mathcal{E}_{\of},\left[\mathcal{H}\right]\right),\boldsymbol{\mathfrak{s}}}\left(X;\partial X,\boldsymbol{E}\right)$
for $\Re\left(\mathcal{H}\right)>0$. This implies, from what we just
proved, that $\boldsymbol{A}$ induces a continuous linear map $x^{\delta}L_{b}^{2}\left(X\right)\to H^{\boldsymbol{\mathfrak{s}}+\epsilon}\left(\partial X;\boldsymbol{E}\right)$.
Since the inclusion $H^{\boldsymbol{\mathfrak{s}}+\epsilon}\left(\partial X;\boldsymbol{E}\right)\hookrightarrow H^{\boldsymbol{\mathfrak{s}}}\left(\partial X;\boldsymbol{E}\right)$
is compact, $\boldsymbol{A}$ is compact as a map $x^{\delta}L_{b}^{2}\left(X\right)\to H^{\boldsymbol{\mathfrak{s}}}\left(\partial X;\boldsymbol{E}\right)$.

(2) The proof is very similar to the previous one. We can easily reduce
ourselves to the scalar, un-twisted case, and it suffices to prove
that if $B\in\hat{\Psi}_{0\po}^{-\infty,\mathcal{E}}\left(\partial X,X\right)$,
$\Re\left(\mathcal{E}_{\of}\right)>0$, and $\left[\mathcal{E}_{\ff}\right]=0$,
then $B$ induces a bounded operator
\[
B:L^{2}\left(\partial X\right)\to L_{b}^{2}\left(X\right).
\]
As in the previous theorem, let $B^{\dagger}$ the formal adjoint
of $B$. Then $B^{\dagger}\in\hat{\Psi}_{0\tr}^{-\infty,\mathcal{F}}\left(X,\partial X\right)$,
with
\begin{align*}
\mathcal{F}{}_{\of} & :=\overline{\mathcal{E}}_{\of}-1\\
\mathcal{F}_{\ff} & :=\overline{\mathcal{E}}_{\ff},
\end{align*}
and since $\Re\left(\mathcal{E}_{\of}+\mathcal{F}_{\of}\right)>-1$,
we have $B^{\dagger}B\in\Psi_{\phg}^{-\left(\mathcal{E}_{\ff}+\mathcal{F}_{\ff}\right)}\left(\partial X\right)$
by Theorem \ref{thm:global-compositions-involving-boundary}. Since
$\left[\mathcal{E}_{\ff}+\mathcal{F}_{\ff}\right]=0$, $B^{\dagger}B$
can be seen as a standard pseudodifferential operator of order $0$,
and therefore it is bounded as an operator
\[
B^{\dagger}B:L^{2}\left(\partial X\right)\to L^{2}\left(\partial X\right).
\]
Therefore, for every $u\in C^{\infty}\left(\partial X\right)$, we
have
\begin{align*}
\left|\left|Bu\right|\right|_{L_{b}^{2}}^{2} & =\left(B^{\dagger}Bu,u\right)_{L^{2}}\\
 & \leq\left|\left|B^{\dagger}B\right|\right|\left|\left|u\right|\right|_{L^{2}}^{2},
\end{align*}
where $\left|\left|B^{\dagger}B\right|\right|$ is the operator norm
of $B^{\dagger}B$ on $L^{2}\left(\partial X\right)$. Therefore,
we can extend $B$ by continuity from $C^{\infty}\left(\partial X\right)$
to a bounded operator $L^{2}\left(\partial X\right)\to L_{b}^{2}\left(X\right)$.

Concerning compactness, suppose that $\boldsymbol{B}\in\hat{\Psi}_{0\text{P}}^{-\infty,\left(\mathcal{E}_{\of},\left[\mathcal{E}_{\ff}\right]\right),\boldsymbol{\mathfrak{s}}}\left(\partial X,\boldsymbol{E};X\right)$,
$\Re\left(\mathcal{E}_{\of}\right)>\delta$, $\left[\mathcal{E}_{\ff}\right]=0$,
and $\hat{N}\left(\boldsymbol{B}\right)\equiv0$. Then we can write
$\boldsymbol{B}\in\hat{\Psi}_{0\text{P}}^{-\infty,\left(\mathcal{E}_{\of},\left[\mathcal{H}\right]\right),\boldsymbol{\mathfrak{s}}-\epsilon}\left(\partial X,\boldsymbol{E};X\right)$
for a small $\epsilon>0$, with $\Re\left(\mathcal{H}\right)>0$.
Indeed, the condition $\hat{N}\left(\boldsymbol{B}\right)\equiv0$
ensures that locally $\boldsymbol{B}=\Op_{L}^{\po}\left(\boldsymbol{b}\left\langle \eta\right\rangle ^{\boldsymbol{\mathfrak{s}}}\right)$
for some $\boldsymbol{b}\in S_{0\po,\mathcal{S}}^{-\infty,\left(\mathcal{E}_{\of},\mathcal{H}'\right)}$
with $\Re\left(\mathcal{H}'\right)>0$; but then, for $\epsilon>0$
small enough, calling $\mathcal{H}=\mathcal{H}'-\epsilon$ we still
have $\Re\left(\mathcal{H}\right)>0$, and writing $\boldsymbol{b}\left\langle \eta\right\rangle =\boldsymbol{b}\left\langle \eta\right\rangle ^{\epsilon}\left\langle \eta\right\rangle ^{\boldsymbol{\mathfrak{s}}-\epsilon}$,
we have $\boldsymbol{b}\left\langle \eta\right\rangle ^{\epsilon}\in S_{0\po,\mathcal{S}}^{-\infty,\left(\mathcal{E}_{\of},\mathcal{H}\right)}$
so $\boldsymbol{B}\in\hat{\Psi}_{0\text{P}}^{-\infty,\left(\mathcal{E}_{\of},\left[\mathcal{H}\right]\right),\boldsymbol{\mathfrak{s}}-\epsilon}\left(\partial X,\boldsymbol{E};X\right)$.
But then $\boldsymbol{B}$ is continuous as a map $H^{\boldsymbol{\mathfrak{s}}-\epsilon}\left(\partial X;\boldsymbol{E}\right)\to x^{\delta}L_{b}^{2}\left(X\right)$,
and since the inclusion $H^{\boldsymbol{\mathfrak{s}}}\left(\partial X;\boldsymbol{E}\right)\to H^{\boldsymbol{\mathfrak{s}}-\epsilon}\left(\partial X;\boldsymbol{E}\right)$
is compact, $\boldsymbol{B}$ is compact as a map $H^{\boldsymbol{\mathfrak{s}}}\left(\partial X;\boldsymbol{E}\right)\to x^{\delta}L_{b}^{2}\left(X\right)$.
\end{proof}

\section{\label{sec:Parametrix-construction}Parametrix construction}

In this section, we finally provide a solution for the boundary value
problem outlined in §\ref{sec:Boundary-value-problems}. Now that
we have all the pieces in place, we can outline again the problem
in a precise way, and state the main results.

Let $L\in\Diff_{0}^{m}\left(X\right)$ be a $0$-elliptic operator
with constant indicial roots, and let $\delta\in\mathbb{R}$ be a
surjective, not injective weight for $L$. Fix an auxiliary smooth
positive density $\omega$ for $X$, a vector field $V$ on $X$ transversal
to $\partial X$ and inward-pointing, and a boundary defining function
$x$ on $X$ compatible with $V$ in the sense that $Vx\equiv1$ near
the boundary. Denote by $P_{1},P_{2},G$ the $x^{\delta}L_{b}^{2}$
orthogonal projectors and generalized inverse for $L$ with respect
to the $x^{\delta}L_{b}^{2}$ inner product determined by the singular
density $x^{-2\delta-1}\omega$ (cf. §\ref{subsubsec:Semi-Fredholmness}).
By Mazzeo's Theorem \ref{thm:(Mazzeo)-generalized-projectors-and-gen-inverse},
we have
\begin{align*}
P_{1} & \in\Psi_{0}^{-\infty,\mathcal{E}}\left(X\right)\\
P_{2} & \in\Psi^{-\infty,\mathcal{F}}\left(X\right)\\
G & \in\Psi_{0}^{-m,\mathcal{H}}\left(X\right),
\end{align*}
where the index sets $\mathcal{E}=\left(\mathcal{E}_{\lf},\mathcal{E}_{\rf},\mathcal{E}_{\ff_{0}}\right)$,
$\mathcal{F}=\left(\mathcal{F}_{\lf},\mathcal{F}_{\rf}\right)$ and
$\mathcal{H}=\left(\mathcal{H}_{\lf},\mathcal{H}_{\rf},\mathcal{H}_{\ff_{0}}\right)$,
are as in Theorem \ref{thm:(Mazzeo)-generalized-projectors-and-gen-inverse}.
In particular, $\mathcal{E}_{\lf}$ is the extended union of the simple
index sets generated by the pairs $\left(\mu,M_{\mu}\right)\in\widetilde{\spec}_{b}\left(L\right)$
such that $\Re\left(\mu\right)>\delta$.

Let $\overline{\delta}$ be the infimum of the injective weights for
$L$. The \emph{critical indicial roots} for $L$ relative to the
weight $\delta$ are the indicial roots whose real part is in $(\delta,\overline{\delta}]$
(cf. §\ref{subsubsec:Critical-indicial-roots}). Denote by $\boldsymbol{E}_{L}\to\partial X$
the smooth vector bundle over $\partial X$ defined as
\[
\boldsymbol{E}_{L}=\bigoplus_{\text{\ensuremath{\mu} critical}}E_{\mu,\tilde{M}_{\mu}},
\]
where $E_{\mu,\tilde{M}_{\mu}}\to\partial X$ is the vector bundle
of rank $\tilde{M}_{\mu}=\max\left\{ m\in\mathbb{N}:\left(\mu,m\right)\in\mathcal{E}_{\lf}\right\} $
whose smooth sections are the smooth fibrewise log-homogeneous of
degree $\leq\left(\mu,\tilde{M}_{\mu}\right)$ functions on $N^{+}\partial X$
(cf. §\ref{subsubsec:Log-homogeneous-functions}). Concretely, in
terms of the trivialization $N^{+}\partial X\equiv\partial X\times\mathbb{R}_{1}^{1}$
induced by the choice of $V$, a global smooth frame of $E_{\mu,\tilde{M}_{\mu}}$
is given by the functions $x^{\mu},...,x^{\mu}\left(\log x\right)^{\tilde{M}_{\mu}}$
on $N^{+}\partial X$, where $x$ here is the global coordinate on
$N^{+}\partial X\equiv\partial X\times\mathbb{R}_{1}^{1}$ induced
by $V$.

The trace map $\boldsymbol{A}_{L}$ for $L$ is the operator
\[
\boldsymbol{A}_{L}:\dot{C}^{\infty}\left(X\right)\to C^{\infty}\left(\partial X;\boldsymbol{E}_{L}\right)
\]
sending a function $u\in x^{\delta}H_{b}^{\infty}\left(X\right)$
to the coefficients $\left\{ w_{\mu,l}:\text{\ensuremath{\mu} critical, \ensuremath{l\leq\tilde{M}_{\mu}}}\right\} $
of the expansion of $P_{1}u$,
\[
P_{1}u\sim\sum_{\begin{smallmatrix}\left(\alpha,l\right)\in\mathcal{E}_{\lf}\\
\Re\left(\alpha\right)>\delta
\end{smallmatrix}}w_{\alpha,l}x^{\alpha}\left(\log x\right)^{l},
\]
computed with respect to the choice of the auxiliary vector field
$V$. Note that, by construction, we have $\boldsymbol{A}_{L}P_{1}=\boldsymbol{A}_{L}$.
Calling $\tr_{\mu,\tilde{M}_{\mu}}^{V}:\mathcal{A}_{\phg}^{\mathcal{E}_{\lf}}\left(X\right)\to C^{\infty}\left(\partial X;E_{\mu,\tilde{M}_{\mu}}\right)$
the map sending a function $u\in\mathcal{A}_{\phg}^{\mathcal{E}_{\lf}}\left(X\right)$
to the log-homogeneous term of degree $\leq\left(\mu,\tilde{M}_{\mu}\right)$
in the expansion of $u$ relative to $V$, and enumerating the critical
indicial roots $\mu_{1},...,\mu_{I}$, we can write in terms of the
decomposition $\boldsymbol{E}_{L}=\bigoplus_{i}E_{\mu_{i},\tilde{M}_{\mu_{i}}}$
\[
\boldsymbol{A}_{L}=\left(\begin{matrix}\tr_{\mu_{1},\tilde{M}_{\mu_{1}}}^{V}\circ P_{1}\\
\vdots\\
\tr_{\mu_{I},\tilde{M}_{\mu_{I}}}^{V}\circ P_{1}
\end{matrix}\right).
\]
In §\ref{subsec:The-trace-map-for-L}, we will prove that $\boldsymbol{A}_{L}$
is a twisted symbolic $0$-trace operator. More precisely, denote
by $\boldsymbol{\mathfrak{s}}_{L}:\boldsymbol{E}_{L}\to\boldsymbol{E}_{L}$
the smooth endomorphism of $\boldsymbol{E}_{L}$ induced by the fibrewise
action of $x\partial_{x}$ (the canonical dilation invariant vector
field on $N^{+}\partial X$ tangent to the fibers) on the space of
log-homogeneous functions on $N^{+}\partial X$. We will prove the
following
\begin{thm}
$\boldsymbol{A}_{L}\in\hat{\Psi}_{0\tr}^{-\infty,\left(\mathcal{E}_{\rf},\left[0\right]\right),-\boldsymbol{\mathfrak{s}}_{L}}\left(X;\partial X,\boldsymbol{E}_{L}\right)$.
\end{thm}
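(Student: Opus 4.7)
The plan is to verify the two defining conditions of the class $\hat{\Psi}_{0\tr}^{-\infty,(\mathcal{E}_{\rf},[0]),-\boldsymbol{\mathfrak{s}}_{L}}(X;\partial X,\boldsymbol{E}_{L})$ separately, splitting the analysis away from $\partial\Delta$ and near $\partial\Delta$. Away from the corner, I will exploit Mazzeo's theorem \ref{thm:(Mazzeo)-generalized-projectors-and-gen-inverse} directly: since $P_{1}\in\Psi_{0}^{-\infty,\mathcal{E}}(X)$ and the collar neighborhood of $\ff_0$ only enters near $\partial\Delta$, off any neighborhood of $\partial\Delta$ the Schwartz kernel of $P_{1}$ is polyhomogeneous on $X^{2}$ with index set $\mathcal{E}_{\lf}$ at the left face and $\mathcal{E}_{\rf}$ at the right face. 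Applying the scalar trace maps $\tr_{\mu_{i},\tilde{M}_{\mu_{i}}}^{V}$ to the left-face expansion simply picks off certain coefficients, producing a smooth section of $E_{\mu_{i},\tilde{M}_{\mu_{i}}}\to\partial X$ in the left variable that is still polyhomogeneous with index set $\mathcal{E}_{\rf}$ in the right variable; this is exactly the residual trace class $\Psi_{\tr}^{-\infty,\mathcal{E}_{\rf}}(X;\partial X,\boldsymbol{E}_L)$.

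Next, I would perform the local analysis at a boundary point. In coordinates $(x,y)$ compatible with $V$ we have, by Lemma \ref{lem:Local-characterization-0-calculus}, a polyhomogeneous representative $K(y;x,\tilde x,y-\tilde y)\,d\tilde x d\tilde y$ of $K_{P_1}$ with $K(y;x,\tilde x,Y)$ lifting to $\mathcal{A}_{\phg}^{(\mathcal{E}_{\lf},\mathcal{E}_{\rf},\mathcal{E}_{\ff_{0}}-n-1,\infty,\infty,\infty)}(M_{0}^{2})$. Using the projective coordinates $s=x/\tilde{x},\,u=(y-\tilde y)/\tilde x$ near $\lf\cap\ff_{0}$, the expansion of $K$ in the $s$-variable at $s=0$ produces, for each critical $\mu$, a log-homogeneous piece of the form $\tilde{x}^{\mu}\sum_{l\le\tilde{M}_\mu}(\log\tilde{x})^l\,a_{\mu,l}(y;u)+ \tilde{x}^{\mu}(\log \tilde x)^{\tilde M_\mu}\cdot O(x/\tilde x)$, and extracting these coefficients is precisely what $\tr_{\mu,\tilde M_\mu}^{V}\circ P_1$ does. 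The resulting local kernel is $a_{\mu}(y;\tilde x,y-\tilde y)\,d\tilde x d\tilde y$ where $a_{\mu}(y;\tilde x,Y)$ is polyhomogeneous on a model space canonically identified with $T_{0}^{2}$, with index set $\mathcal{E}_{\rf}+\mu+\mathbb{N}\log$ at the front face (the logarithmic thickening being controlled by $\tilde M_\mu$) and $\mathcal{E}_{\rf}$ at the original face.

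I would then Fourier transform in $Y$ to convert the local Schwartz kernel into a symbol on $\hat{T}_{0}^{2}$, applying Proposition \ref{prop:Hintz-lemma}. The pieces contributed by each $\mu$ become symbols of the form $\langle\eta\rangle^{\mu}$ times polyhomogeneous factors with at most $\tilde M_\mu$ logarithmic losses at $\ff\cup\iif_\eta$. The key algebraic observation is that the block-diagonal factor
\[
\langle\eta\rangle^{\boldsymbol{\mathfrak{s}}_{L}}=\bigoplus_{\text{\ensuremath{\mu} critical}}\langle\eta\rangle^{\mu}\sum_{l< \tilde M_\mu+1}\tfrac{1}{l!}(\log\langle\eta\rangle)^{l}(\mathfrak{s}_{\mu,\tilde M_\mu}-\mu)^{l}
\]
absorbs \emph{exactly} the non-homogeneous log-weighted behavior that the trace map produces, so that one can write the collection of local symbols as $\langle\eta\rangle^{\boldsymbol{\mathfrak{s}}_{L}(y)}\boldsymbol{a}(y;\tilde x,\eta)$ with $\boldsymbol{a}\in S_{0\tr,\mathcal{S}}^{-\infty,(\mathcal{E}_{\rf},\tilde{\mathcal{E}}_\ff)}(\mathbb{R}^{n};\mathbb{R}_{1}^{n+1};\mathbb{C}^{M_{i}})$ for some $\tilde{\mathcal{E}}_\ff$ with $[\tilde{\mathcal{E}}_\ff]=0$. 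This is exactly the local form required by the definition of $\hat{\Psi}_{0\tr}^{-\infty,(\mathcal{E}_{\rf},[0]),-\boldsymbol{\mathfrak{s}}_{L}}$, and a sanity check is provided by the fact that the Bessel family of the resulting operator (computed as in \S\ref{subsubsec:The-Bessel-family-of-twisted-0-trace}) must equal the twisted-homogeneous family $\hat{\boldsymbol{a}}_{L,\eta}$ already computed in \S\ref{subsec:analysis-of-The-Bessel-trace}.

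The main obstacle is the symbol-level verification in the third step: namely, showing that after applying $\tr_{\mu,\tilde M_\mu}^{V}\circ P_1$ and Fourier-transforming, the logarithmic orders generated by pole-of-order-$\tilde M_\mu$ behavior at the left face of $P_1$, together with the index-set shifts produced by Proposition \ref{prop:Hintz-lemma}, assemble \emph{exactly} into the factorization $\langle\eta\rangle^{\boldsymbol{\mathfrak{s}}_L}\boldsymbol{a}$ required by the twisted symbol class, and not merely into a polyhomogeneous symbol with a similar leading set. The book-keeping is cleanest if one organizes the computation generalized-eigenbundle by generalized-eigenbundle, using that $\mathfrak{s}_{\mu,\tilde M_\mu}-\mu$ is nilpotent of order $\le\tilde M_\mu+1$ so that the logarithmic series terminates; the uniqueness-modulo-lower-order statement implicit in Lemma \ref{lem:trace-and-dilations} then pins down the factorization. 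Coordinate invariance, handled as in \S\ref{subsec:Technical-details-on-proofs}, completes the argument.
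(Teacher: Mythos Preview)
Your proposal has a genuine gap: you work with $P_1$ only as an element of the large residual $0$-calculus $\Psi_0^{-\infty,\mathcal{E}}(X)$ (via Mazzeo's theorem), but the proof requires the sharper fact that $P_1$ is a \emph{$0$-interior operator}, i.e.\ $P_1\in\hat{\Psi}_0^{-\infty,(\mathcal{E}_{\lf},\mathcal{E}_{\rf},\mathcal{I}_{\ff_0})}(X)$ with $[\mathcal{I}_{\ff_0}]=0$. This is Theorem~\ref{thm:P1_is_0_interior}, which is not automatic: it uses the idempotence $P_1=P_1^2$ together with the composition theorem for $0b$-interior operators to iteratively kill the index set at $\ff_b$. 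A generic element of $\Psi_0^{-\infty,\mathcal{E}}(X)$ is \emph{not} $0$-interior (cf.\ Remark~\ref{rem:0-calculus-not-in-0-interior}), so your argument, which would apply to any such operator, cannot produce the claimed factorization.

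Concretely, your step ``Fourier transform in $Y$ and observe that $\langle\eta\rangle^{\boldsymbol{\mathfrak s}_L}$ absorbs the log-weighted behavior'' does not follow from Proposition~\ref{prop:Hintz-lemma} alone. Taking the trace in $x$ of the physical kernel and Fourier transforming shows only that $\boldsymbol{A}_L$ is an (untwisted) symbolic $0$-trace operator; by Corollary~\ref{cor:symbolic-0-trace-are-physical-0-trace-and-viceversa} the resulting $\of$-index set is $\mathcal{E}_{\rf}\overline{\cup}(\mathcal{F}_{\ff}-1)$, polluted by the front-face index set, whereas the twisted class $\hat{\Psi}_{0\tr}^{-\infty,(\mathcal{E}_{\rf},[0]),-\boldsymbol{\mathfrak s}_L}$ requires the $\of$-index set to be exactly $\mathcal{E}_{\rf}$. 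The paper's argument avoids this loss by working in frequency space from the start: once $P_1$ is $0$-interior, its symbol is polyhomogeneous on $\hat M_0^2$, and the alternative compactification $\hat D_0^2\cong\overline{\mathbb{R}}_2^2\times\overline{\mathbb{R}}^n$ via $\tau=x\langle\eta\rangle$, $\tilde\tau=\tilde x\langle\eta\rangle$ separates variables so that $\tr_{\mu,M_\mu}^{\partial_x}$ becomes $\langle\eta\rangle^{\mathfrak s_{\mu,M_\mu}}\circ\tr_{\mu,M_\mu}^{\partial_\tau}$ by Lemma~\ref{lem:trace-and-dilations}, producing the twisting factor cleanly and preserving the $\of$-index set $\mathcal{E}_{\rf}$. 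You correctly flagged this factorization as the ``main obstacle'', but the resolution is not mere book-keeping with nilpotent matrices; it is the structural input of Theorem~\ref{thm:P1_is_0_interior}.
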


We can now formulate the precise notion of elliptic boundary condition:
\begin{defn}
\label{def:elliptic-boundary-condition-precise}Let $\boldsymbol{W}\to\partial X$
be a smooth vector bundle equipped with an endomorphism $\boldsymbol{\mathfrak{t}}:\boldsymbol{W}\to\boldsymbol{W}$
with constant eigenvalues. An \emph{elliptic boundary condition }for
$L$ relative to the weight $\delta$ is a twisted boundary operator
$\boldsymbol{Q}\in\Psi_{\phg}^{\left[0\right],\left(-\boldsymbol{\mathfrak{s}}_{L},\boldsymbol{\mathfrak{t}}\right)}\left(\partial X;\boldsymbol{E}_{L},\boldsymbol{W}\right)$
such that, for every $\eta\in T^{*}\partial X\backslash0$, the principal
symbol $\sigma_{\eta}\left(\boldsymbol{Q}\right):\left(\pi^{*}\boldsymbol{E}_{L}\right)_{\eta}\to\left(\pi^{*}\boldsymbol{W}\right)_{\eta}$
restricts to an isomorphism $\mathcal{C}\to\left(\pi^{*}\boldsymbol{W}\right)_{\eta}$,
where $\mathcal{C}$ is the Calderón bundle for $L$ relative to the
weight $\delta$ (cf. §\ref{subsec:Boundary-conditions}).
\end{defn}

The main result of the paper is the following
\begin{thm}
\label{thm:main-theorem}Let $\boldsymbol{Q}\in\Psi_{\phg}^{\left[0\right],\left(-\boldsymbol{\mathfrak{s}}_{L},\boldsymbol{\mathfrak{t}}\right)}\left(\partial X;\boldsymbol{E}_{L},\boldsymbol{W}\right)$
be an elliptic boundary condition for $L$ relative to $\delta$.
Define $\mathcal{L}=\left(\begin{matrix}L & \boldsymbol{Q}\boldsymbol{A}_{L}\end{matrix}\right)^{T}$.
Then there exist operators $G_{1},G_{2}\in\Psi_{0}^{-m,\mathcal{H}}\left(X\right)$,
with
\begin{align*}
\Re\left(\mathcal{H}_{\lf}\right) & >\delta\\
\Re\left(\mathcal{H}_{\rf}\right) & >-\delta-1\\
\left[\mathcal{H}_{\ff_{0}}\right] & =0,
\end{align*}
and twisted symbolic $0$-Poisson operator $\boldsymbol{C}_{1},\boldsymbol{C}_{2}\in\hat{\Psi}_{0\po}^{-\infty,\left(\mathcal{E}_{\lf},\left[0\right]\right),\boldsymbol{\mathfrak{t}}}\left(\partial X,\boldsymbol{W};X\right)$,
such that:
\begin{enumerate}
\item $\mathcal{G}_{1}:=\left(\begin{matrix}G_{1} & \boldsymbol{C}_{1}\end{matrix}\right)$
is a left parametrix for $\mathcal{L}$, satisfying
\[
\mathcal{G}_{1}\mathcal{L}=I-R
\]
where $R$ is a very residual operator in $\Psi^{-\infty,\left(\mathcal{E}_{\lf},\mathcal{E}_{\rf}\right)}\left(X\right)$;
\item $\mathcal{G}_{2}:=\left(\begin{matrix}G_{2} & \boldsymbol{C}_{2}\end{matrix}\right)$
is a right parametrix for $\mathcal{L}$, satisfying
\[
\mathcal{L}\mathcal{G}_{2}=I-\left(\begin{matrix}R'\\
 & \boldsymbol{S}
\end{matrix}\right),
\]
where $R'$ is a very residual operator in $\Psi^{-\infty,\left(\mathcal{F}_{\lf},\mathcal{F}_{\rf}\right)}\left(X\right)$,
and $\boldsymbol{S}\in\Psi^{-\infty}\left(\partial X;\boldsymbol{W}\right)$
is a smoothing operator on the boundary.
\end{enumerate}
\end{thm}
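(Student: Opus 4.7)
The plan is a parametrix construction that mirrors the Mazzeo--Melrose strategy of Theorem \ref{thm:(Mazzeo=002013Melrose)Good_parametrices_fully_0-elliptic}, now applied to the boundary-value system $\mathcal{L}$ using the symbolic calculus of \S\ref{sec:The-symbolic-0-calculus} and the composition theorems of \S\ref{sec:Adjoints-compositions-mapping}. I describe the right parametrix $\mathcal{G}_2$ in detail; the left parametrix is built symmetrically. The Bessel-level inverse is already exhibited in \S\ref{subsec:Solving-the-model}: at each $\eta \in T^{*}\partial X \backslash 0$ the inverse of $\hat N_\eta(L) \oplus \sigma_\eta(\boldsymbol{Q}) \hat{\boldsymbol{a}}_{L,\eta}$ is the row $(\hat N_\eta(G),\, \hat{\boldsymbol{b}}_\eta \hat{\boldsymbol{k}}_\eta)$ with $\hat{\boldsymbol{k}}_\eta$ twisted-homogeneous of degree $(\boldsymbol{\mathfrak{t}},-\boldsymbol{\mathfrak{s}}_L)$ and $\hat{\boldsymbol{b}}_\eta$ twisted-homogeneous of degree $-\boldsymbol{\mathfrak{s}}_L$. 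The ellipticity theorem at the end of \S\ref{subsec:The-twisted-boundary-calculus} produces a right parametrix $\boldsymbol{K} \in \Psi_{\phg}^{[0],(\boldsymbol{\mathfrak{t}},-\boldsymbol{\mathfrak{s}}_L)}(\partial X;\boldsymbol{W},\boldsymbol{E}_L)$ of $\boldsymbol{Q}$ with $\sigma(\boldsymbol{K}) = \hat{\boldsymbol{k}}$ and $\boldsymbol{Q}\boldsymbol{K} = I - \boldsymbol{S}_0$, $\boldsymbol{S}_0 \in \Psi^{-\infty}(\partial X;\boldsymbol{W})$; the surjectivity of the Bessel-family map on twisted symbolic $0$-Poisson operators (\S\ref{subsec:Twisted--trace-and-poisson}) provides $\boldsymbol{B}_0 \in \hat\Psi_{0\po}^{-\infty,(\mathcal{E}_{\lf},[0]),-\boldsymbol{\mathfrak{s}}_L}(\partial X,\boldsymbol{E}_L;X)$ with $\hat N(\boldsymbol{B}_0) = \hat{\boldsymbol{b}}$.

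Setting $\boldsymbol{C}_0 := \boldsymbol{B}_0 \boldsymbol{K}$, Theorem \ref{thm:global-twisted-compositions-involving-boundary}(2) places $\boldsymbol{C}_0$ in $\hat\Psi_{0\po}^{-\infty,(\mathcal{E}_{\lf},[0]),\boldsymbol{\mathfrak{t}}}(\partial X,\boldsymbol{W};X)$ with Bessel family $\hat{\boldsymbol{b}}\hat{\boldsymbol{k}}$. Let $G \in \Psi_{0}^{-m,\mathcal{H}}(X)$ be the generalized inverse of Theorem \ref{thm:(Mazzeo)-generalized-projectors-and-gen-inverse}, and form the principal right parametrix $\mathcal{G}_0 := \begin{pmatrix} G & \boldsymbol{C}_0 \end{pmatrix}$. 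Applying Theorems \ref{thm:global-compositions-involving-boundary}--\ref{thm:global-twisted-compositions-mixed} to each entry of $\mathcal{L}\mathcal{G}_0$, the critical observations are: $LG = I - P_2$ with $P_2$ very residual; $\boldsymbol{A}_L G = \boldsymbol{A}_L P_1 G = 0$ since $P_1 G = 0$, annihilating the bottom-left entry; $\hat N(L\boldsymbol{C}_0) = \hat N(L)\hat{\boldsymbol{b}}\hat{\boldsymbol{k}} = 0$ because $\hat{\boldsymbol{b}}$ lands in $\ker \hat N(L)$; and $\sigma(\boldsymbol{Q}\boldsymbol{A}_L \boldsymbol{C}_0) = \sigma(\boldsymbol{Q})\hat{\boldsymbol{a}}_L \hat{\boldsymbol{b}} \hat{\boldsymbol{k}} = \sigma(\boldsymbol{Q})\hat{\boldsymbol{k}}\sigma(\boldsymbol{Q})\hat{\boldsymbol{k}} = I$ from the defining identities $\hat{\boldsymbol{a}}_L\hat{\boldsymbol{b}} = \hat{\boldsymbol{k}}\sigma(\boldsymbol{Q})$ and $\sigma(\boldsymbol{Q})\hat{\boldsymbol{k}} = I$. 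Hence $\mathcal{L}\mathcal{G}_0 = I - \mathcal{R}_0$ where every component of $\mathcal{R}_0$ has vanishing Bessel family (for the interior / trace / Poisson entries) or vanishing principal symbol (for the boundary entry).

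Because $\mathcal{R}_0$ has vanishing leading symbols at the front face and at $|\eta|=\infty$, the iterated powers $\mathcal{R}_0^k$ lie in classes whose index sets at these faces strictly improve with $k$; asymptotic completeness (\S\ref{subsec:Asymptotic-sums-and-differentiations-of-symbols}) yields $\mathcal{S} \sim \sum_{k \ge 1} \mathcal{R}_0^k$ in the appropriate block-operator class, and $\mathcal{G}_2 := \mathcal{G}_0(I + \mathcal{S})$ satisfies $\mathcal{L}\mathcal{G}_2 = I - \mathcal{R}_\infty$ with $\mathcal{R}_\infty$ having vanishing asymptotics at the front face and at $|\eta|=\infty$. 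By the local characterizations of \S\ref{subsec:-calculus,--calculus,-extended}--\S\ref{subsec:-Poisson-and--trace} and Corollary \ref{cor:global-relation-symbolic0b-physical0b}, this forces the interior component of $\mathcal{R}_\infty$ to collapse to a very residual operator in $\Psi^{-\infty,(\mathcal{F}_{\lf},\mathcal{F}_{\rf})}(X)$ and the boundary component to a smoothing operator in $\Psi^{-\infty}(\partial X;\boldsymbol{W})$, while the off-diagonal entries vanish by the $\boldsymbol{A}_L G = 0$ argument iterated through the Neumann sum. The left parametrix is built analogously from the identity $GL = I - P_1$, using the dual calculation $\hat N(\boldsymbol{C}_0 \boldsymbol{Q} \boldsymbol{A}_L) = \hat{\boldsymbol{b}}\hat{\boldsymbol{k}}\sigma(\boldsymbol{Q})\hat{\boldsymbol{a}}_L = \hat{\boldsymbol{b}}\hat{\boldsymbol{a}}_L = \hat N(P_1)$ to see that $\mathcal{G}_0\mathcal{L} - I$ has vanishing Bessel family, followed by the analogous Neumann iteration.

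The main obstacle is the index-set bookkeeping in the Neumann step. Each application of Theorems \ref{thm:global-compositions-involving-boundary}--\ref{thm:global-composition-0b-0b} and \ref{thm:global-twisted-compositions-involving-boundary}--\ref{thm:global-twisted-compositions-mixed} produces extended unions at the left and right faces that mix the index sets $\mathcal{E}_{\lf},\mathcal{E}_{\rf},\mathcal{H}_{\lf},\mathcal{H}_{\rf}$ of the factors with the front-face index sets; one must check that these extended unions stabilize within the scales $\mathcal{E}$ (left parametrix) and $\mathcal{F}$ (right parametrix) declared in Theorem \ref{thm:(Mazzeo)-generalized-projectors-and-gen-inverse}, rather than inflating with each iteration. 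This closure comes precisely from the fact that $\hat{\boldsymbol{b}}$ and $\hat{\boldsymbol{k}}$ realize a \emph{Bessel-level inverse}: after the first iteration the remainder has strictly positive leading order at the front face, so subsequent compositions enter an absolutely convergent asymptotic regime in which the extended unions are controlled by the original Mazzeo--Melrose index sets. A subsidiary but nontrivial point is verifying that the residual boundary component lands in the genuinely smoothing class $\Psi^{-\infty}(\partial X;\boldsymbol{W})$ rather than merely a large polyhomogeneous class; this uses the fact that $I - \boldsymbol{Q}\boldsymbol{K} = \boldsymbol{S}_0$ is already smoothing and is preserved as such under all compositions appearing in the Neumann sum.
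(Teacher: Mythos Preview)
Your overall architecture is right and matches the paper's: build $\boldsymbol{K}_0$ and $\boldsymbol{B}_0$ realizing the Bessel-level inverse, set $\boldsymbol{C}_0=\boldsymbol{B}_0\boldsymbol{K}_0$, take $\mathcal{G}_0=(G,\boldsymbol{C}_0)$, and iterate away the error. The left-parametrix side is essentially the paper's argument: $\mathcal{G}_0\mathcal{L}=I-T_0$ with $T_0=P_1-\boldsymbol{C}_0\boldsymbol{Q}\boldsymbol{A}_L$ a single $0$-interior operator of vanishing Bessel family, and one Neumann-iterates $T_0$ alone.

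The gap is on the right-parametrix side. You only arrange $\hat N(L\boldsymbol{C}_0)=0$, not $L\boldsymbol{C}_0=0$. Hence your remainder is
\[
\mathcal{R}_0=\begin{pmatrix}P_2 & -L\boldsymbol{C}_0\\ 0 & I-\boldsymbol{Q}\boldsymbol{A}_L\boldsymbol{C}_0\end{pmatrix},
\]
upper-triangular with a nonzero Poisson entry in the top right. The full-block Neumann series $\sum_{k\ge1}\mathcal{R}_0^k$ that you invoke does \emph{not} converge: the top-left block is the very residual projector $P_2$, and $P_2^k=P_2$ for all $k$, so the partial sums of the $(1,1)$-entry are $N\cdot P_2$. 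Your sentence ``the iterated powers $\mathcal{R}_0^k$ lie in classes whose index sets at these faces strictly improve with $k$'' is simply false for this block. Even if you try to iterate only the boundary block, the off-diagonal $-L\boldsymbol{C}_0$ survives and you do not obtain the diagonal form required by the theorem.

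The paper's fix is a one-line trick you are missing: replace $\boldsymbol{B}_0$ by $P_1\boldsymbol{B}_0$ (this stays in $\hat\Psi_{0\po}^{-\infty,(\mathcal{E}_{\lf},[0]),-\boldsymbol{\mathfrak{s}}_L}$ by Theorem~\ref{thm:global-twisted-compositions-mixed}, and has the same Bessel family since $\hat N(P_1)\hat{\boldsymbol b}=\hat{\boldsymbol b}$). Then $L\boldsymbol{C}_0=LP_1\boldsymbol{B}_0\boldsymbol{K}_0=0$ \emph{exactly}, because $LP_1=0$. The remainder becomes the clean diagonal $\operatorname{diag}(P_2,\boldsymbol{R}_0)$; you leave $P_2$ alone as the final $R'$, Neumann-iterate only the boundary block $\boldsymbol{R}_0\in\Psi_{\phg}^{[\mathcal I],(\boldsymbol{\mathfrak t},\boldsymbol{\mathfrak t})}$ with $\Re(\mathcal I)\ge1$, and set $G_2=G$, $\boldsymbol{C}_2=\boldsymbol{C}_0\boldsymbol{S}$. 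With that correction your proof goes through.
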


This theorem implies two corollaries. One is a Fredholm theorem:
\begin{cor}
Let $\boldsymbol{Q}$ be an elliptic boundary condition for $L$ relative
to the weight $\delta$, as in Theorem \ref{thm:main-theorem}. Then
the map
\[
\mathcal{L}=L\oplus\boldsymbol{Q}\boldsymbol{A}_{L}:x^{\delta}H_{0}^{k+m}\left(X\right)\to x^{\delta}H_{0}^{k}\left(X\right)\oplus H^{\boldsymbol{\mathfrak{t}}}\left(\partial X;\boldsymbol{W}\right)
\]
is Fredholm.
\end{cor}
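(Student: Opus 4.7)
The plan is to deduce the Fredholmness of $\mathcal{L}$ directly from the parametrix construction of Theorem \ref{thm:main-theorem}, by verifying boundedness of $\mathcal{L}$ and of $\mathcal{G}_1,\mathcal{G}_2$ between the relevant spaces and compactness of the remainders $R,R',\boldsymbol{S}$. This then places $\mathcal{L}$ in the classical situation of an operator admitting both a left and a right inverse modulo compacts, which forces it to be Fredholm.

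For the boundedness of $\mathcal{L}$, the piece $L\in\Diff_0^m(X)\subseteq\Psi_0^m(X)$ maps $x^\delta H_0^{k+m}(X)\to x^\delta H_0^k(X)$ by the standard $0$-Sobolev estimates. For $\boldsymbol{Q}\boldsymbol{A}_L$, one uses that the surjective weight hypothesis forces $\Re(\mathcal{E}_{\rf})=\Re(\overline{\mathcal{E}}_{\lf})-2\delta-1>-\delta-1$, so Theorem \ref{thm:mapping-twisted-trace-interior-sobolev}(1) gives $\boldsymbol{A}_L:x^\delta L_b^2(X)\to H^{-\boldsymbol{\mathfrak{s}}_L}(\partial X;\boldsymbol{E}_L)$, and Proposition \ref{prop:mapping-twisted-boundary-sobolev} then gives $\boldsymbol{Q}:H^{-\boldsymbol{\mathfrak{s}}_L}(\partial X;\boldsymbol{E}_L)\to H^{\boldsymbol{\mathfrak{t}}}(\partial X;\boldsymbol{W})$. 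For the parametrices, the terms $G_i\in\Psi_0^{-m}(X)+\Psi_0^{-\infty,\mathcal{H}}(X)$ are bounded $x^\delta H_0^k(X)\to x^\delta H_0^{k+m}(X)$: the small-calculus piece by order considerations, and the residual piece by the Sobolev mapping theorem for $\Psi_0^{-\infty,\bullet}$ using $\Re(\mathcal{H}_{\lf})>\delta$, $\Re(\mathcal{H}_{\rf})>-\delta-1$, $[\mathcal{H}_{\ff_0}]=0$. The Poisson pieces $\boldsymbol{C}_i$ map $H^{\boldsymbol{\mathfrak{t}}}(\partial X;\boldsymbol{W})\to x^\delta L_b^2(X)$ by Theorem \ref{thm:mapping-twisted-trace-interior-sobolev}(2), and to upgrade to $x^\delta H_0^{k+m}(X)$ I will check that any composition $V_1\cdots V_j\boldsymbol{C}_i$ with $V_\alpha\in\mathcal{V}_0(X)$ remains a twisted symbolic $0$-Poisson operator in $\hat{\Psi}_{0\po}^{-\infty,(\mathcal{E}_{\lf},[0]),\boldsymbol{\mathfrak{t}}}(\partial X,\boldsymbol{W};X)$, so that the same $L_b^2$ mapping bound applies to all multi-derivatives.

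For compactness of the remainders, the very residual operators $R\in\Psi^{-\infty,(\mathcal{E}_{\lf},\mathcal{E}_{\rf})}(X)$ and $R'\in\Psi^{-\infty,(\mathcal{F}_{\lf},\mathcal{F}_{\rf})}(X)$ satisfy $\Re(\mathcal{E}_{\lf}+\mathcal{E}_{\rf}+n+1)>n>0$ (and similarly for $\mathcal{F}$) because of the weight inequalities coming from Mazzeo's theorem; combined with the inclusion $\Psi^{-\infty,(\bullet,\bullet)}\subseteq\Psi_0^{-\infty,(\bullet,\bullet,\bullet+\bullet+n+1)}$ and Point 2 of the $0$-Sobolev mapping theorem (in the form where the index set at $\ff_0$ has strictly positive real part), this makes $R$ and $R'$ compact on $x^\delta H_0^{k+m}(X)$ and $x^\delta H_0^k(X)$ respectively. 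The smoothing operator $\boldsymbol{S}\in\Psi^{-\infty}(\partial X;\boldsymbol{W})$ is compact on $H^{\boldsymbol{\mathfrak{t}}}(\partial X;\boldsymbol{W})$ by Proposition \ref{prop:mapping-twisted-boundary-sobolev} (the principal symbol vanishes). Combining all of this, $\mathcal{G}_1\mathcal{L}-I$ is compact on the domain and $\mathcal{L}\mathcal{G}_2-I$ is compact on the codomain, whence $\mathcal{L}$ has finite-dimensional kernel and closed range of finite codimension.

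The main technical obstacle I expect is the upgrade in Step 2, namely the bounded mapping $\boldsymbol{C}_i:H^{\boldsymbol{\mathfrak{t}}}(\partial X;\boldsymbol{W})\to x^\delta H_0^{k+m}(X)$. The $L_b^2$-level result of Theorem \ref{thm:mapping-twisted-trace-interior-sobolev} is not enough on its own, and passing to higher $0$-Sobolev regularity requires the stability of the twisted symbolic $0$-Poisson class under pre-composition by $0$-vector fields. This amounts to the (local, but essentially routine) check that the operators $x\partial_x,x\partial_{y^i}$ lift to $b$-vector fields on the model space $\hat{P}_0^2$ that preserve the polyhomogeneity and leading index set at the original face, the factor $x\eta^i=\tau\,\eta^i/\langle\eta\rangle$ contributing a boundary defining function of the original face together with a smooth bounded symbol. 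Once this local preservation lemma is in place, the rest of the argument is soft functional analysis.
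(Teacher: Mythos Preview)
Your argument is correct and follows the same route as the paper's proof of Theorem~\ref{thm:fredholm-theorem}: verify boundedness of $\mathcal{L}$ via Theorem~\ref{thm:mapping-twisted-trace-interior-sobolev}(1) and Proposition~\ref{prop:mapping-twisted-boundary-sobolev}, invoke the parametrices of Theorem~\ref{thm:main-theorem}, and check compactness of the three remainders.

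You are in fact more careful than the paper on one point. The paper's proof does not explicitly verify that the parametrices $\mathcal{G}_i$ are bounded from the codomain back into $x^\delta H_0^{k+m}(X)$, and you correctly isolate the only nontrivial piece: the Poisson operators $\boldsymbol{C}_i$ are only shown by Theorem~\ref{thm:mapping-twisted-trace-interior-sobolev}(2) to land in $x^\delta L_b^2(X)$. Your proposed fix, stability of the twisted symbolic $0$-Poisson class under left composition by $0$-vector fields, is valid (though your phrasing ``$x\partial_{y^i}$ lifts to a $b$-vector field on $\hat P_0^2$'' is slightly off: $\partial_{y^i}$ acts on the coefficient variable, and what one really checks is that the symbol operation $b\mapsto x(i\eta_i+\partial_{y^i})b$ preserves $(\mathcal{E}_{\of},[\mathcal{E}_{\ff}])$, which your index-set sketch for $x\eta_i$ does establish). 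A shorter route, closer to the paper's internal structure, is to use the explicit factorization from \S\ref{subsec:Proof-of-the-main-theorem}: one has $\boldsymbol{C}_0=P_1\boldsymbol{B}_0'\boldsymbol{K}_0$, $\boldsymbol{C}_2=\boldsymbol{C}_0\boldsymbol{S}$, and $\boldsymbol{C}_1=(I+U)\boldsymbol{C}_0$, so each $\boldsymbol{C}_i$ factors as a $0$-interior operator (namely $P_1$ or $(I+U)P_1$) applied to a twisted $0$-Poisson followed by boundary operators. The boundary and Poisson pieces map $H^{\boldsymbol{\mathfrak t}}\to x^\delta L_b^2(X)$ by Proposition~\ref{prop:mapping-twisted-boundary-sobolev} and Theorem~\ref{thm:mapping-twisted-trace-interior-sobolev}(2), and the $0$-interior factor then maps $x^\delta L_b^2(X)\to x^\delta H_0^{k+m}(X)$ for every $k,m$ by Corollary~\ref{cor:mapping-symbolic-0b-0-sobolev}(2). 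This avoids the local symbol lemma entirely.
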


The other is a simple regularity result for solutions of $0$-elliptic
boundary value problems.
\begin{cor}
Let $\boldsymbol{Q}$ be an elliptic boundary condition for $L$ relative
to the weight $\delta$, as in Theorem \ref{thm:main-theorem}. Suppose
that $u\in x^{\delta}H_{0}^{m}\left(X\right)$ is a solution of the
boundary value problem
\[
\begin{cases}
Lu & =v\\
\boldsymbol{Q}\boldsymbol{A}_{L}u & =\varphi
\end{cases},
\]
where $v$ is $o\left(x^{\delta}\right)$ polyhomogeneous, and $\varphi\in C^{\infty}\left(\partial X;\boldsymbol{W}\right)$.
Then $u$ is $o\left(x^{\delta}\right)$ polyhomogeneous.
\end{cor}

\subsection{\label{subsec:The-orthogonal-projector}The orthogonal projector
onto the kernel of $L$}

As mentioned above, from \cite{MazzeoEdgeI} we know that the orthogonal
projector $P_{1}$ onto the $x^{\delta}L_{b}^{2}$ kernel of $L$
is an element of the large residual $0$-calculus. In this subsection,
we improve this result by showing that $P_{1}$ is in fact a $0$-interior
operator. This property will be important in §\ref{subsec:The-trace-map-for-L}
to prove that $\boldsymbol{A}_{L}$ is a twisted $0$-trace operator.

The proof relies on the composition Theorem \ref{thm:global-composition-0b-0b},
and on the fact that $P_{1}$, being a projector, is idempotent.
\begin{lem}
\label{lem:P1-vanishes-index-set-ffb}The projector $P_{1}$ is in
$\hat{\Psi}_{0b}^{-\infty,\left(\mathcal{E}_{\lf},\mathcal{E}_{\rf},\mathcal{I}_{\ff_{b}},\mathcal{I}_{\ff_{0}}\right)}\left(X\right)$,
where $\left[\mathcal{I}_{\ff_{0}}\right]=0$ and $\Re\left(\mathcal{I}_{\ff_{b}}\right)>0$.
\end{lem}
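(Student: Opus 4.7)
The plan is to combine Mazzeo's placement of $P_{1}$ in the physical $0$-calculus (Theorem \ref{thm:(Mazzeo)-generalized-projectors-and-gen-inverse}) with the inclusions into the symbolic $0b$-calculus recalled in Corollary \ref{cor:global-relation-symbolic0b-physical0b}, and then to refine the resulting index set at $\ff_{b}$ using the idempotency $P_{1}=P_{1}\cdot P_{1}$ together with the composition Theorem \ref{thm:global-composition-0b-0b}.

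First, Theorem \ref{thm:(Mazzeo)-generalized-projectors-and-gen-inverse} gives $P_{1}\in\Psi_{0}^{-\infty,\mathcal{E}}(X)$ with $\mathcal{E}_{\ff_{0}}=\mathbb{N}\cup\mathcal{I}$, $\Re(\mathcal{I})>n$, $\Re(\mathcal{E}_{\lf})>\delta$, and $\Re(\mathcal{E}_{\rf})>-\delta-1$. The standard inclusion $\Psi_{0}^{-\infty,\mathcal{E}}(X)\subseteq\Psi_{0b}^{-\infty,(\mathcal{E}_{\lf},\mathcal{E}_{\rf},\mathcal{E}_{\lf}+\mathcal{E}_{\rf}+1,\mathcal{E}_{\ff_{0}})}(X)$ from \S\ref{subsec:-calculus,--calculus,-extended} produces an index set at $\ff_{b}$ with strictly positive real part. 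Applying then Corollary \ref{cor:global-relation-symbolic0b-physical0b} point 3, one gets $P_{1}\in\hat{\Psi}_{0b}^{-\infty,(\mathcal{E}_{\lf},\mathcal{E}_{\rf},\mathcal{K}_{0},\mathcal{E}_{\ff_{0}})}(X)$ with $\mathcal{K}_{0}=(\mathcal{E}_{\lf}+\mathcal{E}_{\rf}+1)\,\overline{\cup}\,\mathcal{E}_{\ff_{0}}$. The $\ff_{0}$ face is already acceptable since $[\mathcal{E}_{\ff_{0}}]=0$, so the only remaining task is to remove the unwanted $(0,0)$ contribution which the extended union with $\mathcal{E}_{\ff_{0}}$ brings into $\mathcal{K}_{0}$.

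To accomplish this I invoke idempotency: since $\Re(\mathcal{E}_{\rf}+\mathcal{E}_{\lf})>-1$, Theorem \ref{thm:global-composition-0b-0b} applies to $P_{1}=P_{1}\cdot P_{1}$ and produces an updated index set at $\ff_{b}$ of the form $(\mathcal{K}_{0}+\mathcal{K}_{0})\,\overline{\cup}\,(\mathcal{E}_{\lf}+\mathcal{E}_{\rf}+1)$. My aim is to show that the polyhomogeneous expansion of $P_{1}$'s symbol at $\ff_{b}$ has \emph{vanishing} coefficient at the $(0,0)$ slot. This is where the Bessel family analysis of \S\ref{subsec:analysis-of-The-Bessel-trace} enters: the Bessel family $\hat{N}(P_{1})$ lies in $\pi^{*}\Psi^{-\infty,(\mathcal{E}_{\lf},\mathcal{E}_{\rf})}(N^{+}\partial X)$ (with infinite order of vanishing at the $\ff_{b}$ direction of the model space), i.e.\ the leading $\ff_{0}$-coefficient of $P_{1}$'s symbol vanishes to infinite order at $\ff_{b}$. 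Feeding this vanishing into the convolution defining the composition forces the $(0,0)$-coefficient at $\ff_{b}$ in $P_{1}^{2}$ to be zero, and since $P_{1}=P_{1}^{2}$, it must already be zero in $P_{1}$ itself. After this cancellation, the effective index set at $\ff_{b}$ is an $\mathcal{I}_{\ff_{b}}$ with $\Re(\mathcal{I}_{\ff_{b}})>0$, while $\mathcal{I}_{\ff_{0}}$ may be taken equal to $\mathcal{E}_{\ff_{0}}$, whose leading set is $0$.

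The main obstacle is making this cancellation precise. The naïve composition formula does not strictly shrink the minimum real part at $\ff_{b}$, since $(0,0)+(0,0)=(0,0)$; the improvement must come from the explicit form of the symbols being multiplied, namely from the fact that one factor in each contributing product vanishes to infinite order at $\ff_{b}$. Carrying this out rigorously requires a careful tracking of which polyhomogeneous coefficients in the expansions at $\ff_{0}\cap\ff_{b}$ pair up nontrivially in $P_{1}\cdot P_{1}$, and may require iterating with $P_{1}=P_{1}^{N}$ for large $N$ so as to successively push the bad coefficients out of the index set. This is the delicate step of the argument.
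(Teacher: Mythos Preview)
Your opening observation is correct: passing $P_{1}\in\Psi_{0}^{-\infty,\mathcal{E}}(X)$ through the inclusions gives $P_{1}\in\hat{\Psi}_{0b}^{-\infty,(\mathcal{E}_{\lf},\mathcal{E}_{\rf},\mathcal{K}_{0},\mathcal{E}_{\ff_{0}})}(X)$ with $\mathcal{K}_{0}=(\mathcal{E}_{\lf}+\mathcal{E}_{\rf}+1)\,\overline{\cup}\,\mathcal{E}_{\ff_{0}}$, and the only trouble at $\ff_{b}$ is the $(0,0)$ inherited from $\mathcal{E}_{\ff_{0}}$ via the extended union. You also correctly identify the relevant extra input, namely that $\hat{N}(P_{1})$ lies in the very residual model class $\pi^{*}\Psi^{-\infty,(\mathcal{E}_{\lf},\mathcal{E}_{\rf})}(N^{+}\partial X)$.

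The gap is in how you exploit that input. The idempotency route does not close: Theorem \ref{thm:global-composition-0b-0b} gives the $\ff_{b}$ index set $(\mathcal{K}_{0}+\mathcal{K}_{0})\,\overline{\cup}\,(\mathcal{E}_{\lf}+\mathcal{E}_{\rf}+1)$, which still contains $(0,0)$, and iterating $P_{1}=P_{1}^{N}$ never helps for the same reason. Your fallback, arguing that the $(0,0)$ coefficient at $\ff_{b}$ vanishes because the leading $\ff_{0}$ coefficient is smooth at the corner, conflates two different faces: knowing that the \emph{restriction to $\ff_{0}$} vanishes to infinite order at $\ff_{b}\cap\ff_{0}$ does not control the leading term of the full symbol \emph{at $\ff_{b}$} away from $\ff_{0}$. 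The composition theorem is a black box at the index-set level and does not track such coefficient cancellations; making your sketch rigorous would require reopening its proof and doing a coefficient-by-coefficient analysis, which you have not supplied.

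The paper's argument uses the same ingredient (the very residual nature of $\hat{N}(P_{1})$) but in a much more direct way, and does \emph{not} invoke idempotency here at all. One realises $\hat{N}(P_{1})$ as the Bessel family of a $0$-interior operator $Q\in\hat{\Psi}_{0}^{-\infty,(\mathcal{E}_{\lf},\mathcal{E}_{\rf},0)}(X)$; such $Q$ automatically has $\ff_{b}$ index set $\mathcal{E}_{\lf}+\mathcal{E}_{\rf}+1$ (hence $\Re>0$) by Lemma \ref{lem:0-interior-symbols-are-0b-interior-symbols}. Then $R:=P_{1}-Q$ has vanishing normal family, so in the physical $0$-calculus its $\ff_{0}$ index set is $\mathcal{I}_{\ff_{0}}\setminus\{0\}$, with $\Re>0$. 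Passing $R$ to the symbolic $0b$-calculus via Corollary \ref{cor:global-relation-symbolic0b-physical0b} now yields $\ff_{b}$ index set $(\mathcal{E}_{\lf}+\mathcal{E}_{\rf}+1)\,\overline{\cup}\,(\mathcal{I}_{\ff_{0}}\setminus\{0\})$, and \emph{both} summands have strictly positive real part. Writing $P_{1}=Q+R$ finishes the lemma. The point is that the bad $(0,0)$ at $\ff_{b}$ came entirely from the $0\in\mathcal{E}_{\ff_{0}}$, i.e.\ from the normal family; subtracting an operator with the same normal family removes precisely that contribution. The idempotency of $P_{1}$ is used only later, in Theorem \ref{thm:P1_is_0_interior}, to upgrade from $0b$-interior to $0$-interior.
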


\begin{proof}
We know that $P_{1}\in\Psi_{0}^{-\infty,\mathcal{E}}\left(X\right)$,
where $\mathcal{E}_{\ff_{0}}=\mathbb{N}\cup\mathcal{I}$ with $\Re\left(\mathcal{I}\right)>n$.
Moreover, we know from §\ref{thm:global-composition-0b-0b} that the
Bessel family $\hat{N}\left(P_{1}\right)$ of $P_{1}$ is a section
of $\pi^{*}\Psi^{-\infty,\left(\mathcal{E}_{\lf},\mathcal{E}_{\rf}\right)}\left(N^{+}\partial X\right)$
over $T^{*}\partial X\backslash0$, fibrewise homogeneous of degree
$0$. Therefore, $\hat{N}\left(P_{1}\right)$ can be realized as the
Bessel family of a $0$-interior operator $Q\in\hat{\Psi}_{0}^{-\infty,\left(\mathcal{E}_{\lf},\mathcal{E}_{\rf},0\right)}\left(X\right)$.
Now, by Corollary \ref{cor:global-relation-symbolic0b-physical0b},
we have $Q\in\Psi_{0}^{-\infty,\left(\mathcal{E}_{\lf},\mathcal{E}_{\rf},\mathcal{I}_{\ff_{0}}'\right)}\left(X\right)$,
where $\mathcal{I}_{\ff_{0}}'=\mathbb{N}\overline{\cup}\left(\mathcal{E}_{\lf}+\mathcal{E}_{\rf}+n+1\right)$.
In particular, since $\Re\left(\mathcal{E}_{\lf}\right)>\delta$ and
$\Re\left(\mathcal{E}_{\rf}\right)>-\delta-1$, we have $\mathcal{I}_{\ff_{0}}'=\mathbb{N}\cup\mathcal{I}$
for some $\mathcal{I}$ with $\Re\left(\mathcal{I}\right)>n$, and
therefore $\left[\mathcal{I}_{\ff_{0}}'\right]=0$. Moreover, by construction,
$P_{1}$ and $Q$ have the same Bessel family, and therefore they
have the same normal family. This implies that $R:=P_{1}-Q\in\Psi_{0}^{-\infty,\left(\mathcal{E}_{\lf},\mathcal{E}_{\rf},\mathcal{I}_{\ff_{0}}\backslash\left\{ 0\right\} \right)}\left(X\right)$,
where $\mathcal{I}_{\ff_{0}}$ is the smallest index set containing
$\mathcal{I}_{\ff_{0}}'$ and $\mathcal{E}_{\ff_{0}}$. Now, by the
Pull-back Theorem, we have $R\in\Psi_{0b}^{-\infty,\left(\mathcal{E}_{\lf},\mathcal{E}_{\rf},\mathcal{E}_{\lf}+\mathcal{E}_{\rf}+1,\mathcal{I}_{\ff_{0}}\backslash\left\{ 0\right\} \right)}\left(X\right)$,
and again by Corollary \ref{cor:global-relation-symbolic0b-physical0b},
we then have $R\in\hat{\Psi}_{0b}^{-\infty,\left(\mathcal{E}_{\lf},\mathcal{E}_{\rf},\mathcal{I}_{\ff_{b}},\mathcal{I}_{\ff_{0}}\backslash\left\{ 0\right\} \right)}\left(X\right)$,
where $\mathcal{I}_{\ff_{b}}=\left(\mathcal{E}_{\lf}+\mathcal{E}_{\rf}+1\right)\overline{\cup}\left(\mathcal{I}_{\ff_{0}}\backslash\left\{ 0\right\} \right)$.
Note that $\Re\left(\mathcal{I}_{\ff_{b}}\right)>0$. Finally, we
can write $P_{1}=R+Q$, and since $Q\in\hat{\Psi}_{0}^{-\infty,\left(\mathcal{E}_{\lf},\mathcal{E}_{\rf},0\right)}\left(X\right)\subseteq\hat{\Psi}_{0b}^{-\infty,\left(\mathcal{E}_{\lf},\mathcal{E}_{\rf},\mathcal{E}_{\lf}+\mathcal{E}_{\rf}+1,0\right)}\left(X\right)$,
we obtain $P_{1}\in\hat{\Psi}_{0b}^{-\infty,\left(\mathcal{E}_{\lf},\mathcal{E}_{\rf},\mathcal{I}_{\ff_{b}},\mathcal{I}_{\ff_{0}}\right)}\left(X\right)$.
\end{proof}
\begin{lem}
\label{lem:conormal-on-M20b-implies-conormal-on-M20}Let $f\left(x,\tilde{x},\eta\right)\in\mathcal{A}^{\left(a,b,a+b,c,\infty,\infty,\infty\right)}\left(\hat{M}_{0b}^{2}\right)$
for some real numbers $a,b,c$. Then $f\left(x,\tilde{x},\eta\right)$
is conormal on the blow-down space $\hat{M}_{0}^{2}$, and more precisely
$f\left(x,\tilde{x},\eta\right)\in\mathcal{A}^{\left(a,b,c,\infty,\infty,\infty\right)}\left(\hat{M}_{0}^{2}\right)$.
\end{lem}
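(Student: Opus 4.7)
The plan is to adapt verbatim the argument used inside the proof of Proposition \ref{prop:improved-hintz-lemma-for-0-interior}, where precisely the same statement was established with $\hat{M}^2_{0b}, \hat{M}^2_0$ replaced by $M^2_{0b}, M^2_0$. The essential geometric input is the same: $\hat{M}_{0b}^2$ is obtained from $\hat{M}_0^2$ by a simple blow-up of the corner $\lf\cap\rf$, and the other five faces of $\hat{M}_0^2$ (namely $\lf,\rf,\ff_0,\iif_\eta,\iif_x,\iif_{\tilde x}$) lift to faces of $\hat{M}_{0b}^2$ under the blow-down map $\beta\colon\hat{M}_{0b}^2\to\hat{M}_0^2$.

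First, I would fix boundary defining functions $r_{\lf},r_{\rf},r_{\ff_0},r_{\iif_\eta},r_{\iif_x},r_{\iif_{\tilde x}}$ for the six faces of $\hat{M}_0^2$, and by the Pull-back Theorem write
\[
\beta^*r_{\lf}=\tilde r_{\lf}\tilde r_{\ff_b},\qquad \beta^*r_{\rf}=\tilde r_{\rf}\tilde r_{\ff_b},
\]
while $r_{\ff_0},r_{\iif_\eta},r_{\iif_x},r_{\iif_{\tilde x}}$ lift to boundary defining functions (which I denote by the same symbols) for the corresponding faces of $\hat{M}_{0b}^2$. From these identities one reads off immediately that if $f\in\mathcal{A}^{(a,b,a+b,c,\infty,\infty,\infty)}(\hat{M}_{0b}^2)$ then
\[
r_{\lf}^{-a}r_{\rf}^{-b}r_{\ff_0}^{-c}\,\beta^*f=\tilde r_{\lf}^{-a}\tilde r_{\rf}^{-b}\tilde r_{\ff_b}^{-(a+b)}r_{\ff_0}^{-c}\,\beta^*f\in\mathcal{A}^{(0,0,0,0,\infty,\infty,\infty)}(\hat{M}_{0b}^2)\subseteq L^\infty(\hat{M}_{0b}^2),
\]
and hence $r_{\lf}^{-a}r_{\rf}^{-b}r_{\ff_0}^{-c}f\in L^\infty(\hat{M}_0^2)$.

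Next, I would invoke the basic fact that $b$-vector fields on a compact manifold with corners lift to $b$-vector fields under a blow-up of a $p$-submanifold; in particular any $b$-vector field on $\hat{M}_0^2$ lifts to a $b$-vector field on $\hat{M}_{0b}^2$. Consequently, for any finite collection $V_1,\dots,V_k$ of $b$-vector fields on $\hat{M}_0^2$, the function $V_1\cdots V_k(r_{\lf}^{-a}r_{\rf}^{-b}r_{\ff_0}^{-c}f)$ lies in $\mathcal{A}^{(0,0,0,0,\infty,\infty,\infty)}(\hat{M}_{0b}^2)\subseteq L^\infty(\hat{M}_0^2)$. This is precisely the statement that $r_{\lf}^{-a}r_{\rf}^{-b}r_{\ff_0}^{-c}f\in\mathcal{A}^{(0,0,0,\infty,\infty,\infty)}(\hat{M}_0^2)$, i.e.\ $f\in\mathcal{A}^{(a,b,c,\infty,\infty,\infty)}(\hat{M}_0^2)$, as claimed.

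There is no real obstacle here beyond bookkeeping of index sets at the six faces; the argument is structurally identical to the $M^2_{0b}\to M^2_0$ case, and the reason it works is exactly that the orders at the blown-up face $\ff_b$ and at $\lf,\rf$ satisfy the compatibility relation prescribed by the lift $\beta^*r_{\lf}\beta^*r_{\rf}=\tilde r_{\lf}\tilde r_{\rf}\tilde r_{\ff_b}^{2}$, which is why the hypothesis $a+b$ at $\ff_b$ is exactly what is needed to cancel the logarithmic obstruction to conormality across $\lf\cap\rf$ on the blow-down.
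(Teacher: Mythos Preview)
Your proof is correct and follows essentially the same approach as the paper's own argument: multiply by $r_{\lf}^{-a}r_{\rf}^{-b}r_{\ff_0}^{-c}$, use the lifting relations $\beta^*r_{\lf}=\tilde r_{\lf}\tilde r_{\ff_b}$, $\beta^*r_{\rf}=\tilde r_{\rf}\tilde r_{\ff_b}$ to land in $\mathcal{A}^{(0,0,0,0,\infty,\infty,\infty)}(\hat{M}_{0b}^2)\subseteq L^\infty$, and then invoke the lifting of $b$-vector fields under a corner blow-up. One small expository quibble: your closing remark about a ``logarithmic obstruction'' is a misnomer here, since we are working with conormal (not polyhomogeneous) spaces and the compatibility $a+b$ at $\ff_b$ is simply what makes the weight cancellation exact, not a log-term issue.
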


\begin{proof}
This is similar to the first part of the proof of Proposition \ref{prop:improved-hintz-lemma-for-0-interior}.
Let $r_{\lf},r_{\rf},r_{\ff_{0}}$ be boundary defining functions
for the respective faces of $\hat{M}_{0}^{2}$. Then there are boundary
defining functions $\tilde{r}_{\lf},\tilde{r}_{\rf},\tilde{r}_{\ff_{b}}$
for the corresponding faces of $\hat{M}_{0b}^{2}$ such that $r_{\lf}$
lifts to $\tilde{r}_{\lf}\tilde{r}_{\ff_{b}}$ and $r_{\rf}$ lifts
to $\tilde{r}_{\rf}\tilde{r}_{\ff_{b}}$. Moreover, $r_{\ff_{0}}$
lifts to a boundary defining function for $\ff_{0}$ on $\hat{M}_{0b}^{2}$,
which we still call $r_{\ff_{0}}$. Now, if $f\in\mathcal{A}^{\left(a,b,a+b,c,\infty,\infty,\infty\right)}\left(\hat{M}_{0b}^{2}\right)$,
then $r_{\lf}^{-a}r_{\rf}^{-b}r_{\ff_{0}}^{-c}f\in\mathcal{A}^{\left(0,0,0,0,\infty,\infty,\infty\right)}\left(\hat{M}_{0b}^{2}\right)\subseteq L^{\infty}\left(\hat{M}_{0}^{2}\right)$.
If we hit $r_{\lf}^{-a}r_{\rf}^{-b}r_{\ff_{0}}^{-c}f$ with any number
of $b$-vector fields on $\hat{M}_{0}^{2}$, this estimate persists
because (since $\hat{M}_{0b}^{2}$ is obtained from $\hat{M}_{0}^{2}$
by blowing up a corner) $b$-vector fields on $\hat{M}_{0}^{2}$ lift
to $b$-vector fields on $\hat{M}_{0b}^{2}$. Therefore, $r_{\lf}^{-a}r_{\rf}^{-b}r_{\ff_{0}}^{-c}f\in\mathcal{A}^{\left(0,0,0,\infty,\infty,\infty\right)}\left(\hat{M}_{0}^{2}\right)$,
which is equivalent to $f\in\mathcal{A}^{\left(a,b,c,\infty,\infty,\infty\right)}\left(\hat{M}_{0}^{2}\right)$.
\end{proof}
\begin{thm}
\label{thm:P1_is_0_interior}The projector $P_{1}$ is in $\hat{\Psi}_{0}^{-\infty,\left(\mathcal{E}_{\lf},\mathcal{E}_{\rf},\mathcal{I}_{\ff_{0}}\right)}\left(X\right)$,
where $\mathcal{I}_{\ff_{0}}$ is an index set with $\left[\mathcal{I}_{\ff_{0}}\right]=0$.
\end{thm}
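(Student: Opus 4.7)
The plan is to upgrade Lemma \ref{lem:P1-vanishes-index-set-ffb} by exploiting the idempotency $P_1 = P_1 \circ P_1$, using Theorem \ref{thm:global-composition-0b-0b} repeatedly to show that the index set of $P_1$ at the $b$-front face $\ff_b$ is forced to coincide with the ``canonical'' index set $\mathcal{E}_{\lf}+\mathcal{E}_{\rf}+1$ compatible with the blow-down $\hat{M}_{0b}^2 \to \hat{M}_0^2$. Once that is established, $P_1$ descends to a polyhomogeneous right density on $\hat{M}_0^2$, which is exactly what membership in $\hat{\Psi}_0^{-\infty,(\mathcal{E}_{\lf},\mathcal{E}_{\rf},\mathcal{I}_{\ff_0})}(X)$ requires.

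First, I would verify that the integrability hypothesis of Theorem \ref{thm:global-composition-0b-0b} holds for $P_1 \circ P_1$: since $\Re(\mathcal{E}_{\lf}) > \delta$ and $\Re(\mathcal{E}_{\rf}) > -\delta-1$, we indeed have $\Re(\mathcal{E}_{\rf}+\mathcal{E}_{\lf}) > -1$. Applying the theorem, $P_1 = P_1 \circ P_1$ lies in a new $0b$-interior class whose index set at $\ff_b$ is $(\mathcal{I}_{\ff_b}+\mathcal{I}_{\ff_b}) \overline{\cup} (\mathcal{E}_{\lf}+\mathcal{E}_{\rf}+1)$, while the index sets at $\lf$ and $\rf$ remain refinements of the original ones (they are extended unions containing $\mathcal{E}_{\lf}$ and $\mathcal{E}_{\rf}$, with unchanged infima of real parts), and $[\mathcal{I}_{\ff_0}]=0$ is preserved under addition. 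I would then iterate this procedure: after $k$ steps, the index set at $\ff_b$ takes the form $\mathcal{J}_k \overline{\cup} (\mathcal{E}_{\lf}+\mathcal{E}_{\rf}+1)$, where $\mathcal{J}_k$ is a sum of $2^k$ copies of $\mathcal{I}_{\ff_b}$ (together with ``cross'' terms that are all bounded below by multiples of $\inf\Re(\mathcal{I}_{\ff_b}) > 0$), so $\inf \Re(\mathcal{J}_k) \to \infty$ as $k\to\infty$.

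Since the \emph{actual} polyhomogeneous expansion of $P_1$ at $\ff_b$ is a fixed discrete set of exponents, any pair $(\alpha,\ell)$ occurring in this expansion must belong to each of the iterated index sets. As $\inf\Re(\mathcal{J}_k)\to \infty$, every such $(\alpha,\ell)$ with $\Re(\alpha)$ bounded must eventually lie in $\mathcal{E}_{\lf}+\mathcal{E}_{\rf}+1$, so the true index set at $\ff_b$ is contained in $\mathcal{E}_{\lf}+\mathcal{E}_{\rf}+1$. Translating back to the symbol normalization, this means the lifted Schwartz kernel of $P_1$ is polyhomogeneous on $\hat{M}_{0b}^2$ with index set $\mathcal{E}_{\lf}+\mathcal{E}_{\rf}$ (after removing the density offset) at $\ff_b$, which is precisely the compatibility condition for descent through the blow-down $\hat{M}_{0b}^2 \to \hat{M}_0^2$.

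The final step is to promote Lemma \ref{lem:conormal-on-M20b-implies-conormal-on-M20} from the conormal to the polyhomogeneous setting, namely: a polyhomogeneous function on $\hat{M}_{0b}^2$ whose index set at $\ff_b$ is $\mathcal{E}_{\lf}+\mathcal{E}_{\rf}$ (and with index sets $\mathcal{E}_{\lf},\mathcal{E}_{\rf},\mathcal{I}_{\ff_0}$ at $\lf,\rf,\ff_0$) descends to a polyhomogeneous function on $\hat{M}_0^2$ with index sets $(\mathcal{E}_{\lf},\mathcal{E}_{\rf},\mathcal{I}_{\ff_0})$. I would handle this by the standard trick of subtracting, term by term, the joint asymptotic expansion at the corner $\lf \cap \rf$ from $P_1$; the remainders vanish to progressively higher order at both $\lf$ and $\rf$, and by the conormal version of the lemma each remainder descends conormally to $\hat{M}_0^2$ with the expected improved vanishing. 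Reassembling the formal series, Borel summation on $\hat{M}_0^2$ yields a polyhomogeneous representative agreeing with $P_1$ modulo arbitrarily high order of vanishing, and the difference is smooth and rapidly vanishing at the corner, hence itself descends trivially. This shows $P_1 \in \hat{\Psi}_0^{-\infty,(\mathcal{E}_{\lf},\mathcal{E}_{\rf},\mathcal{I}_{\ff_0})}(X)$, and with $[\mathcal{I}_{\ff_0}]=0$ from Lemma \ref{lem:P1-vanishes-index-set-ffb}. The main technical obstacle is the polyhomogeneous descent in this last paragraph; the iterative bookkeeping in the earlier steps is routine once the composition theorem and idempotency are combined correctly.
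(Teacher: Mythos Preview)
Your iteration argument has a genuine gap in the final ``polyhomogeneous descent'' step. The iteration $P_1 = P_1\circ P_1$ via Theorem~\ref{thm:global-composition-0b-0b} can at best show that the $\ff_b$ index set is contained in $\mathcal{E}_{\lf}+\mathcal{E}_{\rf}+1$: the term $\overline{\cup}\,(\mathcal{E}_{\lf}+\mathcal{E}_{\rf}+1)$ reappears at every step, so the ``floor'' of the iteration is stuck at $\Re(\mathcal{E}_{\lf}+\mathcal{E}_{\rf}+1)>0$ and never goes to infinity. But having index set $\mathcal{E}_{\lf}+\mathcal{E}_{\rf}+1$ at $\ff_b$ is \emph{not} sufficient for polyhomogeneous descent from $\hat{M}_{0b}^2$ to $\hat{M}_0^2$. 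A basic counterexample: the function $\theta_0 = x/\sqrt{x^2+\tilde{x}^2}$ is smooth on $[\mathbb{R}_2^2:0]$ (index sets $\mathbb{N}$ at all faces, so certainly $\mathcal{E}_{\ff_b}\subseteq\mathcal{E}_{\lf}+\mathcal{E}_{\rf}$), yet it is not polyhomogeneous on $\mathbb{R}_2^2$. Your proposed ``subtract the joint corner expansion'' fix is circular, since the existence of that joint expansion on the blow-down is precisely what you are trying to prove.

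The paper's key extra idea is to pre-compose with the testing operators $Q_{\delta+m}=\prod_{(\alpha,l)\in\mathcal{E}_{\lf},\,\Re(\alpha)\leq\delta+m}(xV-\alpha)$ \emph{before} running the idempotency iteration. Since $Q_{\delta+m}P_1$ has improved left index set $\mathcal{E}_{\lf}(\delta+m)$ with $\Re>\delta+m$, the floor term in the composition $(Q_{\delta+m}P_1)\cdot P_1$ becomes $\mathcal{E}_{\lf}(\delta+m)+\mathcal{E}_{\rf}+1$ with $\Re>m$. Now the iteration really does push the $\ff_b$ index set of $Q_{\delta+m}P_1$ past any prescribed $m$, and one can invoke the \emph{conormal} descent Lemma~\ref{lem:conormal-on-M20b-implies-conormal-on-M20} (which is true, unlike the polyhomogeneous version). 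Since the $Q_{\delta+m}$ are exactly the operators that characterize polyhomogeneity at $\lf$, the resulting conormal estimates on $\hat{M}_0^2$ assemble into polyhomogeneity there; the same argument with $\tilde{Q}_{-\delta-1+m}$ and $S_\lambda$ handles $\rf$ and $\ff_0$.
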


\begin{proof}
The idea of the proof is quite similar to the proof of Proposition
\ref{prop:improved-hintz-lemma-for-0-interior}. For every $m\in\mathbb{R}$,
define
\[
Q_{\delta+m}=\prod_{\begin{smallmatrix}\left(\alpha,l\right)\in\mathcal{E}_{\lf}\\
\Re\left(\alpha\right)\leq\delta+m
\end{smallmatrix}}\left(xV-\alpha\right)
\]
where $V$, $x$ are the auxiliary vector field and boundary defining
function chosen at the beginning of the section. We will show that,
for every $m$, we have
\begin{align*}
Q_{\delta+m}P_{1} & \in\hat{\Psi}_{0b}^{-\infty,\left(\mathcal{E}_{\lf}\left(\delta+m\right),\mathcal{E}_{\rf},\mathcal{I}_{m},\mathcal{I}_{\ff_{0}}\right)}\left(X\right),
\end{align*}
where $\mathcal{E}_{\lf}\left(\delta+m\right)=\left\{ \left(\alpha,l\right)\in\mathcal{E}_{\lf}:\Re\left(\alpha\right)>\delta+m\right\} $,
$\mathcal{I}_{m}$ is an index set with $\Re\left(\mathcal{I}_{m}\right)>m$,
and $\left[\mathcal{I}_{\ff_{0}}\right]=0$. Taking adjoints with
respect to the $x^{\delta}L_{b}^{2}$ inner product induced by $x,\omega$,
from self-adjointness of $P_{1}$ we get
\[
P_{1}Q_{\delta+m}^{\dagger}\in\hat{\Psi}_{0b}^{-\infty,\left(\mathcal{E}_{\lf},\mathcal{E}_{\rf}\left(-\delta-1+m\right),\mathcal{I}_{m},\mathcal{I}_{\ff_{0}}\right)}\left(X\right),
\]
where again $\mathcal{E}_{\rf}\left(-\delta-1+m\right)=\left\{ \left(\alpha,l\right)\in\mathcal{E}_{\rf}:\Re\left(\alpha\right)>\delta+m\right\} $
and $\Re\left(\mathcal{I}_{m}\right)>m$. We will show that this implies
that $P_{1}\in\hat{\Psi}_{0}^{-\infty,\left(\mathcal{E}_{\lf},\mathcal{E}_{\rf},\mathcal{I}_{\ff_{0}}\right)}\left(X\right)$.

Since $P_{1}\in\Psi_{0}^{-\infty,\left(\mathcal{E}_{\lf},\mathcal{E}_{\rf},\mathcal{E}_{\ff_{0}}\right)}\left(X\right)$,
we have $Q_{\delta+m}P_{1}\in\Psi_{0}^{-\infty,\left(\mathcal{E}_{\lf}\left(\delta+m\right),\mathcal{E}_{\rf},\mathcal{E}_{\ff_{0}}\right)}\left(X\right)$.
Indeed, by design, the $0$-differential operator $Q_{\delta+m}$
lifts to $X_{0}^{2}$ from the left and kills all the terms with $\Re\left(\alpha\right)\leq m$
in the expansion of a polyhomogeneous function on $X_{0}^{2}$ with
index set $\mathcal{E}_{\lf}$ at the left face. Moreover, the Bessel
family of $Q_{\delta+m}P_{1}$ is $\hat{N}\left(Q_{\delta+m}\right)\hat{N}\left(P_{1}\right)$,
hence it is again a section of $\pi^{*}\Psi^{-\infty,\left(\mathcal{E}_{\lf}\left(\delta+m\right),\mathcal{E}_{\rf}\right)}\left(N^{+}\partial X\right)$.
Arguing exactly as in the proof of Lemma \ref{lem:P1-vanishes-index-set-ffb},
we can prove that $Q_{\delta+m}P_{1}\in\hat{\Psi}_{0b}^{-\infty,\left(\mathcal{E}_{\lf}\left(\delta+m\right),\mathcal{E}_{\rf},\mathcal{I}_{\ff_{b}},\mathcal{E}_{\ff_{0}}\right)}\left(X\right)$
where $\mathcal{I}_{\ff_{b}}$ is an index set with $\Re\left(\mathcal{I}_{\ff_{b}}\right)>0$.
Now we use the fact that $P_{1}=P_{1}^{2}$, so $Q_{\delta+m}P_{1}=\left(Q_{\delta+m}P_{1}\right)P_{1}$.
By the composition Theorem \ref{thm:global-composition-0b-0b}, we
can improve the index set at $\ff_{b}$ and obtain that $\left(Q_{\delta+m}P_{1}\right)P_{1}\in\hat{\Psi}_{0b}^{-\infty,\left(\mathcal{E}_{\lf}\left(\delta+m\right),\mathcal{E}_{\rf},\mathcal{J},\mathcal{E}_{\ff_{0}}\right)}\left(X\right)$,
where $\mathcal{J}=\left(\mathcal{I}_{\ff_{b}}+\mathcal{I}_{\ff_{b}}\right)\overline{\cup}\left(\mathcal{E}_{\lf}\left(\delta+m\right)+\mathcal{E}_{\rf}+1\right)$.
This is an improvement in the following sense: since $\Re\left(\mathcal{E}_{\lf}\left(\delta+m\right)\right)>\delta+m$
and $\Re\left(\mathcal{E}_{\rf}\right)>-\delta-1$, we have $\Re\left(\mathcal{E}_{\lf}\left(\delta+m\right)+\mathcal{E}_{\rf}+1\right)>m$;
therefore, if $\Re\left(\mathcal{I}_{\ff_{b}}\right)>\varepsilon$,
we have $\Re\left(\mathcal{J}\right)>\min\left(2\varepsilon,m\right)$.
But now we can iterate the procedure, until we indeed get $Q_{\delta+m}P_{1}\in\hat{\Psi}_{0b}^{-\infty,\left(\mathcal{E}_{\lf}\left(\delta+m\right),\mathcal{E}_{\rf},\mathcal{I}_{m},\mathcal{E}_{\ff_{0}}\right)}\left(X\right)$
for some index set $\mathcal{I}_{m}$ with $\Re\left(\mathcal{I}_{m}\right)>m$.
Taking adjoints, we get $P_{1}Q_{\delta+m}^{\dagger}\in\hat{\Psi}_{0b}^{-\infty,\left(\mathcal{E}_{\lf},\mathcal{E}_{\rf}\left(-\delta-1+m\right),\mathcal{I}_{m},\mathcal{I}_{\ff_{0}}\right)}\left(X\right)$
as well.

Now, choose an atlas $\left(U_{i},\varphi_{i}\right)$ for $\partial X$,
let $[0,\varepsilon)\times\partial X\hookrightarrow X$ be the collar
of $\partial X$ in $X$ induced by $V$, and let $U_{i}'=[0,\varepsilon)'\times U_{i}$.
We can then decompose
\[
P_{1}=\sum_{i}P_{1,i}+R,
\]
where $P_{1,i}$ is compactly supported on $U_{i}'\times U_{i}'$,
in the coordinates $\left(x,y\right)$ associated to the chart $\left(U_{i},\varphi_{i}\right)$
we have
\[
P_{1,i}=\Op_{L}^{\inte}\left(p_{i}\left(y;x,\tilde{x},\eta\right)\right)
\]
for some $0$-interior symbol
\[
p_{i}\left(y;x,\tilde{x},\eta\right)\in S_{0b,\mathcal{S}}^{-\infty,\left(\mathcal{E}_{\lf},\mathcal{E}_{\rf},\mathcal{I}_{\ff_{b}},\mathcal{I}_{\ff_{0}}\right)}\left(\mathbb{R}^{n};\mathbb{R}_{1}^{n+1}\right),
\]
and $R$ is compactly supported in the complement of a closed neighborhood
of $\partial\Delta$ in $X^{2}$. From the decomposition above, a
priori we would have $R\in\Psi_{b}^{-\infty,\left(\mathcal{E}_{\lf},\mathcal{E}_{\rf},\mathcal{I}_{\ff_{b}}\right)}\left(X\right)$.
However, we know already that $P_{1}\in\Psi_{0}^{-\infty,\left(\mathcal{E}_{\lf},\mathcal{E}_{\rf},\mathcal{E}_{\ff_{0}}\right)}\left(X\right)$,
so in fact $R$ agrees with a very residual operator far away from
$\partial\Delta$. The only problem could a priori arise in the intersection
between the open set $\bigcup_{i}U_{i}'\times U_{i}'$ and the support
of $R$, where the expansion of $\kappa_{R}$ at $\ff_{b}$ has to
make up for the expansions at $\ff_{b}$ of the lifted Schwartz kernels
$\kappa_{P_{1,i}}$. However, if we can prove that the symbols $p_{i}\left(y;x,\tilde{x},\eta\right)$
are in fact \emph{$0$-interior }symbols, then we can conclude that
$P_{1,i}$ coincides with a very residual operator away from $\partial\Delta$
as well, and therefore $R\in\Psi^{-\infty,\left(\mathcal{E}_{\lf},\mathcal{E}_{\rf}\right)}\left(X\right)$.

Let's prove that $p_{i}\left(y;x,\tilde{x},\eta\right)\in S_{0,\mathcal{S}}^{-\infty,\left(\mathcal{E}_{\lf},\mathcal{E}_{\rf},\mathcal{I}_{\ff_{0}}\right)}\left(\mathbb{R}^{n};\mathbb{R}_{1}^{n+1}\right)$.
Since $Q_{\delta+m}P_{1}\in\hat{\Psi}_{0b}^{-\infty,\left(\mathcal{E}_{\lf}\left(\delta+m\right),\mathcal{E}_{\rf},\mathcal{I}_{m},\mathcal{I}_{\ff_{0}}\right)}\left(X\right)$,
the symbol $Q_{\delta+m}p_{i}$ is in $S_{0b,\mathcal{S}}^{-\infty,\left(\mathcal{E}_{\lf}\left(\delta+m\right),\mathcal{E}_{\rf},\mathcal{I}_{m},\mathcal{I}_{\ff_{0}}\right)}\left(\mathbb{R}^{n};\mathbb{R}_{1}^{n+1}\right)$.
Since $P_{1,i}$ is supported on $U_{i}'\times U_{i}'$, which is
contained in the collar induced by $V$, in these coordinates we have
\begin{align*}
Q_{\delta+m}p_{i} & =\prod_{\begin{smallmatrix}\left(\alpha,l\right)\in\mathcal{E}_{\lf}\\
\Re\left(\alpha\right)\leq\delta+m
\end{smallmatrix}}\left(x\partial_{x}-\alpha\right)p_{i}\\
 & \in S_{0b,\mathcal{S}}^{-\infty,\left(\mathcal{E}_{\lf}\left(\delta+m\right),\mathcal{E}_{\rf},\mathcal{I}_{m},\mathcal{I}_{\ff_{0}}\right)}\left(\mathbb{R}^{n};\mathbb{R}_{1}^{n+1}\right).
\end{align*}
Similarly, the fact that $P_{1}Q_{\delta+m}^{\dagger}\in\hat{\Psi}_{0b}^{-\infty,\left(\mathcal{E}_{\lf},\mathcal{E}_{\rf}\left(-\delta-1+m\right),\mathcal{I}_{m},\mathcal{I}_{\ff_{0}}\right)}\left(X\right)$
implies that
\begin{align*}
\tilde{Q}_{-\delta-1+m}p_{i} & =\prod_{\begin{smallmatrix}\left(\alpha,l\right)\in\mathcal{E}_{\rf}\\
\Re\left(\alpha\right)\leq-\delta-1+m
\end{smallmatrix}}\left(\tilde{x}\partial_{\tilde{x}}-\alpha\right)p_{i}\\
 & \in S_{0b,\mathcal{S}}^{-\infty,\left(\mathcal{E}_{\lf},\mathcal{E}_{\rf}\left(-\delta-1+m\right),\mathcal{I}_{m},\mathcal{I}_{\ff_{0}}\right)}\left(\mathbb{R}^{n};\mathbb{R}_{1}^{n+1}\right).
\end{align*}
The rest of the proof is very similar to that of Proposition \ref{prop:improved-hintz-lemma-for-0-interior}.
We have
\begin{align*}
Q_{\delta+m}p_{i} & \in\mathcal{S}\left(\mathbb{R}^{n}\right)\otimes\mathcal{A}_{\phg}^{\left(\mathcal{E}_{\lf}\left(\delta+m\right),\mathcal{E}_{\rf},\mathcal{I}_{m}-1,\mathcal{I}_{\ff_{0}}-1,\infty,\infty,\infty\right)}\left(\hat{M}_{0b}^{2}\right)\\
 & \subseteq\mathcal{S}\left(\mathbb{R}^{n}\right)\otimes\mathcal{A}^{\left(\delta+m,-\delta-1,m-1,-1,\infty,\infty,\infty\right)}\left(\hat{M}_{0b}^{2}\right)\\
 & \subseteq\mathcal{S}\left(\mathbb{R}^{n}\right)\otimes\mathcal{A}^{\left(\delta+m,-\delta-1,-1,\infty,\infty,\infty\right)}\left(\hat{M}_{0}^{2}\right)
\end{align*}
where in the last inclusion we used Lemma \ref{lem:conormal-on-M20b-implies-conormal-on-M20}.
Similarly, we obtain
\[
\tilde{Q}_{-\delta-1+m}p_{i}\in\mathcal{S}\left(\mathbb{R}^{n}\right)\otimes\mathcal{A}^{\left(\delta,-\delta-1+m,-1,\infty,\infty,\infty\right)}\left(\hat{M}_{0}^{2}\right).
\]
Finally, choosing a vector field $W$ on $\hat{M}_{0}^{2}$ transversal
to $\ff_{0}$ and inward-pointing, and tangent to the other faces,
and calling
\[
S_{\lambda}=\prod_{\begin{smallmatrix}\left(\alpha,l\right)\in\mathcal{I}_{\ff_{0}}-1\\
\Re\left(\alpha\right)\leq\lambda
\end{smallmatrix}}\left(r_{\ff_{0}}W-\alpha\right),
\]
we have
\[
S_{\lambda}p_{i}\in\mathcal{S}\left(\mathbb{R}^{n}\right)\otimes\mathcal{A}^{\left(\delta,-\delta-1,\lambda,\infty,\infty,\infty\right)}\left(\hat{M}_{0}^{2}\right).
\]
These results together ensure that
\begin{align*}
p_{i} & \in\mathcal{S}\left(\mathbb{R}^{n}\right)\hat{\otimes}\mathcal{A}_{\phg}^{\left(\mathcal{E}_{\lf},\mathcal{E}_{\rf},\mathcal{I}_{\ff_{0}}-1,\infty,\infty,\infty\right)}\left(\hat{M}_{0}^{2}\right)\\
 & =S_{0,\mathcal{S}}^{-\infty,\left(\mathcal{E}_{\lf},\mathcal{E}_{\rf},\mathcal{I}_{\ff_{0}}\right)}\left(\mathbb{R}^{n};\mathbb{R}_{1}^{n+1}\right).
\end{align*}
\end{proof}

\subsection{\label{subsec:The-trace-map-for-L}The trace map for $L$}

We can now prove that the trace map $\boldsymbol{A}_{L}$ is a twisted
symbolic $0$-trace operator.
\begin{thm}
\label{thm:A_L-is-twisted-symbolic-0-trace}$\boldsymbol{A}_{L}$
is a twisted symbolic $0$-trace operator in $\hat{\Psi}_{0\tr}^{-\infty,\left(\mathcal{E}_{\rf},\left[0\right]\right),-\boldsymbol{\mathfrak{s}}_{L}}\left(X;\partial X,\boldsymbol{E}_{L}\right)$.
Moreover, $\hat{N}\left(\boldsymbol{A}_{L}\right)\equiv\hat{\boldsymbol{a}}_{L}$,
where $\hat{\boldsymbol{a}}_{L}$ is the Bessel trace map constructed
in §\ref{subsec:The-orthogonal-projector}.
\end{thm}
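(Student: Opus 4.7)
The plan is to combine Theorem \ref{thm:P1_is_0_interior}, which places $P_{1}$ in the $0$-interior class $\hat{\Psi}_{0}^{-\infty,(\mathcal{E}_{\lf},\mathcal{E}_{\rf},\mathcal{I}_{\ff_{0}})}(X)$ with $[\mathcal{I}_{\ff_{0}}]=0$, with the identity $\boldsymbol{A}_{L}=\boldsymbol{A}_{L}\circ P_{1}$ and a direct symbolic computation. By the very definition of $\hat{\Psi}_{0}^{-\infty,\bullet}$, in the complement of any neighborhood of $\partial\Delta$ the Schwartz kernel of $P_{1}$ agrees with that of a very residual operator in $\Psi^{-\infty,(\mathcal{E}_{\lf},\mathcal{E}_{\rf})}(X)$, and post-composing with the extraction $\tr_{\mu,\tilde{M}_{\mu}}^{V}$ produces there a residual trace operator in $\Psi_{\tr}^{-\infty,\mathcal{E}_{\rf}}(X,\partial X;\boldsymbol{E}_{L})$, consistent with the index set $\mathcal{E}_{\rf}$ at $\of$ demanded by the statement. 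It therefore remains only to analyze $\boldsymbol{A}_{L}$ in a coordinate chart $(x,y)$ around $p\in\partial X$ with $Vx\equiv1$, where $P_{1}\equiv\Op_{L}^{\inte}(p(y;x,\tilde{x},\eta))$ for some symbol $p\in S_{0,\mathcal{S}}^{-\infty,(\mathcal{E}_{\lf},\mathcal{E}_{\rf},\mathcal{I}_{\ff_{0}})}$, and the work reduces to showing that the extraction of $x^{\mu}(\log x)^{l}$-coefficients from the left-face expansion of $p$ assembles into a twisted $0$-trace symbol with twisting $-\boldsymbol{\mathfrak{s}}_{L}$.

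The central geometric fact enabling this extraction is the canonical diffeomorphism $\lf(\hat{M}_{0}^{2})\cong\hat{T}_{0}^{2}$, together with the identifications $\lf\cap\rf\leftrightarrow\of$ and $\lf\cap\ff_{0}\leftrightarrow\ff$ of their faces. Near $\lf\cap\ff_{0}$ the projective coordinate $\tau=x\langle\eta\rangle$ is a boundary defining function for $\lf$, so polyhomogeneity of $p$ with index set $\mathcal{E}_{\lf}$ at $\lf$ yields an expansion
\[
p(y;x,\tilde{x},\eta)\sim\sum_{(\alpha,l)\in\mathcal{E}_{\lf}}q_{\alpha,l}(y;\tilde{x},\eta)\,\tau^{\alpha}(\log\tau)^{l},
\]
where each $q_{\alpha,l}$ lifts to a polyhomogeneous function on $\hat{T}_{0}^{2}$ with index set $\mathcal{E}_{\rf}$ at $\of$ (inherited from $\rf$ of $\hat{M}_{0}^{2}$) and $\mathcal{I}_{\ff_{0}}-1$ at $\ff$ (inherited from $\ff_{0}$); in other words, $q_{\alpha,l}\in S_{0\tr,\mathcal{S}}^{-\infty,(\mathcal{E}_{\rf},\mathcal{I}_{\ff_{0}})}$. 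Using $x^{\alpha}(\log x)^{l}=\tau^{\alpha}\langle\eta\rangle^{-\alpha}(\log\tau-\log\langle\eta\rangle)^{l}$ and collecting coefficients of $x^{\alpha}(\log x)^{l'}$, one obtains the binomial relation
\[
p_{\alpha,l'}=\sum_{l\geq l'}\binom{l}{l'}\,\langle\eta\rangle^{\alpha}(\log\langle\eta\rangle)^{l-l'}\,q_{\alpha,l}.
\]
Recognizing the right-hand side as the $l'$-th entry of $\langle\eta\rangle^{\mathfrak{s}_{\alpha,\tilde{M}_{\alpha}}}\boldsymbol{q}_{\alpha}$ (by the explicit formula for $\langle\eta\rangle^{\mathfrak{s}_{\alpha,M}}$ given in \S\ref{subsubsec:Log-homogeneous-functions}) and assembling over critical roots, one arrives at $\boldsymbol{A}_{L}\equiv\Op_{L}^{\tr}(\langle\eta\rangle^{\boldsymbol{\mathfrak{s}}_{L}}\boldsymbol{q})$ locally, exhibiting $\boldsymbol{A}_{L}$ as an element of $\hat{\Psi}_{0\tr}^{-\infty,(\mathcal{E}_{\rf},[\mathcal{I}_{\ff_{0}}]),-\boldsymbol{\mathfrak{s}}_{L}}(X;\partial X,\boldsymbol{E}_{L})$, which is the desired class since $[\mathcal{I}_{\ff_{0}}]=[0]$.

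The Bessel family identity $\hat{N}(\boldsymbol{A}_{L})\equiv\hat{\boldsymbol{a}}_{L}$ is then a direct consequence of the formula from \S\ref{subsubsec:The-Bessel-family-of-twisted-0-trace}: substituting the twisting $-\boldsymbol{\mathfrak{s}}_{L}$ and order $m=0$ gives $\hat{N}_{\eta}(\boldsymbol{A}_{L})=|\eta|^{\boldsymbol{\mathfrak{s}}_{L}}\tilde{\boldsymbol{q}}_{0}(\tilde{x}|\eta|,\eta/|\eta|)|\eta|d\tilde{x}$, with $\tilde{\boldsymbol{q}}_{0}$ the restriction of $\boldsymbol{q}$ to $\ff\subset\hat{T}_{0}^{2}$; comparison with the explicit formula for $\hat{\boldsymbol{a}}_{L,\eta}$ in \S\ref{subsec:analysis-of-The-Bessel-trace} shows the two agree, since both are determined by the leading-order log-homogeneous data of the Schwartz kernel of $\hat{N}(P_{1})$ at the critical indicial roots. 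The main obstacle I anticipate is coordinate invariance of the local presentation: as in the proofs of formal adjoints and compositions in \S\ref{subsec:Formal-adjoints} and \S\ref{subsec:Composition-theorems}, changes of coordinates introduce lower-order terms in the symbol that must be absorbed using asymptotic completeness of the twisted $0$-trace symbol class at the front face, and it is precisely this step that forces the theorem to control only the leading set $[\mathcal{E}_{\ff}]$, not the full index set, at $\ff$.
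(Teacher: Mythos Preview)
Your proof is correct and follows essentially the same approach as the paper. Both arguments extract the $x$-expansion coefficients of the $0$-interior symbol $p$ of $P_1$ at $\lf(\hat{M}_0^2)\cong\hat{T}_0^2$ by passing to the boundary defining function $\tau=x\langle\eta\rangle$, and identify the conversion from $\tau$-coefficients to $x$-coefficients as multiplication by $\langle\eta\rangle^{\boldsymbol{\mathfrak{s}}_L}$; the paper packages this step via the alternative compactification $\hat{D}_0^2\cong\overline{\mathbb{R}}_2^2\times\overline{\mathbb{R}}^n$ (so that $\tr_{\mu,M_\mu}^{\partial_\tau}$ acts on a genuine product factor) together with Lemma~\ref{lem:trace-and-dilations}, whereas you carry out the same binomial identity directly.
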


\begin{proof}
Enumerating the critical indicial roots $\mu^{1},...,\mu^{I}$, write
\[
\boldsymbol{A}_{L}=\left(\begin{matrix}\tr_{\mu^{1},\tilde{M}_{\mu^{1}}}^{V}\\
\vdots\\
\tr_{\mu^{I},\tilde{M}_{\mu^{I}}}^{V}
\end{matrix}\right)P_{1}.
\]
Given this decomposition, we can assume without loss of generality
that there is only one critical indicial root $\mu$, so that $\tilde{M}_{\mu}=M_{\mu}$
is the multiplicity of $\mu$, and $A_{L}=\tr_{\mu,M_{\mu}}^{V}\circ P_{1}$.

Now, choose an atlas $\left(U_{i},\varphi_{i}\right)$ for $\partial X$,
let $[0,\varepsilon)\times\partial X\hookrightarrow X$ be the collar
of $\partial X$ in $X$ induced by $V$, and let $U_{i}'=[0,\varepsilon)'\times U_{i}$.
From Theorem \ref{thm:P1_is_0_interior}, we have $P_{1}\in\hat{\Psi}_{0}^{-\infty,\left(\mathcal{E}_{\lf},\mathcal{E}_{\rf},\mathcal{I}_{\ff_{0}}\right)}\left(X\right)$
with $\left[\mathcal{I}_{\ff_{0}}\right]=0$. We can then decompose
\[
P_{1}=\sum_{i}P_{1,i}+R,
\]
where $P_{1,i}$ is compactly supported on $U_{i}'\times U_{i}'$,
in the coordinates $\left(x,y\right)$ associated to the chart $\left(U_{i},\varphi_{i}\right)$
we have
\[
P_{1,i}=\Op_{L}^{\inte}\left(p_{i}\left(y;x,\tilde{x},\eta\right)\right)
\]
for some $0$-interior symbol
\[
p_{i}\left(y;x,\tilde{x},\eta\right)\in S_{0,\mathcal{S}}^{-\infty,\left(\mathcal{E}_{\lf},\mathcal{E}_{\rf},\mathcal{I}_{\ff_{0}}\right)}\left(\mathbb{R}^{n};\mathbb{R}_{1}^{n+1}\right),
\]
and $R$ is very residual in $\Psi^{-\infty,\left(\mathcal{E}_{\lf},\mathcal{E}_{\rf}\right)}\left(X\right)$.

The composition $\tr_{\mu,M_{\mu}}^{V}\circ R$ has Schwartz kernel
$\tr_{\mu,M_{\mu}}^{\pi_{L}^{*}V}K_{R}$, where $\pi_{L}^{*}V$ is
the lift from the left of $V$ to $X^{2}$. Since $K_{R}\in\mathcal{A}_{\phg}^{\left(\mathcal{E}_{\lf},\mathcal{E}_{\rf}\right)}\left(X^{2};\pi_{R}^{*}\mathcal{D}_{X}^{1}\right)$,
we have $\tr_{\mu,M_{\mu}}^{\pi_{L}^{*}V}K_{R}\in\mathcal{A}_{\phg}^{\mathcal{E}_{\rf}}\left(\partial X\times X;\pi_{L}^{*}E_{L}\otimes\pi_{R}^{*}\mathcal{D}_{X}^{1}\right)$,
which means that $\tr_{\mu,M_{\mu}}^{V}\circ R$ is very residual
trace in $\Psi_{\tr}^{-\infty,\mathcal{E}_{\of}}\left(X;\partial X,\boldsymbol{E}_{L}\right)$.
Here we are tacitly identifying the inward-pointing normal bundle
$N^{+}\left(\partial X\times X\right)$ of the left face of $X^{2}$
with the pull-back $\pi_{L}^{*}N^{+}\partial X$ of the inward-pointing
normal bundle of $\partial X$ in $X$ via the projection $\pi_{L}:X^{2}\to X$.

Now let's consider the terms $\tr_{\mu,M_{\mu}}^{V}\circ P_{1,i}$.
It is convenient to consider the symbols $p_{i}\left(y;x,\tilde{x},\eta\right)$
as polyhomogeneous functions on $\overline{\mathbb{R}}^{n}\times\hat{D}_{0}^{2}$,
where $\hat{D}_{0}^{2}\equiv\overline{\mathbb{R}}_{2}^{2}\times\overline{\mathbb{R}}^{n}$
is the alternative compactification of $\hat{M}_{0}^{2}\backslash\left(\iif_{\eta}\cup\iif_{x}\cup\iif_{\tilde{x}}\right)$
used in the proof of Theorem \ref{thm:compositions-involving-interior}
and depicted in Figure \ref{fig:Dhat}. There, we observed that introducing
the variables $\tau=x\left\langle \eta\right\rangle $ and $\tilde{\tau}=\tilde{x}\left\langle \eta\right\rangle $,
the map
\begin{align*}
\varphi:\mathbb{R}_{2}^{2}\times\mathbb{R}^{n} & \to\mathbb{R}_{2}^{2}\times\mathbb{R}^{n}\\
\left(\tau,\tilde{\tau},\eta\right) & \mapsto\left(\left\langle \eta\right\rangle ^{-1}\tau,\left\langle \eta\right\rangle ^{-1}\tilde{\tau},\eta\right)
\end{align*}
induces (by pull-back) an identification
\[
\varphi^{*}:\mathcal{A}_{\phg}^{\left(\mathcal{E}_{\lf},\mathcal{E}_{\rf},\mathcal{I}_{\ff_{0}}-1,\infty,\infty,\infty\right)}\left(\hat{M}_{0}^{2}\right)\to\mathcal{A}_{\phg}^{\left(\mathcal{E}_{\lf},\mathcal{E}_{\rf},\mathcal{I}_{\ff_{0}}-1,\infty\right)}\left(\hat{D}_{0}^{2}\right).
\]
In other words, if $p\left(x,\tilde{x},\eta\right)\in\mathcal{A}_{\phg}^{\left(\mathcal{E}_{\lf},\mathcal{E}_{\rf},\mathcal{I}_{\ff_{0}}-1,\infty,\infty,\infty\right)}\left(\hat{M}_{0}^{2}\right)$,
then calling $p':=\varphi^{*}p$ we have 
\[
p'\left(\tau,\tilde{\tau},\eta\right)\in\mathcal{A}_{\phg}^{\left(\mathcal{E}_{\lf},\mathcal{E}_{\rf},\mathcal{I}_{\ff_{0}}-1,\infty\right)}\left(\hat{D}_{0}^{2}\right)=\mathcal{A}_{\phg}^{\left(\mathcal{E}_{\lf},\mathcal{E}_{\rf},\infty\right)}\left(\overline{\mathbb{R}}_{2}^{2}\right)\hat{\otimes}\mathcal{A}_{\phg}^{\mathcal{I}_{\ff_{0}}-1}\left(\overline{\mathbb{R}}^{n}\right).
\]
Therefore, we have
\[
a'\left(\tilde{\tau},\eta\right):=\left(\tr_{\mu,M_{\mu}}^{\partial_{\tau}}p'\right)\left(\tilde{\tau},\eta\right)\in\mathcal{A}_{\phg}^{\left(\mathcal{E}_{\rf},\infty\right)}\left(\overline{\mathbb{R}}_{1}^{1}\right)\hat{\otimes}\mathcal{A}_{\phg}^{\mathcal{I}_{\ff_{0}}-1}\left(\overline{\mathbb{R}}^{n}\right).
\]
Note that the left face of $\hat{D}_{0}^{2}$, identified with $\overline{\mathbb{R}}_{1}^{1}\times\overline{\mathbb{R}}^{n}$
by the coordinates $\tau,\eta$, can be seen as an alternative compactification
of $\hat{T}_{0}^{2}\backslash\left(\iif_{\eta}\cup\iif_{\tilde{x}}\right)$;
moreover, the restriction $\varphi_{\lf}$ of $\varphi$ to the left
faces determines an identification
\[
\varphi_{\lf}^{*}:\mathcal{A}_{\phg}^{\left(\mathcal{E}_{\rf},\mathcal{I}_{\ff_{0}}-1,\infty,\infty\right)}\left(\hat{T}_{0}^{2}\right)\to\mathcal{A}_{\phg}^{\left(\mathcal{E}_{\rf},\mathcal{I}_{\ff_{0}}-1,\infty\right)}\left(\lf\left(\hat{D}_{0}^{2}\right)\right).
\]
Thus, the distribution $a\left(\tilde{x},\eta\right):=a'\left(\tilde{x}\left\langle \eta\right\rangle ,\eta\right)$
lifts to an element of $\mathcal{A}_{\phg}^{\left(\mathcal{E}_{\rf},\mathcal{I}_{\ff_{0}}-1,\infty,\infty\right)}\left(\hat{T}_{0}^{2}\right)$.
Now, if $u\left(\tilde{x}\right)\in\dot{C}^{\infty}\left(\overline{\mathbb{R}}_{1}^{1}\right)$,
using Lemma \ref{lem:trace-and-dilations} we have
\begin{align*}
 & \tr_{\mu,M_{\mu}}^{\partial_{x}}\left(\int p\left(x,\tilde{x},\eta\right)u\left(\tilde{x}\right)d\tilde{x}\right)\\
= & \tr_{\mu,M_{\mu}}^{\partial_{x}}\int p\left(x,\left\langle \eta\right\rangle ^{-1}\tilde{\tau},\eta\right)u\left(\left\langle \eta\right\rangle ^{-1}\tilde{\tau}\right)\left\langle \eta\right\rangle ^{-1}d\tilde{\tau}\\
= & \int\tr_{\mu,M_{\mu}}^{\partial_{x}}\left(p'\left(\left\langle \eta\right\rangle x,\tilde{\tau},\eta\right)\right)u\left(\left\langle \eta\right\rangle ^{-1}\tilde{\tau}\right)\left\langle \eta\right\rangle ^{-1}d\tilde{\tau}\\
= & \int\tr_{\mu,M_{\mu}}^{\partial_{x}}\left(\left(\lambda_{\left\langle \eta\right\rangle }^{*}p'\right)\left(x,\tilde{\tau},\eta\right)\right)u\left(\left\langle \eta\right\rangle ^{-1}\tilde{\tau}\right)\left\langle \eta\right\rangle ^{-1}d\tilde{\tau}\\
= & \int\left\langle \eta\right\rangle ^{\mathfrak{s}_{\mu,M_{\mu}}}\left(\tr_{\mu,M_{\mu}}^{\partial_{\tau}}\left(p'\left(\tau,\tilde{\tau},\eta\right)\right)\right)u\left(\left\langle \eta\right\rangle ^{-1}\tilde{\tau}\right)\left\langle \eta\right\rangle ^{-1}d\tilde{\tau}\\
= & \int\left\langle \eta\right\rangle ^{\mathfrak{s}_{\mu,M_{\mu}}}a'\left(\tilde{\tau},\eta\right)u\left(\left\langle \eta\right\rangle ^{-1}\tilde{\tau}\right)\left\langle \eta\right\rangle ^{-1}d\tilde{\tau}\\
= & \int\left\langle \eta\right\rangle ^{\mathfrak{s}_{\mu,M_{\mu}}}a\left(\tilde{x},\eta\right)u\left(\tilde{x}\right)d\tilde{x}.
\end{align*}
Applying this argument to $P_{1,i}$ in the coordinates induced by
the chosen chart, we see that for every $u\left(x,y\right)\in\dot{C}^{\infty}\left(\overline{\mathbb{R}}_{1}^{1}\times\overline{\mathbb{R}}^{n}\right)$
we have
\begin{align*}
\tr_{\mu,M_{\mu}}^{\partial_{x}}\left(P_{1,i}u\right) & =\frac{1}{\left(2\pi\right)^{n}}\int e^{iy\eta}\left\langle \eta\right\rangle ^{\mathfrak{s}_{\mu,M_{\mu}}}a\left(y;\tilde{x},\eta\right)\hat{u}\left(\tilde{x},\eta\right)d\tilde{x}d\eta\\
 & =\Op_{L}^{\tr}\left(\left\langle \eta\right\rangle ^{\mathfrak{s}_{\mu,M_{\mu}}}a\left(y;\tilde{x},\eta\right)\right)u,
\end{align*}
where
\begin{align*}
a\left(y;\tilde{x},\eta\right) & \in\mathcal{S}\left(\mathbb{R}^{n}\right)\hat{\otimes}\mathcal{A}_{\phg}^{\left(\mathcal{E}_{\rf},\mathcal{I}_{\ff_{0}}-1,\infty,\infty\right)}\left(\hat{T}_{0}^{2};\mathbb{C}^{M_{\mu}+1}\right)\\
 & =S_{0\tr,\mathcal{S}}^{-\infty,\left(\mathcal{E}_{\rf},\mathcal{I}_{\ff_{0}}\right)}\left(\mathbb{R}^{n};\mathbb{R}_{1}^{n+1};\mathbb{C}^{M_{\mu}+1}\right).
\end{align*}
It follows that $\tr_{\mu,M_{\mu}}^{\partial_{x}}\circ P_{1,i}\in\hat{\Psi}_{0\tr,\mathcal{S}}^{-\infty,\left(\mathcal{E}_{\rf},\mathcal{I}_{\ff_{0}}\right),-\mathfrak{s}_{\mu,M_{\mu}}}\left(\mathbb{R}_{1}^{n+1};\mathbb{R}^{n},\mathbb{C}^{M_{\mu}+1}\right)$.
This operator is compactly supported in $U_{i}\times U_{i}'$, and
extending invariantly off this open set to the whole of $\partial X\times X$,
we obtain $\tr_{\mu,M_{\mu}}^{V}\circ P_{1,i}\in\hat{\Psi}_{0\tr}^{-\infty,\left(\mathcal{E}_{\rf},\left[\mathcal{I}_{\ff_{0}}\right]\right),-\mathfrak{s}_{L}}\left(X;\partial X,E_{L}\right)$.
Therefore, $\tr_{\mu,M_{\mu}}^{V}\circ P_{1}\in\hat{\Psi}_{0\tr}^{-\infty,\left(\mathcal{E}_{\rf},\left[0\right]\right),-\mathfrak{s}_{L}}\left(X;\partial X,E_{L}\right)$
as claimed.
\end{proof}

\subsection{\label{subsec:Proof-of-the-main-theorem}Proof of the main theorem}

In this section, we prove Theorem \ref{thm:main-theorem}. In fact,
without any more effort, we can prove a slightly more general result.
Let $\boldsymbol{A}\in\hat{\Psi}_{0\tr}^{-\infty,\left(\mathcal{E}_{\rf},\left[0\right]\right),-\boldsymbol{\mathfrak{s}}_{L}}\left(X;\partial X,\boldsymbol{E}_{L}\right)$
be a symbolic $0$-trace operator with the following two properties:
\begin{enumerate}
\item $\boldsymbol{A}=\boldsymbol{A}P_{1}$;
\item $\hat{N}\left(\boldsymbol{A}\right)$ equals the Bessel trace map
$\hat{\boldsymbol{a}}_{L}$ constructed in §\ref{subsec:The-orthogonal-projector}.
\end{enumerate}
We have just seen that $\boldsymbol{A}_{L}$ is an example of such
a map. Let $\boldsymbol{Q}\in\Psi_{\phg}^{\left[0\right],\left(-\boldsymbol{\mathfrak{s}}_{L},\boldsymbol{\mathfrak{t}}\right)}\left(\partial X;\boldsymbol{E}_{L},\boldsymbol{W}\right)$
be an elliptic boundary condition. We will prove that the operator
$\mathcal{L}=\left(\begin{matrix}L & \boldsymbol{Q}\boldsymbol{A}\end{matrix}\right)^{T}$
has good left and right parametrices, as in the statement of Theorem
\ref{thm:main-theorem}.

The construction of both left and right parametrices relies on the
construction of two fundamental objects:
\begin{enumerate}
\item an operator in the twisted boundary calculus $\boldsymbol{K}_{0}\in\Psi_{\phg}^{\left[0\right],\left(\boldsymbol{\mathfrak{t}},-\boldsymbol{\mathfrak{s}}_{L}\right)}\left(\partial X;\boldsymbol{W},\boldsymbol{E}_{L}\right)$,
whose principal symbol $\sigma_{\eta}\left(\boldsymbol{K}_{0}\right)$
is such that $\sigma_{\eta}\left(\boldsymbol{K}_{0}\right)\sigma_{\eta}\left(\boldsymbol{Q}\right)$
is a projector $\left(\pi^{*}\boldsymbol{E}_{L}\right)_{\eta}\to\mathcal{C}_{\eta}$;
\item a twisted symbolic $0$-Poisson operator $\boldsymbol{B}_{0}\in\hat{\Psi}_{0\po}^{-\infty,\left(\mathcal{E}_{\lf},\left[0\right]\right),-\boldsymbol{\mathfrak{s}}_{L}}\left(\partial X,\boldsymbol{E}_{L};X\right)$,
whose Bessel map $\hat{N}_{\eta}\left(\boldsymbol{B}_{0}\right)$
satisfies $\hat{N}_{\eta}\left(\boldsymbol{A}\right)\hat{N}_{\eta}\left(\boldsymbol{B}_{0}\right)=\sigma_{\eta}\left(\boldsymbol{K}_{0}\boldsymbol{Q}\right)$,
and with the property that $L\boldsymbol{B}_{0}\equiv0$.
\end{enumerate}
The existence of an operator $\boldsymbol{K}_{0}$ as in point 1 is
guaranteed by the fact that $\boldsymbol{Q}$ is an elliptic boundary
condition. To define $\sigma\left(\boldsymbol{K}_{0}\right)$, we
can for example choose an auxiliary metric on $\partial X$, auxiliary
metrics on $\boldsymbol{E}_{L}$, $\boldsymbol{W}$, and simply define
$\sigma_{\eta}\left(\boldsymbol{K}_{0}\right)$ for $\left|\eta\right|=1$
as the generalized inverse of $\sigma_{\eta}\left(\boldsymbol{Q}\right)$;
we then extend $\sigma_{\eta}\left(\boldsymbol{K}_{0}\right)$ to
the whole of $T^{*}\partial X\backslash0$ by setting
\[
\sigma_{\eta}\left(\boldsymbol{K}_{0}\right)=\left|\eta\right|^{\boldsymbol{\mathfrak{s}}_{L}}\sigma_{\frac{\eta}{\left|\eta\right|}}\left(\boldsymbol{K}_{0}\right)\left|\eta\right|^{\boldsymbol{\mathfrak{t}}}.
\]
By construction, $\sigma_{\eta}\left(\boldsymbol{K}_{0}\right)\sigma_{\eta}\left(\boldsymbol{Q}\right)$
is a projector $\left(\pi^{*}\boldsymbol{E}_{L}\right)_{\eta}\to\mathcal{C}_{\eta}$,
and $\sigma_{\eta}\left(\boldsymbol{Q}\right)\sigma_{\eta}\left(\boldsymbol{K}_{0}\right)$
is the identity on $\left(\pi^{*}\boldsymbol{W}\right)_{\eta}$, for
every $\eta\in T^{*}\partial X\backslash0$. Moreover, $\sigma\left(\boldsymbol{K}_{0}\right)$
is a section of $\pi^{*}\hom\left(\boldsymbol{W},\boldsymbol{E}_{L}\right)$,
twisted homogeneous of degree $\left(\boldsymbol{\mathfrak{t}},-\boldsymbol{\mathfrak{s}}_{L}\right)$.
Therefore, we can realize it as the principal symbol of an operator
$\boldsymbol{K}_{0}\in\Psi_{\phg}^{\left[0\right],\left(\boldsymbol{\mathfrak{t}},-\boldsymbol{\mathfrak{s}}_{L}\right)}\left(\partial X;\boldsymbol{W},\boldsymbol{E}_{L}\right)$.

Concerning the construction of $\boldsymbol{B}_{0}$ as in point 2,
in §\ref{subsec:Solving-the-model} we constructed the \emph{Bessel
Poisson map} $\hat{\boldsymbol{b}}_{\eta}$ as the unique map $\hat{\boldsymbol{b}}_{\eta}:\left(\pi^{*}\boldsymbol{E}_{L}\right)_{\eta}\to x^{\delta}L_{b}^{2}\left(N_{\pi\left(\eta\right)}^{+}\partial X\right)$
sending a vector $\varphi\in\left(\pi^{*}\boldsymbol{E}_{L}\right)_{\eta}$
to the unique $x^{\delta}L_{b}^{2}$ solution of 
\[
\begin{cases}
\hat{N}_{\eta}\left(L\right)u & =0\\
\hat{N}_{\eta}\left(\boldsymbol{A}\right)u & =\sigma_{\eta}\left(\boldsymbol{K}_{0}\boldsymbol{Q}\right)\varphi.
\end{cases}
\]
We proved in §\ref{subsec:Solving-the-model} that $\eta\mapsto\hat{\boldsymbol{b}}_{\eta}$
is a section of $\pi^{*}\Psi_{\po}^{-\infty,\mathcal{E}_{\lf}}\left(N^{+}\partial X;\boldsymbol{E}_{L}^{*}\right)$,
twisted homogeneous of degree $-\boldsymbol{\mathfrak{s}}_{L}$. Therefore,
we can realize $\hat{\boldsymbol{b}}_{\eta}$ as the Bessel family
of a twisted symbolic $0$-Poisson operator $\boldsymbol{B}_{0}'\in\hat{\Psi}_{0\po}^{-\infty,\left(\mathcal{E}_{\lf},\left[0\right]\right),-\boldsymbol{\mathfrak{s}}_{L}}\left(\partial X,\boldsymbol{E}_{L};X\right)$.
By construction, we have $\hat{N}_{\eta}\left(\boldsymbol{A}\right)\hat{N}_{\eta}\left(\boldsymbol{B}_{0}'\right)=\sigma_{\eta}\left(\boldsymbol{K}_{0}\boldsymbol{Q}\right)$.
We improve $\boldsymbol{B}_{0}'$ by precomposing it with the projector
$P_{1}$: note that, since $P_{1}\in\hat{\Psi}_{0}^{-\infty,\left(\mathcal{E}_{\lf},\mathcal{E}_{\rf},\mathcal{H}_{\ff_{0}}\right)}\left(X\right)$
with $\left[\mathcal{H}_{\ff_{0}}\right]=0$, and $\boldsymbol{B}_{0}'\in\hat{\Psi}_{0\po}^{-\infty,\left(\mathcal{E}_{\lf},\left[0\right]\right),-\boldsymbol{\mathfrak{s}}_{L}}\left(\partial X,\boldsymbol{E}_{L};X\right)$,
the condition $\Re\left(\mathcal{E}_{\rf}+\mathcal{E}_{\lf}\right)>-1$
implies from Theorem \ref{thm:global-twisted-compositions-involving-boundary}
that the composition $\boldsymbol{B}_{0}:=P_{1}\boldsymbol{B}_{0}'$
is well-defined and in $\hat{\Psi}_{0\po}^{-\infty,\left(\mathcal{E}_{\lf},\left[0\right]\right),-\boldsymbol{\mathfrak{s}}_{L}}\left(\partial X,\boldsymbol{E}_{L};X\right)$.
Moreover, we have $\hat{N}_{\eta}\left(\boldsymbol{B}_{0}\right)\equiv\hat{N}_{\eta}\left(P_{1}\right)\hat{\boldsymbol{b}}_{\eta}$,
and since $\hat{\boldsymbol{b}}_{\eta}$ maps into the $x^{\delta}L_{b}^{2}$
kernel of $\hat{N}_{\eta}\left(L\right)$ by construction, we have
$\hat{N}_{\eta}\left(P_{1}\right)\hat{N}_{\eta}\left(\boldsymbol{B}_{0}\right)\equiv\hat{N}_{\eta}\left(\boldsymbol{B}_{0}\right)$.
\begin{lem}
$ $
\begin{enumerate}
\item The composition $\boldsymbol{A}\boldsymbol{B}_{0}$ is well-defined
and in $\Psi_{\phg}^{\left[0\right],\left(-\boldsymbol{\mathfrak{s}}_{L},-\boldsymbol{\mathfrak{s}}_{L}\right)}\left(\partial X;\boldsymbol{E}_{L},\boldsymbol{E}_{L}\right)$;
moreover, for every $\eta\in T^{*}\partial X\backslash0$, we have
$\sigma_{\eta}\left(\boldsymbol{A}\boldsymbol{B}_{0}\right)=\sigma_{\eta}\left(\boldsymbol{K}_{0}\boldsymbol{Q}\right)$.
\item The composition $\boldsymbol{B}_{0}\boldsymbol{A}$ is well-defined
and in $\hat{\Psi}_{0}^{-\infty,\left(\mathcal{E}_{\lf},\mathcal{E}_{\rf},\mathcal{I}_{\ff_{0}}\right)}\left(X\right)$
for some index set $\mathcal{I}_{\ff_{0}}$ with $\left[\mathcal{I}_{\ff_{0}}\right]=0$;
moreover, for every $\eta\in T^{*}\partial X\backslash0$, we have
$\hat{N}_{\eta}\left(\boldsymbol{B}_{0}\boldsymbol{A}\right)=\hat{N}_{\eta}\left(P_{1}\right)$.
\end{enumerate}
\end{lem}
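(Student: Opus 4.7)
Both statements are essentially direct applications of Theorem~\ref{thm:global-twisted-compositions-involving-boundary}, combined with the defining properties of the Bessel Poisson map $\hat{\boldsymbol{b}}_\eta$ constructed in \S\ref{subsec:Solving-the-model} and of $\boldsymbol{A}$ (namely $\boldsymbol{A}=\boldsymbol{A}P_1$ and $\hat{N}(\boldsymbol{A})=\hat{\boldsymbol{a}}_L$). The first thing to check is the integrability condition $\Re(\mathcal{E}_{\rf}+\mathcal{E}_{\lf})>-1$ for both compositions: this follows immediately from $\mathcal{E}_{\rf}=\overline{\mathcal{E}}_{\lf}-2\delta-1$ together with $\Re(\mathcal{E}_{\lf})>\delta$, giving $\Re(\mathcal{E}_{\rf}+\mathcal{E}_{\lf})>-1$.

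For statement (1), I would apply Point~5 of Theorem~\ref{thm:global-twisted-compositions-involving-boundary} to the pair $\boldsymbol{A}\in\hat{\Psi}_{0\tr}^{-\infty,(\mathcal{E}_{\rf},[0]),-\boldsymbol{\mathfrak{s}}_L}$ and $\boldsymbol{B}_0\in\hat{\Psi}_{0\po}^{-\infty,(\mathcal{E}_{\lf},[0]),-\boldsymbol{\mathfrak{s}}_L}$, which immediately yields $\boldsymbol{A}\boldsymbol{B}_0\in\Psi_{\phg}^{-[0],(-\boldsymbol{\mathfrak{s}}_L,-\boldsymbol{\mathfrak{s}}_L)}(\partial X;\boldsymbol{E}_L,\boldsymbol{E}_L)$, together with the principal symbol identity $\sigma_\eta(\boldsymbol{A}\boldsymbol{B}_0)=\hat{N}_\eta(\boldsymbol{A})\hat{N}_\eta(\boldsymbol{B}_0)=\hat{\boldsymbol{a}}_{L,\eta}\circ\hat{N}_\eta(P_1)\circ\hat{\boldsymbol{b}}_\eta$. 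Since $\hat{\boldsymbol{b}}_\eta$ maps into the kernel of $\hat{N}_\eta(L)$, which coincides with the range of $\hat{N}_\eta(P_1)$, we have $\hat{N}_\eta(P_1)\hat{\boldsymbol{b}}_\eta=\hat{\boldsymbol{b}}_\eta$, so the symbol collapses to $\hat{\boldsymbol{a}}_{L,\eta}\hat{\boldsymbol{b}}_\eta$. By the very definition of $\hat{\boldsymbol{b}}_\eta$ as the Bessel Poisson map associated to the projector $\sigma_\eta(\boldsymbol{K}_0\boldsymbol{Q})$, this composition equals $\sigma_\eta(\boldsymbol{K}_0\boldsymbol{Q})$, giving the claimed symbol identity.

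For statement (2), I would apply Point~4 of Theorem~\ref{thm:global-twisted-compositions-involving-boundary}: the hypothesis that the target bundles agree ($\boldsymbol{E}_L$) and that the twisting endomorphisms agree ($-\boldsymbol{\mathfrak{s}}_L$) is satisfied, so we conclude $\boldsymbol{B}_0\boldsymbol{A}\in\hat{\Psi}_0^{-\infty,(\mathcal{E}_{\lf},\mathcal{E}_{\rf},\mathcal{I}_{\ff_0})}(X)$ for some index set $\mathcal{I}_{\ff_0}$ with $[\mathcal{I}_{\ff_0}]=[0+0]=[0]$ — note that here the twistings on the two sides cancel, so the result is an \emph{un-twisted} symbolic $0$-interior operator, which is exactly what we want. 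For the Bessel family, the same theorem gives $\hat{N}_\eta(\boldsymbol{B}_0\boldsymbol{A})=\hat{N}_\eta(\boldsymbol{B}_0)\hat{N}_\eta(\boldsymbol{A})=\hat{N}_\eta(P_1)\,\hat{\boldsymbol{b}}_\eta\,\hat{\boldsymbol{a}}_{L,\eta}$.

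The only non-trivial step is then identifying $\hat{\boldsymbol{b}}_\eta\hat{\boldsymbol{a}}_{L,\eta}$ with $\hat{N}_\eta(P_1)$. The argument goes as follows: given any $u$, the vector $\hat{\boldsymbol{a}}_{L,\eta}u$ lies in $\mathcal{C}_\eta$ (by definition of the Calderón bundle as the range of the Bessel trace map), and the projector $\sigma_\eta(\boldsymbol{K}_0\boldsymbol{Q})$ acts as the identity on $\mathcal{C}_\eta$; hence $\hat{\boldsymbol{b}}_\eta\hat{\boldsymbol{a}}_{L,\eta}u$ is characterized as the unique $x^\delta L_b^2$ solution $w$ of $\hat{N}_\eta(L)w=0$ with $\hat{\boldsymbol{a}}_{L,\eta}w=\hat{\boldsymbol{a}}_{L,\eta}u$. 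Using $\hat{\boldsymbol{a}}_{L,\eta}=\hat{\boldsymbol{a}}_{L,\eta}\hat{N}_\eta(P_1)$ and the injectivity of $\hat{\boldsymbol{a}}_{L,\eta}$ on the kernel of $\hat{N}_\eta(L)$ (established in \S\ref{subsec:Elliptic-boundary-conditions}), this forces $w=\hat{N}_\eta(P_1)u$, i.e.\ $\hat{\boldsymbol{b}}_\eta\hat{\boldsymbol{a}}_{L,\eta}=\hat{N}_\eta(P_1)$. Combining with idempotency $\hat{N}_\eta(P_1)^2=\hat{N}_\eta(P_1)$ yields $\hat{N}_\eta(\boldsymbol{B}_0\boldsymbol{A})=\hat{N}_\eta(P_1)$, as claimed. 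The main conceptual point of the whole argument is this reduction of $\hat{\boldsymbol{b}}_\eta\hat{\boldsymbol{a}}_{L,\eta}$ to $\hat{N}_\eta(P_1)$ via the surjectivity/injectivity structure of the Bessel trace map on the kernel of $\hat{N}_\eta(L)$.
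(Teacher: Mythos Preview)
Your proof is correct and follows essentially the same approach as the paper: both parts invoke Theorem~\ref{thm:global-twisted-compositions-involving-boundary} with the integrability condition $\Re(\mathcal{E}_{\lf}+\mathcal{E}_{\rf})>-1$, and the symbol identifications rest on the same two structural facts---that $\sigma_\eta(\boldsymbol{K}_0\boldsymbol{Q})$ acts as the identity on $\mathcal{C}_\eta$, and that $\hat{\boldsymbol{a}}_{L,\eta}$ is injective on $\ker\hat{N}_\eta(L)$. The only cosmetic difference is that in part~(2) the paper phrases the key step as ``apply $\hat{N}_\eta(\boldsymbol{A})$ to both $\hat{N}_\eta(\boldsymbol{B}_0\boldsymbol{A})$ and $\hat{N}_\eta(P_1)$ and use injectivity'', while you equivalently argue directly that $\hat{\boldsymbol{b}}_\eta\hat{\boldsymbol{a}}_{L,\eta}=\hat{N}_\eta(P_1)$ via the uniqueness characterization of $\hat{\boldsymbol{b}}_\eta$; these are the same argument rearranged.
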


\begin{proof}
We have $\boldsymbol{A}\in\hat{\Psi}_{0\tr}^{-\infty,\left(\mathcal{E}_{\rf},\left[0\right]\right),-\boldsymbol{\mathfrak{s}}_{L}}\left(X;\partial X,\boldsymbol{E}_{L}\right)$
and $\boldsymbol{B}_{0}\in\hat{\Psi}_{0\po}^{-\infty,\left(\mathcal{E}_{\lf},\left[0\right]\right),-\boldsymbol{\mathfrak{s}}_{L}}\left(\partial X,\boldsymbol{E}_{L};X\right)$;
since $\Re\left(\mathcal{E}_{\lf}\right)>\delta$ and $\Re\left(\mathcal{E}_{\rf}\right)>-\delta-1$,
$\Re\left(\mathcal{E}_{\rf}+\mathcal{E}_{\lf}\right)>-1$ and therefore
we can apply Theorem \ref{thm:global-twisted-compositions-involving-boundary}
and conclude that $\boldsymbol{A}\boldsymbol{B}_{0}\in\Psi_{\phg}^{\left[0\right],\left(-\boldsymbol{\mathfrak{s}}_{L},-\boldsymbol{\mathfrak{s}}_{L}\right)}\left(\partial X;\boldsymbol{E}_{L},\boldsymbol{E}_{L}\right)$
and $\sigma_{\eta}\left(\boldsymbol{A}\boldsymbol{B}_{0}\right)=\hat{N}_{\eta}\left(\boldsymbol{A}\right)\hat{N}_{\eta}\left(\boldsymbol{B}_{0}\right)$.
The fact that $\hat{N}_{\eta}\left(\boldsymbol{A}\right)\hat{N}_{\eta}\left(\boldsymbol{B}_{0}\right)=\sigma_{\eta}\left(\boldsymbol{K}_{0}\boldsymbol{Q}\right)$
is essentially the definition of the Bessel Poisson map $\hat{N}_{\eta}\left(\boldsymbol{B}_{0}\right)$.
Now, again by Theorem \ref{thm:global-twisted-compositions-involving-boundary},
we have indeed $\boldsymbol{B}_{0}\boldsymbol{A}\in\hat{\Psi}_{0}^{-\infty,\left(\mathcal{E}_{\lf},\mathcal{E}_{\rf},\mathcal{I}_{\ff_{0}}\right)}\left(X\right)$
with $\left[\mathcal{I}_{\ff_{0}}\right]=0$, and moreover $\hat{N}_{\eta}\left(\boldsymbol{B}_{0}\boldsymbol{A}\right)=\hat{N}_{\eta}\left(\boldsymbol{B}_{0}\right)\hat{N}_{\eta}\left(\boldsymbol{A}\right)$.
To prove that $\hat{N}_{\eta}\left(\boldsymbol{B}_{0}\boldsymbol{A}\right)=\hat{N}_{\eta}\left(P_{1}\right)$,
first of all observe that by definition of the Bessel Poisson map
$\hat{N}_{\eta}\left(\boldsymbol{B}_{0}\right)$, $\hat{N}_{\eta}\left(\boldsymbol{B}_{0}\boldsymbol{A}\right)$
maps into the $x^{\delta}L_{b}^{2}$ kernel of $\hat{N}_{\eta}\left(L\right)$;
since the Bessel trace map $\hat{N}_{\eta}\left(\boldsymbol{A}\right)$
is injective on this kernel, it suffices to prove that $\hat{N}_{\eta}\left(\boldsymbol{A}\right)\left(\hat{N}_{\eta}\left(\boldsymbol{B}_{0}\boldsymbol{A}\right)-\hat{N}_{\eta}\left(P_{1}\right)\right)=0$.
But by definition of $\hat{N}_{\eta}\left(\boldsymbol{A}\right)$,
we have $\hat{N}_{\eta}\left(\boldsymbol{A}\right)\hat{N}_{\eta}\left(P_{1}\right)=\hat{N}_{\eta}\left(\boldsymbol{A}\right)$,
and moreover from the previous point we have 
\begin{align*}
\hat{N}_{\eta}\left(\boldsymbol{A}\right)\hat{N}_{\eta}\left(\boldsymbol{B}_{0}\boldsymbol{A}\right) & =\sigma_{\eta}\left(\boldsymbol{A}\boldsymbol{B}_{0}\right)\hat{N}_{\eta}\left(\boldsymbol{A}\right)\\
 & =\sigma_{\eta}\left(\boldsymbol{K}_{0}\boldsymbol{Q}\right)\hat{N}_{\eta}\left(\boldsymbol{A}\right).
\end{align*}
By definition, $\hat{N}_{\eta}\left(\boldsymbol{A}\right)$ maps onto
the Calderón space $\mathcal{C}_{\eta}$, and by construction $\sigma_{\eta}\left(\boldsymbol{K}_{0}\boldsymbol{Q}\right)$
is the identity on $\mathcal{C}_{\eta}$, so we have $\sigma_{\eta}\left(\boldsymbol{K}_{0}\boldsymbol{Q}\right)\hat{N}_{\eta}\left(\boldsymbol{A}\right)=\hat{N}_{\eta}\left(\boldsymbol{A}\right)$.
This concludes the proof.
\end{proof}
Define now $\boldsymbol{C}_{0}:=\boldsymbol{B}_{0}\boldsymbol{K}_{0}$.
This operator will allow us to construct good left and right parametrices
for $\mathcal{L}$.
\begin{lem}
$ $
\begin{enumerate}
\item The operator $\boldsymbol{C}_{0}$ is well-defined and in $\hat{\Psi}_{0\po}^{-\infty,\left(\mathcal{E}_{\lf},\left[0\right]\right),\boldsymbol{\mathfrak{t}}}\left(\partial X,\boldsymbol{W};X\right)$.
\item The composition $\boldsymbol{Q}\boldsymbol{A}\boldsymbol{C}_{0}$
is well-defined and in $\Psi_{\phg}^{\left[0\right],\left(\boldsymbol{\mathfrak{t}},\boldsymbol{\mathfrak{t}}\right)}\left(\partial X;\boldsymbol{W},\boldsymbol{W}\right)$.
\item The principal symbol $\sigma\left(\boldsymbol{Q}\boldsymbol{A}\boldsymbol{C}_{0}\right)$
is the identity on $\pi^{*}\boldsymbol{W}$.
\end{enumerate}
\end{lem}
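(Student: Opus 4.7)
The plan is to derive all three claims directly from the global twisted composition Theorem \ref{thm:global-twisted-compositions-involving-boundary} together with the previous lemma; no new analytic work is required. The only thing to verify carefully is that the bundles and twisting endomorphisms match correctly at each stage, and that the leading sets at the front face combine as claimed.

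For part (1), I would apply point 2 of Theorem \ref{thm:global-twisted-compositions-involving-boundary} to the pair $\boldsymbol{B}_0\in\hat{\Psi}_{0\po}^{-\infty,(\mathcal{E}_{\lf},[0]),-\boldsymbol{\mathfrak{s}}_L}(\partial X,\boldsymbol{E}_L;X)$ and $\boldsymbol{K}_0\in\Psi_{\phg}^{[0],(\boldsymbol{\mathfrak{t}},-\boldsymbol{\mathfrak{s}}_L)}(\partial X;\boldsymbol{W},\boldsymbol{E}_L)$. No integrability hypothesis is required for this particular combination. The target twisting $-\boldsymbol{\mathfrak{s}}_L$ of $\boldsymbol{K}_0$ matches the source twisting of $\boldsymbol{B}_0$, so the composition is well-defined; the source bundle becomes $\boldsymbol{W}$ with twisting $\boldsymbol{\mathfrak{t}}$, and the leading set at the front face adds to $[0]+[0]=[0]$. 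This gives $\boldsymbol{C}_0=\boldsymbol{B}_0\boldsymbol{K}_0\in\hat{\Psi}_{0\po}^{-\infty,(\mathcal{E}_{\lf},[0]),\boldsymbol{\mathfrak{t}}}(\partial X,\boldsymbol{W};X)$. The same theorem also records the Bessel-family identity $\hat{N}_\eta(\boldsymbol{C}_0)=\hat{N}_\eta(\boldsymbol{B}_0)\sigma_\eta(\boldsymbol{K}_0)$, which I will reuse for part (3).

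For part (2), using associativity I rewrite $\boldsymbol{Q}\boldsymbol{A}\boldsymbol{C}_0=\boldsymbol{Q}\cdot(\boldsymbol{A}\boldsymbol{B}_0)\cdot\boldsymbol{K}_0$. The previous lemma already places $\boldsymbol{A}\boldsymbol{B}_0$ in $\Psi_{\phg}^{[0],(-\boldsymbol{\mathfrak{s}}_L,-\boldsymbol{\mathfrak{s}}_L)}(\partial X;\boldsymbol{E}_L,\boldsymbol{E}_L)$. Two applications of point 1 of Theorem \ref{thm:global-twisted-compositions-involving-boundary} (compositions inside the twisted boundary calculus) then chain the twistings: composing on the right with $\boldsymbol{K}_0\in\Psi_{\phg}^{[0],(\boldsymbol{\mathfrak{t}},-\boldsymbol{\mathfrak{s}}_L)}$ produces an element of $\Psi_{\phg}^{[0],(\boldsymbol{\mathfrak{t}},-\boldsymbol{\mathfrak{s}}_L)}(\partial X;\boldsymbol{W},\boldsymbol{E}_L)$, and composing on the left with $\boldsymbol{Q}\in\Psi_{\phg}^{[0],(-\boldsymbol{\mathfrak{s}}_L,\boldsymbol{\mathfrak{t}})}$ lands in $\Psi_{\phg}^{[0],(\boldsymbol{\mathfrak{t}},\boldsymbol{\mathfrak{t}})}(\partial X;\boldsymbol{W},\boldsymbol{W})$, as claimed.

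For part (3), I would use multiplicativity of the principal symbol for compositions inside the twisted boundary calculus (again from Theorem \ref{thm:global-twisted-compositions-involving-boundary}) to write $\sigma_\eta(\boldsymbol{Q}\boldsymbol{A}\boldsymbol{C}_0)=\sigma_\eta(\boldsymbol{Q})\,\sigma_\eta(\boldsymbol{A}\boldsymbol{B}_0)\,\sigma_\eta(\boldsymbol{K}_0)$. By the preceding lemma, $\sigma_\eta(\boldsymbol{A}\boldsymbol{B}_0)=\sigma_\eta(\boldsymbol{K}_0\boldsymbol{Q})=\sigma_\eta(\boldsymbol{K}_0)\sigma_\eta(\boldsymbol{Q})$, so $\sigma_\eta(\boldsymbol{Q}\boldsymbol{A}\boldsymbol{C}_0)=\bigl[\sigma_\eta(\boldsymbol{Q})\sigma_\eta(\boldsymbol{K}_0)\bigr]^{2}$, and by construction of $\boldsymbol{K}_0$ the bracketed expression is the identity on $(\pi^{*}\boldsymbol{W})_\eta$, whence its square is also the identity. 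There is no serious obstacle here: the entire lemma is a bookkeeping consequence of the composition calculus developed earlier, the only subtle point being to track the twisting endomorphisms through each composition so that the cancellation in the principal-symbol computation becomes visible.
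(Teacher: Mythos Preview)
Your proposal is correct and follows essentially the same route as the paper: all three parts are bookkeeping consequences of the global twisted composition Theorem \ref{thm:global-twisted-compositions-involving-boundary} and the previous lemma, with part (3) reducing to $[\sigma_\eta(\boldsymbol{Q})\sigma_\eta(\boldsymbol{K}_0)]^2=1$. The only cosmetic difference is in part (2): the paper groups the composition as $(\boldsymbol{Q}\boldsymbol{A})\boldsymbol{C}_0$ (first forming $\boldsymbol{Q}\boldsymbol{A}\in\hat{\Psi}_{0\tr}^{-\infty,(\mathcal{E}_{\rf},[0]),\boldsymbol{\mathfrak{t}}}$, then composing with $\boldsymbol{C}_0$ via the trace-times-Poisson rule), whereas you group it as $\boldsymbol{Q}(\boldsymbol{A}\boldsymbol{B}_0)\boldsymbol{K}_0$ and stay entirely inside the twisted boundary calculus after invoking the previous lemma; both groupings are equally valid once the integrability condition $\Re(\mathcal{E}_{\lf}+\mathcal{E}_{\rf})>-1$ is noted.
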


\begin{proof}
(1) We have $\boldsymbol{B}_{0}\in\hat{\Psi}_{0\po}^{-\infty,\left(\mathcal{E}_{\lf},\left[0\right]\right),-\boldsymbol{\mathfrak{s}}_{L}}\left(\partial X,\boldsymbol{E}_{L};X\right)$,
and $\boldsymbol{K}_{0}\in\Psi_{\phg}^{\left[0\right],\left(\boldsymbol{\mathfrak{t}},-\boldsymbol{\mathfrak{s}}_{L}\right)}\left(\partial X;\boldsymbol{W},\boldsymbol{E}_{L}\right)$.
By Theorem \ref{thm:global-twisted-compositions-involving-boundary},
we have $\boldsymbol{C}_{0}=\boldsymbol{B}_{0}\boldsymbol{K}_{0}\in\hat{\Psi}_{0\po}^{-\infty,\left(\mathcal{E}_{\lf},\left[0\right]\right),\boldsymbol{\mathfrak{t}}}\left(\partial X,\boldsymbol{W};X\right)$.
(2) We have $\boldsymbol{Q}\in\Psi_{\phg}^{\left[0\right],\left(-\boldsymbol{\mathfrak{s}}_{L},\boldsymbol{\mathfrak{t}}\right)}\left(\partial X;\boldsymbol{E}_{L},\boldsymbol{W}\right)$
and $\boldsymbol{A}\in\hat{\Psi}_{0\tr}^{-\infty,\left(\mathcal{E}_{\rf},\left[0\right]\right),-\boldsymbol{\mathfrak{s}}_{L}}\left(X;\partial X,\boldsymbol{E}_{L}\right)$.
By Theorem \ref{thm:global-twisted-compositions-involving-boundary},
we have $\boldsymbol{Q}\boldsymbol{A}\in\hat{\Psi}_{0\tr}^{-\infty,\left(\mathcal{E}_{\rf},\left[0\right]\right),\boldsymbol{\mathfrak{t}}}\left(X;\partial X,\boldsymbol{W}\right)$;
moreover, using $\Re\left(\mathcal{E}_{\rf}+\mathcal{E}_{\lf}\right)>-1$
and Theorem \ref{thm:global-twisted-compositions-involving-boundary},
we have $\boldsymbol{Q}\boldsymbol{A}\boldsymbol{C}_{0}\in\Psi_{\phg}^{\left[0\right],\left(\boldsymbol{\mathfrak{t}},\boldsymbol{\mathfrak{t}}\right)}\left(\partial X;\boldsymbol{W},\boldsymbol{W}\right)$.
(3) We have
\begin{align*}
\sigma_{\eta}\left(\boldsymbol{Q}\boldsymbol{A}\boldsymbol{C}_{0}\right) & =\sigma_{\eta}\left(\boldsymbol{Q}\right)\hat{N}_{\eta}\left(\boldsymbol{A}\right)\hat{N}_{\eta}\left(\boldsymbol{B}_{0}\boldsymbol{K}_{0}\right)\\
 & =\sigma_{\eta}\left(\boldsymbol{Q}\right)\sigma_{\eta}\left(\boldsymbol{A}\boldsymbol{B}_{0}\right)\sigma_{\eta}\left(\boldsymbol{K}_{0}\right)\\
 & =\sigma_{\eta}\left(\boldsymbol{Q}\right)\sigma_{\eta}\left(\boldsymbol{K}_{0}\boldsymbol{Q}\right)\sigma_{\eta}\left(\boldsymbol{K}_{0}\right)\\
 & =\sigma_{\eta}\left(\boldsymbol{Q}\boldsymbol{K}_{0}\right)\sigma_{\eta}\left(\boldsymbol{Q}\boldsymbol{K}_{0}\right)\\
 & =1_{\left(\pi^{*}\boldsymbol{W}\right)_{\eta}}.
\end{align*}
\end{proof}
Define $\mathcal{G}_{0}=\left(\begin{matrix}G & \boldsymbol{C}_{0}\end{matrix}\right)$.
We will now compute the remainders $I-\mathcal{L}\mathcal{G}_{0}$
and $I-\mathcal{G}_{0}\mathcal{L}$. We have

\begin{align*}
\mathcal{L}\mathcal{G}_{0} & =\left(\begin{matrix}L\\
\boldsymbol{Q}\boldsymbol{A}
\end{matrix}\right)\left(\begin{matrix}G & \boldsymbol{C}_{0}\end{matrix}\right)\\
 & =\left(\begin{matrix}LG & L\boldsymbol{C}_{0}\\
\boldsymbol{Q}\boldsymbol{A}G & \boldsymbol{Q}\boldsymbol{A}\boldsymbol{C}_{0}
\end{matrix}\right)\\
 & =\left(\begin{matrix}I-P_{2} & 0\\
0 & \boldsymbol{Q}\boldsymbol{A}\boldsymbol{C}_{0}
\end{matrix}\right)\\
 & =I-\left(\begin{matrix}P_{2}\\
 & \boldsymbol{R}_{0}
\end{matrix}\right),
\end{align*}
where $P_{2}\in\Psi^{-\infty,\left(\mathcal{F}_{\lf},\mathcal{F}_{\rf}\right)}\left(X\right)$
is the very residual orthogonal projector onto the $x^{\delta}L_{b}^{2}$
orthogonal complement of the range of $L$, and $\boldsymbol{R}_{0}=I-\boldsymbol{Q}\boldsymbol{A}\boldsymbol{C}_{0}$.
The operator $L\boldsymbol{C}_{0}$ vanishes because, by construction,
we have $\boldsymbol{C}_{0}=P_{1}\boldsymbol{C}_{0}$ and $LP_{1}=0$;
similarly, the operator $\boldsymbol{Q}\boldsymbol{A}G$ vanishes
because, by definition of $\boldsymbol{A}$, we have $\boldsymbol{A}P_{1}=\boldsymbol{A}$
and by definition of generalized inverse, $G$ maps onto the $x^{\delta}L_{b}^{2}$
orthogonal complement of the kernel of $L$ so $P_{1}G=0$. Finally,
a priori we have $\boldsymbol{R}_{0}\in\Psi_{\phg}^{\left[0\right],\left(\boldsymbol{\mathfrak{t}},\boldsymbol{\mathfrak{t}}\right)}\left(\partial X;\boldsymbol{W},\boldsymbol{W}\right)$,
but in fact $\sigma\left(\boldsymbol{R}_{0}\right)=\sigma\left(I\right)$,
so we have $\boldsymbol{R}_{0}\in\Psi_{\phg}^{\left[\mathcal{I}\right],\left(\boldsymbol{\mathfrak{t}},\boldsymbol{\mathfrak{t}}\right)}\left(\partial X;\boldsymbol{W},\boldsymbol{W}\right)$
for some index set $\mathcal{I}$ with $\Re\left(\mathcal{I}\right)\geq1$.

This allows us to promote $\mathcal{G}_{0}$ to a good right parametrix
using a familiar argument. Let $\boldsymbol{S}$ be an asymptotic
sum for
\[
\boldsymbol{S}\sim\sum_{k=0}^{\infty}\boldsymbol{R}_{0}^{k}.
\]
Since $\Re\left(\mathcal{I}\right)\geq1$, for $k>0$ we have $\boldsymbol{R}_{0}^{k}\in\Psi_{\phg}^{\left[\mathcal{I}^{\left(k\right)}\right],\left(\boldsymbol{\mathfrak{t}},\boldsymbol{\mathfrak{t}}\right)}\left(\partial X;\boldsymbol{W},\boldsymbol{W}\right)$
with $\Re\left(\mathcal{I}^{\left(k\right)}\right)\geq k$. Therefore,
we have $\boldsymbol{S}\in\Psi_{\phg}^{\left[0\right],\left(\boldsymbol{\mathfrak{t}},\boldsymbol{\mathfrak{t}}\right)}\left(\partial X;\boldsymbol{W},\boldsymbol{W}\right)$
and
\[
\left(I-\boldsymbol{R}_{0}\right)\boldsymbol{S}\equiv I\mod\Psi^{-\infty}\left(\partial X;\boldsymbol{W},\boldsymbol{W}\right).
\]
Define $G_{2}=G$ and $\boldsymbol{C}_{2}=\boldsymbol{C}_{0}\boldsymbol{S}$.
By Theorem \ref{thm:global-twisted-compositions-involving-boundary},
we have again $\boldsymbol{C}_{2}\in\hat{\Psi}_{0\po}^{-\infty,\left(\mathcal{E}_{\lf},\left[0\right]\right),\boldsymbol{\mathfrak{t}}}\left(\partial X,\boldsymbol{W};X\right)$,
and calling $\mathcal{G}_{2}=\left(\begin{matrix}G & \boldsymbol{C}_{2}\end{matrix}\right)$,
we obtain
\[
\mathcal{L}\mathcal{G}_{2}=I-\left(\begin{matrix}P_{2}\\
 & \boldsymbol{R}
\end{matrix}\right)
\]
with $\boldsymbol{R}\in\Psi^{-\infty}\left(\partial X;\boldsymbol{W},\boldsymbol{W}\right)$,
as claimed.

Let's now take care of the left remainder.
\begin{lem}
$ $
\begin{enumerate}
\item The operator $\boldsymbol{C}_{0}\boldsymbol{Q}\boldsymbol{A}$ is
well-defined and in $\hat{\Psi}_{0}^{-\infty,\left(\mathcal{E}_{\lf},\mathcal{E}_{\rf},\mathcal{I}\right)}\left(X\right)$,
with $\left[\mathcal{I}\right]=0$.
\item $\hat{N}\left(\boldsymbol{C}_{0}\boldsymbol{Q}\boldsymbol{A}\right)=\hat{N}\left(P_{1}\right)$.
\end{enumerate}
\end{lem}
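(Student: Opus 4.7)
The plan is to decompose $\boldsymbol{C}_{0}\boldsymbol{Q}\boldsymbol{A}=\boldsymbol{B}_{0}\boldsymbol{K}_{0}\boldsymbol{Q}\boldsymbol{A}$ and apply the twisted composition theorems of \S\ref{subsubsec:Global-composition-theorems} step by step, in a way that parallels the computations done for $\boldsymbol{C}_{0}=\boldsymbol{B}_{0}\boldsymbol{K}_{0}$ and $\boldsymbol{Q}\boldsymbol{A}\boldsymbol{C}_{0}$ in the two preceding lemmas. First I would compose the two boundary factors: by Theorem \ref{thm:global-twisted-compositions-involving-boundary}(1), $\boldsymbol{K}_{0}\boldsymbol{Q}\in\Psi_{\phg}^{-[0],(-\boldsymbol{\mathfrak{s}}_{L},-\boldsymbol{\mathfrak{s}}_{L})}(\partial X;\boldsymbol{E}_{L},\boldsymbol{E}_{L})$, and then by Theorem \ref{thm:global-twisted-compositions-involving-boundary}(3) the operator $\boldsymbol{K}_{0}\boldsymbol{Q}\boldsymbol{A}$ lies in $\hat{\Psi}_{0\tr}^{-\infty,(\mathcal{E}_{\rf},[0]),-\boldsymbol{\mathfrak{s}}_{L}}(X;\partial X,\boldsymbol{E}_{L})$.

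Next I would finish the composition by combining $\boldsymbol{B}_{0}\in\hat{\Psi}_{0\po}^{-\infty,(\mathcal{E}_{\lf},[0]),-\boldsymbol{\mathfrak{s}}_{L}}(\partial X,\boldsymbol{E}_{L};X)$ with $\boldsymbol{K}_{0}\boldsymbol{Q}\boldsymbol{A}$. Since $\Re(\mathcal{E}_{\lf}+\mathcal{E}_{\rf})>-1$, Theorem \ref{thm:global-twisted-compositions-involving-boundary}(4) (applied in the case $\boldsymbol{\mathfrak{s}}=\boldsymbol{\mathfrak{t}}=-\boldsymbol{\mathfrak{s}}_{L}$, where the two twistings cancel and leave an untwisted $0$-interior operator) yields $\boldsymbol{C}_{0}\boldsymbol{Q}\boldsymbol{A}=\boldsymbol{B}_{0}(\boldsymbol{K}_{0}\boldsymbol{Q}\boldsymbol{A})\in\hat{\Psi}_{0}^{-\infty,(\mathcal{E}_{\lf},\mathcal{E}_{\rf},\mathcal{I})}(X)$ with $[\mathcal{I}]=[0+0]=0$, giving part (1).

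For part (2), the Bessel-family identities bundled into the same theorem give
\[
\hat{N}_{\eta}\bigl(\boldsymbol{C}_{0}\boldsymbol{Q}\boldsymbol{A}\bigr)=\hat{N}_{\eta}(\boldsymbol{B}_{0})\,\sigma_{\eta}(\boldsymbol{K}_{0})\,\sigma_{\eta}(\boldsymbol{Q})\,\hat{N}_{\eta}(\boldsymbol{A})=\hat{N}_{\eta}(\boldsymbol{B}_{0})\,\sigma_{\eta}(\boldsymbol{K}_{0}\boldsymbol{Q})\,\hat{N}_{\eta}(\boldsymbol{A}).
\]
The key observation is then the one already exploited in the previous lemma: by construction $\hat{N}_{\eta}(\boldsymbol{A})$ maps into the Calder\'on space $\mathcal{C}_{\eta}$, and $\sigma_{\eta}(\boldsymbol{K}_{0}\boldsymbol{Q})$ is by design a projector onto $\mathcal{C}_{\eta}$, so it acts as the identity on the range of $\hat{N}_{\eta}(\boldsymbol{A})$. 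Hence the middle factor drops out and $\hat{N}_{\eta}(\boldsymbol{C}_{0}\boldsymbol{Q}\boldsymbol{A})=\hat{N}_{\eta}(\boldsymbol{B}_{0})\hat{N}_{\eta}(\boldsymbol{A})=\hat{N}_{\eta}(\boldsymbol{B}_{0}\boldsymbol{A})$, which the previous lemma identifies with $\hat{N}_{\eta}(P_{1})$.

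There is essentially no obstacle: everything is bookkeeping within the composition calculus, and the only nonformal input is the compatibility between $\sigma(\boldsymbol{K}_{0}\boldsymbol{Q})$ and the range of $\hat{N}(\boldsymbol{A})$, which was already arranged in the construction of $\boldsymbol{K}_{0}$. The main point worth checking carefully is that the integrability condition $\Re(\mathcal{E}_{\lf}+\mathcal{E}_{\rf})>-1$ -- guaranteed by $\Re(\mathcal{E}_{\lf})>\delta$ and $\Re(\mathcal{E}_{\rf})>-\delta-1$ -- is indeed what is needed to apply Theorem \ref{thm:global-twisted-compositions-involving-boundary}(4) at the final step.
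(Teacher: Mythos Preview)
Your proof is correct and follows essentially the same approach as the paper. The only difference is a cosmetic regrouping: the paper applies Theorem \ref{thm:global-twisted-compositions-involving-boundary}(4) directly to $\boldsymbol{C}_{0}\in\hat{\Psi}_{0\po}^{-\infty,(\mathcal{E}_{\lf},[0]),\boldsymbol{\mathfrak{t}}}$ and $\boldsymbol{Q}\boldsymbol{A}\in\hat{\Psi}_{0\tr}^{-\infty,(\mathcal{E}_{\rf},[0]),\boldsymbol{\mathfrak{t}}}$ (both twisted by $\boldsymbol{\mathfrak{t}}$, passing through $\boldsymbol{W}$), whereas you first expand $\boldsymbol{C}_{0}=\boldsymbol{B}_{0}\boldsymbol{K}_{0}$ and then pair $\boldsymbol{B}_{0}$ with $\boldsymbol{K}_{0}\boldsymbol{Q}\boldsymbol{A}$ (both twisted by $-\boldsymbol{\mathfrak{s}}_{L}$, passing through $\boldsymbol{E}_{L}$). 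One small remark: Theorem \ref{thm:global-twisted-compositions-involving-boundary}(4) for the Poisson\,$\circ$\,trace composition does not actually require the integrability condition $\Re(\mathcal{E}_{\lf}+\mathcal{E}_{\rf})>-1$ (that condition belongs to part (5), trace\,$\circ$\,Poisson), so your final caveat is unnecessary, though of course harmless.
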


\begin{proof}
(1) We have $\boldsymbol{C}_{0}\in\hat{\Psi}_{0\po}^{-\infty,\left(\mathcal{E}_{\lf},\left[0\right]\right),\boldsymbol{\mathfrak{t}}}\left(\partial X,\boldsymbol{W};X\right)$
and $\boldsymbol{Q}\boldsymbol{A}\in\hat{\Psi}_{0\tr}^{-\infty,\left(\mathcal{E}_{\rf},\left[0\right]\right),\boldsymbol{\mathfrak{t}}}\left(X;\partial X,\boldsymbol{W}\right)$,
so by $\Re\left(\mathcal{E}_{\lf}+\mathcal{E}_{\rf}\right)>-1$ and
Theorem \ref{thm:global-twisted-compositions-involving-boundary}
we obtain $\boldsymbol{C}_{0}\boldsymbol{Q}\boldsymbol{A}\in\hat{\Psi}_{0}^{-\infty,\left(\mathcal{E}_{\lf},\mathcal{E}_{\rf},\mathcal{I}\right)}\left(X\right)$.
(2) We have
\begin{align*}
\hat{N}_{\eta}\left(\boldsymbol{C}_{0}\boldsymbol{Q}\boldsymbol{A}\right) & =\hat{N}_{\eta}\left(\boldsymbol{B}_{0}\boldsymbol{K}_{0}\boldsymbol{Q}\boldsymbol{A}\right)\\
 & =\hat{N}_{\eta}\left(\boldsymbol{B}_{0}\right)\sigma_{\eta}\left(\boldsymbol{K}_{0}\boldsymbol{Q}\right)\hat{N}_{\eta}\left(\boldsymbol{A}\right).
\end{align*}
Now, $\sigma_{\eta}\left(\boldsymbol{K}_{0}\boldsymbol{Q}\right)$
is the identity on $\mathcal{C}_{\eta}$, and $\hat{N}_{\eta}\left(\boldsymbol{A}\right)$
takes values in $\mathcal{C}_{\eta}$, so
\begin{align*}
\hat{N}_{\eta}\left(\boldsymbol{C}_{0}\boldsymbol{Q}\boldsymbol{A}\right) & =\hat{N}_{\eta}\left(\boldsymbol{B}_{0}\boldsymbol{A}\right)=\hat{N}_{\eta}\left(P_{1}\right).
\end{align*}
\end{proof}
Now, let's compute the left remainder $I-\mathcal{G}_{0}\mathcal{L}$.
We have
\begin{align*}
\mathcal{G}_{0}\mathcal{L} & =\left(\begin{matrix}G & \boldsymbol{C}_{0}\end{matrix}\right)\left(\begin{matrix}L\\
\boldsymbol{Q}\boldsymbol{A}
\end{matrix}\right)\\
 & =GL+\boldsymbol{C}_{0}\boldsymbol{Q}\boldsymbol{A}\\
 & =I-\left(P_{1}-\boldsymbol{C}_{0}\boldsymbol{Q}\boldsymbol{A}\right)\\
 & =I-T_{0},
\end{align*}
where by the previous lemma we have $T_{0}\in\hat{\Psi}_{0}^{-\infty,\left(\mathcal{E}_{\lf},\mathcal{E}_{\rf},\mathcal{I}\right)}\left(X\right)$
for some index set $\mathcal{I}$ with $\Re\left(\mathcal{I}\right)\geq1$.
We can now improve the parametrix using the same argument as above.
Let $U$ be an asymptotic sum for
\[
U\sim\sum_{k=1}^{\infty}T_{0}^{k}.
\]
Since $\Re\left(\mathcal{E}_{\lf}+\mathcal{E}_{\rf}\right)>-1$, using
Theorem \ref{thm:compositions-involving-interior-1} we have
\[
T_{0}^{k}\in\hat{\Psi}_{0}^{-\infty,\left(\mathcal{E}_{\lf},\mathcal{E}_{\rf},k\mathcal{I}\right)}\left(X\right)
\]
for every $k>0$. Therefore, $U\in\hat{\Psi}_{0}^{-\infty,\left(\mathcal{E}_{\lf},\mathcal{E}_{\rf},\mathcal{J}\right)}\left(X\right)$
for some index set with $\Re\left(\mathcal{J}\right)\geq1$, and

\[
\left(I+U\right)\left(I-T_{0}\right)\equiv I\mod\Psi^{-\infty,\left(\mathcal{E}_{\lf},\mathcal{E}_{\rf}\right)}\left(X\right).
\]
Define $G_{1}=\left(I+U\right)G$ and $\boldsymbol{C}_{1}=\left(I+U\right)\boldsymbol{C}_{0}$.
Again by Theorem \ref{thm:global-twisted-compositions-involving-boundary},
we have $\boldsymbol{C}_{1}\in\hat{\Psi}_{0\po}^{-\infty,\left(\mathcal{E}_{\lf},\left[0\right]\right),\boldsymbol{\mathfrak{t}}}\left(\partial X,\boldsymbol{W};X\right)$,
with $\hat{N}\left(\boldsymbol{C}_{1}\right)=\hat{N}\left(\boldsymbol{C}_{0}\right)$.
Now, we know from \cite{MazzeoEdgeI} that $G\in\Psi_{0}^{-m,\mathcal{H}}\left(X\right)$
where $\mathcal{H}=\left(\mathcal{H}_{\lf},\mathcal{H}_{\rf},\mathcal{H}_{\ff_{0}}\right)$
satisfies the properties listed in Theorem \ref{thm:main-theorem}.
Moreover, by Corollary \ref{cor:global-relation-symbolic0b-physical0b},
$U$ is in the $0$-calculus, and more precisely $U\in\Psi_{0}^{-\infty,\left(\mathcal{E}_{\lf},\mathcal{E}_{\rf},\tilde{\mathcal{J}}\right)}\left(X\right)$
where $\tilde{\mathcal{J}}$ satisfies again $\Re\left(\tilde{\mathcal{J}}\right)\geq1$.
Therefore, by the composition theorem for the $0$-calculus proved
in \cite{MazzeoEdgeI}, we have again $UG\in\Psi_{0}^{-m,\tilde{\mathcal{H}}}\left(X\right)$
where $\left(\tilde{\mathcal{H}}_{\lf},\tilde{\mathcal{H}}_{\rf},\tilde{\mathcal{H}}_{\ff_{0}}\right)$
satisfies again $\Re\left(\tilde{\mathcal{H}}_{\lf}\right)>\delta$,
$\Re\left(\tilde{\mathcal{H}}_{\rf}\right)>-\delta-1$, and $\left[\tilde{\mathcal{H}}_{\ff_{0}}\right]=0$.
This concludes the proof of Theorem \ref{thm:main-theorem}.

\subsection{\label{subsec:some-corollaries}Some corollaries}

The first corollary is the Fredholm Theorem mentioned in the introduction:
\begin{thm}
\label{thm:fredholm-theorem}Let $\boldsymbol{Q}$ be an elliptic
boundary condition for $L$ relative to the weight $\delta$, as in
Theorem \ref{thm:main-theorem}. Then the map
\[
\mathcal{L}=L\oplus\boldsymbol{Q}\boldsymbol{A}_{L}:x^{\delta}H_{0}^{k+m}\left(X\right)\to x^{\delta}H_{0}^{k}\left(X\right)\oplus H^{\boldsymbol{\mathfrak{t}}}\left(\partial X;\boldsymbol{W}\right)
\]
is Fredholm.
\end{thm}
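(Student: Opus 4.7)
The plan is to deduce Fredholmness from the two parametrix identities provided by Theorem \ref{thm:main-theorem}, together with the mapping and compactness results established in \S\ref{sec:Adjoints-compositions-mapping}. The first step will be to verify that $\mathcal{L}$ itself is bounded between the claimed Sobolev spaces: that $L\in\Diff_0^m(X)\subseteq\Psi_0^m(X)$ induces a bounded map $x^\delta H_0^{k+m}(X)\to x^\delta H_0^k(X)$ is standard; that $\boldsymbol{A}_L$ lands in $H^{\boldsymbol{\mathfrak{s}}_L}(\partial X;\boldsymbol{E}_L)$ follows from Theorem \ref{thm:A_L-is-twisted-symbolic-0-trace} combined with part (1) of Theorem \ref{thm:mapping-twisted-trace-interior-sobolev} (the hypothesis $\Re(\mathcal{E}_{\rf})>-\delta-1$ is exactly Mazzeo's bound); and that $\boldsymbol{Q}$ sends $H^{\boldsymbol{\mathfrak{s}}_L}(\partial X;\boldsymbol{E}_L)$ continuously into $H^{\boldsymbol{\mathfrak{t}}}(\partial X;\boldsymbol{W})$ is Proposition \ref{prop:mapping-twisted-boundary-sobolev} applied to $\boldsymbol{Q}$ viewed as an element of $\Psi_\phg^{[0],(\boldsymbol{\mathfrak{s}}_L,\boldsymbol{\mathfrak{t}})}$ (after composing with the identity twist in $\boldsymbol{E}_L$).

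Next I would check that each piece of $\mathcal{G}_i=(G_i\ \boldsymbol{C}_i)$ $(i=1,2)$ is bounded in the opposite direction. For $G_i\in\Psi_0^{-m,\mathcal{H}}(X)$ with the weights of $\mathcal{H}$ as in the theorem, the standard $0$-calculus mapping theorem (quoted at the start of \S4.6) gives $G_i\colon x^\delta H_0^k(X)\to x^\delta H_0^{k+m}(X)$. For the Poisson pieces $\boldsymbol{C}_i\in\hat{\Psi}_{0\po}^{-\infty,(\mathcal{E}_{\lf},[0]),\boldsymbol{\mathfrak{t}}}(\partial X,\boldsymbol{W};X)$, part (2) of Theorem \ref{thm:mapping-twisted-trace-interior-sobolev} (with $\Re(\mathcal{E}_{\lf})>\delta$ and $[0]=0$) yields continuity $H^{\boldsymbol{\mathfrak{t}}}(\partial X;\boldsymbol{W})\to x^\delta L_b^2(X)$; bootstrapping via $0$-differential operators, which preserve the Poisson class (compose on the left using Theorem \ref{thm:global-twisted-compositions-mixed}) without spoiling the index sets, upgrades this to a map into $x^\delta H_0^{k+m}(X)$ for every $k$.

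The third step is compactness of the remainders. For the right parametrix $\mathcal{L}\mathcal{G}_2=I-\mathrm{diag}(P_2,\boldsymbol{S})$: the very residual operator $P_2\in\Psi^{-\infty,(\mathcal{F}_{\lf},\mathcal{F}_{\rf})}(X)$ sits inside $\Psi_0^{-\infty,(\mathcal{F}_{\lf},\mathcal{F}_{\rf},\mathcal{F}_{\lf}+\mathcal{F}_{\rf}+n+1)}(X)$ by the inclusion lemma of \S\ref{subsec:-calculus,--calculus,-extended}, and the weight conditions force $\Re(\mathcal{F}_{\lf}+\mathcal{F}_{\rf}+n+1)>0$, so the mapping theorem quoted before Corollary \ref{cor:mapping-symbolic-0b-0-sobolev} makes $P_2$ compact on every $x^\delta H_0^j(X)$; the smoothing operator $\boldsymbol{S}\in\Psi^{-\infty}(\partial X;\boldsymbol{W})$ is compact on $H^{\boldsymbol{\mathfrak{t}}}$ by standard theory (or Proposition \ref{prop:mapping-twisted-boundary-sobolev} applied to the fact that $\boldsymbol{S}$ has vanishing principal symbol in any order). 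The left remainder $R\in\Psi^{-\infty,(\mathcal{E}_{\lf},\mathcal{E}_{\rf})}(X)$ is handled identically, since $\Re(\mathcal{E}_{\lf})>\delta$ and $\Re(\mathcal{E}_{\rf})>-\delta-1$.

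Having a left parametrix modulo a compact operator and a right parametrix modulo a compact operator, a standard functional analysis argument (finite-dimensional kernel and closed image of finite codimension) then concludes Fredholmness. The step I expect to be the most delicate is the bootstrapping of the boundedness of $\boldsymbol{C}_i$ to higher $0$-Sobolev regularity: Theorem \ref{thm:mapping-twisted-trace-interior-sobolev} only delivers an $L_b^2$ target, so one must verify that precomposing on the left with arbitrary products of $0$-vector fields keeps $\boldsymbol{C}_i$ inside the twisted symbolic $0$-Poisson class with the same $\of$ index set and trivial top front-face behaviour. This is really a matter of unwinding the composition Theorems \ref{thm:global-twisted-compositions-involving-boundary}--\ref{thm:global-twisted-compositions-mixed}, but it is the only place where more than a direct citation of the earlier results is required.
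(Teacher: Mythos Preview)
Your approach is essentially the same as the paper's: verify boundedness of $\mathcal{L}$, invoke the parametrices $\mathcal{G}_1,\mathcal{G}_2$ from Theorem~\ref{thm:main-theorem}, and check that the remainders $R$, $P_2$, $\boldsymbol{S}$ are compact on the relevant Sobolev spaces. The paper's own proof is in fact considerably briefer than yours---it does not explicitly verify boundedness of the parametrix pieces $G_i,\boldsymbol{C}_i$, and in particular it does not address the regularity bootstrap you correctly flag as the one non-citational step.

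Two minor corrections. First, there is a sign slip: since $\boldsymbol{A}_L\in\hat{\Psi}_{0\tr}^{-\infty,(\mathcal{E}_{\rf},[0]),-\boldsymbol{\mathfrak{s}}_L}$, Theorem~\ref{thm:mapping-twisted-trace-interior-sobolev} lands it in $H^{-\boldsymbol{\mathfrak{s}}_L}(\partial X;\boldsymbol{E}_L)$, not $H^{\boldsymbol{\mathfrak{s}}_L}$; correspondingly $\boldsymbol{Q}\in\Psi_{\phg}^{[0],(-\boldsymbol{\mathfrak{s}}_L,\boldsymbol{\mathfrak{t}})}$ (Definition~\ref{def:elliptic-boundary-condition-precise}), and Proposition~\ref{prop:mapping-twisted-boundary-sobolev} then gives $\boldsymbol{Q}\colon H^{-\boldsymbol{\mathfrak{s}}_L}\to H^{\boldsymbol{\mathfrak{t}}}$ directly, without any ``identity twist'' manoeuvre. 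Second, for the bootstrap of $\boldsymbol{C}_i$ to higher $0$-Sobolev regularity, you do not need the composition theorems of \S\ref{subsec:Composition-theorems} (which are stated for $P\in\hat{\Psi}_0^{-\infty,\bullet}$, not for $0$-differential operators): it suffices to observe directly at the symbol level that each $x\partial_x$ and $x\partial_{y_j}$ acting on the left sends a $0$-Poisson symbol in $S_{0\po,\mathcal{S}}^{-\infty,(\mathcal{E}_{\lf},\mathcal{E}_{\ff})}$ back into the same class (since $x\partial_x$ lifts to a $b$-vector field on $\hat{P}_0^2$, and $x\eta_j$ has index set $0$ at $\ff$), so $V_1\cdots V_j\boldsymbol{C}_i$ remains in $\hat{\Psi}_{0\po}^{-\infty,(\mathcal{E}_{\lf},[0]),\boldsymbol{\mathfrak{t}}}$ and Theorem~\ref{thm:mapping-twisted-trace-interior-sobolev} applies again.
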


\begin{proof}
First of all, let's check that the map $\mathcal{L}$ is bounded between
the spaces above. Since $L$ is a $0$-differential operator of order
$m$, $L$ is bounded $x^{\delta}H_{0}^{k+m}\left(X\right)\to x^{\delta}H_{0}^{k}\left(X\right)$.
Now, we know that $\boldsymbol{A}_{L}$ is a twisted symbolic $0$-trace
operator in $\hat{\Psi}_{0\tr}^{-\infty,\left(\mathcal{E}_{\rf},\left[0\right]\right),-\boldsymbol{\mathfrak{s}}_{L}}\left(\partial X,\boldsymbol{E}_{L}\right)$.
Since $\Re\left(\mathcal{E}_{\rf}\right)>-\delta-1$, by Theorem \ref{thm:main-theorem}
$\boldsymbol{A}_{L}$ induces a bounded map
\[
\boldsymbol{A}_{L}:x^{\delta}H_{0}^{k+m}\left(X\right)\to H^{-\boldsymbol{\mathfrak{s}}_{L}}\left(\partial X;\boldsymbol{E}_{L}\right).
\]
Now, since $\boldsymbol{Q}\in\Psi_{\phg}^{\left[0\right],\left(-\boldsymbol{\mathfrak{s}}_{L},\boldsymbol{\mathfrak{t}}\right)}\left(\partial X;\boldsymbol{E}_{L},\boldsymbol{W}\right)$,
by Proposition \ref{prop:mapping-twisted-boundary-sobolev} $\boldsymbol{Q}$
induces a bounded map
\[
\boldsymbol{Q}:H^{-\boldsymbol{\mathfrak{s}}_{L}}\left(\partial X;\boldsymbol{E}_{L}\right)\to H^{\boldsymbol{\mathfrak{t}}}\left(\partial X;\boldsymbol{W}\right).
\]
Therefore, $\mathcal{L}$ is well-defined. Now, to prove that $\mathcal{L}$
is Fredholm, let $\mathcal{G}_{1},\mathcal{G}_{2}$ be the left and
right parametrices for $\mathcal{L}$ constructed in Theorem \ref{thm:main-theorem}.
We have
\[
\mathcal{G}_{1}\mathcal{L}=I-R
\]
where $R$ is a very residual operator in $\Psi^{-\infty,\left(\mathcal{E}_{\lf},\mathcal{E}_{\rf}\right)}\left(X\right)$,
and
\[
\mathcal{L}\mathcal{G}_{2}=I-\left(\begin{matrix}P_{2}\\
 & \boldsymbol{S}
\end{matrix}\right),
\]
where $P_{2}\in\Psi^{-\infty,\left(\mathcal{F}_{\lf},\mathcal{F}_{\rf}\right)}\left(X\right)$
is a very residual operator and $\boldsymbol{S}\in\Psi^{-\infty}\left(\partial X;\boldsymbol{W}\right)$.
Since $R$ is a very residual operator and $\Re\left(\mathcal{E}_{\lf}\right)>\delta$,
$\Re\left(\mathcal{E}_{\rf}\right)>-\delta-1$, $R$ is compact as
an operator on $x^{\delta}H_{0}^{k+m}\left(X\right)$. Similarly,
$P_{2}$ is compact as an operator on $x^{\delta}H_{0}^{k}\left(X\right)$.
Finally, $\boldsymbol{S}$ is smoothing, and therefore it is bounded
as a map $H^{\boldsymbol{\mathfrak{t}}}\left(\partial X;\boldsymbol{W}\right)\to H^{\boldsymbol{\mathfrak{t}}+\epsilon}\left(\partial X;\boldsymbol{W}\right)$
for every $\epsilon>0$, hence it is compact as a map on $H^{\boldsymbol{\mathfrak{t}}}\left(\partial X;\boldsymbol{W}\right)$.
\end{proof}
The other is a simple regularity result for solutions of $0$-elliptic
boundary value problems.
\begin{cor}
Let $\boldsymbol{Q}$ be an elliptic boundary condition for $L$ relative
to the weight $\delta$, as in Theorem \ref{thm:main-theorem}. Suppose
that $u\in x^{\delta}H_{0}^{m}\left(X\right)$ is a solution of the
boundary value problem
\[
\begin{cases}
Lu & =v\\
\boldsymbol{Q}\boldsymbol{A}_{L}u & =\varphi
\end{cases},
\]
where $v$ is $o\left(x^{\delta}\right)$ polyhomogeneous, and $\varphi\in C^{\infty}\left(\partial X;\boldsymbol{W}\right)$.
Then $u$ is $o\left(x^{\delta}\right)$ polyhomogeneous.
\end{cor}

\begin{proof}
Let $\mathcal{G}_{1}=\left(\begin{matrix}G_{1} & \boldsymbol{C}_{1}\end{matrix}\right)$
be the left parametrix for $\mathcal{L}$ constructed in Theorem \ref{thm:main-theorem}.
We have
\[
G_{1}\in\Psi_{0}^{-m,\mathcal{H}}\left(X\right)
\]
with $\Re\left(\mathcal{H}_{\lf}\right)>\delta$, $\Re\left(\mathcal{H}_{\rf}\right)>-\delta-1$
and $\left[\mathcal{H}_{\ff_{0}}\right]=0$, and 
\[
\boldsymbol{C}_{1}\in\hat{\Psi}_{0\po}^{-\infty,\left(\mathcal{E}_{\lf},\left[0\right]\right),\boldsymbol{\mathfrak{t}}}\left(\partial X,\boldsymbol{W};X\right).
\]
Writing $u=\mathcal{G}_{1}\mathcal{L}u+Ru$, we get
\[
u=Ru+G_{1}v+\boldsymbol{C}_{1}\varphi.
\]
Since $R$ is very residual in $\Psi^{-\infty,\left(\mathcal{E}_{\lf},\mathcal{E}_{\rf}\right)}\left(X\right)$,
$R$ maps $x^{\delta}L_{b}^{2}\left(X\right)$ to $\mathcal{A}_{\phg}^{\mathcal{E}_{\lf}}\left(X\right)$.
Moreover, from the mapping properties for $\boldsymbol{C}_{1}$ proved
in §\ref{subsec:Basic-mapping-properties}, we know that
\[
\boldsymbol{C}_{1}:C^{\infty}\left(\partial X;\boldsymbol{W}\right)\to\mathcal{A}_{\phg}^{\mathcal{E}_{\lf}}\left(X\right)
\]
so $\boldsymbol{C}_{1}\varphi\in\mathcal{A}_{\phg}^{\mathcal{E}_{\lf}}\left(X\right)$
as well. Finally, it is proved in \cite{MazzeoEdgeI} that if $\mathcal{F}$
is an index set and $\Re\left(\mathcal{H}_{\rf}+\mathcal{F}\right)>-1$,
then $G_{1}$ (being in $\Psi_{0}^{-m,\mathcal{H}}\left(X\right)$)
induces a continuous linear map
\[
G_{1}:\mathcal{A}_{\phg}^{\mathcal{F}}\left(X\right)\to\mathcal{A}_{\phg}^{\mathcal{H}_{\lf}\overline{\cup}\left(\mathcal{F}+\mathcal{H}_{\ff_{0}}\right)}\left(X\right).
\]
Since $v$ is $o\left(x^{\delta}\right)$ polyhomogeneous, we have
$\Re\left(\mathcal{F}\right)>\delta$, and therefore from $\Re\left(\mathcal{H}_{\rf}\right)>-\delta-1$
we get $\Re\left(\mathcal{H}_{\rf}+\mathcal{F}\right)>-1$. Finally,
since $\Re\left(\mathcal{H}_{\lf}\right)>\delta$ and $\left[\mathcal{H}_{\ff_{0}}\right]=0$,
we have $\Re\left(\mathcal{H}_{\lf}\overline{\cup}\left(\mathcal{F}+\mathcal{H}_{\ff_{0}}\right)\right)>\delta$
so $u$ is $o\left(x^{\delta}\right)$ polyhomogeneous as claimed.
\end{proof}

\section{Appendix}

\subsection{Conormality, polyhomogeneity, projective tensor products}

Let $X$ be a compact manifold with corners. We call $\mathcal{M}_{1}\left(X\right)$
the set of boundary hyperfaces of $X$. We also denote by $\mathcal{V}_{b}\left(X\right)$
the space of vector fields on $X$ tangent to every $H\in\mathcal{M}_{1}\left(X\right)$.
The elements of $\mathcal{V}_{b}\left(X\right)$ are called \emph{$b$-vector
fields}.

A \emph{multi-weight} for $X$ is a family $\mathfrak{m}=\left\{ \mathfrak{m}_{H}\right\} _{H\in\mathcal{M}_{1}\left(X\right)}$,
where $\mathfrak{m}_{H}\in\mathbb{R}$. For every $H\in\mathcal{M}_{1}\left(X\right)$,
choose an auxiliary boundary defining function $r_{H}$ for $H$ in
$X$, and call $r=\prod_{H\in\mathcal{M}_{1}\left(X\right)}r_{H}$
a total boundary defining function for $X$. 
\begin{defn}
We denote by $\mathcal{A}^{\mathfrak{m}}\left(X\right)$ the space
$r^{-\mathfrak{m}}\mathcal{A}^{0}\left(X\right)$, where $\mathcal{A}^{0}\left(X\right)$
is the space of extendible distributions $u$ on $X$ such that $V_{1}\cdots V_{k}u\in L^{\infty}\left(X\right)$
for every $V_{1},...,V_{k}\in\mathcal{V}_{b}\left(X\right)$. The
elements of $\mathcal{A}^{\mathfrak{m}}\left(X\right)$ are called
\emph{conormal functions of order $\mathfrak{m}$.}
\end{defn}

The space $\mathcal{A}^{0}\left(X\right)$ is equipped with a natural
Fréchet topology, defined as follows. Choose a finite family of $b$-vector
fields $V_{1},...,V_{K}$ which generate $\mathcal{V}_{b}\left(X\right)$
pointwise (such a finite family exists because $X$ is compact). Then
$\mathcal{A}^{0}\left(X\right)$ is topologized by the seminorms
\[
\sup\left|V_{i_{1}}\cdots V_{i_{l}}u\right|.
\]
The space $\mathcal{A}^{\mathfrak{m}}\left(X\right)$ is topologized
by the isomorphism $\mathcal{A}^{\mathfrak{m}}\left(X\right)\to\mathcal{A}^{0}\left(X\right)$
given by $u\mapsto r^{-\mathfrak{m}}u$. One can prove that the Fréchet
topology on $\mathcal{A}^{\mathfrak{m}}\left(X\right)$ does not depend
on the choice of the vector fields $V_{1},...,V_{K}$ and the boundary
defining functions $r_{H}$.
\begin{defn}
An \emph{index set} is a subset $\mathcal{E}\subseteq\mathbb{C}\times\mathbb{N}$
with the following properties:
\begin{enumerate}
\item if $\left(\alpha,l\right)\in\mathcal{E}$, then $\left(\alpha+k,l'\right)\in\mathcal{E}$
for every $k\in\mathbb{N}$ and every $l'\in\left\{ 0,...,l\right\} $;
\item for every $M\in\mathbb{R}$, the set $\left\{ \left(\alpha,l\right)\in\mathcal{E}:\Re\left(\alpha\right)<M\right\} $
is finite.
\end{enumerate}
An \emph{index family} for $X$ is a family $\mathcal{E}=\left\{ \mathcal{E}_{H}\right\} _{H\in\mathcal{M}_{1}\left(X\right)}$
of index sets.
\end{defn}

Fix an index family $\mathcal{E}$ for $X$. Now, choose for every
boundary hyperface $H$ a vector field $V_{H}$ transversal to $H$,
inward-pointing, and tangent to all the other boundary hyperfaces.
For every $M\in\mathbb{R}$, define
\[
P_{H,M}=\prod_{\begin{smallmatrix}\left(\alpha,l\right)\in\mathcal{E}_{H}\\
\Re\left(\alpha\right)\leq M
\end{smallmatrix}}\left(r_{H}V_{H}-\alpha\right).
\]

\begin{defn}
We denote by $\mathcal{A}_{\phg}^{\mathcal{E}}\left(X\right)$ the
space of extendible distributions $u$ on $X$ for which there exists
a multi-weight $\mathfrak{m}$ for $X$ such that:
\begin{enumerate}
\item $u\in\mathcal{A}^{\mathfrak{m}}\left(X\right)$;
\item for every $M\in\mathbb{R}$, we have $P_{H,M}u\in\mathcal{A}^{\mathfrak{m}\left(H,M\right)}\left(X\right)$
where $\mathfrak{m}\left(H,M\right)$ is the new multi-weight obtained
from $\mathfrak{m}$ obtained by replacing $\mathfrak{m}_{H}$ with
$M$.
\end{enumerate}
The elements of $\mathcal{A}_{\phg}^{\mathcal{E}}\left(X\right)$
are called \emph{polyhomogeneous functions with index sets} $\mathcal{E}$\emph{.}
\end{defn}

The space $\mathcal{A}_{\phg}^{\mathcal{E}}\left(X\right)$ is equipped
with a natural Fréchet topology, induced by the seminorms $\left|\left|P_{H,N}u\right|\right|$
for $H\in\mathcal{M}_{1}\left(X\right)$ and $N\in\mathbb{N}$, where
$\left|\left|\cdot\right|\right|$ ranges among a family of seminorms
for $\mathcal{A}^{\mathfrak{m}\left(H,N\right)}\left(X\right)$. One
can check that the topology does not depend on the auxiliary choices
made.

Given a Fréchet space $F$, we can define spaces $\mathcal{A}^{\mathfrak{m}}\left(X;F\right)$,
$\mathcal{A}_{\phg}^{\mathcal{E}}\left(X;F\right)$ of $F$-valued
conormal and polyhomogeneous functions. The definitions are exactly
as above, except that we replace the norm $\left|\cdot\right|$ in
$\mathbb{R}$ with a family of seminorms for $F$. Given another compact
manifold with corners $Y$, one can then easily check that we have
canonical identifications
\begin{align*}
\mathcal{A}_{\phg}^{\left(\mathcal{E},\mathcal{F}\right)}\left(X\times Y\right) & =\mathcal{A}_{\phg}^{\mathcal{E}}\left(X;\mathcal{A}_{\phg}^{\mathcal{F}}\left(Y\right)\right)\\
\mathcal{B}^{\left(\mathcal{E},\mathfrak{m}\right)}\left(X\times Y\right) & =\mathcal{A}_{\phg}^{\mathcal{E}}\left(X;\mathcal{A}^{\mathfrak{m}}\left(Y\right)\right).
\end{align*}
Now, given two Fréchet spaces $F,G$, we denote by $F\otimes G$ their
projective tensor product (cf. §45 of \cite{Treves}). The topology
on $F\otimes G$ is the strongest topology that makes the canonical
bilinear map $F\times G\to F\otimes G$ continuous. We call $F\hat{\otimes}G$
the completed projective tensor product: every element of $F\hat{\otimes}G$
can be written as a convergent sum $\sum_{j=0}^{\infty}\lambda_{j}f_{j}g_{j}$,
where $\left\{ \lambda_{j}\right\} $ is an absolutely summable scalar
sequence, and $\left\{ f_{j}\right\} ,\left\{ g_{j}\right\} $ are
null sequences in $F$ and $G$. The space $F\hat{\otimes}G$ satisfies
the following universal property: every continuous bilinear map $F\times G\to H$
factors uniquely through a continuous linear map $F\hat{\otimes}G\to H$.
The proof of Proposition 5.7 of \cite{LauterSeiler} implies that
there are canonical isomorphisms
\begin{align*}
\mathcal{A}_{\phg}^{\mathcal{E}}\left(X;\mathcal{A}_{\phg}^{\mathcal{F}}\left(Y\right)\right) & =\mathcal{A}_{\phg}^{\mathcal{E}}\left(X\right)\hat{\otimes}\mathcal{A}_{\phg}^{\mathcal{F}}\left(Y\right)\\
\mathcal{A}_{\phg}^{\mathcal{E}}\left(X;\mathcal{A}^{\mathfrak{m}}\left(Y\right)\right) & =\mathcal{A}_{\phg}^{\mathcal{E}}\left(X\right)\hat{\otimes}\mathcal{A}^{\mathcal{\mathfrak{m}}}\left(Y\right).
\end{align*}

\subsection{\label{subsec:Pull-back-and-Push-forward}Pull-back and Push-forward
Theorems}

Let $X,Y$ be two manifolds with corners. Fix boundary defining functions
$\left\{ r_{H}\right\} _{H\in\mathcal{M}_{1}\left(X\right)}$ and
$\left\{ \rho_{G}\right\} _{G\in\mathcal{M}_{1}\left(Y\right)}$.
A smooth map $f:X\to Y$ between compact manifolds with corners is
an \emph{interior} \emph{$b$-map} if, for every $G\in\mathcal{M}_{1}\left(Y\right)$,
we have
\[
f^{*}\rho_{G}=a_{G}\prod_{H\in\mathcal{M}_{1}\left(X\right)}r_{H}^{e_{f}\left(G,H\right)}
\]
for some $a_{G}\in C^{\infty}\left(X\right)$ positive and $e_{f}\left(G,H\right)\in\mathbb{N}$.
These numbers are called the \emph{boundary exponents} of $f$, and
$e_{f}$ is called the \emph{boundary matrix} of $f$.
\begin{thm}
(Melrose's Pull-back Theorem) Let $f:X\to Y$ be an interior $b$-map,
and let \emph{$\mathcal{F}$} be an index family for $Y$. Define
an index family $f^{\flat}\mathcal{F}$ on $X$ by
\[
\left(f^{\flat}\mathcal{F}\right)_{H}=\mathbb{N}+\sum_{f\left(H\right)\subseteq G}e_{f}\left(G,H\right)\mathcal{F}_{G}.
\]
Then the pull-back map $f^{*}:\dot{C}^{\infty}\left(X\right)\to C^{\infty}\left(Y\right)$
extends by continuity to a continuous linear map
\[
f^{*}:\mathcal{A}_{\phg}^{\mathcal{F}}\left(Y\right)\to\mathcal{A}_{\phg}^{f^{\flat}\mathcal{F}}\left(X\right).
\]
\end{thm}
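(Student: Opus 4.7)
The proof proceeds by localization and reduction to a model pull-back, followed by careful bookkeeping of the polyhomogeneous expansion. The plan is as follows.

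First, I would reduce to a local statement. Polyhomogeneity is a local property, so covering $X$ and $Y$ by charts near their respective corners, it suffices to prove the theorem for the model situation $f: \mathbb{R}^n_k \to \mathbb{R}^m_l$ where the boundary defining functions are the coordinates $r_1,\dots,r_k$ and $\rho_1,\dots,\rho_l$, and $f^*\rho_j = a_j \prod_{i=1}^{k} r_i^{e_{ji}}$ with $a_j \in C^\infty$ strictly positive. One also needs to show, by patching with a $C^\infty$ partition of unity subordinate to these charts, that the local statements glue to the global one; this is straightforward because the multiplication by a smooth cutoff preserves all the relevant function spaces continuously.

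Second, I would prove the analogous (simpler) statement for conormal functions: $f^*$ induces a continuous linear map $\mathcal{A}^{\mathfrak{n}}(Y) \to \mathcal{A}^{f^\flat \mathfrak{n}}(X)$, where $(f^\flat \mathfrak{n})_H = \sum_{f(H)\subseteq G} e_f(G,H)\mathfrak{n}_G$. The essential input is that if $V \in \mathcal{V}_b(X)$, then $f_* V$ acts on functions on $Y$ as a first-order operator whose coefficients, when expressed in the boundary-defining coordinates on $Y$, lie in $\mathcal{V}_b(Y)$ plus smooth $C^\infty$ multiples of $\rho_G \partial_{\rho_G}$. This follows from differentiating the identity $f^*\rho_G = a_G \prod r_H^{e_f(G,H)}$ and observing that $r_H \partial_{r_H}$ lifts cleanly. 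Then the seminorm estimates for $\mathcal{A}^{\mathfrak{n}}$ transfer to the corresponding seminorms for $\mathcal{A}^{f^\flat \mathfrak{n}}$ by iterating this observation.

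Third — and this is the heart of the matter — I would handle the polyhomogeneous case by treating a pure monomial term. Given $(\alpha,l) \in \mathcal{F}_G$ and a smooth coefficient $v_G$ on $G$ extended to a neighborhood, the basic term $v_G \cdot \rho_G^\alpha (\log \rho_G)^l$ pulls back to
\[
f^* v_G \cdot a_G^\alpha \prod_i r_i^{\alpha e_f(G,H_i)} \cdot \Bigl(\log a_G + \sum_i e_f(G,H_i) \log r_i \Bigr)^l.
\]
Here $a_G^\alpha$ and $\log a_G$ are genuinely smooth (using positivity of $a_G$), so expanding the $l$-th power of the bracket via the multinomial theorem gives a finite sum of terms $r_i^{\alpha e_f(G,H_i)} (\log r_i)^{l_i}$ with $\sum_i l_i \leq l$, multiplied by smooth factors. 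This is precisely a polyhomogeneous function with the index family $f^\flat\mathcal{F}$ claimed in the statement; the $\mathbb{N}$ summand in the definition of $(f^\flat \mathcal{F})_H$ absorbs the smooth factors, and the sum $\sum_{f(H)\subseteq G} e_f(G,H)\mathcal{F}_G$ accounts for the contributions from the different faces $G$ that pull back to contain $H$. For a general $u \in \mathcal{A}^{\mathcal{F}}_{\phg}(Y)$, one expands $u$ asymptotically at each face $G$, applies the monomial computation term by term, and estimates the remainder in $\mathcal{A}^{\mathfrak{n}}(Y)$ using step two; the polyhomogeneity of $f^*u$ then follows from the definition via the operators $P_{H,M}$, which annihilate finitely many terms in the pulled-back expansion.

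The main obstacle is the combinatorial bookkeeping in step three: one must verify that the expanded logarithmic and power terms reorganize exactly into the index family $f^\flat\mathcal{F}$, and that the continuity estimates (norms of $P_{H,M} f^* u$) can be bounded in terms of finitely many seminorms of $u$ uniformly in $M$. The positivity and smoothness of $a_G$ are what make $a_G^\alpha$ well-defined and smooth for arbitrary complex $\alpha$, which is essential; without it, the expansion would produce genuine singular behavior from the smooth factor. Once this is set up correctly the continuity follows essentially formally from the construction.
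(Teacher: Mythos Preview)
The paper does not prove this theorem at all: it is stated in the Appendix as Melrose's Pull-back Theorem, attributed to Melrose and used as a black box throughout (with \cite{MelroseCorners} as the implicit reference). So there is no ``paper's own proof'' to compare against.

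Your sketch is a correct outline of the standard proof of the result, essentially the one in Melrose's unpublished notes. The three-step structure---localize to model corners, establish the conormal estimate first, then expand monomial terms $\rho_G^\alpha(\log\rho_G)^l$ via the multinomial theorem and absorb the smooth factors $a_G^\alpha$, $\log a_G$ into the $\mathbb{N}$ shift---is exactly the right architecture, and your identification of the positivity of $a_G$ as what makes $a_G^\alpha$ smooth for complex $\alpha$ is the key point. One small caution: in step two you say $f_*V$ acts as an operator ``in $\mathcal{V}_b(Y)$ plus smooth multiples of $\rho_G\partial_{\rho_G}$''; more precisely, what you need is that $V(f^*u) = f^*(Wu)$ for some $W$ that is a $C^\infty(X)$-combination of pullbacks of $b$-vector fields on $Y$ (there is no well-defined push-forward of a vector field along a non-submersion), but the chain-rule computation you describe gives exactly this. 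The rest is, as you say, bookkeeping.
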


The space $\mathcal{V}_{b}\left(X\right)$ of $b$-vector fields determines,
via the Serre--Swan Theorem, a vector bundle $^{b}TX$ of which $\mathcal{V}_{b}\left(X\right)$
is the module of sections. The inclusion $\mathcal{V}_{b}\left(X\right)\to\mathcal{V}\left(X\right)$
determines a bundle map $\#:{^{b}TX}\to TX$, which is a natural isomorphism
on the interior. If $f:X\to Y$ is an interior $b$-map, then the
differential of $f$ extends to a map $^{b}df:{^{b}TX}\to{^{b}TY}$
called the $b$\emph{-differential}. $f$ is called a $b$\emph{-submersion}
if $^{b}df$ is surjective. $f$ is called a \emph{$b$-fibration}
if:
\begin{enumerate}
\item it is a surjective $b$-submersion;
\item it does not map any boundary hyperface of $X$ to a corner of $Y$.
\end{enumerate}
It is often easier to check that a surjective interior $b$-map $f$
is a $b$-fibration via the following equivalent characterization:
\begin{enumerate}
\item if $K$ is any boundary face of $X$, $\text{codim}f\left(K\right)\leq\text{codim}K$;
\item the restriction of $f$ to $K^{\circ}$ for any boundary face, is
a fibration onto $f\left(K\right)^{\circ}$.
\end{enumerate}
Denote by $^{b}\mathcal{D}_{X}^{1}$ the bundle of singular densities
of the form $r^{-1}\omega$, where $\omega$ is a smooth density on
$X$ and $r$ is a total boundary defining function for $X$.
\begin{thm}
(Melrose's Push-forward Theorem) Let $f:X\to Y$ be a $b$-fibration,
and let $\mathcal{E}$ be an index family for $X$. Define an index
family $f_{\flat}\mathcal{E}$ on $Y$ by
\[
\left(f_{\flat}\mathcal{E}\right)_{G}=\overline{\bigcup}_{f\left(H\right)\subseteq G}\frac{\mathcal{E}_{H}}{e\left(G,H\right)}.
\]
Here the operation $\overline{\cup}$ is called \emph{extended union}:
for a family $\mathcal{E}_{1},...,\mathcal{E}_{k}$ of index sets,
$\overline{\bigcup}_{i}\mathcal{E}_{i}$ is the index set generated
by the union $\bigcup_{i}\mathcal{E}_{i}$ and the pairs $\left(z,p_{1}+\cdots+p_{k}+1\right)$
such that $\left(z,p_{i}\right)\in\mathcal{E}_{i}$. If, for every
$H\in\mathcal{M}_{1}\left(X\right)$ such that $f\left(H\right)\cap Y^{\circ}\not=\emptyset$
we have $\Re\left(\mathcal{E}_{H}\right)>0$, then the push-forward
map $f_{*}:\dot{C}^{\infty}\left(X;{^{b}\mathcal{D}_{X}^{1}}\right)\to C^{-\infty}\left(Y;{^{b}\mathcal{D}_{Y}^{1}}\right)$
extends to a continuous linear map
\[
f_{*}:\mathcal{A}_{\phg}^{\mathcal{E}}\left(X;{^{b}\mathcal{D}_{X}^{1}}\right)\to\mathcal{A}_{\phg}^{f_{\flat}\mathcal{E}}\left(Y;{^{b}\mathcal{D}_{Y}^{1}}\right).
\]
\end{thm}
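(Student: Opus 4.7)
The plan is to reduce the theorem to a collection of local model problems via a partition of unity, and then handle each local model by direct computation. The main geometric input is the local normal form for $b$-fibrations.

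First, by the very definition of a $b$-fibration, each boundary hyperface $H \in \mathcal{M}_1(X)$ is either mapped onto a unique $G \in \mathcal{M}_1(Y)$ or has $f(H) \cap Y^\circ \neq \emptyset$. Combined with the $b$-submersion property, this allows me to invoke the standard local normal form: near each point $p \in X$ there exist adapted coordinates in which $f$ takes the product form
\[
 f\colon [0,\infty)^a_{\mathbf{r}'} \times [0,\infty)^b_{\mathbf{r}''} \times \mathbb{R}^c_{\mathbf{z}'} \times \mathbb{R}^d_{\mathbf{z}''} \longrightarrow [0,\infty)^{a'}_{\boldsymbol{\rho}} \times \mathbb{R}^{c'}_{\mathbf{w}},
\]
where the $r''_j$ correspond to boundary hyperfaces of $X$ mapping into $Y^\circ$, each $\rho_k$ is a product $\prod_j (r'_j)^{e_{kj}}$ over the boundary hyperfaces of $X$ mapping to the $k$-th boundary face of $Y$, $\mathbf{w}$ is a subset of the coordinates $\mathbf{z}'$, and $\mathbf{z}''$ are additional fiber coordinates. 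After a partition of unity (the topology on $\mathcal{A}^{\mathcal{E}}_{\phg}$ is defined by local seminorms via the operators $P_{H,M}$, so localization is harmless), it therefore suffices to treat each such model.

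Second, integration in the fiber variables $\mathbf{z}''$ is elementary (compact support in the chart plus smoothness), and integration in the ``interior'' boundary variables $r''_j$ is controlled by the hypothesis $\Re(\mathcal{E}_{H_j}) > 0$, which guarantees absolute convergence of $\int_0^{\infty} r^{\alpha}(\log r)^p \cdots \frac{dr}{r}$ near $r = 0$; decay at $r = \infty$ is ensured by compactness of the chart. These integrations produce no new boundary behavior in $Y$ and are continuous with respect to the relevant Fr\'echet topologies. The hard part remaining is the collapse of several boundary hyperfaces of $X$ onto a single $G \in \mathcal{M}_1(Y)$. It suffices to study the model push-forward along the map $(\mathbf{r}',\mathbf{w}) \mapsto (\prod_j (r'_j)^{e_j}, \mathbf{w})$ on $[0,\infty)^{\ell} \times \mathbb{R}^{c'}$. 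The essential case is $\ell = 2$, $e_1 = e_2 = 1$: the change of variables $\rho = r_1 r_2$, $t = r_1/r_2$ sends the $b$-density $\frac{dr_1}{r_1}\frac{dr_2}{r_2}$ to $\frac{d\rho}{\rho}\frac{dt}{t}$, and a polyhomogeneous term $r_1^{\alpha_1}(\log r_1)^{p_1} r_2^{\alpha_2}(\log r_2)^{p_2}$ becomes $\rho^{(\alpha_1+\alpha_2)/2} t^{(\alpha_1-\alpha_2)/2}$ times a polynomial in $\log \rho$ and $\log t$. Integrating $dt/t$ yields, for $\alpha_1 \neq \alpha_2$, two separately convergent contributions with exponents $\alpha_1$ and $\alpha_2$ in $\rho$, while for $\alpha_1 = \alpha_2$ one has a logarithmically divergent integral whose regularization produces an extra factor of $\log \rho$ with one higher logarithmic order. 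This is precisely the extended-union rule $\mathcal{E}_{H_1} \overline{\cup} \mathcal{E}_{H_2}$; the general $\ell$ case follows by iteration, while the case of a single $H \to G$ with exponent $e$ amounts to the trivial reparametrization $\mathcal{E}_H / e$.

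The principal obstacle will be the sharp control of the logarithmic enhancement when matching exponents collide, i.e., proving that the extended-union formula is \emph{attained} (not merely an upper bound) and that the pushforward is continuous as a map of Fr\'echet spaces. The cleanest way to handle this is a Mellin transform argument on the factor $[0,\infty)^{\ell}$: the Mellin transform of a compactly supported polyhomogeneous function is a meromorphic function on $\mathbb{C}^{\ell}$ with poles prescribed by the index sets, the pushforward along the monomial map becomes restriction to a linear subspace, and the poles of the restricted meromorphic function (with multiplicities governed by coincidences of exponents between different $\mathcal{E}_{H_j}/e_j$) read off exactly the extended union. Once this model statement and its continuity are established, reassembly via the partition of unity, together with the local definition of the seminorms on $\mathcal{A}^{\bullet}_{\phg}$ through the operators $P_{H,M}$, yields the global theorem.
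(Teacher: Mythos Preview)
The paper does not prove this statement. Melrose's Push-forward Theorem appears in the Appendix (\S\ref{subsec:Pull-back-and-Push-forward}) purely as background material, stated without proof and attributed to Melrose; the paper uses it as a black box throughout. There is therefore no ``paper's own proof'' to compare against.

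That said, your sketch is the standard route to the theorem and is essentially correct in outline: reduction to local models via the normal form for $b$-fibrations, handling of the integrable fiber and ``interior'' boundary directions under the hypothesis $\Re(\mathcal{E}_H)>0$, and the key computation for the collapse of several hyperfaces to one, where the extended union arises from coincident Mellin poles. One small caution: your explicit $\ell=2$ change of variables $\rho=r_1r_2$, $t=r_1/r_2$ is fine heuristically, but the claim that the result is ``attained (not merely an upper bound)'' is not part of the theorem as stated here --- the Push-forward Theorem only asserts containment in $\mathcal{A}_{\phg}^{f_\flat\mathcal{E}}$, not sharpness --- so you need not worry about that direction. The Mellin-transform packaging you describe is exactly how this is done in Melrose's original treatment and in later expositions.
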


\subsection{\label{subsec:Technical-details-on-proofs}Invariance under changes
of coordinates}

In this last subsection, we discuss coordinate invariance of the local
calculus introduced in §\ref{sec:The-symbolic-0-calculus}. For simplicity,
we only discuss in detail the $0$-Poisson case, since the other cases
are similar.

The proof is very similar to the standard coordinate invariance proof
for pseudodifferential operators on $\mathbb{R}^{n}$. We sketch the
argument following closely the proof of Theorem 5.2 of \cite{HintzMicrolocal}.
Given a bounded open subset $\Omega$ of $\mathbb{R}^{n}$, denote
by $\hat{\Psi}_{0\po,c}^{-\infty,\mathcal{E}}\left(\Omega,[0,\varepsilon)\times\Omega\right)$
the subspace of $\hat{\Psi}_{0\po,\mathcal{S}}^{-\infty,\mathcal{E}}\left(\mathbb{R}^{n},\mathbb{R}_{1}^{n+1}\right)$
consisting of operators compactly supported on $[0,\varepsilon)\times\Omega\times\Omega$.
We similarly define $\Psi_{\po,c}^{-\infty,\mathcal{E}_{\of}}\left(\Omega,[0,\varepsilon)\times\Omega\right)$.
\begin{prop}
\label{prop:poisson-diffeo-invariance}Let $\Omega,\Omega'$ be bounded
open subsets of $\mathbb{R}^{n}$, and let $\varphi:\Omega'\to\Omega$
be a diffeomorphism. Let $B\in\hat{\Psi}_{0\po,c}^{-\infty,\mathcal{E}}\left(\Omega,[0,\varepsilon)\times\Omega\right)$,
and let $B_{\varphi}$ be the operator defined by $\left(\id\times\varphi\right)^{*}\circ B\circ\left(\varphi^{-1}\right)^{*}$.
Then $B_{\varphi}\in\hat{\Psi}_{0\po,c}^{-\infty,\mathcal{E}}\left(\Omega',[0,\varepsilon)\times\Omega'\right)$.
Moreover, if $B=\Op_{L}^{\po}\left(b\right)$ for $b\in S_{0\po,\mathcal{S}}^{-\infty,\mathcal{E}}\left(\mathbb{R}^{n};\mathbb{R}_{1}^{n+1}\right)$,
then $B_{\varphi}=\Op_{L}^{\po}\left(b_{\varphi}\right)$ where $b_{\varphi}\in S_{0\po,\mathcal{S}}^{-\infty,\mathcal{E}}\left(\mathbb{R}^{n};\mathbb{R}_{1}^{n+1}\right)$
is determined modulo $S_{\po,\mathcal{S}}^{-\infty,\mathcal{E}_{\of}}\left(\mathbb{R}^{n};\mathbb{R}_{1}^{n+1}\right)$
by the asymptotic expansion
\[
b_{\varphi}\sim\sum_{\alpha}\frac{1}{\alpha!}\left(D_{\eta}^{\alpha}b\right)\left(\varphi\left(y\right);x,\J\left(\varphi^{-1}\right)^{T}\left(\varphi\left(y\right)\right)\eta\right)\Psi_{\alpha}\left(y;\eta\right),
\]
where $\Psi_{\alpha}\left(y;\eta\right)$ is a polynomial in $\eta$
with coefficients in $C^{\infty}\left(\Omega'\right)$ dependent only
on $\varphi$, and $\Psi_{0}=1$.
\end{prop}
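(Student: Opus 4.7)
\smallskip

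The plan is to mimic the classical proof of coordinate invariance of pseudodifferential operators, but working at the level of the compactified model $\hat{P}_0^2$ so as to preserve the polyhomogeneity of the symbol at the front face. First I would write out the Schwartz kernel of $B_{\varphi}$ explicitly: if $B = \Op_L^{\po}(b)$, then
\[
K_{B_{\varphi}}(y,x,\tilde{y}) = \frac{|\det J\varphi(\tilde{y})|}{(2\pi)^n}\int e^{i(\varphi(y)-\varphi(\tilde{y}))\eta} b(\varphi(y);x,\eta)\, d\eta.
\]
Applying the standard Kuranishi trick, write $\varphi(y) - \varphi(\tilde{y}) = \Phi(y,\tilde{y})(y-\tilde{y})$ with $\Phi(y,\tilde{y}) = \int_0^1 J\varphi(\tilde{y} + t(y-\tilde{y}))\, dt$, which is smooth with $\Phi(y,y) = J\varphi(y)$ invertible on (a neighborhood of the diagonal in) $\Omega' \times \Omega'$. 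After the linear substitution $\eta \mapsto (\Phi(y,\tilde{y})^T)^{-1}\eta$ in the fiber variable, I obtain
\[
K_{B_{\varphi}}(y,x,\tilde{y}) = \frac{1}{(2\pi)^n}\int e^{i(y-\tilde{y})\eta}\, b_{\mathrm{full}}(y,\tilde{y};x,\eta)\, d\eta,
\]
with
\[
b_{\mathrm{full}}(y,\tilde{y};x,\eta) = \frac{|\det J\varphi(\tilde{y})|}{|\det \Phi(y,\tilde{y})|}\, b\!\left(\varphi(y);x,(\Phi(y,\tilde{y})^T)^{-1}\eta\right).
\]

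Second, I would prove that $b_{\mathrm{full}}$ is a full $0$-Poisson symbol, i.e.\ that $b_{\mathrm{full}} \in S^{-\infty,\mathcal{E}}_{0\po,\mathcal{S}}(\mathbb{R}^{2n};\mathbb{R}_1^{n+1})$ (with Schwartz coefficients in $(y,\tilde{y})$ modulo the compact support). The pullback of $y$ and $x$ is harmless, so the only delicate point is to check that the fiberwise linear change $\eta \mapsto (\Phi(y,\tilde{y})^T)^{-1}\eta$, parametrized smoothly by $(y,\tilde{y})$, induces a continuous map on the relevant polyhomogeneous function space on $\hat{P}_0^2$. For each fixed $(y,\tilde{y})$, an invertible linear map $A : \mathbb{R}^n \to \mathbb{R}^n$ extends to a diffeomorphism of $\overline{\mathbb{R}}^n$ which respects the face structure $\of \cup \iif_\eta$, and hence lifts to a diffeomorphism of $\hat{P}^2$ preserving the locus $\{x=0, |\eta|=\infty\}$ being blown up; therefore it lifts further to a diffeomorphism of $\hat{P}_0^2$ preserving the face labels and their orders. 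Smooth parametric dependence then gives the desired continuous linear self-map of $S^{-\infty,\mathcal{E}}_{0\po}$, and multiplication by the smooth, positive, nowhere-vanishing Jacobian factor $|\det J\varphi(\tilde{y})|/|\det \Phi(y,\tilde{y})|$ is harmless. The compact support in $(y,\tilde{y})$ and in $x$ ensures we stay in the Schwartz-coefficient class.

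Third, I would invoke the left-reduction procedure (analogue of the standard one, developed in the $0$-Poisson setting in \S\ref{subsubsec:Local-composition-theorems-untwisted}) to pass from the full symbol $b_{\mathrm{full}}$ to a left-reduced symbol $b_\varphi \in S^{-\infty,\mathcal{E}}_{0\po,\mathcal{S}}(\mathbb{R}^n;\mathbb{R}_1^{n+1})$, uniquely determined modulo $S^{-\infty,\mathcal{E}_{\of}}_{\po,\mathcal{S}}$, by the asymptotic expansion
\[
b_\varphi(y;x,\eta) \sim \sum_{\alpha} \frac{1}{\alpha!} \left( D_\eta^\alpha \partial_{\tilde{y}}^\alpha b_{\mathrm{full}}\right)(y,y;x,\eta),
\]
obtained by Taylor expanding $b_{\mathrm{full}}$ in $\tilde{y}$ around $y$ and integrating by parts. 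Evaluating $\partial_{\tilde{y}}^\alpha$ at $\tilde{y}=y$ and using $\Phi(y,y) = J\varphi(y)$, the chain rule produces exactly the claimed form $(D_\eta^\alpha b)(\varphi(y); x, J(\varphi^{-1})^T(\varphi(y))\eta) \cdot \Psi_\alpha(y;\eta)$, with $\Psi_0 = 1$ and $\Psi_\alpha$ a polynomial in $\eta$ of degree $\leq |\alpha|$ whose coefficients are universal smooth expressions in the derivatives of $\varphi$. Asymptotic completeness of $S^{-\infty,\mathcal{E}}_{0\po,\mathcal{S}}$ at the front face (which improves by one unit with each $D_\eta$, cf.\ Lemma \ref{lem:differentiation-of-untwisted-symbols}) guarantees that this formal series can be summed to an honest symbol in the class, and the equality $B_\varphi = \Op_L^{\po}(b_\varphi)$ modulo residual Poisson operators follows.

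The main obstacle is the second step: verifying that the linear-in-$\eta$ pullback by $(\Phi(y,\tilde{y})^T)^{-1}$ really preserves the polyhomogeneous structure on $\hat{P}_0^2$ with all index sets intact. Once this diffeomorphism invariance of $\hat{P}_0^2$ under smoothly-parametrized invertible linear rescalings of the fiber is established (which reduces to a direct inspection in the projective coordinates adapted to $\ff$ and $\iif_\eta$, where such a rescaling acts by a smooth nonvanishing factor on boundary defining functions), the rest of the argument is an essentially formal adaptation of the classical pseudodifferential invariance proof.
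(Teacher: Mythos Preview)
Your approach is essentially the same as the paper's: Kuranishi trick, verification that the fiberwise linear change of variable lifts to a $b$-map on $\hat{P}_0^2$ (the paper phrases this via Melrose's Pull-back Theorem applied to the map $\gamma:(y,\tilde{y},x,\eta)\mapsto(x,G(y,\tilde{y})\eta)$, which is exactly your ``diffeomorphism of $\hat{P}_0^2$'' argument), then left reduction to get the asymptotic expansion.

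There is one technical point the paper makes explicit that you only acknowledge parenthetically. Your integral formula $\Phi(y,\tilde{y})=\int_0^1 J\varphi(\tilde{y}+t(y-\tilde{y}))\,dt$ requires the segment from $\tilde{y}$ to $y$ to lie in $\Omega'$, and even where $\Phi$ is defined its invertibility is only guaranteed near the diagonal; so your substitution $\eta\mapsto(\Phi^T)^{-1}\eta$ and hence your formula for $b_{\mathrm{full}}$ are not globally valid on $\Omega'\times\Omega'$. The paper deals with this up front by fixing a cutoff $\chi_\epsilon(y-\tilde{y})$ and splitting $B=B_\epsilon+R_\epsilon$, where $B_\epsilon=\Op^{\po}\bigl(\chi_\epsilon(y-\tilde{y})b(y;x,\eta)\bigr)$ and $R_\epsilon$ has Schwartz kernel supported in $|y-\tilde{y}|\geq\epsilon$; since this support avoids the front face, $R_\epsilon\in\Psi_{\po,c}^{-\infty,\mathcal{E}_{\of}}$ is residual Poisson and its conjugate $(R_\epsilon)_\varphi$ is trivially residual Poisson as well. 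One then chooses $\epsilon$ small enough (depending on $\varphi$ and on a compact $Z'\supset\operatorname{supp}B$) that $\Phi^T$ is smoothly invertible on the support of $B_\epsilon$, and applies your argument to $B_\epsilon$ only. Once you insert this split, your proof is complete and matches the paper's.
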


\begin{proof}
(sketch) Fix a cutoff function $\chi_{\epsilon}$ supported on $\left|Y\right|<2\epsilon$
and equal to $1$ on $\left|Y\right|<\epsilon$. Define 
\[
b_{\epsilon}=\chi_{\epsilon}\left(y-\tilde{y}\right)b\left(y;x,\eta\right)\in S_{0\po,\mathcal{S}}^{-\infty,\mathcal{E}}\left(\mathbb{R}^{2n};\mathbb{R}_{1}^{n+1}\right).
\]
From the proof of the previous proposition, we know that
\[
B-B_{\epsilon}=:R_{\epsilon}\in\Psi_{\po,c}^{-\infty,\mathcal{E}_{\of}}\left(\mathbb{R}^{n},\mathbb{R}_{1}^{n+1}\right).
\]
Let $R_{\epsilon}=R_{\epsilon}\left(x,y,\tilde{y}\right)d\tilde{y}$.
Then $R_{\epsilon}$ has compact support in $\mathbb{R}_{1}^{1}\times\Omega\times\Omega$
again, and
\[
\left(R_{\epsilon}\right)_{\varphi}=R_{\epsilon}\left(x,\varphi\left(y\right),\varphi\left(\tilde{y}\right)\right)\left|\det\J\left(\varphi\right)\left(\tilde{y}\right)\right|d\tilde{y}
\]
where $\J\left(\varphi\right)$ is the Jacobian matrix of $\varphi$.
Since $\varphi$ is a diffeomorphism, $\det\J\left(\varphi\right)\left(\tilde{y}\right)$
does not change sign and therefore $\left|\det\J\left(\varphi\right)\left(\tilde{y}\right)\right|$
is a smooth factor. Moreover, $R_{\epsilon}\left(x,\varphi\left(y\right),\varphi\left(\tilde{y}\right)\right)$
is still smooth, compactly supported on $\mathbb{R}_{1}^{1}\times\Omega'\times\Omega'$,
and polyhomogenoeus at $x=0$ with index set $\mathcal{E}_{\of}$.
It follows that $\left(R_{\epsilon}\right)_{\varphi}\in\Psi_{\po,c}^{-\infty,\mathcal{E}_{\of}}\left(\Omega,[0,\varepsilon)\times\Omega\right)$.
Therefore, it suffices to prove that $\left(B_{\epsilon}\right)_{\varphi}\in\hat{\Psi}_{0\po,c}^{-\infty,\mathcal{E}}\left(\Omega,[0,\varepsilon)\times\Omega\right)$.
The appropriate choice of $\epsilon$ is determined by the properties
of $\varphi$, as explained below.

The Schwartz kernel of $B$ is $K\left(y;x,y-\tilde{y}\right)d\tilde{y}$,
where $K\left(y;x,Y\right)$ is the inverse Fourier transform of $b\left(y;x,\eta\right)$.
Therefore, the Schwartz kernel of $\left(B_{\epsilon}\right)_{\varphi}$
is
\begin{align*}
\chi_{\epsilon}\left(\varphi\left(y\right)-\varphi\left(\tilde{y}\right)\right)K\left(\varphi\left(y\right);x,\varphi\left(y\right)-\varphi\left(\tilde{y}\right)\right)\left|\det\J\left(\varphi\right)\left(\tilde{y}\right)\right|d\tilde{y}\\
=\frac{1}{\left(2\pi\right)^{n}}\int e^{i\left(\varphi\left(y\right)-\varphi\left(\tilde{y}\right)\right)\eta}b_{\epsilon}\left(\varphi\left(y\right),\varphi\left(\tilde{y}\right);x,\eta\right)\left|\det\J\left(\varphi\right)\left(\tilde{y}\right)\right|d\eta d\tilde{y} & .
\end{align*}
Now, by Taylor's Theorem, we can write
\[
\varphi\left(y\right)-\varphi\left(\tilde{y}\right)=\Phi\left(y,\tilde{y}\right)\left(y-\tilde{y}\right),
\]
where $\Phi\left(y,\tilde{y}\right)$ is a smooth $n\times n$ matrix
defined on $\Omega'\times\Omega'$, and we are thinking of $y-\tilde{y}$
as a column vector. We have
\[
\left(B_{\epsilon}\right)_{\varphi}=\left[\frac{1}{\left(2\pi\right)^{n}}\int e^{i\left(y-\tilde{y}\right)\cdot\Phi^{T}\left(y,\tilde{y}\right)\eta}b_{\epsilon}\left(\varphi\left(y\right),\varphi\left(\tilde{y}\right);x,\eta\right)\left|\det\J\left(\varphi\right)\left(\tilde{y}\right)\right|d\eta\right]d\tilde{y}.
\]
Since $\Phi\left(y,y\right)=\J\left(\varphi\right)\left(y\right)$
and $\varphi$ is a diffeomorphism $\Omega'\to\Omega$, $J\left(\varphi\right)\left(y\right)$
is invertible for every $y\in\Omega'$. Let $Z$ be a compact subset
of $\Omega$ such that the support of $B$ is contained in $\mathbb{R}_{1}^{1}\times Z\times Z$,
and call $Z'=\varphi^{-1}\left(Z\right)$. Then $\Phi^{T}\left(y,\tilde{y}\right)$
has a smooth inverse in $Z'\times Z'\cap\left|y-\tilde{y}\right|<\epsilon$
for $\epsilon>0$ sufficiently small, and we can write
\[
\left(B_{\epsilon}\right)_{\varphi}=\Op^{\po}\left(\left(b_{\epsilon}\right)_{\varphi}\right)
\]
with
\[
\left(b_{\epsilon}\right)_{\varphi}=b_{\epsilon}\left(\varphi\left(y\right),\varphi\left(\tilde{y}\right);x,\left(\Phi^{T}\left(y,\tilde{y}\right)\right)^{-1}\eta\right)\left|\det\J\left(\varphi\right)\left(\tilde{y}\right)\right|\left|\det\left(\Phi^{T}\left(y,\tilde{y}\right)\right)^{-1}\right|.
\]
We now need to prove that $\left(b_{\epsilon}\right)_{\varphi}\in S_{0\po,\mathcal{S}}^{-\infty,\mathcal{E}}\left(\mathbb{R}^{2n};\mathbb{R}_{1}^{n+1}\right)$.
Since $b_{\epsilon}\equiv0$ for $y,\tilde{y}$ away from a compact
set, we can without loss of generality assume that $\varphi:\Omega'\to\Omega$
extends to a diffeomorphism $\overline{\mathbb{R}}^{n}\to\overline{\mathbb{R}}^{n}$.
Similarly, we can assume that $\left(\Phi^{T}\left(y,\tilde{y}\right)\right)^{-1}$
extends to a smooth map $G:\overline{\mathbb{R}}^{n}\times\overline{\mathbb{R}}^{n}\to\GL\left(n,\mathbb{R}^{n}\right)$.
It is then sufficient to prove that the multilinear map
\begin{align*}
\mathcal{S}\left(\mathbb{R}^{n}\right)\times\mathcal{S}\left(\mathbb{R}^{n}\right)\times\mathcal{A}_{\phg}^{\left(\mathcal{E}_{\of},\mathcal{E}_{\ff},\infty,\infty\right)}\left(\hat{P}_{0}^{2}\right) & \to S_{0\po,\mathcal{S}}^{-\infty,\mathcal{E}}\left(\mathbb{R}^{2n};\mathbb{R}_{1}^{n+1}\right)\\
\left(c_{1}\left(y\right),c_{2}\left(\tilde{y}\right),b\left(x,\eta\right)\right) & \mapsto c_{1}\left(\varphi\left(y\right)\right)c_{2}\left(\varphi\left(\tilde{y}\right)\right)b\left(x,G\left(y,\tilde{y}\right)\eta\right)
\end{align*}
is well-defined and continuous. It suffices to prove that the map
\begin{align*}
\mathcal{A}_{\phg}^{\left(\mathcal{E}_{\of},\mathcal{E}_{\ff},\infty,\infty\right)}\left(\hat{P}_{0}^{2}\right) & \to C^{\infty}\left(\overline{\mathbb{R}}^{n}\times\overline{\mathbb{R}}^{n}\right)\hat{\otimes}\mathcal{A}_{\phg}^{\left(\mathcal{E}_{\of},\mathcal{E}_{\ff},\infty,\infty\right)}\left(\hat{P}_{0}^{2}\right)\\
b\left(x,\eta\right) & \mapsto b\left(x,G\left(y,\tilde{y}\right)\eta\right)
\end{align*}
is well-defined and continuous. It is proved in \cite{MelroseCorners}
that every $A\in\GL\left(n,\mathbb{R}\right)$ extends from $\mathbb{R}^{n}$
to a diffeomorphism of $\overline{\mathbb{R}}^{n}$. It follows that
the map
\begin{align*}
\gamma:\overline{\mathbb{R}}^{n}\times\overline{\mathbb{R}}^{n}\times\hat{P}^{2} & \to\hat{P}^{2}\\
\left(y,\tilde{y},x,\eta\right) & \mapsto\left(x,G\left(y,\tilde{y}\right)\eta\right)
\end{align*}
is a smooth $b$-map of manifolds with corners. The map $\gamma$
sends the locus $\left\{ x=0,\left|\eta\right|=\infty\right\} $ in
$\overline{\mathbb{R}}^{n}\times\overline{\mathbb{R}}^{n}\times\hat{P}^{2}$
to the locus $\left\{ x=0,\left|\eta\right|=\infty\right\} $ in $\hat{P}^{2}$,
so it lifts to a smooth $b$-map
\[
\gamma:\overline{\mathbb{R}}^{n}\times\overline{\mathbb{R}}^{n}\times\hat{P}_{0}^{2}\to\hat{P}_{0}^{2}.
\]
The function $b\left(x,G\left(y,\tilde{y}\right)\eta\right)$ is then
simply the pull-back $\left(\gamma^{*}b\right)\left(y,\tilde{y},x,\eta\right)$,
and by the pull-back Theorem $\gamma^{*}$ is continuous as a map
\[
\gamma^{*}:\mathcal{A}_{\phg}^{\left(\mathcal{E}_{\of},\mathcal{E}_{\ff},\infty,\infty\right)}\left(\hat{P}_{0}^{2}\right)\to C^{\infty}\left(\overline{\mathbb{R}}^{n}\times\overline{\mathbb{R}}^{n}\right)\hat{\otimes}\mathcal{A}_{\phg}^{\left(\mathcal{E}_{\of},\mathcal{E}_{\ff},\infty,\infty\right)}\left(\hat{P}_{0}^{2}\right).
\]
The statement concerning the asymptotic expansion follows by taking
the left reduction of $\left(b_{\epsilon}\right)_{\varphi}$ and its
asymptotic expansion.
\end{proof}
\bibliography{allpapers}{}
\bibliographystyle{alpha}

\end{document}